\DeclareFontFamily{U}{mathx}{\hyphenchar\font45}
\DeclareFontShape{U}{mathx}{m}{n}{
      <5> <6> <7> <8> <9> <10>
      <10.95> <12> <14.4> <17.28> <20.74> <24.88>
      mathx10
      }{}
\DeclareSymbolFont{mathx}{U}{mathx}{m}{n}
\DeclareMathAccent{\widecheck}{0}{mathx}{"71}
\DeclareMathAccent{\wideparen}{0}{mathx}{"75}
\theoremstyle{definition}
\newtheorem*{defn*}{\protect\definitionname}
\theoremstyle{plain}
\newtheorem{thm}{\protect\theoremname}[section]
\theoremstyle{remark}
\newtheorem{rem}[thm]{\protect\remarkname}
\theoremstyle{plain}
\newtheorem{lem}[thm]{\protect\lemmaname}
\theoremstyle{plain}
\newtheorem{cor}[thm]{\protect\corollaryname}
\theoremstyle{plain}
\newtheorem{prop}[thm]{\protect\propositionname}
\theoremstyle{plain}
\theoremstyle{definition}
\newtheorem{example}[thm]{\protect\examplename}
\theoremstyle{definition}
\newtheorem{xca}[thm]{\protect\exercisename}
\renewcommand{\labelenumi}{{\upshape (\alph{enumi})}}
\renewcommand{\theenumi}{{\upshape (\alph{enumi})}}
\providecommand{\set}[2]{\left\{ #1 \,;\, #2 \right\}}
\providecommand{\scp}[2]{\left\langle #1 ,#2\right\rangle}
\providecommand{\sbb}[1]{\mathrm{s}_{\mathrm{b}}\left( #1 \right)}
\providecommand{\Lm}[1]{L_{2,#1}}
\DeclareMathOperator{\grad}{grad}
\DeclareMathOperator{\Grad}{Grad}
\DeclareMathOperator{\Dive}{Div}
\DeclareMathOperator{\dive}{div}
\DeclareMathOperator{\curl}{curl}
\newcommand{\rmc}{\mathrm{c}}
\newcommand{\rmpe}{\mathrm{pe}}
\newcommand{\rmhom}{\mathrm{hom}}
\newcommand{\rmsym}{\mathrm{sym}}
\newcommand{\bo}{L}
\newcommand{\cci}{C_{\mathrm{c}}^\infty}
\newcommand{\cco}{C_{\mathrm{c}}^1}
\newcommand{\cb}{C_{\mathrm{b}}}
\newcommand{\cc}{C_{\mathrm{c}}}
\renewcommand{\L}{L_2}
\newcommand{\Lnu}{L_{2,\nu}}
\newcommand{\cHt}{\mathcal{H}_2}
\providecommand{\ball}[2]{B\left(#1,#2\right)}
\providecommand{\cball}[2]{B\left[#1,#2\right]}
\renewcommand{\Re}{\operatorname{Re}}
\renewcommand{\Im}{\operatorname{Im}}
\providecommand{\td}[1]{\partial_{t,#1}}
\newcommand{\dom}{\operatorname{dom}}
\renewcommand{\ker}{\operatorname{ker}}
\newcommand{\ran}{\operatorname{ran}}
\newcommand{\lin}{\operatorname{lin}}
\newcommand{\id}{\operatorname{id}}
\newcommand{\idop}{1}
\providecommand{\card}[1]{\operatorname{card}(#1)}
\newcommand{\IV}{\operatorname{IV}}
\providecommand{\calDH}{\mathcal{D}_H}
\providecommand{\Lip}{\mathrm{Lip}}
\newcommand{\BD}{\mathrm{BD}}
\newcommand{\cran}{\operatorname{\overline{ran}}}
\newcommand{\loc}{\mathrm{loc}}
\renewcommand{\d}{\,\mathrm{d}}
\newcommand{\e}{\mathrm{e}}
\renewcommand{\i}{\mathrm{i}}
\renewcommand{\leq}{\leqslant}
\renewcommand{\geq}{\geqslant}
\newcommand{\Drazin}[1]{#1^{\mathrm{D}}}
\DeclareMathOperator{\spt}{spt}
\renewcommand{\Re}{\operatorname{Re}}
\renewcommand{\Im}{\operatorname{Im}}
\DeclareMathOperator{\trace}{trace}
\DeclareMathOperator{\diag}{diag}
\DeclareMathOperator{\dist}{dist}
\newcommand\MyPairedDelimiter{%
  \@ifstar{\My@Paired@Delimiter{{}}}
          {\My@Paired@Delimiter{}}%
}
\newcommand\My@Paired@Delimiter[4]{%
  \newcommand#2{%
    \@ifstar{\start@PD{#1}{\delimitershortfall=-1sp}{#3}{#4}}
            {\start@PD{#1}{}{#3}{#4}}%
  }%
}
\newcommand\start@PD[5]{%
  #1\mathopen{\mathpalette\put@delim@helper{\put@delim{#2}{#3}{.}{#5}}}%
  #5%
  \mathclose{\mathpalette\put@delim@helper{\put@delim{#2}{.}{#4}{#5}}}%
}
\newcommand\put@delim@helper[2]{%
  \hbox{$\m@th\nulldelimiterspace=0pt #2#1$}%
}
\newcommand\put@delim[5]{%
  \setbox\z@\hbox{$\m@th#5{#4}$}%
  \setbox\tw@\null
  \ht\tw@\ht\z@ \dp\tw@\dp\z@
  #1#5%
  \left#2\box\tw@\right#3%
}
\MyPairedDelimiter*{\abs}{\lvert}{\rvert}
\MyPairedDelimiter*{\norm}{\lVert}{\rVert}
\newcommand\llim{
\mathchoice{\vcenter{\hbox{${\scriptstyle{-}}$}}}
{\vcenter{\hbox{$\scriptstyle{-}$}}}
{\vcenter{\hbox{$\scriptscriptstyle{-}$}}}
{\vcenter{\hbox{$\scriptscriptstyle{-}$}}}}
\newcommand\rlim{
\mathchoice{\vcenter{\hbox{${\scriptstyle{+}}$}}}
{\vcenter{\hbox{$\scriptstyle{+}$}}}
{\vcenter{\hbox{$\scriptscriptstyle{+}$}}}
{\vcenter{\hbox{$\scriptscriptstyle{+}$}}}}
\newcommand{\N}{\mathbb{N}}
\newcommand{\Z}{\mathbb{Z}}
\newcommand{\R}{\mathbb{R}}
\providecommand{\Rl}[1]{\mathbb{R}_{<{#1}}}
\providecommand{\Rle}[1]{\mathbb{R}_{\le{#1}}}
\providecommand{\Rg}[1]{\mathbb{R}_{>{#1}}}
\providecommand{\Rge}[1]{\mathbb{R}_{\ge{#1}}}
\newcommand{\C}{\mathbb{C}}
\newcommand{\K}{\mathbb{K}}
\newcommand{\1}{\mathds{1}}
\newcommand{\m}{\mathrm{m}}
\renewcommand{\tilde}{\widetilde}
\renewcommand{\hat}{\widehat}
\renewcommand{\check}{\widecheck}
\newcommand{\Fun}{\operatorname{Fun}}
\newcommand{\ind}{\operatorname{ind}}
\renewcommand{\emptyset}{\varnothing}
\newcommand{\from}{\colon}
\providecommand{\loi}[2]{\left(#1,#2\right]}
\providecommand{\roi}[2]{\left[#1,#2\right)}
\providecommand{\ci}[2]{\left[#1,#2\right]}
\providecommand{\oi}[2]{\left(#1,#2\right)}
\DeclareMathOperator{\esssup}{ess-sup}
\DeclareMathOperator{\essran}{ess-ran}
\def\renewtheorem#1{%
  \expandafter\let\csname#1\endcsname\relax
  \expandafter\let\csname c@#1\endcsname\relax
  \gdef\renewtheorem@envname{#1}
  \renewtheorem@secpar
}
\def\renewtheorem@secpar{\@ifnextchar[{\renewtheorem@numberedlike}{\renewtheorem@nonumberedlike}}
\def\renewtheorem@numberedlike[#1]#2{\newtheorem{\renewtheorem@envname}[#1]{#2}}
\def\renewtheorem@nonumberedlike#1{  
\def\renewtheorem@caption{#1}
\edef\renewtheorem@nowithin{\noexpand\newtheorem{\renewtheorem@envname}{\renewtheorem@caption}}
\renewtheorem@thirdpar
}
\def\renewtheorem@thirdpar{\@ifnextchar[{\renewtheorem@within}{\renewtheorem@nowithin}}
\def\renewtheorem@within[#1]{\renewtheorem@nowithin[#1]}
\providecommand{\corollaryname}{Corollary}
\providecommand{\definitionname}{Definition}
\providecommand{\examplename}{Example}
\providecommand{\exercisename}{Exercise}
\providecommand{\lemmaname}{Lemma}
\providecommand{\propositionname}{Proposition}
\providecommand{\remarkname}{Remark}
\providecommand{\theoremname}{Theorem}
\providecommand{\problemname}{Problem}
\begin{document}

\frontmatter

\title{23rd Internet Seminar\\
``Evolutionary Equations''}
\author{Christian Seifert, Sascha Trostorff \& Marcus Waurick}
\date{Final Version, \today}

\maketitle

\tableofcontents{}

\mainmatter

% compile just one Lecture
% \setcounter{page}{5}
% \begin{refsection}
% \input{Lecture_01}
% \printbibliography[heading=subbibliography]
% \end{refsection}

% complile all Lectures

\chapter{Introduction}

This chapter is intended to give a brief introduction as well as a
summary of the course to be presented throughout the semester. We
shall highlight some of the main ideas and methods behind the theory
and will also aim to provide some background on the main concept, which
will be the central object of study in the forthcoming weeks: the
notion of so-called
\[
\textbf{Evolutionary Equations}
\]
dating back to Picard in the seminal paper \cite{PicPhy}; see also
\cite[Chapter 6]{A11}. 

Another expression used to describe the same thing (and in order to
distinguish the concept from \emph{\index{evolution equation}evolution equations}) is that of
\emph{evo-systems}.\index{evo-system} Before going into detail on what we think of when
using the term evolutionary equations, we shall look into a seemingly
similar class of equations first.

\section{Evolution Equations}

The term evolution equation\index{evolution equation} is commonly referred to as a (partial)
differential equation involving time. This is a well developed concept
that can be found, for example, in the standard references \cite{Engel2000,Hille1957,Pazy1983}.
Before addressing a solution strategy for these kinds of problems
we mention some examples. We shall revisit these examples again in
the course later. One of the main examples of evolution equations\index{evolution equation},
in the sense to be discussed in this section, is the heat equation\index{heat equation}
in its second order form. More precisely, 
\[
\begin{cases}
\partial_{t}\theta(t,x)=\Delta\theta(t,x), & (t,x)\in\oi{0}{\infty}\times\Omega,\\
\theta(0,x)=\theta_{0}(x), & x\in\Omega,
\end{cases}
\]
where $\Omega\subseteq\R^{d}$ is some open set, and $\Delta=\sum_{j=1}^{d}\partial_{j}^{2}$
is the usual Laplacian\index{Laplacian} carried out with respect to the `$x$-variables' or `spatial variables',
and $\theta_{0}$ is a given initial heat distribution and $\theta$
is the unknown (scalar-valued) heat distribution. The above heat equation
is also accompanied with some boundary conditions for $\theta(t,x)$ which are required 
to be valid for all $t>0$ and $x\in\partial\Omega$.

We shall explain one way of solving this problem. To this end we make
a detour to the theory of ordinary differential equations. Let us consider
an $n\times n$-matrix $A$ with entries from the field $\K$ of complex
or real numbers, $\C$ or $\R$, and address the system of ordinary
differential equations 
\[
\begin{cases}
u'(t)=Au(t), & t>0,\\
u(0)=u_{0}
\end{cases}
\]
for some given initial datum, $u_{0}\in\K^{n}$.
In this case, we know that there exists a unique solution. This solution can
be computed with the help of the so-called matrix exponential\index{matrix exponential}
\[
\e^{tA}=\sum_{n=0}^{\infty}\frac{(tA)^{k}}{k!}\in\K^{n\times n}
\]
in the form 
\[
u(t)=\e^{tA}u_{0}.
\]
As it turns out, this $u$ is continuously differentiable and
 $u$ satisfies the above equation. We note in particular that
 $\e^{tA}u_{0}\to\e^{0 A}u_{0}=u_{0}$ as $t\to0\rlim$
and that $\e^{\left(t+s\right)A}=\e^{tA}\e^{sA}$. In a way, to obtain
the solution for the system of ordinary differential equations we
need to construct $(\e^{tA})_{t\geq 0}$. This is the same idea behind the process for obtaining a solution
for the aforementioned heat equation.

Indeed, given a suitable Banach space $X$ one aims to construct a
so-called \emph{$C_{0}$-semigroup\index{$C_0$-semigroup}, $(T(t))_{t\geq0}$}, that
is, for all $t\geq 0$, $T(t)$ is a bounded linear operator acting
in $X$, $T(t)\in\bo(X)$, and the following conditions are satisfied
\begin{enumerate}
\item semigroup law: $T(0)= I$ and $T(t+s)=T(t)T(s)$ for all $t,s\geq 0$,
\item strong continuity: for all $x\in X$, $\lim_{t\to0\rlim}T(t)x=x$.
\end{enumerate}
For instance in the case of $X=\L(\Omega)$, it is possible to construct
such a family $(T(t))_{t\geq0}$, written as $(\e^{t\Delta})_{t\geq0}$,
satisfying the just mentioned criteria. For every $\theta_{0}\in\L(\Omega)$ this $C_0$-semigroup provides a function $\theta\colon t\mapsto\e^{t\Delta}\theta_{0}\in\L(\Omega)$
which satisfies the above heat equation in a certain \emph{generalised}
sense. It is then an a posteriori question as to which additional conditions,
for example on $\theta_{0}$, need to be imposed in order to assure that $\theta$
solves the above heat equation as it stands. 

In the abstract setting of $C_{0}$-semigroups, the orbit
$u\colon t\mapsto T(t)x$ for some $x\in X$ then satisfies the equation
\begin{equation}
\begin{cases}
u'(t)=Bu(t), & t>0,\\
u(0)=x
\end{cases}\label{eq:aCP}
\end{equation}
provided that $x\in\set{y\in X}{By\coloneqq\lim_{t\to0\rlim}\frac{1}{t}(T(t)y-y)\in X\text{ exists}}$.
It can be shown that $B$ is uniquely
determined. $B$ is called the \emph{generator}\index{generator} of the $C_0$-semigroup
$(T(t))_{t\geq 0}$. The $C_0$-semigroup and the generator are in direct correspondence to
each other. In applications, given some operator $B$ the task is
to find a $C_{0}$-semigroup such that $Bx=\lim_{t\to0\rlim}\frac{1}{t}(T(t)x-x)$
for all $x\in X$ where either the left-hand side or the right-hand
side is well-defined. In other words, the question is whether a given
operator $B$ is actually the generator of a $C_{0}$-semigroup.

Note that in the case of the heat equation, the $C_{0}$-semigroup
is also known as the \emph{fundamental solution or Green's function}\index{fundamental solution or Green's function} of the problem considered;
in the abstract setting, $(T(t))_{t\geq0}$ is the fundamental
solution of \prettyref{eq:aCP}.

As we have seen, $C_{0}$-semigroups focus on initial value problems. Moreover,
the heat equation (as a partial differential equation) above is viewed as an \emph{ordinary} differential
equation with values in an \emph{infinite-dimensional} state space
$X$. While the left-hand side of the equation is
always of the same form, the complexity of the problem class is stored
in the choice of $X$ and the operator $B$ (and, naturally, its domain of definition). 

In the literature, explicit initial value problems like that of the ODE system
or the heat equation are gathered under the umbrella term \emph{evolution
equation}. It has become customary to refer to any problem where $C_{0}$-semigroups
play a fundamental role as an evolution equation\index{evolution equation}. This is particularly
the case when $X$ is infinite-dimensional. Then, arguably, the study
of $C_{0}$-semigroups is the study of fundamental solutions\index{fundamental solution or Green's function} (or abstract
Green's functions) associated to a certain class of initial
value problems for (partial) differential equations. A solution theory,\index{solution theory, general notion}
that is, the proof for existence, uniqueness and continuous dependence
on the data, is then contained in the construction of the fundamental
solution in terms of the ingredients of the equation. More precisely,
in the case of the ODE above, the fundamental solution\index{fundamental solution or Green's function} is constructed
in terms of $A$ and in case of the heat equation, the fundamental
solution is constructed in terms of (a particular realisation of)
$\Delta$ as an (unbounded) operator in $X$. We emphasise that there
is some bias towards the temporal direction in the theory of $C_{0}$-semigroups in the sense that $C_0$-semigroups impose continuity in time while this is in general not assumed for the spatial directions.

\section{Time-independent Problems}

The construction of fundamental solutions\index{fundamental solution or Green's function} is also a valuable method
for obtaining a solution for time-independent problems, see, e.g., \cite{Evans1998}.
To see this, let us consider Poisson's equation\index{Poisson's equation} in $\R^{3}$: Given
$f\in\cci(\R^{3})$ we want to find a function $u\colon\R^{3}\to\R$
with the property that
\[
-\Delta u(x)=f(x)\quad(x\in\R^{3}).
\]
It can be shown that $u$ given by
\[
u(x)=\frac{1}{4\pi}\int_{\R^{3}}\frac{1}{\abs{x-y}}f(y)\d y
\]
is well-defined, twice continuously differentiable and satisfies Poisson's
equation; cf.\ \prettyref{exer:Poisson_solution}. Note that $x\mapsto\frac{1}{4\pi\abs{x}}$ is also referred
to as the \emph{fundamental solution} or \emph{Green's} \emph{function}\index{fundamental solution or Green's function}
for Poisson's equation. The formula presented for $u$ is the \emph{convolution}
with the fundamental solution. The formula used
to define $u$ also works for $f$ being merely bounded and measurable
with compact support. In this case, however, the pointwise formula
of Poisson's equation cannot be expected to hold anymore, simply because
$f$ is well-defined only up to a set of measure 0. Thus, only a posteriori
estimates under additional conditions on $f$ render $u$ to be twice
continuously differentiable (say) with Poisson's equation holding
for all $x\in\R^{3}$. However, similar to the semigroup setting,
it is possible to \emph{generalise} the meaning of $-\Delta u=f$.
Then, again, the fundamental solution\index{fundamental solution or Green's function} can be used to construct a solution
for Poisson's equation for more general $f$. 

The situation becomes different when we consider a boundary value
problem instead of the problem above. More precisely, let $\Omega\subseteq\R^{3}$
be an open set and let $f\in\L(\Omega).$ We then need to ask whether
there exists $u\in\L(\Omega)$ such that 
\[
\begin{cases}
-\Delta u =f, & \text{ on }\Omega,\\
\quad\;\; u = 0, & \text{ on }\partial\Omega.
\end{cases}
\]
Notice that the task of just (mathematically) formulating this equation, let alone establishing a solution theory, is something that needs to be addressed. Indeed, we emphasise
that it is unclear as to what $\Delta u$ is supposed to mean if $u\in\L(\Omega)$,
only. It turns out that the problem described is not well-posed in general.
In particular -- depending on the shape of $\Omega$ and the norms
involved -- it might, for instance, lack continuous dependence on the
data, $f$.

In any case, the solution formula that we have used for the case when $\Omega=\R^{3}$
does not work anymore. Indeed, only particular shapes of $\Omega$
permit to construct a fundamental solution\index{fundamental solution or Green's function}; see \cite[Section 2.2]{Evans1998}.
Despite this, when $\Omega$ is merely bounded, it is still possible
to construct a solution, $u$, for the above problem. There are two key ingredients required for this approach. One is a clever application of Riesz's representation
theorem for functionals in Hilbert spaces and the other one involves inventing `suitable'
interpretations of $\Delta u$ in $\Omega$ and $u=0$ on $\partial\Omega$. Thus,
the method of `solving' Poisson's equation amounts to posing the
correct question, which then can be addressed \emph{without} invoking
the fundamental solution\index{fundamental solution or Green's function}. With this in mind, one could argue that the \emph{setting}
makes the problem solvable. 

\section{Evolution\emph{ary} Equations}

The central aim for \index{evolutionary equation}evolutionary equations is to combine the rationales
from both the $C_{0}$-semigroup theory and that from the time-independent
case. That is to say, we wish to establish a setting that treats time-independent
problems as well as time-dependent problems. At the
same time we need to \emph{generalise} solution concepts. We
shall not aim to construct the fundamental solution in either
the spatial or the temporal directions. The problem class will comprise of problems that can be written in the form
\[
\left(\partial_{t}M(\partial_{t})+A\right)U=F
\]
where $U$ is the unknown and $F$ the known right-hand side.
Furthermore, $A$ is an (unbounded, skew-selfadjoint) operator acting
in some Hilbert space that is thought of as modelling spatial coordinates;
$\partial_{t}$ is a realisation of the (time-)derivative operator and $M(\partial_{t})$
is an analytic, bounded operator-valued function $M$, which is evaluated at the time derivative.
In the course of the next chapters, we shall specfiy the definitions
and how standard problems fit into this problem class. 

Before going into greater depth on this approach, we would like to emphasise
the key differences and similarities which arise when compared to the derivation of more traditional 
solution theories that we outlined above.

Since the solution theory for evolutionary equations\index{evolutionary equation} will also encapsulate
time-in\-de\-pen\-dent problems, we cannot just focus on initial value problems
but rather on inhomogeneous problems. As we do not want to
require the existence of any fundamental solution we will also need to introduce
a \emph{generalisation} of the concept of a solution. Indeed, continuity
in the form of the existence of a $C_{0}$-semigroup, or variants thereof,
will neither be shown nor be expected. Moreoever, we shall see that both
$\partial_{t}$ and $A$ are \emph{unbounded} operators with $M(\partial_{t})$
being a bounded operator. Thus, we need to make sense of the operator
sum of the two unbounded operators $\partial_{t}M(\partial_{t})$
and $A$, which, in general, cannot be realised as being onto but rather as
having dense range, only.

A post-processing procedure will then ensure that for more regular
right-hand sides, $F$, the solution $U$ will also be more regular.
In some cases this will, for instance, amount to $U$ being continuous
in the time variable. In this way, phrased in similar settings, $C_{0}$-semigroup
theory may be viewed as a \emph{regularity} theory for a subclass of
evolutionary equations\index{evolutionary equation}. We shall entirely confine ourselves within the
Hilbert space case though. In this sense, the solution theory to
be presented will be, in essence,
an application of the projection theorem (similar to time-independent problems). In our case, however, there
will not be as much of a regularity bias as there is in $C_{0}$-semigroup
theory or in abstract ODEs with an (infinite-)dimensional state space. In
fact, the projection theorem is applied in a Hilbert space, which
combines both spatial and temporal variables.

The operator $M(\partial_{t})$ is thought of as carrying all the `complexity'
of the model. This is different to $C_{0}$-semigroups, where
this complexity is put on to the (domain of the) generator. What we mean by complexity  will become more
apparent when we discuss some examples. 

Finally, let us stress that $A$ being `skew-selfadjoint' is a way
of implementing first order systems in our abstract setting. In fact,
deviating from classical approaches, we shall focus on first order
equations in \emph{both} time \emph{and} space. This is also another change
in perspective when compared to classical approaches. As classical treatments
might emphasise the importance of the Laplacian (and hence  Poisson's
equation) and variants thereof, evolutionary equations\index{evolutionary equation} rather emphasise
\emph{Maxwell's equations} as the prototypical PDE. This change of 
point of view will be illustrated in the following section, where we
address some classical examples.

\section{Particular Examples and the Change of Perspective}

Here we will focus on three examples. These examples will also be the first to be readdressed when we
discuss the solution theory of evolutionary equations\index{evolutionary equation} in a later chapter. In order to
simplify the current presentation we will not consider boundary value
problems but solely concentrate on problems posed on $\Omega=\R^{3}$.
Furthermore, we shall dispose of any initial conditions. 

\subsection*{Maxwell's Equations}\index{Maxwell's equations}

The prototypical evolutionary equation is the system provided by Maxwell's
equations. Maxwell's equations consist of two equations describing
an electro-magnetic field, $(E,H)$, subject to a given certain external
current, $J$, 
\begin{align*}
\partial_{t}\varepsilon E+\sigma E-\curl H & =-J,\\
\partial_{t}\mu H+\curl E & =0.
\end{align*}
We shall detail the properties of the material parameters $\varepsilon,\mu$, and $\sigma$
later on. For the time being it is safe to assume that they are non-negative
real numbers and that they additionally satisfy that $\mu(\varepsilon+\sigma)>0$.
Now, in the setting of evolutionary equations, we gather the electro-magnetic
field into one column vector and obtain
\[
\left(\partial_{t}\begin{pmatrix}
\varepsilon & 0\\
0 & \mu
\end{pmatrix}+\begin{pmatrix}
\sigma & 0\\
0 & 0
\end{pmatrix}+\begin{pmatrix}
0 & -\curl\\
\curl & 0
\end{pmatrix}\right)\begin{pmatrix}
E\\
H
\end{pmatrix}=\begin{pmatrix}
-J\\
0
\end{pmatrix}.
\]
We shall see later that we obtain an evolutionary equation by setting
\begin{align*}
M(\partial_{t}) & \coloneqq\begin{pmatrix}
\varepsilon & 0\\
0 & \mu
\end{pmatrix}+\partial_{t}^{-1}\begin{pmatrix}
\sigma & 0\\
0 & 0
\end{pmatrix}\text{ and }A\coloneqq\begin{pmatrix}
0 & -\curl\\
\curl & 0
\end{pmatrix}.
\end{align*}

A formulation that fits well into the $C_{0}$-semigroup setting would
be, for example, 
\[
\partial_{t}\begin{pmatrix}
E\\
H
\end{pmatrix}=\begin{pmatrix}
\varepsilon & 0\\
0 & \mu
\end{pmatrix}^{-1}\begin{pmatrix}
-\sigma & \curl\\
-\curl & 0
\end{pmatrix}\begin{pmatrix}
E\\
H
\end{pmatrix}+\begin{pmatrix}
\varepsilon & 0\\
0 & \mu
\end{pmatrix}^{-1}\begin{pmatrix}
-J\\
0
\end{pmatrix},
\]
provided that $\varepsilon>0$. The inhomogeneous right-hand side $(-\frac{1}{\varepsilon} J, 0)$
can then be dealt with by means of the variation of constants formula,
which is the incarnation of the convolution of $(-\frac{1}{\varepsilon} J, 0)$ with the fundamental solution
in this time-dependent situation. Thus, in order to apply semigroup
theory, the main task lies in showing that 
\[
\begin{pmatrix}
-\frac{1}{\varepsilon}\sigma & \frac{1}{\varepsilon}\curl\\
-\frac{1}{\mu}\curl & 0
\end{pmatrix}
\]
is the generator of a $C_{0}$-semigroup. 

A different formulation needs to be put in place if $\varepsilon=0$.
The situation becomes even more complicated if $\varepsilon$ and $\sigma$
are bounded, non-negative, measurable functions of the spatial variable such that $\varepsilon+\sigma\geq c$
for some $c>0$. In the setting of evolutionary equations, this problem,
however, \emph{can} be dealt with. Note that then one cannot expect $E$ to be continuous with respect to the temporal variable unless $J$ is smooth enough.

\subsection*{Wave Equation}\index{wave equation}

We shall discuss the scalar wave equation in a medium where the wave
propagation speed is inhomogeneous in different directions of space.
This is modelled by finding $u\colon\R\times\R^{3}\to\R$ such that,
given a suitable forcing term $f\colon\R\times\R^{3}\to\R$ (again
we skip initial values here), we have
\[
\partial_{t}^{2}u-\dive a\grad u=f,
\]
where $a=a^{\top}\in\R^{3\times3}$ is positive definite\index{positive definite}; that is, $\scp{\xi}{a\xi}_{\R^{3}}>0$
for all $\xi\in\R^{3}\setminus\{0\}$. In the context of evolutionary
equations, we rewrite this as a first order problem in time \emph{and}
space. For this, we introduce $v\coloneqq\partial_{t}u$ and $q\coloneqq-a\grad u$
and obtain that
\[
\left(\partial_{t}\begin{pmatrix}
1 & 0\\
0 & a^{-1}
\end{pmatrix}+\begin{pmatrix}
0 & \dive\\
\grad & 0
\end{pmatrix}\right)\begin{pmatrix}
v\\
q
\end{pmatrix}=\begin{pmatrix}
f\\
0
\end{pmatrix}.
\]
Thus, 
\[
M(\partial_{t})\coloneqq\begin{pmatrix}
1 & 0\\
0 & a^{-1}
\end{pmatrix}\text{ and }A\coloneqq\begin{pmatrix}
0 & \dive\\
\grad & 0
\end{pmatrix}
\]
yield a corresponding formulation.
A semigroup formulation would work again by multiplying through
by $\begin{pmatrix}
1 & 0\\
0 & a
\end{pmatrix}$ . 

Let us mention briefly that it is also possible to rewrite the wave
equation as a first order system in time only. For this, a standard
ODE trick is used: one simply sticks with the additional variable $v=\partial_{t}u$
and obtains that
\[
\partial_{t}\begin{pmatrix}
u\\
v
\end{pmatrix}=\begin{pmatrix}
0 & 1\\
\dive a\grad & 0
\end{pmatrix}\begin{pmatrix}
u\\
v
\end{pmatrix}+\begin{pmatrix}
f\\
0
\end{pmatrix}.
\]
 In this formulation the `complexity' of the model is contained in the
operator 
\[
\begin{pmatrix}
0 & 1\\
\dive a\grad & 0
\end{pmatrix}.
\]
One would then have to show that this operator is the generator of a $C_{0}$-semigroup.

\subsection*{Heat Equation}\index{heat equation}

We have already formulated the semigroup perspective of the heat equation
\[
\partial_{t}\theta-\dive a\grad\theta=Q,
\]
in which we have added a heat source $Q$ and a conductivity $a=a^{\top}\in\R^{3\times3}$
being positive definite. Here, again, we reformulate the heat equation as
a first order system in time and space to end up (again setting $q\coloneqq-a\grad\theta$) with 
\[
\left(\partial_{t}\begin{pmatrix}
1 & 0\\
0 & 0
\end{pmatrix}+\begin{pmatrix}
0 & 0\\
0 & a^{-1}
\end{pmatrix}+\begin{pmatrix}
0 & \dive\\
\grad & 0
\end{pmatrix}\right)\begin{pmatrix}
\theta\\
q
\end{pmatrix}=\begin{pmatrix}
Q\\
0
\end{pmatrix}.
\]
In the context of evolutionary equations we then have that
\[
M(\partial_{t})\coloneqq\begin{pmatrix}
1 & 0\\
0 & 0
\end{pmatrix}+\partial_{t}^{-1}\begin{pmatrix}
0 & 0\\
0 & a^{-1}
\end{pmatrix}\text{ and }A\coloneqq\begin{pmatrix}
0 & \dive\\
\grad & 0
\end{pmatrix}.
\]
The advantage of this reformulation is that it becomes easily comparable
to the first order formulation of the wave equation outlined above.
For instance it is now possible to easily consider mixed type problems
of the form
\[
\left(\partial_{t}\begin{pmatrix}
1 & 0\\
0 & (1-s)a^{-1}
\end{pmatrix}+\begin{pmatrix}
0 & 0\\
0 & sa^{-1}
\end{pmatrix}+\begin{pmatrix}
0 & \dive\\
\grad & 0
\end{pmatrix}\right)\begin{pmatrix}
\theta\\
q
\end{pmatrix}=\begin{pmatrix}
Q\\
0
\end{pmatrix},
\]
with $s\colon\R^{3}\to[0,1]$ being an arbitrary measurable function. In fact,
in the solution theory for evolutionary equations, this does not amount to any additional complication of the problem. Models of this type are particularly
interesting in the context of so-called solid-fluid interaction\index{solid-fluid interaction}, where
the relations of a solid body and a flow of fluid surrounding it
are addressed.

\section{A Brief Outline of the Course}

We now present an overview of the contents of the following chapters.

\subsection*{Basics}

In order to properly set the stage, we shall begin with
some background operator theory in Banach and Hilbert spaces. We
assume the readers to be acquainted with some knowledge on bounded
linear operators, such as the uniform boundedness principle, and basic
concepts in the topology of metric spaces, such as density and closure.
The most important new material will be the adjoint of an operator,
which need not be bounded anymore. In order to deal with this notion, we will
consider relations rather than operators as they provide the natural setting for \emph{unbounded} operators. Having finished this brief detour
on operator theory, we will turn to a generalisation of Lebesgue spaces.
More precisely, we will survey ideas from Lebesgue's integration theory for functions
attaining values in an infinite-dimensional Banach space. 

\subsection*{The Time Derivative}

Banach space-valued (or rather Hilbert space-valued) integration theory
will play a fundamental role in defining the time derivative as
an unbounded, continuously invertible operator in a suitable Hilbert
space. In order to obtain continuous invertibility, we have to introduce
an exponential weighting function, which is akin to the exponential
weight introduced in the space of continuous functions for a proof of the Picard--Lindelöf
theorem. It is therefore natural to discuss the application of
this operator to ordinary differential equations. In particular,
we will present a Hilbert space solution theory for ordinary differential
equations. Here, we will also have the opportunity to discuss ordinary differential
equations with delay and memory. After this short detour, we will
turn back to the time derivative operator and describe its spectrum.
For this we introduce the so-called Fourier--Laplace transformation
which transforms the time derivative into a multiplication operator.
This unitary transformation will additionally serve to define (analytic and
bounded) functions of the time derivative. This is absolutely essential for the
formulation of evolutionary equations.

\subsection*{Evolutionary Equations}

Having finished the necessary preliminary work, we will then be in a
position to provide the proper justification of the formulation and solution theory for evolutionary
equations. We will accompany this solution theory not only with the
three leading examples from above, but also with some more sophisticated
equations. Amazingly, the considered space-time setting will allow us to
discuss (time-)fractional differential equations, partial differential
equations with delay terms and even a class of integro-differential equations.
Withdrawing the focus on regularity with respect to the temporal variable,
we are en passant able to generalise well-posedness conditions from
the classical literature. However, we shall stick with the treatment of
analytic operator-valued functions $M$ only. Therefore, we will
also include some arguments as to why this assumption seems to be \emph{physically}
meaningful. It will turn out that analyticity and causality are intimately
related via both the so-called Paley--Wiener theorem and a representation
theorem for time translation invariant causal operators.

\subsection*{Initial Value Problems for Evolutionary Equations}

As it has been outlined above, the focus of evolutionary equations
is on inhomogeneous right-hand sides rather than on initial value
problems. However, there is also the possibility to treat initial value
problems with the approach discussed here. For this, we need to introduce
extrapolation spaces. This then enables us to formulate initial value
problems as inhomogeneous equations. We have to make a concession
on the structure of the problem, however. In fact, we will focus on the case when $M(\partial_{t})=M_{0}+\partial_{t}^{-1}M_{1}$ for some bounded
linear operators $M_{0},M_{1}$ acting in the spatial variables alone.
The initial condition will then read as $\left(M_{0}U\right)(0\rlim)=M_{0}U_{0}$.
Hence, one might argue that the initial condition $U(0\rlim)=U_{0}$ is
only assumed in a rather generalised sense. This is due to the fact
that $M_{0}$ might be zero. However, for the case $A=0$ we will
also discuss the initial condition $U(0\rlim)=U_{0}$, which amounts to
a treatment of so-called differential-algebraic equations in both
finite- and inifinite-dimensional state spaces. 

\subsection*{Properties of Solutions and Inhomogeneous Boundary Value Problems}

Turning back to the case when $A\neq0$ we will discuss qualitative
properties of solutions of evolutionary equations. One of which will
be exponential decay. We will identify a subclass of evolutionary
equations where it is comparatively easy to show that if the right-hand side decays exponentially then so too must the solution. This is the
proper replacement in our setting for the notion of exponential stability
 from $C_{0}$-semigroups. If the right-hand side is smooth enough
we obtain that $U(t)$, the solution of the evolutionary equation at time $t$,
decays exponentially if $t\to\infty$. Furthermore, we will frame
inhomogeneous boundary value problems in the setting of evolutionary
equations. The method will require a bit more on the regularity theory
for evolutionary equations and a definition of suitable boundary values.
In particular, we shall present a way of formulating classical inhomogeneous
boundary value problems for domains without any boundary regularity.

\subsection*{Properties of the Solution Operator}

In the final part, we shall have another
look at the advantages of the problem formulation. In fact, we will
have a look at the notion of homogenisation of differential equations.
In the problem formulation presented here, we shall analyse the continuity
properties of the solution operator with respect to weak operator topology convergence
of the operator $M(\partial_{t})$. We will address an example for
ordinary differential equations (when $A=0$) and one for partial differential
equations (when $A\neq0$). It will turn out that the respective continuity
properties are profoundly different from one another. 

\section{Comments}

The focus presented here on the main notions behind evolutionary equations is
mostly in order to properly motivate the theory and highlight the
most striking differences in the philosophy. There are other solution
concepts (and corresponding general settings) developed for partial
differential equations; either time-dependent or without involving time. 

There is an abundance of examples and additional concepts for $C_{0}$-semigroups
for which we refer to the aforementioned standard treatments again.
There is also a generalisation to problems that are second order in
time, e.g., $u''=Au$, where $u(0)$ and $u'(0)$
are given. This gives rise to cosine families of bounded linear operators which is
another way of generalising the fundamental solution\index{fundamental solution or Green's function} concept, see,
for example, \cite{Sova1966}.

The main focus of all of these equations is to address \emph{initial
value problems, }where the (first/second) time derivative of the unknown
is explicit.

With a focus on static, that is, time-independent partial differential
equations, the notion of Friedrichs systems is also concerned with
a way of writing many PDEs from mathematical physics into a common
form, see \cite{Friedrichs1954,Friedrichs1958}. A time-dependent
variant of constant coefficient Friedrichs systems are so-called symmetric-hyperbolic
systems, see e.g.~\cite{Benzoni-Gavage2007}. In these cases, whether
the authors treat constant coefficients or not, the framework of evolutionary
equations adds a profound
amount of additional complexity by including the operator $M(\partial_{t})$.

The treatment of time-dependent problems in space-time settings and
addressing corresponding well-posedness properties of a sum of two
unbounded operators has also been considered in \cite{DaPrato1975}
with elaborate conditions on the operators involved. In their studies,
the flexibility introduced by the operator $M(\partial_{t})$ in our
setting is missing, thus the time derivative operator is not
thought of having any variable coefficients attached to it.

\section*{Exercises}
\addcontentsline{toc}{section}{Exercises}

\begin{xca}
Let $\phi\in C(\R,\R)$. Assume that $\phi(t+s)=\phi(t)\phi(s)$ for
all $t,s\in\R$, $\phi(0)=1$. Show that $\phi(t)=\e^{\alpha t}$ ($t\in\R$)
for some $\alpha\in\R$.
\end{xca}

\begin{xca}
Let $n\in\N$, $T\colon\R\to\R^{n\times n}$ continuously differentiable such that
$T(t+s)=T(t)T(s)$ for all $t,s\in\R$, $T(0)=I$. Show that there exists $A\in\R^{n\times n}$
with the property that $T(t)=\e^{tA}$ ($t\in\R$).
\end{xca}

\begin{xca}
\label{exer:Poisson_solution}
Show that $x\mapsto u(x)=\frac{1}{4\pi}\int_{\R^{3}}\frac{1}{\abs{x-y}}f(y)\d y$
satisfies Poisson's equation, given $f\in\cci(\R^{3})$.
\end{xca}

\begin{xca}
Let $f\in\cci(\R)$. Define $u(t,x)\coloneqq f(x+t)$ for $x,t\in\R$.
Show that $u$ satisfies the differential equation $\partial_{t}u=\partial_{x}u$
and $u(0,x)=f(x)$ for all $x\in\R$. 
\end{xca}

\begin{xca}
Let $X,Y$ be Banach spaces, $(T_{n})_{n\in\N}$ be a sequence in
$\bo(X,Y)$, the set of bounded linear operators. If $\sup\set{\norm{T_{n}}}{n\in\N}=\infty,$
show that there is $x\in X$ and a strictly increasing sequence $(n_{k})_{k}$
in $\N$ such that $\norm{T_{n_{k}}x}\to\infty.$
\end{xca}

\begin{xca}
Let $n\in\N$. Denote by $\mathrm{GL}(n;\K)$ the set of continuously
invertible $n\times n$ matrices. Show that $\mathrm{GL}(n;\K)\subseteq \K^{n\times n}$
is open.
\end{xca}

\begin{xca}
Let $n\in\N$. Show that $\Phi\colon\mathrm{GL}(n;\K)\ni A\mapsto A^{-1}\in \K^{n\times n}$
is continuously differentiable. Compute $\Phi'$.
\end{xca}

\printbibliography[heading=subbibliography]

\chapter{Unbounded Operators}

We will gather some information on operators
in Banach and Hilbert spaces. Throughout this chapter let $X_{0}$, $X_{1},$ and
$X_{2}$ be Banach spaces and $H_{0}$, $H_{1}$, and $H_{2}$ be
Hilbert spaces over the field $\K\in\{\R,\C\}$.

\section{Operators in Banach Spaces}

The main difference of \index{continuous linear operator}continuous linear operators,
that is, 
\[
\bo(X_{0},X_{1})\coloneqq\set{B\colon X_{0}\to X_{1}}{B\text{ linear, }\norm{B}\coloneqq\sup_{x\in X_{0}\setminus\{0\}}\frac{\norm{Bx}}{\norm{x}}<\infty}
\]
(with the usual abbreviation $\bo(X_0):=\bo(X_0,X_0)$)
and the set of uncontinuous or unbounded linear operators is that the latter only need to be defined 
on a subset of $X_{0}$. In order to define unbounded linear operators, we will first take a more general point of view and introduce (linear) relations.
This perspective will turn out to be the natural setting later on.
% It will on the other hand be important
% to keep track of the topology of $X_{0}$ as well. We keep this in
% mind for the next definition.
\begin{defn*}
A subset $A\subseteq X_{0}\times X_{1}$ is called a \emph{relation
in $X_{0}$ and $X_{1}$}. We define the \emph{domain}\index{domain},
\emph{range\index{range}} and \emph{kernel\index{kernel} of $A$}
as follows
\begin{align*}
\dom(A) & \coloneqq\set{x\in X_{0}}{\exists y\in X_{1}\colon(x,y)\in A},\\
\ran(A) & \coloneqq\set{y\in X_{1}}{\exists x\in X_{0}\colon(x,y)\in A},\\
\ker(A) & \coloneqq\set{x\in X_{0}}{(x,0)\in A}.
\end{align*}
The \emph{image\index{image}, $A[M]$, of a set $M\subseteq X_{0}$
under $A$} is given by
\[
A[M]\coloneqq\set{y\in X_{1}}{\exists x\in M\colon(x,y)\in A}.
\]
A relation $A$ is called \emph{bounded}, if for all bounded $M\subseteq X_{0}$
the set $A[M]\subseteq X_{1}$ is bounded. For a given relation $A$
we define the \emph{inverse relation\index{inverse relation}}
\[
A^{-1}\coloneqq\set{(y,x)\in X_{1}\times X_{0}}{(x,y)\in A}.
\]
A relation $A$ is called \emph{linear}, if $ A\subseteq X_0 \times X_1$ is a linear subspace. A linear
relation $A$ is called \emph{linear operator} or just \emph{operator
from $X_{0}$ to $X_{1}$}, if 
\[
A[\{0\}] = \set{y\in X_{1}}{(0,y)\in A}=\{0\}.
\]
In this case, we also write 
\[
A\colon\dom(A)\subseteq X_{0}\to X_{1}
\]
to denote a linear operator from $X_{0}$ to $X_{1}$. Moreover, we
shall write $Ax=y$ instead of $(x,y)\in A$ in this case.
A linear operator $A$, which is not bounded, is called \emph{unbounded}.
\end{defn*}
For completeness, we also define the sum, scalar multiples, and composition
of relations.
\begin{defn*}
Let $A\subseteq X_{0}\times X_{1}$, $B\subseteq X_{0}\times X_{1}$
and $C\subseteq X_{1}\times X_{2}$ be relations, $\lambda\in\mathbb{K}$. Then
we define
\begin{align*}
A+B & \coloneqq\set{(x,y+w)\in X_{0}\times X_{1}}{(x,y)\in A,(x,w)\in B},\\
\lambda A & \coloneqq\set{(x,\lambda y)\in X_{0}\times X_{1}}{(x,y)\in A},\\
CA & \coloneqq\set{(x,z)\in X_{0}\times X_{2}}{\exists y\in X_{1}\colon(x,y)\in A,(y,z)\in C}.
\end{align*}
\end{defn*}

For a relation $A\subseteq X_{0}\times X_{1}$ we will use the abbreviation $-A:=-1A$ (so that the minus sign only acts on the second component).
We now proceed with topological notions for relations.

\begin{defn*}
Let $A\subseteq X_{0}\times X_{1}$ be a relation. $A$ is called
\emph{densely defined}\index{densely defined}, if $\dom(A)$ is dense
in $X_{0}$. We call $A$ \emph{closed}\index{closed}, if $A$ is
a closed subset of the direct sum of the Banach spaces $X_{0}$ and
$X_{1}$. If $A$ is a linear operator then we will call $A$ \emph{closable}\index{closable},
whenever $\overline{{A}}\subseteq X_{0}\times X_{1}$ is a linear operator. 
\end{defn*}

\begin{prop}
\label{prop:inv-sum-comp-closed}Let $A\subseteq X_{0}\times X_{1}$ be a relation,
$C\in\bo(X_{2},X_{0})$ and $B\in\bo(X_{0},X_{1})$. Then the following
statements hold.
\begin{enumerate}
\item $A$ is closed if and only if $A^{-1}$ is closed. Moreover, we have $(\overline{A})^{-1}=\overline{A^{-1}}$.
\item $A$ is closed if and only if $A+B$ is closed;
\item if $A$ is closed, then $AC$ is closed.
\end{enumerate}
\end{prop}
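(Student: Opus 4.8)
The plan is to treat each of the three claims by unwinding the definition of closedness as a topological property of the subset $A \subseteq X_0 \times X_1$ (with the product topology coming from $\norm{(x,y)} = \norm{x} + \norm{y}$, say) and exhibiting, in each case, a linear homeomorphism of $X_0 \times X_1$ onto an appropriate product space that carries $A$ to the relation in question. Since homeomorphisms preserve closedness, each statement then follows at once.

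For (a), I would observe that the flip map $\iota \colon X_0 \times X_1 \to X_1 \times X_0$, $(x,y) \mapsto (y,x)$, is a linear homeomorphism (it is its own inverse up to the obvious identification), and by definition $\iota[A] = A^{-1}$. Hence $A$ is closed iff $A^{-1} = \iota[A]$ is closed. For the closure identity $(\overline A)^{-1} = \overline{A^{-1}}$, I would again use that $\iota$ is a homeomorphism, so it commutes with closure: $\overline{\iota[A]} = \iota[\overline A]$, which reads precisely $\overline{A^{-1}} = (\overline A)^{-1}$.

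For (b), the relevant map is the shear $\sigma \colon X_0 \times X_1 \to X_0 \times X_1$, $(x,y) \mapsto (x, y + Bx)$. This is linear, and it is a homeomorphism because it is bijective with inverse $(x,y) \mapsto (x, y - Bx)$, and both $\sigma$ and $\sigma^{-1}$ are continuous precisely because $B \in \bo(X_0, X_1)$ (so $x \mapsto Bx$ is continuous). One checks directly from the definition of $A + B$ that $\sigma[A] = A + B$: indeed $(x,y) \in A$ iff $(x, y+Bx) \in A+B$, using that $(x, Bx) \in B$ for every $x \in X_0$ since $B$ is everywhere-defined. Therefore $A$ closed $\iff$ $A+B$ closed. (Applying this with $B$ replaced by $-B$ gives the converse direction, but since the statement is an ``if and only if'' the single homeomorphism already suffices.)

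For (c), consider the map $\tau \colon X_2 \times X_1 \to X_0 \times X_1$, $(z, y) \mapsto (Cz, y)$, which is continuous since $C \in \bo(X_2, X_0)$. From the definition of composition one has $AC = \tau^{-1}[A]$: a pair $(z, y) \in X_2 \times X_1$ lies in $AC$ iff $(Cz, y) \in A$, i.e. iff $\tau(z,y) \in A$. Hence if $A$ is closed, then $AC = \tau^{-1}[A]$ is the preimage of a closed set under a continuous map, so it is closed.

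The only point requiring a little care — and the closest thing to an obstacle — is the verification that $\sigma$ in (b) and $\tau$ in (c) genuinely induce the claimed set-theoretic identities; this hinges on $B$ and $C$ being \emph{everywhere-defined} bounded operators (so that $\dom(B) = X_0$, $\dom(C) = X_2$), which is exactly what lets us add $Bx$ to the second component for \emph{every} $x$, or precompose with $C$ on all of $X_2$, without shrinking domains. Once that is in hand, everything reduces to the elementary fact that images (resp. preimages) of closed sets under homeomorphisms (resp. continuous maps) are closed. Note that in (c) we do not need $C$ to be a homeomorphism, and indeed $AC$ need not be closed-reflected back to $A$ — which is why (c), unlike (a) and (b), is only a one-directional implication.
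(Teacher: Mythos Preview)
Your proof is correct and follows essentially the same approach as the paper: part (a) is identical, and for (b) and (c) the paper runs the sequence arguments explicitly rather than naming the shear $\sigma$ and the map $\tau$, but the underlying content is the same (continuity of $B$ and $C$ is exactly what makes those maps continuous). Your packaging via homeomorphisms and continuous preimages is a clean way to say it.
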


\begin{proof}
Statement (a) follows upon realising that $X_{0}\times X_{1}\ni(x,y)\mapsto(y,x)\in X_{1}\times X_{0}$
is an isomorphism. 

For statement (b), it suffices to show that
the closedness of $A$ implies the same for $A+B$. Let $\left((x_{n},y_{n})\right)_{n}$
be a sequence in $A+B$ convergent in $X_{0}\times X_{1}$ to some $(x,y)$. Since $B\in\bo(X_{0},X_{1})$, it follows that
$\left((x_{n},y_{n}-Bx_{n})\right)_{n}$ in $A$ is convergent to
$(x,y-Bx)$ in $X_{0}\times X_{1}$. Since $A$ is closed, $(x,y-Bx)\in A$.
Thus, $(x,y)\in A+B$.

For statement (c), let $\left((w_{n},y_{n})\right)_{n}$
be a sequence in $AC$ convergent in $X_{2}\times X_{1}$ to some $(w,y)$. Since $C$ is continuous, $\left(Cw_{n}\right)_{n}$
converges to $Cw$. Hence, $(Cw_{n},y_{n})\to(Cw,y)$ in $X_{0}\times X_{1}$
and since $(Cw_{n},y_{n})\in A$ and $A$ is closed, it follows that
$(Cw,y)\in A$. Equivalently, $(w,y)\in AC$, which yields closedness of $AC$.
\end{proof}
We shall gather some other elementary facts about closed operators
in the following. We will make use of the following notion.

\begin{defn*}
Let $A\colon\dom(A)\subseteq X_{0}\to X_{1}$ be
a linear operator. Then the \emph{graph norm\index{graph norm}} of $A$ is defined by $\dom(A)\ni x\mapsto\norm{x}_A\coloneqq \sqrt{\norm{x}^{2}+\norm{Ax}^{2}}$.
\end{defn*}

\begin{lem}
\label{lem:grscp}Let $A\colon\dom(A)\subseteq X_{0}\to X_{1}$ be
a linear operator. Then the following statements are equivalent:
\begin{enumerate}
\renewcommand{\labelenumi}{{\upshape (\roman{enumi})}}
\item $A$ is closed.
\item $\dom(A)$ equipped with the graph norm
is a Banach space.
\item For all $(x_{n})_{n}$ in $\dom(A)$
convergent in $X_{0}$ such that $\left(Ax_{n}\right)_{n}$ is convergent in
$X_{1}$ we have $\lim_{n\to\infty}x_{n}\in\dom(A)$ and $A\lim_{n\to\infty}x_{n}=\lim_{n\to\infty}Ax_{n}$.
\end{enumerate}
\end{lem}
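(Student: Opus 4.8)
The plan is to prove the cyclic chain of implications (i) $\Rightarrow$ (ii) $\Rightarrow$ (iii) $\Rightarrow$ (i), since each step is short and natural. The only genuinely substantive point is (ii) $\Rightarrow$ (iii), where one must recognise that "$\dom(A)$ with the graph norm is complete" really does encode the closedness condition phrased in terms of sequences — and here the one subtlety worth flagging is that a sequence Cauchy in the graph norm need not obviously be the image of anything; one has to extract the limits in $X_0$ and $X_1$ separately and argue they are compatible. That compatibility is exactly statement (iii).

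First I would prove (i) $\Rightarrow$ (ii). Assume $A$ is closed, i.e.\ the graph $\{(x,Ax) : x \in \dom(A)\}$ is a closed subspace of $X_0 \times X_1$. A closed subspace of a Banach space is itself a Banach space in the restricted norm, and the map $\dom(A) \ni x \mapsto (x,Ax)$ is a linear bijection onto the graph which is isometric precisely when $\dom(A)$ carries the graph norm $\norm{x}_A = \sqrt{\norm{x}^2 + \norm{Ax}^2}$ (using that $X_0 \times X_1$ is normed by $\sqrt{\norm{\cdot}^2 + \norm{\cdot}^2}$, or any equivalent product norm — I would just fix one convention). Hence completeness transfers, giving (ii).

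Next, (ii) $\Rightarrow$ (iii). Let $(x_n)_n$ be a sequence in $\dom(A)$ with $x_n \to x$ in $X_0$ and $Ax_n \to y$ in $X_1$. Then $(x_n)_n$ is Cauchy for the graph norm, because $\norm{x_n - x_m}_A^2 = \norm{x_n - x_m}^2 + \norm{Ax_n - Ax_m}^2$ and both summands go to $0$ as $n,m \to \infty$ (convergent sequences are Cauchy). By (ii), $(x_n)_n$ converges in the graph norm to some $z \in \dom(A)$; convergence in the graph norm implies $x_n \to z$ in $X_0$ and $Ax_n \to Az$ in $X_1$. Uniqueness of limits in $X_0$ and $X_1$ forces $z = x$ and $Az = y$, so $x \in \dom(A)$ and $Ax = y = \lim_n Ax_n$, which is (iii).

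Finally, (iii) $\Rightarrow$ (i). To show the graph is closed, take a sequence $\big((x_n, Ax_n)\big)_n$ in the graph converging in $X_0 \times X_1$ to some $(x,y)$. Then $x_n \to x$ in $X_0$ and $Ax_n \to y$ in $X_1$, so (iii) applies and gives $x \in \dom(A)$ with $Ax = y$; hence $(x,y) = (x, Ax)$ lies in the graph. Thus the graph is closed, i.e.\ $A$ is closed, closing the loop. I do not expect any real obstacle here — the whole proof is an exercise in unwinding definitions; the one place to be careful is to state clearly which norm is put on the product $X_0 \times X_1$ and to note that the argument is insensitive to the choice among equivalent product norms.
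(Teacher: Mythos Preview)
Your proof is correct and follows essentially the same approach as the paper: the paper also uses the isometric bijection $\dom(A)\ni x\mapsto (x,Ax)$ onto the graph to get (i)$\Leftrightarrow$(ii), and treats (iii) as the sequential reformulation of closedness. The only difference is organisational---you run a cycle (i)$\Rightarrow$(ii)$\Rightarrow$(iii)$\Rightarrow$(i) with full detail, whereas the paper handles the two equivalences (i)$\Leftrightarrow$(ii) and (ii)$\Leftrightarrow$(iii) in one sentence each.
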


\begin{proof}
For the equivalence (i)$\Leftrightarrow$(ii), it suffices to observe that $\dom(A)\ni x\mapsto(x,Ax)\in A$,
where $\dom(A)$ is endowed with the graph norm, is an isomorphism.
The equivalence (ii)$\Leftrightarrow$(iii) is an easy reformulation of the definition of closedness of $A\subseteq X_{0}\times X_{1}$.
\end{proof}

Unless explicitly stated otherwise (e.g.~in the form $\dom(A)\subseteq X_0$, where we regard $\dom(A)$ as a subspace of $X_0$), for closed operators $A$ we always consider $\dom(A)$ as a Banach
space in its own right; that is, we shall regard it as being endowed 
with the graph norm.

\begin{lem}
\label{lem:bbc}Let $A\colon\dom(A)\subseteq X_{0}\to X_{1}$ be a
closed linear operator. Then $A$ is bounded if and only if $\dom(A)\subseteq X_{0}$
is closed.
\end{lem}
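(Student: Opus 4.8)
The plan is to prove both implications using the equivalences from \prettyref{lem:grscp}, in particular the identification of $\dom(A)$ (with the graph norm) as a Banach space when $A$ is closed.

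For the forward direction, suppose $A$ is bounded. Then on $\dom(A)$ the graph norm $\norm{x}_A = \sqrt{\norm{x}^2 + \norm{Ax}^2}$ is equivalent to the original norm $\norm{\cdot}$ inherited from $X_0$: indeed $\norm{x} \leq \norm{x}_A \leq \sqrt{1 + \norm{A}^2}\,\norm{x}$ for all $x \in \dom(A)$. Since $A$ is closed, \prettyref{lem:grscp} tells us $(\dom(A), \norm{\cdot}_A)$ is a Banach space, hence by norm equivalence $(\dom(A), \norm{\cdot})$ is complete, and a complete subspace of the Banach space $X_0$ is closed.

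For the converse, suppose $\dom(A) \subseteq X_0$ is closed. Then $\dom(A)$, equipped with the norm inherited from $X_0$, is itself a Banach space. On the other hand, since $A$ is closed, $\dom(A)$ equipped with the graph norm is also a Banach space by \prettyref{lem:grscp}. The identity map $(\dom(A), \norm{\cdot}_A) \to (\dom(A), \norm{\cdot})$ is bounded (as $\norm{x} \leq \norm{x}_A$), linear, and bijective between two Banach spaces, so by the bounded inverse theorem (a consequence of the open mapping theorem) its inverse is bounded; that is, there is $C > 0$ with $\norm{x}_A \leq C\norm{x}$ for all $x \in \dom(A)$. In particular $\norm{Ax} \leq \norm{x}_A \leq C\norm{x}$, which says exactly that $A$ is bounded.

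The only subtlety — and the one point where I would be careful — is the invocation of the bounded inverse theorem: it requires \emph{both} norms to be complete on the same underlying space $\dom(A)$, which is precisely what closedness of $A$ (via \prettyref{lem:grscp}) and closedness of $\dom(A)$ in $X_0$ respectively supply. Everything else is routine norm-comparison bookkeeping.
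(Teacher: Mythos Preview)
Your proof is correct and follows essentially the same approach as the paper: both directions hinge on the equivalence of the graph norm and the $X_0$-norm on $\dom(A)$, using \prettyref{lem:grscp} for completeness under the graph norm and the open mapping theorem (bounded inverse theorem) for the converse. The paper phrases the converse via the continuous bijective embedding $\iota\colon(\dom(A),\norm{\cdot}_A)\hookrightarrow(\dom(A),\norm{\cdot}_{X_0})$, which is exactly your identity map.
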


\begin{proof}
First of all note that boundedness of $A$ is equivalent to the fact
that the graph norm and the $X_{0}$-norm on $\dom(A)$ are equivalent.
Hence, the closedness and boundedness of $A$ implies that $\dom(A)\subseteq X_0$
is closed. On the other hand, the embedding 
\[
\iota\colon(\dom(A),\norm{\cdot}_{A})\hookrightarrow(\dom(A),\norm{\cdot}_{X_{0}})
\]
 is continuous and bijective. Since the range is closed, the open
mapping theorem implies that $\iota^{-1}$ is continuous. This yields
the equivalence of the graph norm and the $X_{0}$-norm and, thus,
the boundedness of $A$.
\end{proof}

For unbounded operators, obtaining a precise description of the domain may be difficult. However, there may be a subset of the domain which essentially (or approximately) describes the operator.
This gives rise to the following notion of a core.

\begin{defn*}
Let $A\subseteq X_{0}\times X_{1}$. A set $D\subseteq\dom(A)$ is
called a \emph{core\index{core} for $A$} provided $\overline{A\cap(D\times X_{1})}=\overline{A}$.
\end{defn*}

\begin{prop}
\label{prop:coreBounded}Let $A\in\bo(X_{0},X_{1})$, and $D\subseteq X_{0}$ a
dense linear subspace. Then $D$ is a core for $A$.
\end{prop}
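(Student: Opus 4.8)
The plan is to verify the core condition directly from the definition, namely to show $\overline{A \cap (D \times X_1)} = \overline{A}$. Since $A \in \bo(X_0, X_1)$ is already closed (being bounded and everywhere defined, its graph is closed), we have $\overline{A} = A$, so the task reduces to proving $\overline{A \cap (D \times X_1)} = A$.

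First I would unwind what $A \cap (D \times X_1)$ is: since $A$ is an operator, $A = \set{(x, Ax)}{x \in X_0}$, and intersecting with $D \times X_1$ picks out exactly $\set{(x, Ax)}{x \in D}$, the graph of the restriction $A|_D$. The inclusion $\overline{A|_D} \subseteq \overline{A} = A$ is immediate from monotonicity of closure. For the reverse inclusion, take any $(x, Ax) \in A$ with $x \in X_0$ arbitrary. Since $D$ is dense in $X_0$, choose a sequence $(x_n)_n$ in $D$ with $x_n \to x$ in $X_0$. Then by continuity of $A$, $Ax_n \to Ax$ in $X_1$, so $(x_n, Ax_n) \to (x, Ax)$ in $X_0 \times X_1$. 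Each $(x_n, Ax_n)$ lies in $A \cap (D \times X_1)$, hence $(x, Ax) \in \overline{A \cap (D \times X_1)}$. This gives $A \subseteq \overline{A \cap (D \times X_1)}$, and combined with the trivial inclusion we conclude $\overline{A \cap (D \times X_1)} = A = \overline{A}$, which is precisely the statement that $D$ is a core for $A$.

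There is essentially no obstacle here: the only ingredients are the continuity of $A$ (to pass limits through), the density of $D$ (to approximate arbitrary points of $X_0$), and the fact that a bounded everywhere-defined operator is closed (so that $\overline{A} = A$). The one point worth being careful about is making explicit that the topology on $A \subseteq X_0 \times X_1$ is that of the direct sum of Banach spaces, so that convergence of $(x_n, Ax_n)$ means $x_n \to x$ in $X_0$ \emph{and} $Ax_n \to Ax$ in $X_1$ — this is why we need both density and continuity simultaneously. Everything else is routine.
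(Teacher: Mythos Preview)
Your proof is correct. The paper does not give a proof in the text but defers it to \prettyref{exer:propcor}, with the hint ``One might use that bounded linear relations are always operators.'' That hint points to a slightly different route: one observes that $\overline{A|_D}$ is a closed linear relation which is bounded (inheriting the bound $\norm{A}$), hence an operator; by \prettyref{lem:bbc} its domain is closed in $X_0$, and since it contains the dense set $D$ it equals $X_0$; finally $\overline{A|_D}\subseteq A$ with both being operators on all of $X_0$ forces equality. Your direct sequential argument bypasses this machinery and is more elementary and transparent --- there is nothing to prefer in the hint-based approach except that it exercises the general relation framework the paper has set up.
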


\begin{cor}
\label{cor:uniquecont}Let $A\colon\dom(A)\subseteq X_{0}\to X_{1}$
be a densely defined, bounded linear operator. Then there exists a
unique $B\in L(X_{0},X_{1})$ with $B\supseteq A$. In particular,
we have $B=\overline{A}$ and 
\[
\norm{B}=\sup_{x\in\dom(A), x\neq0}\frac{{\normalcolor \norm{Ax}}}{\norm{x}}.
\]
\end{cor}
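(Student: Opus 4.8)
The plan is to obtain $B$ by invoking \prettyref{prop:coreBounded} together with \prettyref{prop:inv-sum-comp-closed}, or more directly by the standard density-and-completeness argument, and then to verify uniqueness and the norm formula by elementary means. First I would note that $A$, being a bounded operator defined on the dense subspace $\dom(A)$, is in particular a bounded relation, and its closure $\overline{A} \subseteq X_0 \times X_1$ is a closed linear relation; I would check that $\overline{A}$ is in fact an operator (i.e. $\overline{A}[\{0\}] = \{0\}$), which follows because a convergent sequence $(x_n, Ax_n) \to (0, y)$ forces $\norm{y} = \lim \norm{Ax_n} \leq \norm{A}_{\mathrm{op}} \lim \norm{x_n} = 0$, where $\norm{A}_{\mathrm{op}} \coloneqq \sup_{x \in \dom(A), x \neq 0} \norm{Ax}/\norm{x} < \infty$ is finite by boundedness. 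Hence $B \coloneqq \overline{A}$ is a closed linear operator extending $A$.

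Next I would show $\dom(B) = X_0$, so that $B$ is everywhere defined; combined with closedness and \prettyref{lem:bbc} (or just the direct estimate) this gives $B \in L(X_0, X_1)$. The key estimate is that for $x \in \dom(A)$ we have $\norm{Ax} \leq \norm{A}_{\mathrm{op}} \norm{x}$, so $A$ is uniformly continuous on its domain; since $\dom(A)$ is dense in the complete space $X_0$, every $x \in X_0$ is a limit $x = \lim x_n$ with $x_n \in \dom(A)$, the sequence $(Ax_n)_n$ is Cauchy in the complete space $X_1$, hence convergent, and by definition of the closure $(x, \lim Ax_n) \in \overline{A} = B$. Thus $\dom(B) = X_0$. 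The same estimate passes to the limit to give $\norm{Bx} \leq \norm{A}_{\mathrm{op}}\norm{x}$ for all $x \in X_0$, so $\norm{B} \leq \norm{A}_{\mathrm{op}}$; the reverse inequality is immediate since $B \supseteq A$ and the supremum defining $\norm{A}_{\mathrm{op}}$ is taken over a subset of $X_0$. This yields the displayed norm identity.

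For uniqueness, suppose $B' \in L(X_0, X_1)$ also satisfies $B' \supseteq A$. Then $B$ and $B'$ agree on the dense set $\dom(A)$ and both are continuous on all of $X_0$, so they agree everywhere; equivalently, one can phrase this via \prettyref{prop:coreBounded}, which says $\dom(A)$ is a core for $B'$, whence $B' = \overline{A \cap (\dom(A) \times X_1)} = \overline{A} = B$. Finally, that $B = \overline{A}$ is already built into the construction.

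I do not expect any genuine obstacle here: the only mild subtlety is bookkeeping — being careful that "bounded" for the relation $A$ is used in the sense of the definition given (images of bounded sets are bounded), and translating that into the operator-norm estimate $\norm{Ax} \leq \norm{A}_{\mathrm{op}}\norm{x}$ that actually drives the completion argument. Everything else is the textbook "a uniformly continuous map on a dense subset of a complete space extends uniquely and continuously", specialized to linear maps between Banach spaces.
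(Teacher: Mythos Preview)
Your proof is correct and complete. The paper leaves this result as \prettyref{exer:propcor} with the hint ``bounded linear relations are always operators''; your direct verification that $\overline{A}[\{0\}]=\{0\}$ via the norm estimate is exactly that fact specialised to the case at hand, and the remainder of your argument (density plus completeness for existence, \prettyref{prop:coreBounded} or continuity-on-a-dense-set for uniqueness) is the intended route.
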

The proofs of \prettyref{prop:coreBounded} and \prettyref{cor:uniquecont} are asked for in \prettyref{exer:propcor}.

\section{Operators in Hilbert Spaces}

%Almost all definitions in the previous section rather account for
%algebraic properties of $A$. Of course, we want to address topological
%properties of $A$, as well. 
Let us now focus on operators on Hilbert spaces. In this setting, we can additionally make use of scalar products $\scp{\cdot}{\cdot}$, which in this course are considered to be linear in the second argument (and anti-linear in the first, in the case when $\K=\C$).

For a linear operator $A\colon \dom(A)\subseteq H_0\to H_1$ the graph norm of $A$ is induced
by the scalar product
\[
(x,y)\mapsto\scp{x}{y}+\scp{Ax}{Ay},
\]
known as the \emph{graph scalar product\index{graph scalar product} of $A$}. If $A$ is closed then $\dom(A)$ (equipped with the graph norm) is a Hilbert space.

Of course, no presentation of operators in Hilbert spaces would be complete without the central notion of the adjoint operator. We wish to pose the adjoint within the relational framework just established. The definition is as follows.
\begin{defn*}
For a relation $A\subseteq H_{0}\times H_{1}$ we define the \emph{adjoint
relation\index{adjoint relation} $A^{*}$} by
\[
A^{*}\coloneqq-\left(\left(A^{-1}\right)^{\bot}\right)\subseteq H_{1}\times H_{0},
\]
where the orthogonal complement is computed in the direct sum of the
Hilbert spaces $H_{1}$ and $H_{0}$; that is, the set $H_{1}\times H_{0}$
endowed with the scalar product $\bigl((x,y),(u,v)\bigr)\mapsto\scp{x}{u}_{H_1}+\scp{y}{v}_{H_0}$. 
\end{defn*}
\begin{rem}
Let $A\subseteq H_{0}\times H_{1}$. Then we have
\[
A^{*}=\set{(u,v)\in H_{1}\times H_{0}}{\forall(x,y)\in A:\scp{u}{y}_{H_1}=\scp{v}{x}_{H_0}}.
\]
In particular, if $A$ is a linear operator, we have
\[
A^{*}=\set{(u,v)\in H_{1}\times H_{0}}{\forall x\in\dom(A):\scp{u}{Ax}_{H_1}=\scp{v}{x}_{H_0}}.
\]
\end{rem}

\begin{lem}
\label{lem:adj}Let $A\subseteq H_{0}\times H_{1}$ be a relation. Then $A^{*}$
is a linear relation. Moreover, we have
\[
A^{*}=-\left(\left(A^{\bot}\right)^{-1}\right)=\left(\left(-A\right)^{-1}\right)^{\bot}=\left(-\left(A^{-1}\right)\right)^{\bot}=\left(\left(-A\right)^{\bot}\right)^{-1}=\left(-\left(A^{\bot}\right)\right)^{-1}.
\]
\end{lem}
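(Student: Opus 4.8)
The plan is to prove the first identity $A^* = -((A^\perp)^{-1})$ directly from the definition of the adjoint together with the algebraic identities relating $\perp$, inversion, and negation on subsets of product Hilbert spaces, and then obtain the remaining three identities as formal consequences of these commutation rules. Linearity of $A^*$ is immediate: by definition $A^* = -((A^{-1})^\perp)$, and the orthogonal complement of any subset of a Hilbert space is a closed linear subspace; inversion and negation (scalar multiplication by $-1$ on the second component) both map linear subspaces to linear subspaces, so $A^*$ is a linear relation. (We do \emph{not} claim $A^*$ is an operator in general — indeed it need not be.)

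The core of the argument is a small set of elementary commutation lemmas for subsets $S \subseteq H_0 \times H_1$, which I would state and verify inline:
(i) inversion is an isometric isomorphism $H_0\times H_1 \to H_1 \times H_0$ (just the coordinate flip), hence it commutes with taking orthogonal complements: $(S^\perp)^{-1} = (S^{-1})^\perp$;
(ii) negation on the second component, $(x,y)\mapsto(x,-y)$, is a unitary on $H_0\times H_1$, hence it too commutes with $\perp$: $(-S)^\perp = -(S^\perp)$;
(iii) inversion and negation-of-second-component do \emph{not} commute naively, but one has $(-S)^{-1} = -'(S^{-1})$ where $-'$ negates the \emph{first} component; however, since both $-$ (second component) and $-'$ (first component) are unitaries, and since applying both amounts to negating everything, which fixes $\perp$, the combined manipulations all collapse. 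Concretely, I would record: $((-A)^{-1})^\perp = (-'(A^{-1}))^\perp = -'((A^{-1})^\perp)$ by (ii) applied on the appropriate factor, and separately $(-(A^{-1}))^\perp = -((A^{-1})^\perp)$.

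With these in hand the chain is mechanical. Start from $A^* = -((A^{-1})^\perp)$. Apply (i) to rewrite $(A^{-1})^\perp = (A^\perp)^{-1}$, giving $A^* = -((A^\perp)^{-1})$, the first claimed form. For the second form, $((-A)^{-1})^\perp$: compute $(-A)^{-1}$ as the set of $(y,x)$ with $(x,-y)\in A$, i.e.\ $(-A)^{-1} = \{(y,x) : (x,-y)\in A\}$; comparing with $-((A^{-1})^\perp)$ one checks both describe the same pairs, using that complementation against the sum inner product introduces exactly one sign which can be parked on either component. The remaining three expressions $(-(A^{-1}))^\perp$, $((-A)^\perp)^{-1}$, $(-(A^\perp))^{-1}$ are then each seen to equal $-((A^\perp)^{-1})$ by applying (i) and (ii) to move the inversion past the complement and to move the single minus sign between the first and second components; in each case one unitary is applied to one factor, which leaves $\perp$ invariant.

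The step I expect to require the most care is bookkeeping the signs: because $-$ in this paper acts only on the second component (as emphasised in the text before Proposition~\ref{prop:inv-sum-comp-closed}), an expression like $-(A^{-1})$ has the minus on what was originally the \emph{domain} side of $A$, whereas $(-A)^{-1}$ has it on what was the \emph{range} side. The saving observation — which I would state explicitly — is that for computing orthogonal complements in $H_1\times H_0$ with respect to $\scp{x}{u}_{H_1}+\scp{y}{v}_{H_0}$, negating the first component, negating the second component, or negating both, are all unitaries, so they each commute with $(\cdot)^\perp$; hence any single stray minus sign can be slid freely from one factor to the other at the level of complements, and that is precisely what makes all five right-hand expressions coincide. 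Once this is said once, each of the five equalities is a one-line verification, and I would present them as a short displayed chain rather than proving each in isolation.
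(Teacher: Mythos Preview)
The paper does not prove this lemma; it is deferred to Exercise~\ref{exer:lem_adj}. So there is no paper argument to compare against, and your proposal stands on its own.

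Your approach is correct and is the natural one: linearity is immediate from the definition since orthogonal complements are linear subspaces and both inversion and second-component negation preserve linearity, and the five displayed identities follow from the two commutation rules (i) $(S^{-1})^\perp = (S^\perp)^{-1}$ (coordinate flip is unitary) and (ii) $(-S)^\perp = -(S^\perp)$ (second-component negation is unitary), together with a sign-tracking argument.

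One point deserves to be stated more sharply. You correctly observe that $(-S)^{-1} = -'(S^{-1})$, where $-'$ denotes first-component negation, and that $-'$ is also a unitary so $(-'T)^\perp = -'(T^\perp)$. But the step ``hence any single stray minus sign can be slid freely from one factor to the other at the level of complements'' does not follow merely from both $-$ and $-'$ commuting with $\perp$. What you actually need is that $-T = -'T$ whenever $T$ is a \emph{linear subspace}: indeed, for linear $T$ one has $(-x,y) = -(x,-y)$ and $(x,-y)\in T$ since $(-x,-y) = -(x,y)\in T$. Since every orthogonal complement is a linear subspace, once a $\perp$ has been taken the two negations coincide, and that is the mechanism allowing the minus to migrate. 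With this one sentence added, your chain of equalities goes through cleanly; each of the five forms is then a one-line computation as you say.
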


The proof of this lemma is left as \prettyref{exer:lem_adj}.

\begin{rem}
Let $A\subseteq H_{0}\times H_{1}$. Since $A^{*}$ is the orthogonal
complement of $-A^{-1}$, it follows immediately that $A^{*}$ is closed. Moreover, $A^* = \bigl(\overline{A}\bigr)^*$ since $A^\bot = \bigl(\overline{A}\bigr)^\bot$.
\end{rem}

\begin{lem}
\label{lem:ass}Let $A\subseteq H_{0}\times H_{1}$ be a linear relation.
Then
\[
A^{**}\coloneqq\left(A^{*}\right)^{*}=\overline{{A}}.
\]
\end{lem}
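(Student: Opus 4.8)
The plan is to exploit the characterization of the adjoint as an orthogonal complement (twisted by the inverse and a sign), and then use the basic Hilbert space fact that taking the orthogonal complement twice returns the closure of a subspace. Concretely, I would start from the definition $A^{*}=-\bigl((A^{-1})^{\bot}\bigr)$ and apply it once more to $A^{*}$. Using \prettyref{lem:adj}, which lists the several equivalent forms of the adjoint, I can choose whichever form makes the second application cleanest; for instance $A^{*}=\bigl((-A)^{\bot}\bigr)^{-1}$ has the virtue that inverting it is immediate. Thus $\bigl(A^{*}\bigr)^{-1}=(-A)^{\bot}$, and then $A^{**}=-\bigl(\bigl((A^{*})^{-1}\bigr)^{\bot}\bigr)=-\bigl(\bigl((-A)^{\bot}\bigr)^{\bot}\bigr)$.

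Next I would use that $(-\,\cdot\,)$ commutes with orthogonal complementation: for a linear subspace $S\subseteq H_{0}\times H_{1}$ we have $(-S)^{\bot}=-(S^{\bot})$, since $(x,y)\mapsto(x,-y)$ is a (unitary) isometry of the direct sum. Applying this, $-\bigl(\bigl((-A)^{\bot}\bigr)^{\bot}\bigr)=-\bigl(-\bigl((A^{\bot})^{\bot}\bigr)\bigr)=(A^{\bot})^{\bot}$. So it remains to invoke the standard Hilbert space identity $(A^{\bot})^{\bot}=\overline{A}$, the closure of $A$ as a subspace of the direct sum $H_{0}\times H_{1}$ — here one uses that $A$ is a linear relation, i.e.\ a linear subspace, so that its biorthogonal complement is precisely its closure. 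This gives $A^{**}=\overline{A}$, as claimed. Finally I would note that $\overline{A}$ is indeed a linear relation (closure of a subspace is a subspace), so the statement makes sense; the equality also re-proves that $A^{**}$ is closed, consistent with the earlier remark that any adjoint is closed.

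The routine bookkeeping is in keeping track of the sign and the inverse through two applications of $(\cdot)^{*}$; the cleanest route is to fix one of the five equivalent expressions from \prettyref{lem:adj} at the outset and never deviate from it. The one genuine ingredient is the biorthogonal-complement identity $(A^{\bot})^{\bot}=\overline{A}$ in the Hilbert space $H_{0}\times H_{1}$; I expect this to be either quoted as standard or reduced to the projection theorem. I anticipate that the main (though minor) obstacle is purely notational: making sure that the sign operator $(x,y)\mapsto(x,-y)$ and the flip $(x,y)\mapsto(y,x)$ interact correctly with $\bot$, which is exactly what \prettyref{lem:adj} has already packaged, so in practice the proof is short once that lemma is in hand.
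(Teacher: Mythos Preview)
Your proposal is correct and follows essentially the same approach as the paper: both use the identities from \prettyref{lem:adj} to reduce $A^{**}$ to $(A^{\bot})^{\bot}=\overline{A}$. The paper's version is a touch shorter because it picks the form $A^{**}=\bigl((-A^{*})^{-1}\bigr)^{\bot}$ together with $A^{*}=-\bigl((A^{\bot})^{-1}\bigr)$, so the double minus cancels without a separate appeal to $(-S)^{\bot}=-(S^{\bot})$, but this is purely cosmetic.
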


\begin{proof}
We compute using \prettyref{lem:adj}
\[
A^{**}=\left(\left(-\left(A^{*}\right)\right)^{-1}\right)^{\bot}=\left(\left(-\left(-\left(\left(A^{\bot}\right)^{-1}\right)\right)\right)^{-1}\right)^{\bot}=\left(A^{\bot}\right)^{\bot}=\overline{A}.\tag*{{\qedhere}}
\]
\end{proof}
\begin{thm}
\label{thm:ran-kernerl}Let $A\subseteq H_{0}\times H_{1}$ be a linear relation.
Then
\[
\ran(A)^{\bot}=\ker(A^{*})\quad\text{and}\quad \cran(A^{*})=\ker(\overline{A})^{\bot}.
\]
\end{thm}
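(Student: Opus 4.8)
The plan is to establish the first identity directly from the description of $A^{*}$ recorded in the Remark following the definition of the adjoint relation, and then to obtain the second identity by applying the first one to $A^{*}$ in place of $A$ and invoking the reflexivity statement $A^{**}=\overline{A}$ from \prettyref{lem:ass}.

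First I would prove $\ran(A)^{\bot}=\ker(A^{*})$. Using the characterisation $A^{*}=\set{(u,v)\in H_{1}\times H_{0}}{\forall (x,y)\in A\colon \scp{u}{y}_{H_1}=\scp{v}{x}_{H_0}}$, an element $u\in H_{1}$ lies in $\ker(A^{*})$ if and only if $(u,0)\in A^{*}$, which by this description means precisely that $\scp{u}{y}_{H_1}=0$ for every $(x,y)\in A$, i.e.\ $\scp{u}{y}_{H_1}=0$ for every $y\in\ran(A)$. This is exactly the assertion $u\in\ran(A)^{\bot}$, so the two sets coincide. (Note that linearity of $A$ plays no role here.)

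For the second identity I would apply the identity just proven to the relation $A^{*}\subseteq H_{1}\times H_{0}$, which is again a linear relation by \prettyref{lem:adj}; this yields $\ran(A^{*})^{\bot}=\ker\bigl((A^{*})^{*}\bigr)=\ker(A^{**})$. By \prettyref{lem:ass} we have $A^{**}=\overline{A}$, hence $\ran(A^{*})^{\bot}=\ker(\overline{A})$. Taking orthogonal complements on both sides and using that $(S^{\bot})^{\bot}=\overline{S}$ for any linear subspace $S$ of a Hilbert space — applied to $S=\ran(A^{*})$, which is a subspace because $A^{*}$ is a linear relation — gives $\cran(A^{*})=\overline{\ran(A^{*})}=\ker(\overline{A})^{\bot}$, as claimed.

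There is no serious obstacle here; the argument is essentially bookkeeping within the relational calculus built up in the preceding lemmas. The points that deserve a moment's care are that $A^{*}$ is again a linear relation — so that the first identity and the double-complement identity may be applied to it — which is exactly \prettyref{lem:adj}, and that the closures appearing on the two sides of the second identity ($\overline{A}$ in $\ker(\overline{A})$ and the closure in $\cran(A^{*})$) are genuinely necessary: $\ran(A)$ and $\ran(A^{*})$ need not be closed, while an orthogonal complement only ever sees the closed linear hull of a set.
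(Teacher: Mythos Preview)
Your proof is correct and follows essentially the same route as the paper's: the first identity is read off directly from the defining characterisation of $A^{*}$ (the paper writes out the two inclusions separately, you package them into a single ``if and only if'', but the content is identical), and the second identity is obtained by applying the first to $A^{*}$ and invoking $A^{**}=\overline{A}$ from \prettyref{lem:ass}.
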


\begin{proof}
Let $u\in\ker(A^{*})$ and let $y\in\ran(A)$. Then we find $x\in\dom(A)$
such that $(x,y)\in A$. Moreover, note that $(u,0)\in A^{*}$. Then, we
compute
\[
\scp{u}{y}_{H_1}=\scp{0}{x}_{H_0}=0.
\]
This equality shows that $\ran(A)^{\bot}\supseteq\ker(A^{*})$. If
on the other hand, $u\in\ran(A)^{\bot}$ then for all $(x,y)\in A$
we have that
\[
0=\scp{u}{y}_{H_1},
\]
which implies $(u,0)\in A^{*}$ and hence $u\in\ker(A^{*})$. The
remaining equation follows from \prettyref{lem:ass} together with
the first equation applied to $A^{*}$.
\end{proof}
The following decomposition result is immediate from the latter theorem
and will be used frequently throughout the text.
\begin{cor}
\label{cor:abstrHelmH} Let $A\subseteq H_{0}\times H_{1}$ be a closed linear relation.
Then
% \begin{align*}
% H_{1} & =\clin\ran(A)\oplus\ker(A^{*})\quad\text{and}\\
% H_{0} & =\clin\ker(\overline{A})\oplus\cran(A^{*}).
% \end{align*}
% If, in addition, $A$ is linear and closed, then
\begin{align*}
H_{1} & =\cran(A)\oplus\ker(A^{*})\quad\text{and}\quad
H_{0} =\ker(A)\oplus\cran(A^{*}).
\end{align*}
\end{cor}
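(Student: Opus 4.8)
The plan is to derive Corollary~\ref{cor:abstrHelmH} directly from Theorem~\ref{thm:ran-kernerl} and the standard orthogonal decomposition theorem for Hilbert spaces. Recall that for any \emph{closed} subspace $V$ of a Hilbert space $H$ one has $H = V \oplus V^\bot$. So the entire task reduces to: (i) identifying the relevant subspaces as closed, and (ii) rewriting their orthogonal complements in the form asserted.

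First I would treat the decomposition of $H_1$. The subspace $\cran(A)$ is closed by construction (it is the closure of the range). Hence the projection theorem gives $H_1 = \cran(A) \oplus \bigl(\cran(A)\bigr)^\bot$. Now $\bigl(\cran(A)\bigr)^\bot = \ran(A)^\bot$, since taking orthogonal complements is insensitive to closure; and by the first identity in Theorem~\ref{thm:ran-kernerl}, $\ran(A)^\bot = \ker(A^*)$. Stringing these together yields $H_1 = \cran(A) \oplus \ker(A^*)$, which is the first claim.

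For the decomposition of $H_0$, the argument is symmetric but uses the \emph{second} identity of Theorem~\ref{thm:ran-kernerl} together with the hypothesis that $A$ is closed. Apply the projection theorem to the closed subspace $\cran(A^*) \subseteq H_0$: this gives $H_0 = \cran(A^*) \oplus \bigl(\cran(A^*)\bigr)^\bot$. By Theorem~\ref{thm:ran-kernerl}, $\cran(A^*) = \ker(\overline{A})^\bot$, and since $A$ is closed, $\overline{A} = A$, so $\cran(A^*) = \ker(A)^\bot$. Taking orthogonal complements once more and using that $\ker(A)$ is itself closed (the kernel of a closed relation is closed, as it is the preimage of the closed set $\{0\}$ under the continuous projection, or simply a closed subset of a closed subspace), we get $\bigl(\cran(A^*)\bigr)^\bot = \ker(A)^{\bot\bot} = \ker(A)$. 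Hence $H_0 = \ker(A) \oplus \cran(A^*)$, as claimed.

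There is essentially no obstacle here; the statement is a bookkeeping corollary. The only point requiring a moment's care is the direction of the second decomposition: one must apply the second identity of Theorem~\ref{thm:ran-kernerl} (rather than naively trying to take the complement of $\ker(A)$ directly), and one must invoke closedness of $A$ to replace $\ker(\overline{A})$ by $\ker(A)$ and to justify $\ker(A)^{\bot\bot} = \ker(A)$ via closedness of $\ker(A)$. Everything else is the projection theorem and the fact that $V^\bot = \overline{V}^\bot$.
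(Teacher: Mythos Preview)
Your proof is correct and follows exactly the approach the paper intends; the paper itself does not write out an argument, simply calling the corollary ``immediate'' from Theorem~\ref{thm:ran-kernerl}. The only minor quibble is your justification for the closedness of $\ker(A)$: for a relation there is no map to take a preimage under, but the direct argument (if $x_n\to x$ with $(x_n,0)\in A$, then $(x,0)\in A$ by closedness of $A$) works and is surely what you meant.
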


We will now turn to the case where the adjoint relation is actually a linear operator.

\begin{lem}
\label{lem:ddc}Let $A\subseteq H_{0}\times H_{1}$ be a linear relation.
Then $A^{*}$ is a linear operator if and only if $A$ is densely
defined. If, in addition, $A$ is a linear operator, then $A$ is
closable if and only if $A^{*}$ is densely defined.
\end{lem}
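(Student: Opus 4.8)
The plan is to exploit the characterizations established in Lemma~\ref{lem:adj}, namely that $A^* = \left(\left(-A\right)^{-1}\right)^{\bot}$ (so $A^*$ is the orthogonal complement, in $H_1 \times H_0$, of $-A^{-1}$), together with the basic fact that a closed linear relation $R \subseteq H_1 \times H_0$ is a linear operator precisely when $R[\{0\}] = \{0\}$, i.e.\ when $R \cap (\{0\} \times H_0) = \{(0,0)\}$. First I would translate the operator condition for $A^*$ into this language: since $A^*$ is closed (as an orthogonal complement), $A^*$ is a linear operator iff whenever $(0,v) \in A^*$ then $v = 0$. Using the characterization of $A^*$ from the Remark following Lemma~\ref{lem:adj}, $(0,v) \in A^*$ means $\scp{v}{x}_{H_0} = \scp{0}{y}_{H_1} = 0$ for all $(x,y) \in A$, i.e.\ $v \perp \dom(A)$. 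Hence $A^*$ is a linear operator iff $\dom(A)^{\bot} = \{0\}$, which is exactly the statement that $A$ is densely defined. That settles the first equivalence.

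For the second statement, assume in addition that $A$ is a linear operator. By Lemma~\ref{lem:ass} we have $A^{**} = \overline{A}$, and by definition $A$ is closable iff $\overline{A}$ is a linear operator. Applying the first part of the lemma to the relation $A^*$ in place of $A$: $(A^*)^* = A^{**}$ is a linear operator iff $A^*$ is densely defined. Combining, $A$ is closable $\iff$ $\overline{A} = A^{**}$ is a linear operator $\iff$ $A^*$ is densely defined. Note that the first part of the lemma already guarantees $A^*$ is a linear operator here (since $A$, being a genuine operator, is in particular... wait — one does need $A$ densely defined for $A^*$ to be an operator, but that is not assumed; however the argument via $A^{**}$ does not require it, since we only apply the \emph{first} equivalence to the relation $A^*$, asking whether its adjoint is an operator, and that equivalence holds for arbitrary linear relations).

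The only genuinely delicate point is making sure the logical chain in the second part is airtight without smuggling in an unstated density hypothesis on $A$: the key is that the first equivalence of the lemma is stated for an arbitrary linear relation, so it legitimately applies to $A^*$ regardless of whether $A$ is densely defined. Everything else is a routine unwinding of the definition of the adjoint via the orthogonal-complement formulas of Lemma~\ref{lem:adj} and the domain/kernel dictionary; no estimates or limiting arguments are needed. I would present the first equivalence in full detail (the $\dom(A)^\bot = \{0\}$ computation) and then dispatch the second in two lines by quoting Lemma~\ref{lem:ass} and the first part.
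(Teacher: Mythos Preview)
Your proof is correct and follows essentially the same approach as the paper: both reduce the first equivalence to the identity $A^*[\{0\}]=\dom(A)^\bot$, and both handle the second equivalence by applying the first to $A^*$ together with $A^{**}=\overline{A}$ from Lemma~\ref{lem:ass}. The only cosmetic difference is that you derive $A^*[\{0\}]=\dom(A)^\bot$ directly from the explicit description of $A^*$ in the Remark, whereas the paper obtains it by applying Theorem~\ref{thm:ran-kernerl} to $A^{-1}$; your route is arguably the more transparent of the two.
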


\begin{proof}
For the first equivalence, it suffices to observe that 
\begin{equation}
 A^{*}[\{0\}]=\dom(A)^{\bot}.\label{eq:rel}
\end{equation}
Indeed, $A$ being densely defined is equivalent to having $\dom(A)^{\bot}=\{0\}$. Moreover, $A^{*}$ is an operator if and only if
$A^{*}[\{0\}]=\{0\}$. Next, we show \prettyref{eq:rel}. For this, apply \prettyref{thm:ran-kernerl} to the linear relation $A^{-1}$. One obtains $(\ran A^{-1})^\bot=\ker(A^{-1})^*$. Hence, $(\dom (A))^\bot = \ker(A^*)^{-1}=A^*[\{0\}]$, which is \prettyref{eq:rel}.
For the remaining equivalence,
we need to characterise $\overline{A}$ being an operator. Using \prettyref{lem:ass}
and the first equivalence, we deduce that $\overline{A}=\left(A^{*}\right)^{*}$
is a linear operator if and only if $A^{*}$ is densely defined.
\end{proof}

\begin{rem} Note that the statement ``$A^{*}$ is an operator if  $A$ is densely
defined''  asserted in \prettyref{lem:ddc} is also true for \emph{any} relation. For this, it suffices to observe that \prettyref{eq:rel} is true for any relation $A\subseteq H_0 \times H_1$. Indeed, let $A\subseteq H_0\times H_1$ be a relation; define $B\coloneqq \lin A$. Then $\dom(B)= \lin \dom(A)$. Also, we have \[A^*=-(A^\bot)^{-1}=-(B^\bot)^{-1}=B^*.\] With these preparations, we can write
\begin{align*}
   \dom(A)^\bot & = (\lin  \dom (A))^\bot
   = \dom(B)^\bot
   = B^*[\{0\}]
  = A^{*}[\{0\}],
\end{align*}
where we used that \prettyref{eq:rel} holds for linear relations.
\end{rem}

\begin{lem}
\label{lem:bdds}Let $A\subseteq H_{0}\times H_{1}$ be a linear relation.
Then $\overline{A}\in\bo(H_{0},H_{1})$ if and only if $A^{*}\in\bo(H_{1},H_{0})$. In either case, $\norm{A^*} = \norm{\overline{A}}$.
\end{lem}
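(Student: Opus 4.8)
The plan is to deduce both implications from results already established, essentially reducing the statement to the self-adjoint-closure identity $A^{**} = \overline{A}$ from \prettyref{lem:ass} together with the norm formula from \prettyref{cor:uniquecont}. First I would observe that by the last remark before \prettyref{lem:ass}, $A^* = (\overline{A})^*$, so without loss of generality we may replace $A$ by $\overline{A}$ and assume $A$ is closed; thus it suffices to prove: for a closed linear relation $A$, $A \in \bo(H_0,H_1)$ iff $A^* \in \bo(H_1,H_0)$, with equal norms. Note also that whenever $A^*$ is an operator, \prettyref{lem:ddc} tells us $A$ is densely defined, and symmetrically; so once one side is bounded (hence an everywhere-defined operator) the other side is densely defined and we are in the operator setting where \prettyref{cor:uniquecont} applies.

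For the forward direction, suppose $\overline{A} =: B \in \bo(H_0,H_1)$. Then $B$ is a densely defined (indeed everywhere-defined) bounded operator, and the Remark after the definition of the adjoint gives the familiar characterization $B^* = \set{(u,v)\in H_1\times H_0}{\forall x\in H_0\colon \scp{u}{Bx}_{H_1} = \scp{v}{x}_{H_0}}$. For each $u \in H_1$, the functional $x \mapsto \scp{u}{Bx}_{H_1}$ is bounded on $H_0$ with norm at most $\norm{B}\norm{u}$, so by the Riesz representation theorem there is a unique $v \in H_0$ with $\scp{v}{x}_{H_0} = \scp{u}{Bx}_{H_1}$ for all $x$, and $\norm{v} \leq \norm{B}\norm{u}$. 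Hence $B^*$ is an everywhere-defined operator with $\norm{B^*} \leq \norm{B}$; in particular $B^* \in \bo(H_1,H_0)$. The reverse direction now needs no new argument: applying what we just proved to the relation $A^*$ (which is always closed, by the Remark after \prettyref{lem:adj}) in place of $A$, boundedness of $A^*$ yields boundedness of $(A^*)^* = A^{**} = \overline{A}$ by \prettyref{lem:ass}, with $\norm{\overline{A}} \leq \norm{A^*}$.

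Combining the two norm inequalities, $\norm{A^*} \leq \norm{\overline{A}}$ and $\norm{\overline{A}} \leq \norm{A^*}$, gives $\norm{A^*} = \norm{\overline{A}}$, which is the remaining claim. I do not anticipate a serious obstacle here; the only point requiring a little care is the bookkeeping with $A$ versus $\overline{A}$ — one must make sure that the domain-density hypotheses needed to invoke \prettyref{cor:uniquecont} and \prettyref{lem:ass} are actually in force, which is why reducing at the outset to the closed case (and noting $A^* = (\overline{A})^*$) is worthwhile. An alternative, slightly slicker route avoids Riesz entirely: once $A^*$ is known to be an operator, \prettyref{cor:uniquecont} applied to $A^*$ on its (dense) domain gives $\norm{A^*} = \sup_{0\neq u \in \dom(A^*)} \norm{A^*u}/\norm{u}$, and one bounds $\norm{A^*u}^2 = \scp{A^*u}{A^*u}_{H_0} = \scp{u}{A A^* u}_{H_1} \leq \norm{u}\,\norm{\overline{A}}\,\norm{A^*u}$ when $\overline{A}$ is bounded — but this still ultimately rests on the same Riesz-type fact, so I would present the direct Riesz argument above as the cleanest.
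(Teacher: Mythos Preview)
Your proposal is correct and follows essentially the same route as the paper: bound $\norm{A^*u}$ in terms of $\norm{\overline{A}}\norm{u}$ via the duality/Riesz identification, then obtain the reverse direction by applying the forward one to $A^*$ using $A^{**}=\overline{A}$ from \prettyref{lem:ass}. The only cosmetic difference is that you invoke Riesz to conclude directly that $\dom(B^*)=H_1$, whereas the paper first establishes boundedness of $A^*$ on $\dom(A^*)$ via the dual-norm formula and then appeals to \prettyref{lem:bbc} (closed bounded operator has closed domain) together with the density of $\dom(A^*)$ from \prettyref{lem:ddc}; your aside about the ``alternative, slightly slicker route'' is in fact closer to what the paper actually does.
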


\begin{proof}
Note that $\overline{A}\in L(H_{0},H_{1})$ implies that $A$ is closable
and densely defined. Thus, by \prettyref{lem:ddc}, $A^{*}$ is a
densely defined, closed linear operator. For $u\in\dom(A^{*})$ we
compute using \prettyref{lem:ass}
\[
\norm{A^{*}u}=\sup_{x\in H_{0}\setminus\{0\}}\frac{\abs{\scp{A^{*}u}{x}}}{\norm{x}}=\sup_{x\in H_{0}\setminus\{0\}}\frac{\abs{\scp{u}{\overline{A}x}}}{\norm{x}}\leq\norm{\overline{A}}\norm{u},
\]
yielding $\norm{A^*}\leq \norm{\overline{A}}$.
On the one hand, this implies that $A^{*}$ is bounded, and on the other, since $A^{*}$ is densely
defined we deduce $A^{*}\in L(H_{1},H_{0})$ by \prettyref{lem:bbc}.
The other implication (and the other inequality) follows from the first one applied to $A^{*}$
instead of $A$ using $A^{**}=\overline{A}$.
\end{proof}

We end this section by defining some special classes of relations and operators.

\begin{defn*}
Let $H$ be a Hilbert space and $A\subseteq H\times H$ a linear relation.
We call $A$ \emph{(skew-)Hermitian}\index{(skew-)Hermitian} if
$A\subseteq A^{*}$ ($A\subseteq -A^{*}$). We say that $A$ is \emph{(skew-)symmetric}\index{(skew-)symmetric}
if $A$ is (skew-)Hermitian and densely defined (so that $A^{*}$ is a linear operator), and $A$
is called \emph{(skew-)self\-adjoint}\index{(skew-)selfadjoint} if $A=A^{*}$
($A=-A^{*}$). Additionally, if $A$ is densely defined, then we say that
$A$ is \emph{normal}\index{normal} if $AA^{*}=A^{*}A$. 
\end{defn*}

\section{Computing the Adjoint}

In general it is a very difficult task to compute the adjoint of a
given (unbounded) operator. There are, however, cases, where the adjoint
of a sum or the product can be computed more readily. We start with the most basic
case of bounded linear operators.
\begin{prop}
\label{prop:adj-cont-sum-comp}Let $A,B\in\bo(H_{0},H_{1}),C\in\bo(H_{2},H_{0})$.
Then $\left(A+B\right)^{*}=A^{*}+B^{*}$ and $\left(AC\right)^{*}=C^{*}A^{*}$.
\end{prop}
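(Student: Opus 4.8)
The plan is to reduce the statement about adjoints of sums and composites of bounded operators to the characterisation of the adjoint relation recorded in the Remark after the definition of $A^*$, namely $A^* = \set{(u,v)\in H_1\times H_0}{\forall x\in\dom(A)\colon \scp{u}{Ax} = \scp{v}{x}}$, and then simply unwind the defining orthogonality relations. Since $A,B\in\bo(H_0,H_1)$ and $C\in\bo(H_2,H_0)$ are everywhere defined and bounded, by \prettyref{lem:bdds} (or directly by \prettyref{cor:uniquecont}) their adjoints $A^*,B^*,C^*$ are again everywhere-defined bounded operators, so there are no domain subtleties to worry about: every relation appearing in the statement has full domain, and equality of relations is just equality of the associated bilinear identities.

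First I would prove $(A+B)^* = A^* + B^*$. For $u\in H_1$ and $x\in H_0$ one computes $\scp{u}{(A+B)x} = \scp{u}{Ax} + \scp{u}{Bx} = \scp{A^*u}{x} + \scp{B^*u}{x} = \scp{(A^*+B^*)u}{x}$, using linearity of the scalar product in the second argument and the defining property of $A^*$ and $B^*$. Since this holds for all $x\in H_0$, the element $(u,(A^*+B^*)u)$ satisfies the characterisation of $(A+B)^*$, so $A^*+B^* \subseteq (A+B)^*$; conversely, if $(u,v)\in (A+B)^*$ then $\scp{v}{x} = \scp{u}{(A+B)x} = \scp{(A^*+B^*)u}{x}$ for all $x$, hence $v = (A^*+B^*)u$ by density (here non-degeneracy of the scalar product on all of $H_0$), giving the reverse inclusion. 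Note that on the level of relations $A^*+B^*$ is literally $\set{(u, A^*u + B^*u)}{u\in H_1}$ since both summands have full domain, so this genuinely is the operator sum.

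Next I would prove $(AC)^* = C^*A^*$. Here $AC\in\bo(H_2,H_1)$, and for $u\in H_1$, $x\in H_2$ we have $\scp{u}{(AC)x} = \scp{u}{A(Cx)} = \scp{A^*u}{Cx} = \scp{C^*A^*u}{x}$, applying the adjoint property of $A$ first and then that of $C$. As before, this identity valid for all $x\in H_2$ shows $(u, C^*A^*u)\in (AC)^*$, and the converse inclusion follows from non-degeneracy of $\scp{\cdot}{\cdot}$ on $H_2$. Again $C^*A^*$ as a composition of relations coincides with the operator $u\mapsto C^*(A^*u)$ because $A^*$ and $C^*$ are everywhere defined, so the relational composition introduces no spurious pairs.

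I do not expect a serious obstacle here; the only things to be careful about are (i) invoking \prettyref{lem:bdds} (together with \prettyref{prop:adj-cont-sum-comp}'s hypotheses) at the start so that all adjoints are known to be bounded everywhere-defined operators, which legitimises writing $A^*+B^*$ and $C^*A^*$ as honest operators rather than merely relations, and (ii) using that the scalar product is non-degenerate on the whole space to pass from ``$\scp{v-w}{x}=0$ for all $x$'' to ``$v=w$''. Everything else is a direct substitution into the bilinear identity characterising the adjoint relation.
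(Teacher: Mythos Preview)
Your proof is correct. The paper does not give an independent proof of this proposition; it simply remarks that the assertions are special cases of the more general \prettyref{thm:adj-sum} and \prettyref{thm:adj-comp} that follow, and those theorems are proved by exactly the same kind of direct computation with the defining bilinear identity that you carry out here (so your approach coincides with the paper's, just specialised to the bounded case rather than factored through the general relational setting).
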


The latter results are special cases of more general statements to
follow.
\begin{thm}
\label{thm:adj-sum}Let $A\subseteq H_{0}\times H_{1}$ be a relation and $B\in L(H_{0},H_{1})$.
Then $(A+B)^{*}=A^{*}+B^{*}$.
\end{thm}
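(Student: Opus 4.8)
The plan is to prove the set equality $(A+B)^* = A^*+B^*$ by showing both inclusions directly from the characterisation of the adjoint given in the Remark after its definition, namely that $C^* = \set{(u,v)}{\forall (x,y)\in C\colon \scp{u}{y}_{H_1}=\scp{v}{x}_{H_0}}$ together with the fact (used implicitly) that $B^*$ is the genuine bounded adjoint of $B\in\bo(H_0,H_1)$, so that $\scp{Bx}{w}_{H_1}=\scp{x}{B^*w}_{H_0}$ for all $x\in H_0$, $w\in H_1$. The key point making the argument clean is that $\dom(A+B)=\dom(A)$, because $B$ is everywhere defined: for $(x,z)\in A+B$ we have $z = y+Bx$ with $(x,y)\in A$, and conversely.

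First I would unwind the definition of $A+B$: $(x,z)\in A+B$ iff there is $y$ with $(x,y)\in A$ and $z=y+Bx$. Then I would take $(u,v)\in (A+B)^*$, so for every such $(x,z)$ we have $\scp{u}{z}_{H_1}=\scp{v}{x}_{H_0}$, i.e. $\scp{u}{y+Bx}_{H_1}=\scp{v}{x}_{H_0}$ for all $(x,y)\in A$. Splitting the left-hand side and using $\scp{u}{Bx}_{H_1}=\scp{B^*u}{x}_{H_0}$ gives $\scp{u}{y}_{H_1}=\scp{v-B^*u}{x}_{H_0}$ for all $(x,y)\in A$, which says exactly $(u,v-B^*u)\in A^*$. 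Hence $(u,v)=(u,(v-B^*u)+B^*u)\in A^*+B^*$, noting that $B^*\in\bo(H_1,H_0)$ is everywhere defined so $(u,B^*u)\in B^*$ and the sum of relations is formed along the common first component $u$. This proves $(A+B)^*\subseteq A^*+B^*$.

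For the reverse inclusion I would run the same computation backwards: take $(u,v)\in A^*+B^*$, so $v=w+B^*u$ with $(u,w)\in A^*$; then for any $(x,z)\in A+B$, writing $z=y+Bx$ with $(x,y)\in A$, compute $\scp{u}{z}_{H_1}=\scp{u}{y}_{H_1}+\scp{u}{Bx}_{H_1}=\scp{w}{x}_{H_0}+\scp{B^*u}{x}_{H_0}=\scp{v}{x}_{H_0}$, so $(u,v)\in(A+B)^*$.

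There is essentially no serious obstacle here — the statement is a bookkeeping exercise once one has the Remark's description of $A^*$ and \prettyref{prop:adj-cont-sum-comp} for the bounded part. The one subtlety worth stating explicitly is why the two relational sums $A^*+B^*$ in the two directions pair up correctly: since $B^*$ is a (bounded, total) operator, $A^*+B^*$ is literally $\set{(u, w+B^*u)}{(u,w)\in A^*}$, so adding and subtracting $B^*u$ is an honest equivalence of membership statements rather than merely an inclusion. I would remark that this also re-proves \prettyref{prop:adj-cont-sum-comp}'s first identity as the special case $A\in\bo(H_0,H_1)$, since then $A^{**}=\overline A = A$ by \prettyref{lem:ass} and \prettyref{lem:bbc}, although that is not needed for the proof itself.
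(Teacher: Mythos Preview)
Your proof is correct and follows essentially the same approach as the paper's: both use the characterisation of the adjoint from the Remark and the fact that $B^*$ is everywhere defined to reduce membership in $(A+B)^*$ to membership in $A^*+B^*$. The only difference is stylistic --- the paper packages the argument as a single chain of equivalences $(u,v)\in A^*+B^*\iff (u,v-B^*u)\in A^*\iff\ldots\iff(u,v)\in(A+B)^*$ rather than two separate inclusions --- and your explicit remark that totality of $B^*$ is what makes $A^*+B^*=\set{(u,w+B^*u)}{(u,w)\in A^*}$ is precisely what justifies the first biconditional in the paper's chain.
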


\begin{proof}
Let $(u,v)\in H_{1}\times H_{0}$. Then we compute
\begin{align*}
(u,v)\in A^{*}+B^{*} & \iff(u,v-B^{*}u)\in A^{*}\\
 & \iff\forall\, (x,y)\in A\colon\scp{y}{u}_{H_1}=\scp{x}{v-B^{*}u}_{H_0}\\
 & \iff\forall\, (x,y)\in A\colon\scp{y+Bx}{u}_{H_1}=\scp{x}{v}_{H_0}\\
 & \iff\forall\, (x,z)\in A+B\colon\scp{z}{u}_{H_1}=\scp{x}{v}_{H_0}\\
 & \iff(u,v)\in\left(A+B\right)^{*}.
\end{align*}
Note that for the first, third and fourth equivalence, we have used the fact that
$B\in\bo(H_{0},H_{1})$ together with \prettyref{lem:bdds}.
\end{proof}
\begin{cor}
\label{cor:adj-sum}Let $A\subseteq H_{0}\times H_{1}$, $B\in L(H_{0},H_{1}).$
If $A$ is densely defined, then $A^{*}+B^{*}$ is an operator
and $(A+B)^{*}=A^{*}+B^{*}$.
\end{cor}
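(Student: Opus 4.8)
The plan is to derive the corollary from \prettyref{thm:adj-sum} together with \prettyref{lem:ddc}. The equality $(A+B)^{*}=A^{*}+B^{*}$ \emph{as relations} is precisely the content of \prettyref{thm:adj-sum} (whose hypothesis $B\in\bo(H_{0},H_{1})$ is satisfied), so the only new thing to establish is that this common relation is in fact a linear \emph{operator}, i.e.\ that it maps $\{0\}$ to $\{0\}$.

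First I would note that $A+B$ is densely defined: since $B\in\bo(H_{0},H_{1})$ is defined on all of $H_{0}$, the definition of the sum of relations yields $\dom(A+B)=\dom(A)\cap\dom(B)=\dom(A)$, which is dense in $H_{0}$ by hypothesis. Applying \prettyref{lem:ddc} to the relation $A+B$ then shows that the adjoint of a densely defined relation is a linear operator; hence $(A+B)^{*}$, and therefore $A^{*}+B^{*}$, is an operator. Alternatively one can argue directly on the right-hand side: $A^{*}$ is an operator by \prettyref{lem:ddc} (as $A$ is densely defined) and $B^{*}\in\bo(H_{1},H_{0})$ by \prettyref{lem:bdds}, so $(A^{*}+B^{*})[\{0\}]\subseteq A^{*}[\{0\}]+B^{*}[\{0\}]=\{0\}$, reaching the same conclusion. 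Either route closes the argument once combined with \prettyref{thm:adj-sum}.

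I do not anticipate a genuine obstacle: the statement is essentially a bookkeeping consequence of the relational framework already in place. The one step deserving a moment's care is the identity $\dom(A+B)=\dom(A)$ — this is what upgrades ``$A$ densely defined'' to ``$A+B$ densely defined'', and it relies crucially on $B$ being defined on the whole of $H_{0}$; for a merely boundedly defined $B$ with proper domain the conclusion could fail.
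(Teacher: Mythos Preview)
Your proposal is correct and matches the paper's intended argument: the corollary is stated immediately after \prettyref{thm:adj-sum} without proof, as it follows directly from that theorem together with \prettyref{lem:ddc} applied to the densely defined relation $A+B$ (noting $\dom(A+B)=\dom(A)$ since $B$ is everywhere defined). Your alternative route via $A^{*}$ and $B^{*}$ separately is also fine and equally brief.
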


\begin{thm}
\label{thm:adj-comp}Let $A\subseteq H_{0}\times H_{1}$ be a closed
linear relation and $C\in\bo(H_{2},H_{0})$. Then $\left(AC\right)^{*}=\overline{C^{*}A^{*}}.$
\end{thm}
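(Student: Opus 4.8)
The plan is to mimic the argument for \prettyref{thm:adj-sum}, unravelling the membership $(u,v) \in (AC)^*$ into a statement about pairs in $A$ composed with $C$, and then matching it against membership in $\overline{C^*A^*}$. The key structural input is that $C$ is bounded everywhere-defined, so by \prettyref{prop:adj-cont-sum-comp} (or \prettyref{lem:bdds}) $C^* \in \bo(H_0, H_2)$ is again bounded and everywhere-defined; this is what lets us shuffle $C$ and $C^*$ across scalar products freely. One inclusion should come out cleanly by direct computation; the other will require taking closures, which is the source of the $\overline{(\cdot)}$ on the right-hand side and the main point to handle with care.

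First I would establish $C^*A^* \subseteq (AC)^*$, hence $\overline{C^*A^*} \subseteq (AC)^*$ since $(AC)^*$ is closed (adjoints are always closed, as noted in the remark after \prettyref{lem:adj}). To see the inclusion: take $(u,v) \in C^*A^*$, so there is $w \in H_0$ with $(u,w) \in A^*$ and $(w,v) \in C^*$, i.e.\ $v = C^*w$. For any $(x,z) \in AC$ pick $y \in H_0$ with $(x,y) \in C$ (so $y = Cx$) and $(y,z) \in A$. Then
\[
\scp{z}{u}_{H_1} = \scp{y}{w}_{H_0} = \scp{Cx}{w}_{H_0} = \scp{x}{C^*w}_{H_2} = \scp{x}{v}_{H_2},
\]
using $(u,w)\in A^*$ for the first equality and boundedness of $C$ for the third. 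Hence $(u,v) \in (AC)^*$.

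For the reverse inclusion $(AC)^* \subseteq \overline{C^*A^*}$, the cleanest route is to avoid chasing sequences directly and instead reduce to a case already proved. Note $AC$ is closed by \prettyref{prop:inv-sum-comp-closed}(c), so $(AC)^{**} = \overline{AC} = AC$ by \prettyref{lem:ass}. Applying the already-established inclusion with $A$ replaced by $(AC)^*$ and... — actually the slicker argument: it suffices to show $((AC)^*)^* \subseteq (\overline{C^*A^*})^*$, equivalently (taking adjoints, which reverses inclusions and is why we want this direction) that $\overline{C^*A^*}^* \subseteq \overline{AC}^* = (AC)^*$; but $\overline{C^*A^*}^* = (C^*A^*)^*$ and by the first inclusion applied to the pair $(A^*, C^*)$ — here $C^*$ is bounded so the hypotheses are met — we get $A^{**}C^{**} \subseteq (C^*A^*)^*$... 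I would need $A^{**}C^{**} = \overline{A}\,C \supseteq AC$, which holds. Chaining: $AC \subseteq \overline{A}C = A^{**}C^{**} \subseteq (C^*A^*)^* = (\overline{C^*A^*})^*$, so taking adjoints once more, $\overline{C^*A^*} = (\overline{C^*A^*})^{**} \subseteq (AC)^*$. Wait — this gives the inclusion I already have. Let me instead just do it honestly: given $(u,v) \in (AC)^*$, set $w$... the obstacle is that $v$ need not be of the form $C^*w$ for a single $w$, so one genuinely needs a limiting argument or a clean duality trick. The most robust fix: show $(AC)^* = (\overline{AC})^* $ and that $\overline{AC} = \overline{(C^{**})^*{}^* \cdots}$ — this is getting circular.

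The honest main step, then, is the reverse inclusion, and I expect to prove it by the duality identity $(AC)^* = ((AC)^{**})^* $ combined with the observation that $AC$ is \emph{dense} in nothing helpful; so instead I will argue: by \prettyref{lem:ass} it is enough to show $(AC)^{*} \subseteq \overline{C^*A^*}$ by verifying $\bigl((AC)^*\bigr)^\bot \supseteq (\overline{C^*A^*})^\bot$ in $H_1 \times H_0$, i.e.\ working with the orthogonal complement characterisation of the adjoint from its definition $A^* = -((A^{-1})^\bot)$. Concretely, $\overline{C^*A^*}^\bot = (C^*A^*)^\bot$, and using \prettyref{lem:adj} one rewrites everything in terms of $A^\bot$, $C^\bot$ and the isomorphisms $(x,y)\mapsto(y,x)$; the identity $(AC)^* = \overline{C^*A^*}$ then becomes a purely linear-algebraic statement about these subspaces together with closures, with the single analytic fact that $C$ is bounded used to guarantee $C^*$ is everywhere defined so that the composition $C^*A^*$ has the expected domain. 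The hard part will be bookkeeping the closures correctly — in particular seeing exactly where density/boundedness of $C$ forces a bar that cannot be removed — rather than any deep idea; this is why the statement has $\overline{C^*A^*}$ and not $C^*A^*$, and an instructive remark afterwards would be that the bar is genuinely needed (e.g.\ when $C$ has non-closed range, $C^*A^*$ can fail to be closed even though $(AC)^*$ always is).
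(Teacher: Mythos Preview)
Your first inclusion $C^*A^* \subseteq (AC)^*$ is correct and matches the paper's easy direction. The reverse inclusion, however, is not established: you yourself notice that each duality manipulation you try collapses back to the direction already proved, and the orthogonal-complement sketch at the end is not carried out. So the proposal has a genuine gap.

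The missing idea is to aim for the equality $AC = (C^*A^*)^*$ rather than attacking $(AC)^* \subseteq \overline{C^*A^*}$ head-on. One inclusion, $AC \subseteq (C^*A^*)^*$, you essentially already have (it is the adjoint of your easy direction). For the other, take $(w,y) \in (C^*A^*)^*$. For \emph{every} $(u,v)\in A^*$ the pair $(u,C^*v)$ lies in $C^*A^*$, so the defining identity of the adjoint gives
\[
\scp{u}{y}_{H_1} = \scp{C^*v}{w}_{H_2} = \scp{v}{Cw}_{H_0}.
\]
This says precisely that $(Cw,y)\in A^{**}$. Now the closedness of $A$ is used: $A^{**}=\overline{A}=A$, hence $(Cw,y)\in A$ and $(w,y)\in AC$. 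Thus $AC=(C^*A^*)^*$, and one final adjunction together with \prettyref{lem:ass} yields $(AC)^* = (C^*A^*)^{**}=\overline{C^*A^*}$. The point you were circling around---that one cannot in general produce a single $w$ with $v=C^*w$---evaporates once you work on the $(C^*A^*)^*$ side, because there the quantifier runs over all $(u,v)\in A^*$ and the boundedness of $C$ lets you manufacture the needed element of $C^*A^*$ from each such pair.
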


\begin{proof}
We first show that $AC\subseteq\left(C^{*}A^{*}\right)^{*}$. For
this, let $(w,y)\in AC$. Then $(Cw,y)\in A$. Hence, for all $(u,z)\in C^{*}A^{*}$;
that is, for all $(u,v)\in A^{*}$ and $z=C^{*}v$, we compute
\[
\scp{u}{y}_{H_1}=\scp{v}{Cw}_{H_0}=\scp{C^{*}v}{w}_{H_2}=\scp{z}{w}_{H_2},
\]
which implies that $AC\subseteq\left(C^{*}A^{*}\right)^{*}$. Next, let
$(w,y)\in\left(C^{*}A^{*}\right)^{*}$. Then for all $(u,v)\in A^{*}$
and $z=C^{*}v$ we obtain
\[
\scp{u}{y}_{H_1}=\scp{z}{w}_{H_2}=\scp{C^{*}v}{w}_{H_2}=\scp{v}{Cw}_{H_0}.
\]
Thus, we obtain $(Cw,y)\in A^{**}=\overline{A}=A$. Thus, $(w,y)\in AC.$
Hence, 
\[
AC=\left(C^{*}A^{*}\right)^{*},
\]
which yields the assertion by adjoining this equation.
\end{proof}
\begin{cor}
\label{cor:adj-comp1}Let $A\subseteq H_{0}\times H_{1}$ be a linear
relation and $C\in\bo(H_{2},H_{0})$. Then $\left(\overline{A}C\right)^{*}=\overline{C^{*}A^{*}}.$
\end{cor}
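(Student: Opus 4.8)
The statement to prove is Corollary~\ref{cor:adj-comp1}: for a linear relation $A\subseteq H_0\times H_1$ and $C\in\bo(H_2,H_0)$, we have $(\overline{A}C)^* = \overline{C^*A^*}$.

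\textbf{Plan.} The idea is to reduce this to \prettyref{thm:adj-comp}, which is exactly the same statement but with the hypothesis that $A$ is closed rather than arbitrary. So the key observation should be that replacing $A$ by $\overline{A}$ changes neither side of the desired identity. First I would apply \prettyref{thm:adj-comp} to the closed linear relation $\overline{A}$ (which is closed by definition of the closure, being a closed subspace of $H_0\times H_1$) in place of $A$: since $C\in\bo(H_2,H_0)$, the theorem yields
\[
(\overline{A}C)^* = \overline{C^*(\overline{A})^*}.
\]
So it remains to show $\overline{C^*(\overline{A})^*} = \overline{C^*A^*}$.

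\textbf{The one thing to check.} This follows from the fact that $(\overline{A})^* = A^*$, which was already recorded in the remark following \prettyref{lem:adj} (adjoints only see the closure, because $A^\bot = (\overline{A})^\bot$). Substituting $(\overline{A})^* = A^*$ into the displayed equation gives $(\overline{A}C)^* = \overline{C^*A^*}$ directly, with no closure-comparison argument needed at all.

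\textbf{Expected obstacle.} There is essentially no obstacle here; this is a one-line corollary. The only thing one must be slightly careful about is making sure the hypotheses of \prettyref{thm:adj-comp} are genuinely met after the substitution — namely that $\overline{A}$ is a \emph{closed} linear relation (true, since the closure of a linear subspace is a linear subspace and is closed by construction) and that $C$ still lies in $\bo(H_2,H_0)$ (unchanged). Once those are in place, the proof is just: invoke \prettyref{thm:adj-comp} for $\overline A$, then rewrite $(\overline A)^* = A^*$ using the earlier remark. I would present it in three lines accordingly.
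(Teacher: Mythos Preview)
Your proof is correct and follows essentially the same approach as the paper: apply \prettyref{thm:adj-comp} to the closed relation $\overline{A}$ and then use $(\overline{A})^* = A^*$. The paper phrases the latter identity as $A^* = A^{***} = (\overline{A})^*$ (via \prettyref{lem:ass}) rather than citing the remark after \prettyref{lem:adj}, but this is a cosmetic difference.
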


\begin{proof}
The result follows upon realising that $A^{*}=A^{***}=\left(\overline{A}\right)^{*}$.
\end{proof}
\begin{cor}
\label{cor:adj-comp2}Let $A\subseteq H_{0}\times H_{1}$ be a linear
relation and $C\in\bo(H_{2},H_{0})$. If $\overline{A}C$ is densely
defined, then $C^{*}A^{*}$ is a closable linear operator with $\overline{C^{*}A^{*}}=\left(\overline{A}C\right)^{*}.$
\end{cor}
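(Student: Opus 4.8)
The plan is to reduce everything to \prettyref{cor:adj-comp1}, which already supplies the identity $(\overline{A}C)^{*}=\overline{C^{*}A^{*}}$; the only substantive point left is to verify that $C^{*}A^{*}$ is genuinely a (closable) linear \emph{operator} and not merely a linear relation.

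First I would note that, since $A$ is a linear relation and $C\in\bo(H_{2},H_{0})$, the composition $\overline{A}C\subseteq H_{2}\times H_{1}$ is again a linear relation. By hypothesis it is densely defined, so \prettyref{lem:ddc}, applied to the linear relation $\overline{A}C$, shows that its adjoint $(\overline{A}C)^{*}$ is a linear operator. Combining this with \prettyref{cor:adj-comp1}, namely $(\overline{A}C)^{*}=\overline{C^{*}A^{*}}$, we conclude that $\overline{C^{*}A^{*}}$ is a linear operator.

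Next I would transfer this property down to $C^{*}A^{*}$ itself. Because $C^{*}A^{*}\subseteq\overline{C^{*}A^{*}}$, we have $(C^{*}A^{*})[\{0\}]\subseteq(\overline{C^{*}A^{*}})[\{0\}]=\{0\}$, the last equality holding since $\overline{C^{*}A^{*}}$ is an operator; hence $C^{*}A^{*}$ is a linear operator. Now $C^{*}A^{*}$ is a linear operator whose closure $\overline{C^{*}A^{*}}$ is a linear operator, so by the very definition of closability, $C^{*}A^{*}$ is closable, and the identity $\overline{C^{*}A^{*}}=(\overline{A}C)^{*}$ is again exactly \prettyref{cor:adj-comp1}.

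There is no real obstacle here: the proof is a short chain of \prettyref{cor:adj-comp1} and \prettyref{lem:ddc}. The one place that needs care is the \emph{direction} of the argument. One cannot read off directly from the definition of composition that $C^{*}A^{*}$ is an operator, since the fibre of $A^{*}$ over $0$ is $\dom(A)^{\bot}$ and $C^{*}$ need not annihilate it; instead one must obtain the operator property of $\overline{C^{*}A^{*}}$ first (via the density of $\overline{A}C$) and only then descend to $C^{*}A^{*}$.
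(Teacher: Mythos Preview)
Your proof is correct and is exactly the intended argument: the paper states this corollary without proof, as it follows immediately from \prettyref{cor:adj-comp1} together with \prettyref{lem:ddc}. Your observation that one must first establish that $\overline{C^{*}A^{*}}=(\overline{A}C)^{*}$ is an operator (via the density hypothesis) before deducing the same for $C^{*}A^{*}$ is precisely the point that makes this a corollary rather than a triviality.
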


\begin{rem}
\label{rem:adj-comp}
\begin{enumerate}
 \item 
Note that if $B\in L(H_{1},H_{2})$ and $A\subseteq H_{0}\times H_{1}$
linear, then $\left(B\overline{A}\right)^{*}=A^{*}B^{*}$. Indeed,
this follows from \prettyref{thm:adj-comp} applied to $A^{*}$ and $B$ instead
of $A$ and $C^{*}$, respectively, since then we obtain $\left(A^{*}B^{*}\right)^{*}=\overline{B^{**}A^{**}}=\overline{B\overline{A}}.$
Computing adjoints on both sides again and using that $A^{*}B^{*}$
is closed by \prettyref{prop:inv-sum-comp-closed}, we get the assertion.

\item We note here that in \prettyref{cor:adj-comp1} and \prettyref{cor:adj-comp2} we $\overline{A}C$ cannot be replaced by $\overline{AC}$
 and encourage the reader to find a counterexample for $A$ being a
closable linear operator. We also refer to \cite{Picard2013a} for
a counterexample due to J. Epperlein.
\end{enumerate}
\end{rem}

We have already seen that $A^{*}={\overline{A}}^{*}$. We can even
restrict $A$ to a core and still obtain the same adjoint.

\begin{prop}
\label{prop:adjoint_core}Let $A\subseteq H_{0}\times H_{1}$ be a linear
relation, $D\subseteq\dom(A)$  a linear subspace. Then $D$
is a core for $A$ if and only if $\left(A\cap(D\times H_{1})\right)^{*}=A^{*}.$
\end{prop}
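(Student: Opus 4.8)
The plan is to reduce the claimed equivalence to the already-established fact that $A^* = (\overline{A})^*$ together with the characterisation of cores. Write $B \coloneqq A \cap (D \times H_1)$, so that $B$ is the restriction of $A$ to $D$ in the relational sense. By definition, $D$ is a core for $A$ precisely when $\overline{B} = \overline{A}$. So the statement to prove is: $\overline{B} = \overline{A}$ if and only if $B^* = A^*$.

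For the forward direction, suppose $\overline{B} = \overline{A}$. Then, using the remark following \prettyref{lem:adj} that $A^* = (\overline{A})^*$ applied to both $A$ and $B$, we get $B^* = (\overline{B})^* = (\overline{A})^* = A^*$. This direction is essentially immediate.

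For the converse, suppose $B^* = A^*$. Take adjoints on both sides and invoke \prettyref{lem:ass}: $\overline{B} = B^{**} = A^{**} = \overline{A}$. Hence $D$ is a core for $A$. So in fact both directions are one-liners once the right abbreviation is in place; the only thing one must check is the trivial set-theoretic point that $B = A \cap (D \times H_1)$ is genuinely the relation one should take adjoints of, and that $\overline{B} = \overline{A}$ is exactly the defining condition of a core. I do not anticipate a real obstacle here; the proof is a bookkeeping exercise chaining $A^* = (\overline{A})^*$ with $A^{**} = \overline{A}$. If anything needs care it is merely confirming that $D$ being a linear subspace makes $A \cap (D \times H_1)$ a linear relation (so that \prettyref{lem:ass} applies), which is clear since the intersection of two linear subspaces of $H_0 \times H_1$ is again one.
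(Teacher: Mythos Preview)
Your proof is correct and takes essentially the same approach as the paper. The paper presents the argument as a single chain of equivalences
\[
D\text{ core} \iff \overline{A|_D}=\overline{A} \iff \overline{A|_D}^\bot=\overline{A}^\bot \iff (A|_D)^\bot = A^\bot \iff (A|_D)^*=A^*,
\]
whereas you split it into two directions using $A^* = (\overline{A})^*$ and \prettyref{lem:ass}; these are the same facts expressed slightly differently.
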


\begin{proof} We set $A|_D\coloneqq A\cap (D\times H_1)$. Then
\begin{align*}
D\text{ core} & \iff \overline{A|_D}=\overline{A} \iff \overline{A|_D}^\bot=\overline{A}^\bot 
\iff  {A|_D}^\bot={A}^\bot \iff A|_D^*=A^*.\qedhere
\end{align*}
\end{proof}

\section{The Spectrum and Resolvent Set}

In this section, we focus on operators acting on a single Banach space. As such, throughout this section let $X$ be a Banach space
over $\K\in\{\R,\C\}$ and let $A\colon\dom(A)\subseteq X\to X$ be
a closed linear operator.
\begin{defn*}
The set 
\[
\rho(A)\coloneqq\set{\lambda\in\K}{\left(\lambda-A\right)^{-1}\in\bo(X)}
\]
is called the \emph{resolvent set\index{resolvent set}} of $A$. We define
\[
\sigma(A)\coloneqq\K\setminus\rho(A)
\]
to be the \emph{spectrum\index{spectrum}} of $A$. 
\end{defn*}
We state and prove some elementary properties of the spectrum and
the resolvent set. We shall see natural examples for $A$ which satisfy that
$\sigma(A)=\K$ or $\sigma(A)=\emptyset$ later on.

For a metric space $(X,d)$, we will write $\ball{x}{r}=\set{y\in X}{d(x,y)<r}$ for the open ball around $x$ of radius $r$ and $\cball{x}{r}=\set{y\in X}{d(x,y)\leq r}$ for the closed ball.

\begin{prop}
\label{prop:resolvent}If $\lambda,\mu\in\rho(A)$, then the \emph{\index{resolvent identity}resolvent
identity} holds. That is
\[
\left(\lambda-A\right)^{-1}-\left(\mu-A\right)^{-1}=\left(\mu-\lambda\right)\left(\lambda-A\right)^{-1}\left(\mu-A\right)^{-1}.
\]
Moreover, the set $\rho(A)$ is open. More precisely, if $\lambda\in\rho(A)$
then $\ball{\lambda}{1\big/\norm{(\lambda-A)^{-1}}}\subseteq\rho(A)$
and for $\mu\in\ball{\lambda}{1\big/\norm{(\lambda-A)^{-1}}}$
we have
\[
\norm{(\mu-A)^{-1}}\leq\frac{\norm{(\lambda-A)^{-1}}}{1-\abs{\lambda-\mu}\norm{(\lambda-A)^{-1}}}.
\]

The mapping $\rho(A)\ni\lambda\mapsto(\lambda-A)^{-1}\in\bo(X)$
is analytic.
\end{prop}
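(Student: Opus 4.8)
The plan is to establish the three assertions in turn, all of which rest on the Neumann series. First I would prove the resolvent identity: for $\lambda,\mu\in\rho(A)$ write
\[
(\lambda-A)^{-1}-(\mu-A)^{-1}=(\lambda-A)^{-1}\bigl((\mu-A)-(\lambda-A)\bigr)(\mu-A)^{-1}=(\mu-\lambda)(\lambda-A)^{-1}(\mu-A)^{-1},
\]
the middle step being legitimate because $(\mu-A)^{-1}$ maps into $\dom(A)$, so the factor $(\mu-A)-(\lambda-A)=\mu-\lambda$ (a bounded scalar operator) may be inserted; one should note the computation is symmetric, so the right-hand side is unchanged under swapping $\lambda$ and $\mu$.

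Next, for openness and the quantitative bound, fix $\lambda\in\rho(A)$ and let $\mu\in\K$ with $\abs{\lambda-\mu}<1\big/\norm{(\lambda-A)^{-1}}$. I would write
\[
\mu-A=(\lambda-A)-(\lambda-\mu)=(\lambda-A)\bigl(1-(\lambda-\mu)(\lambda-A)^{-1}\bigr),
\]
and since $\norm{(\lambda-\mu)(\lambda-A)^{-1}}<1$, the operator $1-(\lambda-\mu)(\lambda-A)^{-1}\in\bo(X)$ is boundedly invertible with inverse given by the Neumann series $\sum_{k=0}^{\infty}(\lambda-\mu)^{k}(\lambda-A)^{-k}$, whose norm is bounded by $\bigl(1-\abs{\lambda-\mu}\norm{(\lambda-A)^{-1}}\bigr)^{-1}$. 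Consequently $\mu-A$ is a bijection from $\dom(A)$ onto $X$ with bounded inverse
\[
(\mu-A)^{-1}=\bigl(1-(\lambda-\mu)(\lambda-A)^{-1}\bigr)^{-1}(\lambda-A)^{-1},
\]
so $\mu\in\rho(A)$ and the stated norm estimate follows by submultiplicativity. (Strictly one should observe that $\mu-A$ is closed, e.g.\ via \prettyref{prop:inv-sum-comp-closed}(b), so that the algebraic inverse just produced is automatically in $\bo(X)$; this is what licenses writing $(\mu-A)^{-1}\in\bo(X)$.)

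Finally, for analyticity of $\lambda\mapsto(\lambda-A)^{-1}$ on $\rho(A)$, I would use the Neumann series representation just obtained: for $\mu$ in the ball $\ball{\lambda}{1\big/\norm{(\lambda-A)^{-1}}}$,
\[
(\mu-A)^{-1}=\sum_{k=0}^{\infty}(\lambda-\mu)^{k}(\lambda-A)^{-(k+1)},
\]
which is a norm-convergent power series in $(\mu-\lambda)$ with coefficients in $\bo(X)$, valid on a neighbourhood of every point of $\rho(A)$; this is precisely local analyticity (in the $\K=\C$ case; for $\K=\R$ one reads this as real-analyticity).

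The main obstacle, such as it is, lies not in any single estimate but in being careful about the operator-algebra bookkeeping with the unbounded operator $A$: one must justify each manipulation of $(\lambda-A)$ on its domain (inserting scalars, factoring, composing with bounded operators on the correct side) and, crucially, invoke closedness of $\mu-A$ to conclude that the inverse constructed algebraically actually belongs to $\bo(X)$ rather than merely being a densely defined bounded operator. Everything else is the standard Neumann-series argument.
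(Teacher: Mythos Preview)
Your proposal is correct and follows essentially the same route as the paper: the same telescoping computation for the resolvent identity, the same factorisation $\mu-A=(\lambda-A)\bigl(1-(\lambda-\mu)(\lambda-A)^{-1}\bigr)$ followed by a Neumann series, and the same power-series expansion for analyticity. Your extra remark about closedness of $\mu-A$ is harmless but not strictly needed here, since bounded invertibility of $\mu-A$ follows directly from the factorisation as a composition of two bijections with bounded inverses.
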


\begin{proof}
For the first assertion, we let $\lambda,\mu\in\rho(A)$ and compute
\begin{align*}
(\lambda-A)^{-1}-(\mu-A)^{-1} & =(\lambda-A)^{-1}\bigl((\mu-A)-(\lambda-A)\bigr)(\mu-A)^{-1}\\
 & =(\lambda-A)^{-1}(\mu-\lambda)(\mu-A)^{-1}\\
 & =(\mu-\lambda)(\lambda-A)^{-1}(\mu-A)^{-1}.
\end{align*}
Next, let $\lambda\in\rho(A)$ and $\mu\in\ball{\lambda}{1/\norm{(\lambda-A)^{-1}}}$.
Then 
\[
\norm{(\lambda-\mu)(\lambda-A)^{-1}}<1.
\]
Hence, $1-(\lambda-\mu)(\lambda-A)^{-1}$ admits
an inverse in $\bo(X)$ satisfying
\begin{equation}
\left(1-(\lambda-\mu)(\lambda-A)^{-1}\right)^{-1}=\sum_{k=0}^{\infty}\left((\lambda-\mu)(\lambda-A)^{-1}\right)^{k}.\label{eq:Neu}
\end{equation}
We claim that $\mu\in\rho(A)$. For this, we compute
\begin{align*}
\mu-A & =\lambda-A-(\lambda-\mu)
 =(\lambda-A)\left(1-(\lambda-\mu)(\lambda-A)^{-1}\right).
\end{align*}
Since $\left(1-(\lambda-\mu)(\lambda-A)^{-1}\right)$ is
an isomorphism in $\bo(X)$, we deduce that the right-hand side admits
a continuous inverse if and only if the left-hand side does. As $\lambda\in\rho(A)$,
we thus infer $\mu\in\rho(A)$. The estimate follows from \prettyref{eq:Neu}.
Indeed, we have
\begin{align*}
\norm{(\mu-A)^{-1}} & \leq\norm{(\lambda-A)^{-1}}\norm{\sum_{k=0}^{\infty}\left((\lambda-\mu)(\lambda-A)^{-1}\right)^{k}}\\
 & \leq\norm{(\lambda-A)^{-1}}\sum_{k=0}^{\infty}\norm{(\lambda-\mu)(\lambda-A)^{-1}}^{k}
 =\frac{\norm{(\lambda-A)^{-1}}}{1-\norm{(\lambda-\mu)(\lambda-A)^{-1}}}.
\end{align*}

For the final claim of the present proposition, we observe that 
\begin{align*}
(\mu-A)^{-1} & =\left(1-(\lambda-\mu)(\lambda-A)^{-1}\right)^{-1}(\lambda-A)^{-1}
 =\sum_{k=0}^{\infty}(\lambda-\mu)^{k}\left((\lambda-A)^{-1}\right)^{k+1},
\end{align*}
which is an operator norm convergent power series expression for the
resolvent at $\mu$ about $\lambda$. Thus, analyticity follows.
\end{proof}
We shall consider multiplication operators in $\L(\mu)$ next. For
a measurable function $V\colon\Omega\to\R$ we will use the notation
$[V\leq c]\coloneqq V^{-1}\bigl[\loi{-\infty}{c}\bigr]$ for some
constant $c\in\mathbb{R}$ (and similarly for relational symbols other
than $\leq$).

\begin{thm}\label{thm:mult1}Let $\left(\Omega,\Sigma,\mu\right)$ be a measure
space and $V\colon\Omega\to\K$ a measurable function. Then the operator
\begin{align*}
V(\m)\colon\dom(V(\m))\subseteq\L(\mu) & \to\L(\mu)\\
f & \mapsto\bigl(\omega\mapsto V(\omega)f(\omega)\bigr),
\end{align*} 
with $\dom(V(\m))\coloneqq\set{f\in\L(\mu)}{\bigl(\omega \mapsto V(\omega)f(\omega)\bigr)\in\L(\mu)}$ satisfies the following prop\-er\-ties:
\begin{enumerate}
  \item $V(\m)$ is densely defined and closed.
  \item $\left(V(\m)\right)^{*}=V^{*}(\m)$ where $V^{*}(\omega)=V(\omega)^{*}$
for all $\omega\in\Omega$ (here $V(\omega)^{*}$ denotes the complex
conjugate of $V(\omega)$).
\item If $V$ is $\mu$-almost everywhere bounded,
then $V(\m)$ is continuous. Moreover, we have $\norm{V(\m)}_{\bo(\L(\mu))}\leq \norm{V}_{L_\infty(\mu)}$.
\item If $V\neq0$ $\mu$-a.e. then $V(\m)$ is
injective and $V(\m)^{-1}=\frac{1}{V}(\m)$, where 
\[\frac{1}{V}(\omega)\coloneqq\begin{cases}
\frac{1}{V(\omega)}, & V(\omega)\neq0,\\
0, & V(\omega)=0,
\end{cases}\]
for all $\omega\in\Omega$. 
\end{enumerate}  
\end{thm}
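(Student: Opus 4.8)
The plan is to verify each of the four properties in turn, using the definition of $V(\m)$ and its domain directly.

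\textbf{Density of the domain and closedness.} For density, I would approximate a general $f\in\L(\mu)$ by the truncations $f_n\coloneqq \1_{[\abs{V}\leq n]}f$. Each $f_n$ lies in $\dom(V(\m))$ since $\abs{V f_n}\leq n\abs{f}$ pointwise, and $f_n\to f$ in $\L(\mu)$ by dominated convergence, as $[\abs{V}\leq n]\uparrow\Omega$ up to the $\mu$-null set $[\abs{V}=\infty]=\varnothing$ (as $V$ is $\K$-valued). For closedness I would use the characterisation in \prettyref{lem:grscp}(iii): if $f_k\to f$ in $\L(\mu)$ and $V(\m)f_k\to g$ in $\L(\mu)$, then passing to a subsequence both convergences hold $\mu$-almost everywhere, so $V(\omega)f_k(\omega)\to V(\omega)f(\omega)$ and $\to g(\omega)$ a.e., whence $g=Vf$ a.e., so $f\in\dom(V(\m))$ and $V(\m)f=g$.

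\textbf{The adjoint.} One inclusion $V^*(\m)\subseteq (V(\m))^*$ is a direct computation: for $f\in\dom(V^*(\m))$ and $g\in\dom(V(\m))$, $\scp{f}{V(\m)g}=\int \overline{f}\,Vg\,\d\mu=\int \overline{V^* f}\,g\,\d\mu=\scp{V^*(\m)f}{g}$, where the interchange is justified since $\overline{f}Vg\in L_1(\mu)$ by Cauchy--Schwarz (both $f$ and $Vg$ are in $\L(\mu)$). For the reverse inclusion, suppose $(u,v)\in (V(\m))^*$, i.e.\ $\int \overline{u}Vg\,\d\mu=\int\overline{v}g\,\d\mu$ for all $g\in\dom(V(\m))$. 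Testing against $g=\1_{[\abs{V}\leq n]\cap S}$ for measurable $S$ of finite measure shows $\overline{u}V=\overline{v}$, i.e.\ $V^* u=v$ a.e.\ on $[\abs{V}\leq n]$, and letting $n\to\infty$ gives $V^* u=v$ a.e.; in particular $u\in\dom(V^*(\m))$. Note density of $\dom(V(\m))$ guarantees $(V(\m))^*$ is an operator by \prettyref{lem:ddc}.

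\textbf{Boundedness and the inverse.} Statement (c) is immediate: if $\abs{V}\leq C$ $\mu$-a.e., then $\norm{Vf}\leq C\norm{f}$ for all $f\in\L(\mu)$, so $\dom(V(\m))=\L(\mu)$ and $\norm{V(\m)}\leq \norm{V}_{L_\infty(\mu)}$. For (d), suppose $V\neq 0$ $\mu$-a.e. If $V(\m)f=0$ then $Vf=0$ a.e., and since $V\neq0$ a.e.\ this forces $f=0$ a.e., giving injectivity. To identify the inverse, set $W\coloneqq \tfrac1V(\m)$ with the stated convention; since $V\neq 0$ a.e., $VW=WV=1$ as functions $\mu$-a.e., and a short check on domains shows $\ran(V(\m))=\dom(W)$ and $W V(\m)f=f$, $V(\m)Wg=g$ on the appropriate domains, so $V(\m)^{-1}=\tfrac1V(\m)$ in the sense of relations.

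\textbf{Main obstacle.} The routine points are the null-set bookkeeping and the measure-theoretic justifications (dominated convergence, Cauchy--Schwarz for integrability, extraction of a.e.-convergent subsequences). The one step demanding genuine care is the nontrivial inclusion $(V(\m))^*\subseteq V^*(\m)$: one must choose enough test functions $g\in\dom(V(\m))$ --- and here the truncation by $[\abs{V}\leq n]$ intersected with sets of finite measure is essential, since $\mu$ need not be $\sigma$-finite --- to conclude the pointwise identity $V^* u=v$ a.e.\ and simultaneously that $u$ lies in the correct domain.
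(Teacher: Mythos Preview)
Your proof is correct and follows essentially the same strategy as the paper: both arguments hinge on the truncation sets $\Omega_n=[\abs{V}\leq n]$, and the treatment of (c) and (d) is the same. Two small execution differences are worth noting. For closedness, you pass to an a.e.-convergent subsequence, whereas the paper multiplies by $\1_{\Omega_n}$ and uses that $V(\m)$ is bounded on $\Omega_n$ to get $\1_{\Omega_n}g=\1_{\Omega_n}Vf$ directly; both are fine. For the nontrivial adjoint inclusion, the paper tests against $g=\1_{\Omega_n}f$ for \emph{arbitrary} $f\in\L(\mu)$, which immediately yields $\1_{\Omega_n}V^*u=\1_{\Omega_n}V(\m)^*u$ as an equality in $\L(\mu)$; your choice of $g=\1_{\Omega_n\cap S}$ with $\mu(S)<\infty$ works too but requires the extra (routine) observation that an $\L(\mu)$-function with vanishing integral over every finite-measure set is zero a.e.
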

\begin{proof}
For the whole proof we let $\Omega_{n}\coloneqq[\abs{V}\leq n]$ and put $\1_{n}\coloneqq\1_{\Omega_{n}}$.  

(a) We first show that $V(\m)$ is densely defined.
Let $f\in\L(\mu)$. Then, we have for all $n\in\N$ that $\1_{n}f\in\dom(V(\m))$.
From $\Omega=\bigcup_{n}\Omega_{n}$ and $\Omega_{n}\subseteq\Omega_{n+1}$
it follows that $\1_{n}f\to f$ in $\L(\mu)$ as $n\to\infty$.

Next, we confirm that $V(\m)$ is closed. Let $(f_{k})_{k}$ in $\dom(V(\m))$
convergent in $\L(\mu)$ with $(V(\m)f_{k})_{k}$ be convergent in $\L(\mu)$.
Denote the respective limits by $f$ and $g$. It is clear that for
all $n\in\N$ we have $\1_{n}f_{k}\to\1_{n}f$ as $k\to\infty$. Also,
we have 
\[
\1_{n}g=\lim_{k\to\infty}\1_{n}V(\m)f_{k}=\lim_{k\to\infty}V(\m)(\1_{n}f_{k})=V(\m)(\1_{n}f) = \1_{n}Vf.
\]
Hence, $g=Vf$ $\mu$-almost everywhere and since $g\in\L(\mu)$,
we have that $f\in\dom(V(\m))$. 

(b) It is easy to see that $V^{*}(\m)\subseteq V(\m)^{*}$. For the other inclusion, we
let $u\in\dom(V(\m)^{*})$. Then, for all $f\in \L(\mu)$ and $n\in \N$ we have $\1_n f\in\dom(V(\m))$ and, hence,
\[
   \scp{f}{\1_nV^*u} = \int_{\Omega_n} f^* V^* u \d\mu = \scp{V(\m)(\1_nf)}{u}=\scp{\1_nf}{V(\m)^*u}=\scp{f}{\1_nV(\m)^*u}.
\]
It follows that $\1_nV^*u=\1_nV(\m)^*u$ for all $n\in\N$.  Thus, $\Omega=\bigcup_{n}\Omega_{n}$ implies $V^*u=V(\m)^*u$ and therefore $u\in \dom(V^*(\m))$ and $V^*(\m)u=V(\m)^*u$.

(c) If $\abs{V}\leq\kappa$ $\mu$-almost everywhere for some $\kappa\geq0$,
then for all $f\in\L(\mu)$ we have $\abs{V(\omega)f(\omega)}\leq\kappa\abs{f(\omega)}$
for $\mu$-almost every $\omega\in\Omega$. Squaring and integrating this
inequality yields boundedness of $V(\m)$ and the asserted inequality.

(d) Assume that $V\neq0$ $\mu$-a.e.~and $V(\m)f=0$. Then, $f(\omega)=0$
for $\mu$-a.e.~$\omega\in\Omega$, which implies $f=0$ in $\L(\mu)$.
Moreover, if $V(\m)f=g$ for $f,g\in\L(\mu)$, then for $\mu$-a.e.~$\omega\in\Omega$ we deduce that $f(\omega)=\frac{1}{V}(\omega)g(\omega)$,
which shows $\frac{1}{V}(\m)\supseteq V(\m)^{-1}$. If on the other
hand $g\in\dom\left(\frac{1}{V}(\m)\right)$, then a similar computation
reveals that $\frac{1}{V}(\m)g\in\dom(V(\m))$ and $V(\m)\frac{1}{V}(\m)g=g$. 
\end{proof}

The spectrum of $V(\m)$ from the latter example can be computed once
we consider a less general class of measure spaces. For later use, we provide a characterisation of these measure spaces first.

\begin{prop}\label{prop:mult1.5} Let $\left(\Omega,\Sigma,\mu\right)$ be a measure
space.
Then the following statements are equivalent:
\begin{enumerate}
\renewcommand{\labelenumi}{{\upshape (\roman{enumi})}}
  \item $\left(\Omega,\Sigma,\mu\right)$ is \emph{semi-finite}\index{semi-finite}, that is, for every $A\in\Sigma$ with $\mu(A)=\infty$,
there exists $B\in\Sigma$ with $0<\mu(B)<\infty$ such that $B\subseteq A$.
  \item For all measurable $V\colon\Omega\to\K$ with $V(\m)\in \bo(\L(\mu))$, we have $V\in L_\infty(\mu)$ and $\norm{V}_{L_\infty(\mu)}\leq \norm{V(\m)}_{\bo(\L(\mu))}$.
\end{enumerate}
\end{prop}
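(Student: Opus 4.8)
The plan is to prove the two implications separately; throughout I abbreviate $c_V\coloneqq\norm{V(\m)}_{\bo(\L(\mu))}$.

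For (i)$\Rightarrow$(ii), I would argue by contradiction. Let $V$ be measurable with $V(\m)\in\bo(\L(\mu))$ and suppose $\mu([\abs{V}>c_V])>0$. Writing $[\abs{V}>c_V]$ as the increasing union $\bigcup_{n\in\N}[\abs{V}>c_V+1/n]$, continuity of measure from below yields some $\varepsilon>0$ with $\mu([\abs{V}>c_V+\varepsilon])>0$. This is precisely where semi-finiteness enters: it guarantees a set $B\in\Sigma$ with $B\subseteq[\abs{V}>c_V+\varepsilon]$ and $0<\mu(B)<\infty$ (if $[\abs{V}>c_V+\varepsilon]$ already has finite measure, take it directly; otherwise invoke (i)). Then $\1_B\in\L(\mu)$ with $\norm{\1_B}^2=\mu(B)>0$, and
\[
\norm{V(\m)\1_B}^2=\int_B\abs{V}^2\d\mu\geq(c_V+\varepsilon)^2\mu(B),
\]
so $\norm{V(\m)}\geq c_V+\varepsilon$, a contradiction. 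Hence $\abs{V}\leq c_V$ $\mu$-a.e., which is exactly the assertion $V\in L_\infty(\mu)$ with $\norm{V}_{L_\infty(\mu)}\leq\norm{V(\m)}_{\bo(\L(\mu))}$.

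For (ii)$\Rightarrow$(i) I would prove the contrapositive. Assume $(\Omega,\Sigma,\mu)$ is not semi-finite, so there is $A\in\Sigma$ with $\mu(A)=\infty$ all of whose measurable subsets have measure $0$ or $\infty$. The key observation is that every $f\in\L(\mu)$ vanishes $\mu$-a.e.\ on $A$: for $n\in\N$ the set $B_n\coloneqq\set{\omega\in A}{\abs{f(\omega)}\geq 1/n}$ satisfies $\mu(B_n)\leq n^2\norm{f}^2<\infty$, hence $\mu(B_n)=0$, and $\set{\omega\in A}{f(\omega)\neq0}=\bigcup_n B_n$ is then a $\mu$-null set. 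Consequently, taking $V\coloneqq\1_A$, we get $V(\m)f=\1_A f=0$ for every $f\in\L(\mu)$, so $V(\m)=0\in\bo(\L(\mu))$ with norm $0$; but $\mu(A)=\infty>0$ forces $\norm{V}_{L_\infty(\mu)}=1>0=\norm{V(\m)}_{\bo(\L(\mu))}$, so (ii) fails.

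The routine ingredients (continuity of measure, the elementary $L_2$- and $L_\infty$-estimates) are straightforward. The only genuinely load-bearing steps are the use of semi-finiteness to produce a finite-measure testing set for $\1_B$ in the first implication, and the vanishing-on-$A$ observation in the second; I expect no real obstacle beyond being careful that the testing function $\1_B$ actually lies in $\L(\mu)$, which is exactly the role of the finite-measure requirement.
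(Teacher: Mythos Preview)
Your proof is correct and follows essentially the same approach as the paper: for (i)$\Rightarrow$(ii) you both produce a finite-measure subset of $[\abs{V}>c_V+\varepsilon]$ via semi-finiteness and test with its indicator, and for (ii)$\Rightarrow$(i) you both take $V=\1_A$ for a ``purely infinite'' set $A$ and show every $\L(\mu)$-function vanishes a.e.\ on $A$ via the Chebyshev-type estimate. The only cosmetic difference is that you explicitly invoke continuity of measure to locate the $\varepsilon$, whereas the paper starts directly with an arbitrary $\varepsilon>0$ and assumes the level set has positive measure.
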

\begin{proof}
 (i)$\Rightarrow$(ii): Let $\varepsilon>0$ and $A_\varepsilon\coloneqq [\abs{V}\geq \norm{V(\m)}_{\bo(\L(\mu))}+\varepsilon]$. Assume that $\mu(A_\varepsilon)>0$. Since $(\Omega,\Sigma,\mu)$ is semi-finite we find $B_\varepsilon\subseteq A_\varepsilon$ such that $0<\mu(B_\varepsilon)<\infty$. Define $f\coloneqq \mu(B_\varepsilon)^{-1/2}\1_{B_\varepsilon} \in \L(\mu)$ with $\norm{f}_{\L(\mu)}=1$. Consequently, we obtain
 \[
     \norm{V(\m)}_{\bo(\L(\mu))}\geq      \norm{V(\m)f}_{\L(\mu)} \geq \norm{V(\m)}_{\bo(\L(\mu))}+\varepsilon,  
 \]which yields a contradiction, and hence (ii).
 
 (ii)$\Rightarrow$(i): Assume that $(\Omega,\Sigma,\mu)$ is not semi-finite. Then we find $A\in \Sigma$ with $\mu(A)=\infty$ such that for each $B\subseteq A$ measurable, we have $\mu(B)\in \{0,\infty\}$. Then $V\coloneqq \1_{A}$ is bounded and measurable with $\norm{V}_{L_\infty(\mu)}=1$. However, $V(\m)=0$. Indeed, if $f\in \L(\mu)$ then $[f\neq 0]=\bigcup_{n\in\N}[\abs{f}^2\geq n^{-1}]$. Thus, 
 \[
 [V(\m)f\neq 0] = [f\neq 0]\cap A = \bigcup_{n\in\N} [\abs{f}^2\geq n^{-1}]\cap A.
 \] Since $\mu([\abs{f}^2\geq n^{-1}])<\infty$ as $f\in \L(\mu)$, we infer $\mu([\abs{f}^2\geq n^{-1}]\cap A)=0$ by the property assumed for $A$. Thus, $\mu([V(\m)f\neq 0])=0$ implying $V(\m)=0$. Hence, $\norm{V(\m)}_{\bo(\L(\mu))}=0<1=\norm{V}_{L_\infty(\mu)}$.
 \end{proof}

A straightforward consequence of \prettyref{thm:mult1} (c) and \prettyref{prop:mult1.5} is the following.

\begin{prop}\label{prop:mult1.75} Let $\left(\Omega,\Sigma,\mu\right)$ be a semi-finite measure
space, $V\colon \Omega\to\K$ measurable and bounded. Then $\norm{V(\m)}_{\bo(\L(\mu))}=\norm{V}_{L_\infty(\mu)}$.
\end{prop}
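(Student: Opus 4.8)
The plan is to obtain the claimed equality by combining two inequalities that are already at our disposal, so that the proof reduces to a bookkeeping of which earlier result supplies which bound. For the inequality $\norm{V(\m)}_{\bo(\L(\mu))}\leq\norm{V}_{L_\infty(\mu)}$ I would simply invoke \prettyref{thm:mult1} (c): since $V$ is bounded it is in particular $\mu$-almost everywhere bounded, so $V(\m)$ is a continuous operator on $\L(\mu)$ with $\norm{V(\m)}_{\bo(\L(\mu))}\leq\norm{V}_{L_\infty(\mu)}$. In particular $V(\m)\in\bo(\L(\mu))$, which is precisely the hypothesis needed to feed this particular $V$ into \prettyref{prop:mult1.5}.

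For the reverse inequality $\norm{V}_{L_\infty(\mu)}\leq\norm{V(\m)}_{\bo(\L(\mu))}$ I would use the implication (i)$\Rightarrow$(ii) of \prettyref{prop:mult1.5}: the measure space is semi-finite by hypothesis, and we have just checked that $V(\m)\in\bo(\L(\mu))$, so the conclusion of statement (ii) applied to this $V$ gives exactly $\norm{V}_{L_\infty(\mu)}\leq\norm{V(\m)}_{\bo(\L(\mu))}$ (it also re-confirms $V\in L_\infty(\mu)$, which we already knew from boundedness). Putting the two inequalities together yields $\norm{V(\m)}_{\bo(\L(\mu))}=\norm{V}_{L_\infty(\mu)}$.

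There is essentially no obstacle here: the entire content of the statement is already contained in \prettyref{thm:mult1} (c) and \prettyref{prop:mult1.5}, and the only point requiring a moment's care is the logical ordering — the easy bound from \prettyref{thm:mult1} (c) is needed first, both as one half of the desired equality and to guarantee that $V(\m)\in\bo(\L(\mu))$ so that \prettyref{prop:mult1.5} is applicable; the $(\mathrm{i})\Rightarrow(\mathrm{ii})$ direction of \prettyref{prop:mult1.5}, which is where semi-finiteness genuinely enters, then supplies the other half.
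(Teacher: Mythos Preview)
Your proof is correct and matches the paper's approach exactly: the paper does not give a separate proof but simply states that the proposition is a straightforward consequence of \prettyref{thm:mult1}~(c) and \prettyref{prop:mult1.5}, which is precisely the combination of inequalities you spell out.
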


\begin{thm}
\label{thm:mult2}Let $\left(\Omega,\Sigma,\mu\right)$ be a semi-finite measure
space and let $V\colon\Omega\to\K$ be measurable. Then
\[
\sigma\left(V(\m)\right)=\essran V\coloneqq\set{\lambda\in\K}{\forall\varepsilon>0\colon\mu\left(\left[\abs{\lambda-V}<\varepsilon\right]\right)>0}.
\]
\end{thm}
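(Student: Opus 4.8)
The plan is to establish the set equality by proving the two inclusions $\K\setminus\essran V\subseteq\rho(V(\m))$ and $\essran V\subseteq\sigma(V(\m))$ separately. Throughout I will freely use \prettyref{thm:mult1}, in particular that $\lambda-V(\m)$ coincides with the multiplication operator $(\lambda-V)(\m)$: the two operators act identically on the common domain, and the domains agree because $\lambda f\in\L(\mu)$ for every $f\in\L(\mu)$, so that $(\lambda-V)f\in\L(\mu)$ if and only if $Vf\in\L(\mu)$.

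For the first inclusion, suppose $\lambda\notin\essran V$. Then there is $\varepsilon>0$ with $\mu\bigl([\abs{\lambda-V}<\varepsilon]\bigr)=0$, i.e.\ $\abs{\lambda-V}\geq\varepsilon$ $\mu$-almost everywhere. Writing $W\coloneqq\lambda-V$, we have $W\neq0$ $\mu$-a.e., so \prettyref{thm:mult1}(d) yields that $W(\m)$ is injective with $W(\m)^{-1}=\frac{1}{W}(\m)$. Since $\abs{\frac{1}{W}}\leq\frac{1}{\varepsilon}$ $\mu$-a.e., \prettyref{thm:mult1}(c) shows $\frac{1}{W}(\m)\in\bo(\L(\mu))$. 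Hence $(\lambda-V(\m))^{-1}=W(\m)^{-1}\in\bo(\L(\mu))$, that is, $\lambda\in\rho(V(\m))$. Note that this direction does not use semi-finiteness.

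For the reverse inclusion, let $\lambda\in\essran V$; I will construct a normalised sequence $(f_n)_n$ in $\L(\mu)$ with $(\lambda-V(\m))f_n\to0$, which rules out $\lambda-V(\m)$ having a bounded inverse. This is where semi-finiteness is used: for each $n\in\N$ the set $A_n\coloneqq[\abs{\lambda-V}<1/n]$ has $\mu(A_n)>0$ by definition of $\essran V$, and semi-finiteness provides $B_n\subseteq A_n$ with $0<\mu(B_n)<\infty$. Setting $f_n\coloneqq\mu(B_n)^{-1/2}\1_{B_n}$ gives $\norm{f_n}_{\L(\mu)}=1$, while integrating $\abs{\lambda-V}^2$ over $B_n\subseteq A_n$ yields $\norm{(\lambda-V(\m))f_n}_{\L(\mu)}\leq 1/n$. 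If $\lambda\in\rho(V(\m))$, then with $C\coloneqq\norm{(\lambda-V(\m))^{-1}}_{\bo(\L(\mu))}$ we would get $1=\norm{f_n}\leq C\norm{(\lambda-V(\m))f_n}\leq C/n\to0$, a contradiction; hence $\lambda\in\sigma(V(\m))$.

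The integral estimates and the identification $\lambda-V(\m)=(\lambda-V)(\m)$ are routine; the only genuinely substantive point is recognising that semi-finiteness is precisely the hypothesis making the approximate-eigenvector construction work, for otherwise a set of infinite measure on which $V\equiv\lambda$ could contribute to $\essran V$ while carrying no normalisable characteristic function, exactly the phenomenon already exploited in the proof of \prettyref{prop:mult1.5}. With that in place I expect no further obstacle.
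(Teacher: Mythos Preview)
Your proof is correct and follows essentially the same approach as the paper: the approximate-eigenvector construction using semi-finiteness for $\essran V\subseteq\sigma(V(\m))$, and \prettyref{thm:mult1}(c),(d) applied to $\frac{1}{\lambda-V}$ for the converse. The paper additionally spells out that each $f_n$ lies in $\dom(V(\m))$ via the bound $\abs{V}\leq\abs{\lambda}+\frac{1}{n}$ on $B_n$, which you implicitly use when forming $(\lambda-V(\m))f_n$; this is a minor presentational point and your argument is complete as written.
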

\begin{proof}
Let $\lambda\in\essran V$. For all $n\in\N$ we find $B_{n}\in\Sigma$
with non-zero, but finite measure such that $B_{n}\subseteq\left[\abs{\lambda-V}<\frac{1}{n}\right].$
We define $f_{n}\coloneqq\sqrt{\frac{1}{\mu(B_{n})}}\1_{B_{n}}\in\L(\mu)$.
Then $\norm{f_{n}}_{\L(\mu)}=1$ and 
\[
\abs{V(\omega)f_{n}(\omega)}\leq\abs{V(\omega)-\lambda}\abs{f_{n}(\omega)}+\abs{\lambda}\abs{f_{n}(\omega)}\leq\left(\frac{1}{n}+\abs{\lambda}\right)\abs{f_{n}(\omega)}
\]
for $\omega\in\Omega,$ which shows that $(f_{n})_{n}$ is in $\dom(V(\m))$. A similar estimate, on the other hand, shows that
\[
\norm{\left(V(\m)-\lambda\right)f_{n}}_{\L(\mu)}\to0\quad(n\to\infty).
\]
Thus, $\left(V(\m)-\lambda\right)^{-1}$ cannot be continuous as $\|f_{n}\|_{\L(\mu)}=1$
for all $n\in\mathbb{N}$. 

Let now $\lambda\in\K\setminus\essran V$. Then there exists $\varepsilon>0$
such that $N\coloneqq\left[\abs{\lambda-V}<\varepsilon\right]$ is
a $\mu$-null set. In particular, $\lambda-V\neq0$ $\mu$-a.e. Hence,
$\left(\lambda-V(\m)\right)^{-1}=\frac{1}{\lambda-V}(\m)$ is a linear
operator. Since, $\abs{\frac{1}{\lambda-V}}\leq1/\varepsilon$ $\mu$-almost
everywhere, we deduce that $\left(\lambda-V(\m)\right)^{-1}\in\bo(\L(\mu))$
and hence, $\lambda\in\rho(V(\m))$. 
\end{proof}

We conclude this chapter by stating that multiplication operators
as discussed in \prettyref{thm:mult1}, \prettyref{prop:mult1.5}, \prettyref{prop:mult1.75}, and \prettyref{thm:mult2}
are \emph{the} prototypical example for normal operators. It is also important to note that, as we have seen in \prettyref{thm:mult1},
 a multiplication operator in $\L(\mu)$ is self-adjoint if and
only if $V$ assumes values in the real numbers, only.

\section{Comments}

The material presented in this lecture is basic textbook knowledge.
We shall thus refer to the monographs \cite{Kato1995,Weidmann1980}.
Note that spectral theory for self-adjoint operators is a classical
topic in functional analysis. For a glimpse on further theory of linear
relations we exemplarily refer to \cite{Arens1961,Berger2016,Cross1998}. 
The restriction in \prettyref{prop:mult1.75} and \prettyref{thm:mult2} to semi-finite measure spaces is not very severe. In fact, if $(\Omega,\Sigma,\mu)$ was not semi-finite, it is possible to construct a semi-finite measure space $(\Omega_{\textnormal{loc}},\Sigma_{\textnormal{loc}},\mu_{\textnormal{loc}})$ such that $L_p(\mu)$ is isometrically isomorphic to $L_p(\mu_{\textnormal{loc}})$, see \cite[Section 2]{VV2017}.

\section*{Exercises}
\addcontentsline{toc}{section}{Exercises}

\begin{xca}
Let $A\subseteq X_{0}\times X_{1}$ be an unbounded linear operator. 
Show that for every linear operator $B\subseteq X_{0}\times X_{1}$
with $B\supseteq A$ and $\dom(B)=X_{0}$, we have that $B$ is not
closed.
\end{xca}

\begin{xca}\label{exer:propcor}
Prove \prettyref{prop:coreBounded} and \prettyref{cor:uniquecont}. Hint: One might use that bounded linear relations are always operators.
\end{xca}

\begin{xca}\label{exer:lem_adj}
Prove \prettyref{lem:adj}.
\end{xca}

\begin{xca}\label{exer:resolvent_adj}
Let $A\colon\dom(A)\subseteq H_{0}\to H_{0}$ be a closed and densely
defined linear operator. Show that for all $\lambda\in\K$ we have
\[
\lambda\in\rho(A)\iff\lambda^{*}\in\rho(A^{*}).
\]
\end{xca}

\begin{xca}
Let $U\subseteq H_{0}\times H_{1}$ satisfy $U^{-1}=U^{*}$. Show
that $U\in L(H_{0},H_{1})$ and that $U$ is \emph{unitary\index{unitary}},
that is, $U$ is onto and for all $x\in H_{0}$ we have $\norm{Ux}_{H_1}=\norm{x}_{H_0}$.
\end{xca}

\begin{xca}
Let $\delta\colon C\ci{0}{1}\subseteq\L(0,1)\to\K,f\mapsto f(0)$,
where $C\ci{0}{1}$ denotes the set of $\K$-valued continuous functions
on $[0,1]$. Show that $\delta$ is not closable. Compute $\overline{\delta}$.
\end{xca}

\begin{xca}
Let $C\subseteq\C$ be closed. Provide a Hilbert space $H$ and a densely
defined closed linear operator $A$ on $H$ such that $\sigma(A)=C$.
\end{xca}

  \printbibliography[heading=subbibliography]
  
  \chapter{The Time Derivative}

It is the aim of this chapter to define a derivative operator on a
suitable $\L$-space, which will be used as the derivative with respect
to the temporal variable in our applications. As we want to deal with
Hilbert space-valued functions, we start by introducing the concept
of Bochner--Lebesgue spaces, which generalises the classical scalar-valued
$L_{p}$-spaces to the Banach space-valued case. 

\section{Bochner--Lebesgue Spaces}

Throughout, let $(\Omega,\Sigma,\mu)$ be a $\sigma$-finite measure space and $X$
a Banach space over the field $\mathbb{K}\in\{\R,\C\}$. We are aiming
to define the spaces $L_{p}(\mu;X)$ for $1\leq p\leq\infty$. This is the
space of (equivalence classes of) measurable functions attaining values
in $X$, which are $p$-integrable (if $p<\infty)$, or essentially
bounded (if $p=\infty$) with respect to the measure $\mu.$ We begin
by defining the space of simple functions on $\Omega$ with values
in $X$ and the notion of Bochner-measurability.
\begin{defn*}
For a function $f\from \Omega\to X$ and $x\in X$ we set 
\[
A_{f,x}\coloneqq f^{-1}[\{x\}].
\]
A function $f\from\Omega\to X$ is called \emph{simple}\index{simple function}
if $f[\Omega]$ is finite and for each $x\in X\setminus\{0\}$ the
set $A_{f,x}$ belongs to $\Sigma$ and has finite measure. We denote
the set of simple functions by $S(\mu;X)$.
A function $f\from\Omega\to X$ is called \index{Bochner-measurable}\emph{Bochner-measurable}
if there exists a sequence $(f_{n})_{n\in\N}$ in $S(\mu;X)$ such
that 
\[
f_{n}(\omega)\to f(\omega)\quad(n\to\infty)
\]
for $\mu$-a.e.~$\omega\in\Omega$.
\end{defn*}
\begin{rem} 
\label{rem:Bochner}
\begin{enumerate}
\item For a simple function $f$ we have 
\[
 f=\sum_{x\in X} x\cdot\1_{A_{f,x}},
\]
where the sum is actually finite, since $\1_{A_{f,x}}=0$ for all $x\notin f[\Omega]$.
\item\label{rem:Bochner:item:2} If $X=\K$, then a function is Bochner-measurable if and only if it has a $\mu$-measurable representative.
\item It is easy to check that $S(\mu;X)$ is a vector space and an $S(\mu;\K)$-module;
that is, for $f\in S(\mu;X)$ and $g\in S(\mu;\K)$ we have 
$g\cdot f\in S(\mu;X)$.

\item\label{rem:Bochner:item:4} If $f\from\Omega\to X$ is Bochner-measurable, then $\norm{f(\cdot)}_{X}\from\Omega\to\R$
is Bochner-measurable. Indeed, since 
\[
\norm{f(\cdot)}_{X}=\lim_{n\to\infty}\norm{f_{n}(\cdot)}_{X}
\]
for $\mu$-a.e.~$\omega\in\Omega$ and a sequence $(f_{n})_{n\in\N}$
in $S(\mu;X)$, it suffices to show that $\norm{f_{n}(\cdot)}_{X}$
is simple for all $n\in\N.$ The latter follows since $A_{f,x}\cap A_{f,y}=\emptyset$
for $x\ne y$ and thus
\[
\norm{f_{n}(\cdot)}_{X}=\sum_{x\in f_{n}[\Omega]}\norm{x}_{X}\cdot\1_{A_{f,x}}
\]
is a real-valued simple function.

\item If one deals with arbitrary measure spaces, the definition of simple functions has to be weakened by allowing the sets $A_{f,x}$ to have infinite measure. However, since in the applications to follow we only work with weighted Lebesgue measures, we restrict ourselves to $\sigma$-finite measure spaces. 
\end{enumerate}
\end{rem}

Next, we state and prove the celebrated Theorem of Pettis, which characterises Bochner-measurabilty in terms of weak measurability. In what follows, let $X':=\bo(X,\K)$\index{dual space $X'$} denote the dual space of $X$.
\begin{thm}[Theorem of Pettis]
 \label{thm:Pettis}
 Let $f:\Omega\to X.$
Then $f$ is Bochner-measurable if and only if
\begin{enumerate}
\item $f$ is \emph{weakly Bochner-measurable\index{weakly Bochner-measurable}}; that is, $x'\circ f:\Omega\to\mathbb{K}$
is Bochner-measurable for each $x'\in X'$, and
\item $f$ is \emph{almost separably-valued\index{almost separably-valued}}; that is, $\overline{\lin f[\Omega\setminus N_{0}]}$
is separable for some $N_{0}\in\Sigma$ with $\mu(N_{0})=0$. 
\end{enumerate}
\end{thm}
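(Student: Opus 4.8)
The plan is to prove the two directions separately, with the forward direction (Bochner-measurability implies weak measurability and almost separable-valuedness) being the easy one, and the converse being the substantial part. For the forward direction: suppose $f$ is Bochner-measurable, so there is a sequence $(f_n)$ in $S(\mu;X)$ with $f_n\to f$ $\mu$-a.e. For any $x'\in X'$, continuity and linearity of $x'$ give $x'\circ f_n\to x'\circ f$ $\mu$-a.e., and each $x'\circ f_n$ is a scalar simple function, so $x'\circ f$ is Bochner-measurable; this gives weak Bochner-measurability. For almost separable-valuedness, let $N_0$ be the null set off which $f_n(\omega)\to f(\omega)$; then $f[\Omega\setminus N_0]$ is contained in the closure of the countable set $\bigcup_n f_n[\Omega\setminus N_0]$ (each $f_n$ has finite range), so $\overline{\lin f[\Omega\setminus N_0]}$ is separable.

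For the converse, assume (a) and (b). By discarding the null set $N_0$ from (b) we may assume $f[\Omega]$ lies in a separable closed subspace $X_0$ of $X$; fix a countable dense set $(x_k)_{k\in\N}$ in $X_0$. The idea is to approximate $f$ by functions that round $f(\omega)$ to a nearest $x_k$, using weak measurability to check that the relevant sets are measurable. First I would establish that $\omega\mapsto\norm{f(\omega)-x}_X$ is measurable for each fixed $x\in X_0$: by a Hahn--Banach argument there is a countable norming set $(x'_j)$ in the unit ball of $X'$ with $\norm{z}_X=\sup_j\abs{x'_j(z)}$ for all $z\in X_0$, so $\norm{f(\cdot)-x}_X=\sup_j\abs{x'_j(f(\cdot))-x'_j(x)}$ is a countable supremum of measurable functions, hence measurable (here I use (a) and \prettyref{rem:Bochner}\prettyref{rem:Bochner:item:2}). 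Then, for each $n\in\N$, define $\varphi_n\colon\Omega\to X_0$ by $\varphi_n(\omega)\coloneqq x_{k(n,\omega)}$, where $k(n,\omega)$ is the least index $k\leq n$ minimising $\norm{f(\omega)-x_k}_X$; the level sets of $k(n,\cdot)$ are built from finitely many sets of the form $[\norm{f(\cdot)-x_i}_X\leq\norm{f(\cdot)-x_j}_X]$, which are measurable by the previous step, so $\varphi_n$ is a measurable countably-valued function. By density of $(x_k)$ in $X_0\supseteq f[\Omega]$ one checks $\varphi_n(\omega)\to f(\omega)$ for every $\omega$ (in fact $\norm{f(\omega)-\varphi_n(\omega)}_X\to 0$ as $n\to\infty$).

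The remaining gap is that $\varphi_n$ is only \emph{countably}-valued, not simple, since the sets $\varphi_n^{-1}[\{x_k\}]$ need not have finite measure — this is where $\sigma$-finiteness of $\mu$ enters. Write $\Omega=\bigcup_m \Omega_m$ with $\Omega_m\in\Sigma$ increasing and $\mu(\Omega_m)<\infty$. For each $n$, the function $\varphi_n$ restricted to $\Omega_n$ takes at most $n$ values; multiplying by $\1_{\Omega_n}$ and noting each preimage now has finite measure, we see $\1_{\Omega_n}\varphi_n\in S(\mu;X)$. Finally set $g_n\coloneqq \1_{\Omega_n}\varphi_n$; for a fixed $\omega$, once $n$ is large enough that $\omega\in\Omega_n$ we have $g_n(\omega)=\varphi_n(\omega)$, so $g_n(\omega)\to f(\omega)$ for every $\omega\in\Omega$, and $(g_n)$ witnesses Bochner-measurability of $f$. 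The main obstacle is the measurability of $\omega\mapsto\norm{f(\omega)-x}_X$ — everything downstream (measurable choice of nearest point, hence measurable countably-valued approximants) depends on having a countable norming family, which is exactly what separability of the range plus weak measurability provides.
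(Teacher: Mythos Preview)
Your proof is correct and follows essentially the same strategy as the paper: obtain a countable norming family from separability of the range to show $\omega\mapsto\norm{f(\omega)-x}_X$ is measurable, build approximants from a countable dense sequence, and invoke $\sigma$-finiteness to land in $S(\mu;X)$. Your nearest-point construction $\varphi_n(\omega)=x_{k(n,\omega)}$ (which is in fact \emph{finitely}-valued, not just countably-valued as you write) is a minor variant of the paper's level-set construction---the paper takes the first $y_k$ with $\norm{f(\omega)-y_k}\leq\varepsilon$ to get a countably-valued $g$ within $\varepsilon$ of $f$, then approximates $g$ by simple functions and implicitly lets $\varepsilon\to0$; your route is slightly more direct since $\varphi_n\to f$ already holds pointwise.
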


\begin{proof}
If $f$ is Bochner-measurable, then clearly it is weakly Bochner-measurable.
Further, as $f$ is the almost everywhere limit of simple functions,
it is almost separably-valued, since each simple function attains
values in a finite dimensional subspace of $X$.\\
Assume now conversely that $f$ satisfies (a) and (b). We define $Y\coloneqq\overline{\lin f[\Omega\setminus N_{0}]}$,
which is a separable Banach space by (b). Thus, there exists a sequence
$(x_{n}')_{n\in\mathbb{N}}$ in $X'$ such that 
\[
\|y\|=\sup_{n\in\mathbb{N}}|x_{n}'(y)|\quad(y\in Y).
\]
Since for each $n\in\N$ the function $g_{n}\coloneqq|x_{n}'\circ f|$
is Bochner-measurable by (a) and \prettyref{rem:Bochner}\ref{rem:Bochner:item:4}, we find a $\mu$-nullset $N_{n}$ and
a measurable function $\tilde{g}_{n}:\Omega\to\R$ such that $g_{n}=\tilde{g}_{n}$
on $\Omega\setminus N_{n}$ by \prettyref{rem:Bochner}\ref{rem:Bochner:item:2}. Then $\sup_{n\in\N}\tilde{g}_{n}(\cdot)$
is measurable and 
\[
\|f(\omega)\|=\sup_{n\in\N}\tilde{g}_{n}(\omega)\quad(\omega\in\Omega\setminus N),
\]
where $N\coloneqq\bigcup_{n\in\N_{0}}N_{n}$, which shows that $\|f(\cdot)\|$
is Bochner-measurable. Let $\varepsilon>0$, $(y_{n})_{n\in\mathbb{N}}$
a dense sequence in $Y$. Applying the previous argument to the function
$f_{k}(\cdot)\coloneqq f(\cdot)-y_{k}$ for $k\in\N$ we infer that
$\|f_{k}(\cdot)\|$ is Bochner measurable and hence, there is a $\mu$-nullset
$N_{k}'$ and a measurable funtion $\tilde{f_{k}}:\Omega\to\R$ such
that $\|f_{k}\|=\tilde{f}_{k}$ on $\Omega\setminus N_{k}'$. Consequently,
the sets 
\[
E_{k}\coloneqq[\tilde{f}_{k}\leq\varepsilon]\coloneqq \set{\omega\in \Omega}{|\tilde{f}_k(\omega)|\leq \varepsilon}\quad(k\in\mathbb{N})
\]
are measurable. Moreover, by the density of $\{y_{n}\,;\,n\in\N\}$
in $Y$, we get that $\Omega\setminus N'\subseteq\bigcup_{k\in\N}E_{k}$
with $N'\coloneqq\bigcup_{k=1}^{\infty}N_{k}'\cup N_{0}$. Setting
$F_{1}\coloneqq E_{1}$ and $F_{n+1}=E_{n+1}\setminus\bigcup_{k=1}^{n}F_{k}$
for $n\in\mathbb{N}$, we obtain a sequence of pairwise disjoint measurable
sets $(F_{n})_{n\in\mathbb{N}}$ with $\Omega\setminus N'\subseteq\bigcup_{n\in\mathbb{N}}F_{n}.$
We set 
\[
g\coloneqq\sum_{k=1}^{\infty}y_{k}\1_{F_{k}}
\]
and obtain $\|f(\omega)-g(\omega)\|\leq\varepsilon$ for each $\omega\in\Omega\setminus N'.$
Hence, if $g$ is Bochner-measurable, then $f$ is Bochner-measurable as well. For showing the Bochner-measurability of $g$, let $(\Omega_{k})_{k\in\mathbb{N}}$
be a sequence of pairwise disjoint measurable sets such that $\bigcup_{k\in\mathbb{N}}\Omega_{k}=\Omega$
and $\mu(\Omega_{k})<\infty$ for each $k\in\mathbb{N}.$ For $n\in\mathbb{N}$
we set 
\[
g_{n}\coloneqq\sum_{k,j=1}^{n}y_{k}\1_{F_{k}\cap\Omega_{j}}.
\]
Then $\left(g_{n}\right)_{n\in\mathbb{N}}$ is a sequence of simple
functions with $g_{n}\to g$ pointwise as $n\to\infty$ and thus,
$g$ is Bochner-measurable.
\end{proof}

\begin{cor}\label{cor:limit-measurable}
 Let $f_n,f:\Omega \to X$ for $n\in \N$. Moreover, assume that $f_n$ is Bochner-measurbale for each $n\in \N$ and $f_n(\omega)\to f(\omega)$ as $n\to \infty$ for $\mu$-almost every $\omega \in \Omega$. Then $f$ is Bochner-measurable 
\end{cor}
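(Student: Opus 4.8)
The plan is to reduce everything to the Theorem of Pettis (\prettyref{thm:Pettis}): I would verify that $f$ is weakly Bochner-measurable and almost separably-valued, and then conclude directly. For weak Bochner-measurability, fix $x'\in X'$. Since each $f_n$ is Bochner-measurable, so is $x'\circ f_n\colon\Omega\to\K$, and hence by \prettyref{rem:Bochner}\ref{rem:Bochner:item:2} it has a $\mu$-measurable representative. By continuity of $x'$ we have $x'(f_n(\omega))\to x'(f(\omega))$ for $\mu$-a.e.\ $\omega$, and a pointwise a.e.\ limit of scalar-valued measurable functions is measurable; thus $x'\circ f$ has a measurable representative and is Bochner-measurable again by \prettyref{rem:Bochner}\ref{rem:Bochner:item:2}. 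Since $x'$ was arbitrary, $f$ is weakly Bochner-measurable.

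For the almost separably-valued property I would keep careful track of nullsets. Using Bochner-measurability of each $f_n$, pick a $\mu$-nullset $N_n$ with $\overline{\lin f_n[\Omega\setminus N_n]}$ separable, and let $N_\infty$ be the $\mu$-nullset off which $f_n(\omega)\to f(\omega)$. Set $N\coloneqq N_\infty\cup\bigcup_{n\in\N}N_n$, still a $\mu$-nullset, and $Y\coloneqq\overline{\lin\bigcup_{n\in\N}f_n[\Omega\setminus N_n]}$. As the closed linear span of a countable union of separable subsets of $X$, the space $Y$ is a separable closed subspace. For $\omega\in\Omega\setminus N$ we have $f_n(\omega)\in Y$ for every $n$, and since $Y$ is closed, $f(\omega)=\lim_{n\to\infty}f_n(\omega)\in Y$. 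Hence $f[\Omega\setminus N]\subseteq Y$, so $\overline{\lin f[\Omega\setminus N]}\subseteq Y$ is separable, i.e.\ $f$ is almost separably-valued.

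With both hypotheses of \prettyref{thm:Pettis} in place, the theorem yields that $f$ is Bochner-measurable, which completes the argument. There is no real obstacle here: the only points requiring a little care are the bookkeeping of countably many nullsets and the elementary fact that a countable union of separable subsets of a Banach space has separable closed linear span; everything else is immediate from Pettis's theorem together with the scalar case recorded in \prettyref{rem:Bochner}.
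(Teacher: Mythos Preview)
Your proof is correct and follows essentially the same approach as the paper's: both verify the two hypotheses of Pettis's theorem, handling the almost-separably-valued condition by collecting the countably many nullsets $N_n$ together with the convergence nullset, and deducing weak Bochner-measurability from the scalar case via \prettyref{rem:Bochner}\ref{rem:Bochner:item:2}. The arguments are in effect identical.
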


\begin{proof}
 By \prettyref{thm:Pettis} we find $\mu$-nullsets $N_n\in \Sigma$ such that $X_n\coloneqq \overline{\lin f_n[\Omega\setminus N_n]}$ is separable. Moreover, we find a $\mu$-nullset $N'\in \Sigma$ such that 
 \[
  f_n(\omega)\to f(\omega)\quad (n\to \infty, \omega \in \Omega \setminus N').
 \]
We set $N\coloneqq \bigcup_{n\in \N} N_n\cup N'$. Then 
\[
 \overline{\lin f[\Omega \setminus N]} \subseteq \overline{\lin \bigcup_{n\in \N} X_n}
\]
and thus, $f$ is almost separably-valued. Moreover, for $x'\in X'$ we have that 
\[
 (x'\circ f)(\omega)=\lim_{n\to \infty} (x'\circ f_n)(\omega)\quad (\omega \in \Omega \setminus N')
\]
and since all functions $x'\circ f_n$ are $\mu$-measurable outside a $\mu$-nullset, so is $x'\circ f$ and thus, $x'\circ f$ is Bochner-measurable, see \prettyref{rem:Bochner}\ref{rem:Bochner:item:2}. Thus, \prettyref{thm:Pettis} yields the Bochner-measurability of $f$.
\end{proof}

\begin{defn*}[Bochner--Lebesgue spaces]
 \index{Bochner-Lebesgue spaces}For $p\in\ci{1}{\infty}$ we define
\[
\mathcal{L}_{p}(\mu;X)\coloneqq\set{f\from\Omega\to X}{f\mbox{ Bochner-measurable},\,\norm{f(\cdot)}_{X}\in \mathcal{L}_{p}(\mu)},
\]
as well as 
\[
L_{p}(\mu;X)\coloneqq\faktor{\mathcal{L}_{p}(\mu;X)}{\sim},
\]
where $\sim$ denotes the usual equivalence relation of equality $\mu$-almost
everywhere.
\end{defn*}
\begin{prop}
\label{prop:Lp Banach space}Let $p\in\ci{1}{\infty}.$ Then 
\[
\norm{f}_{p}\coloneqq\begin{cases}
\left(\intop_{\Omega}\norm{f(\omega)}_{X}^{p}\d\mu(\omega)\right)^{\frac{1}{p}}, & \text{ if }p<\infty,\\
\esssup_{\omega\in\Omega}\norm{f(\omega)}_{X}, & \text{ if }p=\infty,
\end{cases}
\]
defines a seminorm on $\mathcal{L}_{p}(\mu;X)$ where $\norm{f}_{p}=0$
if and only if $f=0$ $\mu$-a.e. Consequently, $\norm{\cdot}_{p}$
defines a norm on $L_{p}(\mu;X).$ Moreover, $(L_{p}(\mu;X),\norm{\cdot}_{p})$
is a Banach space and if $X=H$ is a Hilbert space, then so too is $L_{2}(\mu;H)$
with the scalar product given by 
\[
\scp{f}{g}_{2}\coloneqq\intop_{\Omega}\scp{f(\omega)}{g(\omega)}_{H}\d\mu(\omega)\quad(f,g\in L_{2}(\mu;H)).
\]
\end{prop}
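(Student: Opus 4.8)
The plan is to proceed in the classical order: first establish that $\norm{\cdot}_p$ is a seminorm on $\mathcal L_p(\mu;X)$ vanishing exactly on the $\mu$-null functions, then prove completeness of $L_p(\mu;X)$, and finally treat the Hilbert space case. For the seminorm property when $p<\infty$, the only nontrivial point is the triangle inequality, which follows from Minkowski's inequality in the scalar space $\mathcal L_p(\mu)$ applied to $\omega\mapsto\norm{f(\omega)}_X$ and $\omega\mapsto\norm{g(\omega)}_X$, together with the pointwise estimate $\norm{f(\omega)+g(\omega)}_X\le\norm{f(\omega)}_X+\norm{g(\omega)}_X$; here I use \prettyref{rem:Bochner}\ref{rem:Bochner:item:4} to know that these norm functions are measurable, and I should note that $f+g$ is again Bochner-measurable (a sum of a.e.\ limits of simple functions is an a.e.\ limit of simple functions, or invoke \prettyref{cor:limit-measurable}). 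For $p=\infty$ the triangle inequality and absolute homogeneity are immediate from the corresponding facts for $\esssup$. That $\norm{f}_p=0$ iff $f=0$ $\mu$-a.e.\ is the standard scalar statement applied to $\norm{f(\cdot)}_X$. Passing to the quotient $L_p(\mu;X)=\mathcal L_p(\mu;X)/\!\sim$ then turns the seminorm into a genuine norm.

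The heart of the proof is completeness, and this is the step I expect to be the main obstacle (though it is a routine obstacle). I would use the standard criterion that a normed space is complete iff every absolutely convergent series converges. So let $(f_n)_n$ in $L_p(\mu;X)$ with $\sum_{n}\norm{f_n}_p=:C<\infty$. For $p<\infty$, set $g_N(\omega):=\sum_{n=1}^N\norm{f_n(\omega)}_X$ and $g(\omega):=\sum_{n=1}^\infty\norm{f_n(\omega)}_X\in[0,\infty]$; by the monotone convergence theorem and Minkowski's inequality, $\norm{g}_{\mathcal L_p(\mu)}\le C$, so $g(\omega)<\infty$ for $\mu$-a.e.\ $\omega$. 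On that set of full measure, $\sum_{n}f_n(\omega)$ converges absolutely in the Banach space $X$, hence converges to some $s(\omega)\in X$; define $s(\omega):=0$ on the exceptional null set. Then $s$ is Bochner-measurable as the a.e.\ pointwise limit of the Bochner-measurable partial sums, by \prettyref{cor:limit-measurable}. The estimate $\norm{s(\omega)-\sum_{n=1}^N f_n(\omega)}_X\le\sum_{n>N}\norm{f_n(\omega)}_X\le g(\omega)$ gives both $s\in\mathcal L_p(\mu;X)$ (dominated by $g\in\mathcal L_p(\mu)$) and, via the dominated convergence theorem applied to the $p$-th powers, $\norm{s-\sum_{n=1}^N f_n}_p\to 0$. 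For $p=\infty$ the argument is easier: outside a single null set (a countable union of the null sets on which $\norm{f_n(\cdot)}_X>\norm{f_n}_\infty$) the series converges uniformly, so the partial sums converge in $\mathcal L_\infty(\mu;X)$. In both cases this exhibits the sum of the series in $L_p(\mu;X)$, giving completeness.

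Finally, for the Hilbert space case $X=H$ with $p=2$: the map $\scp{f}{g}_2:=\int_\Omega\scp{f(\omega)}{g(\omega)}_H\d\mu(\omega)$ is well-defined because $\omega\mapsto\scp{f(\omega)}{g(\omega)}_H$ is measurable (it is a limit of the measurable functions $\scp{f_n(\omega)}{g_n(\omega)}_H$ for approximating simple functions, or can be obtained by polarisation from the measurable norm functions) and integrable by Cauchy--Schwarz: $\abs{\scp{f(\omega)}{g(\omega)}_H}\le\norm{f(\omega)}_H\norm{g(\omega)}_H$, whose integral is bounded by $\norm{f}_2\norm{g}_2$ via the scalar Cauchy--Schwarz inequality. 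Sesquilinearity and Hermitian symmetry are inherited pointwise from $\scp{\cdot}{\cdot}_H$ and preserved by the integral; positivity and definiteness on the quotient follow since $\scp{f}{f}_2=\int_\Omega\norm{f(\omega)}_H^2\d\mu=\norm{f}_2^2$, which is $0$ iff $f=0$ $\mu$-a.e. Thus $\scp{\cdot}{\cdot}_2$ induces the norm $\norm{\cdot}_2$, and since $(L_2(\mu;H),\norm{\cdot}_2)$ is complete by the preceding part, it is a Hilbert space.
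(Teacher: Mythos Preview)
Your proof is correct and follows essentially the same route as the paper: both use the absolutely summable series criterion for completeness, reduce to the scalar $L_p$ statement via $\omega\mapsto\norm{f_n(\omega)}_X$, and invoke \prettyref{cor:limit-measurable} for Bochner-measurability of the limit. The only cosmetic difference is in the final convergence step for $p<\infty$: you use dominated convergence with the majorant $g(\omega)^p$, while the paper bounds the $L_p$-norm of the tail $\sum_{n\geq k} f_n$ directly via monotone convergence and Minkowski, but these are interchangeable implementations of the same idea.
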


\begin{proof}
We just show the completeness of $L_{p}(\mu;X).$ Let $(f_{n})_{n\in\N}$
be a sequence in $L_{p}(\mu;X)$ such that $\sum_{n=1}^{\infty}\norm{f_{n}}_{p}<\infty$.
We set 
\[
g_{n}(\omega)\coloneqq\norm{f_{n}(\omega)}_X\quad(n\in\N,\omega\in\Omega).
\]
Then $(g_{n})_{n\in\N}$ is a sequence in $L_{p}(\mu)$ such that
$\sum_{n=1}^{\infty}\norm{g_{n}}_{p}<\infty.$ By the completeness
of $L_{p}(\mu)$ we infer that 
\[
g\coloneqq\sum_{n=1}^{\infty}g_{n}
\]
exists and is an element in $L_{p}(\mu).$ In particular, $g(\omega)<\infty$
for $\mu$-a.e.~$\omega\in\Omega$ and thus, 
\[
\sum_{n=1}^{\infty}\norm{f_{n}(\omega)}_{X}=\sum_{n=1}^{\infty}g_{n}(\omega)<\infty
\]
for $\mu$-a.e.~$\omega\in\Omega.$ By the completeness of
$X$ we can define 
\[
f(\omega)\coloneqq\sum_{n=1}^{\infty}f_{n}(\omega)
\]
for $\mu$-a.e.~$\omega\in\Omega$. Note that $f$ is Bochner-measurable by \prettyref{cor:limit-measurable}. We need to prove that
$f\in L_{p}(\mu;X)$ and that $\sum_{n=1}^{k}f_{n}\to f$ in $L_{p}(\mu;X)$
as $k\to\infty.$  For this, it suffices to prove that
\begin{equation}
\sum_{n=k}^{\infty}f_{n}\in L_{p}(\mu;X)\mbox{ and }\sum_{n=k}^{\infty}f_{n}\to0\mbox{ in }L_{p}(\mu;X)\mbox{ as }k\to\infty.\label{eq:series_to_zero}
\end{equation}
Indeed, this would imply both that $f-\sum_{n=1}^{k}f_{n}\in L_{p}(\mu;X)$
and the desired convergence result. We prove \prettyref{eq:series_to_zero}
for $p<\infty$ and $p=\infty$ separately.

First, let $p=\infty$. For each $n\in\N$ we have $f_{n}\in L_{\infty}(\mu;X)$
and thus $\norm{f_{n}(\omega)}_{X}\leq\norm{f_{n}}_{\infty}$ for
all $\omega\in\Omega\setminus N_{n}$ and some null set $N_{n}\subseteq\Omega$. We set $N\coloneqq\bigcup_{n=1}^{\infty}N_{n},$
which is again a null set. For $k\in\N$ and $\omega\in\Omega\setminus N$
we then estimate 
\[
\norm{\sum_{n=k}^{\infty}f_{n}(\omega)}_{X}\leq\sum_{n=k}^{\infty}\norm{f_{n}(\omega)}_{X}\leq\sum_{n=k}^{\infty}\norm{f_{n}}_{\infty},
\]
which yields \prettyref{eq:series_to_zero}.

Now, let $p<\infty$. For $k\in\N$ we estimate 
\begin{align*}
\left(\intop_{\Omega}\left(\norm{\sum_{n=k}^{\infty}f_{n}(\omega)}_{X}\right)^{p}\d\mu(\omega)\right)^{\frac{1}{p}} & \leq\left(\intop_{\Omega}\left(\sum_{n=k}^{\infty}\norm{f_{n}(\omega)}_{X}\right)^{p}\d\mu(\omega)\right)^{\frac{1}{p}}\\
 & =\left(\intop_{\Omega}\lim_{m\to\infty}\left(\sum_{n=k}^{m}\norm{f_{n}(\omega)}_{X}\right)^{p}\d\mu(\omega)\right)^{\frac{1}{p}}\\
 & =\lim_{m\to\infty}\left(\intop_{\Omega}\left(\sum_{n=k}^{m}\norm{f_{n}(\omega)}_{X}\right)^{p}\d\mu(\omega)\right)^{\frac{1}{p}}\\
 & \leq\lim_{m\to\infty}\sum_{n=k}^{m}\norm{f_{n}}_{p}
 =\sum_{n=k}^{\infty}\norm{f_{n}}_{p},
\end{align*}
where we have used monotone convergence in the third line. This estimate
yields \prettyref{eq:series_to_zero}.
\end{proof}

We now want to define an $X$-valued integral for functions in $L_{1}(\mu;X)$;
the so-called Bochner-integral. To do this, we need the following
density result.
\begin{lem}
\label{lem:simple_fcts_dense}The space $S(\mu;X)$ is dense in $L_{p}(\mu;X)$
for $p\in\roi{1}{\infty}$.
\end{lem}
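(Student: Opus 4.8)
The plan is to approximate an arbitrary $f\in L_p(\mu;X)$ in two stages: first truncate to a set of finite measure and bounded norm, and then approximate the resulting function in the $X$-norm pointwise by simple functions while keeping everything dominated, so that dominated convergence in $L_p(\mu)$ upgrades pointwise convergence to norm convergence. Throughout I will use that $\mu$ is $\sigma$-finite, say $\Omega=\bigcup_{k}\Omega_k$ with $\Omega_k\subseteq\Omega_{k+1}$ and $\mu(\Omega_k)<\infty$.

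First I would reduce to the case where $f$ is bounded and supported on a set of finite measure. Given $f\in\mathcal L_p(\mu;X)$, set $g_k\coloneqq \1_{\Omega_k\cap[\norm{f(\cdot)}_X\le k]}\cdot f$. Then $\norm{g_k(\cdot)-f(\cdot)}_X\to 0$ pointwise $\mu$-a.e.\ (each $\omega$ with $f(\omega)\neq 0$ eventually lies in $\Omega_k\cap[\norm{f(\cdot)}_X\le k]$), and $\norm{g_k(\cdot)-f(\cdot)}_X\le \norm{f(\cdot)}_X\in\mathcal L_p(\mu)$; hence $\norm{g_k-f}_p\to 0$ by dominated convergence. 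Each $g_k$ is Bochner-measurable (it is $f$ times a scalar simple function, cf.\ \prettyref{rem:Bochner}), bounded by $k$, and supported in $\Omega_k$.

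Next I would approximate such a bounded, finitely-supported $h$ (write $h=g_k$, $\norm{h(\cdot)}_X\le C$, $\spt$ inside a set $E$ with $\mu(E)<\infty$) by simple functions. Since $h$ is Bochner-measurable, by \prettyref{thm:Pettis} it is almost separably-valued; let $\{y_j\}_{j\in\N}$ be dense in $\overline{\lin h[\Omega\setminus N_0]}$, and w.l.o.g.\ $y_1=0$. For $n\in\N$ define a measurable partition by assigning $\omega$ to the block of the first index $j\le n$ with $\norm{h(\omega)-y_j}_X<1/n$ (this is possible $\mu$-a.e.\ once $n$ is large, by density; for $\omega$ not yet captured set the value $0$), giving $h_n\coloneqq\sum_{j=1}^{n} y_j\1_{F_{j,n}}$ with the $F_{j,n}$ pairwise disjoint and measurable. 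I would have to check $h_n\in S(\mu;X)$: it takes finitely many values, and for $y_j\neq 0$ the set $A_{h_n,y_j}$ is contained (up to null sets) in $E$, hence has finite measure. One gets $\norm{h_n(\omega)-h(\omega)}_X\to 0$ for $\mu$-a.e.\ $\omega$, and the uniform bound $\norm{h_n(\cdot)-h(\cdot)}_X\le 2C\1_E\in\mathcal L_p(\mu)$ (note $h_n$ is supported essentially in $E$ and bounded), so dominated convergence gives $\norm{h_n-h}_p\to 0$.

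Combining the two stages by a diagonal argument yields a sequence in $S(\mu;X)$ converging to $f$ in $L_p(\mu;X)$. The main obstacle is the second stage: producing the simple approximants $h_n$ with genuinely \emph{finite-measure} level sets while controlling $\norm{h_n(\cdot)-h(\cdot)}_X$ by a fixed $\mathcal L_p$ function — this is exactly where almost separable-valuedness (via Pettis) and the preliminary truncation to $E$ of finite measure are both essential; the rest is routine dominated convergence.
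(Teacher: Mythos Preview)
Your two-stage plan is sound in spirit but contains a subtle slip in Stage~2, and it is considerably more elaborate than the paper's argument.

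The slip: the claim that ``this is possible $\mu$-a.e.\ once $n$ is large, by density'' is not justified. Density of $(y_j)$ guarantees that $d_n(\omega)\coloneqq\min_{j\le n}\norm{h(\omega)-y_j}_X\to 0$, but \emph{not} that $d_n(\omega)<1/n$ eventually. One can enumerate a countable dense set so that $d_n(\omega)\ge 1/\sqrt{n}$ for all $n$ at a particular $\omega$; then no $j\le n$ meets your threshold, $h_n(\omega)=0$ for every $n$, and $h_n(\omega)\not\to h(\omega)$. The fix is easy: define $h_n(\omega)$ as the nearest neighbour $y_{j^*}$ with $j^*=\arg\min_{j\le n}\norm{h(\omega)-y_j}_X$; then $\norm{h_n(\omega)-h(\omega)}_X=d_n(\omega)\to 0$ pointwise, the level sets remain measurable, and (since $y_1=0$ and $h$ vanishes off $E$) $h_n$ is still supported in $E$ with the same $\mathcal L_p$ bound. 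Alternatively, decouple the index bound from the threshold as in the Pettis proof.

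The paper's proof is a single step and bypasses your Stage~2 entirely by exploiting the \emph{definition} of Bochner-measurability: one already has simple $f_n\to f$ $\mu$-a.e., and the only issue is domination. This is handled by the truncation $\tilde f_n\coloneqq f_n\1_{I_n}$ with $I_n\coloneqq\bigl[\norm{f_n(\cdot)}_X\le 2\norm{f(\cdot)}_X\bigr]$. Then $\tilde f_n\in S(\mu;X)$, pointwise convergence persists (trivially where $f(\omega)=0$; otherwise $\omega\in I_n$ eventually since $f_n(\omega)\to f(\omega)\ne 0$), and $\norm{\tilde f_n-f}_X\le 3\norm{f}_X\in\mathcal L_p(\mu)$, so dominated convergence finishes. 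No preliminary truncation of $f$, no re-derivation of simple approximants, and no use of $\sigma$-finiteness beyond what is built into the definition of $S(\mu;X)$.
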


\begin{proof}
Let $f\in L_{p}(\mu;X).$ Then there exists a sequence $(f_{n})_{n\in\N}$
in $S(\mu;X)$ such that $f_{n}(\omega)\to f(\omega)$ for all $\omega\in\Omega\setminus N$
for some null set $N\subseteq\Omega$. W.l.o.g. we may assume that $\norm{f_n(\cdot)}_X$ and $\norm{f(\cdot)}_X$ are measurable on $\Omega\setminus N$ for each $n\in \N$. For
$n\in\N$ we define the set 
\[
I_{n}\coloneqq\set{\omega\in\Omega\setminus N}{\norm{f_{n}(\omega)}_{X}\leq2\norm{f(\omega)}_{X}}\in\Sigma,
\]
and set $\tilde{f}_{n}\coloneqq f_{n}\1_{I_{n}}$. Then $\tilde{f}_{n}\in S(\mu;X)$
and we claim that $\tilde{f}_{n}(\omega)\to f(\omega)$ for all $\omega\in\Omega\setminus N$.
Indeed, if $f(\omega)=0$ then $\tilde{f}_{n}(\omega)=0$ also and
the claim follows. If $f(\omega)\ne0$, then there is some $n_{0}\in\N$
such that $\norm{f_{n}(\omega)}_{X}\leq2\norm{f(\omega)}_{X}$ for
$n\ge n_{0}$, and hence $\omega\in\bigcap_{n\geq n_{0}}I_{n}.$ Consequently
$\tilde{f}_{n}(\omega)=f_{n}(\omega)\to f(\omega).$ By dominated
convergence, it now follows that 
\[
\intop_{\Omega}\norm{\tilde{f}_{n}(\omega)-f(\omega)}_{X}^{p}\d\mu(\omega)\to0\quad(n\to\infty),
\]
which proves the claim. 
\end{proof}
\begin{prop}
\label{prop:Bochner-integral}The mapping\footnote{Note that the sum is indeed finite and all summands are well-defined
if we set $0_{X}\cdot\infty=0_{X}.$} 
\begin{align*}
\int_{\Omega}\d\mu\colon S(\mu;X)\subseteq L_{1}(\mu;X) & \to X\\
f & \mapsto\sum_{x\in X} x\cdot \mu(A_{f,x})
\end{align*}
is linear and continuous, and thus has a unique continuous linear extension
to $L_{1}(\mu;X)$, called the \emph{Bochner-integral\index{Bochner-integral}}. Moreover,
\[
\norm{\int_{\Omega}f\d\mu}_{X}\leq\norm{f}_{1}\quad(f\in L_{1}(\mu;X)),
\]
and for $A\in\Sigma,f\in L_{1}(\mu;X)$ we set 
\[
\int_{A}f\d\mu\coloneqq\int_{\Omega}f\cdot\1_{A}\d\mu.
\]
 
\end{prop}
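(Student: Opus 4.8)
The plan is to check that $\int_\Omega\cdot\d\mu$ is well defined, linear and bounded on the subspace $S(\mu;X)$, which is dense in $L_1(\mu;X)$ by \prettyref{lem:simple_fcts_dense}, and then to invoke \prettyref{cor:uniquecont} to produce the unique continuous linear extension together with the norm bound.

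First I would record that for $f\in S(\mu;X)$ the defining sum makes sense: only finitely many $x\in f[\Omega]$ are nonzero, each such $x$ satisfying $\mu(A_{f,x})<\infty$, so each summand $x\cdot\mu(A_{f,x})$ is an element of $X$ (the single possibly infinite-measure term $x=0$ contributes $0_X$ under the convention $0_X\cdot\infty=0_X$). It is convenient to isolate the elementary observation that whenever $\{C_1,\dots,C_n\}\subseteq\Sigma$ is a finite measurable partition of $\Omega$ on which $f$ is constant, say $f|_{C_k}\equiv c_k$ (necessarily $c_k=0$ if $\mu(C_k)=\infty$), then $\int_\Omega f\d\mu=\sum_{k=1}^n c_k\mu(C_k)$; this follows by regrouping the $C_k$ according to their common value, which recovers the sets $A_{f,x}$.

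Linearity then follows in the usual way. Homogeneity is immediate, since $A_{\lambda f,\lambda x}=A_{f,x}$ for $\lambda\neq0$ while both sides vanish for $\lambda=0$. For additivity of $f,g\in S(\mu;X)$ I would pass to the common refinement: the sets $A_{f,x}\cap A_{g,y}$ (over the finitely many relevant pairs, together with the complementary set on which both $f$ and $g$ vanish) form a finite measurable partition of $\Omega$ on which $f$, $g$ and hence $f+g$ are all constant, and applying the observation above to each of the three functions and summing over this partition yields $\int_\Omega(f+g)\d\mu=\int_\Omega f\d\mu+\int_\Omega g\d\mu$. One just has to keep in mind that a partition piece of infinite measure carries the value $0$ for all three functions and so contributes $0_X$ throughout.

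For boundedness I would use that $\norm{f(\cdot)}_X=\sum_x\norm{x}_X\1_{A_{f,x}}$ is a nonnegative real-valued simple function (cf.\ \prettyref{rem:Bochner}\ref{rem:Bochner:item:4}) with integral $\sum_x\norm{x}_X\mu(A_{f,x})$, so that by the triangle inequality in $X$
\[
\norm{\int_\Omega f\d\mu}_X=\norm{\sum_x x\,\mu(A_{f,x})}_X\leq\sum_x\norm{x}_X\mu(A_{f,x})=\norm{f}_1.
\]
Hence $\int_\Omega\cdot\d\mu$ is a densely defined bounded linear operator from $L_1(\mu;X)$ into the Banach space $X$, and \prettyref{cor:uniquecont} supplies the unique $B\in\bo(L_1(\mu;X),X)$ extending it; by continuity the estimate $\norm{Bf}_X\leq\norm{f}_1$ passes to all $f\in L_1(\mu;X)$, and $\int_A f\d\mu=\int_\Omega f\1_A\d\mu$ is well defined because $f\1_A\in L_1(\mu;X)$ whenever $f\in L_1(\mu;X)$ and $A\in\Sigma$. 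The only mildly delicate point is the bookkeeping around the $0_X\cdot\infty$ convention and infinite-measure level sets in the additivity step; everything else is routine.
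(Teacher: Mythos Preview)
Your proposal is correct and follows the same overall scheme as the paper: establish linearity and the bound $\norm{\int_\Omega f\d\mu}_X\leq\norm{f}_1$ on $S(\mu;X)$, then extend via density (\prettyref{lem:simple_fcts_dense}) and \prettyref{cor:uniquecont}. The only difference is in the additivity argument: you pass to the common refinement $\{A_{f,x}\cap A_{g,y}\}$ and use your auxiliary observation about partitions, whereas the paper carries out a direct algebraic manipulation of the double sum $\sum_{x}x\,\mu(A_{\lambda f+g,x})=\sum_{x}\sum_{y}x\,\mu(A_{f,y}\cap A_{g,x-\lambda y})$, splitting $x=\lambda y+(x-\lambda y)$ and reassembling. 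Both are standard; your partition viewpoint is arguably cleaner and more geometric, while the paper's computation handles $\lambda f+g$ in one pass.
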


\begin{proof}
We first show linearity. Let $f,g\in S(\mu;X)$ and $\lambda\in\K$. We then compute
\begin{align*}
\int_{\Omega}(\lambda f+g)\d\mu & =\sum_{x\in X}x\cdot\mu(A_{\lambda f+g,x})
 =\sum_{x\in X}x\cdot\mu\left(\bigcup_{y\in X}\left(A_{f,y}\cap A_{g,x-\lambda y}\right)\right)\\
 & =\sum_{x\in X}\sum_{y\in X}x\cdot\mu(A_{f,y}\cap A_{g,x-\lambda y})\\
 & =\sum_{y\in X}\sum_{x\in X}\lambda y\cdot\mu(A_{f,y}\cap A_{g,x-\lambda y})+\sum_{x\in X}\sum_{y\in X}\left(x-\lambda y\right)\mu(A_{f,y}\cap A_{g,x-\lambda y})\\
 & =\lambda\sum_{y\in X}y\cdot\mu\left(A_{f,y}\cap\bigcup_{x\in X}A_{g,x-\lambda y}\right)+\sum_{z\in X}z\cdot\mu\left(\bigcup_{y\in X}A_{f,y}\cap A_{g,z}\right)\\
 & =\lambda\int_{\Omega}f\d\mu+\int_{\Omega}g\d\mu,
\end{align*}
where we have used the fact that all occuring unions and sums are
finite. In order to prove continuity, let $f\in S(\mu;X)$. We estimate 
\begin{align*}
\norm{\int_{\Omega}f\d\mu}_{X} & =\norm{\sum_{x\in f[\Omega]}x\cdot\mu(A_{f,x})}_X
 \leq\sum_{x\in f[\Omega]}\norm{x}_{X}\mu(A_{f,x})
 =\int_{\Omega}\sum_{x\in f[\Omega]}\norm{x}_{X}\1_{A_{f,x}}\d\mu\\
 & =\int_{\Omega}\norm{f(\cdot)}_{X}\d\mu
 =\norm{f}_{1}.
\end{align*}
The remaining assertions now follow from \prettyref{lem:simple_fcts_dense}
by continuous extension (see \prettyref{cor:uniquecont}).
\end{proof}

The next proposition tells us how the Bochner-integral of a function behaves if we compose the function with a bounded or closed linear operator first.

\begin{prop}
\label{prop:interchange_integral}Let $f\in L_{1}(\mu;X)$, $Y$ a Banach
space. 
\begin{enumerate}
\item Let $B\in\bo(X,Y)$. Then $B\circ f\in L_{1}(\mu;Y)$ and 
\[
\int_{\Omega}B\circ f\d\mu=B\int_{\Omega}f\d\mu.
\]
\item If $X_{0}\subseteq X$ is a closed subspace and $f(\omega)\in X_{0}$
for $\mu$-a.e.~$\omega\in\Omega$, then $\int_{\Omega}f\d\mu\in X_{0}$.
\item (Theorem of Hille)\index{Theorem of Hille} Let $A\colon\dom(A)\subseteq X\to Y$
be a closed linear operator and assume that $f(\omega)\in\dom(A)$ for
$\mu$-a.e.~$\omega\in\Omega$ and that $A\circ f\in L_{1}(\mu;Y)$.
Then $\int_{\Omega}f\d\mu\in\dom(A)$ and 
\[
A\int_{\Omega}f\d\mu=\int_{\Omega}A\circ f\d\mu.
\]
\end{enumerate}
\end{prop}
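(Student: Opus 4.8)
The plan is to establish the three parts in order, each feeding into the next. For part (a), I would first verify that $B\circ f$ is Bochner-measurable: choosing simple functions $f_n$ with $f_n(\omega)\to f(\omega)$ for $\mu$-a.e.\ $\omega$, continuity of $B$ gives $B\circ f_n\to B\circ f$ a.e., and each $B\circ f_n$ is simple since it has finite range and, for $y\neq 0$, the level set $A_{B\circ f_n,y}=\bigcup_{Bx=y}A_{f_n,x}$ is a finite union of sets of finite measure (here $Bx=y\neq 0$ forces $x\neq 0$). Hence $B\circ f$ is Bochner-measurable by \prettyref{cor:limit-measurable}, and $\norm{B\circ f(\cdot)}_Y\leq\norm{B}\,\norm{f(\cdot)}_X$ shows $B\circ f\in L_1(\mu;Y)$. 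For $f\in S(\mu;X)$ the identity $\int_\Omega B\circ f\d\mu=B\int_\Omega f\d\mu$ follows directly from the definition of the integral in \prettyref{prop:Bochner-integral} and the linearity of $B$. Both sides are continuous linear maps $L_1(\mu;X)\to Y$ --- the left-hand one because $\norm{\int_\Omega B\circ f\d\mu}_Y\leq\norm{B\circ f}_1\leq\norm{B}\,\norm{f}_1$ by \prettyref{prop:Bochner-integral}, the right-hand one trivially --- so the identity extends from $S(\mu;X)$ to all of $L_1(\mu;X)$ by density of the simple functions (\prettyref{lem:simple_fcts_dense}).

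For part (b), I would use a Hahn--Banach argument: since $X_0$ is a closed subspace, $X_0=\bigcap\set{\ker x'}{x'\in X',\ x'|_{X_0}=0}$. Fix such an $x'$; then $x'\circ f=0$ $\mu$-a.e., so part (a) applied with $B=x'\in\bo(X,\K)$ gives $x'\bigl(\int_\Omega f\d\mu\bigr)=\int_\Omega x'\circ f\d\mu=0$. As this holds for every $x'\in X'$ vanishing on $X_0$, we conclude $\int_\Omega f\d\mu\in X_0$.

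For part (c), the key idea is to pass to the graph of $A$. Since $A$ is closed, $G(A)\coloneqq\set{(x,Ax)}{x\in\dom(A)}$ is a closed subspace of the Banach space $X\times Y$ (equipped, say, with the norm $\norm{(x,y)}\coloneqq\norm{x}_X+\norm{y}_Y$); cf.\ \prettyref{lem:grscp}. Define $g\colon\Omega\to X\times Y$ by $g(\omega)\coloneqq(f(\omega),Af(\omega))$, defined for $\mu$-a.e.\ $\omega$ by hypothesis. Approximating $f$ and $A\circ f$ simultaneously by simple functions produces $X\times Y$-valued simple functions converging to $g$ a.e., so $g$ is Bochner-measurable (cf.\ \prettyref{cor:limit-measurable}), and $\norm{g(\cdot)}\leq\norm{f(\cdot)}_X+\norm{A\circ f(\cdot)}_Y$ together with $f,A\circ f\in L_1$ shows $g\in L_1(\mu;X\times Y)$. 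Since $g(\omega)\in G(A)$ for $\mu$-a.e.\ $\omega$, part (b) yields $\int_\Omega g\d\mu\in G(A)$. Finally, applying part (a) to the coordinate projections $X\times Y\to X$ and $X\times Y\to Y$ identifies $\int_\Omega g\d\mu=\bigl(\int_\Omega f\d\mu,\int_\Omega A\circ f\d\mu\bigr)$, and membership of this pair in $G(A)$ is precisely the assertion that $\int_\Omega f\d\mu\in\dom(A)$ and $A\int_\Omega f\d\mu=\int_\Omega A\circ f\d\mu$.

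The simple-function checks and the density/continuity extension in part (a) are routine. The one genuinely clever step is the reduction of part (c) to part (b) through the graph, and the point to watch there is that the two hypotheses $f(\omega)\in\dom(A)$ for a.e.\ $\omega$ and $A\circ f\in L_1(\mu;Y)$ are exactly what makes $g$ a legitimate element of $L_1(\mu;X\times Y)$ that is valued, off a null set, in the closed subspace $G(A)$; without both of these the argument would not go through.
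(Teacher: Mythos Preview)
Your proof is correct and follows essentially the same approach as the paper: the density/continuity extension for part (a), the Hahn--Banach separation for part (b), and the reduction of part (c) to part (b) via the graph of $A$ viewed as a closed subspace of $X\times Y$, followed by an application of part (a) to the coordinate projections. The paper's proof differs only in minor presentational details (it does not explicitly invoke \prettyref{cor:limit-measurable} for the measurability of $(f,A\circ f)$, simply asserting $(f,A\circ f)\in L_1(\mu;X\times Y)$).
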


\begin{proof}
\begin{enumerate}

\item Let $(f_{n})_{n\in\N}$ be a sequence in $S(\mu;X)$ such that
$f_{n}\to f$ $\mu$-a.e. Then $B\circ f_{n}\in S(\mu;Y)$ since
\[
\left(B\circ f_{n}\right)[\Omega]=B\bigl[f_{n}[\Omega]\bigr]
\]
is finite and for $y\in(B\circ f_{n})[\Omega]\setminus\{0\}$ we have
that 
\[
A_{B\circ f_{n},y}=\bigcup_{x\in B^{-1}[\{y\}]\cap f_{n}[\Omega]}A_{f_{n},x}\in\Sigma,
\]
and thus, $\mu(A_{B\circ f_{n},y})<\infty.$ Due to the continuity
of $B$ we have that $B\circ f_{n}\to B\circ f$ $\mu$-a.e., hence
$B\circ f$ is Bochner-measurable. Moreover, $\norm{B\circ f(\cdot)}_{Y}\leq\norm{B}\norm{f(\cdot)}_{X}$,
which yields that $B\circ f\in L_{1}(\mu;Y).$ By continuity of both $B$
and $\int_{\Omega}\d\mu$, it suffices to check the interchanging property
for any $f\in S(\mu;X)$ alone. However, this is clear, since for a simple function $f$
\[
 B\circ f = B\left(\,\sum_{x\in X} x \cdot \1_{A_{f,x}}\right)= \sum_{x\in X} Bx \cdot \1_{A_{f,x}},
\]
where the sum is actually finite and hence, 
\begin{align*}
\int_{\Omega}B\circ f\d\mu & =\int_\Omega \sum_{x\in X} Bx\cdot\1_{A_{f,x}} \d\mu= \sum_{x\in X} \int_\Omega Bx\cdot \1_{A_{f,x}}\d\mu\\
 &=\sum_{x\in X}Bx\cdot\mu(A_{f,x})= B\left(\,\sum_{x\in X} x\cdot\mu(A_{f,x})\right)=B\,\int_{\Omega}f\d\mu,
\end{align*}
where in the third equality we have used that $Bx\cdot\1_{A_{f,x}}$ is a simple function.
\item Let $x'\in X'$ with $x'|_{X_{0}}=0.$ It follows from (a) that
\[
x'\left(\int_{\Omega}f\d\mu\right)=\int_{\Omega}x'\circ f\d\mu=0,
\]
and since $x'$ was arbitrary, it follows that $\int_{\Omega}f\d\mu\in X_{0}$
from the Theorem of Hahn--Banach.

\item Consider the space $L_{1}(\mu;X\times Y).$ By assumption,
it follows that 
\[
(f,A\circ f)\in L_{1}(\mu;X\times Y).
\]
However, $(f,A\circ f)(\omega)=(f(\omega),\left(A\circ f\right)(\omega))\in A\subseteq X\times Y$
for $\mu$-a.e.~$\omega\in\Omega$, and since $A$ is closed we can
use (b) to derive that 
\begin{equation}
\int_{\Omega}(f,A\circ f)\d\mu\in A.\label{eq:int in A}
\end{equation}
Let $\pi_{1},\pi_{2}$ be the projection from $X\times Y$ to $X$
and $Y$, respectively. It then follows from part (a) that 
\[
\pi_{1}\left(\int_{\Omega}(f,A\circ f)\d\mu\right)=\int_{\Omega}\pi_{1}(f,A\circ f)\d\mu=\int_{\Omega}f\d\mu,
\]
and analogously for $\pi_{2}.$ Using these equalities we derive from
\prettyref{eq:int in A} that $\int_{\Omega}f\d\mu\in\dom(A)$ and that
$A\int_{\Omega}f\d\mu=\int_{\Omega}A\circ f\d\mu$.\qedhere

\end{enumerate}
\end{proof}
As a consequence of the latter proposition, we derive the fundamental
theorem of calculus for Banach space-valued functions.
\begin{cor}[fundamental theorem of calculus]
\label{cor:Hauptsatz}\index{fundamental theorem of calculus}Let
$a,b\in\R,a<b$ and consider the measure space $(\ci{a}{b},\mathcal{B}(\ci{a}{b}),\lambda)$,
where $\mathcal{B}(\ci{a}{b})$ denotes the Borel-$\sigma$-algebra
of $\ci{a}{b}$ and $\lambda$ is the Lebesgue measure. Let $f\from\ci{a}{b}\to X$
be continuously differentiable.\footnote{By this we mean that $f$ is continuous on $\ci{a}{b}$, continuously
differentiable on $\oi{a}{b}$ and $f'$ has a continuous extension
to $\ci{a}{b}$.} Then 
\[
f(b)-f(a)=\int_{\ci{a}{b}}f'\d\lambda.
\]
\end{cor}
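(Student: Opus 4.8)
The plan is to reduce to the classical, scalar fundamental theorem of calculus by pairing with continuous linear functionals and then using the Hahn--Banach theorem to recover the vector-valued identity. Write $g$ for the continuous extension of $f'$ to $\ci{a}{b}$. First I would check that the right-hand side is meaningful, i.e.\ that $g\in L_{1}(\lambda;X)$: since $\ci{a}{b}$ is compact, $g$ is a uniform limit of step functions, which lie in $S(\lambda;X)$, hence $g$ is Bochner-measurable (alternatively, $g$ is continuous with compact, thus separable, image and is weakly measurable, so \prettyref{thm:Pettis} applies); moreover $\norm{g(\cdot)}_{X}$ is continuous on the compact interval, hence bounded, and $\lambda(\ci{a}{b})<\infty$, so indeed $g\in L_{1}(\lambda;X)$.

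Next, fix $x'\in X'$. Since $x'$ is bounded and linear, the scalar-valued function $x'\circ f$ is continuous on $\ci{a}{b}$, continuously differentiable on $\oi{a}{b}$ with $(x'\circ f)'=x'\circ g$ there, and $x'\circ g$ is the continuous extension of this derivative to $\ci{a}{b}$; thus $x'\circ f$ meets the hypotheses of the classical fundamental theorem of calculus (applied to real and imaginary parts separately when $\K=\C$), giving
\[
x'(f(b))-x'(f(a))=\int_{\ci{a}{b}}x'\circ g\d\lambda.
\]
Applying \prettyref{prop:interchange_integral} (a) with $B\coloneqq x'\in\bo(X,\K)$, the right-hand side equals $x'\bigl(\int_{\ci{a}{b}}g\d\lambda\bigr)$, so that $x'\bigl(f(b)-f(a)-\int_{\ci{a}{b}}f'\d\lambda\bigr)=0$.

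Since $x'\in X'$ was arbitrary and bounded linear functionals separate the points of $X$ (Hahn--Banach), the bracketed vector vanishes, which is the claim. I do not expect a genuine obstacle here; the only points demanding a little care are the measurability and integrability of $g$, handled above, and checking that $x'\circ f$ satisfies the precise hypotheses of the scalar theorem under the stated convention for differentiability on a closed interval. The reduction to the scalar case via Hahn--Banach, together with \prettyref{prop:interchange_integral} (a), then does the rest.
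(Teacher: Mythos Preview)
Your proof is correct and follows essentially the same route as the paper's: reduce to the scalar fundamental theorem of calculus by composing with an arbitrary $x'\in X'$, invoke \prettyref{prop:interchange_integral}(a) to pull $x'$ through the Bochner integral, and conclude via Hahn--Banach. You are slightly more explicit about the Bochner-measurability and integrability of $f'$, but the argument is the same.
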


\begin{proof}
Note first of all that continuous functions are Bochner-measurable (which can be easily seen using \prettyref{thm:Pettis}). Thus, the integral
on the right-hand side is well-defined. Let $\varphi\in X'.$ Then $\varphi\circ f\from[a,b]\to\K$
is continuously differentiable, and $\left(\varphi\circ f\right)'(t)=\left(\varphi\circ f'\right)(t)$.
Using \prettyref{prop:interchange_integral}~(a) together with the fundamental
theorem of calculus for the scalar-valued case we get 
\begin{align*}
\varphi\left(\int_{\ci{a}{b}}f'\d\lambda\right) & =\int_{\ci{a}{b}}\left(\varphi\circ f'\right)\d\lambda
 =\varphi\left(f(b)\right)-\varphi\left(f(a)\right)
 =\varphi\left(f(b)-f(a)\right).
\end{align*}
Since this holds for all $\varphi\in X',$ the assertion follows from
the Theorem of Hahn--Banach.
\end{proof}
We conclude this section with a density result, which will be useful
throughout the course.
\begin{lem}
\label{lem:dense sets}Let $\mathcal{D}\subseteq \L(\mu)$ be total
in $\L(\mu)$ and $H$ a Hilbert space. Then the set $\set{\varphi(\cdot)x}{x\in H,\,\varphi\in\mathcal{D}}$
is total in $\L(\mu;H)$.
\end{lem}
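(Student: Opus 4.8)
The plan is to reduce the claim about $L_2(\mu;H)$ to the known totality of $\mathcal D$ in $L_2(\mu)$ by a tensor-product argument. Recall that a subset is \emph{total} if its closed linear span is the whole space. First I would fix $f\in L_2(\mu;H)$ and $\varepsilon>0$, and aim to approximate $f$ up to $\varepsilon$ by a finite linear combination of functions of the form $\varphi(\cdot)x$ with $\varphi\in\mathcal D$ and $x\in H$. The natural intermediate step is to use simple functions: by \prettyref{lem:simple_fcts_dense}, $S(\mu;H)$ is dense in $L_2(\mu;H)$, so pick a simple function $g=\sum_{j=1}^n x_j\1_{A_{g,x_j}}$ with $\norm{f-g}_2<\varepsilon/2$, where the $x_j\in H\setminus\{0\}$ and $A_{g,x_j}\in\Sigma$ have finite measure. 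Thus $g$ already lies in the linear span of $\set{\psi(\cdot)x}{x\in H,\ \psi\in L_2(\mu)}$, since each $\1_{A_{g,x_j}}\in L_2(\mu)$.

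Next I would replace each scalar coefficient function $\1_{A_{g,x_j}}$ by an element of $\mathcal D$. Since $\mathcal D$ is total in $L_2(\mu)$, for each $j$ choose $\varphi_j\in\operatorname{lin}\mathcal D$ with $\norm{\1_{A_{g,x_j}}-\varphi_j}_{L_2(\mu)}<\frac{\varepsilon}{2n\norm{x_j}_H}$ (here I use $x_j\neq 0$). Writing $\varphi_j$ itself as a finite linear combination of elements of $\mathcal D$, the function $h\coloneqq\sum_{j=1}^n \varphi_j(\cdot)x_j$ is then a finite linear combination of functions of the form $\varphi(\cdot)x$ with $\varphi\in\mathcal D$, $x\in H$. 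It remains to estimate $\norm{g-h}_2$. Using the triangle inequality in $L_2(\mu;H)$ and the identity $\norm{\varphi_j(\cdot)x_j-\1_{A_{g,x_j}}(\cdot)x_j}_2=\norm{x_j}_H\norm{\varphi_j-\1_{A_{g,x_j}}}_{L_2(\mu)}$ (which holds because $\norm{\cdot}_2$ on $L_2(\mu;H)$ is computed by integrating $\norm{\cdot}_H$), we get $\norm{g-h}_2\le\sum_{j=1}^n\norm{x_j}_H\norm{\varphi_j-\1_{A_{g,x_j}}}_{L_2(\mu)}<\varepsilon/2$. Hence $\norm{f-h}_2<\varepsilon$, which proves totality.

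I do not expect any serious obstacle here; the only points requiring a little care are: ensuring the coefficients $x_j$ are nonzero so the division in the choice of $\varphi_j$ makes sense (this is guaranteed by the definition of simple function, which only requires $A_{f,x}\in\Sigma$ with finite measure for $x\neq 0$), and noting that $\1_{A_{g,x_j}}\in L_2(\mu)$ because $\mu(A_{g,x_j})<\infty$ — so that the approximation by elements of $\operatorname{lin}\mathcal D$ in the $L_2(\mu)$-norm is legitimate. One should also remark that $\varphi(\cdot)x$ genuinely lies in $L_2(\mu;H)$: it is Bochner-measurable (being a scalar-measurable function times a fixed vector, hence almost separably-valued and weakly measurable, so Bochner-measurable by \prettyref{thm:Pettis}) and $\norm{\varphi(\cdot)x}_H=\abs{\varphi(\cdot)}\norm{x}_H\in L_2(\mu)$. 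With these remarks in place the argument is complete.
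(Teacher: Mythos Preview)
Your proof is correct and follows essentially the same approach as the paper: approximate by simple functions via \prettyref{lem:simple_fcts_dense}, then replace each scalar indicator $\1_{A}$ by an element of $\lin\mathcal{D}$ using the identity $\norm{\psi(\cdot)x}_{\L(\mu;H)}=\norm{x}_H\norm{\psi}_{\L(\mu)}$. The paper's version is slightly more concise in that it observes it suffices to approximate the building blocks $\1_A x$, but the content is the same.
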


\begin{proof}By \prettyref{lem:simple_fcts_dense}, we know that $S(\mu;X)$ is dense in $\L(\mu;H)$. Thus, it suffices to approximate $\1_A x$ for some $A\in \Sigma$ and $x\in H$. For this, however, take a sequence $(\phi_n)_n$ in the linear hull of $\mathcal{D}$ with $\phi_n\to \1_A$ in $\L(\mu)$ as $n\to \infty$. Then
\[
    \norm{\1_Ax-\phi_n x}_{\L(\mu;H)} = \norm{x}_H\norm{\1_A - \phi_n}_{\L(\mu)}\to 0 \quad(n\to\infty).
\]Thus, the claim follows.
\end{proof}
\begin{lem}
\label{lem:dense sets 2} Let $\mathcal{D}\subseteq \L(\mu)$ be total
in $\L(\mu)$, $H$ a Hilbert space, $D_{0}\subseteq H$ total in
$H$. Then $\set{\varphi(\cdot)x}{x\in D_{0},\varphi\in\mathcal{D}}$
is total in $\L(\mu;H)$.
\end{lem}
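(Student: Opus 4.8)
The plan is to bootstrap from \prettyref{lem:dense sets}, which already establishes that $E\coloneqq\set{\varphi(\cdot)x}{x\in H,\,\varphi\in\mathcal{D}}$ is total in $\L(\mu;H)$. Writing $E_{0}\coloneqq\set{\varphi(\cdot)x}{x\in D_{0},\varphi\in\mathcal{D}}$ for the set in question, I would first record the elementary ``transitivity of totality'': since $\clin E_{0}$ is a closed linear subspace of $\L(\mu;H)$, it suffices to show $E\subseteq\clin E_{0}$; then $\clin E_{0}$ is a closed subspace containing the total set $E$, so $\clin E_{0}=\L(\mu;H)$, i.e.\ $E_{0}$ is total. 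One should also note at the outset that $\varphi(\cdot)x$ genuinely defines an element of $\L(\mu;H)$ whenever $\varphi\in\L(\mu)$ and $x\in H$, since $\omega\mapsto\abs{\varphi(\omega)}\,\norm{x}_{H}$ lies in $\L(\mu)$ and Bochner-measurability is immediate (e.g.\ via \prettyref{thm:Pettis}, or by approximating $\varphi$ with simple functions).

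The remaining step is to fix $\varphi\in\mathcal{D}$ and $x\in H$ and approximate $\varphi(\cdot)x$ by elements of $\lin E_{0}$. Here I would use that $D_{0}$ is total in $H$ to choose a sequence $(y_{n})_{n}$ in $\lin D_{0}$ with $y_{n}\to x$ in $H$ as $n\to\infty$; each $\varphi(\cdot)y_{n}$ is then a finite linear combination of elements $\varphi(\cdot)z$ with $z\in D_{0}$, hence lies in $\lin E_{0}$. The key estimate is the norm identity
\[
\norm{\varphi(\cdot)x-\varphi(\cdot)y_{n}}_{\L(\mu;H)}=\left(\int_{\Omega}\abs{\varphi(\omega)}^{2}\,\norm{x-y_{n}}_{H}^{2}\d\mu(\omega)\right)^{1/2}=\norm{\varphi}_{\L(\mu)}\,\norm{x-y_{n}}_{H},
\]
which tends to $0$ as $n\to\infty$. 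Hence $\varphi(\cdot)x\in\clin E_{0}$, and since $\varphi\in\mathcal{D}$ and $x\in H$ were arbitrary, we conclude $E\subseteq\clin E_{0}$, which finishes the proof.

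There is essentially no obstacle here: the argument is a two-line reduction once \prettyref{lem:dense sets} is available, and the only points requiring (trivial) care are the transitivity-of-totality observation above and the identity $\norm{\varphi(\cdot)v}_{\L(\mu;H)}=\norm{\varphi}_{\L(\mu)}\norm{v}_{H}$, which is read off directly from the definition of the $\L(\mu;H)$-norm in \prettyref{prop:Lp Banach space}. If one preferred not to cite \prettyref{lem:dense sets}, one could instead mimic its proof, but this would require a less economical double approximation (of $\1_{A}$ by elements of $\lin\mathcal{D}$ and of vectors by elements of $\lin D_{0}$ simultaneously), so reusing \prettyref{lem:dense sets} is the cleaner route.
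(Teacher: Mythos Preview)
Your proof is correct and follows essentially the same approach as the paper's: the paper's one-line proof simply says that $E_0$ is total in $E$ and then applies \prettyref{lem:dense sets}, and your argument spells out exactly this reduction via the norm identity $\norm{\varphi(\cdot)v}_{\L(\mu;H)}=\norm{\varphi}_{\L(\mu)}\norm{v}_{H}$.
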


\begin{proof}
The proof follows upon realising that the set $\set{\varphi(\cdot)x}{x\in D_{0},\,\varphi\in\mathcal{D}}$
is total in the set $\set{\varphi(\cdot)x}{x\in H,\,\varphi\in\mathcal{D}}$. From here we just apply \prettyref{lem:dense sets}.
\end{proof}

\section{The Time Derivative as a Normal Operator}
\label{sec:time_derivative}

Now let $H$ be a Hilbert space over $\K\in\{\R,\C\}$. For $\nu\in\R$
and $p\in\roi{1}{\infty}$ we define the measure 
\[
\mu_{p,\nu}(A)\coloneqq\int_{A}\e^{-p\nu t}\d\lambda(t)
\]
for $A$ in the Borel-$\sigma$-algebra, $\mathcal{B}(\R)$, of $\R$.
As our underlying Hilbert space for the time derivative we set 
\[
\Lnu(\R;H)\coloneqq\L(\mu_{2,\nu};H).
\]
In the same way we define 
\[
L_{p,\nu}(\R;H)\coloneqq L_{p}(\mu_{p,\nu};H)
\]
for $p\in\roi{1}{\infty}$. If $H=\K$ we abbreviate $L_{p,\nu}(\R):=L_{p,\nu}(\R;\K)$.

Our aim is to define the time derivative on $\Lnu(\R;H)$.
For this, we define the integral as an operator, which for $\nu\neq0$ turns out to be one-to-one and bounded. Then we introduce the time derivative as the inverse of this integral. The reason for doing it that way is to easily get a formula for the adjoint for the time derivative using the boundedness of the integral.

We start our considerations with the definition
of convolution operators in $\Lnu(\R;H)$. 
\begin{lem}
\label{lem:convolution}Let $k\in L_{1,\nu}(\R)$. We define the convolution
operator
\[
k\ast\colon\Lnu(\R;H)\to\Lnu(\R;H)
\]
by 
\[
\left(k\ast f\right)(t)\coloneqq\int_{\R}k(s)f(t-s)\d s,
\]
which exists for a.e.~$t\in\R$. Then, $k\ast$ is linear and
bounded with $\norm{k\ast}\leq\norm{k}_{L_{1,\nu}(\R)}$.
\end{lem}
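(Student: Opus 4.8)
The plan is to absorb the exponential weight $\e^{-\nu\cdot}$ into the functions and thereby reduce the claimed estimate to Minkowski's integral inequality (equivalently, after the substitution $g(t)\coloneqq\e^{-\nu t}f(t)$, $k_\nu(s)\coloneqq\e^{-\nu s}k(s)$, to the classical Young inequality $\norm{k_\nu\ast g}_{\L(\R;H)}\leq\norm{k_\nu}_{L_1(\R)}\norm{g}_{\L(\R;H)}$, noting $\norm{k_\nu}_{L_1(\R)}=\norm{k}_{L_{1,\nu}(\R)}$ and $\norm{g}_{\L(\R;H)}=\norm{f}_{\Lnu(\R;H)}$). Linearity of $k\ast$ is immediate from the linearity of the Bochner integral, so only the norm bound and the a.e.\ existence of the defining integral require work.

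First I would settle well-definedness. Introduce the measurable majorant $\Phi(t)\coloneqq\int_\R\abs{k(s)}\norm{f(t-s)}_H\d s\in[0,\infty]$. Factoring $\abs{k(s)}\norm{f(t-s)}_H=\e^{\nu t}\bigl(\abs{k(s)}\e^{-\nu s}\bigr)\bigl(\norm{f(t-s)}_H\e^{-\nu(t-s)}\bigr)$ and applying Minkowski's integral inequality in $\L(\R)$ (in the variable $t$), together with the translation invariance of the Lebesgue measure, yields
\begin{align*}
\left(\int_\R\bigl(\e^{-\nu t}\Phi(t)\bigr)^2\d t\right)^{1/2}
&\leq\int_\R\abs{k(s)}\e^{-\nu s}\left(\int_\R\norm{f(t-s)}_H^2\e^{-2\nu(t-s)}\d t\right)^{1/2}\d s\\
&=\left(\int_\R\abs{k(s)}\e^{-\nu s}\d s\right)\norm{f}_{\Lnu(\R;H)}
=\norm{k}_{L_{1,\nu}(\R)}\,\norm{f}_{\Lnu(\R;H)}.
\end{align*}
Hence $\e^{-\nu\cdot}\Phi\in\L(\R)$, so $\Phi(t)<\infty$ for a.e.\ $t\in\R$; for each such $t$ the function $s\mapsto k(s)f(t-s)$ is Bochner-integrable (its $H$-norm is dominated by $s\mapsto\abs{k(s)}\norm{f(t-s)}_H$, which is integrable), so that $(k\ast f)(t)$ is well-defined.

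For the norm estimate, \prettyref{prop:Bochner-integral} applied with the Lebesgue measure gives $\norm{(k\ast f)(t)}_H\leq\Phi(t)$ for a.e.\ $t$, and therefore
\[
\norm{k\ast f}_{\Lnu(\R;H)}^2=\int_\R\norm{(k\ast f)(t)}_H^2\,\e^{-2\nu t}\d t\leq\int_\R\bigl(\e^{-\nu t}\Phi(t)\bigr)^2\d t\leq\norm{k}_{L_{1,\nu}(\R)}^2\,\norm{f}_{\Lnu(\R;H)}^2
\]
by the displayed bound, which is the assertion. The only genuinely delicate points in carrying this out are the measurability prerequisites behind the Fubini/Tonelli steps: one needs that $(s,t)\mapsto k(s)f(t-s)$ is Bochner-measurable on $\R^2$ and that its $s$-sections are Bochner-measurable for a.e.\ $t$. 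By \prettyref{thm:Pettis} this holds because $(s,t)\mapsto f(t-s)$ is weakly measurable and almost separably valued (the exceptional null set of $f$ pulls back under $(s,t)\mapsto t-s$ to a planar null set), multiplication by the scalar measurable $k$ preserves Bochner-measurability, and the sectioning property is standard; the inequality itself is then routine.
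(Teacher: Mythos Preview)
Your proof is correct and follows the same overall shape as the paper's: first bound the scalar majorant $\Phi(t)=\int_\R\abs{k(s)}\norm{f(t-s)}_H\d s$ in $\Lnu$, conclude a.e.\ existence of the Bochner integral, and then verify Bochner-measurability of $k\ast f$ via \prettyref{thm:Pettis}. The only methodological difference is in the inequality used for the norm bound: you invoke Minkowski's integral inequality (equivalently Young's inequality after the substitution $g=\e^{-\nu\cdot}f$, $k_\nu=\e^{-\nu\cdot}k$), whereas the paper splits $\abs{k(s)}\e^{-\nu s}=\abs{k(s)}^{1/2}\e^{-\nu s/2}\cdot\abs{k(s)}^{1/2}\e^{-\nu s/2}$ and applies Cauchy--Schwarz in the $s$-integral before using Fubini. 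Both routes yield the identical bound $\norm{k}_{L_{1,\nu}}\norm{f}_{\Lnu}$; your Minkowski argument is arguably the more direct of the two. The paper is slightly more explicit than you are about the Bochner-measurability of $k\ast f$ itself (showing separately that it is almost separably-valued via $H_0=\overline{\lin f[\R\setminus N]}$ and weakly measurable via $x'\circ(k\ast f)=k\ast(x'\circ f)$), which you might want to spell out rather than fold into the Fubini prerequisites.
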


\begin{proof}
We first prove that $s\mapsto k(s)f(t-s)\in L_{1}(\R;H)$ for
a.e.~$t\in\R$. The Bochner-measurability is clear since $k$ and
$f$ are both Bochner-measurable. Moreover, 
\begin{align*}
 & \int_{\R}\left(\int_{\R}\norm{k(s)f(t-s)}_{H}\d s\right)^{2}\e^{-2\nu t}\d t\\
 & =\int_{\R}\left(\int_{\R}\abs{k(s)}^{\frac{1}{2}}\e^{-\frac{\nu}{2}s}\abs{k(s)}^{\frac{1}{2}}\e^{-\frac{\nu}{2}s}\norm{f(t-s)}_{H}\e^{-\nu(t-s)}\d s\right)^{2}\d t\\
 & \leq\int_{\R}\left(\int_{\R}\abs{k(s)}\e^{-\nu s}\d s\right)\left(\int_{\R}\abs{k(s)}\e^{-\nu s}\norm{f(t-s)}_{H}^{2}\e^{-2\nu(t-s)}\d s\right)\d t\\
 & =\norm{k}_{L_{1,\nu}(\R)}\int_{\R}\abs{k(s)}\int_{\R}\norm{f(t-s)}^2\e^{-2\nu(t-s)}\d t\,\e^{-\nu s}\d s\\
 & =\norm{k}_{L_{1,\nu}(\R)}^{2}\norm{f}_{\Lnu(\R;H)}^{2},
\end{align*}
which on the one hand proves that 
\[
\int_{\R}\norm{k(s)f(t-s)}_{H}\d s<\infty
\]
for a.e.~$t\in\R$ and on the other hand shows the norm estimate, once we have shown the Bochner-measurability of $k\ast f$. For proving the latter, we apply \prettyref{thm:Pettis}. Since $f$ is Bochner-measurable, we find a nullset $N$ such that $H_0 \coloneqq \overline{\lin f[\R\setminus N]}$ is separable. Hence, for almost every $t\in \R$ we have 
\[
 (k\ast f)(t)=\int_{\R} k(s)f(t-s)\d s=\int_{\R\setminus N} k(t-s) f(s)\d s\in H_0
\]
by \prettyref{prop:interchange_integral} (b). Thus, $k \ast f$ is almost separably-valued. Moreover, for $x' \in H'$ we have by \prettyref{prop:interchange_integral} (a) 
\[
 x'\circ (k\ast f)= k\ast (x'\circ f)
\]
almost everywhere and thus, the weak Bochner-measurability follows from the fact that the convolution of two measurable scalar-valued functions is measurable. Since the linearity of $k\ast$ is clear the proof is done.
\end{proof}
\begin{defn*}
For $\nu\ne0$ we define the operator 
\[
I_{\nu}\colon\Lnu(\R;H)\to\Lnu(\R;H)
\]
by 
\[
I_{\nu}\coloneqq\begin{cases}
\1_{\roi{0}{\infty}}\ast, & \text{ if }\nu>0,\\
-\1_{\loi{-\infty}{0}}\ast, & \text{ if }\nu<0.
\end{cases}
\]
\end{defn*}
Note that, by \prettyref{lem:convolution}, $I_{\nu}$ is bounded with $\norm{I_{\nu}}\leq\frac{1}{\abs{\nu}}$.

\begin{rem}
\label{rem:action_Inu}
  For $\nu>0$, $f\in \Lnu(\R,H)$ we have
  \[I_\nu f(t) = \1_{\roi{0}{\infty}}\ast f (t) = \int_0^\infty f(t-s)\d s = \int_{-\infty}^t f(s)\d s\quad (t\in \R \mbox{ a.e.}).\]
  Analogously, for $\nu<0$, $f\in \Lnu(\R,H)$ we have
  \[I_\nu f(t) = -\int_t^\infty f(s)\d s\quad (t\in \R \mbox{ a.e.}).\]
\end{rem}

\begin{prop}
\label{prop:int_nice}Let $\nu\ne0.$ Then $I_{\nu}$ is one-to-one
and $\cco(\R;H)$\index{C_{c}^{1}(R;H)@$\cco(\R;H)$},
the space of continuously differentiable, compactly supported functions
on $\R$ with values in $H$, is in the range of $I_{\nu}$.
\end{prop}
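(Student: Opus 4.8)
The plan is to prove the two assertions separately. The range statement is a direct consequence of the fundamental theorem of calculus, while injectivity will be obtained by reducing, via the duality between $H$ and $H'$, to the classical scalar statement that an $L_1$-function whose indefinite integral vanishes is zero almost everywhere.

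\emph{Range.} Let $g\in\cco(\R;H)$ and set $f\coloneqq g'$. Since $g'$ is continuous (hence Bochner-measurable, as noted in the proof of \prettyref{cor:Hauptsatz}) with support contained in the compact set $\spt g$, on which $t\mapsto\e^{-2\nu t}$ is bounded, we have $f\in\Lnu(\R;H)$. To see $I_\nu f=g$, consider first $\nu>0$. Fix $t\in\R$ and pick $a<t$ with $g=0$ on $\loi{-\infty}{a}$; applying \prettyref{cor:Hauptsatz} on $\ci{a}{t}$ gives $\int_{\ci{a}{t}}g'\d\lambda=g(t)-g(a)=g(t)$, and since $g'$ vanishes on $\loi{-\infty}{a}$ this equals $\int_{-\infty}^{t}g'(s)\d s$, which by \prettyref{rem:action_Inu} is $I_\nu f(t)$. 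For $\nu<0$ one argues symmetrically, now choosing $b>t$ with $g=0$ on $\roi{b}{\infty}$ and using $I_\nu f(t)=-\int_{t}^{\infty}g'(s)\d s=-(g(b)-g(t))=g(t)$. Hence $g=I_\nu f\in\ran(I_\nu)$.

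\emph{Injectivity.} Suppose $f\in\Lnu(\R;H)$ with $I_\nu f=0$; I treat $\nu>0$, the case $\nu<0$ being analogous. By Cauchy--Schwarz, $\int_{-\infty}^{t}\norm{f(s)}_H\d s\le\norm{f}_{\Lnu(\R;H)}\bigl(\int_{-\infty}^{t}\e^{2\nu s}\d s\bigr)^{1/2}<\infty$ for every $t$, so $f$ is Bochner-integrable on each half-line $\loi{-\infty}{t}$ and $G(t)\coloneqq\int_{-\infty}^{t}f(s)\d s$ defines a continuous $H$-valued function. By \prettyref{rem:action_Inu}, the hypothesis gives $G=0$ almost everywhere, hence $G\equiv0$ by continuity. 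For $x'\in H'$, \prettyref{prop:interchange_integral}~(a) yields $x'\circ G=\int_{-\infty}^{\cdot}(x'\circ f)(s)\d s$, which therefore vanishes identically; as $x'\circ f\in L_{1,\loc}(\R)$, the classical scalar result forces $x'\circ f=0$ almost everywhere. Finally, since $f$ is almost separably-valued by \prettyref{thm:Pettis}, choose $N_0$ with $Y\coloneqq\overline{\lin f[\R\setminus N_0]}$ separable and a countable family $(x_n')_{n\in\N}$ in $H'$ with $\norm{y}_H=\sup_{n}\abs{x_n'(y)}$ for all $y\in Y$; outside the null set $N_0\cup\bigcup_{n\in\N}[x_n'\circ f\neq0]$ we then have $f=0$, i.e.\ $f=0$ in $\Lnu(\R;H)$.

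The routine part is the measurability and integrability bookkeeping in the two steps; the only point that genuinely needs care is the last one — passing from ``$x'\circ f=0$ a.e.\ for each $x'$'' to ``$f=0$ a.e.'' — and this is precisely where almost separable-valuedness from the Theorem of Pettis is indispensable, since it lets the individual null sets be amalgamated into a single one.
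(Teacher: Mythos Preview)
Your proof is correct. The range argument matches the paper's. For injectivity, however, the paper takes a more direct Hilbert-space route: instead of reducing to scalar functions via $x'\in H'$ and then invoking Pettis to amalgamate the resulting null sets, it computes
\[
\scp{f}{\e^{2\nu(\cdot)}\1_{\ci{a}{b}}\cdot x}_{\Lnu(\R;H)}
=\scp{\int_a^b f(t)\d t}{x}_H
=\scp{(I_\nu f)(b)-(I_\nu f)(a)}{x}_H=0
\]
for $a,b$ outside one fixed null set and arbitrary $x\in H$, and then observes via \prettyref{lem:dense sets} that the functions $\e^{2\nu(\cdot)}\1_{\ci{a}{b}}\cdot x$ form a total set in $\Lnu(\R;H)$, whence $f=0$ immediately. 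This bypasses the continuity of $G$, the classical scalar result, and the separability bookkeeping altogether. Your route has the merit of working verbatim in a Banach-space setting where no inner product is available, but within the Hilbert framework of the paper the totality argument is both shorter and more in keeping with the tools being developed.
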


\begin{proof}
We just prove the assertion for the case when $\nu>0$. Let $f\in\Lnu(\R;H)$
satisfy $I_{\nu}f=0$. In particular, we obtain for all $t\in\R\setminus N$
that $0=I_{\nu}f(t)=\int_{-\infty}^{t}f(s)\d s$
for some Lebesgue null set, $N\subseteq\R$. Then for $a,b\in\R\setminus N$
with $a<b$ and $x\in H$ we have that 
\begin{align*}
\scp{f}{\e^{2\nu(\cdot)}\1_{\ci{a}{b}}\cdot x}_{\Lnu(\R;H)} & =\int_{\R}\scp{f(t)}{\e^{2\nu t}\1_{\ci{a}{b}}(t)\cdot x}_{H}\e^{-2\nu t}\d t\\
 & =\scp{\int_{a}^{b}f(t)\d t}{x}_{H}\\
 & =\scp{\left(I_{\nu}f\right)(b)-\left(I_{\nu}f\right)(a)}{x}_{H}=0.
\end{align*}
Thus $f=0$. Indeed,  since $\R\setminus N$ is dense in $\R$, $\set{\e^{2\nu(\cdot)}\1_{\ci{a}{b}}}{a,b\in\R\setminus N}$
is total in $\Lnu(\R)$. Hence, $\set{\e^{2\nu(\cdot)}\1_{\ci{a}{b}}\cdot x}{a,b\in\R\setminus N,\,x\in H}$
is total in $\Lnu(\R;H)$ by \prettyref{lem:dense sets}. This proves
the injectivity of $I_{\nu}$. Moreover, if $\varphi\in \cco(\R;H)$
then by \prettyref{cor:Hauptsatz} we have 
\[
\varphi(t)=\int_{-\infty}^{t}\varphi'(s)\d s=\left(I_{\nu}\varphi'\right)(t)\quad(t\in\R\mbox{ a.e.}).\tag*{\qedhere}
\]
\end{proof}
\begin{defn*}
For $\nu\ne0$ we define the\index{time derivative}\emph{ time derivative,
}$\td{\nu}$, on $\Lnu(\R;H)$ by 
\[
\td{\nu}\coloneqq I_{\nu}^{-1}.
\]
\end{defn*}
Note that by \prettyref{lem:convolution} and \prettyref{prop:int_nice},
$\td{\nu}$ is a closed linear operator for which $\cco(\R;H)\subseteq\dom(\td{\nu})$.
Since 
\[
\cco(\R;H)\supseteq\lin\set{\varphi\cdot x}{\varphi\in \cco(\R),\,x\in H}
\]
we infer that $\td{\nu}$ is densely defined by \prettyref{lem:dense sets}
and \prettyref{exer:C_cinfty dense}. Moreover, since $I_{\nu}\varphi'=\varphi$
for $\varphi\in\cco(\R;H)$ we get that 
\[
\td{\nu}\varphi=\varphi';
\]
that is, $\td{\nu}$ extends the classical derivative of continuously
differentiable functions. We shall discuss the actual domain of $\td{\nu}$
in the next chapter. 
\begin{prop}
\label{prop:C_cinfty_core}Let $\nu\ne0$. Then \index{mathcal{D}_H@$\calDH$}$\calDH\coloneqq\lin\set{\varphi\cdot x}{\varphi\in \cci(\R),\,x\in H}$
is a core for $\td{\nu}.$ Here, $\cci(\R)$ denotes the
space of smooth functions on $\R$ with compact
support.
\end{prop}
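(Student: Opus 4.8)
The plan is to unwind the definition of a core. Since $\td{\nu}=I_{\nu}^{-1}$ is closed, it suffices to show that for every $f\in\dom(\td{\nu})$ the pair $(f,\td{\nu}f)$ lies in the closure of the graph $\set{(\phi,\td{\nu}\phi)}{\phi\in\calDH}$ in $\Lnu(\R;H)\times\Lnu(\R;H)$; the reverse inclusion $\overline{\td{\nu}\cap(\calDH\times\Lnu(\R;H))}\subseteq\td{\nu}$ is automatic because $\td{\nu}$ is closed and $\calDH\subseteq\cco(\R;H)\subseteq\dom(\td{\nu})$. I would first record two cheap facts. Every $\phi\in\calDH$ lies in $\cco(\R;H)$, hence $\td{\nu}\phi=\phi'$ and therefore $\phi=I_{\nu}\td{\nu}\phi=I_{\nu}\phi'$; the same identity applied to $f$ gives $f=I_{\nu}\td{\nu}f$. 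Moreover, if $\phi=\sum_{j}\varphi_{j}x_{j}\in\calDH$ then $\phi'=\sum_{j}\varphi_{j}'x_{j}$, so $\set{\phi'}{\phi\in\calDH}=\lin\set{\varphi'x}{\varphi\in\cci(\R),\,x\in H}$. Since $I_{\nu}\in\bo(\Lnu(\R;H))$ with $\norm{I_{\nu}}\leq1/\abs{\nu}$, it follows that the whole problem reduces to this: given $f\in\dom(\td{\nu})$ with $g\coloneqq\td{\nu}f$, produce $\phi_{n}\in\calDH$ with $\phi_{n}'\to g$ in $\Lnu(\R;H)$. Then automatically $\phi_{n}=I_{\nu}\phi_{n}'\to I_{\nu}g=f$ and hence $(\phi_{n},\td{\nu}\phi_{n})=(\phi_{n},\phi_{n}')\to(f,g)$.

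Thus everything comes down to showing that $\set{\phi'}{\phi\in\calDH}=\lin\set{\varphi'x}{\varphi\in\cci(\R),\,x\in H}$ is dense in $\Lnu(\R;H)$. By \prettyref{lem:dense sets} it is enough to prove that $\set{\varphi'}{\varphi\in\cci(\R)}$ — being a linear subspace of $\Lnu(\R)$ — is dense, equivalently total, in $\Lnu(\R)=\L(\mu_{2,\nu})$. This is the only genuine point of the argument, and it is exactly where $\nu\neq0$ is needed. I would compute the orthogonal complement: if $h\in\Lnu(\R)$ satisfies $\scp{h}{\varphi'}_{\Lnu(\R)}=\int_{\R}\overline{h(t)}\varphi'(t)\e^{-2\nu t}\d t=0$ for all $\varphi\in\cci(\R)$, then $\overline{h(\cdot)}\e^{-2\nu(\cdot)}\in L_{1,\loc}(\R)$ has vanishing distributional derivative, hence is almost everywhere equal to some constant $c$; thus $h(t)=\overline{c}\,\e^{2\nu t}$. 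But then $\int_{\R}\abs{\overline{c}\,\e^{2\nu t}}^{2}\e^{-2\nu t}\d t=\abs{c}^{2}\int_{\R}\e^{2\nu t}\d t=\infty$ unless $c=0$, so $h=0$ and the complement is trivial. (If one prefers to avoid distributions: for $\psi\in\cci(\R)$ and a fixed $\rho\in\cci(\R)$ with $\int_{\R}\rho=1$, the function $\varphi(t)\coloneqq\int_{-\infty}^{t}\bigl(\psi(s)-(\int_{\R}\psi)\rho(s)\bigr)\d s$ lies in $\cci(\R)$ and $\varphi'=\psi-(\int_{\R}\psi)\rho$, which upon testing against $h$ yields the same conclusion via the fundamental lemma of the calculus of variations.)

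Assembling the pieces: given $f\in\dom(\td{\nu})$, set $g\coloneqq\td{\nu}f$, use the density just established to choose $\phi_{n}\in\calDH$ with $\phi_{n}'\to g$ in $\Lnu(\R;H)$, and conclude from the reduction in the first paragraph that $(\phi_{n},\td{\nu}\phi_{n})\to(f,g)$ in the graph norm. Hence $\calDH$ is a core for $\td{\nu}$. I expect the only real obstacle to be the density statement in $\Lnu(\R)$; note that it genuinely fails for $\nu=0$, where the constants are orthogonal to all derivatives and do belong to $L_{2}(\R)$ — so any correct proof must exploit $\nu\neq0$, precisely through the divergence of $\int_{\R}\e^{2\nu t}\d t$.
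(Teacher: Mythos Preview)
Your proof is correct and follows the same global architecture as the paper: reduce to showing that $\set{\varphi'}{\varphi\in\cci(\R)}$ is dense in $\Lnu(\R)$, lift to $H$-valued functions via \prettyref{lem:dense sets}, and then use $\norm{I_{\nu}}\leq1/\abs{\nu}$ to pass from convergence of the derivatives to convergence in graph norm.

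The genuine difference is in how you prove the scalar density. The paper gives an explicit construction: for $f\in\cci(\R)$ it sets $\varphi_{n}(t)=\int_{-\infty}^{t}\bigl(f(s)-f(s\mp n)\bigr)\d s$ (sign depending on the sign of $\nu$), observes $\varphi_{n}\in\cci(\R)$, and computes $\norm{\varphi_{n}'-f}_{\Lnu}^{2}=\e^{-2\abs{\nu}n}\norm{f}_{\Lnu}^{2}\to0$. Your orthogonal-complement argument is cleaner and avoids guessing the right primitive; the paper's construction, on the other hand, is self-contained and does not invoke the fact that a locally integrable function with vanishing distributional derivative is constant (your alternative via a fixed $\rho$ with $\int\rho=1$ neatly sidesteps this, and is a nice touch).

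One correction to your closing remark: the density of $\set{\varphi'}{\varphi\in\cci(\R)}$ does \emph{not} fail for $\nu=0$. Your own argument shows that any $h$ in the orthogonal complement must satisfy $h(t)=\overline{c}\,\e^{2\nu t}$, and for $\nu=0$ this gives $h$ constant---but nonzero constants are not in $\L(\R)$ either, so still $h=0$. What genuinely breaks at $\nu=0$ is the second half of the argument: $I_{0}$ is not bounded (indeed $\td{0}$ is skew-selfadjoint with $0\in\sigma(\td{0})$), so you can no longer infer $\phi_{n}\to f$ from $\phi_{n}'\to g$. The hypothesis $\nu\neq0$ is used exactly there, not in the density step.
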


\begin{proof}
We first prove that 
\begin{equation}
\set{\varphi'}{\varphi\in \cci(\R)}\label{eq:der_test_fct}
\end{equation}
is dense in $\Lnu(\R)$. As $\cci(\R)$ is dense in $\Lnu(\R)$
(see \prettyref{exer:C_cinfty dense}), it suffices to approximate
functions in $\cci(\R)$. For this, let $f\in \cci(\R)$.
% Moreover, we choose $\psi\in \cci(\R)$ such that $\int_{\R}\psi=1$
% and 
% \[
% \spt\psi\subseteq\begin{cases}
% \Rge{0}, & \text{ if }\nu>0,\\
% \Rle{0}, & \text{ if }\nu<0,
% \end{cases}
% \]
% where $\spt \psi$ denotes the support\index{support, $\spt$} of $\psi$.
% For $n\in\N$ we define $\psi_{n}(t)\coloneqq\frac{1}{n}\psi\left(\frac{t}{n}\right)$
% for $t\in\R$. Notice we still have that $\psi_{n}\in \cci(\R)$
% and $\int_{\R}\psi_{n}=1$. 
We now define 
\[
\varphi_{n}(t)\coloneqq\begin{cases}
                        \int_{-\infty}^{t}f(s) - f(s-n) \d s &\mbox{if } \nu>0,\\
                        \int_{-\infty}^{t}f(s) - f(s+n) \d s &\mbox{if } \nu<0
                       \end{cases}
\quad(t\in\R,n\in \N).
\]
Then $\varphi_{n}\in \cci(\R)$ for each $n\in\N$ and 
\[
\varphi_{n}'(t)=\begin{cases}
                 f(t)-f(t-n) &\mbox{if } \nu>0,\\
                 f(t)-f(t+n) &\mbox{if } \nu<0
                \end{cases}\quad (t\in \R,n\in \N).
\]
Consequently, 
\begin{align*}
\norm{\varphi_{n}'-f}_{\Lnu(\R)}^2&= \begin{cases}
                                      \int_\R |f(t-n)|^2  \e^{-2\nu t} \d t & \mbox{if } \nu>0,\\
                                      \int_\R |f(t+n)|^2 \e^{-2\nu t} \d t  &\mbox{if } \nu<0
                                     \end{cases}\\
                                   &= \norm{f}_{\Lnu(\R)}^2 \e^{-2|\nu|n} \to 0\quad (n\to \infty),
\end{align*}
which shows the density of \prettyref{eq:der_test_fct} in $\Lnu(\R)$. By \prettyref{lem:dense sets} we have that 
\[
\set{\varphi'\cdot x}{\varphi\in \cci(\R),\,x\in H}
\]
is total in $\Lnu(\R;H)$ and so $\td{\nu}[\calDH]$ is dense
in $\Lnu(\R;H)$. Now let $f\in\dom(\td{\nu})$ and $\varepsilon>0$.
By what we have shown above there exists some $\varphi\in\calDH$
such that 
\[
\norm{\td{\nu}\varphi-\td{\nu}f}_{\Lnu(\R;H)}\leq\varepsilon.
\]
Since $\partial_{t,\nu}^{-1}=I_{\nu}$ is bounded with $\norm{\partial_{t,\nu}^{-1}}\leq\frac{1}{\abs{\nu}}$, the latter implies
that
\[
\norm{\varphi-f}_{\Lnu(\R;H)}\leq\frac{\varepsilon}{\abs{\nu}},
\]
and hence, $\calDH$ is indeed a core for $\td{\nu}$. 
\end{proof}

\begin{cor}
\label{cor:nu=00003D0} For $\nu\in\R$ the mapping
\begin{align*}
\exp(-\nu\m):\Lnu(\R;H) & \to\L(\R;H)\\
f & \mapsto(t\mapsto\e^{-\nu t}f(t))
\end{align*}
is unitary, and for $\nu,\mu\ne0$ one has 
\[
\exp(-\nu\m)(\td{\nu}-\nu)\exp(-\nu\m)^{-1}=\exp(-\mu\m)(\td{\mu}-\mu)\exp(-\mu\m)^{-1}.
\]
\end{cor}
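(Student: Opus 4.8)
The plan is to treat the two assertions separately. For the unitarity, I would note that $\exp(-\nu\m)$ is linear with inverse $g\mapsto(t\mapsto\e^{\nu t}g(t))$, which maps $\L(\R;H)$ back into $\Lnu(\R;H)$ by the same computation, and that for $f\in\Lnu(\R;H)$,
\[
\norm{\exp(-\nu\m)f}_{\L(\R;H)}^{2}=\int_{\R}\norm{\e^{-\nu t}f(t)}_{H}^{2}\d\lambda(t)=\int_{\R}\norm{f(t)}_{H}^{2}\e^{-2\nu t}\d\lambda(t)=\norm{f}_{\Lnu(\R;H)}^{2}.
\]
Hence $\exp(-\nu\m)$ is a surjective linear isometry between Hilbert spaces, so it is unitary and $\exp(-\nu\m)^{*}=\exp(-\nu\m)^{-1}$.

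For the conjugation identity, the idea is to show that $B_{\nu}\coloneqq\exp(-\nu\m)(\td{\nu}-\nu)\exp(-\nu\m)^{-1}$ equals, for every $\nu\neq0$, one and the same $\nu$-independent operator, namely the closure in $\L(\R;H)$ of classical differentiation restricted to $\calDH$. First, $\td{\nu}$ is closed, being the inverse of the bounded injective operator $I_{\nu}$ (\prettyref{prop:int_nice}) and inverses of closed relations being closed (\prettyref{prop:inv-sum-comp-closed}); subtracting the bounded operator $\nu\id$ and conjugating by the unitary $\exp(-\nu\m)$ preserves closedness, so $B_{\nu}$ is closed with $\dom(B_{\nu})=\exp(-\nu\m)[\dom(\td{\nu})]$. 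Next, for $\varphi\in\calDH$ we have $\exp(-\nu\m)^{-1}\varphi=\e^{\nu(\cdot)}\varphi\in\calDH\subseteq\dom(\td{\nu})$, and since $\td{\nu}$ extends the classical derivative on $\cco(\R;H)$, the product rule yields
\[
(\td{\nu}-\nu)\bigl(\e^{\nu(\cdot)}\varphi\bigr)=\nu\e^{\nu(\cdot)}\varphi+\e^{\nu(\cdot)}\varphi'-\nu\e^{\nu(\cdot)}\varphi=\e^{\nu(\cdot)}\varphi',
\]
so $B_{\nu}\varphi=\exp(-\nu\m)\bigl(\e^{\nu(\cdot)}\varphi'\bigr)=\varphi'$, with no dependence on $\nu$.

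It remains to see that $\calDH$ is a core for $B_{\nu}$, and this is the only step requiring more than routine bookkeeping. Since multiplication by $\e^{\nu(\cdot)}$ is a bijection of $\cci(\R)$ onto itself, $\exp(-\nu\m)^{-1}[\calDH]=\calDH$, which is a core for $\td{\nu}$ by \prettyref{prop:C_cinfty_core}, hence also for $\td{\nu}-\nu$ (same domain, equivalent graph norms); and conjugation by a unitary carries cores to cores, the unitary being an isomorphism of the respective graphs. Thus $\calDH$ is a core for $B_{\nu}$, so by the definition of a core and closedness of $B_{\nu}$ we get $B_{\nu}=\overline{B_{\nu}\cap(\calDH\times\L(\R;H))}$, which is the closure of the $\nu$-independent map $\varphi\mapsto\varphi'$ on $\calDH$. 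Therefore $B_{\nu}=B_{\mu}$ for all $\nu,\mu\neq0$, which is precisely the asserted identity.
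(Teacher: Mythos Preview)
Your proof is correct. The paper leaves this result as \prettyref{exer:derivative nu=00003D0}, so there is no written proof to compare against, but your approach---computing $B_{\nu}$ on $\calDH$ via the product rule, then invoking \prettyref{prop:C_cinfty_core} together with the observation that $\exp(-\nu\m)^{-1}[\calDH]=\calDH$ to conclude $\calDH$ is a core for the closed operator $B_{\nu}$---is precisely the natural one suggested by the placement of the corollary immediately after \prettyref{prop:C_cinfty_core}, and is almost certainly what the authors intend.
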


\begin{proof}
The proof is left as \prettyref{exer:derivative nu=00003D0}.
\end{proof}

By \prettyref{cor:nu=00003D0} we can now define $\td{0}$. Let $\nu\neq0$. Then
\[
 \td{0}\coloneqq \exp(-\nu\m)(\td{\nu}-\nu)\exp(-\nu\m)^{-1}.
\]
Note that in view of \prettyref{cor:nu=00003D0}, the assertion of \prettyref{prop:C_cinfty_core} now also holds for $\nu=0$.

Finally, we want to compute the adjoint of $\td{\nu}$. 

\begin{cor}
\label{cor:adjoint_time_derivative}Let $\nu\in\R$. The adjoint of
$\td{\nu}$ is given by 
\[
\td{\nu}^{\ast}=-\td{\nu}+2\nu.
\]
In particular, $\td{\nu}$ is a normal operator with $\Re\td{\nu}\coloneqq\frac{1}{2}\left(\overline{\td{\nu}+\td{\nu}^{\ast}}\right)=\nu$, and $\td{0}$ is skew-selfadjoint.
\end{cor}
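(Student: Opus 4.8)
The plan is to exploit, for $\nu\ne 0$, that $\td{\nu}$ is the inverse of the bounded injective operator $I_\nu$: since taking adjoints of relations commutes with inversion (\prettyref{lem:adj}), we have $\td{\nu}^*=(I_\nu^{-1})^*=(I_\nu^*)^{-1}$, so it suffices to identify $I_\nu^*$. First I would compute $I_\nu^*$ explicitly via Fubini's theorem. For $\nu>0$, writing out $\scp{I_\nu f}{g}_{\Lnu(\R;H)}=\int_\R\scp{\int_{-\infty}^tf(s)\d s}{g(t)}_H\e^{-2\nu t}\d t$ and swapping the order of integration gives
\[
(I_\nu^*g)(t)=\e^{2\nu t}\int_t^\infty\e^{-2\nu s}g(s)\d s=(k_\nu\ast g)(t),\qquad k_\nu(t)\coloneqq\e^{2\nu t}\1_{\loi{-\infty}{0}}(t);
\]
one checks $k_\nu\in L_{1,\nu}(\R)$, so $I_\nu^*$ is again a convolution operator (cf.\ \prettyref{lem:convolution}). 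For $\nu<0$ the analogous computation gives $I_\nu^*=-k_\nu\ast{}$ with $k_\nu(t)\coloneqq\e^{2\nu t}\1_{\roi{0}{\infty}}(t)$.

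Because $I_\nu$ and $I_\nu^*$ are both convolution operators they commute, and evaluating the convolution of the two kernels (e.g.\ for $\nu>0$ one computes $\1_{\roi{0}{\infty}}\ast k_\nu=\tfrac{1}{2\nu}(\1_{\roi{0}{\infty}}+k_\nu)$) yields the algebraic identity
\[
2\nu\,I_\nu I_\nu^*=2\nu\,I_\nu^*I_\nu=I_\nu+I_\nu^*.
\]
From this I would read off the two inclusions needed. Rewriting $I_\nu^*=I_\nu(2\nu I_\nu^*-\id)$ shows $\ran(I_\nu^*)\subseteq\ran(I_\nu)=\dom(\td{\nu})$ and, applying $I_\nu^{-1}=\td{\nu}$, that $(-\td{\nu}+2\nu)I_\nu^*=\id$; rewriting $I_\nu^*I_\nu=\tfrac{1}{2\nu}(I_\nu+I_\nu^*)$ and evaluating at $h=I_\nu f\in\dom(\td{\nu})$ gives $I_\nu^*(-\td{\nu}+2\nu)\subseteq\id$. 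Hence $-\td{\nu}+2\nu$ is a bijection of $\dom(\td{\nu})$ onto $\Lnu(\R;H)$ with inverse relation $I_\nu^*$, so $(I_\nu^*)^{-1}=-\td{\nu}+2\nu$ and therefore $\td{\nu}^*=-\td{\nu}+2\nu$ for all $\nu\ne 0$. For $\nu=0$ I would invoke \prettyref{cor:nu=00003D0}: choosing any $\mu\ne 0$ and writing $\td{0}=V(\td{\mu}-\mu)V^{-1}$ with $V\coloneqq\exp(-\mu\m)$ unitary, conjugation by $V$ preserves adjoints (by \prettyref{thm:adj-comp} and \prettyref{rem:adj-comp}), so $\td{0}^*=V(\td{\mu}-\mu)^*V^{-1}=V(-\td{\mu}+2\mu-\mu)V^{-1}=-V(\td{\mu}-\mu)V^{-1}=-\td{0}$, i.e.\ $\td{0}$ is skew-selfadjoint, which is the claimed formula at $\nu=0$.

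The remaining assertions are then formal consequences of $\td{\nu}^*=-\td{\nu}+2\nu$. On $\dom(\td{\nu})=\dom(\td{\nu}^*)$ one has $\td{\nu}+\td{\nu}^*=2\nu\,\id$, which is densely defined and bounded, so its closure is $2\nu\,\id$ on all of $\Lnu(\R;H)$ by \prettyref{cor:uniquecont}, whence $\Re\td{\nu}=\nu$; and one checks $\dom(\td{\nu}\td{\nu}^*)=\dom(\td{\nu}^*\td{\nu})=\dom(\td{\nu}^2)$ with $\td{\nu}\td{\nu}^*=\td{\nu}^*\td{\nu}=-\td{\nu}^2+2\nu\td{\nu}$ on that domain, so $\td{\nu}$ is normal.

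The main obstacle I expect is the bookkeeping in the two explicit computations — the Fubini identification of $I_\nu^*$ (with the sign conventions separating the cases $\nu>0$ and $\nu<0$) and the convolution-kernel identity underlying $2\nu I_\nu I_\nu^*=I_\nu+I_\nu^*$ — together with making the relational step "$-\td{\nu}+2\nu$ has the two-sided inverse $I_\nu^*$, hence equals $(I_\nu^*)^{-1}$" fully precise. Everything else reduces to the algebra of adjoints of sums and compositions (\prettyref{thm:adj-sum}, \prettyref{thm:adj-comp}) and \prettyref{cor:nu=00003D0}.
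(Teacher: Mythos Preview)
Your argument is correct and constitutes a genuinely different proof from the one in the paper. The paper proceeds by first establishing the inclusion $\td{\nu}\subseteq -\td{\nu}^*+2\nu$ via integration by parts on $\cci(\R;H)$ (which requires \prettyref{prop:C_cinfty_core}), and then upgrades this inclusion to an equality by showing $-\td{\nu}^*+2\nu$ is one-to-one, using $\ker(-\td{\nu}^*+2\nu)=\ran(-\td{\nu}+2\nu)^\bot$ together with the unitary equivalence of $-\td{\nu}+2\nu$ and $-\td{-\nu}$ from \prettyref{cor:nu=00003D0}. Your route is more computational: you identify $I_\nu^*$ explicitly as a convolution operator via Fubini, derive the resolvent-type identity $2\nu I_\nu I_\nu^*=I_\nu+I_\nu^*$, and read off directly that $(I_\nu^*)^{-1}=-\td{\nu}+2\nu$. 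This bypasses the core property entirely and gives a concrete formula for $I_\nu^*$ along the way; the price is the bookkeeping you flag (Fubini justification, the kernel convolution, and the two-sided inverse argument), whereas the paper's approach is shorter once \prettyref{prop:C_cinfty_core} is in hand. Both proofs treat $\nu=0$ identically via \prettyref{cor:nu=00003D0}.
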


\begin{proof}
Let $\nu\neq 0$ first.
Integrating by parts, one obtains 
\begin{align*}
\int_{\R}\scp{\td{\nu}\varphi(t)}{\psi(t)}\e^{-2\nu t}\d t & =\int_{\R}\scp{\varphi'(t)}{\psi(t)}\e^{-2\nu t}\d t\\
& =\int_{\R}\scp{\varphi(t)}{-\psi'(t)+2\nu\psi(t)}\e^{-2\nu t}\d t
\end{align*}
for $\varphi,\psi\in \cci(\R;H)$. Since $\cci(\R;H)$
is a core for $\td{\nu}$ by \prettyref{prop:C_cinfty_core}, the
latter shows 
\[
\td{\nu}\subseteq-\td{\nu}^{\ast}+2\nu.
\]
Since we know that $\td{\nu}$ is onto, it suffices to prove that $-\td{\nu}^\ast+2\nu$ is one-to-one, since this would imply equality in the latter operator inclusion. For doing so, we apply \prettyref{thm:ran-kernerl} to compute
\[
 \ker(-\td{\nu}^\ast +2\nu)=\ran(-\td{\nu}+2\nu)^\bot. 
\]
Moreover, we have that $-\td{\nu}+2\nu$ is unitarily equivalent to $-\td{-\nu}$ by \prettyref{cor:nu=00003D0} and since $\td{-\nu}$ is onto, so is $-\td{\nu}+2\nu$ and thus $\ker(-\td{\nu}^\ast +2\nu)=\Lnu(\R;H)^\bot=\{0\}$, which yields the assertion.

The case $\nu=0$ follows directly from the definition of $\td{0}$.
\end{proof}

\section{Comments}

Standard references for Bochner integration and related results are \cite{ABHN_2011,Diestel1977}.

Considering the derivative operator in an exponentially weighted space
goes back (at least) to Morgenstern \cite{Morgenstern1952}, where
ordinary differential equations were considered in a classical
setting. In fact, we shall return to this observation in the next
chapter when we devote our study to some implications of the already developed
concepts on ordinary and delay differential equations. 

A first occurence of the derivative operator in exponentially weighted
$L^{2}$-spaces can be found in \cite{Picard1989}, where a corresponding
spectral theorem has been focussed on. We will prove in a later chapter
that the spectral representation of the time-derivative as a multiplication operator can
be realised by a shifted variant of the Fourier transformation --
the so-called Fourier--Laplace transformation. 

In an applied context, the time derivative operator discussed here
has been introduced in \cite{PicPhy}. 

\section*{Exercises}
\addcontentsline{toc}{section}{Exercises}

\begin{xca}
\label{exer:delta-seq}A sequence $(\varphi_{n})_{n}$ in $\cci(\R^{d})$
is called a \emph{$\delta$-sequence,\index{delta-sequence,@$\delta$-sequence}}
if 

\begin{enumerate}

\item $\varphi_{n}\geq0$ for $n\in\N$,

\item $\spt\varphi_{n}\subseteq\ci{-\frac{1}{n}}{\frac{1}{n}}^{d}$
for $n\in\N$,

\item $\int_{\R^{d}}\varphi_{n}=1$ for $n\in\N$.

\end{enumerate}

Let $\varphi\in \cci(\R^{d})$ with $\spt\varphi\subseteq\ci{-1}{1}^{d}$,
$\varphi\geq0$ and $\int_{\R^{d}}\varphi=1$. Prove that $(\varphi_{n})_{n}$
given by $\varphi_{n}(x)\coloneqq n^d\varphi(nx)$ for $x\in \R^d$, $n\in\N$
defines a $\delta$-sequence. Moreover, give an example for such a
function $\varphi$. 
\end{xca}

\begin{xca}
\label{exer:C_cinfty dense} It is well-known
that $\set{\1_{I}}{I\text{ \ensuremath{d}-dimensional bounded interval}}$
is total in $\L(\R^{d})$.
\begin{enumerate}

\item Let $\varphi\in \cci(\R^{d})$, $f\in \L(\R^{d})$. Define as usual $f\ast \varphi\coloneqq \Bigl(x\mapsto \int_{\R^d} f(x-y) \varphi(y) \d y\Bigr)$.
Prove that $f\ast\varphi\in C^\infty(\R^d)$ with $\partial^{\alpha}\left(f\ast\varphi\right)=f\ast\partial^{\alpha}\varphi$
for all $\alpha\in\N_0^{d}$, where $\partial^{\alpha}\varphi=\partial_{1}^{\alpha_{1}}\cdots\partial_{d}^{\alpha_{d}}\varphi$.
Moreover, prove that $\spt f\ast\varphi\subseteq\spt f+\spt\varphi$.

\item Let $(\varphi_{n})_{n}$ be a $\delta$-sequence and $f\in\L(\R^{d})$.
Show that $f\ast\varphi_{n}\to f$ in $\L(\R^{d})$ as $n\to\infty$.\\
Hint: Prove that $\1_{I}\ast\varphi_{n}\to\1_{I}$ in $\L(\R^{d})$
for all $d$-dimensional intervals and use that $\norm{f\ast\varphi_{n}}_{2}\leq\norm{f}_{2}$
(see also \prettyref{lem:convolution}).

\item Prove that $\cci(\R^{d})$ is dense in $\L(\R^{d})$.

\end{enumerate}
\end{xca}

\begin{xca}
\label{exer:int-by-parts}Let $a<b$, $X_{0},X_{1},X_{2}$ be Banach
spaces, $f\from \oi{a}{b} \to X_{0}$ and $g\from \oi{a}{b} \to X_{1}$ both continuously differentiable,
$\ell\colon X_{0}\times X_{1}\to X_{2}$ bilinear and continuous. Prove
that $h\from \oi{a}{b}\to X_2$ given by
\[
h(t)\coloneqq\ell(f(t),g(t))\quad(t\in\oi{a}{b})
\]
is continuously differentiable with 
\[
h'(t)=\ell(f'(t),g(t))+\ell(f(t),g'(t))\quad(t\in\oi{a}{b}).
\]
If $f,f',g,g'$ have continuous extensions to $\ci{a}{b},$ prove
the integration by parts formula: 
\[
\int_{a}^{b}\ell(f'(t),g(t))\d t=\ell(f(b),g(b))-\ell(f(a),g(a))-\int_{a}^{b}\ell(f(t),g'(t))\d t.
\]
\end{xca}

\begin{xca}
\label{exer:norm_Inu}For $\nu\ne0$, show that $\norm{I_{\nu}}=\frac{1}{\abs{\nu}}$.
\end{xca}

\begin{xca}
\label{exer:derivative nu=00003D0} Prove \prettyref{cor:nu=00003D0}.
\end{xca}

\begin{xca}
\label{exer:spectrum_time_Der} Let $\nu\in\R$ and $H$ be a complex Hilbert
space. Prove that $\sigma(\td{\nu})\subseteq\set{\i t+\nu}{t\in\R}$,
where $\td{0}$ is defined in \prettyref{cor:adjoint_time_derivative}.\\
Hint: For $f\in\dom(\td{\nu}), z\in\C$ compute $\Re\scp{(z-\td{\nu})f}{f}_{\Lnu(\R;H)}$
by using \prettyref{cor:adjoint_time_derivative}. For proving the
surjectivity of $z-\td{\nu}$ for a suitable $z$, use the formula 
\[
\cran(z-\td{\nu})=\ker(z^{\ast}-\td{\nu}^{\ast})^{\bot}.
\]

Remark: Later we will see that, actually, $\sigma(\td{\nu})=\set{\i t+\nu}{t\in\R}$.
\end{xca}

\begin{xca}
\label{exer:inverse-depends-on-nu}Consider the differential equation
\[
\left(\td{\nu}^{2}-1\right)u=\1_{[-1,1]}.
\]
Since $\td{\nu}^{2}-1=\left(\td{\nu}-1\right)\left(\td{\nu}+1\right)$,
it follows by \prettyref{exer:spectrum_time_Der} that there is a
unique $u\in\Lnu(\R)$ solving this equation if $\nu\notin\{-1,1\}$.
Compute these solutions.

Hint: For $u\in\dom(\td{\nu})$ use the fact that $u$ is necessarily continuous (which we shall establish in the next lecture).
\end{xca}

\printbibliography[heading=subbibliography]

\chapter{Ordinary Differential Equations}

In this lecture, we discuss a first application of the time derivative
operator constructed in the previous lecture. More precisely, we analyse
well-posedness of ordinary differential equations and will at the
same time provide a Hilbert space proof of the classical Picard--Lindelöf
theorem. We shall furthermore see that the abstract theory developed
here also allows for more general differential equations to be considered.
In particular, we will have a look at so-called delay differential equations
with finite or infinite delay; neutral differential equations are
considered in the exercises section.

We start with some information on the time derivative and its
domain.

\section{The Domain of the time derivative and the Sobolev Embedding Theorem}

Let $H$ be a Hilbert space.
Readers familiar with the notion of Sobolev spaces might
have already realised that the domain of $\td{\nu}$ can be described as $\Lnu(\R;H)$-functions
with distributional derivative lying in $\Lnu(\R;H)$. In order to
stress this, we include the following result. Later on, we have the
opportunity to have a more detailed look at Sobolev spaces in more general
contexts.
\begin{prop}
\label{prop:char_dom(tdnu)}Let $\nu\in\R$ and $f,g\in\Lnu(\R;H)$.
Then the following conditions are equivalent: 
\begin{enumerate}
\renewcommand{\labelenumi}{{\upshape (\roman{enumi})}}
\item $f\in\dom(\td{\nu})$ and $\td{\nu}f = g$.
\item For all $\phi\in\cci(\R)$
we have 
\[
-\int_{\R}\phi'f=\int_{\R}\phi g.
\]
\end{enumerate}
\end{prop}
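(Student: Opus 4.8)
The plan is to prove both implications directly from the construction of $\td{\nu}$ as $I_\nu^{-1}$ together with the core property from \prettyref{prop:C_cinfty_core}.

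For the implication (i)$\Rightarrow$(ii), I would first reduce to the case of test functions tensored with vectors. Assume $f\in\dom(\td{\nu})$ with $\td{\nu}f=g$. By \prettyref{prop:C_cinfty_core}, $\calDH$ is a core for $\td{\nu}$, so there is a sequence $(f_n)_n$ in $\calDH$ with $f_n\to f$ and $\td{\nu}f_n\to g$ in $\Lnu(\R;H)$. For each $f_n\in\calDH$, which is a finite linear combination of terms $\psi\cdot x$ with $\psi\in\cci(\R)$, $x\in H$, we have $\td{\nu}f_n=f_n'$ (the classical derivative) by the remark following the definition of $\td{\nu}$. Then for fixed $\phi\in\cci(\R)$, ordinary integration by parts on $\R$ (valid since $\phi f_n$ has compact support) gives $-\int_\R\phi'f_n=\int_\R\phi\td{\nu}f_n$; here I should be a little careful that this is an $H$-valued identity, which follows by testing against $x'\in H'$ and using \prettyref{prop:interchange_integral}(a), or by invoking \prettyref{exer:int-by-parts} with the bilinear map $(\lambda,h)\mapsto\lambda h$. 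Now I pass to the limit: since $\phi'\in\cci(\R)\subseteq\Lnu(\R;H)'$-pairing-wise (more precisely $\phi'$ is bounded with compact support, hence multiplication by it followed by Bochner integration is continuous on $\Lnu(\R;H)$), $\int_\R\phi'f_n\to\int_\R\phi'f$ and likewise $\int_\R\phi\td{\nu}f_n\to\int_\R\phi g$. This yields (ii).

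For (ii)$\Rightarrow$(i), suppose $f,g\in\Lnu(\R;H)$ satisfy $-\int_\R\phi'f=\int_\R\phi g$ for all $\phi\in\cci(\R)$. The goal is to show $f\in\ran(I_\nu)$ with $I_\nu^{-1}f=g$, equivalently $I_\nu g=f$. Treating the case $\nu>0$ (the case $\nu<0$ is symmetric, and $\nu=0$ follows by the unitary equivalence in \prettyref{cor:nu=00003D0}), recall from \prettyref{rem:action_Inu} that $(I_\nu g)(t)=\int_{-\infty}^t g(s)\d s$ a.e. Set $G\coloneqq I_\nu g\in\Lnu(\R;H)$. The strategy is to show $G=f$ almost everywhere, and then $f=I_\nu g\in\dom(I_\nu^{-1})=\dom(\td{\nu})$ with $\td{\nu}f=g$. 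By the already-proven direction (i)$\Rightarrow$(ii) applied to $G\in\dom(\td{\nu})$ with $\td{\nu}G=g$, we get $-\int_\R\phi'G=\int_\R\phi g$ for all $\phi\in\cci(\R)$. Subtracting from the hypothesis gives $\int_\R\phi'(f-G)=0$ for all $\phi\in\cci(\R)$. It remains to conclude $f-G=0$ a.e. This is the du Bois-Reymond lemma (in the Bochner-valued setting): testing against $x'\in H'$ reduces it to the scalar statement that a locally integrable function with vanishing distributional derivative is a.e.\ constant, hence $x'\circ(f-G)$ is a.e.\ constant for each $x'$; since both $f$ and $G$ lie in $\Lnu(\R;H)$ and the constant must then be integrable over $\R$ with the weight $\e^{-2\nu t}$ near $-\infty$ but $G\to 0$ there, the constant is $0$, so $x'\circ(f-G)=0$ a.e.\ for all $x'$, whence $f=G$ a.e.

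The main obstacle I anticipate is the du Bois-Reymond step in the Banach-valued setting: one must either cite the scalar version and upgrade via Hahn--Banach and Pettis's theorem (using that the exceptional nullset may depend on $x'$ but can be handled since $H$ is almost separably-valued along $f-G$), or argue directly by convolution with a $\delta$-sequence (cf.\ \prettyref{exer:delta-seq}, \prettyref{exer:C_cinfty dense}) to show $(f-G)\ast\varphi_n$ is constant for each $n$ and converges to $f-G$. Everything else — the integration by parts on test functions, the density/limit argument, the identification of $I_\nu$ — is routine given the machinery already established. A minor point to keep in mind is the measurability subtlety: in (ii) one should note $f$ is only assumed Bochner-measurable, so pairing with $\phi'$ and integrating requires $\phi'f\in L_1(\R;H)$, which holds because $\phi'$ is bounded with compact support and $f\in\Lnu(\R;H)\subseteq L_{1,\loc}$.
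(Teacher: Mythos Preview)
Your proof is correct, but it takes a genuinely different route from the paper's, particularly for (ii)$\Rightarrow$(i). The paper exploits the adjoint identity $\td{\nu}^{*}=-\td{\nu}+2\nu$ from \prettyref{cor:adjoint_time_derivative} together with the substitution $\phi=\e^{-2\nu\cdot}\psi$ (a bijection on $\cci(\R)$), which converts the unweighted pairing in (ii) into the $\Lnu$-inner product $\scp{g}{\psi\cdot x}_{\Lnu}=\scp{f}{\td{\nu}^{*}(\psi\cdot x)}_{\Lnu}$; density of $\calDH$ in $\dom(\td{\nu}^{*})$ then yields $f\in\dom(\td{\nu}^{**})=\dom(\td{\nu})$ directly. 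This sidesteps entirely the du Bois--Reymond step that you correctly identify as the main obstacle in your approach. Your argument via $G\coloneqq I_{\nu}g$ and the vector-valued du Bois--Reymond lemma is more hands-on and perhaps more intuitive, but requires the extra work you outline (scalar reduction plus a separability/countable-norming-set argument to control the $x'$-dependent null sets), as well as a separate treatment of $\nu=0$ via \prettyref{cor:nu=00003D0}. For (i)$\Rightarrow$(ii) the two proofs are closer in spirit: the paper again uses the adjoint identity and the $\e^{-2\nu\cdot}$ substitution, while you approximate by the core and pass to the limit---both are straightforward.
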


\begin{proof}
Assume that $f\in\dom(\td{\nu})$. By \prettyref{prop:C_cinfty_core}
and \prettyref{cor:adjoint_time_derivative}, we have that $\calDH=\lin\set{\varphi\cdot x}{\varphi\in \cci(\R),\,x\in H}\subseteq\dom(\td{\nu}^{*})$ (which also holds for $\nu=0$)
and 
\[
\scp{\td{\nu}f}{\psi\cdot x}_{\Lnu}=\scp{f}{\left(-\psi'+2\nu\psi\right)\cdot x}_{\Lnu}
\]
for all $x\in H$ and $\psi\in\cci(\R)$. Hence, we obtain for all
$\psi\in\cci(\R)$ 
\[
\int_{\R}\left(-\psi'+2\nu\psi\right)f\e^{-2\nu\cdot}=\int_{\R}\psi \td{\nu}f\e^{-2\nu\cdot};
\]
putting $\phi\coloneqq\e^{-2\nu\cdot}\psi$ and using that multiplication
by $\e^{-2\nu\cdot}$ is a bijection on $\cci(\R)$, we deduce the
claimed formula with $g=\td{\nu}f$.

On the other hand, the equation involving $g$ applied to $\phi=\e^{-2\nu\cdot}\psi$
for $\psi\in\cci(\R)$ implies that
\[
\int_{\R}\left(-\psi'+2\nu\psi\right)f\e^{-2\nu\cdot}=\int_{\R}\psi g\e^{-2\nu\cdot}.
\]
Testing this equation with $x\in H$ 
yields
\[
\scp{g}{\psi\cdot x}_{\Lnu}=\scp{f}{\left(-\psi'+2\nu\psi\right)\cdot x}_{\Lnu}=\scp{f}{\left(-\td{\nu}\psi\cdot x +2\nu\psi\cdot x\right)}_{\Lnu}.
\]
Since $\calDH$ is dense in $\dom(\td{\nu})$ by \prettyref{prop:C_cinfty_core}, we infer that
\[
\scp{g}{h}_{\Lnu}=\scp{f}{\left(-\td{\nu}h+2\nu h\right)}_{\Lnu}
\]
for all $h\in\dom(\td{\nu})$. Now, \prettyref{cor:adjoint_time_derivative},
yields
\[
\scp{g}{h}_{\Lnu}=\scp{f}{\td{\nu}^{*}h}_{\Lnu}\quad(h\in\dom(\td{\nu}^{*})).
\]
Thus, $f\in\dom(\td{\nu}^{**})=\dom(\td{\nu})$ and $\td{\nu}f=g$. 
\end{proof}

The next result confirms that functions in the domain of $\td{\nu}$
are continuous. This result was announced in \prettyref{exer:inverse-depends-on-nu} and  is known as the Sobolev embedding theorem. Here,
we make use of the explicit form of the domain of $\td{\nu}$ as being
the range space of the integral operator $I_{\nu}$. We define\index{$C_{nu}(R;H)$@$C_{\nu}(\R;H)$}
\[
C_{\nu}(\R;H)\coloneqq\set{f\colon\R\to H}{f\text{ continuous, }\norm{f}_{\nu,\infty}\coloneqq\sup_{t\in\R}\norm{\e^{-\nu t}f(t)}_H<\infty}
\]and regard it as being
endowed with the obvious norm. 
\begin{thm}[\index{Sobolev embedding theorem}Sobolev embedding theorem]
\label{thm:Sobolev_emb} Let $\nu\in\R$. Then every $f\in\dom(\td{\nu})$ has a continuous
representative, and the mapping 
\[
\dom(\td{\nu})\ni f\mapsto f\in C_{\nu}(\R;H)
\]
is continuous. 
\end{thm}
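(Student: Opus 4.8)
The plan is to use that $\td{\nu}=I_{\nu}^{-1}$ by definition, so that every $f\in\dom(\td{\nu})$ is of the form $f=I_{\nu}g$ with $g\coloneqq\td{\nu}f\in\Lnu(\R;H)$; by \prettyref{rem:action_Inu} this means $f(t)=\int_{-\infty}^{t}g(s)\d s$ for a.e.~$t\in\R$ when $\nu>0$, and $f(t)=-\int_{t}^{\infty}g(s)\d s$ for a.e.~$t$ when $\nu<0$. I would first prove the statement for $\nu\neq0$ straight from this representation, and then reduce the case $\nu=0$ to it by conjugating with the unitary $\exp(-\mu\m)$ from \prettyref{cor:nu=00003D0} for an auxiliary $\mu\neq0$.

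Let $\nu>0$ and set $\tilde f(t)\coloneqq\int_{-\infty}^{t}g(s)\d s$. Writing $\norm{g(s)}_{H}=\bigl(\norm{g(s)}_{H}\e^{-\nu s}\bigr)\e^{\nu s}$ and using the Cauchy--Schwarz inequality,
\[
\int_{-\infty}^{t}\norm{g(s)}_{H}\d s\leq\Bigl(\int_{-\infty}^{t}\norm{g(s)}_{H}^{2}\e^{-2\nu s}\d s\Bigr)^{1/2}\Bigl(\int_{-\infty}^{t}\e^{2\nu s}\d s\Bigr)^{1/2}\leq\frac{\e^{\nu t}}{\sqrt{2\nu}}\,\norm{g}_{\Lnu(\R;H)}.
\]
Hence the defining integral converges absolutely for every $t$, and since $\norm{\tilde f(t)}_{H}\leq\int_{-\infty}^{t}\norm{g(s)}_{H}\d s$ we obtain $\sup_{t\in\R}\norm{\e^{-\nu t}\tilde f(t)}_{H}\leq\frac{1}{\sqrt{2\nu}}\norm{g}_{\Lnu(\R;H)}$. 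The same Cauchy--Schwarz bound on $[t_{1},t_{2}]$ gives $\norm{\tilde f(t_{2})-\tilde f(t_{1})}_{H}\leq\bigl(\int_{t_{1}}^{t_{2}}\norm{g(s)}_{H}^{2}\e^{-2\nu s}\d s\bigr)^{1/2}\bigl(\int_{t_{1}}^{t_{2}}\e^{2\nu s}\d s\bigr)^{1/2}$, which tends to $0$ as $t_{2}\to t_{1}$ (the first factor by absolute continuity of the integral of the $L_{1}$-function $s\mapsto\norm{g(s)}_{H}^{2}\e^{-2\nu s}$, the second being locally bounded); thus $\tilde f$ is continuous and $\tilde f\in C_{\nu}(\R;H)$. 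As $\tilde f=I_{\nu}g=f$ a.e., this is the asserted continuous representative, and combining the displayed estimate with $\norm{\td{\nu}f}_{\Lnu(\R;H)}\leq\norm{f}_{\td{\nu}}$ (graph norm) yields $\norm{\tilde f}_{\nu,\infty}\leq\frac{1}{\sqrt{2\nu}}\norm{f}_{\td{\nu}}$, i.e.\ continuity of the embedding. The case $\nu<0$ is symmetric, with $\tilde f(t)=-\int_{t}^{\infty}g(s)\d s$ and $\int_{t}^{\infty}\e^{2\nu s}\d s=\e^{2\nu t}/(2\abs{\nu})$, giving the constant $1/\sqrt{2\abs{\nu}}$.

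Finally, for $\nu=0$ fix $\mu\neq0$ and recall $\td{0}=\exp(-\mu\m)(\td{\mu}-\mu)\exp(-\mu\m)^{-1}$. Given $f\in\dom(\td{0})\subseteq\L(\R;H)$, the function $h\coloneqq\exp(-\mu\m)^{-1}f=\bigl(t\mapsto\e^{\mu t}f(t)\bigr)$ lies in $\dom(\td{\mu}-\mu)=\dom(\td{\mu})$, so by the case already treated it has a continuous representative $\tilde h$ with $\norm{\tilde h}_{\mu,\infty}\leq\frac{1}{\sqrt{2\abs{\mu}}}\norm{\td{\mu}h}_{L_{2,\mu}(\R;H)}$; then $t\mapsto\e^{-\mu t}\tilde h(t)$ is a continuous representative of $f$ with $\norm{f}_{0,\infty}=\norm{\tilde h}_{\mu,\infty}$. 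Writing $\td{\mu}h=(\td{\mu}-\mu)h+\mu h=\exp(-\mu\m)^{-1}\td{0}f+\mu h$ and using that $\exp(-\mu\m)$ is unitary, $\norm{\td{\mu}h}_{L_{2,\mu}(\R;H)}\leq\norm{\td{0}f}_{\L(\R;H)}+\abs{\mu}\norm{f}_{\L(\R;H)}\leq\sqrt{1+\mu^{2}}\,\norm{f}_{\td{0}}$, which completes the proof. I do not expect a genuine obstacle here; the only points needing care are the bookkeeping between the $\L$-equivalence class $f$ and its continuous representative $\tilde f$, and the observation that the weight $\e^{-2\nu\cdot}$ supplies integrability of $g$ exactly at the end of $\R$ where it is needed, because the sign of $\nu$ is matched to the orientation of the convolution defining $I_{\nu}$.
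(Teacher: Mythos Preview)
Your proof is correct and follows essentially the same approach as the paper: write $f=I_\nu g$ with $g=\td{\nu}f$, apply Cauchy--Schwarz with the weight $\e^{\nu s}$ to control $\int_{-\infty}^t\norm{g(s)}_H\d s$, and reduce the remaining cases to $\nu>0$ via \prettyref{cor:nu=00003D0}. The only cosmetic differences are that the paper argues continuity of $\tilde f$ via dominated convergence rather than your direct Cauchy--Schwarz estimate on $[t_1,t_2]$, and it leaves the reductions for $\nu<0$ and $\nu=0$ implicit whereas you spell them out.
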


\begin{proof}
We restrict ourselves to the case when $\nu>0$; the remaining cases can be proved by invoking \prettyref{cor:nu=00003D0}.
Let $f\in\dom(\td{\nu})$. By definition, we find $g\in\Lnu(\R;H)$
such that $f=\td{\nu}^{-1}g=I_{\nu}g.$ Then for all $t\in\R$ we compute
\begin{align*}
 \int_{-\infty}^{t}\norm{g(\tau)}\d\tau &
 =\int_{-\infty}^{t}\norm{g(\tau)}\e^{-\nu\tau}\e^{\nu\tau}\d\tau
 \leq\sqrt{\int_{-\infty}^{t}\norm{g(\tau)}^{2}\e^{-2\nu\tau}\d\tau}\sqrt{\int_{-\infty}^{t}\e^{2\nu\tau}\d\tau}\\
 &\leq\norm{\td{\nu}f}_{\Lnu}\sqrt{\frac{1}{2\nu}}\e^{\nu t}.
\end{align*}
Thus, $g$ is integrable on $\loi{-\infty}{t}$ for all $t\in \R$ and dominated convergence implies that
\[f = \Bigl( t\mapsto \int_{-\infty}^t g(s)\d s\Bigr)\]
is continuous.
Moreover, for $t\in\R$ we obtain
\[\norm{f(t)} \leq \int_{-\infty}^{t}\norm{g(\tau)}\d\tau \leq \norm{\td{\nu}f}_{\Lnu}\sqrt{\frac{1}{2\nu}}\e^{\nu t}\]
which yields the claimed continuity.
\end{proof}

\begin{cor}
\label{cor:vanish_at_inf}For all $f\in\dom(\td{\nu})$, we have that $\norm{\e^{-\nu t}f(t)}_{H}\to0$
as $t\to\pm\infty$. 
\end{cor}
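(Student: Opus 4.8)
The plan is to reuse the representation established in the proof of \prettyref{thm:Sobolev_emb}: for $f\in\dom(\td{\nu})$ and $g\coloneqq\td{\nu}f\in\Lnu(\R;H)$ one has $f=I_{\nu}g$, together with the explicit form of $I_\nu$ from \prettyref{rem:action_Inu}, and then to combine a Cauchy--Schwarz estimate with the fact that the tails of $\int_{\R}\norm{g(s)}_{H}^{2}\e^{-2\nu s}\d s$ vanish. I would first reduce to the case $\nu>0$. The case $\nu<0$ is entirely analogous, starting instead from $I_{\nu}g(t)=-\int_{t}^{\infty}g(s)\d s$ and interchanging the roles of $+\infty$ and $-\infty$. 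The case $\nu=0$ then follows from \prettyref{cor:nu=00003D0}: fixing any $\nu>0$, an $f\in\dom(\td{0})$ can be written as $f=\exp(-\nu\m)\tilde f$ with $\tilde f\coloneqq(t\mapsto\e^{\nu t}f(t))\in\dom(\td{\nu})$, whence $\norm{f(t)}_{H}=\norm{\e^{-\nu t}\tilde f(t)}_{H}\to0$ as $t\to\pm\infty$ by the case $\nu>0$.

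So assume $\nu>0$. Using the continuous representative and \prettyref{rem:action_Inu}, $\norm{\e^{-\nu t}f(t)}_{H}\leq\e^{-\nu t}\int_{-\infty}^{t}\norm{g(s)}_{H}\d s$ for all $t\in\R$. Writing $1=\e^{-\nu s}\e^{\nu s}$ and applying Cauchy--Schwarz on $\loi{-\infty}{t}$ yields
\[
\e^{-\nu t}\int_{-\infty}^{t}\norm{g(s)}_{H}\d s\leq\e^{-\nu t}\Bigl(\int_{-\infty}^{t}\norm{g(s)}_{H}^{2}\e^{-2\nu s}\d s\Bigr)^{1/2}\Bigl(\int_{-\infty}^{t}\e^{2\nu s}\d s\Bigr)^{1/2}=\frac{1}{\sqrt{2\nu}}\Bigl(\int_{-\infty}^{t}\norm{g(s)}_{H}^{2}\e^{-2\nu s}\d s\Bigr)^{1/2}.
\]
Since $g\in\Lnu(\R;H)$, the right-hand side tends to $0$ as $t\to-\infty$, which handles that limit.

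For $t\to+\infty$ the estimate above only gives boundedness, so I would split the integral at an arbitrary fixed $a\in\R$, $f(t)=\int_{-\infty}^{a}g+\int_{a}^{t}g$. The first term contributes $\e^{-\nu t}\bigl\|\int_{-\infty}^{a}g\bigr\|_{H}\to0$ as $t\to\infty$ for fixed $a$ (the integral is finite by the previous paragraph), while the same Cauchy--Schwarz computation on $\ci{a}{t}$, using $\int_{a}^{t}\e^{2\nu s}\d s\leq\e^{2\nu t}/(2\nu)$, bounds the second term by $\frac{1}{\sqrt{2\nu}}\bigl(\int_{a}^{\infty}\norm{g(s)}_{H}^{2}\e^{-2\nu s}\d s\bigr)^{1/2}$. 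Hence $\limsup_{t\to\infty}\norm{\e^{-\nu t}f(t)}_{H}\leq\frac{1}{\sqrt{2\nu}}\bigl(\int_{a}^{\infty}\norm{g(s)}_{H}^{2}\e^{-2\nu s}\d s\bigr)^{1/2}$, and letting $a\to\infty$ the right-hand side tends to $0$ because $g\in\Lnu(\R;H)$, giving the claim. The only genuine obstacle is precisely this last limit: the naive bound does not vanish, so one must split off a fixed head and exploit that the tail of $\int_{\R}\norm{g}_{H}^{2}\e^{-2\nu\cdot}$ can be made arbitrarily small.
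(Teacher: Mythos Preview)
Your proof is correct. Note, however, that the paper does not actually prove this corollary---it defers it to \prettyref{exer:vanish_at_inf}---so there is no ``paper's proof'' to compare against line by line.

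Your direct approach (reproducing the Cauchy--Schwarz estimate from the proof of \prettyref{thm:Sobolev_emb} and then splitting the integral at a cut point $a$ for the $t\to+\infty$ limit) is entirely sound. A shorter route, and arguably the one the authors have in mind given the placement as an immediate corollary, is to argue by density: by \prettyref{prop:C_cinfty_core} the set $\calDH\subseteq\cci(\R;H)$ is dense in $\dom(\td{\nu})$, and the embedding $\dom(\td{\nu})\hookrightarrow C_{\nu}(\R;H)$ of \prettyref{thm:Sobolev_emb} is continuous. Since every element of $\calDH$ has compact support and therefore satisfies $\e^{-\nu t}f(t)\to0$ at $\pm\infty$, and since $\set{h\in C_{\nu}(\R;H)}{\norm{\e^{-\nu t}h(t)}_{H}\to0\text{ as }t\to\pm\infty}$ is a closed subspace of $C_{\nu}(\R;H)$, the claim follows for all $f\in\dom(\td{\nu})$. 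This avoids the case split on the sign of $\nu$ and the tail-splitting manoeuvre, at the cost of invoking the density result. Your approach has the advantage of being self-contained and giving a quantitative handle on the rate of decay in terms of the tail of $\norm{g}_{\Lnu}^{2}$.
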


The proof is left as \prettyref{exer:vanish_at_inf}.
%\begin{proof}
%The result follows upon realising that $\calDH$ is a core for
%$\dom(\td{\nu})$ together with \prettyref{thm:Sobolev_emb}. 
%\end{proof}

\section{The Picard--Lindelöf Theorem}

The prototype of the Picard--Lindelöf theorem will be formulated
for so-called uniformly Lipschitz continuous functions.

We first need a preparation.
\begin{defn*}
  Let $X$ be a Banach space. Then we define 
  \[S_{\rmc}(\R;X) \coloneqq\set{f\from \R\to X}{f\,\text{simple},\, \spt f\,\text{compact}}\]
  to be the set of \emph{simple functions from $\R$ to $X$ with compact support}.
\end{defn*}

\begin{lem}
\label{lem:simple_fcts_compact_spt}
  Let $X$ be a Banach space and $\nu,\eta\in\R$. Then $S_{\rmc}(\R;X)$ is dense in $\Lnu(\R;X)\cap \Lm{\eta}(\R,X)$; that is, for all $f\in \Lnu(\R;X)\cap\Lm{\eta}(\R;X)$ there exists $(f_n)_n$ in $S_{\rmc}(\R;X)$ such that $f_n\to f$ in both $\Lnu(\R;X)$ and $\Lm{\eta}(\R;X)$. 
	 In particular, $S_{\rmc}(\R,X)$ is dense in $\Lnu(\R;X)$.
\end{lem}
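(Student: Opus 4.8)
The plan is to use a reduction argument: first approximate by simple functions with finite measure support (not necessarily bounded), then cut off to bounded support. Let $f\in\Lnu(\R;X)\cap\Lm{\eta}(\R;X)$. Since $S(\mu_{2,\nu};X)$ is dense in $\Lnu(\R;X)$ by \prettyref{lem:simple_fcts_dense}, and similarly for $\Lm{\eta}(\R;X)$, one first wants simultaneous approximation in both norms. Here the key observation is that a single sequence works: mimic the proof of \prettyref{lem:simple_fcts_dense}, choosing a sequence $(g_n)_n$ of simple functions converging to $f$ pointwise a.e., and truncate by setting $\tilde g_n\coloneqq g_n\1_{I_n}$ where $I_n\coloneqq\set{t}{\norm{g_n(t)}_X\leq 2\norm{f(t)}_X}$. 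Then $\tilde g_n\to f$ pointwise a.e.\ with domination $\norm{\tilde g_n(\cdot)-f(\cdot)}_X\leq 3\norm{f(\cdot)}_X$; since the right-hand side is in $\Lnu(\R)$ and in $\Lm{\eta}(\R)$ (using that the weights $\e^{-2\nu t}$ and $\e^{-2\eta t}$ are locally comparable and the dominating function lies in both weighted spaces), dominated convergence in each weighted space gives $\tilde g_n\to f$ in both $\Lnu(\R;X)$ and $\Lm{\eta}(\R;X)$.

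The second step is to arrange compact support. Each $\tilde g_n$ is simple, hence of the form $\sum_{x\in F_n} x\1_{A_{\tilde g_n,x}}$ with $F_n\subseteq X$ finite and each $A_{\tilde g_n,x}$ of finite Lebesgue measure (since the weight is bounded below on any bounded set and the sets have finite $\mu_{2,\nu}$-measure — actually one should be a little careful, as finite $\mu_{2,\nu}$-measure on an unbounded set need not give finite Lebesgue measure, so instead intersect with $[-m,m]$). Concretely, for fixed $n$ set $g_{n,m}\coloneqq\tilde g_n\1_{[-m,m]}$. Then $g_{n,m}\in S_{\rmc}(\R;X)$, and as $m\to\infty$ we have $g_{n,m}\to\tilde g_n$ pointwise with $\norm{g_{n,m}(\cdot)-\tilde g_n(\cdot)}_X\leq\norm{\tilde g_n(\cdot)}_X\in\Lnu(\R)\cap\Lm{\eta}(\R)$, so dominated convergence again yields $g_{n,m}\to\tilde g_n$ in both norms as $m\to\infty$.

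Finally, a diagonal argument: for each $n$ pick $m(n)$ so large that $\norm{g_{n,m(n)}-\tilde g_n}_{\Lnu(\R;X)}\leq 1/n$ and $\norm{g_{n,m(n)}-\tilde g_n}_{\Lm{\eta}(\R;X)}\leq 1/n$. Then $f_n\coloneqq g_{n,m(n)}\in S_{\rmc}(\R;X)$ and by the triangle inequality $f_n\to f$ in both $\Lnu(\R;X)$ and $\Lm{\eta}(\R;X)$. The last sentence of the statement is the special case $\eta=\nu$ (or simply ignoring the second space). I expect the main obstacle to be the first step — getting one sequence that converges in \emph{both} weighted norms — and the resolution is precisely the domination trick from \prettyref{lem:simple_fcts_dense}, noting that the dominating function $3\norm{f(\cdot)}_X$ automatically lies in both $\Lnu(\R)$ and $\Lm{\eta}(\R)$ by hypothesis on $f$.
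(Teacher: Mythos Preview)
Your argument is correct, but the paper takes a slightly different route that sidesteps the need to revisit the proof of \prettyref{lem:simple_fcts_dense}. The paper first truncates $f$ to $\ci{-n}{n}$ (dominated convergence in both norms), then approximates $\1_{\ci{-n}{n}}f$ by simple functions $\tilde f_{n,k}$ in $\Lnu$ only, citing \prettyref{lem:simple_fcts_dense} as a black box, and finally sets $f_{n,k}\coloneqq\1_{\ci{-n}{n}}\tilde f_{n,k}$. The point is that on the compact interval $\ci{-n}{n}$ the weights $\e^{-2\nu t}$ and $\e^{-2\eta t}$ are uniformly comparable, so convergence of $f_{n,k}\to\1_{\ci{-n}{n}}f$ in $\Lnu$ automatically implies convergence in $\Lm{\eta}$. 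In contrast, you approximate by simple functions first and obtain simultaneous convergence directly from the domination $\norm{\tilde g_n(\cdot)-f(\cdot)}_X\leq3\norm{f(\cdot)}_X$, which lies in both weighted $\L$-spaces by hypothesis; only then do you truncate. Your approach is more explicit about why one sequence serves both norms, while the paper's is shorter since it only invokes \prettyref{lem:simple_fcts_dense} once and transfers convergence via norm equivalence on compacta.
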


\begin{proof}
  Let $f\in\Lnu(\R;X)\cap\Lm{\eta}(\R;X)$.
  Then for all $n\in\N$ we have that $\1_{\ci{-n}{n}}f\in\Lnu(\R;X)\cap\Lm{\eta}(\R;X)$
  and $\1_{\ci{-n}{n}}f\to f$ in $\Lnu(\R;X)$ and in $\Lm{\eta}(\R;X)$
  as $n\to\infty$. 
  For $n\in\N$ let $(\tilde{f}_{n,k})_{k}$ be in
  $S(\mu_{2,\nu};X)$ such that $\tilde{f}_{n,k}\to\1_{\ci{-n}{n}}f$
  in $\Lnu(\R;X)$ as $k\to\infty$. We
  put $f_{n,k}\coloneqq\1_{\ci{-n}{n}}\tilde{f}_{n,k} \in S_{\rmc}(\R;X)$. Then
  $f_{n,k}\to\1_{\ci{-n}{n}}f$ in $\Lnu(\R;X)$ and in $\Lm{\eta}(\R;X)$
  as $k\to\infty$.
\end{proof}

In order to define the notion of uniformly Lipschitz continuous functions, we first need the Lipschitz semi-norm.
\begin{defn*}
  Let $X_0, X_1$ be normed spaces, and $F\from X_0\to X_1$ Lipschitz continuous. Then 
  \[
  \norm{F}_{\Lip}\coloneqq\sup_{\substack{x,y\in X_0 \\ x\neq y}}{\frac{\norm{F(x)-F(y)}}{\norm{x-y}}}
  \]
is the \emph{Lipschitz semi-norm} of $F$. 
\end{defn*}

\begin{defn*}
Let $H_{0},H_{1}$ be Hilbert spaces, $\mu\in\R$. Then a function $F\colon S_{\rmc}(\R;H_{0})\to\bigcap_{\nu\geq\mu}\Lnu(\R;H_{1})$
is called \emph{uniformly Lipschitz continuous}, if for all $\nu\geq\mu$
we have that $F$ considered in $\Lnu(\R;H_{0})\times\Lnu(\R;H_{1})$
is Lipschitz continuous, and for the unique Lipschitz continuous extensions
$F^{\nu}$, $\nu\geq\mu$, we have that
\[
\sup_{\nu\geq\mu}\norm{F^{\nu}}_{\Lip}<\infty.
\] 
\end{defn*}

\begin{rem}
  Another way to introduce uniformly Lipschitz continuous mappings is the following.
  Let $H_{0},H_{1}$ be Hilbert spaces, $\mu\in\R$. Let $(F^\nu)_{\nu\geq\mu}$ be a family of Lipschitz continuous mappings $F^\nu\from \Lnu(\R;H_0)\to \Lnu(\R;H_1)$ such that
  \[\sup_{\nu\geq\mu}\norm{F^{\nu}}_{\Lip}<\infty\]
  and the mappings are consistent in the sense that for all $\nu,\eta\geq\mu$ and $f\in \Lnu(\R;H_0)\cap \Lm{\eta}(\R;H_0)$ we have
  \[F^\nu(f) = F^\eta(f).\]
  Then, for $\nu\geq\mu$ and $f\in S_{\rmc}(\R;H_0)$ we have $F^\nu(f)\in \bigcap_{\eta\geq\mu}\Lm{\eta}(\R;H_{1})$ and $F^\nu|_{S_{\rmc}(\R;H_0)}$ is uniformly Lipschitz continuous.
\end{rem}

\begin{thm}[\index{Theorem of Picard--Lindelöf}Picard--Lindelöf -- Hilbert space version]
\label{thm:PicLind1} Let $H$ be a Hilbert space, $\mu\in\R$ and $F\colon S_{\rmc}(\R;H)\to\bigcap_{\nu\geq\mu}\Lnu(\R;H)$
uniformly Lipschitz continuous, with $L\coloneqq\sup_{\nu\geq\mu}\norm{F^{\nu}}_{\Lip}$.
Then for all $\nu>\max\{L,\mu\}$ the equation 
\[
\td{\nu}u_{\nu}=F^{\nu}(u_{\nu})
\]
admits a unique solution $u_{\nu}\in\dom(\td{\nu})$. Furthermore,
for all $\nu>\max\{L,\mu\}$ the following properties hold: 
\begin{enumerate}
\item
\label{thm:PicLind1:item:1}
If $F^{\nu}(u_{\nu})$ is continuous in a neighborhood of $a\in\R$,
then $u_{\nu}$ is continuously differentiable in a neighborhood of $a$. 
\item
\label{thm:PicLind1:item:2}
For all $a\in\R$, $\1_{\loi{-\infty}{a}}u_{\nu}$ is the unique fixed
point $v\in\Lnu(\R;H)$ of $\1_{\loi{-\infty}{a}}\td{\nu}^{-1}F^{\nu}$, that is, $v$ uniquely solves
\[
v=\1_{\loi{-\infty}{a}}\td{\nu}^{-1}F^{\nu}(v).
\]
\item 
\label{thm:PicLind1:item:4}
For all $\eta\geq\nu$ we have that $u_{\nu}=u_{\eta}$. 
\item
\label{thm:PicLind1:item:3}
For all $f\in\Lnu(\R;H)$ the equation 
\[
\td{\nu}v=F^{\nu}(v)+f
\]
admits a unique solution $v_{\nu,f}\in\dom(\td{\nu})$, and if $f,g\in\Lnu(\R;H)$
satisfy $f=g$ on $\loi{-\infty}{a}$ for some $a\in\mathbb{R},$
then $v_{\nu,f}=v_{\nu,g}$ on $\loi{-\infty}{a}$. 
\end{enumerate}
\end{thm}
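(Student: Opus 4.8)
The plan is to recast $\td{\nu}u_\nu=F^\nu(u_\nu)$ as a fixed point equation. Since $\nu>\max\{L,\mu\}\geq0$, the operator $\td{\nu}^{-1}=I_\nu$ is bounded with $\norm{I_\nu}\leq\tfrac{1}{\nu}$, and $u\in\dom(\td{\nu})$ solves the equation if and only if $u=I_\nu F^\nu(u)$. The map $\Phi\coloneqq I_\nu F^\nu\colon\Lnu(\R;H)\to\Lnu(\R;H)$ has Lipschitz constant at most $\tfrac{1}{\nu}\sup_{\eta\geq\mu}\norm{F^\eta}_{\Lip}=\tfrac{L}{\nu}<1$, so by Banach's fixed point theorem it has a unique fixed point $u_\nu$, which automatically lies in $\ran(I_\nu)=\dom(\td{\nu})$ and solves the equation; any solution $u$ satisfies $u=I_\nu\td{\nu}u=\Phi(u)$, so $u_\nu$ is unique. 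For (a): since $\nu>0$, $(I_\nu g)(t)=\int_{-\infty}^tg(s)\d s$ by \prettyref{rem:action_Inu}, so if $g\coloneqq F^\nu(u_\nu)$ has a representative continuous near $a$, the fundamental theorem of calculus shows $I_\nu g$ has a $C^1$ representative near $a$, which by \prettyref{thm:Sobolev_emb} coincides with $u_\nu$.

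The crucial — and, I expect, hardest — step is the \emph{causality} of $F^\nu$: if $f=g$ on $\loi{-\infty}{a}$ then $F^\nu(f)=F^\nu(g)$ on $\loi{-\infty}{a}$. I would prove this first for $f,g\in S_{\rmc}(\R;H)$. Then $\psi\coloneqq f-g$ is supported in $[a,R]$ for some $R>a$, and $\phi\coloneqq F(f)-F(g)$ (independent of the weight, since $F^\eta(f)=F(f)$ for all $\eta$) lies in $\bigcap_{\eta\geq\mu}\Lm{\eta}(\R;H)$ with $\norm{\phi}_{\Lm{\eta}}\leq L\norm{\psi}_{\Lm{\eta}}$ for every $\eta\geq\mu$. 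Keeping only the contribution of $\oi{-\infty}{a}$ on the left, multiplying through by $\e^{2\eta a}$ and letting $\eta\to\infty$ yields
\[
\int_{-\infty}^a\norm{\phi(t)}_H^2\e^{2\eta(a-t)}\d t\leq L^2\int_a^R\norm{\psi(t)}_H^2\e^{2\eta(a-t)}\d t;
\]
the right-hand side tends to $0$ by dominated convergence, while monotone convergence forces the left-hand side to $+\infty$ unless $\phi=0$ a.e.\ on $\oi{-\infty}{a}$, which is therefore the case. For general $f,g\in\Lnu(\R;H)$ with $f=g$ on $\loi{-\infty}{a}$ one picks $g_n\in S_{\rmc}(\R;H)$ with $g_n\to g$ and $f_n\in S_{\rmc}(\R;H)$ with $f_n\to f$ (\prettyref{lem:simple_fcts_compact_spt}) and applies the special case to $g_n$ and $\1_{\loi{-\infty}{a}}g_n+\1_{\roi{a}{\infty}}f_n\to f$, which agree on $\loi{-\infty}{a}$, then lets $n\to\infty$. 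The same density argument shows $F^\nu$ and $F^\eta$ agree on $\Lnu(\R;H)\cap\Lm{\eta}(\R;H)$, and causality of $I_\nu$, in the form $\1_{\loi{-\infty}{a}}I_\nu h=\1_{\loi{-\infty}{a}}I_\nu\1_{\loi{-\infty}{a}}h$, is immediate from its integral formula.

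Granting this, (b) follows: $v\mapsto\1_{\loi{-\infty}{a}}I_\nu F^\nu(v)$ is a strict contraction on $\Lnu(\R;H)$, hence has a unique fixed point, and $\1_{\loi{-\infty}{a}}u_\nu$ is one, since $\1_{\loi{-\infty}{a}}I_\nu F^\nu(\1_{\loi{-\infty}{a}}u_\nu)$ equals, successively by causality of $I_\nu$, of $F^\nu$, of $I_\nu$ again, and by $u_\nu=I_\nu F^\nu(u_\nu)$, the function $\1_{\loi{-\infty}{a}}u_\nu$. For (d), existence and uniqueness of $v_{\nu,f}\in\dom(\td{\nu})$ come from the strict contraction $v\mapsto I_\nu F^\nu(v)+I_\nu f$ on $\Lnu(\R;H)$, as in the first paragraph; and if $f=g$ on $\loi{-\infty}{a}$ then $\1_{\loi{-\infty}{a}}I_\nu f=\1_{\loi{-\infty}{a}}I_\nu g$, so $\1_{\loi{-\infty}{a}}v_{\nu,f}$ and $\1_{\loi{-\infty}{a}}v_{\nu,g}$ are fixed points of the \emph{same} strict contraction $v\mapsto\1_{\loi{-\infty}{a}}\bigl(I_\nu F^\nu(v)+I_\nu f\bigr)$ (by the computation just used, again invoking causality), hence coincide.

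Finally, for (c) I would consider, for $\eta\geq\nu$, the Picard iteration $u^{(0)}\coloneqq0$, $u^{(n+1)}\coloneqq I_\nu F^\nu(u^{(n)})$. Inductively $u^{(n)}\in\Lnu(\R;H)\cap\Lm{\eta}(\R;H)$, and on this intersection $I_\nu F^\nu=I_\eta F^\eta$, so $(u^{(n)})_n$ is at once the Picard sequence of $I_\nu F^\nu$ on $\Lnu(\R;H)$ and of $I_\eta F^\eta$ on $\Lm{\eta}(\R;H)$; hence it converges to $u_\nu$ in $\Lnu(\R;H)$ and to $u_\eta$ in $\Lm{\eta}(\R;H)$. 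A common a.e.-convergent subsequence gives $u_\nu=u_\eta$ almost everywhere, and since both are continuous by \prettyref{thm:Sobolev_emb}, $u_\nu=u_\eta$.
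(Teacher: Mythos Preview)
Your proof is correct and tracks the paper's approach closely: the fixed-point setup, the causality argument (which the paper isolates as a separate lemma, \prettyref{lem:indepCaus}, with essentially the same proof you give), and the treatment of (b) and (d) are the same in both.

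The one genuine divergence is part (c). The paper routes through the truncated fixed-point characterization (b): since $\1_{\loi{-\infty}{n}}u_\eta\in\Lnu\cap\Lm{\eta}$ for each $n\in\N$, and $F^\nu=F^\eta$, $I_\nu=I_\eta$ on this intersection, $\1_{\loi{-\infty}{n}}u_\eta$ satisfies the same $\nu$-truncated fixed-point equation as $\1_{\loi{-\infty}{n}}u_\nu$; uniqueness then gives $\1_{\loi{-\infty}{n}}u_\eta=\1_{\loi{-\infty}{n}}u_\nu$ for all $n$. Your Picard-iteration argument is a clean alternative that bypasses (b) entirely: the iterates stay in $\Lnu\cap\Lm{\eta}$ throughout, so a single sequence converges to both fixed points. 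Both are short; yours has fewer logical dependencies, while the paper's makes the role of causality (via (b)) more visible.
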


\begin{proof}[Proof of \prettyref{thm:PicLind1} -- first part]
Define $\Phi\colon\Lnu(\R;H)\to\Lnu(\R;H)$ by 
\[
\Phi(u)=\td{\nu}^{-1}F^{\nu}(u).
\]
Since $\norm{\td{\nu}^{-1}} \leq \frac{1}{\nu}$ and $\nu>L$
it follows that $\Phi$ is a
contraction and thus admits a unique fixed point, which by definition
solves the equation in question. Moreover, we have that $u_{\nu}=\Phi(u_{\nu})=\td{\nu}^{-1}F^{\nu}(u_{\nu})\in\dom(\td{\nu})$.

Differentiability of $u_\nu$ as in \ref{thm:PicLind1:item:1} follows from \prettyref{exer:weak_diff+cont} and the continuity of $F^\nu(u_\nu)$.

For the unique existence asserted in \ref{thm:PicLind1:item:3}, note that the unique existence of $v_{\nu,f}$ follows from the above considerations after
realising that $\Phi(v)\coloneqq\td{\nu}^{-1}F^{\nu}(v)+\td{\nu}^{-1}f$
defines a contraction in $\Lnu(\R;H)$. For the remaining statements in \ref{thm:PicLind1:item:3} and the statements in \ref{thm:PicLind1:item:2} and \ref{thm:PicLind1:item:4},
we need some prerequisites. 
\end{proof}

\begin{defn*}
  Let $H_0, H_1$ be Hilbert spaces, and $F\from \Lnu(\R;H_{0})\to \Lnu(\R;H_{1})$ Lipschitz continuous. Then, $F$ is called \emph{\index{causal}causal} if for all $a\in\R$ and all $f,g\in \Lnu(\R;H_0)$ with
  $f=g$ on $\loi{-\infty}{a}$, we have that $F(f)=F(g)$ on $\loi{-\infty}{a}$.
\end{defn*}

\begin{rem}
In the following, we will frequently  make use of the following easy estimates: 
Let $\nu\in\R$, $a\in \R$. If $f\in \Lnu(\R;H)$ with $\spt f\subseteq \loi{-\infty}{a}$ then $f\in \bigcap_{\eta \leq \nu} \Lm{\eta}(\R;H)$ and
\[
 \norm{f}_{\Lm{\eta}(\R;H)}\leq \e^{(\nu-\eta)a} \norm{f}_{\Lnu(\R;H)}\quad (\eta \leq \nu).
\]
Likewise, if $\spt f\subseteq \roi{a}{\infty}$, we get $f\in \bigcap_{\rho \geq \nu}\Lm{\rho}(\R;H)$ with
\[
 \norm{f}_{\Lm{\rho}(\R;H)}\leq \e^{(\nu-\rho)a} \norm{f}_{\Lnu(\R;H)}\quad (\rho \geq \nu).
\]
\end{rem}

\begin{lem}
\label{lem:indepCaus}Let $H_{0},H_{1}$ be Hilbert spaces, $\mu\in\R$, $F\colon S_{\rmc}(\R;H_{0})\to\bigcap_{\nu\geq\mu}\Lnu(\R;H_{1})$
uniformly Lipschitz continuous. Then the following statements hold: 
\begin{enumerate}
\item\label{lem:indepCaus:item:1} $F^{\nu}$ is causal for all $\nu\geq \mu$. 
\item\label{lem:indepCaus:item:2} The mapping $\td{\nu}^{-1}F^{\nu}$ is causal if $\nu\geq\max\{\mu,0\}$ and $\nu\neq 0$. 
\item\label{lem:indepCaus:item:3} For all $\nu\geq\eta\geq\mu$, we have that $F^{\nu}=F^{\eta}$ on $\Lnu(\R;H_0)\cap\Lm{\mu}(\R;H_0)$. 
\end{enumerate}
\end{lem}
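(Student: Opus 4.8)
The plan is to treat the three parts essentially separately: (c) is a density argument, (b) is a short formal consequence of (a) together with the explicit form of $\td{\nu}^{-1}$, and (a) is the substantial point, proved by a ``gap'' estimate that exploits the \emph{uniform} Lipschitz bound.

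For (c): on $S_{\rmc}(\R;H_0)$ the maps $F^\nu$ and $F^\eta$ both coincide with $F$, since for $f\in S_{\rmc}(\R;H_0)$ the value $F(f)$ is a single function belonging to every $\Lm{\rho}(\R;H_1)$, $\rho\geq\mu$. Given $f\in\Lnu(\R;H_0)\cap\Lm{\mu}(\R;H_0)$, the log-convexity of $\rho\mapsto\norm{f}_{\Lm{\rho}}$ (Hölder's inequality, using $\mu\leq\eta\leq\nu$) gives $f\in\Lm{\eta}(\R;H_0)$, and by \prettyref{lem:simple_fcts_compact_spt} there is a sequence $(f_n)_n$ in $S_{\rmc}(\R;H_0)$ with $f_n\to f$ in $\Lnu$ and in $\Lm{\mu}$, hence also in $\Lm{\eta}$. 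Since $F^\nu(f_n)=F(f_n)=F^\eta(f_n)$ and $F^\nu,F^\eta$ are continuous in the relevant norms, passing to a common subsequence along which $F^\nu(f_n)\to F^\nu(f)$ and $F^\eta(f_n)\to F^\eta(f)$ almost everywhere yields $F^\nu(f)=F^\eta(f)$.

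For (a) --- the core --- I would first prove causality of $F$ on $S_{\rmc}(\R;H_0)$ and then extend it to $F^\nu$ by density. Let $f,g\in S_{\rmc}(\R;H_0)$ agree on $\loi{-\infty}{a}$; then $h\coloneqq f-g\in S_{\rmc}(\R;H_0)$ has $\spt h$ a compact subset of $\roi{a}{\infty}$. For $\varepsilon>0$ set $g_\varepsilon\coloneqq g+\1_{\roi{a+\varepsilon}{\infty}}h\in S_{\rmc}(\R;H_0)$; then $\spt(g_\varepsilon-g)\subseteq\roi{a+\varepsilon}{\infty}$, while $f-g_\varepsilon=\1_{\loi{-\infty}{a+\varepsilon}}h$ is supported in $\ci{a}{a+\varepsilon}$, so that $g_\varepsilon\to f$ in $\Lnu(\R;H_0)$ as $\varepsilon\to0$. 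Put $v\coloneqq\1_{\loi{-\infty}{a}}\bigl(F(g_\varepsilon)-F(g)\bigr)$, which is supported in $\loi{-\infty}{a}$ and lies in every $\Lm{\rho}(\R;H_1)$, $\rho\geq\mu$. For an arbitrary $\rho\geq\nu$ I would chain the two elementary weighted estimates from the remark immediately preceding \prettyref{lem:indepCaus} with the uniform Lipschitz constant $L\coloneqq\sup_{\eta\geq\mu}\norm{F^\eta}_{\Lip}$:
\begin{align*}
\norm{v}_{\Lm{\nu}} & \leq\e^{(\rho-\nu)a}\norm{v}_{\Lm{\rho}}\leq\e^{(\rho-\nu)a}\norm{F(g_\varepsilon)-F(g)}_{\Lm{\rho}}\\
 & \leq L\,\e^{(\rho-\nu)a}\norm{g_\varepsilon-g}_{\Lm{\rho}}\leq L\,\e^{-(\rho-\nu)\varepsilon}\norm{g_\varepsilon-g}_{\Lm{\nu}}.
\end{align*}
Letting $\rho\to\infty$ forces $v=0$, i.e.\ $F(g_\varepsilon)=F(g)$ on $\loi{-\infty}{a}$; letting $\varepsilon\to0$ and using continuity of $F^\nu$ and boundedness of the multiplication operator $\1_{\loi{-\infty}{a}}$ on $\Lnu(\R;H_1)$ gives $F(f)=F(g)$ on $\loi{-\infty}{a}$. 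To pass to $F^\nu$: given $f,g\in\Lnu(\R;H_0)$ agreeing on $\loi{-\infty}{a}$, choose $\phi_n,\psi_n\in S_{\rmc}(\R;H_0)$ with $\phi_n\to f$, $\psi_n\to g$ in $\Lnu$ (possible by \prettyref{lem:simple_fcts_compact_spt}) and set $f_n\coloneqq\phi_n$, $g_n\coloneqq\1_{\loi{-\infty}{a}}\phi_n+\1_{\oi{a}{\infty}}\psi_n$; then $f_n,g_n\in S_{\rmc}(\R;H_0)$, $f_n=g_n$ on $\loi{-\infty}{a}$, $f_n\to f$, and, using $\1_{\loi{-\infty}{a}}f=\1_{\loi{-\infty}{a}}g$, also $g_n\to g$ in $\Lnu$. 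The $S_{\rmc}$-causality just shown together with continuity of $F^\nu$ then yields $F^\nu(f)=F^\nu(g)$ on $\loi{-\infty}{a}$.

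For (b), with $\nu\geq\max\{\mu,0\}$ and $\nu\neq0$: by \prettyref{rem:action_Inu} one has $\td{\nu}^{-1}=I_\nu$ with $(I_\nu h)(t)=\int_{-\infty}^t h(s)\d s$, which is visibly causal on $\Lnu(\R;H)$, and a composition of causal maps is causal, so $\td{\nu}^{-1}F^\nu$ is causal by (a). The main obstacle is exactly the estimate in (a): carried out naively with $\varepsilon=0$, the factors $\e^{(\rho-\nu)a}$ and $\e^{(\nu-\rho)a}$ cancel precisely and one only recovers the useless bound $\norm{v}_{\Lm{\nu}}\leq L\norm{f-g}_{\Lm{\nu}}$; pushing the perturbation strictly to the right of $a$ is what produces the genuine decay factor $\e^{-(\rho-\nu)\varepsilon}$ that survives the limit $\rho\to\infty$. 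Everything else is routine bookkeeping with density of $S_{\rmc}$.
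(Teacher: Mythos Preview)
Your proof is correct, and parts (b) and (c) as well as the density extension in (a) match the paper essentially verbatim. The one genuine difference is in the core estimate for (a).

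You introduce the auxiliary $g_\varepsilon$ with support shifted to $\roi{a+\varepsilon}{\infty}$, then chain the weighted norm inequalities to produce the decay factor $\e^{-(\rho-\nu)\varepsilon}$ and pass to the limit $\rho\to\infty$, followed by $\varepsilon\to0$. The paper dispenses with the $\varepsilon$-gap entirely: it works directly with $f-g$ (supported in $\roi{a}{\infty}$) and $F(f)-F(g)$, and instead of norm inequalities it writes out the integrals. From
\[
\int_{-\infty}^{a}\norm{(F(f)-F(g))(t)}^{2}\e^{2\nu(a-t)}\d t
\leq L^{2}\int_{a}^{\infty}\norm{(f-g)(t)}^{2}\e^{2\nu(a-t)}\d t,
\]
the right-hand side tends to $0$ as $\nu\to\infty$ by dominated convergence (the exponent is negative for $t>a$), while the left-hand integrand is monotone increasing in $\nu$ (the exponent is positive for $t<a$), so monotone convergence forces the integrand to vanish a.e. This is a single limit instead of your two, and it shows that the ``naive $\varepsilon=0$'' approach does work --- just not at the level of norm inequalities, where, as you correctly observe, the exponential factors cancel. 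Your route has the virtue of using only the abstract estimates from the preceding remark and no measure-theoretic convergence theorems; the paper's route is shorter and avoids the auxiliary $g_\varepsilon$.
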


\begin{proof} 
\ref{lem:indepCaus:item:1} 
We divide the proof into three steps.

(i) Let $\nu\geq\mu$. In order to show causality of $F^\nu$, we first note that it suffices to have $F^\nu(f) = F^\nu(g)$ on $\loi{-\infty}{a}$ for all $f,g\in S_{\rmc}(\R;H_0)$ with $f=g$ on $\loi{-\infty}{a}$. Indeed, let $f,g\in\Lnu(\R;H)$ with $f=g$ on $\loi{-\infty}{a}$ for some $a\in\R$. By \prettyref{lem:simple_fcts_compact_spt} we find
$(f_{n})_{n}$ and $(\tilde{g}_{n})_{n}$ in $S_{\rmc}(\R;H_{0})$ such that
$f_{n}\to f$ and $\tilde{g}_{n}\to g$ in $\Lnu(\R;H_{0})$. Next,
$\1_{\loi{-\infty}{a}}f_{n}\to\1_{\loi{-\infty}{a}}f=\1_{\loi{-\infty}{a}}g$
as $n\to\infty$ in $\Lnu(\R;H_{0})$. Thus, putting $g_{n}\coloneqq\1_{\loi{-\infty}{a}}f_{n}+\1_{\oi{a}{\infty}}\tilde{g}_{n}$
for all $n\in\N$ we obtain that $g_{n}\to g$ in $\Lnu(\R;H_{0})$.
Since $F^\nu(f_n) = F^\nu(g_n)$ on $\loi{-\infty}{a}$ for all $n\in\N$ and $F^\nu\from \Lnu(\R;H_0)\to \Lnu(\R;H_1)$ is continuous, taking the limit $n\to\infty$ yields $F^\nu(f) = F^\nu(g)$ on $\loi{-\infty}{a}$.

(ii) Let $a\in\R$, $c\geq0$ and $f\in S_{\rmc}(\R;H_0)$ such that $f=0$ on $\loi{-\infty}{a}$, $g\in \bigcap_{\nu\geq \mu}\Lnu(\R;H_1)$ such that
$\norm{g}_{\Lnu(\R;H_1)}\leq c\norm{f}_{\Lnu(\R;H_0)}$ for all $\nu\geq\mu$. Then
\[\int_{-\infty}^a \norm{g(t)}_{H_1}^2 \e^{2\nu(a-t)}\d t \leq \int_\R \norm{g(t)}_{H_1}^2 \e^{2\nu(a-t)}\d t \leq c^2 \int_a^\infty \norm{f(t)}_{H_0}^2 \e^{2\nu(a-t)}\d t \to 0\]
as $\nu\to\infty$. Since $\e^{2\nu(a-t)} \to \infty$ as $\nu\to\infty$ for all $t<a$, the monotone convergence theorem implies $g=0$ on $\loi{-\infty}{a}$.

(iii) Let $f,g\in S_{\rmc}(\R;H_0)$ such that $f=g$ on $\loi{-\infty}{a}$ for some $a\in\R$. Then $f-g=0$ on $\loi{-\infty}{a}$. Since $F$ is uniformly Lipschitz continuous, with $L\coloneqq \sup_{\nu\geq\mu} \norm{F^\nu}_{\Lip}$ we obtain $\norm{F^\nu(f)-F^\nu(g)}_{\Lnu(\R;H_1)} \leq L\norm{f-g}_{\Lnu(\R;H_0)}$ for all $\nu\geq\mu$. 
By (ii) we conclude $F^\nu(f)=F^\nu(g)$ on $\loi{-\infty}{a}$ for all $\nu\geq\mu$, which by (i) yields the assertion.

The statement in \ref{lem:indepCaus:item:2} directly follows from \ref{lem:indepCaus:item:1}. 
Note that $\td{\nu}^{-1}F^\nu$ is uniformly Lipschitz continuous only for $\nu>0$.

Let us prove \ref{lem:indepCaus:item:3}.
Since $F^\nu(f)=F(f)=F^\eta(f)$ for $f\in S_{\rmc}(\R;H_0)$, the set $S_{\rmc}(\R;H_0)$ is dense in $\Lnu(\R;H_0)\cap \Lm{\mu}(\R;H_0)$ by \prettyref{lem:simple_fcts_compact_spt}, and $F^\nu$ and $F^\eta$ are Lipschitz-continuous, we obtain the assertion.
\end{proof}
\begin{proof}[Proof of \prettyref{thm:PicLind1} -- second part] The remaining part in \ref{thm:PicLind1:item:3}:
Let $f,g\in\Lnu(\R;H)$ with $f=g$ on $\loi{-\infty}{a}$. Since $\nu>L\geq 0$,
we compute using \prettyref{lem:indepCaus}\ref{lem:indepCaus:item:2} and causality of $\td{\nu}^{-1}$ that
\begin{align*}
\1_{\loi{-\infty}{a}}v_{\nu,f} & =\1_{\loi{-\infty}{a}}\td{\nu}^{-1}F^{\nu}\left(v_{\nu,f}\right)+\1_{\loi{-\infty}{a}}\td{\nu}^{-1}f\\
 & =\1_{\loi{-\infty}{a}}\td{\nu}^{-1}F^{\nu}\left(\1_{\loi{-\infty}{a}}v_{\nu,f}\right)+\1_{\loi{-\infty}{a}}\td{\nu}^{-1}\1_{\loi{-\infty}{a}}f\\
 & =\1_{\loi{-\infty}{a}}\td{\nu}^{-1}F^{\nu}\left(\1_{\loi{-\infty}{a}}v_{\nu,f}\right)+\1_{\loi{-\infty}{a}}\td{\nu}^{-1}\1_{\loi{-\infty}{a}}g.
\end{align*}
The same computation also yields that
\[
\1_{\loi{-\infty}{a}}v_{\nu,g}=\1_{\loi{-\infty}{a}}\td{\nu}^{-1}F^{\nu}\left(\1_{\loi{-\infty}{a}}v_{\nu,g}\right)+\1_{\loi{-\infty}{a}}\td{\nu}^{-1}\1_{\loi{-\infty}{a}}g.
\]
It is easy to see that $u\mapsto\1_{\loi{-\infty}{a}}\td{\nu}^{-1}F^{\nu}\left(u\right)+\1_{\loi{-\infty}{a}}\td{\nu}^{-1}\1_{\loi{-\infty}{a}}g$
defines a contraction in $\Lnu(\R;H)$. Hence, the contraction mapping
principle implies that $\1_{\loi{-\infty}{a}}v_{\nu,f}=\1_{\loi{-\infty}{a}}v_{\nu,g}$.

The  statement in \ref{thm:PicLind1:item:2} follows from the fact that $u\mapsto\1_{\loi{-\infty}{a}}\td{\nu}^{-1}F^{\nu}(u)$
defines a contraction and \prettyref{lem:indepCaus}\ref{lem:indepCaus:item:2}.

For the proof of \ref{thm:PicLind1:item:4}, we observe that for all $n\in\N$, we have $\1_{\loi{-\infty}{n}}u_{\eta}\in\Lnu(\R;H)\cap\Lm{\eta}(\R;H)$.
Hence, by \ref{thm:PicLind1:item:2} and \prettyref{lem:indepCaus}\ref{lem:indepCaus:item:3}, it follows that
\begin{align*}
\1_{\loi{-\infty}{n}}u_{\eta} & =\1_{\loi{-\infty}{n}}\td{\eta}^{-1}F^{\eta}\left(\1_{\loi{-\infty}{n}}u_{\eta}\right)
=\1_{\loi{-\infty}{n}}\td{\nu}^{-1}F^{\nu}\left(\1_{\loi{-\infty}{n}}u_{\eta}\right).
\end{align*}
As $\1_{\loi{-\infty}{n}}u_{\nu}$ satisfies the same fixed point
equation, we deduce $\1_{\loi{-\infty}{n}}u_{\eta}=\1_{\loi{-\infty}{n}}u_{\nu}$
for all $n\in\N,$ which yields the assertion. 
\end{proof}
As a first application of \prettyref{thm:PicLind1} we state and prove
the classical version of the Theorem of Picard--Lindelöf.

\begin{thm}[Picard--Lindel\"of -- classical version] \label{thm:PicLind_class} Let $H$ be a Hilbert space, $\Omega\subseteq \R\times H$ be open, $f\colon\Omega\to H$ continuous, $(t_{0},x_{0})\in\Omega$. 
Assume there exists $L\geq0$
such that for all $(t,x),(t,y)\in\Omega$ we have 
\[
\norm{f(t,x)-f(t,y)}\leq L\norm{x-y}.
\]
Then, there exists $\delta>0$ such that the initial value problem
\begin{equation}
\begin{cases}
u'(t)=f(t,u(t)) & (t\in\oi{t_{0}}{t_{0}+\delta}),\\
u(t_{0})=x_{0},
\end{cases}\label{eq:IVP_ODE}
\end{equation}
admits a unique continuously differentiable solution, $u\colon\ci{t_{0}}{t_{0}+\delta}\to H$, which satisfies
$\left(t,u(t)\right)\in\Omega$ for all $t\in\ci{t_{0}}{t_{0}+\delta}$. 
\end{thm}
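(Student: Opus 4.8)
The plan is to derive the classical Picard--Lindel\"of theorem from \prettyref{thm:PicLind1} by a localisation and cut-off argument that turns the locally defined, locally Lipschitz right-hand side $f$ into a genuinely globally defined uniformly Lipschitz Nemytskii operator on the whole real line. First I would shrink the domain: since $\Omega$ is open and $(t_0,x_0)\in\Omega$, choose $r>0$ and $\delta_0>0$ so that the closed cylinder $\ci{t_0-\delta_0}{t_0+\delta_0}\times\cball{x_0}{r}$ is contained in $\Omega$. On this compact set $f$ is bounded, say by $M$, and globally $L$-Lipschitz in the second variable by hypothesis. Then pick $\delta\in\oi{0}{\delta_0}$ small enough that $M\delta\leq r$; this is the usual constraint guaranteeing that a solution starting at $x_0$ cannot leave $\cball{x_0}{r}$ on the time interval $\ci{t_0}{t_0+\delta}$.

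Next I would construct a modified right-hand side $\tilde f\colon\R\times H\to H$ that is defined everywhere, agrees with $f$ on the small cylinder, is still globally $L$-Lipschitz in $x$ (uniformly in $t$) and is globally bounded. Concretely, set $\tilde f(t,x)\coloneqq\chi(t)\, f\bigl(\pi(t), x_0 + P_r(x-x_0)\bigr)$ where $\pi$ projects $t$ onto $\ci{t_0-\delta_0}{t_0+\delta_0}$, $P_r$ is the (1-Lipschitz) radial retraction of $H$ onto $\cball{0}{r}$, and $\chi\in\cci(\R)$ is a cut-off equal to $1$ near $\ci{t_0}{t_0+\delta}$ and supported in $\oi{t_0-\delta_0}{t_0+\delta_0}$; one checks $\tilde f$ is bounded and $L$-Lipschitz in $x$. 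Now define the Nemytskii operator: for $u\in\Lnu(\R;H)$ put $\bigl(F^\nu(u)\bigr)(t)\coloneqq \tilde f(t,u(t)+x_0\1_{\roi{t_0}{\infty}}(t))$ — more precisely one incorporates the initial value by the standard trick of solving for $v=u-x_0\1_{\roi{t_0}{\infty}}$, so that the integral equation $v=\td{\nu}^{-1}F^\nu(v)$ encodes $u'(t)=\tilde f(t,u(t))$ together with $u(t_0)=x_0$. Because $\tilde f$ is bounded and globally Lipschitz in $x$, the family $(F^\nu)_{\nu\geq\mu}$ is consistent and uniformly Lipschitz with $\sup_\nu\norm{F^\nu}_{\Lip}\leq L$ (here one uses that $\norm{v}_{\Lnu}$-estimates of $F^\nu(v)-F^\nu(w)$ are controlled by $L\norm{v-w}_{\Lnu}$ pointwise); checking the integrability/measurability needed so that $F^\nu$ indeed maps into $\Lnu(\R;H)$ uses boundedness of $\tilde f$ and the remark preceding \prettyref{lem:indepCaus}.

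Applying \prettyref{thm:PicLind1} with any fixed $\nu>\max\{L,\mu\}$ then yields a unique $u_\nu\in\dom(\td{\nu})$ with $\td{\nu}u_\nu=F^\nu(u_\nu)$; by \prettyref{thm:Sobolev_emb} it has a continuous representative, and by \prettyref{thm:PicLind1}\ref{thm:PicLind1:item:1}, since $F^\nu(u_\nu)$ is continuous (composition of the continuous $\tilde f$ with the continuous $u_\nu$), $u_\nu$ is continuously differentiable everywhere. Restricting to $\ci{t_0}{t_0+\delta}$ and recovering $u$ from $v$, I would then run the a priori bound: from $u(t)=x_0+\int_{t_0}^t \tilde f(s,u(s))\d s$ and $\norm{\tilde f}\leq M$ one gets $\norm{u(t)-x_0}\leq M(t-t_0)\leq M\delta\leq r$ for $t\in\ci{t_0}{t_0+\delta}$, so on this interval the retraction $P_r$ and the cut-off $\chi$ are inactive and $\tilde f(s,u(s))=f(s,u(s))$; hence $u$ solves \prettyref{eq:IVP_ODE} with $(t,u(t))\in\Omega$. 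Uniqueness on $\ci{t_0}{t_0+\delta}$: any classical solution $w$ of \prettyref{eq:IVP_ODE} stays in $\Omega$ by assumption and, by continuity, in $\cball{x_0}{r}$ on a possibly smaller interval; a standard Gr\"onwall argument (or, to stay within the framework, extending $w$ by the flow/by a constant and comparing with the fixed point via \prettyref{thm:PicLind1}\ref{thm:PicLind1:item:2} using causality) forces $w=u$ there, and a connectedness argument propagates this to all of $\ci{t_0}{t_0+\delta}$. The main obstacle I anticipate is the bookkeeping in step two: constructing $\tilde f$ and the associated $F^\nu$ so that \emph{simultaneously} (i) the initial condition $u(t_0)=x_0$ is correctly encoded as an inhomogeneous term of the form treated in \prettyref{thm:PicLind1}\ref{thm:PicLind1:item:3}, (ii) uniform Lipschitz continuity with constant $L$ genuinely holds (the retraction and cut-off must not inflate the constant), and (iii) on the target interval the modifications are provably inert — getting these three to cohere, rather than any single estimate, is where the care is needed.
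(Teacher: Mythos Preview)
Your proposal is correct and is essentially the paper's argument: project onto a ball to globalise $f$, cut off in time, apply \prettyref{thm:PicLind1}, then use $M\delta\leq r$ to show the modifications are inert on $\ci{t_0}{t_0+\delta}$; uniqueness is obtained via \prettyref{thm:PicLind1}\,\ref{thm:PicLind1:item:2} exactly as you suggest as one option. The paper dissolves the bookkeeping obstacle you anticipate by first normalising to $t_0=0$, $x_0=0$ and then using the \emph{sharp} cutoff $\1_{\roi{0}{\delta'}}$ in place of your smooth $\chi$: this forces $\spt F^\nu(v)\subseteq\roi{0}{\infty}$, so $v(0)=0$ automatically by causality and the Sobolev embedding, and the shift by $x_0\1_{\roi{t_0}{\infty}}$ and the time projection $\pi$ become unnecessary. (Note that with your two-sided $\chi$ the integral $v(t_0)=\int_{-\infty}^{t_0}F^\nu(v)$ need not vanish, so the initial condition is not yet encoded; replacing $\chi$ by a one-sided cutoff is the fix.)
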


\begin{proof}
First of all we observe that we may assume, without loss of generality, that
$x_{0}=0$. Indeed, to solve the initial value problem 
\[
\begin{cases}
v'(t)=f(t,v(t)+x_{0}) & (t\in\oi{t_{0}}{t_{0}+\delta}),\\
v(t_{0})=0,
\end{cases}
\]
for a continuously differentiable $v\colon\ci{t_{0}}{t_{0}+\delta}\to H$
is equivalent to solving the problem in \prettyref{thm:PicLind_class}
for $u$ by setting $u=v+\1_{\ci{t_{0}}{t_{0}+\delta}}x_{0}$. Appropriately
shifting the time coordinate, we may also assume that $t_{0}=0$.

Thus, let $(0,0)\in\Omega$. Let also $[0,\delta']\times\cball{0}{\varepsilon}\subseteq\Omega$
for some $\delta',\varepsilon>0$. Denote by $P\colon H\to H$
the orthogonal projection onto $\cball{0}{\varepsilon}$.
By \prettyref{exer:ProjC}, $P$ is Lipschitz continuous with Lipschitz
semi-norm bounded by $1$. We then define 
\begin{align*}
F\colon S_{\rmc}(\R; H) & \to\bigcap_{\nu\geq0}\Lnu(\R; H)\\
g & \mapsto\bigl(t\mapsto\1_{\roi{0}{\delta'}}(t)f(t,P(g(t)))\bigr)
\end{align*}
and will prove that $F$ is well-defined and uniformly Lipschitz continuous. Since the mapping $t\mapsto\1_{\roi{0}{\delta'}}(t)f(t,0)$
is supported on $\ci{0}{\delta'}$, we obtain for $\nu\geq0$ that $F(0)\in \Lnu(\R;H)$.
Moreover, for $\nu\geq0$ and $g,h\in S_{\rmc}(\R; H)$ we estimate
\begin{align*}
& \norm{F(g)-F(h)}_{\Lnu(\R;H)}^{2} \\
& =\int_{\R}\norm{F(g)(t)-F(h)(t)}^{2}\e^{-2\nu t}\d t
 =\int_{0}^{\delta'}\norm{f(t,P(g(t)))-f(t,P(h(t)))}^{2}\e^{-2\nu t}\d t\\
 & \leq L^{2}\int_{0}^{\delta'}\norm{P(g(t))-P(h(t))}^{2}\e^{-2\nu t}\d t
 \leq L^{2}\int_{0}^{\delta'}\norm{g(t)-h(t)}^{2}\e^{-2\nu t}\d t \\
 & \leq L^{2}\norm{g-h}_{\Lnu(\R;H)}^{2},
\end{align*}
which shows that $F$ is well-defined and uniformly Lipschitz continuous.

By \prettyref{thm:PicLind1}, there exists $v\in\dom(\td{\nu})$ with
$\nu>L$ such that 
\[
\td{\nu}v=F^{\nu}(v).
\]
We read off from $v=\td{\nu}^{-1}F^{\nu}(v)$ that $v=0$ on $\loi{-\infty}{0}$, and that $v$ is continuous by \prettyref{thm:Sobolev_emb}. 
Moreover, we obtain that
\[
v(t)=\int_{-\infty}^{t}\1_{\roi{0}{\delta'}}(\tau)f(\tau,P(v(\tau)))\d\tau=\int_{0}^{\min\{t,\delta'\}}f(\tau,P(v(\tau)))\d\tau,
\]
from which we read off that $v$ is continuously differentiable on
$\oi{0}{\delta'}$ since $f$ and $P$ are also continuous. The
same equality implies for $0<t\leq\delta\coloneqq\min\{\frac{\varepsilon}{M},\delta'\}$,
where $M\coloneqq\sup_{\left(t,x\right)\in\ci{0}{\delta'}\times\cball{0}{\varepsilon}}\norm{f(t,x)}$,
that 
\[
\norm{v(t)}\leq\int_{0}^{t}\norm{f(\tau,P(v(\tau)))}\d\tau\leq M\delta\leq\varepsilon.
\]
Thus, $(t,v(t))\in\ci{0}{\delta'}\times\cball{0}{\varepsilon}\subseteq\Omega$
for all $0\leq t\leq \delta$ and so $Pv(t)=v(t)$ for $0\leq t\leq\delta$.
Thus, $u\coloneqq v|_{\ci{0}{\delta}}$ satisfies \prettyref{eq:IVP_ODE}.

Finally, concerning uniqueness, let $\tilde{u}\colon\ci{0}{\delta}\to H$
be a continuously differentiable solution of \prettyref{eq:IVP_ODE}.
Let $\tilde{v}$ be the extension of $\tilde{u}$ by $0$ to the whole
of $\R$. Then we get that 
\begin{align*}
\1_{\loi{-\infty}{\delta}}\tilde{v} & =\1_{\loi{-\infty}{\delta}}\int_{0}^{\cdot}\1_{\roi{0}{\delta'}}(\tau) f(\tau,\tilde{v}(\tau))\d\tau\\
 & =\1_{\loi{-\infty}{\delta}}\int_{-\infty}^{\cdot}\1_{\roi{0}{\delta'}}(\tau)f(\tau,P(\tilde{v}(\tau)))\d\tau\\
 & =\1_{\loi{-\infty}{\delta}}\td{\nu}^{-1}F^{\nu}(\1_{\loi{-\infty}{\delta}}\tilde{v}).
\end{align*}
Since $\1_{\loi{-\infty}{\delta}}v$ is the unique solution of the equation
$w=\1_{\loi{-\infty}{\delta}}\td{\nu}^{-1}F^{\nu}(w)$, we obtain that $\1_{\loi{-\infty}{\delta}}\tilde{v}=\1_{\loi{-\infty}{\delta}}v$,
which yields $u=\tilde{u}$. 
\end{proof}

\begin{rem} The reason for the proof of the classical Picard--Lindel\"of theorem being seemingly complicated is two-fold. First of all, the Hilbert space solution theory is for $\L$-functions rather than continuous (or continuously differentiable) functions. The second, maybe more important point is that the Hilbert space Picard--Lindel\"of asserts a solution theory, which provides \emph{global} existence in the time variable. The main body of the proof of the classical Picard--Lindel\"of theorem presented here is therefore devoted to `localisation' of the abstract theorem. Furthermore, note that the method of proof for obtaining uniqueness and the admittance of the initial value rests on causality. This effect will resurface when we discuss partial differential equations.
\end{rem}

\section{Delay Differential Equations}

In this section, our study will not be as in depth as done for the local Picard--Lindelöf theorem. 
Of course, the solution theory afforded would not be a very good one, if it was only applicable to, arguably,  the easiest case of ordinary differential equations.
We shall see next that the developed theory  applies
to more elaborate examples.

In what follows, let $H$ be a Hilbert space over $\K$. We start out with a delay differential equation with  so-called `discrete delay'.
For this, we introduce, for $h\in\R$, the \emph{time-shift operator}
\begin{align*}
\tau_{h}\colon S_{\rmc}(\R;H) & \to\bigcap_{\nu\in\R}\Lnu(\R;H),\\
f & \mapsto f(\cdot+h).
\end{align*}

\begin{lem}
\label{lem:time-shift-norm}
Let $\mu\in\R$. The mapping $\tau_{h} \colon S_{\rmc}(\R;H) \to\bigcap_{\nu\geq \mu}\Lnu(\R;H)$ is uniformly Lipschitz continuous
if and only if $h\leq 0$.
Moreover, for $\nu\in\R$ we have 
\[
\norm{\tau_{h}}_{\Lnu(\R;H)}=\e^{h\nu}.
\]
\end{lem}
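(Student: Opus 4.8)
The plan is to compute the norm $\norm{\tau_h}_{\Lnu(\R;H)}$ first and deduce the characterisation of uniform Lipschitz continuity from it. For the norm, the key observation is that $\tau_h$ is a (scalar multiple of a) bijective substitution on $\R$: for $f\in S_{\rmc}(\R;H)$ one has
\[
\norm{\tau_h f}_{\Lnu(\R;H)}^2 = \int_{\R} \norm{f(t+h)}_H^2 \e^{-2\nu t}\d t = \int_\R \norm{f(s)}_H^2 \e^{-2\nu(s-h)}\d s = \e^{2\nu h}\norm{f}_{\Lnu(\R;H)}^2,
\]
using the translation invariance of Lebesgue measure under $s=t+h$. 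This already shows $\norm{\tau_h f}_{\Lnu(\R;H)} = \e^{h\nu}\norm{f}_{\Lnu(\R;H)}$ on $S_{\rmc}(\R;H)$; since $S_{\rmc}(\R;H)$ is dense in $\Lnu(\R;H)$ by \prettyref{lem:simple_fcts_compact_spt}, the unique continuous extension $\tau_h^\nu$ of $\tau_h$ to $\Lnu(\R;H)$ satisfies $\norm{\tau_h^\nu} = \e^{h\nu}$, which is the displayed formula (noting that $\tau_h$ maps $S_{\rmc}(\R;H)$ into $\bigcap_{\nu\in\R}\Lnu(\R;H)$, so the extension is consistent across all $\nu$). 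I would also remark that $\tau_h$ is linear, so being Lipschitz on $\Lnu(\R;H)$ is the same as being bounded, with $\norm{\tau_h^\nu}_{\Lip} = \norm{\tau_h^\nu} = \e^{h\nu}$.

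For the uniform Lipschitz continuity statement, recall from the definition that $\tau_h\colon S_{\rmc}(\R;H)\to\bigcap_{\nu\geq\mu}\Lnu(\R;H)$ is uniformly Lipschitz continuous iff $\sup_{\nu\geq\mu}\norm{\tau_h^\nu}_{\Lip}<\infty$, i.e.\ iff $\sup_{\nu\geq\mu}\e^{h\nu}<\infty$. If $h\leq 0$, then $\nu\mapsto\e^{h\nu}$ is non-increasing on $[\mu,\infty)$, so the supremum equals $\e^{h\mu}<\infty$; hence $\tau_h$ is uniformly Lipschitz continuous (and one should check that the values indeed lie in $\bigcap_{\nu\geq\mu}\Lnu(\R;H)$, which is immediate since $\tau_h$ already maps into $\bigcap_{\nu\in\R}\Lnu(\R;H)$). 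Conversely, if $h>0$ then $\e^{h\nu}\to\infty$ as $\nu\to\infty$, so the supremum is infinite and $\tau_h$ fails to be uniformly Lipschitz continuous. This establishes the equivalence.

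There is essentially no hard step here; the only mild subtlety is bookkeeping about which spaces $\tau_h$ maps into and the compatibility of the extensions $\tau_h^\nu$ across different weights $\nu$. I would dispatch this by noting once and for all that the substitution identity above holds simultaneously for every $\nu\in\R$ on the common dense domain $S_{\rmc}(\R;H)$, so the extensions are automatically consistent and the formula $\norm{\tau_h}_{\Lnu(\R;H)}=\e^{h\nu}$ is unambiguous.
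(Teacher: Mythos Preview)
Your proof is correct and follows essentially the same approach as the paper: the substitution $s=t+h$ gives $\norm{\tau_h f}_{\Lnu} = \e^{h\nu}\norm{f}_{\Lnu}$ on $S_{\rmc}(\R;H)$, and the equivalence then reduces to whether $\sup_{\nu\geq\mu}\e^{h\nu}<\infty$. The paper's argument is terser, omitting the discussion of consistency of extensions across weights, but the mathematical content is identical.
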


\begin{proof}
Let $f\in S_{\rmc}(\R;H)$. Then for $\nu\in\R$ we compute
\begin{align*}
\norm{\tau_{h}f}_{\Lnu(\R;H)}^{2} & =\int_{\R}\norm{f(t+h)}^{2}\e^{-2\nu t}\d t
 =\int_{\R}\norm{f(t)}^{2}\e^{-2\nu(t-h)}\d t \\
 & =\norm{f}_{\Lnu(\R;H)}^{2}\e^{2\nu h}.
\end{align*}
Since $\sup_{\nu\geq \nu} \e^{2\nu h} <\infty$ if and only if $h\leq 0$ we obtain the equivalence. Moreover, the above equality also also yields the norm of $\tau_h$ in $\Lnu(\R;H)$.
\end{proof}
We will reuse $\tau_{h}$ for the Lipschitz continuous extensions
to $\Lnu(\R;H)$. The well-posedness theorem for delay equations with discrete delay
is contained in the next theorem. We note here that we only formulate the respective
result for right-hand sides that are globally Lipschitz continuous.
With a localisation technique, as has already been carried out for the classical
Picard--Lindelöf theorem, it is also possible to obtain local results. 
\begin{thm}
\label{thm:discretedelay} Let $H$ be a Hilbert
space, $\mu\in\R$, $N\in\N$, $h_{1},\ldots,h_{N}\in\loi{-\infty}{0}$, and
\[
G\colon S_{\rmc}(\R;H^{N})\to\bigcap_{\nu\geq\mu}\Lnu(\R;H)
\]
uniformly Lipschitz. Then there exists an $\eta\in\R$ such that for
all $\nu\geq\eta$ the equation 
\[
\td{\nu}u=G^{\nu}\left(\tau_{h_{1}}u,\ldots,\tau_{h_{N}}u\right)
\]
admits a solution $u\in\dom(\td{\nu})$ which is unique in $\bigcup_{\nu\geq\eta}\Lnu(\R;H)$.
Moreover, for all $a\in\R$ the function $u_{a}\coloneqq\1_{\loi{-\infty}{a}}u$
satisfies 
\[
u_{a}=\1_{\loi{-\infty}{a}}\td{\nu}^{-1}G^{\nu}\left(\tau_{h_{1}}u_{a},\ldots,\tau_{h_{N}}u_{a}\right).
\]
\end{thm}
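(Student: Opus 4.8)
The plan is to reduce the claim to \prettyref{thm:PicLind1} by absorbing the discrete delays into the right-hand side. To this end I would define
\[
F\colon S_{\rmc}(\R;H)\to\bigcap_{\nu\geq\mu}\Lnu(\R;H),\qquad F(f)\coloneqq G\bigl(\tau_{h_{1}}f,\dots,\tau_{h_{N}}f\bigr),
\]
and first check that this is well-defined: for $f\in S_{\rmc}(\R;H)$ and $h\leq0$ the shift $\tau_{h}f=f(\cdot+h)$ is again simple with compact support, so $(\tau_{h_{1}}f,\dots,\tau_{h_{N}}f)\in S_{\rmc}(\R;H^{N})$ and therefore $F(f)=G(\tau_{h_{1}}f,\dots,\tau_{h_{N}}f)\in\bigcap_{\nu\geq\mu}\Lnu(\R;H)$ by the mapping property of $G$.

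The key step is the uniform Lipschitz estimate for $F$. Put $L_{G}\coloneqq\sup_{\nu\geq\mu}\norm{G^{\nu}}_{\Lip}<\infty$. For $\nu\geq\mu$ and $f,g\in S_{\rmc}(\R;H)$ I would estimate, using that $G$ is uniformly Lipschitz and that $H^{N}$ carries the Hilbert norm $\norm{(x_{1},\dots,x_{N})}^{2}=\sum_{i}\norm{x_{i}}^{2}$,
\[
\norm{F(f)-F(g)}_{\Lnu(\R;H)}^{2}\leq L_{G}^{2}\sum_{i=1}^{N}\norm{\tau_{h_{i}}(f-g)}_{\Lnu(\R;H)}^{2}=L_{G}^{2}\Bigl(\sum_{i=1}^{N}\e^{2\nu h_{i}}\Bigr)\norm{f-g}_{\Lnu(\R;H)}^{2},
\]
where the final equality is \prettyref{lem:time-shift-norm}. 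Since $h_{i}\leq0$ and $\nu\geq\mu$ we have $\e^{2\nu h_{i}}\leq\e^{2\mu h_{i}}$, hence $\sum_{i}\e^{2\nu h_{i}}\leq C^{2}\coloneqq\sum_{i}\e^{2\mu h_{i}}$, a constant independent of $\nu$. Thus $F$ extends to a Lipschitz map $F^{\nu}$ on $\Lnu(\R;H)$ with $\norm{F^{\nu}}_{\Lip}\leq CL_{G}$ for all $\nu\geq\mu$, i.e.\ $F$ is uniformly Lipschitz continuous with $L\coloneqq\sup_{\nu\geq\mu}\norm{F^{\nu}}_{\Lip}\leq CL_{G}$. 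Moreover, since each $\tau_{h_{i}}$ extends to a bounded operator on $\Lnu(\R;H)$ and $G^{\nu}$ is Lipschitz, the composite $f\mapsto G^{\nu}(\tau_{h_{1}}f,\dots,\tau_{h_{N}}f)$ is Lipschitz on $\Lnu(\R;H)$ and agrees with $F$ on the dense set $S_{\rmc}(\R;H)$ (see \prettyref{lem:simple_fcts_compact_spt}), so it coincides with $F^{\nu}$.

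Now set $\eta\coloneqq\max\{L,\mu\}+1$. Applying \prettyref{thm:PicLind1} to $F$: for every $\nu\geq\eta$ there is a unique $u=u_{\nu}\in\dom(\td{\nu})$ with $\td{\nu}u=F^{\nu}(u)=G^{\nu}(\tau_{h_{1}}u,\dots,\tau_{h_{N}}u)$, and by \prettyref{thm:PicLind1}\ref{thm:PicLind1:item:4} these solutions coincide for all $\nu\geq\eta$, which together with uniqueness within each $\Lnu(\R;H)$ gives uniqueness in $\bigcup_{\nu\geq\eta}\Lnu(\R;H)$. Finally, for $a\in\R$, \prettyref{thm:PicLind1}\ref{thm:PicLind1:item:2} identifies $u_{a}\coloneqq\1_{\loi{-\infty}{a}}u$ as the unique fixed point of $v\mapsto\1_{\loi{-\infty}{a}}\td{\nu}^{-1}F^{\nu}(v)$; evaluating this identity and using $F^{\nu}(u_{a})=G^{\nu}(\tau_{h_{1}}u_{a},\dots,\tau_{h_{N}}u_{a})$ yields the asserted formula. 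I expect the only genuinely delicate point to be the uniform Lipschitz estimate, namely the observation that the sign constraint $h_{i}\leq0$ is exactly what renders $\sup_{\nu\geq\mu}\e^{2\nu h_{i}}$ finite (mirroring \prettyref{lem:time-shift-norm}); everything else is bookkeeping plus a direct appeal to \prettyref{thm:PicLind1}, which already supplies causality, consistency and local regularity through \prettyref{lem:indepCaus} (in particular causality of $F$, used implicitly in the ``moreover'' part, is automatic from uniform Lipschitz continuity by \prettyref{lem:indepCaus}\ref{lem:indepCaus:item:1}).
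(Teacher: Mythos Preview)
Your proof is correct and follows exactly the paper's approach: the paper's proof is the single line ``The assertion follows from \prettyref{thm:PicLind1} applied to $F\coloneqq G\circ\left(\tau_{h_{1}},\ldots,\tau_{h_{N}}\right)$'', and you have simply unpacked the verification that this $F$ is uniformly Lipschitz (via \prettyref{lem:time-shift-norm}) and spelled out how parts \ref{thm:PicLind1:item:2} and \ref{thm:PicLind1:item:4} of \prettyref{thm:PicLind1} yield the remaining claims.
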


\begin{proof}
The assertion follows from \prettyref{thm:PicLind1} applied to $F\coloneqq G\circ\left(\tau_{h_{1}},\ldots,\tau_{h_{N}}\right)$. 
\end{proof}
Next, we formulate an initial value problem for a subclass of the
latter type of equations. 
\begin{thm}
\label{thm:IVP-discretedelay} 
Let $h>0$, $f\colon\Rge{0}\times H\times H\to H$
continuous, and $f(\cdot,0,0)\in\Lm{\mu}(\R; H)$ for some $\mu>0$.
Assume that there exists $L\geq0$ with 
\[
\norm{f(t,x,y)-f(t,u,v)}\leq L\norm{(x,y)-(u,v)}\quad\left((t,x,y),(t,u,v)\in\Rge{0}\times H\times H\right).
\]
Let $u_{0}\in C\left([-h,0]; H\right)$. Then there exists $\eta\in\R$
such that for all $\nu\geq\eta$ the initial value problem 
\begin{equation}
\begin{cases}
u'(t)=f(t,u(t),u(t-h)) & (t>0),\\
u(\tau)=u_{0}(\tau) & (\tau\in\ci{-h}{0})
\end{cases}\label{eq:dde}
\end{equation}
admits a unique continuous solution $u\colon\roi{-h}{\infty}\to H$,
continuously differentiable on $\oi{0}{\infty}$. 
\end{thm}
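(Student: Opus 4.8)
The plan is to reduce the initial value problem \prettyref{eq:dde} to the global, Lipschitz situation already handled by \prettyref{thm:discretedelay} (with $N=1$), and then perform a localisation in the spirit of the proof of \prettyref{thm:PicLind_class}. First I would reformulate the problem so that the initial history $u_0$ is absorbed into an inhomogeneity. Extend $u_0$ to a function $\tilde u_0\colon\R\to H$, say by setting $\tilde u_0(\tau)\coloneqq u_0(\tau)$ for $\tau\in\ci{-h}{0}$, $\tilde u_0(\tau)=u_0(0)$ for $\tau>0$ and $\tilde u_0(\tau)=u_0(-h)$ for $\tau<-h$; this is continuous and bounded, hence lies in $\Lnu(\R;H)$ for every $\nu>0$. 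Writing $u=\tilde u_0+w$, the unknown $w$ should vanish on $\loi{-\infty}{0}$ and satisfy $\td{\nu}w=\1_{\roi{0}{\infty}}f\bigl(\cdot,\tilde u_0+w,\tau_{-h}(\tilde u_0+w)\bigr)$ together with the pointwise relation built from the history. The key point is that $u(t-h)$ for $t\in\oi{0}{h}$ is prescribed by $u_0$, so on that time interval the delay term is a \emph{known} function; causality (\prettyref{lem:indepCaus}) will let us bootstrap from $\loi{-\infty}{0}$ up through $\oi{0}{h}$, then $\oi{h}{2h}$, etc. — exactly the classical "method of steps''.

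Concretely, I would define
\[
F\colon S_{\rmc}(\R;H)\to\bigcap_{\nu\geq\mu}\Lnu(\R;H),\quad
g\mapsto\bigl(t\mapsto\1_{\roi{0}{\infty}}(t)\,f\bigl(t,\tilde u_0(t)+g(t),\tilde u_0(t-h)+(\tau_{-h}g)(t)\bigr)\bigr),
\]
where $\tau_{-h}$ is the (Lipschitz, by \prettyref{lem:time-shift-norm}, since $-h<0$) shift. Using the global Lipschitz bound on $f$, the estimate $\norm{F(g)-F(\tilde g)}_{\Lnu}\leq L\bigl(\norm{g-\tilde g}_{\Lnu}+\norm{\tau_{-h}(g-\tilde g)}_{\Lnu}\bigr)\leq L(1+\e^{-h\nu})\norm{g-\tilde g}_{\Lnu}$ shows $F$ is uniformly Lipschitz continuous; well-definedness uses $f(\cdot,0,0)\in\Lm{\mu}(\R;H)$, the continuity of $f$, the boundedness of $\tilde u_0$, and the support restriction to $\roi{0}{\infty}$ (together with the fact that on any $\loi{-\infty}{a}$ one gets $\Lnu$-control). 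Then \prettyref{thm:discretedelay} (or directly \prettyref{thm:PicLind1} applied to $F$) yields $\eta\in\R$ and, for $\nu\geq\max\{\eta,\mu,L(1+\e^{-h\nu}),0\}$ (note $L(1+\e^{-h\nu})\leq 2L$, so $\nu>2L$ suffices uniformly), a unique $w\in\dom(\td{\nu})$ with $\td{\nu}w=F^\nu(w)$; by \prettyref{thm:Sobolev_emb} $w$ has a continuous representative, and reading off $w=\td{\nu}^{-1}F^\nu(w)=I_\nu F^\nu(w)$ gives $w=0$ on $\loi{-\infty}{0}$ and
\[
w(t)=\int_0^t f\bigl(\tau,\tilde u_0(\tau)+w(\tau),\tilde u_0(\tau-h)+w(\tau-h)\bigr)\d\tau\quad(t\geq0).
\]

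Setting $u\coloneqq\tilde u_0+w$ on $\roi{-h}{\infty}$, one checks $u=u_0$ on $\ci{-h}{0}$ and that $u$ is continuous on $\roi{-h}{\infty}$. For the differentiability and the pointwise equation on $\oi{0}{\infty}$: on $\oi{0}{h}$ the integrand $\tau\mapsto f(\tau,u(\tau),u_0(\tau-h))$ is continuous (since $u$ is continuous and $u_0$ is continuous on $\ci{-h}{0}$), so $w'$ exists and is continuous there and $u'(t)=f(t,u(t),u(t-h))$; inductively, once $u$ is known to be $C^1$ on $\oi{0}{kh}$, the delayed argument $u(\cdot-h)$ is continuous (indeed $C^1$) on $\oi{kh}{(k+1)h}$, so the same argument upgrades $u$ to $C^1$ on $\oi{kh}{(k+1)h}$; this covers all of $\oi{0}{\infty}$. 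Uniqueness follows as in \prettyref{thm:PicLind_class}: any continuous solution $\hat u$ of \prettyref{eq:dde}, extended via $\hat w=\hat u-\tilde u_0$ (which is $0$ on $\loi{-\infty}{0}$), satisfies the same fixed-point identity $\1_{\loi{-\infty}{a}}\hat w=\1_{\loi{-\infty}{a}}\td{\nu}^{-1}F^\nu(\1_{\loi{-\infty}{a}}\hat w)$ for every $a$, and the contraction property forces $\hat w=w$, hence $\hat u=u$. The main obstacle I anticipate is purely bookkeeping: verifying that $F$ maps into $\bigcap_{\nu\geq\mu}\Lnu(\R;H)$ and is uniformly Lipschitz despite the shift $\tau_{-h}$ appearing inside $f$ — one must be careful that the nonlinearity is evaluated at $g$ \emph{and} at its shift, so the relevant Lipschitz constant is $L(1+\sup_{\nu\geq\mu}\e^{-h\nu})$, which is finite precisely because $h>0$; everything else is a routine adaptation of the already-proven Picard--Lindel\"of machinery.
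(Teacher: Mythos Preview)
Your approach is essentially identical to the paper's: absorb the initial history into a background function, define the corresponding uniformly Lipschitz $F$ (the paper writes the same $F$ using indicator functions rather than your extension $\tilde u_0$), apply \prettyref{thm:PicLind1}, and read off continuity, differentiability, and uniqueness via the fixed-point identity. One harmless slip: your extension $\tilde u_0$ does \emph{not} lie in $\Lnu(\R;H)$ for $\nu>0$ (the constant value $u_0(-h)$ on $\loi{-\infty}{-h}$ makes $\int\|\tilde u_0\|^2\e^{-2\nu t}\d t$ diverge at $-\infty$), but you never use this claim---only boundedness of $\tilde u_0$ and the cutoff $\1_{\roi{0}{\infty}}$ are needed to place $F(g)$ in $\Lnu$, so the argument goes through; also, your step-by-step induction for $C^1$-regularity is correct but unnecessary, since once $u$ is known to be continuous on $\roi{-h}{\infty}$ the integrand $f(t,u(t),u(t-h))$ is continuous on all of $\oi{0}{\infty}$ at once.
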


\begin{proof}
For $t<0$ let $f(t,\cdot,\cdot)\coloneqq 0$.
We define $F\colon S_{\rmc}(\R; H)\to\bigcap_{\nu\geq\mu}\Lnu(\R; H)$
by 
\begin{align*}
 & F(\phi)(t)\\
 & \coloneqq f\bigl(t,\phi(t)+\1_{\roi{0}{\infty}}(t)u_{0}(0),\phi(t-h)+\1_{\roi{0}{\infty}}(t-h)u_{0}(0)+\1_{\roi{0}{h}}(t)u_{0}(t-h)\bigr)
\end{align*}
for all $t\in\R$. It is easy to see that $F$ is uniformly Lipschitz continuous.
Thus, by \prettyref{thm:PicLind1}, we find $\eta\geq\mu$ such that
for all $\nu\geq\eta$ the equation 
\[
\td{\nu}v=F^{\nu}(v)
\]
admits a solution $v\in\bigcap_{\nu\geq\eta}\dom(\td{\nu})$ which is unique
in $\bigcup_{\nu\geq\eta}\Lnu(\R; H).$ Note that $\spt F^{\nu}(v)\subseteq\roi{0}{\infty}$.
Hence, $v=0$ on $\loi{-\infty}{0}.$ By \prettyref{thm:Sobolev_emb},
we obtain that $v(0)=0$. We claim that $u\coloneqq v+\1_{\roi{0}{\infty}}(\cdot)u_{0}(0)+\1_{\roi{-h}{0}}u_{0}$
is a solution of \prettyref{eq:dde}. First of all note that $u$
is continuous on $\roi{-h}{\infty}$. Next, for $h> t>0$ we have that $t-h< 0$
and thus $v(t-h)=0$ and so we see that
\begin{align*}
F^{\nu}(v)(t) & =f\left(t,v(t)+\1_{\roi{0}{\infty}}(t)u_{0}(0),v(t-h)+\1_{\roi{0}{\infty}}(t-h)u_{0}(0)+\1_{\roi{0}{h}}(t)u_{0}(t-h)\right)\\
 & =f(t,u(t),u_{0}(t-h)).
\end{align*}
Similarily, for $t\geq h$ we obtain
\[
F^{\nu}(v)(t)  =f(t,u(t),u(t-h))
\]
and thus, by continuity of $f$, $u_{0}$ and $u$, it follows that $v$ is
continuously differentiable on $\oi{0}{\infty}$ and 
\[
u'(t)=v'(t)=\td{\nu}v(t)=f(t,u(t),u(t-h)).
\]
It remains to show uniqueness. For this, let $w\colon\roi{-h}{\infty}\to H$
be a solution of \prettyref{eq:dde}. Then
\[
 w(t)=u_0(0)+\int_{0}^t f(s,w(s),w(s-h))\d s\quad (t\geq 0)
\]
and $w(t)=u_0(t)$ if $t\in \ci{-h}{0}$. We set  $\tilde{v}\coloneqq w-\1_{\roi{0}{\infty}}(\cdot)u_{0}(0)-\1_{\roi{-h}{0}}u_{0}$ and infer
\begin{align*}
 \tilde{v}(t)&= \int_{0}^t f(s,w(s),w(s-h))\d s\\ 
 &=\int_{-\infty}^t f\bigl(s,\tilde{v}(s)+\1_{\roi{0}{\infty}}(s)u_{0}(0),\\
 & \qquad\qquad\qquad\qquad\qquad\tilde{v}(s-h)+\1_{\roi{0}{\infty}}(s-h)u_{0}(0)+\1_{\roi{0}{h}}(s)u_{0}(s-h)\bigr)\d s
\end{align*}
for all $t\in \R$. For  $a\in\R$ we set $\tilde{v}_{a}\coloneqq\1_{\loi{-\infty}{a}}\tilde{v}\in\bigcap_{\nu\in\R}\Lnu(\R; H)$
and obtain, using the above formula for $\tilde{v}$,
\[
\tilde{v}_{a}=\1_{\loi{-\infty}{a}}\td{\nu}^{-1} F^\nu(\tilde{v}_a).
\]
 By uniqueness of the solution of 
\[
\1_{\loi{-\infty}{a}}v=\1_{\loi{-\infty}{a}}\td{\nu}^{-1}F^{\nu}\left(\1_{\loi{-\infty}{a}}v\right)
\]
it follows that $\tilde{v}_{a}=\1_{\loi{-\infty}{a}}v$ for all $a\in\R$
and, thus, $u=w$. 
\end{proof}
The equation to come involves the whole history of the unknown; that is, the unknown evaluated at $\loi{-\infty}{0}$.
For a mapping $u\colon\R\to H$ we define the mapping 
\[
u_{\left(\cdot\right)}\colon\R\ni t\mapsto\bigl(\Rle{0}\ni\theta\mapsto u(t+\theta)\in H\bigr).
\]

\begin{lem}
\label{lem:history}Let $\mu>0$. Then 
\begin{align*}
\Theta\colon S_{\rmc}(\R;H) & \to\bigcap_{\nu\geq\mu}\Lnu\bigl(\R;\L(\Rle{0};H)\bigr)\\
u & \mapsto u_{\left(\cdot\right)}
\end{align*}
is uniformly Lipschitz continuous. More precisely, for all $\nu>0$ we have
\[
\norm{\Theta^{\nu}}=\frac{1}{\sqrt{2\nu}}.
\]
\end{lem}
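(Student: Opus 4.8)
The statement to prove is Lemma~\ref{lem:history}: the history map $\Theta\colon u\mapsto u_{(\cdot)}$ is uniformly Lipschitz continuous as a map $S_{\rmc}(\R;H)\to\bigcap_{\nu\geq\mu}\Lnu(\R;\L(\Rle{0};H))$, with $\norm{\Theta^\nu}=1/\sqrt{2\nu}$ for $\nu>0$. Since $\Theta$ is linear, Lipschitz continuity of $\Theta^\nu$ is the same as boundedness, so the whole lemma reduces to computing the operator norm of $\Theta$ acting from $\Lnu(\R;H)$ into $\Lnu(\R;\L(\Rle{0};H))$, and then observing this is bounded uniformly for $\nu\geq\mu$ (indeed $1/\sqrt{2\nu}\leq 1/\sqrt{2\mu}$). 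The key computation is a Fubini/Tonelli argument.

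\textbf{Step 1: the norm estimate $\norm{\Theta^\nu}\leq 1/\sqrt{2\nu}$.} Let $u\in S_{\rmc}(\R;H)$ and $\nu>0$. For fixed $t\in\R$, the function $\Rle{0}\ni\theta\mapsto u(t+\theta)$ has $\L(\Rle{0};H)$-norm squared equal to $\int_{-\infty}^0\norm{u(t+\theta)}_H^2\d\theta$. Hence
\[
\norm{\Theta u}_{\Lnu(\R;\L(\Rle{0};H))}^2 = \int_\R\int_{-\infty}^0 \norm{u(t+\theta)}_H^2\d\theta\,\e^{-2\nu t}\d t.
\]
By Tonelli (the integrand is non-negative and measurable; here one uses that $u\in S_{\rmc}$ so everything is finite and the measurability is clear) I would swap the order of integration, substitute $s=t+\theta$ in the inner integral, and pull out the $\theta$-integral:
\[
\int_{-\infty}^0\int_\R\norm{u(t+\theta)}_H^2\e^{-2\nu t}\d t\,\d\theta = \int_{-\infty}^0\e^{2\nu\theta}\d\theta \int_\R\norm{u(s)}_H^2\e^{-2\nu s}\d s = \frac{1}{2\nu}\norm{u}_{\Lnu(\R;H)}^2.
\]
This gives $\norm{\Theta u}\leq \frac{1}{\sqrt{2\nu}}\norm{u}_{\Lnu(\R;H)}$, and in particular shows $\Theta u$ is genuinely $\L(\Rle{0};H)$-valued and lies in $\Lnu$; Bochner-measurability of $t\mapsto u_{(\cdot)}$ can be checked via the Pettis theorem (\prettyref{thm:Pettis}) much as in the proof of \prettyref{lem:convolution}, or more simply noted for $u\in S_{\rmc}$ directly. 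Since $1/\sqrt{2\nu}$ is decreasing in $\nu$, $\sup_{\nu\geq\mu}\norm{\Theta^\nu}_{\Lip}\leq 1/\sqrt{2\mu}<\infty$, which is exactly uniform Lipschitz continuity. Consistency of the extensions $\Theta^\nu$ across different $\nu$ is automatic because $\Theta$ is already defined pointwise on $S_{\rmc}$ and the extensions agree there.

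\textbf{Step 2: sharpness $\norm{\Theta^\nu}\geq 1/\sqrt{2\nu}$.} Here I would test with an approximate eigenfunction, mimicking the exponential-weight structure. A natural candidate is $u_n \coloneqq \1_{\ci{-n}{0}}\cdot x$ for a fixed unit vector $x\in H$, or a smooth variant; the point is that on a long interval the inner integral $\int_{-\infty}^0\norm{u(t+\theta)}_H^2\d\theta$ is, for most $t$ in the support, close to the "full" value it would have for $u$ supported on all of $\R_{\leq 0}$. Concretely, for $u_n=\1_{\ci{-n}{0}}x$ one computes $\norm{u_n}_{\Lnu(\R;H)}^2 = \norm{x}_H^2\int_{-n}^0\e^{-2\nu t}\d t = \norm{x}_H^2\frac{\e^{2\nu n}-1}{2\nu}$, while $\norm{\Theta u_n}^2 = \norm{x}_H^2\int_\R\int_{-\infty}^0\1_{\ci{-n}{0}}(t+\theta)\d\theta\,\e^{-2\nu t}\d t$; evaluating the inner integral as $\min\{n, (0-t)_+ \}$-type expression (more precisely $\lambda(\ci{-n}{0}\cap\ci{-\infty}{-t}) = \min\{n,-t\}$ for $-n\leq t\leq 0$, etc.) and carrying out the $t$-integral, the ratio $\norm{\Theta u_n}^2/\norm{u_n}_{\Lnu(\R;H)}^2$ tends to $1/(2\nu)$ as $n\to\infty$. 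This pins down the norm.

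\textbf{Main obstacle.} The estimate in Step 1 is essentially bookkeeping with Tonelli, so the only genuinely fiddly part is the sharpness in Step 2: one must carefully evaluate $\int_{-n}^0 \min\{n,-t\}\e^{-2\nu t}\d t$ (splitting at $t=-n$ — but on $\ci{-n}{0}$ one always has $-t\leq n$, so actually $\min\{n,-t\}=-t$, simplifying things) and then take the limit, keeping track that the dominant contribution to both numerator and denominator comes from $t$ near $-n$ where the weight $\e^{-2\nu t}$ is largest, which is precisely where $-t\approx n$ is "saturated" enough to realise the constant. An alternative that avoids the limit is to take $u$ with support in $\ci{-\infty}{0}$ and exponential growth matching the weight; but since $S_{\rmc}$ is the prescribed domain, the approximating-sequence route is cleaner. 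The Bochner-measurability point is routine and I would dispatch it quickly by reducing to simple functions or citing the Pettis-theorem argument already used for convolutions.
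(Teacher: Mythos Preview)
Your Step~1 computation is exactly the paper's proof, and it already establishes the \emph{equality}
\[
\norm{\Theta u}_{\Lnu(\R;\L(\Rle{0};H))}^2=\frac{1}{2\nu}\norm{u}_{\Lnu(\R;H)}^2
\]
for every $u\in S_{\rmc}(\R;H)$, not merely the inequality you then state. Re-read your own displayed chain: Tonelli and the substitution $s=t+\theta$ are both exact, so you have shown that $\Theta$ scales the $\Lnu$-norm by precisely $1/\sqrt{2\nu}$ on every element of $S_{\rmc}(\R;H)$. The operator norm is therefore $1/\sqrt{2\nu}$ on the nose, and Step~2 is entirely superfluous --- there is nothing to sharpen. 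The paper's proof consists of your Step~1 alone, written as the equality it is, and stops there.
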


\begin{proof}
Let $u\in S_{\rmc}(\R;H)$. Then we compute 
\begin{align*}
\norm{\Theta u}_{\Lnu\bigl(\R;\L(\Rle{0};H)\bigr)}^{2} & =\int_{\R}\int_{\Rle{0}}\norm{u(t+\theta)}^{2}\d\theta\e^{-2\nu t}\d t
 =\int_{\R}\int_{\Rle{0}}\norm{u(t)}^{2}\e^{-2\nu(t-\theta)}\d\theta\d t\\
 & =\frac{1}{2\nu}\int_{\R}\norm{u(t)}^{2}\e^{-2\nu t}\d t.\tag*{{\qedhere}}
\end{align*}
\end{proof}
\begin{thm}
\label{thm:infinitedelay} Let $H$ be a Hilbert space, $\mu\in\R$ and let
$\Phi\colon S\bigl(\R;\L(\Rle{0};H)\bigr)\to\bigcap_{\nu\geq\mu}\Lnu(\R;H)$
be uniformly Lipschitz. Then, there exists $\eta>0$ such that for all
$\nu\geq\eta$ the equation 
\[
\td{\nu}u=\Phi^{\nu}(u_{\left(\cdot\right)})
\]
admits a solution $u\in\bigcap_{\nu\geq\eta}\dom(\td{\nu})$ unique in
$\bigcup_{\nu\geq\eta}\Lnu(\R;H).$ 
\end{thm}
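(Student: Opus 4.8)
The plan is to reduce the assertion to the Hilbert space Picard--Lindel\"of theorem \prettyref{thm:PicLind1} by composing $\Phi$ with the history operator $\Theta$ of \prettyref{lem:history}, exactly as \prettyref{thm:discretedelay} was deduced by composing with the shift operators. The one genuine complication here is that $\Theta$ does \emph{not} map $S_{\rmc}(\R;H)$ into the simple functions on which $\Phi$ is originally defined; so one first has to pass to the Lipschitz continuous extensions of $\Phi$ and check that they are mutually compatible.

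\textbf{Step 1: extending $\Phi$.} Set $\mu'\coloneqq\max\{\mu,1\}$ (so that $\mu'>0$, which is needed for \prettyref{lem:history}). By the definition of uniform Lipschitz continuity together with the density of $S_{\rmc}(\R;\L(\Rle{0};H))$ in each $\Lnu(\R;\L(\Rle{0};H))$, the map $\Phi$ extends uniquely to Lipschitz continuous maps $\Phi^{\nu}\colon\Lnu(\R;\L(\Rle{0};H))\to\Lnu(\R;H)$ for $\nu\geq\mu'$, with $L_{\Phi}\coloneqq\sup_{\nu\geq\mu'}\norm{\Phi^{\nu}}_{\Lip}<\infty$. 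Applying \prettyref{lem:indepCaus}\ref{lem:indepCaus:item:3} to $\Phi|_{S_{\rmc}(\R;\L(\Rle{0};H))}$ shows that these extensions agree on $\Lnu(\R;\L(\Rle{0};H))\cap\Lm{\mu'}(\R;\L(\Rle{0};H))$; in particular, since $\Theta u\in\bigcap_{\nu\geq\mu'}\Lnu(\R;\L(\Rle{0};H))$ for $u\in S_{\rmc}(\R;H)$ by \prettyref{lem:history}, the element $\Phi^{\nu}(\Theta u)$ is independent of $\nu\geq\mu'$ and lies in $\bigcap_{\nu\geq\mu'}\Lnu(\R;H)$. Hence $F\colon S_{\rmc}(\R;H)\to\bigcap_{\nu\geq\mu'}\Lnu(\R;H)$, $F(u)\coloneqq\Phi^{\mu'}(\Theta u)$, is well-defined.

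\textbf{Step 2: $F$ is uniformly Lipschitz.} For $u,v\in S_{\rmc}(\R;H)$ and $\nu\geq\mu'$, linearity of $\Theta$ and the norm identity of \prettyref{lem:history} give
\[
\norm{F(u)-F(v)}_{\Lnu(\R;H)}\leq\norm{\Phi^{\nu}}_{\Lip}\norm{\Theta(u-v)}_{\Lnu(\R;\L(\Rle{0};H))}=\frac{\norm{\Phi^{\nu}}_{\Lip}}{\sqrt{2\nu}}\norm{u-v}_{\Lnu(\R;H)},
\]
so the unique Lipschitz extensions $F^{\nu}$ satisfy $L\coloneqq\sup_{\nu\geq\mu'}\norm{F^{\nu}}_{\Lip}\leq L_{\Phi}/\sqrt{2\mu'}<\infty$. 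Moreover $F^{\nu}=\Phi^{\nu}\circ\Theta^{\nu}$ on $\Lnu(\R;H)$ (both sides are Lipschitz and agree on the dense set $S_{\rmc}(\R;H)$), and $\Theta^{\nu}u=u_{(\cdot)}$ for every $u\in\Lnu(\R;H)$, since $u\mapsto u_{(\cdot)}$ is a bounded linear map $\Lnu(\R;H)\to\Lnu(\R;\L(\Rle{0};H))$ extending $\Theta$ (the computation in the proof of \prettyref{lem:history} holds verbatim for arbitrary $u\in\Lnu(\R;H)$ by Tonelli's theorem).

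\textbf{Step 3: conclusion.} Apply \prettyref{thm:PicLind1} to $F$ with base exponent $\mu'$: choosing any $\eta>\max\{\mu',L\}$ (note $\eta>0$), for every $\nu\geq\eta$ the equation $\td{\nu}u_{\nu}=F^{\nu}(u_{\nu})$ has a unique solution $u_{\nu}\in\dom(\td{\nu})$, and $u_{\nu}=u_{\eta}$ for all $\nu\geq\eta$ by \prettyref{thm:PicLind1}\ref{thm:PicLind1:item:4}. Put $u\coloneqq u_{\eta}$; then $u\in\bigcap_{\nu\geq\eta}\dom(\td{\nu})$ and, for each such $\nu$, $\td{\nu}u=F^{\nu}(u)=\Phi^{\nu}(\Theta^{\nu}u)=\Phi^{\nu}(u_{(\cdot)})$, which is the desired solution. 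For uniqueness in $\bigcup_{\nu\geq\eta}\Lnu(\R;H)$: if $w\in\Lm{\rho}(\R;H)$ with $\rho\geq\eta$ solves $\td{\rho}w=\Phi^{\rho}(w_{(\cdot)})$, then $w\in\dom(\td{\rho})$ and $\td{\rho}w=F^{\rho}(w)$, so $w=u_{\rho}=u$ by the uniqueness part of \prettyref{thm:PicLind1}. The main obstacle is entirely Step 1--Step 2, i.e.\ the bookkeeping forced by the mismatch of domains ($\Theta$ landing outside the simple functions); once the identity $F^{\nu}=\Phi^{\nu}\circ\Theta^{\nu}$ and the consistency of the extensions are established, the statement is an immediate corollary of \prettyref{thm:PicLind1}.
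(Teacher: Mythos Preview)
Your proof is correct and follows precisely the approach the paper intends: the paper's entire proof reads ``This is another application of \prettyref{thm:PicLind1}'', meaning one composes $\Phi$ with the history operator $\Theta$ of \prettyref{lem:history} and invokes Picard--Lindel\"of. The domain-mismatch bookkeeping you carry out in Steps~1--2 (that $\Theta u$ is not simple, so one must pass through the consistent extensions $\Phi^{\nu}$ via \prettyref{lem:indepCaus}\ref{lem:indepCaus:item:3}) is genuinely needed and is tacitly left to the reader in the paper; your write-up makes this explicit and is the natural way to fill the gap.
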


\begin{proof}
This is another application of \prettyref{thm:PicLind1}. 
\end{proof}

\section{Comments}

In a way, the proof of \prettyref{thm:PicLind_class} is standard
PDE-theory in a nutshell; a solution theory for $L_{p}$-spaces is
used to deduce existence and uniqueness of solutions and a posteriori
regularity theory provides more information on the properties of the
solution.

Note that -- of course -- other proofs are available for the Picard--Lindelöf
theorem. We chose, however, to present this proof here in order to
provide a perspective on classical results. Furthermore, we mention
that in order to obtain unique existence for the solution, it suffices
to assume that $f$ satisfies a uniform Lipschitz condition with respect
to the second variable and that $f$ is measurable. Continuity
of $f$ is needed in order to obtain $C^{1}$-solutions.

A more detailed exposition and more examples of the theory applied
to delay differential equations can be found in \cite{KPSTW14_OD}
and -- in a Banach space setting -- \cite{PTW14_OD}.

There is also a way of dealing with delay differential equations by
expanding the state space the problem is formulated in. In this case,
it is possible to make use of the rich theory of $C_{0}$-semigroups.
We refer to \cite{Batkai2005} for this.

Causality is one of the main concepts for evolutionary equations. We have provided this notion for mappings defined on $\Lnu$-type spaces only. The situation becomes different, if one considers merely densely defined mappings. Then it is a priori unclear, whether for a Lipschitz continuous mapping the continuous extension is also causal. For this we refer to \prettyref{exer:causal_cont} below and to \cite{Jacob2000,W15_CB}, and \cite[Chapter 2]{W16_H} as well as to references mentioned there.

\section*{Exercises}
\addcontentsline{toc}{section}{Exercises}

\begin{xca}
\label{exer:weak_diff+cont}(a) Let $X$ be a Banach space, $u\colon\ci{a}{b}\to X$
continuous. Show that $v\colon\oi{a}{b}\to X$ given by 
\[
v(t)=\int_{a}^{t}u(\tau)\d\tau
\]
is continuously differentiable with $v'(t)=u(t)$.

(b) Let $H$ be a Hilbert space, and $\nu\in\R.$ Let $u\in\dom(\td{\nu})$
with $\td{\nu}u$ continuous. Show that $u$ is continuously differentiable
and $u'=\td{\nu}u$. 
\end{xca}

\begin{xca}\label{exer:vanish_at_inf}
Prove \prettyref{cor:vanish_at_inf}. 
\end{xca}

\begin{xca}\label{exer:12Hoelder}
Let $H$ be a Hilbert space.
Show that 
\[
\dom(\td{\nu})\hookrightarrow C_{\nu}^{1/2}(\R;H)\coloneqq\set{f\in C_{\nu}(\R;H)}{\e^{-\nu\cdot}f\text{\ is \ensuremath{\tfrac{1}{2}}-Hölder continuous}},
\]
where a function $g\colon\R\to H$ is said to be $\frac{1}{2}$\emph{-Hölder
continuous}, if 
\[
\sup_{\substack{s,t\in\R \\ t\neq s}}\frac{\norm{g(t)-g(s)}}{\abs{t-s}^{1/2}}<\infty.
\]
\end{xca}

\begin{xca}
\label{exer:ProjC}Let $H$ be a Hilbert space, $C\subseteq H$ closed
and convex. Show that the orthogonal projection, $P$, of $H$ onto $C$
defines a Lipschitz continuous mapping with Lipschitz semi-norm bounded
by $1$. 
\end{xca}

\begin{xca}
Let $h\colon\R\times\Rle{0}\times\R^{n}\to\R^{n}$ be continuous satisfying
\[
\norm{h(t,s,x)-h(t,s,y)}\leq L\norm{x-y}
\]
with $h(\cdot,\cdot,0)=0$. Let $R>0$ and $u_{0}\in C(\Rle{0};\R^{n})$ have
compact support. Show that the initial value problem 
\[
\begin{cases}
u'(t)=\int_{-R}^{0}h(t,s,u_{\left(t\right)}(s))\d s & (t>0),\\
u(t)=u_{0}(t) & (t\leq0)
\end{cases}
\]
admits a unique continuous solution $u\colon\R\to\R^{n}$, which is
continuously differentiable on $\Rg{0}$.

Hint: Modify $\Theta$ from \prettyref{lem:history}. 
\end{xca}

\begin{xca}
Let $H$ be a Hilbert space.
Show that for a uniformly Lipschitz continuous $\Phi\colon S\bigl(\R;\L(\Rle{0};H)^{2}\bigr)\to\bigcap_{\nu\geq\mu}\Lnu(\R;H)$
the equation 
\[
\td{\nu}u=\Phi^{\nu}\left(u_{(\cdot)},\left(\td{\nu}u\right)_{\left(\cdot\right)}\right)
\]
admits a unique solution $u\in\dom(\td{\nu})$ for $\nu$ large enough. 
\end{xca}

\begin{xca}\label{exer:causal_cont}
Let $D\subseteq\L(\R)$ be dense and suppose that $F\colon D\subseteq\L(\R)\to\L(\R)$
admits a Lipschitz continuous extension $F^{0}$.

\begin{enumerate}
\item Show that $F^{0}$ is causal if and only if for all $\phi\in S_{\rmc}(\R;\R)$ and all $a\in\R$ there exists $L\geq 0$ such that
\[\abs{\scp{\1_{\loi{-\infty}{a}}\cdot(F(f)-F(g))}{\phi}_{\L(\R)}} \leq L \norm{\1_{\loi{-\infty}{a}}\cdot(f-g)}_{L_{2}(\R)}\]
for all $f,g\in D$; that is,
the mapping 
\[
\left(D,\norm{\1_{\loi{-\infty}{a}}\cdot(\cdot-\cdot)}_{\L(\R)}\right)\ni f\mapsto F(f) \in \left(\L(\R),\abs{\scp{\1_{\loi{-\infty}{a}}\cdot(\cdot-\cdot)}{\phi}_{\L(\R)}}\right)\]
is Lipschitz continuous.
\item For $a\in\R$ let $\dom(F)\cap\dom(F\1_{\loi{-\infty}{a}})$
be dense in $\L(\R)$ and if $f,g\in D=\dom(F)$ and $f=g$ on $\loi{-\infty}{a}$ then also $F(f) = F(g)$ on $\loi{-\infty}{a}$. Show that $F^{0}$ is causal.
\item Assume that for all $f,g\in D$ that $f=g$ on $\loi{-\infty}{a}$
implies that $F(f)=F(g)$ on $\loi{-\infty}{a}$. Show that this is not sufficient for $F^0$ to be causal. 
Hint: Find a dense subspace $D=\dom(F)$ so that the first condition in (b) is not satisfied.
\end{enumerate}
\end{xca}

\printbibliography[heading=subbibliography]

\chapter{The Fourier--Laplace Transformation and Material Law Operators\label{chap:The-Fourier-Laplace-transformati}}

In this chapter we introduce the Fourier--Laplace transformation
and use it to define operator-valued functions of $\td{\nu}$; the
so-called material law operators. These operators will play a crucial
role when we deal with partial differential equations. In the equations
of classical mathematical physics, like the heat equation, wave equation
or Maxwell's equation, the involved material parameters, such as heat
conductivity or permeability of the underlying medium, are incorporated
within these operators and hence the name ``material law''. We start
our chapter by defining the Fourier transformation and proving Plancherel's
theorem in the Hilbert space-valued case, which states that the Fourier
transformation defines a unitary operator on $\L(\R;H)$.

Throughout, let $H$ be a complex Hilbert space.

\section{The Fourier Transformation}

We start by defining the Fourier transformation on $L_{1}(\R;H)$. 

\begin{defn*}
For $f\in L_{1}(\R;H)$ we define the \emph{\index{Fourier transform}Fourier transform $\hat{f}$ of $f$} by 
\[
\hat{f}(s)\coloneqq\frac{1}{\sqrt{2\pi}}\int_{\R}\e^{-\i st}f(t)\d t\quad(s\in\R).
\]
\end{defn*}

We also introduce 
\[
 \cb(\R;H)\index{C{b}(R;H)@$\cb(\R;H)$}\coloneqq \set{ f\colon \R\to H}{f\text{ continuous, bounded}}
\] endowed with the sup-norm, $\norm{\cdot}_\infty$.

\begin{lem}[\index{Lemma of Riemann--Lebesgue}Riemann--Lebesgue]
\label{lem:Riemann-Lebesgue} Let $f\in L_{1}(\R;H)$. Then $\hat{f}\in \cb(\R;H)$ and $\lim_{\abs{t}\to\infty} \bigl\|\hat{f}(t)\bigr\| = 0$.
Moreover, 
\[
\bigl\|\hat{f}\,\bigr\|_{\infty}\leq\frac{1}{\sqrt{2\pi}}\norm{f}_{1}.
\]
\end{lem}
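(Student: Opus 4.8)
The plan is to establish the three assertions of the Riemann--Lebesgue lemma in the natural order: first the uniform bound, then continuity, then the decay at infinity. First I would prove the estimate $\bigl\|\hat f\bigr\|_\infty \leq \frac{1}{\sqrt{2\pi}}\norm{f}_1$. This is immediate: for each fixed $s\in\R$ the function $t\mapsto \e^{-\i st}f(t)$ is Bochner-measurable (it is the product of the scalar measurable function $t\mapsto\e^{-\i st}$ with the Bochner-measurable $f$) and its norm is $\norm{f(t)}_H$, so it lies in $L_1(\R;H)$; then \prettyref{prop:Bochner-integral} gives
\[
\bigl\|\hat f(s)\bigr\|_H = \frac{1}{\sqrt{2\pi}}\Bigl\|\int_{\R}\e^{-\i st}f(t)\d t\Bigr\|_H \leq \frac{1}{\sqrt{2\pi}}\int_{\R}\norm{f(t)}_H\d t = \frac{1}{\sqrt{2\pi}}\norm{f}_1,
\]
and taking the supremum over $s$ yields the claim.

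Next I would prove that $\hat f$ is continuous. Fix $s\in\R$ and a sequence $s_n\to s$. Then $\e^{-\i s_n t}f(t)\to \e^{-\i s t}f(t)$ in $H$ for every $t\in\R$, and $\norm{\e^{-\i s_n t}f(t)}_H = \norm{f(t)}_H$ is an integrable dominating function independent of $n$. Hence the dominated convergence theorem for the Bochner integral (which follows from \prettyref{prop:Bochner-integral}, since $\norm{\int_\R g_n\d\lambda - \int_\R g\d\lambda}_H \leq \int_\R\norm{g_n - g}_H\d\lambda\to 0$ by the scalar dominated convergence theorem applied to the integrands $\norm{g_n-g}_H$) gives $\hat f(s_n)\to\hat f(s)$. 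Together with the bound above this shows $\hat f\in\cb(\R;H)$.

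For the decay $\lim_{\abs t\to\infty}\bigl\|\hat f(t)\bigr\|_H = 0$, the standard strategy is to prove it first on a dense subclass and then pass to the limit using the uniform bound. Concretely, for $f$ of the form $\1_{\ci{a}{b}}x$ with $a<b$ and $x\in H$ one computes $\hat f(s) = \frac{1}{\sqrt{2\pi}}\cdot\frac{\e^{-\i sa}-\e^{-\i sb}}{\i s}\,x$ for $s\neq 0$, whose norm is bounded by $\frac{2}{\sqrt{2\pi}\abs s}\norm{x}_H\to 0$ as $\abs s\to\infty$; by linearity the decay then holds for all $f$ in the linear span of such functions. By \prettyref{lem:dense sets} (with $\mathcal D$ the set of indicator functions of bounded intervals, which is total in $L_1(\R)$) this span is dense in $L_1(\R;H)$. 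Given a general $f\in L_1(\R;H)$ and $\varepsilon>0$, pick $g$ in this span with $\norm{f-g}_1 < \sqrt{2\pi}\,\varepsilon/2$; then $\bigl\|\hat f(s)\bigr\|_H \leq \bigl\|\widehat{f-g}(s)\bigr\|_H + \bigl\|\hat g(s)\bigr\|_H \leq \frac{1}{\sqrt{2\pi}}\norm{f-g}_1 + \bigl\|\hat g(s)\bigr\|_H < \varepsilon/2 + \bigl\|\hat g(s)\bigr\|_H$, and the last term is below $\varepsilon/2$ for $\abs s$ large enough; hence $\limsup_{\abs s\to\infty}\bigl\|\hat f(s)\bigr\|_H \leq \varepsilon$, and since $\varepsilon$ was arbitrary the decay follows.

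The only mildly delicate point is making sure that the measure-theoretic tools used in the scalar case (dominated convergence, the explicit Fourier transform of an indicator, the density of step functions) transfer cleanly to the $H$-valued setting; but each of these reduces to a scalar statement about $\norm{\cdot}_H$ of the integrand together with \prettyref{prop:Bochner-integral} and \prettyref{prop:interchange_integral}, so there is no real obstacle — the proof is essentially the classical one with Bochner integrals in place of Lebesgue integrals.
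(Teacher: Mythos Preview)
Your argument is correct and follows the same overall architecture as the paper's proof: establish the sup-norm bound, get continuity from dominated convergence, verify the decay on a dense subclass, and then push the decay to all of $L_1(\R;H)$ using the bound. The one genuine difference is the choice of dense subclass: the paper uses $\cco(\R;H)$ and gets the decay via integration by parts, $\hat\varphi(s)=\frac{1}{\i s}\widehat{\varphi'}(s)$, whereas you use step functions $\1_{\ci{a}{b}}x$ and compute the transform explicitly. Both are standard; your route avoids any differentiation machinery, while the paper's route yields an explicit $O(1/\abs{s})$ rate on the dense class and reuses a density fact (\prettyref{lem:dense sets}) that is already tailored to the $\L$ setting used later.

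One small citation slip: \prettyref{lem:dense sets} is stated for $\L(\mu;H)$, not $L_1(\mu;H)$. The density you need follows directly from \prettyref{lem:simple_fcts_dense} (simple functions are dense in $L_1(\R;H)$) together with the elementary fact that indicators of intervals approximate indicators of finite-measure sets in $L_1(\R)$, so the argument stands.
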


\begin{proof}
First, note that $\hat{f}$ is continuous by dominated convergence
and bounded with 
\[
\bigl\|\hat{f}\,\bigr\|_{\infty}\leq\frac{1}{\sqrt{2\pi}}\norm{f}_{1}.
\]
This shows that the mapping 
\begin{align}
L_{1}(\R;H) & \to \cb(\R;H),\quad f \mapsto\hat{f}\label{eq:fourier_on_L1} 
\end{align}
defines a bounded linear operator. Moreover, for $\varphi\in \cco(\R;H)$ we compute
\begin{align*}
\hat{\varphi}(s) & =\frac{1}{\sqrt{2\pi}}\int_{\R}\e^{-\i st}\varphi(t)\d t
 =\frac{1}{\sqrt{2\pi}}\frac{1}{\i s}\int_{\R}\e^{-\i st}\varphi'(t)\d t
\end{align*}
for $s\ne0$ and thus, 
\[
\limsup_{\abs{s}\to\infty}\norm{\hat{\varphi}(s)}\leq\limsup_{\abs{s}\to\infty}\frac{1}{\abs{s}}\frac{1}{\sqrt{2\pi}}\norm{\varphi'}_{1}=0,
\]
which shows that $\lim_{\abs{s}\to\infty}\norm{\hat{\varphi}(s)}=0$. By the facts that
$\cco(\R;H)$ is dense in $L_{1}(\R;H)$ (see \prettyref{lem:dense sets}),
$\set{f\in \cb(\R;H)}{\lim_{\abs{t}\to\infty}\norm{f(t)}=0}$ is a closed subspace of $\cb(\R;H)$ and the operator
in \prettyref{eq:fourier_on_L1} is bounded, the assertion follows. 
\end{proof}
It is our main goal to extend the definition of the Fourier transformation
to functions in $\L(\R;H)$. For doing so, we make use of the Schwartz space
of rapidly decreasing functions. 
\begin{defn*}
We define 
\[
\mathcal{S}(\R;H)\coloneqq\set{f\in C^{\infty}(\R;H)}{\forall n,k\in\N:\,\bigl(t\mapsto t^{k}f^{(n)}(t)\bigr)\in \cb(\R;H)}
\]
to be the \emph{\index{Schwartz space}Schwartz space} of rapidly decreasing functions on $\R$ with values
in $H$. 
\end{defn*}
As usual we abbreviate $\mathcal{S}(\R)\coloneqq \mathcal{S}(\R;\K)$.
\begin{rem}
$\mathcal{S}(\R;H)$ is a Fréchet space with respect to the seminorms
\[
\mathcal{S}(\R;H)\ni f\mapsto\sup_{t\in\R}\norm{t^{k}f^{(n)}(t)}\quad(k,n\in\N).
\]
Moreover, $\mathcal{S}(\R;H)\subseteq\bigcap_{p\in\ci{1}{\infty}}L_{p}(\R;H)$.
Indeed, $\mathcal{S}(\R;H)\subseteq L_{\infty}(\R;H)$ by definition,
and for $1\leq p<\infty$ we have that 
\begin{align*}
\int_{\R}\norm{f(t)}^{p}\d t & =\int_{\R}\frac{1}{\left(1+\abs{t}\right)^{2p}}\norm{(1+\abs{t})^{2}f(t)}^{p}\d t\\
 & \leq\sup_{t\in\R}\norm{(1+\abs{t})^{2}f(t)}^{p}\:\int_{\R}\frac{1}{(1+\abs{t})^{2p}}\d t<\infty.
\end{align*}
\end{rem}

\begin{prop}
\label{prop:Fourier-Schwartz-space} For $f\in\mathcal{S}(\R;H)$
we have $\hat{f}\in\mathcal{S}(\R;H)$ and the mapping 
\[
\mathcal{S}(\R;H)\to\mathcal{S}(\R;H),\quad f\mapsto\hat{f}
\]
is bijective. Moreover, for $f,g\in L_{1}(\R;H)$ we have that 
\begin{equation}
\int_{\R}\scp{\hat{f}(t)}{g(t)}\d t=\int_{\R}\scp{f(t)}{\hat{g}(-t)}\d t.\label{eq:Fourier-adjoint}
\end{equation}
Additionally,  if $f,\hat{f}\in L_{1}(\R;H)$ then 
\begin{equation}
f(t)=\hat{\hat{f\,}}(-t)\quad(t\in\R).\label{eq:Fourier-inversion}
\end{equation}
\end{prop}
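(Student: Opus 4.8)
The plan is to establish, in this order: that $f\mapsto\hat f$ maps $\mathcal{S}(\R;H)$ into itself, the multiplication formula \prettyref{eq:Fourier-adjoint}, the inversion formula \prettyref{eq:Fourier-inversion}, and finally the bijectivity of $f\mapsto\hat f$ on $\mathcal{S}(\R;H)$ (a short corollary of the inversion formula). For the first point, fix $f\in\mathcal{S}(\R;H)$. Differentiating under the integral sign, which is legitimate because $t\mapsto t^{n}f(t)$ is Bochner-integrable for every $n$, gives $(\hat f)^{(n)}=\widehat{g_{n}}$ with $g_{n}(t)=(-\i t)^{n}f(t)$; integrating by parts, with boundary terms vanishing since $f$ and all its derivatives decay at infinity, gives $\widehat{\varphi^{(k)}}(s)=(\i s)^{k}\hat\varphi(s)$ for $\varphi\in\mathcal{S}(\R;H)$. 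Combining the two yields $s^{k}(\hat f)^{(n)}(s)=\i^{-k}\widehat{g_{n}^{(k)}}(s)$ for all $k,n\in\N$, and since $g_{n}^{(k)}\in\mathcal{S}(\R;H)\subseteq L_{1}(\R;H)$, the Riemann--Lebesgue lemma (\prettyref{lem:Riemann-Lebesgue}) shows $s\mapsto s^{k}(\hat f)^{(n)}(s)$ lies in $\cb(\R;H)$. As $k,n$ are arbitrary, $\hat f\in\mathcal{S}(\R;H)$.

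For \prettyref{eq:Fourier-adjoint} I would note that for $f,g\in L_{1}(\R;H)$ the scalar function $(s,t)\mapsto\scp{f(s)}{g(t)}$ is measurable with $\abs{\scp{f(s)}{g(t)}}\leq\norm{f(s)}_{H}\norm{g(t)}_{H}$, so $(s,t)\mapsto\scp{f(s)}{g(t)}\in L_{1}(\R^{2})$ and Fubini's theorem applies. Expanding $\hat f$ and moving the scalars $\e^{-\i st}$ through the sesquilinear inner product — which is precisely the step that reflects the argument of $\hat g$ — and using \prettyref{prop:interchange_integral}(a) to commute $\scp{f(s)}{\cdot}$ with the Bochner integral, one computes
\begin{align*}
\int_{\R}\scp{\hat f(t)}{g(t)}\d t
&=\frac{1}{\sqrt{2\pi}}\int_{\R}\int_{\R}\e^{\i st}\scp{f(s)}{g(t)}\d t\,\d s\\
&=\frac{1}{\sqrt{2\pi}}\int_{\R}\scp{f(s)}{\int_{\R}\e^{\i st}g(t)\d t}\d s\\
&=\int_{\R}\scp{f(s)}{\hat g(-s)}\d s,
\end{align*}
since $\int_{\R}\e^{\i st}g(t)\d t=\sqrt{2\pi}\,\hat g(-s)$.

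For \prettyref{eq:Fourier-inversion} I would feed a rescaled Gaussian into \prettyref{eq:Fourier-adjoint}. Fixing $t_{0}\in\R$ and $v\in H$, put $g_{\varepsilon}(t)\coloneqq\tfrac{1}{\sqrt{2\pi}}\e^{-\varepsilon^{2}t^{2}/2}\e^{-\i t_{0}t}\,v\in L_{1}(\R;H)$; the classical Gaussian integral gives $\hat g_{\varepsilon}(-s)=\tfrac{1}{\sqrt{2\pi}\,\varepsilon}\e^{-(s-t_{0})^{2}/(2\varepsilon^{2})}\,v$, so $s\mapsto\hat g_{\varepsilon}(-s)$ is an $H$-valued approximate identity concentrating at $t_{0}$ as $\varepsilon\to0$. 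Substituting $g_{\varepsilon}$ into \prettyref{eq:Fourier-adjoint}, the right-hand side tends to $\scp{f(t_{0})}{v}$ — at every Lebesgue point of $\scp{f(\cdot)}{v}$, hence for a.e.\ $t_{0}$, and at every $t_{0}$ when $f$ is continuous — while the left-hand side, by dominated convergence using $\hat f\in L_{1}(\R;H)$, tends to $\tfrac{1}{\sqrt{2\pi}}\int_{\R}\e^{-\i st_{0}}\scp{\hat f(s)}{v}\d s=\scp{\hat{\hat f}(-t_{0})}{v}$. Letting $v$ run through a countable dense subset of the essential range of $f$ and $\hat{\hat f}$ — separable by Pettis' theorem, \prettyref{thm:Pettis} — gives $f(t_{0})=\hat{\hat f}(-t_{0})$ for a.e.\ $t_{0}$, and since $\hat{\hat f}$ is continuous by \prettyref{lem:Riemann-Lebesgue}, this pins down the continuous representative of $f$ and proves \prettyref{eq:Fourier-inversion}. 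Restricting to $f\in\mathcal{S}(\R;H)$, the inversion formula makes $f\mapsto\hat f$ injective there; it is surjective too, since for $g\in\mathcal{S}(\R;H)$ the function $f\coloneqq\bigl(t\mapsto\hat g(-t)\bigr)$ again lies in $\mathcal{S}(\R;H)$ and a change of variables together with \prettyref{eq:Fourier-inversion} for $g$ gives $\hat f=g$.

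The main obstacle will be the inversion formula: one has to carry the Gaussian regularisation and the approximate-identity limit carefully through the Bochner integral — reducing the vector-valued assertions to scalar ones via \prettyref{prop:interchange_integral} and \prettyref{thm:Pettis} — and to keep the sesquilinearity bookkeeping straight, so that the reflection $s\mapsto-s$ ends up on the correct side of the identity (it is easy to produce $\hat g(s)$ instead of $\hat g(-s)$ by a sign slip). The $\mathcal{S}(\R;H)$-invariance and the multiplication formula, by contrast, are routine once the two commutation rules and Fubini's theorem are available.
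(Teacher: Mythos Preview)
Your proposal is correct and follows essentially the same route as the paper: $\mathcal{S}$-invariance via the commutation rules $\widehat{f'}=(\i s)\hat f$ and $(\hat f)'=\widehat{(-\i t)f}$, the multiplication formula \eqref{eq:Fourier-adjoint} via Fubini, the inversion formula \eqref{eq:Fourier-inversion} via Gaussian regularisation, and bijectivity as an immediate consequence. The only technical difference lies in how the limit in the inversion formula is taken: the paper inserts the scaled Gaussian $\gamma(a\cdot)$ and shows $S_af\to f\norm{\gamma}_1$ in $L_1(\R;H)$, then passes to an a.e.\ convergent subsequence; you instead test against a fixed $v\in H$ from the outset, reducing to scalar approximate-identity convergence at Lebesgue points of $\scp{f(\cdot)}{v}$, and then recover the vector-valued equality by letting $v$ range over a countable dense set in the separable span of the ranges. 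Both executions are standard and equivalent here.
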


\begin{proof}
Let $f\in \mathcal{S}(\R;H)$.
By \prettyref{exer:derivative-under-integral} we have 
\begin{equation}
\hat{f}\:'(s)=\frac{1}{\sqrt{2\pi}}\int_{\R}(-\i t)\e^{-\i st}f(t)\d t=-\i\hat{\bigl(t\mapsto tf(t)\bigr)}(s)\quad(s\in\R)\label{eq:derivative-of-fourier}
\end{equation}
and 
\begin{equation}
s\hat{f}(s)=\frac{\i}{\sqrt{2\pi}}\intop_{\R}\left(-\i s\right)\e^{-\i st}f(t)\d t=-\i\hat{f'}(s)\quad(s\in\R).\label{eq:fourier-of-derivative}
\end{equation}
Using these formulas, one can show that $\hat{f}\in\mathcal{S}(\R;H).$
Since the bijectivity of the Fourier transformation on $\mathcal{S}(\R;H)$
would follow from \prettyref{eq:Fourier-inversion}, it suffices to
prove the formulas \prettyref{eq:Fourier-adjoint} and \prettyref{eq:Fourier-inversion}.
Let $f,g\in L_{1}(\R;H).$ Then we compute using \prettyref{prop:interchange_integral}
and Fubini's theorem 
\begin{align*}
\int_{\R}\scp{\hat{f}(t)}{g(t)}\d t & =\int_{\R}\frac{1}{\sqrt{2\pi}}\scp{\int_{\R}\e^{-\i st}f(s)\d s}{g(t)}\d t\\
 & =\int_{\R}\int_{\R}\frac{1}{\sqrt{2\pi}}\e^{\i st}\scp{f(s)}{g(t)}\d s\d t\\
 & =\int_{\R}\scp{f(s)}{\frac{1}{\sqrt{2\pi}}\int_{\R}\e^{\i st}g(t)\d t}\d s\\
 & =\int_{\R}\scp{f(s)}{\hat{g}(-s)}\d s,
\end{align*}
which yields \prettyref{eq:Fourier-adjoint}. For proving formula \prettyref{eq:Fourier-inversion},
we consider the function $\gamma$ defined by $\gamma(t)\coloneqq\e^{-\frac{t^{2}}{2}}$ for $t\in\R$.
Clearly, $\gamma\in\mathcal{S}(\R)$. We claim that $\hat{\gamma}=\gamma$.
Indeed, we observe that $\gamma$ solves the initial value problem
$y'+ty=0$ subject to $y(0)=1$; if we can show that $\hat{\gamma}$
solves the same initial value problem, then their equality would follow from the uniqueness
of the solution. First, we observe that $\hat{\gamma}(0)=\frac{1}{\sqrt{2\pi}}\int_{\R}\e^{-\frac{t^{2}}{2}}\d t=1.$
Second, we compute using the formulas \prettyref{eq:derivative-of-fourier}
and \prettyref{eq:fourier-of-derivative} that
\[
\hat{\gamma}'(s)=-\i\hat{\bigl(t\mapsto t\gamma(t)\bigr)}(s)=\i\hat{\gamma'}(s)=-s\hat{\gamma}(s)\quad(s\in\R).
\]
Altogether, we have shown that $\hat{\gamma}$ solves the same initial value problem
as $\gamma$ and hence, $\hat{\gamma}=\gamma$. Let now $f\in L_{1}(\R;H)$
with $\hat{f}\in L_{1}(\R;H),$ $a>0$ and $x\in H$. Then we compute using \prettyref{eq:Fourier-adjoint}
\begin{align*}
\scp{\int_{\R}\hat{f}(t)\gamma(at)\e^{\i st}\d t}{x} & =\int_{\R}\scp{\hat{f}(t)}{\gamma(at)x\e^{-\i st}}\d t
 =\int_{\R}\scp{f(t)}{\hat{\bigl(\gamma(a\cdot)x\e^{-\i s(\cdot)}\bigr)}(-t)}\d t\\
 & =\int_{\R}\scp{f(t)}{\frac{1}{\sqrt{2\pi}}\int_{\R}\gamma(ar)x\e^{-\i sr}\e^{\i tr}\d r}\d t\\
 & =\frac{1}{a}\int_{\R}\scp{f(t)}{\hat{\gamma}\left(\frac{s-t}{a}\right)x}\d t
 =\frac{1}{a}\int_{\R}\scp{f(t)}{\gamma\left(\frac{s-t}{a}\right)x}\d t\\
 & =\int_{\R}\scp{f(s-at)}{\gamma\left(t\right)x}\d t
 =\scp{\int_{\R}f(s-at)\gamma\left(t\right)\d t}{x}
\end{align*}
for each $s\in\R$. Since this holds for all $x\in H$ we get 
\[
\int_{\R}\hat{f}(t)\gamma(at)\e^{\i st}\d t=\int_{\R}f(s-at)\gamma\left(t\right)\d t\quad(s\in\R).
\]
Letting $a\to0$ in the latter equality, we obtain 
\begin{equation}
\int_{\R}\hat{f}(t)\e^{\i st}\d t=\lim_{a\to0}\int_{\R}f(s-at)\gamma\left(t\right)\d t \quad(s\in\R),\label{eq:limit_gamma}
\end{equation}
where we have used dominated convergence for the term on the left-hand
side. In order to compute the limit on the right-hand side, we first
observe that 
\[
\int_{\R}\norm{\int_{\R}f(s-at)\gamma\left(t\right)\d t}\d s\leq\int_{\R}\int_{\R}\norm{f(s-at)}\d s\:\gamma(t)\d t=\norm{f}_{1}\norm{\gamma}_{1},
\]
and hence, for each $a>0$ the operator
\begin{align*}
S_{a}\from L_{1}(\R;H)& \to L_{1}(\R;H),\\
f & \mapsto \left(s\mapsto\int_{\R}f(s-at)\gamma\left(t\right)\d t\right)
\end{align*}
is bounded by $\norm{\gamma}_{1}$. Moreover, since $S_{a}\psi\to\psi(\cdot)\norm{\gamma}_{1}$
as $a\to0$ for $\psi\in \cc(\R;H)$, we infer that 
\[
S_{a}f\to f(\cdot)\norm{\gamma}_{1}\quad(a\to0)
\]
for each $f\in L_{1}(\R;H)$. Hence, passing to a suitable sequence
$\left(a_{n}\right)_{n}$ in $\R_{>0}$ tending to $0$, we get 
\[
\lim_{n\to\infty}\left(S_{a_{n}}f\right)(s)\to f(s)\norm{\gamma}_{1}\quad(\text{a.e. } s\in\R).
\]
Using this identity for the right-hand side of \prettyref{eq:limit_gamma},
we get 
\[
\int_{\R}\hat{f}(t)\e^{\i st}\d t=f(s)\norm{\gamma}_{1}\quad(\text{a.e. } s\in\R),
\]
and since $\norm{\gamma}_{1}=\sqrt{2\pi}$, we derive \prettyref{eq:Fourier-inversion}. 
\end{proof}
With these preparations at hand, we are now able to prove the main
theorem of this section. 
\begin{thm}[Plancherel\index{Theorem of Plancherel}]
\label{thm:Plancherel} The mapping 
\[
\mathcal{F}\from \mathcal{S}(\R;H)\subseteq\L(\R;H)\to\L(\R;H),\:f\mapsto\hat{f}
\]
extends to a unitary operator on $\L(\R;H)$, again denoted by $\mathcal{F}$,
the \emph{\index{Fourier transformation}Fourier transformation. }Moreover, $\mathcal{F}^{\ast}=\mathcal{F}^{-1}$
is given by $f\mapsto\hat{f}(-\cdot)$. 
\end{thm}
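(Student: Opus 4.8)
The plan is to run the standard density argument, using the Schwartz space $\mathcal{S}(\R;H)$ as a common dense core. First I would record that $\mathcal{S}(\R;H)$ is dense in $\L(\R;H)$; this follows since $\cco(\R;H)\subseteq\mathcal{S}(\R;H)$ and $\cco(\R;H)$ is dense in $\L(\R;H)$ (by \prettyref{lem:dense sets} together with \prettyref{exer:C_cinfty dense}, noting $\cci(\R;H)\subseteq\cco(\R;H)$). Next, the key computation is that $\mathcal{F}$ is an isometry on $\mathcal{S}(\R;H)$: for $f\in\mathcal{S}(\R;H)$ we have $f,\hat f\in L_1(\R;H)\cap\L(\R;H)$ by \prettyref{prop:Fourier-Schwartz-space} and the remark preceding it, so the adjoint formula \prettyref{eq:Fourier-adjoint} applies. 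Concretely, apply \prettyref{eq:Fourier-adjoint} with $g$ replaced by $\hat f$ and use the inversion formula \prettyref{eq:Fourier-inversion} in the form $\hat{\hat f}(-\cdot)=f$, together with the fact that complex conjugation intertwines appropriately with the transform, to obtain
\[
\int_{\R}\norm{\hat f(t)}^2\d t=\int_{\R}\scp{\hat f(t)}{\hat f(t)}\d t=\int_{\R}\scp{f(t)}{f(t)}\d t=\int_{\R}\norm{f(t)}^2\d t.
\]
(One has to be a little careful with the conjugate in the first slot of the scalar product; the clean way is to note $\overline{\hat{g}}(s)=\widehat{\bar g(-\cdot)}(s)$ for scalar-valued functions, or equivalently to test against $x\in H$ and reduce to the scalar identity $\scp{\hat f}{g}=\scp{f}{\hat g(-\cdot)}$ applied twice.)

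From isometry on a dense subspace, $\mathcal{F}$ extends by uniform continuity (using \prettyref{cor:uniquecont}) to a bounded linear operator on $\L(\R;H)$, still an isometry, which I continue to denote $\mathcal{F}$. To show it is unitary, it remains to show surjectivity, equivalently that the range is dense (it is automatically closed, being the isometric image of a complete space). But $\mathcal{F}\bigl[\mathcal{S}(\R;H)\bigr]=\mathcal{S}(\R;H)$ by the bijectivity statement in \prettyref{prop:Fourier-Schwartz-space}, and $\mathcal{S}(\R;H)$ is dense, so $\ran(\mathcal{F})$ is dense; hence $\mathcal{F}$ is onto and therefore unitary, so $\mathcal{F}^*=\mathcal{F}^{-1}$.

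Finally, to identify $\mathcal{F}^{-1}$, introduce the reflection operator $(Rf)(t)\coloneqq f(-t)$, which is clearly unitary on $\L(\R;H)$, and observe from \prettyref{eq:Fourier-inversion} that $\mathcal{F}(\mathcal{F} f)=Rf$, i.e.\ $\mathcal{F}^2=R$, for $f\in\mathcal{S}(\R;H)$; by density this holds on all of $\L(\R;H)$. Hence $\mathcal{F}^{-1}=\mathcal{F}^{-1}\mathcal{F}^2 R^{-1}\cdot R=\ldots$; more directly, $\mathcal{F}^{-1}=R\mathcal{F}$ since $R\mathcal{F}\cdot\mathcal{F}=R\mathcal{F}^2=R^2=\id$ and $\mathcal{F}\cdot R\mathcal{F}=\mathcal{F}^2R^{-1}\ldots$ — the cleanest route is: from $\mathcal{F}^2=R$ and $R^2=\id$ we get $\mathcal{F}(R\mathcal{F})=R\cdot(R\mathcal{F}^{-1})\ldots$; to avoid the juggling I would simply note $\mathcal{F}^{-1}=\mathcal{F}^*$ from unitarity, and compute $\mathcal{F}^*$ on Schwartz functions directly from \prettyref{eq:Fourier-adjoint}: for $f,g\in\mathcal{S}(\R;H)$, $\scp{\mathcal{F} f}{g}=\scp{f}{\hat g(-\cdot)}$, which says exactly that $\mathcal{F}^* g=\hat g(-\cdot)=(R\mathcal{F} g)$ on the dense set $\mathcal{S}(\R;H)$, and hence everywhere by continuity. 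This is the claimed formula.

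\textbf{Main obstacle.} The only genuinely delicate point is the isometry computation: \prettyref{eq:Fourier-adjoint} as stated involves a Hilbert-space inner product which is conjugate-linear in the first slot, so one cannot naively substitute $g=\hat f$ and read off $\norm{\hat f}^2$; one must handle the conjugation, e.g.\ by reducing to the scalar case via pairing with fixed $x\in H$, or by first establishing $\overline{(Rg)}\,\widehat{\phantom{x}}$-type identities. Everything else is routine density-and-continuity bookkeeping, for which \prettyref{cor:uniquecont} does the work.
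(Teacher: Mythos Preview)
Your approach is essentially the same as the paper's, and it is correct. The ``main obstacle'' you flag, however, is illusory: formula \prettyref{eq:Fourier-adjoint} is already stated for $H$-valued functions with the inner product conjugate-linear in the first slot, so the substitution $g\coloneqq\hat f$ (or, as the paper does, $g\coloneqq\hat g$) goes through directly. You obtain
\[
\scp{\hat f}{\hat f}_2=\int_{\R}\scp{\hat f(t)}{\hat f(t)}\d t=\int_{\R}\scp{f(t)}{\hat{\hat f}(-t)}\d t=\int_{\R}\scp{f(t)}{f(t)}\d t=\scp{f}{f}_2
\]
using \prettyref{eq:Fourier-inversion} in the second slot, which is linear; no conjugation juggling is needed and no reduction to the scalar case is required. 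The paper records the slightly stronger polarised identity $\scp{\hat f}{\hat g}_2=\scp{f}{g}_2$ by the same one-line substitution. For the extension step the paper invokes \prettyref{exer:unitary_extension} (isometry with dense domain and dense range extends to a unitary), which packages your density-plus-closed-range argument. Your identification of $\mathcal{F}^{-1}$ via $\scp{\mathcal{F}f}{g}=\scp{f}{\hat g(-\cdot)}$ on $\mathcal{S}(\R;H)$ is exactly the paper's route; the detour through $\mathcal{F}^2=R$ can be dropped.
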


\begin{proof}
Using \prettyref{eq:Fourier-adjoint} and \prettyref{eq:Fourier-inversion}
we obtain that 
\begin{align*}
\scp{\hat{f}}{\hat{g}}_{2} & =\int_{\R}\scp{\hat{f}(t)}{\hat{g}(t)}\d t
 =\int_{\R}\scp{f(t)}{\hat{\hat{g\,}}(-t)}\d t
 =\int_{\R}\scp{f(t)}{g(t)}\d t
 =\scp{f}{g}_{2}
\end{align*}
for all $f,g\in\mathcal{S}(\R;H)$ and thus, in particular, 
\begin{equation}
\norm{f}_{2}=\norm{\mathcal{F}f}_{2}.\label{eq:isometry}
\end{equation}
Moreover, $\dom(\mathcal{F})=\ran(\mathcal{F})=\mathcal{S}(\R;H)$
is dense in $\L(\R;H)$ and hence, the first assertion follows by \prettyref{exer:unitary_extension}. As $\mathcal{F}$ is unitary, we have $\mathcal{F}^*=\mathcal{F}^{-1}$, thus, by \prettyref{eq:Fourier-adjoint} applied to $f,g\in \mathcal{S}(\R;H)$, we read off (using \prettyref{prop:adjoint_core}) that $\mathcal{F}^{-1}=(f\mapsto \hat{f}(-\cdot))$, which yields all the claims of the theorem at hand.
\end{proof}
\begin{rem}
We emphasise that for $f\in\L(\R;H)$ the Fourier transform $\mathcal{F}f$
is not given by the integral expression for $L_{1}$-functions, simply
because the integral does not need to exist. However, by dominated
convergence 
\[
\mathcal{F}f=\lim_{R\to\infty}\frac{1}{\sqrt{2\pi}}\int_{-R}^{R}\e^{-\i t(\cdot)}f(t)\d t,
\]
where the limit is taken in $\L(\R;H).$ 
\end{rem}

\section{The Fourier--Laplace Transformation and its Relation to the Time
Derivative}

We now use the Fourier transformation to define an analogous transformation
on our exponentially weighted $\L$-type spaces; the so-called Fourier--Laplace
transformation. We recall from \prettyref{cor:nu=00003D0} that
for $\nu\in\R$ the mapping 
\[
\exp(-\nu\m)\from\Lnu(\R;H)\to\L(\R;H),\;f\mapsto\left(t\mapsto\e^{-\nu t}f(t)\right)
\]
is unitary. In a similar fashion, we obtain that
\[
\exp(-\nu\m)\from L_{1,\nu}(\R;H)\to L_{1}(\R;H),\;f\mapsto\left(t\mapsto\e^{-\nu t}f(t)\right)
\]
defines an isometry. 
\begin{defn*}
Let $\nu\in\R$. We define the \emph{\index{Fourier--Laplace transformation}Fourier--Laplace transformation} as
\[
\mathcal{L}_{\nu}\from \Lnu(\R;H)%\cup L_{1,\nu}(\R;H)
\to\L(\R;H),\:f\mapsto\mathcal{F}\exp(-\nu\m)f.
\]
\end{defn*}
We can also consider the Fourier--Laplace transformation as a mapping from $L_{1,\nu}(\R;H)$ to $\cb(\R;H)$; that is,
\[
\mathcal{L}_{\nu}\from L_{1,\nu}(\R;H)%\cup L_{1,\nu}(\R;H)
\to\cb(\R;H),\:f\mapsto\mathcal{F}\exp(-\nu\m)f.
\]

\begin{rem}
Note that $\mathcal{L}_{\nu}=\mathcal{F}\exp(-\nu\m)$ is unitary as an operator from $\Lnu(\R;H)$
to $\L(\R;H)$ since it is the composition of two unitary operators. For $\varphi\in \cci(\R;H)$, we have
the expression 
\[
\left(\mathcal{L}_{\nu}\varphi\right)(t)=\frac{1}{\sqrt{2\pi}}\int_{\R}\e^{-(\i t+\nu)s}\varphi(s)\d s\quad(t\in\R),
\]
which shows that $\mathcal{L}_{\nu}$ can be interpreted as a shifted
variant of the Fourier transformation, where the real part in the
exponent equals $\nu$ instead of zero. 
\end{rem}

Our next goal is to show that the Fourier--Laplace transformation provides
a spectral representation of our time derivative, $\td{\nu}$. 
% \begin{defn*}
% We define the \emph{\index{multiplication-by-the-argument
% operator, $\m$}multiplication-by-the-argument operator} $\m$ by 
% \[
% \m\from \dom(\m)\subseteq\L(\R;H)\to\L(\R;H),\:f\mapsto\bigl(t\mapsto tf(t)\bigr)
% \]
% with 
% \[
% \dom(\m)\coloneqq\set{f\in\L(\R;H)}{\bigl(t\mapsto tf(t)\bigr)\in\L(\R;H)}.
% \]
% \end{defn*}
% 
% \begin{rem}
% It follows from \prettyref{thm:mult1} that $\m$ is selfadjoint.
% Moreover, \prettyref{thm:mult2} shows that 
% \[
% \sigma(\m)=\R.
% \]
% \end{rem}

\begin{defn*}
Let $V\from \R\to \K$ be measurable.
We define the \emph{\index{multiplication-by-$V$ operator}multiplication-by-$V$ operator} as 
\[
V(\m)\from \dom(V(\m))\subseteq\L(\R;H)\to\L(\R;H),\:f\mapsto\bigl(t\mapsto V(t)f(t)\bigr)
\]
with 
\[
\dom(V(\m))\coloneqq\set{f\in\L(\R;H)}{\bigl(t\mapsto V(t)f(t)\bigr)\in\L(\R;H)}.
\]
In particular, if $V$ is the identity on $\R$ we will just write $\m$ instead of $\id(\m)$ and call it the \emph{\index{multiplication-by-the-argument
operator, $\m$}multiplication-by-the-argument operator}.
\end{defn*}

\begin{rem}
Note that the multiplication-by-$V$ operator is a vector-valued analogue of the multiplication operator seen in \prettyref{thm:mult1} and \prettyref{thm:mult2}. The statements in these theorems generalise (easily) to the vector-valued situation at hand. 
Thus, as in \prettyref{thm:mult1}, one shows that $\m$ is selfadjoint.
Moreover, when $H\neq\{0\}$, in a similar fashion to the arguments carried out in \prettyref{thm:mult2} one shows that 
\[
\sigma(\m)=\R.
\]
\end{rem}

In order to avoid trivial cases, we shall assume throughout that $H\neq\{0\}$.

\begin{thm}
\label{thm:spectral-repr-derivative}Let $\nu\in\R$. Then 
\[
\td{\nu}=\mathcal{L}_{\nu}^{\ast}(\i\m+\nu)\mathcal{L}_{\nu}.
\]
In particular, 
\[
\sigma(\td{\nu})=\set{\i t+\nu}{t\in\R}.
\]
\end{thm}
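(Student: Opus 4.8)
The plan is to establish the operator identity $\td{\nu}=\mathcal{L}_{\nu}^{\ast}(\i\m+\nu)\mathcal{L}_{\nu}$ first, and then read off the spectrum from it using the already-computed spectrum of the multiplication operator $\m$ on $\L(\R;H)$ together with the invariance of the spectrum under unitary conjugation.

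For the identity, I would argue on the core $\calDH=\lin\set{\varphi\cdot x}{\varphi\in \cci(\R),\,x\in H}$, which is a core for $\td{\nu}$ by \prettyref{prop:C_cinfty_core}. For $\varphi\in\cci(\R;H)$ I would compute $\mathcal{L}_{\nu}(\td{\nu}\varphi)=\mathcal{L}_{\nu}(\varphi')=\mathcal{F}\exp(-\nu\m)\varphi'$; using integration by parts in the defining integral of the Fourier transform (as in the derivation of \prettyref{eq:fourier-of-derivative}, but now with the extra exponential weight $\e^{-\nu s}$ so that differentiating $\e^{-\nu s}\varphi(s)$ produces $\varphi'(s)\e^{-\nu s}-\nu\varphi(s)\e^{-\nu s}$), one obtains that $\mathcal{F}$ turns $\exp(-\nu\m)\varphi'$ into $(\i t+\nu)$ times $\mathcal{F}\exp(-\nu\m)\varphi$, i.e. $\mathcal{L}_{\nu}\td{\nu}\varphi=(\i\m+\nu)\mathcal{L}_{\nu}\varphi$. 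Hence $\td{\nu}\varphi=\mathcal{L}_{\nu}^{\ast}(\i\m+\nu)\mathcal{L}_{\nu}\varphi$ for all $\varphi\in\calDH$. Since $\mathcal{L}_{\nu}$ is unitary and $(\i\m+\nu)$ is a closed operator, the operator $B\coloneqq\mathcal{L}_{\nu}^{\ast}(\i\m+\nu)\mathcal{L}_{\nu}$ is closed (by \prettyref{prop:inv-sum-comp-closed}), so $B\supseteq\overline{\td{\nu}|_{\calDH}}=\td{\nu}$. For the reverse inclusion $B\subseteq\td{\nu}$, I would use a symmetric argument: $\calDH$ lies in the domain of $B$ (since $\mathcal{L}_{\nu}$ maps $\calDH$ into the Schwartz space, which is in $\dom(\i\m+\nu)$), and $\calDH$ is a core for $B$ because $\mathcal{L}_{\nu}$ maps a core of $\i\m+\nu$ (e.g. $\mathcal{S}(\R;H)$, by the same reasoning as \prettyref{thm:mult1}) onto $\calDH$ up to density — more directly, $B$ and $\td{\nu}$ agree on $\calDH$ and both are closed with $\calDH$ a core for $\td{\nu}$, while $\td{\nu}\subseteq B$ forces $B$ to be a closed extension; one then checks $(\nu-B)$ is injective with dense range (which holds since $\i t+\nu-\nu=\i t$ vanishes only on a null set, so $\i\m$ is injective with dense range on $\L(\R;H)$) to conclude $B$ cannot be a proper extension of $\td{\nu}$, whose inverse $I_\nu$ is everywhere defined and bounded. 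Alternatively, and more cleanly: $\td{\nu}^{-1}=I_\nu\in\bo(\Lnu(\R;H))$, so $\td{\nu}$ is already maximal among operators with the given action, and any closed extension with bounded inverse equals it.

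Once the identity is in hand, the spectrum follows immediately: conjugation by the unitary $\mathcal{L}_{\nu}$ preserves the spectrum, so $\sigma(\td{\nu})=\sigma(\i\m+\nu)$. Writing $\i\m+\nu=\i(\m-\i\nu)$ — or simply noting $\i\m+\nu=V(\m)$ with $V(t)=\i t+\nu$ — the vector-valued analogue of \prettyref{thm:mult2} gives $\sigma(V(\m))=\essran V=\set{\i t+\nu}{t\in\R}$, since Lebesgue measure on $\R$ assigns positive measure to every nonempty open set and $V$ is a homeomorphism onto the line $\set{\i t+\nu}{t\in\R}$. This yields $\sigma(\td{\nu})=\set{\i t+\nu}{t\in\R}$.

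\textbf{Main obstacle.} The routine-looking but genuinely delicate point is pinning down that $B=\td{\nu}$ exactly (domains included), rather than merely $\td{\nu}\subseteq B$. The cleanest route is to exploit that $\td{\nu}^{-1}=I_\nu$ is bounded and globally defined together with $(\i\m+\nu)^{-1}$ being bounded and globally defined when $\nu\neq 0$ (so that both operators are boundedly invertible and the inverses are intertwined by $\mathcal{L}_{\nu}$, forcing equality), and then to handle $\nu=0$ separately via \prettyref{cor:nu=00003D0} and the corresponding unitary equivalence. Everything else — the integration by parts, the Schwartz-space mapping properties of $\mathcal{L}_{\nu}$, and the essential-range computation — is bookkeeping already licensed by the results quoted above.
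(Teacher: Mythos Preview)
Your proposal is correct, and in fact the ``cleanest route'' you identify in your obstacle discussion is precisely what the paper does. Rather than working with the unbounded operators and then wrestling with domain equality, the paper proves directly that the \emph{bounded inverses} coincide: for $\nu\neq 0$ it computes $(\mathcal{L}_{\nu}I_{\nu}\varphi)(t)=\frac{1}{\i t+\nu}(\mathcal{L}_{\nu}\varphi)(t)$ for $\varphi\in \cc(\R;H)$ via Fubini on the explicit integral kernel of $I_{\nu}$, hence $I_{\nu}=\mathcal{L}_{\nu}^{\ast}\bigl(\frac{1}{\i\m+\nu}\bigr)\mathcal{L}_{\nu}$ as bounded operators, and then inverts (invoking \prettyref{thm:mult1}); the case $\nu=0$ is handled separately via \prettyref{cor:nu=00003D0}. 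This bypasses the domain issue entirely, since equality of bounded operators on a dense set is immediate.

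Your primary route --- integration by parts on the core $\calDH$ --- does give $\td{\nu}\subseteq B$, but your arguments for the reverse inclusion are not all clean: the claim that $\mathcal{L}_{\nu}$ maps a core of $\i\m+\nu$ ``onto $\calDH$ up to density'' is not made precise, and the maximality remark needs unpacking. The argument that does work is the one you list last (if $\td{\nu}\subseteq B$, $\td{\nu}$ is surjective, and $B$ is injective, then $B=\td{\nu}$), and this is morally the inverse argument again. So both approaches land in the same place; the paper simply starts there. Your treatment of the spectrum via $\essran V$ and unitary invariance is correct and matches what the paper implicitly uses.
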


\begin{proof}
We first prove the assertion for $\nu\ne0$ and show that 
\[
I_{\nu}=\mathcal{L}_{\nu}^{\ast}\left(\frac{1}{\i\m+\nu}\right)\mathcal{L}_{\nu}.
\]
The assertion will then follow by \prettyref{thm:mult1}. Note that $\frac{1}{\i\m+\nu}\in L(\L(\R;H))$
by \prettyref{thm:mult1}, and hence, both operators $I_{\nu}$ and $\mathcal{L}_{\nu}^{\ast}(\tfrac{1}{\i\m+\nu})\mathcal{L}_{\nu}$ are bounded and
defined on the whole of $\Lnu(\R;H).$ Thus, it suffices to prove the
equality on a dense subset of $\Lnu(\R;H)$, like $\cc(\R;H).$
We will just do the computation for the case when $\nu>0$. So, let $\varphi\in \cc(\R;H)$
and compute 
\begin{align*}
\left(\mathcal{L}_{\nu}I_{\nu}\varphi\right)(t) & =\frac{1}{\sqrt{2\pi}}\int_{\R}\e^{-\left(\i t+\nu\right)s}\int_{-\infty}^{s}\varphi(r)\d r\d s
 =\frac{1}{\sqrt{2\pi}}\int_{\R}\int_{r}^{\infty}\e^{-\left(\i t+\nu\right)s}\d s\:\varphi(r)\d r\\
 & =\frac{1}{\sqrt{2\pi}}\frac{1}{\i t+\nu}\int_{\R}\e^{-(\i t+\nu)r}\varphi(r)\d r
 =\frac{1}{\i t+\nu}\left(\mathcal{L}_{\nu}\varphi\right)(t)
\end{align*}
for $t\in\R.$ For $\nu<0$ the computation is analogous. In the case when
$\nu=0$ we observe that 
\begin{align*}
\td{0} & =\exp(-\nu\m)(\td{\nu}-\nu)\exp(-\nu\m)^{-1}
 =\exp(-\nu\m)\mathcal{L}_{\nu}^{\ast}(\i\m+\nu-\nu)\mathcal{L}_{\nu}\exp(-\nu\m)^{-1}\\
 &  =\mathcal{L}_{0}^{\ast}(\i\m)\mathcal{L}_{0}.\tag*{\qedhere}
\end{align*}
\end{proof}

\section{Material Law Operators}

Using the multiplication operator representation of $\td{\nu}$ via
the Fourier--Laplace transformation, we can assign a functional calculus
to this operator. We will do this in the following and define operator-valued
functions of $\td{\nu}.$ The class of functions used for this calculus
are the so-called material laws. We begin by defining this function class. 
\begin{defn*}
A mapping $M\from\dom(M)\subseteq\C\to L(H)$ is called a \emph{\index{material law}material law} if
\begin{enumerate}
\item $\dom(M)$ is open and $M$ is holomorphic (i.e., complex differentiable;
see also \prettyref{exer:holomorphic}),
\item there exists some $\nu\in\R$ such that $\C_{\Re>\nu}\subseteq\dom(M)$ and
\[
\norm{M}_{\infty,\C_{\Re>\nu}}\coloneqq\sup_{z\in\C_{\Re>\nu}}\norm{M(z)}<\infty.
\]
\end{enumerate}
Moreover, we set
\[
\sbb{M}\coloneqq\inf\set{\nu\in\R}{\C_{\Re>\nu}\subseteq\dom(M) \text{ and }\norm{M}_{\infty,\C_{\Re>\nu}}<\infty}
\]
to be the \emph{\index{abscissa of boundedness, $\sbb{\cdot}$}abscissa of boundedness} of $M$. 
\end{defn*}
\begin{example}
\label{exa:material-laws}

\begin{enumerate}
\item Polynomials in $z^{-1}$: Let $n\in\N_0$, $M_{0},\ldots,M_{n}\in L(H)$.
Then 
\[
M(z)\coloneqq\sum_{k=0}^{n}z^{-k}M_{k}\quad(z\in\C\setminus\{0\})
\]
defines a material law with 
\begin{align*}
\sbb{M} & =\begin{cases}
-\infty, & \text{ if }M_{1}=\ldots=M_{n}=0,\\
0, & \text{ otherwise.}
\end{cases}
\end{align*}
\item Series in $z^{-1}$: Let $(M_{k})_{k\in\N}$ in $L(H)$ such that $\sum_{k=0}^{\infty}\norm{M_{k}}r^{-k}<\infty$
for some $r>0$. Then 
\[
M(z)\coloneqq\sum_{k=0}^{\infty}z^{-k}M_{k}\quad(z\in\C\setminus\{0\})
\]
defines a material law with $\sbb{M}\leq r$.
\item Exponentials: Let $h\in\R,M_{0}\in L(H)$ where $M_{0}\ne0$ and set 
\[
M(z)\coloneqq M_{0}\e^{zh}\quad(z\in\C).
\]
Then $M$ is a material law if and only if $h\leq0$. In this case,
$\sbb{M}=-\infty.$
\item Laplace transforms: Let $\nu\in\R$ and $k\in L_{1,\nu}(\R)$ with
$\spt k\subseteq\Rge{0}$. Then 
\[
M(z)\coloneqq\sqrt{2\pi}(\mathcal{L}k)(z)\coloneqq \int_{0}^{\infty}\e^{-zt}k(t)\d t\quad(z\in\C_{\Re>\nu})
\]
defines a material law with $\sbb{M}\leq\nu$.
\item\label{exa:material-laws:item:5} Fractional powers: Let $M_{0}\in L(H)$, $M_{0}\ne0,$ $\alpha\in\R$
and set 
\[
M(z)\coloneqq M_{0}z^{-\alpha}\quad(z\in\C\setminus\Rle{0}),
\]
where we set 
\[
\left(r\e^{\i\theta}\right)^{-\alpha}\coloneqq r^{-\alpha}\e^{-\i\alpha\theta}\quad(r>0,\theta\in\oi{-\pi}{\pi}).
\]
Then $M$ is a material law if and only if $\alpha\geq0$ and 
\begin{align*}
\sbb{M} & =\begin{cases}
-\infty & \text{ if }\alpha=0,\\
0 & \text{ otherwise}.
\end{cases}
\end{align*}
\end{enumerate}
\end{example}

For material laws $M$ we now define the corresponding material law
operators in terms of the functional calculus induced by the spectral
representation of $\td{\nu}$. 
\begin{prop}
\label{prop:mat_law_function_of_td}Let $M\from\dom(M)\subseteq\C\to \bo(H)$ be
a material law. Then, for $\nu>\sbb{M}$, the operator 
\[
M(\i\m+\nu)\from\L(\R;H)\to\L(\R;H),\;f\mapsto\bigl(t\mapsto M(\i t+\nu)f(t)\bigr)
\]
is bounded. Moreover, we define the \emph{material law operator}\index{material law operator} 
\[
M(\td{\nu})\coloneqq\mathcal{L}_{\nu}^{\ast}M(\i\m+\nu)\mathcal{L}_{\nu}\in L(\Lnu(\R;H))
\]
and obtain 
\[
\norm{M(\td{\nu})}\leq\norm{M}_{\infty,\C_{\Re>\nu}}.
\]
\end{prop}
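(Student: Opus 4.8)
Proof proposal.

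The plan is to reduce everything to the multiplication operator facts already established. First I would check that the operator $M(\i\m+\nu)$ is well-defined and bounded on $\L(\R;H)$: since $\nu>\sbb{M}$, by definition of the abscissa of boundedness we have $\C_{\Re>\nu}\subseteq\dom(M)$ and $\kappa\coloneqq\norm{M}_{\infty,\C_{\Re>\nu}}<\infty$. Hence the map $t\mapsto M(\i t+\nu)\in\bo(H)$ is bounded by $\kappa$ on all of $\R$. To invoke the (vector-valued analogue of the) multiplication operator theory, namely \prettyref{thm:mult1}~(c), I would first note that $t\mapsto M(\i t+\nu)$ is continuous (indeed it is the restriction to a line of a holomorphic $\bo(H)$-valued function, hence continuous), so in particular strongly measurable, and then apply the pointwise estimate $\norm{M(\i t+\nu)f(t)}_H\leq\kappa\norm{f(t)}_H$ for a.e.\ $t$. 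Squaring and integrating against $\d t$ shows $t\mapsto M(\i t+\nu)f(t)$ lies in $\L(\R;H)$ with $\norm{M(\i\m+\nu)f}_2\leq\kappa\norm{f}_2$; linearity is clear. So $M(\i\m+\nu)\in L(\L(\R;H))$ with operator norm at most $\kappa=\norm{M}_{\infty,\C_{\Re>\nu}}$.

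Next I would define $M(\td{\nu})\coloneqq\mathcal{L}_{\nu}^{\ast}M(\i\m+\nu)\mathcal{L}_{\nu}$. Since $\mathcal{L}_{\nu}\colon\Lnu(\R;H)\to\L(\R;H)$ is unitary (it is the composition $\mathcal{F}\exp(-\nu\m)$ of two unitaries, as recorded after \prettyref{thm:Plancherel} and in the remark following the definition of $\mathcal{L}_\nu$), the operator $M(\td{\nu})$ is a composition of a bounded operator with two unitaries, hence bounded, and
\[
\norm{M(\td{\nu})}=\norm{\mathcal{L}_{\nu}^{\ast}M(\i\m+\nu)\mathcal{L}_{\nu}}\leq\norm{\mathcal{L}_{\nu}^{\ast}}\,\norm{M(\i\m+\nu)}\,\norm{\mathcal{L}_{\nu}}=\norm{M(\i\m+\nu)}\leq\norm{M}_{\infty,\C_{\Re>\nu}}.
\]
This gives the asserted bound.

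The only genuine point requiring a little care — and the step I expect to be the main (minor) obstacle — is the measurability/continuity of $t\mapsto M(\i t+\nu)$ as a $\bo(H)$-valued function and the legitimacy of transferring \prettyref{thm:mult1}~(c) to the operator-valued multiplier setting; but this is exactly the content of the remark preceding the proposition, which asserts that the statements of \prettyref{thm:mult1} and \prettyref{thm:mult2} generalise to the vector-valued situation, and holomorphy of $M$ on the half-plane supplies continuity (indeed local boundedness and holomorphy of $z\mapsto M(z)$ give continuity of its restriction to the vertical line). Everything else is a routine chaining of norm estimates, so I would keep the write-up short.
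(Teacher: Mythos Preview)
Your proposal is correct and is exactly the argument the paper has in mind; the paper's own proof reads in its entirety ``The proof is clear.'' One tiny point worth making explicit in your write-up: the line $\{\i t+\nu: t\in\R\}$ lies on the \emph{boundary} of $\C_{\Re>\nu}$, not in it, so the bound $\norm{M(\i t+\nu)}\leq\norm{M}_{\infty,\C_{\Re>\nu}}$ needs the continuity of $M$ (which you already invoke) together with the fact that $\nu>\sbb{M}$ guarantees $M$ is defined and holomorphic on a strictly larger half-plane $\C_{\Re>\nu'}$ for some $\nu'<\nu$, so the boundary line is interior to $\dom(M)$.
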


\begin{proof}
The proof is clear. 
\end{proof}

\begin{rem}\label{rem:mlohom}
  The set of material laws is an algebra and the mapping of assigning a material law to its corresponding material law operator is an algebra homomorphism in the following sense.
  For $j\in\{1,2\}$ let $M_j\from \dom(M_j)\subseteq\C \to \bo(H)$ be material laws, $\lambda\in\C$.
  Then $M_1+M_2$ (with domain $\dom(M_1)\cap \dom(M_2)$), $\lambda M_1$ and $M_1\cdot M_2$ (with domain $\dom(M_1)\cap \dom(M_2)$) are material laws as well.
  Moreover, $\sbb{M_1+M_2}, \sbb{M_1\cdot M_2}\leq \max\{\sbb{M_1},\sbb{M_2}\}$. Furthermore, if $M_2(z)$ is a multiplication operator for all $z\in \dom(M_2)$, then
  for $\nu>\max\{\sbb{M_1},\sbb{M_2}\}$ we have
  $(M_1M_2)(\td{\nu}) = M_1(\td{\nu}) M_2(\td{\nu}) = M_2(\td{\nu}) M_1(\td{\nu}) = (M_2M_1)(\td{\nu})$.
\end{rem}

\begin{example}
\label{exa:material_law_revisited}We now revisit the material laws presented
in \prettyref{exa:material-laws} and compute their corresponding operators,
$M(\td{\nu})$.
\begin{enumerate}
\item Let $n\in\N_0$, $M_{0},\ldots,M_{n}\in L(H)$ and 
\[
M(z)\coloneqq\sum_{k=0}^{n}z^{-k}M_{k}\quad(z\in\C\setminus\{0\}).
\]
Then, for $\nu>0$, one obviously has 
\[
M(\td{\nu})=\sum_{k=0}^{n}\td{\nu}^{-k}M_{k},
\]
due to \prettyref{thm:spectral-repr-derivative}.
\item Let $(M_{k})_{k\in\N}$ in $L(H)$ such that $\sum_{k=0}^{\infty}\norm{M_{k}}r^{-k}<\infty$
for some $r>0$ and 
\[
M(z)\coloneqq\sum_{k=0}^{\infty}z^{-k}M_{k}\quad(z\in\C\setminus\{0\}).
\]
Then, for $\nu>r$, one has 
\[
M(\td{\nu})=\sum_{k=0}^{\infty}\td{\nu}^{-k}M_{k}
\]
again on account of \prettyref{thm:spectral-repr-derivative}.
\item\label{exa:material_law_revisited:item:3} Let $h\leq0,M_{0}\in L(H)$ and 
\[
M(z)\coloneqq M_{0}\e^{zh}\quad(z\in\C).
\]
Then, for $\nu\in\R$, we have 
\[
M(\td{\nu})=M_{0}\tau_{h},
\]
where 
\[
\tau_{h}\from\Lnu(\R;H)\to\Lnu(\R;H),\;f\mapsto\bigl(t\mapsto f(t+h)\bigr).
\]
Indeed, for $\varphi\in \cc(\R;H)$ we compute 
\begin{align*}
\left(\mathcal{L}_{\nu}M_{0}\tau_{h}\varphi\right)(t) & =\frac{1}{\sqrt{2\pi}}\int_{\R}\e^{-(\i t+\nu)s}M_{0}\varphi(s+h)\d s \\
 & =M_{0}\frac{1}{\sqrt{2\pi}}\int_{\R}\e^{-(\i t+\nu)\left(s-h\right)}\varphi(s)\d s
 =M(\i t+\nu)\left(\mathcal{L}_{\nu}\varphi\right)(t)
\end{align*}
for all $t\in\R$, where we have used \prettyref{prop:interchange_integral}
in the second line. Hence, 
\[
M_{0}\tau_{h}\varphi=\mathcal{L}_{\nu}^{\ast}M(\i\m+\nu)\mathcal{L}_{\nu}\varphi=M(\td{\nu})\varphi
\]
and since $\cc(\R;H)$ is dense in $\Lnu(\R;H)$ the assertion follows.
\item Let $\nu\in\R$ and $k\in L_{1,\nu}(\R)$ with $\spt k\subseteq\Rge{0}$
and 
\[
M(z)\coloneqq\sqrt{2\pi} (\mathcal{L}k)(z) \quad(z\in\C_{\Re>\nu}).
\]
Then, by \prettyref{exer:convolution}, 
\[
M(\td{\mu})=k\ast
\]
for each $\mu>\nu.$
\item Let $M_{0}\in L(H)$, $\alpha>0$ and 
\[
M(z)\coloneqq M_{0}z^{-\alpha}\quad(z\in\C\setminus\Rle{0}).
\]
Then for $\nu>0$ we have 
\begin{equation}
\left(M(\td{\nu})f\right)(t)=M_{0}\int_{-\infty}^{t}\frac{1}{\Gamma(\alpha)}(t-s)^{\alpha-1}f(s)\d s\quad(\text{a.e. }t\in\R)\label{eq:fractional_int}
\end{equation}
for each $f\in\Lnu(\R;H)$; see \prettyref{exer:fractional}. This
formula gives rise to the definition 
\[
\left(\td{\nu}^{-\alpha}f\right)(t)\coloneqq\int_{-\infty}^{t}\frac{1}{\Gamma(\alpha)}(t-s)^{\alpha-1}f(s)\d s \quad(t\in\R),
\]
which is known as the \emph{\index{fractional integral}(Riemann--Liouville) fractional integral of order $\alpha$}.
\end{enumerate}
\end{example}

Throughout the previous examples, the operator $M(\td{\nu})$ did not
depend on the actual value of $\nu$. Indeed, this is true for all
material laws. In order to see this, we need the following lemma. 
\begin{lem}
\label{lem:F-L idenpendent of nu}Let $\mu,\nu\in\R$ with $\mu<\nu$, and set $U\coloneqq\set{z\in\C}{\Re z\in\oi{\mu}{\nu}}$.
Moreover, let $g\from\overline{U}\to H$ be continuous and holomorphic on
$U$ such that $g(\i\cdot+\nu),g(\i\cdot+\mu)\in\L(\R;H)$ and there
exists a sequence $(R_{n})_{n\in\N}$ in $\R_{\geq0}$ such that $R_{n}\to\infty$
and 
\begin{equation}
\int_{\mu}^{\nu}\norm{g(\pm \i R_{n}+\rho)}\d\rho\to0\quad(n\to\infty).\label{eq:integral_to_zero}
\end{equation}
Then 
\[
\mathcal{L}_{\mu}^{\ast}g(\i\cdot+\mu)=\mathcal{L}_{\nu}^{\ast}g(\i\cdot+\nu).
\]
\end{lem}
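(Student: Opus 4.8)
The statement is a Paley--Wiener/contour-shift result: under the stated decay on the vertical segments, the Fourier--Laplace transform does not depend on the weight parameter. The natural strategy is to reduce everything to statements about the ordinary Fourier transform $\mathcal{F}$ via the identity $\mathcal{L}_\nu = \mathcal{F}\exp(-\nu\m)$, so that $\mathcal{L}_\mu^\ast = \exp(\mu\m)\mathcal{F}^{-1}$ and $\mathcal{L}_\nu^\ast = \exp(\nu\m)\mathcal{F}^{-1}$. Thus the claim $\mathcal{L}_\mu^\ast g(\i\cdot+\mu) = \mathcal{L}_\nu^\ast g(\i\cdot+\nu)$ is equivalent, after applying $\mathcal{F}$, to
\[
\exp(\mu\m)^{-1}\mathcal{F}\exp(\mu\m)\,\mathcal{F}^{-1}g(\i\cdot+\mu) = \exp(\nu\m)^{-1}\mathcal{F}\exp(\nu\m)\,\mathcal{F}^{-1}g(\i\cdot+\nu),
\]
but it is cleaner to test both sides against functions of the form $\varphi\cdot x$ with $\varphi\in\cci(\R)$, $x\in H$, which are total in $\L(\R;H)$ by \prettyref{lem:dense sets}; reducing to the scalar case, one may even take $x$ fixed and work with $\C$-valued pairings. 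So the first step is to fix such a test function and unravel both sides into explicit (absolutely convergent) integrals, using that $g(\i\cdot+\mu), g(\i\cdot+\nu)\in\L(\R;H)$ and that $\mathcal{F}$ on the Schwartz space is given by the integral formula together with \prettyref{eq:Fourier-adjoint}.

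\textbf{Key steps.} After the reduction, the heart of the matter is a contour integration argument. One shows
\[
\int_{\R}\bigl\langle \mathcal{L}_\nu^\ast g(\i\cdot+\nu)\bigr)(t), \psi(t)\bigr\rangle\,\d t - \int_{\R}\bigl\langle \bigl(\mathcal{L}_\mu^\ast g(\i\cdot+\mu)\bigr)(t), \psi(t)\bigr\rangle\,\d t
\]
can be written, for $\psi$ a suitable (Schwartz, or compactly supported smooth) test function, as a difference of two line integrals of the holomorphic $H$-valued function $z\mapsto g(z)\,\widehat{\psi}(\cdots z\cdots)$ (the precise Fourier-type kernel against $\psi$ is worked out in the first step) along the vertical lines $\Re z = \mu$ and $\Re z = \nu$. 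Then one applies Cauchy's integral theorem on the rectangles with vertical sides $\Re z = \mu$, $\Re z = \nu$ and horizontal sides $\Im z = \pm R_n$: the integral over the closed rectangle vanishes since the integrand is holomorphic on $U$ and continuous on $\overline U$, so the difference of the two vertical contributions equals minus the sum of the two horizontal contributions. Finally one uses hypothesis \prettyref{eq:integral_to_zero} — together with boundedness of the Fourier kernel coming from $\psi$ (or its transform) being bounded on the strip $\overline U$ — to send $n\to\infty$ and conclude that the horizontal pieces vanish in the limit; hence the two vertical line integrals agree, and since $\psi$ ranges over a total set, the two $\L(\R;H)$-functions coincide.

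\textbf{Main obstacle.} The delicate point is bookkeeping: one has to choose the class of test functions $\psi$ so that (i) the relevant Fourier-type integrals defining $(\mathcal{L}_\nu^\ast h)(t)$ make sense pointwise and can be swapped with the outer integration by Fubini, (ii) the resulting holomorphic integrand $z\mapsto g(z)\cdot(\text{kernel}_\psi(z))$ has a kernel factor that is bounded on the whole strip $\overline U$ (so that the horizontal integrals are controlled by $\sup_{\Re z\in[\mu,\nu]}\|\text{kernel}_\psi\| \cdot \int_\mu^\nu \|g(\pm\i R_n+\rho)\|\d\rho \to 0$), and (iii) $\psi$ still ranges over a total subset of $\L(\R;H)$. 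Taking $\psi\in\cci(\R;H)$ and noting its Laplace-type transform is entire with rapid decay in the imaginary direction uniformly on vertical strips handles (ii); the density of $\cci(\R;H)$ in $\L(\R;H)$ handles (iii). The other routine care needed is justifying the interchange of the Bochner integral with the complex line integral, which is covered by \prettyref{prop:interchange_integral} applied to the continuous bounded functionals $x'\in H'$, reducing the vector-valued Cauchy theorem to the scalar one.
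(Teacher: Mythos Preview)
Your approach is correct and shares the same core idea as the paper's proof: apply Cauchy's integral theorem to a holomorphic integrand on the rectangle with vertical sides $\Re z=\mu,\nu$ and horizontal sides $\Im z=\pm R_n$, then use hypothesis \prettyref{eq:integral_to_zero} to kill the horizontal contributions as $n\to\infty$. The difference is in the execution of the limit. The paper works pointwise: for fixed $t\in\R$ it applies Cauchy to $z\mapsto g(z)\e^{zt}$ directly, identifies the vertical line integrals as $\sqrt{2\pi}\i$ times the truncated inverse transforms $\mathcal{L}_\nu^\ast(\1_{\ci{-R_n}{R_n}}g(\i\cdot+\nu))(t)$, and then passes to a subsequence along which these converge almost everywhere to the full inverse transforms. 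Your weak formulation---pairing against $\psi\in\cci(\R;H)$ so that the kernel becomes the entire, strip-bounded Laplace transform of $\psi$---trades that subsequence-and-a.e.\ argument for a density argument at the end; it is slightly more robust (no need to extract subsequences) but requires the extra step of showing that the unweighted pairings determine the two functions, which follows since both lie in $L_{2,\loc}(\R;H)$.
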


\begin{proof}
Let $t\in\R$. By Cauchy's integral theorem, we have that 
\[
\int_{\gamma_{R_{n}}}g(z)\e^{zt}\d z=0,
\]
where $\gamma_{R_{n}}$ is the rectangular closed path with corners
$\pm\i R_{n}+\mu,\pm\i R_{n}+\nu$. 
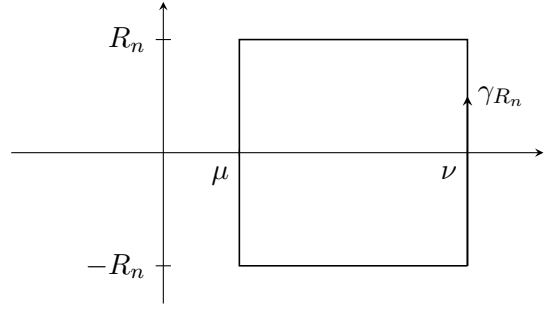
\begin{figure}[htb]
  \centering
  \begin{tikzpicture}[>=stealth,xscale=2]
    % axes
    \draw[->] (-1,0)--(2.5,0);
    \draw[->] (0,-2)--(0,2);
    % ticks
    \def\R{1.5}
    \def\MU{0.5}
    \def\NU{2}
    \draw (\MU,0.05)--(\MU,-0.05) node[below left]{$\mu$};
    \draw (\NU,0.05)--(\NU,-0.05) node[below left]{$\nu$};
    \draw (0.05,-\R)--(-0.05,-\R) node[left]{$-R_n$};
    \draw (0.05,\R)--(-0.05,\R) node[left]{$R_n$};
    \draw[->,semithick] (\NU,-\R)--(\NU,\R/2);
    \draw[semithick] (\NU,-\R)--(\NU,\R)--(\MU,\R)--(\MU,-\R)--(\NU,-\R);
    \draw (\NU,\R/2) node[right]{$\gamma_{R_n}$};
  \end{tikzpicture}
  \caption{Curve $\gamma_{R_n}$.}
  \label{fig:Cauchy-curve_Rn}
\end{figure}
Thus, we have that 
\begin{equation}
\begin{split}
& \i\int_{-R_{n}}^{R_{n}}g(\i s+\nu)\e^{(\i s+\nu)t}\d s-\i\int_{-R_{n}}^{R_{n}}g(\i s+\mu)\e^{(\i s+\mu)t}\d s\\
& = -\int_{\mu}^{\nu}g(-\i R_{n}+\rho)\e^{(-\i R_{n}+\rho)t}\d\rho+\int_{\mu}^{\nu}g(\i R_{n}+\rho)\e^{(\i R_{n}+\rho)t}\d\rho.
\end{split}
\label{eq:path}
\end{equation}
Note that with the help of the formula for the inverse Fourier-transformation (see \prettyref{thm:Plancherel}) and $\mathcal{L}_\nu^*=(\mathcal{F}\exp(-\nu \m))^*=\exp(-\nu \m )^{-1}\mathcal{F}^*$ the left-hand side of \prettyref{eq:path} is nothing but
\[
\sqrt{2\pi}\i\left(\left(\mathcal{L}_{\nu}^{\ast}\1_{\ci{-R_{n}}{R_{n}}}g(\i\cdot+\nu)\right)(t)-\left(\mathcal{L}_{\mu}^{\ast}\1_{\ci{-R_{n}}{R_{n}}}g(\i\cdot+\mu)\right)(t)\right),
\]
and hence, there is a subsequence of $(R_{n})_{n}$ (which we do not
relabel) such that the left-hand side of \prettyref{eq:path} tends
to 
\[
\sqrt{2\pi}\i\left(\left(\mathcal{L}_{\nu}^{\ast}g(\i\cdot+\nu)\right)(t)-\left(\mathcal{L}_{\mu}^{\ast}g(\i\cdot+\mu)\right)(t)\right)
\]
for almost every $t\in\R$ as $n\to\infty$. As such, all we need to show is
that the right-hand side of \prettyref{eq:path} tends to $0$ as
$n\to\infty$, which obviously follows by \prettyref{eq:integral_to_zero}. 
\end{proof}
\begin{thm}
\label{thm:material law independent of nu}Let $M\from\dom(M)\subseteq\C\to L(H)$ be
a material law. Then, for $\mu,\nu>\sbb{M}$ and $f\in\Lnu(\R;H)\cap\Lm{\mu}(\R;H)$,
we have 
\[
M(\td{\nu})f=M(\td{\mu})f.
\]
Moreover, $M(\td{\nu})$ is causal for all $\nu>\sbb{M}$. 
\end{thm}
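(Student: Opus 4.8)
The statement has two parts: $\nu$-independence of $M(\td{\nu})$ on the overlap $\Lnu(\R;H)\cap\Lm{\mu}(\R;H)$, and causality of $M(\td{\nu})$. For the first part, the natural strategy is to reduce to \prettyref{lem:F-L idenpendent of nu}. Fix $\mu<\nu$ with $\mu>\sbb{M}$ (the case $\mu>\nu$ being symmetric). By density it suffices to prove $M(\td{\nu})\varphi = M(\td{\mu})\varphi$ for $\varphi$ in a suitable dense subset of $\Lnu(\R;H)\cap\Lm{\mu}(\R;H)$; in view of \prettyref{lem:simple_fcts_compact_spt} and \prettyref{lem:dense sets}, one can take $\varphi\in\lin\set{\psi\cdot x}{\psi\in\cc(\R),\,x\in H}$, or better $\varphi\in\calDH$, so that $g(z)\coloneqq M(z)(\mathcal{L}_0\varphi)(-\i z)$ (the analytic continuation of $\i t+\nu \mapsto M(\i t+\nu)(\mathcal{L}_\nu\varphi)(t)$ to the strip $U=\set{z}{\Re z\in\oi{\mu}{\nu}}$) is continuous on $\overline U$, holomorphic on $U$, and decays fast enough in the imaginary direction. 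Here the key point is that for $\varphi$ with compactly supported smooth data, $z\mapsto (\mathcal{L}\varphi)(z)$ is entire and rapidly decreasing along vertical lines (integration by parts in the Laplace integral), while $M$ is bounded on $\overline U$; so \prettyref{eq:integral_to_zero} holds for \emph{any} sequence $R_n\to\infty$. Then \prettyref{lem:F-L idenpendent of nu} gives $\mathcal{L}_\mu^* g(\i\cdot+\mu) = \mathcal{L}_\nu^* g(\i\cdot+\nu)$, which is exactly $M(\td{\mu})\varphi = M(\td{\nu})\varphi$. Finally, one extends from $\varphi$ to general $f\in\Lnu(\R;H)\cap\Lm{\mu}(\R;H)$ by approximating $f$ simultaneously in both norms (again \prettyref{lem:simple_fcts_compact_spt}) and using boundedness of $M(\td{\nu})$ and $M(\td{\mu})$ on their respective spaces.

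\textbf{Causality.} Given the overlap result, causality follows by a now-standard exponential-weight argument (compare the proof of \prettyref{lem:indepCaus}\ref{lem:indepCaus:item:1}(ii)). Let $\nu>\sbb{M}$, let $a\in\R$, and let $f,g\in\Lnu(\R;H)$ with $f=g$ on $\loi{-\infty}{a}$; set $h\coloneqq f-g$, so $\spt h\subseteq\roi{a}{\infty}$. By the remark preceding \prettyref{lem:indepCaus}, $h\in\Lm{\rho}(\R;H)$ for every $\rho\geq\nu$ with $\norm{h}_{\Lm{\rho}(\R;H)}\leq \e^{(\nu-\rho)a}\norm{h}_{\Lnu(\R;H)}$. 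By the $\nu$-independence just proved, $M(\td{\nu})h = M(\td{\rho})h$ for all $\rho\geq\nu$, and by \prettyref{prop:mat_law_function_of_td} we have the uniform bound $\norm{M(\td{\rho})h}_{\Lm{\rho}(\R;H)}\leq \norm{M}_{\infty,\C_{\Re>\sbb{M}}}\norm{h}_{\Lm{\rho}(\R;H)}$ (using that $\norm{M}_{\infty,\C_{\Re>\rho}}$ is nonincreasing in $\rho$ and bounded by $\norm{M}_{\infty,\C_{\Re>\nu}}$). Hence, writing $w\coloneqq M(\td{\nu})h$,
\[
\int_{-\infty}^{a}\norm{w(t)}_H^2\,\e^{2\rho(a-t)}\d t
\leq \e^{2\rho a}\norm{w}_{\Lm{\rho}(\R;H)}^2
\leq C\,\e^{2\rho a}\,\e^{2(\nu-\rho)a}\norm{h}_{\Lnu(\R;H)}^2
= C\,\e^{2\nu a}\norm{h}_{\Lnu(\R;H)}^2,
\]
a bound independent of $\rho$. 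Since $\e^{2\rho(a-t)}\to\infty$ as $\rho\to\infty$ for every $t<a$, monotone convergence forces $w=0$ on $\loi{-\infty}{a}$; that is, $M(\td{\nu})f = M(\td{\nu})g$ on $\loi{-\infty}{a}$, which is causality.

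\textbf{Main obstacle.} The delicate step is the first one: verifying the hypotheses of \prettyref{lem:F-L idenpendent of nu}, in particular producing the decay condition \prettyref{eq:integral_to_zero} and knowing that $z\mapsto(\mathcal{L}\varphi)(z)$ extends holomorphically to the closed strip with the right growth. This is why it is essential to work first with a dense class of very regular $\varphi$ (smooth, compactly supported data), for which the Laplace transform is entire and super-polynomially decaying along verticals, so that $g$ manifestly satisfies all the requirements of the lemma; the passage to general $f$ is then a soft density-plus-continuity argument. One should also take a moment to check the compatibility $g(\i\cdot+\nu) = M(\i\cdot+\nu)\mathcal{L}_\nu\varphi$ on the nose, i.e.\ that the chosen analytic $g$ really restricts on the two boundary lines to the integrands defining $M(\td{\nu})\varphi$ and $M(\td{\mu})\varphi$ — this is immediate from $\mathcal{L}_\nu\varphi = (\mathcal{L}\varphi)(\i\cdot+\nu)$ for $\varphi\in\cc(\R;H)$ but deserves to be stated.
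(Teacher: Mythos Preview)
Your proof is correct and follows the same core strategy as the paper: verify the hypotheses of \prettyref{lem:F-L idenpendent of nu} on a dense class, extend by density and boundedness of $M(\td{\nu})$, $M(\td{\mu})$, and then derive causality from the uniform-in-$\rho$ operator bound via the exponential-weight argument.

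The differences are cosmetic. The paper takes $f=\1_{\ci{a}{b}}\cdot x$ as the dense building blocks, computes the Laplace transform explicitly, and then packages the conclusion into the uniformly Lipschitz framework so that \prettyref{lem:indepCaus} delivers both the extension to general $f$ and causality in one stroke. You take $\varphi\in\calDH$ instead (entire Laplace transform with super-polynomial decay via integration by parts, which neatly sidesteps the removable-singularity check at $z=0$ that the paper must do when $\mu\leq0$) and then run the causality argument by hand. One minor correction: the uniform constant should be $\norm{M}_{\infty,\C_{\Re>\nu}}$, not $\norm{M}_{\infty,\C_{\Re>\sbb{M}}}$ (the latter may be infinite), but you already catch this in your parenthetical remark.
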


\begin{proof}
Let $\mu<\nu$. We prove the assertion for $f=\1_{\ci{a}{b}}\cdot x$
with $a<b$ and $x\in H$ first. For $\rho\in\R$ we compute 
\[
\left(\mathcal{L}_{\rho}f\right)(t)=\frac{1}{\sqrt{2\pi}}\int_{a}^{b}x\e^{-(\i t+\rho)s}\d s=\frac{1}{\sqrt{2\pi}}\frac{1}{\i t+\rho}\left(\e^{-(\i t+\rho)a}-\e^{-(\i t+\rho)b}\right)x\quad(t\in\R\setminus\{0\}).
\]
Moreover, we define 
\[
g(z)\coloneqq\frac{1}{\sqrt{2\pi}}M(z)x\frac{1}{z}\left(\e^{-za}-\e^{-zb}\right)\quad(z\in\C_{\Re\geq \mu}\setminus\{0\})
\]
and prove that $g$ satisfies the assumptions of \prettyref{lem:F-L idenpendent of nu}.
First, we note that $g$ is bounded on $\set{z\in \C}{\mu\leq \Re z\leq \nu}\setminus\{0\}$.
Indeed, we only need to prove that it is bounded near $0$ provided
that $\mu\leq0.$ To that end, we observe 
\[
\frac{1}{z}(\e^{-za}-\e^{-zb})=\e^{-za}\frac{1-\e^{-z(b-a)}}{z}\to b-a\quad(z\to0).
\]
Thus, $g$ is bounded near $0$. In particular, $z=0$ is
a removable singularity and, hence, $g$ can be extended holomorphically
to $\C_{\Re\geq\mu}$. Moreover, for $\rho\geq\mu$ we have that 
\[
\int_{\R}\norm{g(\i t+\rho)}^{2}\d t=\int_{-1}^{1}\norm{g(\i t+\rho)}^{2}\d t+\int_{|t|>1}\norm{g(\i t+\rho)}^{2}\d t.
\]
The first term on the right-hand side is finite since $g$ is bounded,
while the second term can be estimated by 
\[
\int_{|t|>1}\norm{g(\i t+\rho)}^{2}\d t \leq \norm{M}_{\infty,\C_{\Re>\mu}}^{2}\norm{x}^{2}\frac{(\e^{-\rho a}+\e^{-\rho b})^{2}}{2\pi}\int_{|t|>1}\frac{1}{t^{2}+\rho^{2}}\d t<\infty.
\]
This proves that $g(\i\cdot+\rho)\in\L(\R;H)$ for each $\rho\geq\mu$
and hence, particularly for $\rho=\mu$ and $\rho=\nu$. Finally,
for $\rho\geq\mu$ we have that 
\[
\norm{g(\i t+\rho)}\leq \frac{1}{\sqrt{2\pi}}\norm{M}_{\infty,\C_{\Re>\mu}}\norm{x}\frac{1}{\sqrt{t{^{2}}+\rho{^{2}}}}\left(\e^{-\rho a}+\e^{-\rho b}\right)\to0\quad\left(\abs{t}\to\infty\right),
\]
which together with the boundedness of $g$ yields \prettyref{eq:integral_to_zero}
by dominated convergence. This shows that $g$ satisfies the assumptions
of \prettyref{lem:F-L idenpendent of nu} and thus 
\[
M(\td{\nu})f=\mathcal{L}_{\nu}^{\ast}g(\i\cdot+\nu)=\mathcal{L}_{\mu}^{\ast}g(\i\cdot+\mu)=M(\td{\mu})f.
\]
By linearity, this equality extends to $S_{\rmc}(\R;H)$ and so, 
\[
F\from S_{\rmc}(\R;H)\to\bigcap_{\nu\geq\mu}\Lnu(\R;H),\;f\mapsto M(\td{\nu})f
\]
is well-defined. Moreover, $F$ is uniformly Lipschitz continuous
(with $\sup_{\nu\geq\mu}\norm{F^{\nu}}=\norm{M}_{\infty,\C_{\Re>\mu}}$)
and hence, the assertions follow from \prettyref{lem:indepCaus}. 
\end{proof}

\section{Comments}

The Fourier and the Fourier--Laplace transformation introduced in this
chapter are used to define an operator-valued functional calculus
for the time derivative, $\td{\nu}$.  This functional calculus can be defined since the Fourier--Laplace transformation provides the unitary transformation yielding the spectral representation of the time derivative as multiplication operator. This fact was already noticed in \cite{Picard1989}, which eventually led to evolutionary equations in \cite{PicPhy}.

We emphasise that we have used the fundamental
property that both $\mathcal{F}$ and $\mathcal{L}_{\nu}$ are unitary.
It is noteworthy that the Fourier transformation
is an isometric isomorphism on $L_{2}(\R;X)$ if and only if $X$ is a Hilbert space, see
\cite{Kwapien1972}. In the Banach space-valued case one has to further
restrict the class of functions used to define a functional calculus.
For the topic of functional calculus we refer to the 21st ISem \cite{ISEM21}
by Markus Haase and to his monograph, \cite{Haase2006}.

\section*{Exercises}
\addcontentsline{toc}{section}{Exercises}

\begin{xca}
\label{exer:derivative-under-integral} Let $(\Omega,\Sigma,\mu)$ be
a measure space, $X$ a Banach space and $I\subseteq\R$ an open interval.
Let $g\from I\times\Omega\to X$ such that $g(t,\cdot)\in L_{1}(\mu;X)$
for each $t\in I$, and define 
\begin{align*}
h & \from I\to X,\;t\mapsto\int_{\Omega}g(t,\omega)\d\mu(\omega).
\end{align*}
\begin{enumerate}
\item Assume that $g(\cdot,\omega)$ is continuous for $\mu$-almost every
$\omega\in\Omega$ and let $f\in L_{1}(\mu)$ such that 
\[
\norm{g(t,\omega)}\leq f(\omega)\quad(t\in I,\omega\in\Omega).
\]
Prove that $h$ is continuous.
\item Assume that $g(\cdot,\omega)$ is differentiable for $\mu$-almost
every $\omega\in\Omega$ and let $f\in L_{1}(\mu)$ such that 
\[
\norm{\partial_{t}g(t,\omega)}\leq f(\omega)\quad(t\in I,\omega\in\Omega).
\]
Prove that $h$ is differentiable with 
\[
h'(t)=\int_{\Omega}\partial_{t}g(t,\omega)\d\mu(\omega).
\]
\end{enumerate}
\end{xca}

\begin{xca}
\label{exer:unitary_extension}Let $H_{0},H_{1}$ be two Hilbert spaces
and $U\from \dom(U)\subseteq H_{0}\to H_{1}$ linear such that
\begin{itemize}
\item $\dom(U)$ is dense in $H_{0}$ and $\ran(U)$ is dense in $H_{1}$.
\item $\forall x\in\dom(U):\:\norm{Ux}_{H_{1}}=\norm{x}_{H_{0}}$.
\end{itemize}
Show that $U$ can be uniquely extended to a unitary operator between
$H_{0}$ and $H_{1}$. 
\end{xca}

\begin{xca}
\label{exer:holomorphic} Let $\Omega\subseteq\C$ be open, $X$ a complex
Banach space and $f\from \Omega\to X$. Prove that the following statements
are equivalent:
\begin{enumerate}
\renewcommand{\labelenumi}{{\upshape (\roman{enumi})}}
\item\label{exer:holomorphic:item:1} $f$ is holomorphic.
\item For all $x'\in X'$ the mapping $x'\circ f\from\Omega\to\C$ is holomorphic.
\item $f$ is locally bounded and $x'\circ f\from\Omega\to\C$ is holomorphic
for all $x'\in D$, where $D\subseteq X'$ is a norming set\footnote{$D\subseteq X'$ is called a norming set for $X$, if $\norm{x}=\sup_{x'\in D\setminus\{0\}}\frac{1}{\norm{x'}}\abs{x'(x)}$
for each $x\in X$. Note that $X'$ is norming for $X$ by the Hahn--Banach
theorem.} for $X$.
\item\label{exer:holomorphic:item:4} $f$ is analytic, i.e.~for each $z_{0}\in\Omega$ there is $r>0$
and $(a_{n})_{n}$ in $X$ with $\ball{z_{0}}{r}\subseteq\Omega$
and 
\[
f(z)=\sum_{n=0}^{\infty}a_{n}\left(z-z_{0}\right)^{n}\quad(z\in\ball{z_{0}}{r}).
\]
\end{enumerate}
Assume now that $X=L(X_{1},X_{2})$ for two complex Banach spaces
$X_{1},X_{2}$, let $D_{1}\subseteq X_{1}$ be dense and $D_{2}\subseteq X_{2}'$
norming for $X_{2}$. Prove that the statements 
\renewcommand{\theenumi}{{\upshape (\roman{enumi})}}
\ref{exer:holomorphic:item:1} to \ref{exer:holomorphic:item:4} are equivalent
\renewcommand{\theenumi}{{\upshape (\alph{enumi})}}
to
\begin{enumerate}
\renewcommand{\labelenumi}{{\upshape (\roman{enumi})}}
\addtocounter{enumi}{4}
\item $f$ is locally bounded and $\Omega\ni z\mapsto x_{2}'(f(z)(x_{1}))\in\C$
is holomorphic for all $x_1\in D_1$ and $x_2'\in D_2$.
\end{enumerate}
\end{xca}

\begin{xca}
\label{exer:convolution} Let $\nu\in\R$ and $k\in L_{1,\nu}(\R)$.
Prove that 
\[
\mathcal{L}_{\nu}\left(k\ast f\right)=\sqrt{2\pi}\left(\mathcal{L}_{\nu}k\right)\cdot\left(\mathcal{L}_{\nu}f\right)
\]
for $f\in\Lnu(\R;H)$. 
\end{xca}

\begin{xca}
\label{exer:fractional} Let $\alpha>0$ and define $g_{\alpha}(t)\coloneqq\1_{\roi{0}{\infty}}(t)t^{\alpha-1}$
for $t\in\R.$ Show that $g_{\alpha}\in L_{1,\nu}(\R)$ for each $\nu>0$
and that 
\[
\left(\mathcal{L}_{\nu}g_{\alpha}\right)(t)=\frac{1}{\sqrt{2\pi}}\Gamma(\alpha)(\i t+\nu)^{-\alpha}.
\]
Use this formula and \prettyref{exer:convolution} to derive \prettyref{eq:fractional_int}.

Hint: To compute the Fourier--Laplace transform of $g_{\alpha}$, derive
that $\mathcal{L}_{\nu}g_{\alpha}$ solves a first order ordinary
differential equation and use separation of variables to solve this
equation. 
\end{xca}

\begin{xca}
\label{exer:analytic_on_strip} Let $\mu,\nu\in\R$ with $\mu<\nu$ and $f\in\Lnu(\R;H)\cap\Lm{\mu}(\R;H)$.
Moreover, set $U\coloneqq\set{z\in\C}{\mu<\Re z<\nu}$. Show that $f\in\bigcap_{\mu<\rho<\nu}\Lm{\rho}(\R;H)\cap L_{1,\rho}(\R;H)$
and that 
\[
U\ni z\mapsto\left(\mathcal{L}_{\Re z}f\right)(\Im z)
\]
is holomorphic. 
\end{xca}

\begin{xca}
\label{exer:causality_autonomous} Let $H_{0},H_{1}$ be Hilbert spaces
and $T\from \Lnu(\R;H_{0})\to\Lnu(\R;H_{1})$ linear and bounded. We call
$T$ \emph{\index{autonomous}autonomous} if $T\tau_{h}=\tau_{h}T$ for each $h\in\R$ ($\tau_{h}$
denotes the translation operator defined in \prettyref{exa:material_law_revisited}).
Prove that for autonomous $T$, the following statements are equivalent:
\begin{enumerate}
\renewcommand{\labelenumi}{{\upshape (\roman{enumi})}}
\item $T$ is causal.
\item For all $f\in\Lnu(\R;H_{0})$ with $\spt f\subseteq\roi{0}{\infty}$
one has $\spt Tf\subseteq\roi{0}{\infty}$.
\end{enumerate}
Moreover, prove that for a material law $M$, the operator $M(\td{\nu})$
is autonomous for each $\nu>\sbb{M}$. 
\end{xca}

\printbibliography[heading=subbibliography]

\chapter{Solution Theory for Evolutionary Equations}
\label{chap:Solution_Theory}

In this chapter, we shall discuss and present the first major result
of this year's internet seminar: Picard's theorem on the solution theory for evolutionary equations which is
the main result of \cite{PicPhy}. In order to stress the applicability
of this theorem, we shall deal with applications first and provide
a proof of the actual result afterwards. With an initial interest in applications in mind, we start off with the introduction of some
vector-analytic operators.

\section{First Order Sobolev Spaces\label{sec:First-order-Sobolev}}

Throughout this section let $\Omega\subseteq\R^{d}$ be an open set.
\begin{defn*}
We define 
\begin{align*}
\grad_{\rmc}\colon \cci(\Omega)\subseteq\L(\Omega) & \to\L(\Omega)^{d}\\
\phi & \mapsto\left(\partial_{j}\phi\right)_{j\in\{1,\ldots,d\}},
\end{align*}
\begin{align*}
\dive_{\rmc}\colon\cci(\Omega)^{d}\subseteq\L(\Omega)^{d} & \to\L(\Omega)\\
\left(\phi_{j}\right)_{j\in\{1,\ldots,d\}} & \mapsto\sum_{j\in\{1,\ldots,d\}}\partial_{j}\phi_{j},
\end{align*}
and  if $d=3$,
\begin{align*}
\curl_{\rmc}\colon\cci(\Omega)^{3}\subseteq\L(\Omega)^{3} & \to\L(\Omega)^{3}\\
\left(\phi_{j}\right)_{j\in\{1,2,3\}} & \mapsto\begin{pmatrix}
\partial_{2}\phi_{3}-\partial_{3}\phi_{2}\\
\partial_{3}\phi_{1}-\partial_{1}\phi_{3}\\
\partial_{1}\phi_{2}-\partial_{2}\phi_{1}
\end{pmatrix}.
\end{align*}
Furthermore, we put
\[
\dive\coloneqq-\grad_{\rmc}^{*},\quad\grad\coloneqq-\dive_{\rmc}^{*},\quad\curl\coloneqq\curl_{\rmc}^{*}
\]
and
\[
\dive_{0}\coloneqq-\grad^{*},\quad\grad_{0}\coloneqq-\dive^{*},\quad\curl_{0}\coloneqq\curl^{*}.
\]
\end{defn*}
\begin{prop}
The relations $\dive,\dive_{0},\grad,\grad_{0},\curl$ and $\curl_{0}$ are
all densely defined, closed linear operators.
\end{prop}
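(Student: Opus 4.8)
The proof is essentially a matter of unwinding the definitions and applying the general results about adjoints established earlier in this chapter, so I would keep it short. The starting observation is that $\grad_{\rmc}$, $\dive_{\rmc}$ and $\curl_{\rmc}$ are linear operators defined on the dense subspace $\cci(\Omega)$ (respectively $\cci(\Omega)^d$, $\cci(\Omega)^3$) of the Hilbert space $\L(\Omega)$ (respectively its powers); density of $\cci(\Omega)$ in $\L(\Omega)$ is exactly \prettyref{exer:C_cinfty dense}, and density in the finite powers follows componentwise. Hence each of these three operators is densely defined.

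The next point is that $\dive$, $\grad$, $\curl$, $\dive_0$, $\grad_0$, $\curl_0$ are all defined as (plus or minus) an adjoint of one of these densely defined operators. By the remark following \prettyref{lem:adj}, the adjoint of any relation is closed, so all six operators are automatically closed as relations; and by \prettyref{lem:ddc}, the adjoint of a densely defined relation is a linear operator (not merely a relation). Since $\grad_{\rmc}$, $\dive_{\rmc}$, $\curl_{\rmc}$ are densely defined, this immediately gives that $\dive = -\grad_{\rmc}^*$, $\grad = -\dive_{\rmc}^*$ and $\curl = \curl_{\rmc}^*$ are closed linear operators (note multiplication by $-1$ preserves both closedness and the operator property, since it only acts on the second component).

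It remains to check that these three operators are themselves densely defined, so that a second application of \prettyref{lem:ddc} and the remark after \prettyref{lem:adj} shows $\dive_0 = -\grad^*$, $\grad_0 = -\dive^*$, $\curl_0 = \curl^*$ are closed linear operators too. For this I would simply note that $\cci(\Omega)^d \subseteq \dom(\dive)$: indeed for $\phi \in \cci(\Omega)^d$ and $\psi \in \cci(\Omega)$, integration by parts (no boundary terms, since $\psi$ has compact support in $\Omega$) gives $\scp{\grad_{\rmc}\psi}{\phi}_{\L(\Omega)^d} = \scp{\psi}{-\dive_{\rmc}\phi}_{\L(\Omega)}$, whence $(\phi, -\dive_{\rmc}\phi) \in \grad_{\rmc}^* = -\dive$, i.e. $\phi \in \dom(\dive)$ with $\dive\phi = \dive_{\rmc}\phi$. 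Since $\cci(\Omega)^d$ is dense in $\L(\Omega)^d$, $\dive$ is densely defined; the same argument with the roles interchanged shows $\cci(\Omega) \subseteq \dom(\grad)$ and $\cci(\Omega)^3 \subseteq \dom(\curl)$, so $\grad$ and $\curl$ are densely defined as well.

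There is no real obstacle here — the only thing to be slightly careful about is the bookkeeping of which operator is the adjoint of which, and the elementary fact that negation and the operator property/closedness are compatible; everything else is a direct citation of \prettyref{lem:adj}, \prettyref{lem:ddc} and the remark on closedness of adjoints. If one wanted to be thorough one could spell out the integration-by-parts identities for $\dive_{\rmc}$ and $\curl_{\rmc}$ as well, but they are entirely analogous to the gradient case.
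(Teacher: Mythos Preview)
Your argument is essentially the same as the paper's: density of $\cci(\Omega)$ gives that $\grad_{\rmc},\dive_{\rmc},\curl_{\rmc}$ are densely defined, so their adjoints $\dive,\grad,\curl$ are closed linear operators; integration by parts then gives $\grad_{\rmc}\subseteq\grad$, $\dive_{\rmc}\subseteq\dive$, $\curl_{\rmc}\subseteq\curl$, so these are also densely defined; and finally one takes adjoints again to handle the $0$-indexed operators.

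There is one small omission worth flagging. You conclude that $\grad_0,\dive_0,\curl_0$ are closed linear operators, but you never verify that they are \emph{densely defined}, which is part of the statement. The paper closes this gap by invoking \prettyref{lem:ass}: since $\grad_0=-\dive^*=-(-\grad_{\rmc}^*)^*=\grad_{\rmc}^{**}=\overline{\grad_{\rmc}}$, the operator $\grad_0$ is the closure of the densely defined $\grad_{\rmc}$ and hence itself densely defined (and likewise for $\dive_0,\curl_0$). Your integration-by-parts step already supplies the raw material for this, but you should state the identification with the closure explicitly. Also, a minor citation issue: density of $\cci(\Omega)$ in $\L(\Omega)$ for a general open $\Omega$ is \prettyref{exer:Ccinfty_dense_hd} in this paper, not \prettyref{exer:C_cinfty dense} (which treats only $\R^d$).
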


\begin{proof}
The operators $\grad_{\rmc},\dive_{\rmc}$ and $\curl_{\rmc}$ are densely
defined by \prettyref{exer:Ccinfty_dense_hd}. Thus, $\dive,\grad$ and $\curl$
are closed linear operators by \prettyref{lem:ddc}. Moreover, it follows from integration
by parts that $\grad_{\rmc}\subseteq\grad$, $\dive_{\rmc}\subseteq\dive$
and $\curl_{\rmc}\subseteq\curl$. Thus, $\dive,\grad$ and $\curl$ are also
densely defined. This, in turn, implies that $\grad_{\rmc},\dive_{\rmc}$
and $\curl_{\rmc}$ are closable by \prettyref{lem:ddc} with respective closures $\grad_{0},\dive_{0}$ and $\curl_{0}$ by \prettyref{lem:ass}. 
\end{proof}
We shall describe the domains of these operators in
more detail in the next theorem.
\begin{thm}
\label{thm:Sobolev_space} If $f\in\L(\Omega)$ and $g=(g_{j})_{j\in\{1,\ldots,d\}}\in\L(\Omega)^{d}$ then the following statements hold:
\begin{enumerate}
\item $f\in\dom(\grad)$ and $g=\grad f$ if and only if
\[
\forall\phi\in\cci(\Omega),j\in\{1,\ldots,d\}\colon-\int_{\Omega}f\partial_{j}\phi=\int_{\Omega}g_{j}\phi.
\]
\item $f\in\dom(\grad_{0})$ and $g=\grad_{0}f$ if and only if there exists $(f_{k})_{k}$ in $\cci(\Omega)$ such that $f_{k}\to f$
in $\L(\Omega)$ and $\grad f_{k}\to g$ in $\L(\Omega)^{d}$ as $k\to \infty$.
\item $g\in\dom(\dive)$ and $f=\dive g$ if and only if
\[
\forall\phi\in\cci(\Omega)\colon-\int_{\Omega}g\cdot\grad\phi=\int_{\Omega}f\phi.
\]
\item $g\in\dom(\dive_{0})$ and $f=\dive_{0}g$ if and only if there exists $(g_{k})_{k}$ in $\cci(\Omega)^{d}$ such that $g_{k}\to g$
in $\L(\Omega)^{d}$ and $\dive g_{k}\to f$ in $\L(\Omega)$ as $k\to \infty$.
\end{enumerate}
If $d=3$ and $f,g\in \L(\Omega)^3$ then the following statements hold:
\begin{enumerate}
\addtocounter{enumi}{4}
\item $f\in\dom(\curl)$ and $g=\curl f$ if and only if
\[
\forall\phi\in\cci(\Omega)^{3}\colon\int_{\Omega}f\cdot \curl\phi=\int_{\Omega}g\cdot \phi.
\]
\item $f\in\dom(\curl_{0})$ and $g=\curl_{0}f$ if and only if there exists $(f_{k})_{k}$ in $\cci(\Omega)^3$ such that $f_{k}\to f$
in $\L(\Omega)^{3}$ and $\curl f_{k}\to g$ in $\L(\Omega)^{3}$ as $k\to \infty$.
\end{enumerate}
\end{thm}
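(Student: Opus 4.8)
\textbf{Proof plan for \prettyref{thm:Sobolev_space}.}

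The structure of this theorem is entirely parallel across the six items, so the plan is to treat the operators $\grad$, $\dive$ and $\curl$ on an equal footing and observe that each is the adjoint of a concrete first-order differential operator on test functions (up to a sign), while the subscript-$0$ versions are the closures of those test-function operators. The two mechanisms at play are therefore: (i) unravelling the definition of the adjoint via \prettyref{lem:adj}/the \enquote{Remark after the definition of the adjoint} to turn membership in the domain into a family of integration-by-parts identities tested against $\cci$-functions; and (ii) unravelling the definition of the closure as approximation in the graph norm. Nothing deeper than these two observations is needed, so I do not expect a genuine obstacle — the work is organisational.

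For item (a): by definition $\grad = -\dive_{\rmc}^{*}$, so $(f,g)\in\grad$ iff $(f,-g)\in\dive_{\rmc}^{*}$, which by the characterisation of the adjoint relation means $\scp{f}{\dive_{\rmc}\phi}_{\L(\Omega)} = \scp{-g}{\phi}_{\L(\Omega)^d}$ for all $\phi\in\cci(\Omega)^d$. Writing this out component-wise, $\int_\Omega f \sum_j \partial_j\phi_j = -\int_\Omega \sum_j g_j \phi_j$ for all $\phi = (\phi_j)_j$; choosing $\phi$ supported in a single component then yields exactly $-\int_\Omega f\partial_j\phi = \int_\Omega g_j\phi$ for each $j$, and conversely summing these recovers the pairing identity. (One should note the scalar field may be $\C$; since $\cci(\Omega)$ is closed under complex conjugation, testing against $\phi$ and against $\overline\phi$ shows the presence or absence of complex conjugates in the scalar product is immaterial, so the stated real-looking formula is correct.) Items (c) and (e) are proved the same way: $\dive = -\grad_{\rmc}^{*}$ and $\curl = \curl_{\rmc}^{*}$, so one expands the adjoint pairing $\scp{f}{\grad_{\rmc}\phi} = \scp{-g}{\phi}$, resp.\ $\scp{f}{\curl_{\rmc}\phi} = \scp{g}{\phi}$, for $\phi\in\cci(\Omega)$ resp.\ $\cci(\Omega)^3$, using that $\curl_{\rmc}$ is formally skew-adjoint so no sign appears. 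The only point to record is that the \enquote{$\grad\phi$} and \enquote{$\curl\phi$} appearing on the right-hand sides of (c) and (e) are the classical gradient/curl of the smooth function $\phi$, which agree with $\grad_{\rmc}\phi$, $\curl_{\rmc}\phi$.

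For items (b), (d), (f): by construction $\grad_0 = \overline{\grad_{\rmc}}$, $\dive_0 = \overline{\dive_{\rmc}}$, $\curl_0 = \overline{\curl_{\rmc}}$ (this identification of the closures with the starred-then-starred operators is exactly what the preceding \prettyref{prop} established via \prettyref{lem:ass}). By \prettyref{lem:grscp}, a pair $(f,g)$ lies in the closure of a closable operator $T$ iff there is a sequence $(f_k)_k$ in $\dom(T)$ with $f_k\to f$ and $Tf_k\to g$; applied to $T = \grad_{\rmc}$ (domain $\cci(\Omega)$, action the classical gradient) this is precisely the stated condition, and likewise for $\dive_{\rmc}$ and $\curl_{\rmc}$. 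So these three items are immediate once the closure identification is invoked. Assembling: I would write item (a) in full detail as the template, then items (c),(e) in one or two lines each citing \enquote{analogously}, and dispatch (b),(d),(f) together with the one-sentence appeal to \prettyref{lem:grscp}. The \enquote{hard part}, such as it is, is merely bookkeeping of signs and of the real-vs-complex scalar product, which the remark above handles.
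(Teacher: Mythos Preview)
Your proposal is correct and matches what the paper intends: the paper's own proof is a one-line remark that the statements are ``elementary consequences of the integration by parts formula and the definitions of the adjoint'' and delegates the details to \prettyref{exer:Sobolev_elem}, so your write-up is precisely the kind of fleshing-out that exercise asks for. One small citation quibble: for (b), (d), (f) you do not actually need \prettyref{lem:grscp} (which characterises closedness), only the identification $\grad_0=\overline{\grad_{\rmc}}$ etc.\ from the preceding Proposition together with the definition of the closure of a subset of the product space; the argument is otherwise exactly right.
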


All the statements in \prettyref{thm:Sobolev_space} are elementary
consequences of the integration by parts formula and the definitions
of the adjoint. We ask the reader to prove these statements in \prettyref{exer:Sobolev_elem}.
\begin{rem}
We remark here that, classically, the following notation has been introduced: \index{H(div,Omega), H_0(div,Omega)@$H(\dive,\Omega), H_0(\dive,\Omega)$}\index{H(curl,Omega), H_0(curl,Omega)@$H(\curl,\Omega),H_0(\curl,\Omega)$}\index{H^1(Omega), H^1_0(Omega)@$H^1(\Omega),H_0^1(\Omega)$}
\begin{align*}
H^{1}(\Omega) & \coloneqq\dom(\grad),\\
H_{0}^{1}(\Omega) & \coloneqq\dom(\grad_{0}),\\
H(\dive,\Omega) & \coloneqq\dom(\dive),\\
H(\curl,\Omega) & \coloneqq\dom(\curl).
\end{align*}
Following the rationale of appending zero as an index for $H_{0}^{1}(\Omega)$,
we shall also use
\begin{align*}
H_{0}(\dive,\Omega) & \coloneqq\dom(\dive_{0}),\\
H_{0}(\curl,\Omega) & \coloneqq\dom(\curl_{0}).
\end{align*}
We do, however, caution the reader that other authors also use $H_{0}(\dive,\Omega)$
and $H_{0}(\curl,\Omega)$ to denote the kernel of $\dive$ and $\curl$.
All the spaces just defined are so-called Sobolev spaces\index{Sobolev space}. We note
that for $d=3$ we clearly have $H^{1}(\Omega)^3\subseteq H(\dive,\Omega)\cap H(\curl,\Omega)$.
On the other hand, note that $H(\dive,\Omega)$ is neither a sub-
nor a superset of $H(\curl,\Omega)$.
\end{rem}

\begin{rem}
  We emphasise that $H_0^1(\Omega)\subseteq H^1(\Omega)$ is a proper inclusion for many open $\Omega$. The `$0$' in the index is a reminder of `$0$'-boundary conditions. In fact, the only difference between these two spaces lies in the behaviour of their elements at the boundary of $\Omega$. The space $H_0^1$ signifies all $H^1$-functions vanishing at $\partial\Omega$ in a generalised sense. The corresponding statements are true for the inclusions $H_0(\dive,\Omega)\subseteq H(\dive,\Omega)$ and $H_0(\curl,\Omega)\subseteq H(\curl,\Omega)$. The space $H_0(\dive,\Omega)$ describes $H(\dive,\Omega)$-vector fields with vanishing normal component and to lie in $H_0(\curl,\Omega)$ provides a handy generalisation of vanishing tangential component. We will anticipate these abstractions, when we apply the solution theory of evolutionary equations for particular cases. In a later chapter we will come back to this issue when we discuss inhomogeneous boundary value problems.
\end{rem}

For later use, we record the following relationships between the vector-analytical
operators introduced above.

\begin{prop}
\label{prop:ker_ran_vec_ana} Let $d=3$. We have the following inclusions:
\begin{align*}
\cran(\curl_{0}) & \subseteq\ker(\dive_{0}),\\
\cran(\grad_{0}) & \subseteq\ker(\curl_{0}),\\
\cran(\curl) & \subseteq\ker(\dive),\\
\cran(\grad) & \subseteq\ker(\curl).
\end{align*}
\end{prop}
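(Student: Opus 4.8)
The plan is to prove all four inclusions from the corresponding differential identities on smooth compactly supported functions, namely $\dive_{\rmc}\curl_{\rmc}=0$ and $\curl_{\rmc}\grad_{\rmc}=0$, which hold pointwise by symmetry of second partial derivatives (Schwarz's theorem). Concretely, for $\phi\in\cci(\Omega)^3$ we have $\dive_{\rmc}\curl_{\rmc}\phi = \partial_1(\partial_2\phi_3-\partial_3\phi_2)+\partial_2(\partial_3\phi_1-\partial_1\phi_3)+\partial_3(\partial_1\phi_2-\partial_2\phi_1)=0$, and similarly $\curl_{\rmc}\grad_{\rmc}\phi=0$. These are the only genuine computations needed; everything else is a closure/duality argument.

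First I would treat $\cran(\curl_0)\subseteq\ker(\dive_0)$. Recall $\curl_0=\overline{\curl_{\rmc}}$ and $\dive_0=-\grad^*=-(\overline{\grad_{\rmc}})^*=-\grad_{\rmc}^*$ (using $\grad=-\dive_{\rmc}^*$ and \prettyref{lem:ass}, so actually $\dive_0=\dive_{\rmc}^{**}=\overline{\dive_{\rmc}}$ — I would double-check which description is cleanest here; the key point is $\dive_0\supseteq\dive_{\rmc}$ and $\dive_0$ is closed). From $\dive_{\rmc}\curl_{\rmc}=0$ we get $\curl_{\rmc}[\cci(\Omega)^3]\subseteq\ker(\dive_{\rmc})\subseteq\ker(\dive_0)$. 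Since $\ker(\dive_0)$ is closed (as $\dive_0$ is a closed operator, its kernel is closed), taking closures gives $\cran(\curl_0)=\overline{\curl_{\rmc}[\cci(\Omega)^3]}\subseteq\overline{\curl_{\rmc}[\cci(\Omega)^3]}\subseteq\ker(\dive_0)$. Here I should be slightly careful: $\cran(\curl_0)$ is the closure of $\ran(\curl_0)$, and $\ran(\curl_0)\subseteq\overline{\ran(\curl_{\rmc})}$ because $\curl_0$ is the closure of $\curl_{\rmc}$ (any $(x,y)\in\curl_0$ is a limit of pairs $(x_k,\curl_{\rmc}x_k)$, so $y\in\overline{\ran(\curl_{\rmc})}$); hence $\cran(\curl_0)=\overline{\ran(\curl_{\rmc})}$, which lands in the closed set $\ker(\dive_0)$. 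The same argument with $\curl_{\rmc}\grad_{\rmc}=0$ gives $\cran(\grad_0)\subseteq\ker(\curl_0)$.

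For the two inclusions without zero-subscripts, $\cran(\curl)\subseteq\ker(\dive)$ and $\cran(\grad)\subseteq\ker(\curl)$, the slick route is to adjoin the inclusions just proved. Indeed, $\dive=-\grad_{\rmc}^*=-(\overline{\grad_{\rmc}})^*=-\grad_0^*$, i.e.\ $\dive^*=-\grad_0$; similarly one reads off from the definitions that $\curl^*=\curl_0$ and $\dive_0^*=-\grad$, $\curl_0^*=\curl$, etc. Then $\cran(\curl)\subseteq\ker(\dive)$ is equivalent, via \prettyref{thm:ran-kernerl} (which gives $\cran(A^*)=\ker(\overline A)^\bot$ and $\ran(A)^\bot=\ker(A^*)$), to the already-established inclusion $\cran(\dive_0)\cdots$ — more precisely I would use: $\cran(\curl)=\ker(\curl^*)^\bot=\ker(\curl_0)^\bot$ and $\ker(\dive)=\ker(\dive)^{\bot\bot}\supseteq\cdots$; taking orthogonal complements turns "$\cran(\grad_0)\subseteq\ker(\curl_0)$" into "$\cran(\curl_0)^{*}$-side $\supseteq\cdots$". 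The cleanest statement of this duality is: for densely defined closed $A,B$ with $BA=0$ on a core, one has $AB$-adjoint-type vanishing; so I would instead just repeat the direct closure argument using \prettyref{thm:Sobolev_space}. Namely, to show $\curl f\in\ker(\dive)$ for $f\in\dom(\curl)$: by \prettyref{thm:Sobolev_space}(e) and (c) it suffices to check $-\int_\Omega(\curl f)\cdot\grad\phi=0$ for all $\phi\in\cci(\Omega)$, i.e.\ $\int_\Omega(\curl f)\cdot\curl_{\rmc}(\text{something})$... — here the natural move is $\int_\Omega(\curl f)\cdot\grad\phi = \int_\Omega f\cdot\curl(\grad\phi)=\int_\Omega f\cdot\curl_{\rmc}\grad_{\rmc}\phi=0$, using the definition of $\curl$ as $\curl_{\rmc}^*$ and $\grad_{\rmc}\phi\in\cci(\Omega)^3$. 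Wait — one must check $\grad_{\rmc}\phi$ is an admissible test object; it is, since $\phi\in\cci(\Omega)$ implies $\grad\phi\in\cci(\Omega)^3$, and then $\curl_{\rmc}\grad_{\rmc}\phi=0$ pointwise. Analogously $\int_\Omega(\grad f)\cdot\curl\phi$: for $f\in\dom(\grad)$, $g=\grad f$, and $\phi\in\cci(\Omega)^3$, we get $\int_\Omega g\cdot\curl\phi\overset{(a)}{=}-\int_\Omega f\,\dive(\ldots)$ — more directly, $\int_\Omega\curl\phi\cdot g = \int_\Omega\curl_{\rmc}\phi\cdot\grad f = -\int_\Omega\dive_{\rmc}\curl_{\rmc}\phi\, f=0$ by part (a) of \prettyref{thm:Sobolev_space}, since $\curl_{\rmc}\phi\in\cci(\Omega)^3$ and $\dive_{\rmc}\curl_{\rmc}\phi=0$.

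The main obstacle, such as it is, is purely bookkeeping: correctly identifying which operator is whose adjoint/closure (the six operators come in three adjoint pairs with a mild sign convention), and being careful that $\cran$ of a closure equals the closure of the range of the original operator. The analytic content — Schwarz's theorem giving $\dive\circ\curl=0$ and $\curl\circ\grad=0$ on test functions — is immediate. I would present the "zero-subscript" pair via the closure-of-range argument and the "non-subscript" pair via testing against $\cci$ as in \prettyref{thm:Sobolev_space}, since each is shortest in its respective setting; alternatively one can derive one pair from the other by adjoining, noting $\cran(A^*)=\ker(\overline A)^\bot$ and $\ran(A)^\bot=\ker(A^*)$ from \prettyref{thm:ran-kernerl}, but writing it out both ways is the safer plan.
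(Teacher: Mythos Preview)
Your argument for the first two inclusions (the zero-subscript ones) is the same as the paper's: reduce to the pointwise identities $\dive_{\rmc}\curl_{\rmc}=0$, $\curl_{\rmc}\grad_{\rmc}=0$ on test functions, then use that kernels of closed operators are closed and that $\cci$ is a core.

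For the last two inclusions you ultimately settle on a direct testing argument via \prettyref{thm:Sobolev_space}: e.g.\ for $f\in\dom(\curl)$ and $\phi\in\cci(\Omega)$, compute $\int_\Omega(\curl f)\cdot\grad_{\rmc}\phi=\int_\Omega f\cdot\curl_{\rmc}\grad_{\rmc}\phi=0$, hence $\curl f\in\ker(\dive)$. This is correct. The paper instead derives these two from the first two by taking orthogonal complements, invoking the decompositions from \prettyref{cor:abstrHelmH}:
\[
\L(\Omega)^{3}=\cran(\grad)\oplus\ker(\dive_{0})=\ker(\curl)\oplus\cran(\curl_{0}),
\]
and similarly with the subscripts swapped. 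From $\cran(\curl_0)\subseteq\ker(\dive_0)$ one reads off $\cran(\grad)=\ker(\dive_0)^\bot\subseteq\cran(\curl_0)^\bot=\ker(\curl)$, and analogously for the remaining inclusion. This is exactly the duality route you sketch first and then abandon as ``bookkeeping''; in fact it goes through cleanly once you use \prettyref{cor:abstrHelmH} rather than manipulating $\ker$/$\cran$ identities by hand. The paper's route is shorter and makes the adjoint structure explicit; your testing route is more self-contained and avoids any appeal to the abstract decomposition. Either is fine here.
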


\begin{proof}
It is elementary to show that for given $\psi\in\cci(\Omega)^{3}$ and
$\phi\in\cci(\Omega)$ we have $\dive_{0}\curl_{0}\psi=0$ as well
as $\curl_{0}\grad_{0}\phi=0$. Thus, we obtain $\ran(\curl_{\rmc})\subseteq\ker(\dive_{0})$ and $\ran(\grad_{\rmc})\subseteq\ker(\curl_{0})$.
Since $\ker(\dive_{0})$ and $\ker(\curl_{0})$ are closed, and $\cci(\Omega)^{3}$
and $\cci(\Omega)$  are cores for $\curl_{0}$ and $\grad_{0}$ respectively, we
obtain the first two inclusions. The last two inclusions follow from
the first two %by appealing 
by taking into account the orthogonal decompositions
\[
\L(\Omega)^{3} =\cran(\grad)\oplus\ker(\dive_{0})=\ker(\curl)\oplus\cran(\curl_{0})
\]
and
\[
\L(\Omega)^{3} =\cran(\grad_{0})\oplus\ker(\dive)=\ker(\curl_{0})\oplus\cran(\curl)
\]
which follow from \prettyref{cor:abstrHelmH}.
\end{proof}

\section{Well-Posedness of Evolutionary Equations and Applications}

The solution theory of evolutionary equations\index{Solution Theory, evolutionary equations} is contained in the
next result, Picard's theorem. This result is central for all the derivations to come.
In fact, with the notation of \prettyref{thm:Solution_theory_EE},
we shall prove that for all (well-behaved) $F$ there is a unique solution
of
\[
\left(\td{\nu}M(\td{\nu})+A\right)U=F.
\]
The solution $U$ depends continuously and causally on the choice
of $F$. %The precise version reads as follows.

In order to formulate the result, for a Hilbert space $H$, $\nu\in\R$ and a given operator $A\from\dom(A)\subseteq H\to H$ we define its extended operator in $\Lnu(\R;H)$, again denoted by $A$, by 
\begin{align*}
\Lnu(\R;\dom(A))\subseteq\Lnu(\R;H) & \to\Lnu(\R;H)\\
f & \mapsto\bigl(t\mapsto Af(t)\bigr).
\end{align*}
We have collected some properties of extended operators in
\prettyref{exer:tensorprod} and \prettyref{exer:abstractFourier_exten}.

\begin{thm}[Picard]
\label{thm:Solution_theory_EE} Let $\nu_{0}\in\R$ and $H$ be a Hilbert
space. Let $M\colon\dom(M)\subseteq\C\to\bo(H)$ be a material law
with $\sbb{M}<\nu_{0}$ and let $A\colon\dom(A)\subseteq H\to H$
be skew-selfadjoint. Assume that
\[
\Re\scp{\phi}{zM(z)\phi}_H\geq c\norm{\phi}_H^{2}\quad(\phi\in H,z\in\C_{\Re\geq\nu_{0}})
\]
for some $c>0$. Then for all $\nu\geq\nu_{0}$ the operator $\td{\nu}M(\td{\nu})+A$ is closable and
\[
S_{\nu}\coloneqq\bigl(\overline{\td{\nu}M(\td{\nu})+A}\bigr)^{-1}\in\bo(\Lnu(\R;H)).
\]
Furthermore, $S_\nu$ is causal and satisfies $\norm{S_{\nu}}_{\bo(\Lnu)}\leq1/c$, and for all $F\in\dom(\td{\nu})$ we
have
\[
S_{\nu}F\in\dom(\td{\nu})\cap\dom(A).
\]
Furthermore, for $\eta,\nu\geq\nu_{0}$ and $F\in\Lnu(\R;H)\cap\Lm{\eta}(\R;H)$ we have that $S_{\nu}F=S_{\eta}F$.
\end{thm}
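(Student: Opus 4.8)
The plan is to diagonalise the equation with the Fourier--Laplace transformation $\mathcal{L}_{\nu}$, solve it pointwise in the frequency variable, and recognise the resulting solution operator as a material law operator, after which every asserted property follows from the machinery already developed. Write $B_{0}\coloneqq\td{\nu}M(\td{\nu})+A$ with its natural domain $\dom(\td{\nu}M(\td{\nu}))\cap\dom(A)$; this domain contains $\lin\set{\varphi\cdot x}{\varphi\in\cci(\R),\,x\in\dom(A)}$ (since $\td{\nu}(\varphi x)=\varphi'x$ and $A(\varphi x)=\varphi Ax$), which is total in $\Lnu(\R;H)$ by \prettyref{lem:dense sets 2} because $\dom(A)$ is dense in $H$, so $B_{0}$ is densely defined. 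By \prettyref{thm:spectral-repr-derivative} and \prettyref{prop:mat_law_function_of_td}, and since the extended operator $A$ commutes with $\mathcal{L}_{\nu}$ (it acts pointwise in time), $\mathcal{L}_{\nu}$ transforms $B_{0}$ into multiplication, along $z=\i\m+\nu$, by the family of operators $z\mapsto zM(z)+A$ on $H$.

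The heart of the matter is to show that for every $z\in\C_{\Re\geq\nu_{0}}$ the operator $zM(z)+A\colon\dom(A)\subseteq H\to H$ is boundedly invertible with $\norm{(zM(z)+A)^{-1}}\leq 1/c$. As $A$ is skew-selfadjoint, $\Re\scp{\phi}{(zM(z)+A)\phi}_{H}=\Re\scp{\phi}{zM(z)\phi}_{H}\geq c\norm{\phi}_{H}^{2}$, so $\norm{(zM(z)+A)\phi}_{H}\geq c\norm{\phi}_{H}$ by Cauchy--Schwarz, giving injectivity and a closed range. For density of the range, observe that $(zM(z)+A)^{*}=\overline{z}M(z)^{*}-A$ (by \prettyref{thm:adj-sum}, using $(zM(z))^{*}=\overline{z}M(z)^{*}$) satisfies the same estimate, since $\Re\scp{\phi}{\overline{z}M(z)^{*}\phi}_{H}=\Re\overline{\scp{\phi}{zM(z)\phi}_{H}}\geq c\norm{\phi}_{H}^{2}$; hence $\ker((zM(z)+A)^{*})=\ran(zM(z)+A)^{\bot}$ (by \prettyref{thm:ran-kernerl}) is trivial. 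Thus $\hat{M}(z)\coloneqq(zM(z)+A)^{-1}$ is well-defined and bounded by $1/c$ on $\C_{\Re\geq\nu_{0}}$, and a Neumann series argument (locally $zM(z)+A=(z_{0}M(z_{0})+A)\bigl(\id+\hat{M}(z_{0})(zM(z)-z_{0}M(z_{0}))\bigr)$) shows $\hat{M}$ extends holomorphically to a neighbourhood of $\C_{\Re\geq\nu_{0}}$, so $\hat{M}$ is a material law with $\sbb{\hat{M}}\leq\nu_{0}$.

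Now set $S_{\nu}\coloneqq\hat{M}(\td{\nu})$. By \prettyref{prop:mat_law_function_of_td} it is bounded with $\norm{S_{\nu}}_{\bo(\Lnu)}\leq\norm{\hat{M}}_{\infty,\C_{\Re>\nu}}\leq 1/c$, and (as in the proof of \prettyref{thm:material law independent of nu} via \prettyref{lem:F-L idenpendent of nu}) it is causal and satisfies $S_{\nu}F=S_{\eta}F$ for $F\in\Lnu(\R;H)\cap\Lm{\eta}(\R;H)$ — so the causality claim and the final $\nu$-independence claim become automatic once we identify $S_{\nu}=(\overline{B_{0}})^{-1}$. For that, a direct Fourier--Laplace computation gives $S_{\nu}B_{0}u=u$ for all $u\in\dom(B_{0})$, since $\hat{M}(\i t+\nu)\bigl((\i t+\nu)M(\i t+\nu)+A\bigr)(\mathcal{L}_{\nu}u)(t)=(\mathcal{L}_{\nu}u)(t)$ for a.e.\ $t$ (using $(\mathcal{L}_{\nu}u)(t)\in\dom(A)$ a.e.); thus $S_{\nu}$ is a bounded injective left inverse of $B_{0}$, which forces $B_{0}$ to be closable and yields $S_{\nu}\overline{B_{0}}=\id$ on $\dom(\overline{B_{0}})$. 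Conversely, for $F\in\dom(\td{\nu})$ one checks, using $A\hat{M}(z)=\id-zM(z)\hat{M}(z)$ and $(\i\m+\nu)\mathcal{L}_{\nu}F\in\L(\R;H)$, that $S_{\nu}F\in\dom(\td{\nu}M(\td{\nu}))\cap\dom(A)=\dom(B_{0})$ with $B_{0}S_{\nu}F=F$; since $\dom(\td{\nu})$ is dense and $\overline{B_{0}}S_{\nu}$ is closed by \prettyref{prop:inv-sum-comp-closed}, it follows that $\overline{B_{0}}S_{\nu}=\id$ on $\Lnu(\R;H)$. Hence $\overline{B_{0}}$ is bijective with $(\overline{B_{0}})^{-1}=S_{\nu}$, and the regularity statement $S_{\nu}F\in\dom(\td{\nu})\cap\dom(A)$ for $F\in\dom(\td{\nu})$ is exactly what was verified en route, together with the fact that the material law operator $S_{\nu}$ maps $\dom(\td{\nu})$ into itself.

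The step I expect to be the main obstacle is the invertibility of $zM(z)+A$ on $H$: the positive-definiteness hypothesis yields injectivity with closed range at once, but surjectivity hinges on the short observation that the adjoint $\overline{z}M(z)^{*}-A$ inherits the same coercivity through the conjugation identity $\Re\scp{\phi}{\overline{z}M(z)^{*}\phi}=\Re\scp{\phi}{zM(z)\phi}$. The remaining work — the domain bookkeeping turning the frequency-wise inverse $\hat{M}(\i\m+\nu)$ into a genuine inverse of $\overline{\td{\nu}M(\td{\nu})+A}$, and the closability — is routine but must be carried out with care.
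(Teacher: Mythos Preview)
Your proposal is correct and follows essentially the same route as the paper: reduce via the Fourier--Laplace transform to the pointwise invertibility of $zM(z)+A$ (the paper packages your coercivity-plus-adjoint argument as \prettyref{prop:accr_inv}), recognise $z\mapsto(zM(z)+A)^{-1}$ as a material law, and read off boundedness, causality, $\nu$-independence and regularity from \prettyref{prop:mat_law_function_of_td} and \prettyref{thm:material law independent of nu}. The only cosmetic difference is in identifying $S_{\nu}$ with $(\overline{B_{0}})^{-1}$: the paper works in the transformed picture and approximates $f\in\dom(B(\i\m+\nu))$ by the truncations $\1_{\ci{-n}{n}}f$, whereas you argue directly via a bounded injective left inverse on $\dom(B_{0})$ together with $B_{0}S_{\nu}=\id$ on the dense set $\dom(\td{\nu})$ --- both amount to the same closure computation.
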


The property that $S_{\nu}F=S_{\eta}F$ for all $F\in\Lnu(\R;H)\cap\Lm{\eta}(\R;H)$
where $\eta,\nu\geq\nu_{0}$, for some $\nu_{0}\in\R$, will be referred
to as $S_{\nu}$ being \emph{eventually independent of $\nu$} in what follows.

\begin{rem}\label{rem:regularity}
 Recall that $F\in \dom(\td{\nu})$ implies $U\coloneqq S_\nu F\in \dom(\td{\nu})\cap \dom(A)$ in \prettyref{thm:Solution_theory_EE}. Since $M(\td{\nu})$ leaves the space $\dom(\td{\nu})$ invariant, this gives that $M(\td{\nu})U\in \dom(\td{\nu})$ and thus, $U$ solves the evolutionary equation literally; that is,
 \[
  (\td{\nu}M(\td{\nu})+A)U=F,
 \]
while for $F\in \Lnu(\R;H)$, in general, we just have
\[
 \bigl(\overline{\td{\nu}M(\td{\nu})+A}\bigr)U=F.
\]
\end{rem}

\begin{defn*}
 Let $H$ be a Hilbert space and $T\in L(H)$. If $T$ is selfadjoint, we write $T\geq c$ for some $c\in \R$ if 
\[
 \forall x\in H:\, \scp{x}{Tx}_H\geq c\norm{x}_H^2.
\]
Moreover, we define the \emph{real part of} $T$ \index{Real part of operator} by $\Re T\coloneqq \frac{1}{2}(T+T^\ast)$.
\end{defn*}

Note that if $H$ is a Hilbert space and $T\in L(H)$ then $\Re T$ is selfadjoint. Moreover,
\[\scp{x}{(\Re T)x}_H = \Re \scp{x}{Tx}_H \quad(x\in H).\]
Hence, in \prettyref{thm:Solution_theory_EE} the assumption on the material law can be rephrased as 
\[\Re zM(z)\geq c \quad(z\in\C_{\Re\geq\nu_{0}}).\]
The following operators will be prototypical examples needed for the applications
of the previous theorem.

\begin{prop}
\label{prop:block_op_realinv}Let $H_{0},H_{1}$ be Hilbert spaces.
\begin{enumerate}
\item\label{prop:block_op_realinv:item:1} Let $B\colon\dom(B)\subseteq H_{0}\to H_{1}$, $C\colon\dom(C)\subseteq H_{1}\to H_{0}$ be
densely defined linear operators. Then
\begin{align*}
\begin{pmatrix}
0 & C\\
B & 0
\end{pmatrix}\colon\dom(B)\times\dom(C)\subseteq H_{0}\times H_{1} &\to H_{0}\times H_{1}\\
(\phi,\psi) & \mapsto(C\psi,B\phi)
\end{align*}
is densely defined, and we have
\[
\begin{pmatrix}
0 & C\\
B & 0
\end{pmatrix}^{*}=\begin{pmatrix}
0 & B^{*}\\
C^{*} & 0
\end{pmatrix}.
\]
\item\label{prop:block_op_realinv:item:2} Let $a\in\bo(H_{0})$, and  $c>0$. Assume $\Re a\geq c$.
Then $a^{-1}\in\bo(H_{0})$ with $\norm{a^{-1}}\leq \frac{1}{c}$ and $\Re a^{-1}\geq c\norm{a}^{-2}$.
\end{enumerate}
\end{prop}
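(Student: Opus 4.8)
\textbf{Plan for the proof of \prettyref{prop:block_op_realinv}.}

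\textbf{Part \ref{prop:block_op_realinv:item:1}.} The first step is to observe that $\dom(B)\times\dom(C)$ is dense in $H_0\times H_1$, which is immediate since $\dom(B)$ is dense in $H_0$ and $\dom(C)$ is dense in $H_1$, and the product of two dense sets is dense in the product space. Hence the block operator, call it $L$, is densely defined and its adjoint is a linear operator by \prettyref{lem:ddc}. For the formula for $L^*$, the plan is to compute directly from the characterisation of the adjoint in the relational framework: a pair $\bigl((u_0,u_1),(v_0,v_1)\bigr)$ lies in $L^*$ if and only if
\[
\scp{(u_0,u_1)}{(C\psi,B\phi)}_{H_0\times H_1} = \scp{(v_0,v_1)}{(\phi,\psi)}_{H_0\times H_1}
\]
for all $\phi\in\dom(B)$, $\psi\in\dom(C)$. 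Expanding the inner products, this reads $\scp{u_0}{C\psi}_{H_0} + \scp{u_1}{B\phi}_{H_1} = \scp{v_0}{\phi}_{H_0} + \scp{v_1}{\psi}_{H_1}$. Since $\phi$ and $\psi$ range over their domains independently, one may first fix $\psi=0$ to get $\scp{u_1}{B\phi}_{H_1} = \scp{v_0}{\phi}_{H_0}$ for all $\phi\in\dom(B)$, i.e.\ $(u_1,v_0)\in B^*$, and then fix $\phi=0$ to get $(u_0,v_1)\in C^*$. Conversely, if $(u_1,v_0)\in B^*$ and $(u_0,v_1)\in C^*$ then adding the two defining identities recovers the displayed equation. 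This shows $L^* = \begin{pmatrix} 0 & B^* \\ C^* & 0\end{pmatrix}$ (with the natural domain $\dom(C^*)\times\dom(B^*)$, reading off components in the correct order). The only point requiring a little care is bookkeeping of which component pairs with which, but there is no real obstacle here.

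\textbf{Part \ref{prop:block_op_realinv:item:2}.} First I would show $a$ is boundedly invertible. From $\Re a\geq c$ we get, for all $x\in H_0$,
\[
c\norm{x}_{H_0}^2 \leq \Re\scp{x}{ax}_{H_0} \leq \abs{\scp{x}{ax}_{H_0}} \leq \norm{x}_{H_0}\norm{ax}_{H_0},
\]
so $\norm{ax}_{H_0}\geq c\norm{x}_{H_0}$; hence $a$ is injective with closed range. Applying the same estimate to $a^*$ (note $\Re a^* = \Re a \geq c$) shows $a^*$ is injective, so by \prettyref{thm:ran-kernerl} $\cran(a) = \ker(a^*)^\bot = \{0\}^\bot = H_0$, and combined with closedness of the range, $a$ is onto. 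Thus $a^{-1}\in\bo(H_0)$, and from $\norm{ax}_{H_0}\geq c\norm{x}_{H_0}$ with $x = a^{-1}y$ we read off $\norm{a^{-1}y}_{H_0}\leq \frac1c\norm{y}_{H_0}$, i.e.\ $\norm{a^{-1}}\leq\frac1c$. For the lower bound on $\Re a^{-1}$, the plan is: for $y\in H_0$ set $x\coloneqq a^{-1}y$, so that
\[
\Re\scp{y}{a^{-1}y}_{H_0} = \Re\scp{ax}{x}_{H_0} = \Re\scp{x}{ax}_{H_0} \geq c\norm{x}_{H_0}^2 = c\norm{a^{-1}y}_{H_0}^2 \geq c\norm{a}^{-2}\norm{y}_{H_0}^2,
\]
where the last step uses $\norm{y}_{H_0} = \norm{a a^{-1} y}_{H_0}\leq \norm{a}\norm{a^{-1}y}_{H_0}$, hence $\norm{a^{-1}y}_{H_0}\geq\norm{a}^{-1}\norm{y}_{H_0}$. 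This gives $\Re a^{-1}\geq c\norm{a}^{-2}$.

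\textbf{Main obstacle.} There is no serious obstacle; the whole proposition is a routine verification. The only mildly delicate point is the surjectivity of $a$ in part \ref{prop:block_op_realinv:item:2}, where one must not forget to invoke the coercivity of $a^*$ (equivalently $\Re a = \Re a^*$) together with \prettyref{thm:ran-kernerl} and closedness of the range — skipping either ingredient would leave a gap. In part \ref{prop:block_op_realinv:item:1} the only thing to watch is the ordering of the components in the block matrix and its domain.
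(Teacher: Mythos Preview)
Your proposal is correct and follows essentially the same route as the paper: in part \ref{prop:block_op_realinv:item:1} both you and the paper test the adjoint condition with vectors of the form $(\phi,0)$ and $(0,\psi)$ separately, and in part \ref{prop:block_op_realinv:item:2} both arguments derive $\norm{ax}\geq c\norm{x}$ from the Cauchy--Schwarz inequality, use $\Re a^*=\Re a$ together with \prettyref{thm:ran-kernerl} for surjectivity, and then substitute $x=a^{-1}y$ for the final estimate. The only cosmetic difference is that the paper phrases the closedness-of-range step via \prettyref{lem:bbc} rather than stating it directly from the coercivity estimate.
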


\begin{proof}
The proof of the first statement can be done in two steps. First, notice that the inclusion
$\begin{pmatrix}
0 & B^{*}\\
C^{*} & 0
\end{pmatrix}\subseteq\begin{pmatrix}
0 & C\\
B & 0
\end{pmatrix}^{*}$ follows immediately. If, on the other hand, $\begin{pmatrix}\phi\\\psi\end{pmatrix}\in\dom\left(\begin{pmatrix}
 0  &  C\\
 B  &  0 
\end{pmatrix}^{*}\right)$ with $\begin{pmatrix}
0 & C\\
B & 0
\end{pmatrix}^{*}\begin{pmatrix}\phi\\\psi\end{pmatrix}=\begin{pmatrix}\xi\\\zeta\end{pmatrix}$ we get for all $x\in\dom(B)$ that
\begin{align*}
\scp{Bx}{\psi}_{H_1} & =\scp{\begin{pmatrix}
0 & C\\
B & 0
\end{pmatrix}\begin{pmatrix}
x\\
0
\end{pmatrix}}{\begin{pmatrix}
\phi\\
\psi
\end{pmatrix}}_{H_0\times H_1}
=\scp{\begin{pmatrix}
x\\
0
\end{pmatrix}}{\begin{pmatrix}
0 & C\\
B & 0
\end{pmatrix}^{*}\begin{pmatrix}
\phi\\
\psi
\end{pmatrix}}_{H_0\times H_1} \\
& =\scp{\begin{pmatrix}
x\\
0
\end{pmatrix}}{\begin{pmatrix}
\xi\\
\zeta
\end{pmatrix}}_{H_0\times H_1}=\scp{x}{\xi}_{H_0}.
\end{align*}
Hence, $\psi\in\dom(B^{*})$ and $B^{*}\psi=\xi$. Similarly, we obtain
$\phi\in\dom(C^{*})$ and $C^{*}\phi=\zeta$.

For the second statement, we compute for all $\phi\in H_{0}$ using
the Cauchy--Schwarz inequality
\[
\norm{\phi}\norm{a\phi}_{H_0}\geq\abs{\scp{\phi}{a\phi}}_{H_0}\geq\Re\scp{\phi}{a\phi}_{H_0}\geq c\scp{\phi}{\phi}_{H_0} = c\norm{\phi}_{H_0}^2.
\]
Thus, $a$ is one-to-one. Since $\Re a=\Re a^{*}$ it follows that
$a^{*}$ is one-to-one, as well. Thus, we get that $a$ has dense
range by \prettyref{thm:ran-kernerl}. The inequality 
\[
\norm{a\phi}_{H_0}\geq c\norm{\phi}_{H_0}
\]
implies that $a^{-1}$ is bounded with $\norm{a^{-1}}\leq \frac{1}{c}$. Hence, as $a^{-1}$ is closed, $\dom(a^{-1})=\ran(a)$ is closed by \prettyref{lem:bbc} and hence, $\dom(a^{-1})=H_0$; that is, $a^{-1}\in\bo(H_{0})$.
To conclude, let $\psi\in H_{0}$ and put $\phi\coloneqq a^{-1}\psi$.
Then $\norm{\psi}_{H_0}=\norm{aa^{-1}\psi}_{H_0}\leq\norm{a}\norm{a^{-1}\psi}_{H_0}$
and so
\begin{align*}
\Re\scp{\psi}{a^{-1}\psi}_{H_0} & = \Re\scp{a\phi}{\phi}_{H_0}
= \Re\scp{\phi}{a\phi}_{H_0}
 \geq c\scp{\phi}{\phi}_{H_0}
 = c\scp{a^{-1}\psi}{a^{-1}\psi}_{H_0}\\
 &  \geq c\frac{1}{\norm{a}^{2}}\norm{\psi}_{H_0}^2.\tag*{{\qedhere}}
\end{align*}
\end{proof}

\subsection*{The Heat Equation}

\index{heat equation, evolutionary equation}
The first example we will consider is the heat equation in an open subset $\Omega\subseteq\R^d$. Under a heat source,
$Q\colon\R\times\Omega\to\R$, the heat distribution, $\theta\colon\R\times\Omega\to\R$,
satisfies the so-called heat-flux-balance\index{heat-flux-balance}
\[
\partial_{t}\theta+\dive q=Q.
\]
Here, $q\colon\R\times\Omega\to\R^{d}$ is the heat flux\index{heat flux} which is connected
to $\theta$ via Fourier's law\index{Fourier's law}
\[
q=-a\grad\theta,
\]
where $a\colon\Omega\to\R^{d\times d}$ is the heat conductivity,
which is measurable, bounded and uniformly strictly positive in the
sense that 
\[
\Re a(x)\geq c
\]
for all $x\in\Omega$ and some $c>0$ in the sense of positive definiteness. Moreover,
we assume that $\Omega$ is thermally isolated, which is modelled
by requiring that the normal component of $q$ vanishes at $\partial\Omega$; that
is, $q\in\dom(\dive_{0})$. Written as a block matrix and incorporating
the boundary condition, we obtain 
\[
\left(\partial_{t}\begin{pmatrix}
1 & 0\\
0 & 0
\end{pmatrix}+\begin{pmatrix}
0 & 0\\
0 & a^{-1}
\end{pmatrix}+\begin{pmatrix}
0 & \dive_{0}\\
\grad & 0
\end{pmatrix}\right)\begin{pmatrix}
\theta\\
q
\end{pmatrix}=\begin{pmatrix}
Q\\
0
\end{pmatrix}.
\]

\begin{thm}
\label{thm:wp_heat}For all $\nu>0$, the operator 
\[
\td{\nu}\begin{pmatrix}
1 & 0\\
0 & 0
\end{pmatrix}+\begin{pmatrix}
0 & 0\\
0 & a^{-1}
\end{pmatrix}+\begin{pmatrix}
0 & \dive_{0}\\
\grad & 0
\end{pmatrix}
\]
is densely defined and closable in $\Lnu\left(\R;\L(\Omega)\times\L(\Omega)^{d}\right)$.
The respective closure is continuously invertible with causal inverse
being eventually independent of $\nu$. 
\end{thm}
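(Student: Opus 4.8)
The plan is to recognise the heat equation as an instance of Picard's theorem (\prettyref{thm:Solution_theory_EE}) and verify its hypotheses one by one. Write the equation as $(\td{\nu}M(\td{\nu})+A)U=F$ with $U=(\theta,q)^\top$, $F=(Q,0)^\top$, the material law
\[
M(z)\coloneqq\begin{pmatrix}1&0\\0&0\end{pmatrix}+z^{-1}\begin{pmatrix}0&0\\0&a^{-1}\end{pmatrix}\quad(z\in\C\setminus\{0\}),
\]
and the spatial operator
\[
A\coloneqq\begin{pmatrix}0&\dive_{0}\\\grad&0\end{pmatrix}
\]
acting in $H\coloneqq\L(\Omega)\times\L(\Omega)^{d}$. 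Here $a^{-1}\in\bo(\L(\Omega)^{d})$ is the multiplication operator by the pointwise inverse of the matrix field $a$; by \prettyref{prop:block_op_realinv}\ref{prop:block_op_realinv:item:2} applied pointwise (together with $\Re a\geq c$ and the boundedness of $a$), this is a well-defined bounded operator with $\Re a^{-1}\geq c\norm{a}_{L_\infty}^{-2}\eqqcolon c'>0$.

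The verification then proceeds in three steps. First, $A$ is skew-selfadjoint: since $\dive_{0}=-\grad^{*}$ and $\grad=\grad$ with $\grad^{*}=-\dive_0$, \prettyref{prop:block_op_realinv}\ref{prop:block_op_realinv:item:1} applied to $B\coloneqq\grad$, $C\coloneqq\dive_{0}$ gives
\[
A^{*}=\begin{pmatrix}0&\grad^{*}\\\dive_{0}^{*}&0\end{pmatrix}=\begin{pmatrix}0&-\dive_{0}\\-\grad&0\end{pmatrix}=-A,
\]
using $\dive_0^* = (-\grad^*)^* = -\grad$ (recall $\grad$ is closed, so $\grad^{**}=\grad$). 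Second, $M$ is a material law with $\sbb{M}=0$: it is a polynomial in $z^{-1}$ with bounded operator coefficients, so \prettyref{exa:material-laws}(a) applies, and we may take any $\nu_{0}>0$. Third, the positive-definiteness condition: for $z\in\C_{\Re\geq\nu_{0}}$ and $\phi=(\phi_{1},\phi_{2})\in H$,
\[
zM(z)=\begin{pmatrix}z&0\\0&0\end{pmatrix}+\begin{pmatrix}0&0\\0&a^{-1}\end{pmatrix},
\]
so $\Re\scp{\phi}{zM(z)\phi}_{H}=(\Re z)\norm{\phi_{1}}^{2}+\Re\scp{\phi_{2}}{a^{-1}\phi_{2}}\geq\nu_{0}\norm{\phi_{1}}^{2}+c'\norm{\phi_{2}}^{2}\geq\min\{\nu_{0},c'\}\norm{\phi}_{H}^{2}$, which is the required estimate with $c\coloneqq\min\{\nu_{0},c'\}$.

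With all hypotheses of \prettyref{thm:Solution_theory_EE} in place, the theorem directly yields that $\td{\nu}M(\td{\nu})+A$ is densely defined (its domain contains $\calDH\times\calDH$-type elements landing in $\dom(A)$, which is dense) and closable, that $S_{\nu}=(\overline{\td{\nu}M(\td{\nu})+A})^{-1}\in\bo(\Lnu(\R;H))$ is causal with $\norm{S_{\nu}}\leq 1/c$, and that $S_{\nu}$ is eventually independent of $\nu$. I expect no serious obstacle here; the only mild care needed is the pointwise-to-global passage for $a^{-1}$ and its real-part bound, and observing that the displayed block operator is exactly $\td{\nu}M(\td{\nu})+A$ — in particular that $M(\td{\nu})=\begin{pmatrix}1&0\\0&0\end{pmatrix}+\td{\nu}^{-1}\begin{pmatrix}0&0\\0&a^{-1}\end{pmatrix}$ by \prettyref{exa:material_law_revisited}(a), so that $\td{\nu}M(\td{\nu})=\td{\nu}\begin{pmatrix}1&0\\0&0\end{pmatrix}+\begin{pmatrix}0&0\\0&a^{-1}\end{pmatrix}$ on the appropriate domain.
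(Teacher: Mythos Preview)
Your proof is correct and follows essentially the same approach as the paper: identify $M(z)=\begin{pmatrix}1&0\\0&0\end{pmatrix}+z^{-1}\begin{pmatrix}0&0\\0&a^{-1}\end{pmatrix}$ and $A=\begin{pmatrix}0&\dive_0\\\grad&0\end{pmatrix}$, verify skew-selfadjointness of $A$ via \prettyref{prop:block_op_realinv}\ref{prop:block_op_realinv:item:1}, check the positive-definiteness estimate for $zM(z)$ via \prettyref{prop:block_op_realinv}\ref{prop:block_op_realinv:item:2}, and invoke Picard's theorem. The paper applies \prettyref{prop:block_op_realinv}\ref{prop:block_op_realinv:item:2} directly to the operator $a\in\bo(\L(\Omega)^d)$ rather than arguing pointwise, but the content is the same.
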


\begin{proof}
The assertion follows from \prettyref{thm:Solution_theory_EE} applied
to 
\[
M(z)=\begin{pmatrix}
1 & 0\\
0 & 0
\end{pmatrix}+z^{-1}\begin{pmatrix}
0 & 0\\
0 & a^{-1}
\end{pmatrix} \quad\text{and} \quad 
A=\begin{pmatrix}
0 & \dive_{0}\\
\grad & 0
\end{pmatrix}.
\]
Note that $M$ is a material law with $\sbb{M}=0$ by \prettyref{exa:material-laws}. Moreover, for $(x,y)\in \L(\Omega)\times\L(\Omega)^d$ and $z\in \C_{\Re\geq\nu}$ with $\nu>0$ we estimate 
\begin{align*}
 \Re \scp{(x,y)}{zM(z)(x,y)}_{\L(\Omega)\times\L(\Omega)^d} &\geq \Re z\norm{x}_{\L(\Omega)}^2+ c\norm{a}^{-2}\norm{y}_{\L(\Omega)^d}^2\\
 &\geq \min\{\nu ,c\norm{a}^{-2}\} \norm{(x,y)}^2_{\L(\Omega)\times \L(\Omega)^d}, 
\end{align*}
where we have used  \prettyref{prop:block_op_realinv}\ref{prop:block_op_realinv:item:2} in the first inequality.
Moreover, $A$ is skew-selfadjoint by \prettyref{prop:block_op_realinv}\ref{prop:block_op_realinv:item:1}.
\end{proof}

\begin{rem}
Assume that $Q\in\dom(\td{\nu})$. It then follows from \prettyref{thm:Solution_theory_EE}
that 
\begin{align}
\begin{pmatrix}
\theta\\
q
\end{pmatrix} & \coloneqq\overline{\left(\td{\nu}\begin{pmatrix}
1 & 0\\
0 & 0
\end{pmatrix}+\begin{pmatrix}
0 & 0\\
0 & a^{-1}
\end{pmatrix}+\begin{pmatrix}
0 & \dive_{0}\\
\grad & 0
\end{pmatrix}\right)}^{-1}\begin{pmatrix}
Q\\
0
\end{pmatrix}\nonumber \\
 & \in\dom\left(\td{\nu}\right)\cap\dom\left(\begin{pmatrix}
0 & \dive_{0}\\
\grad & 0
\end{pmatrix}\right).\label{eq:theta_reg}
\end{align}
Then, as in \prettyref{rem:regularity}, it follows that $\theta$ and $q$ satisfy the heat-flux-balance
and Fourier's law in the sense that $\theta\in\dom(\td{\nu})\cap\dom(\grad)$
and $q\in\dom(\dive_{0})$ and
\begin{align*}
\partial_{t}\theta+\dive_{0}q & =Q,\\
q & =-a\grad\theta.
\end{align*}
This regularity result is true even for $Q\in\Lnu(\R;\L(\Omega));$
see \cite{PTW16_MR}.
\end{rem}

\subsection*{The Scalar Wave Equation}

\index{wave equation, scalar, evolutionary equation}
The classical scalar wave equation in a medium $\Omega\subseteq\R^{d}$
(think, for instance, of a vibrating string ($d=1)$ or membrane ($d=2)$)
consists of the equation of the balance of momentum\index{balance of momentum} where the acceleration
of the (vertical) displacement, $u\colon\R\times\Omega\to\R$, is balanced
by external forces, $f\colon\R\times\Omega\to\R$, and the divergence
of the stress\index{stress}, $\sigma\colon\R\times\Omega\to\R^{d}$, in such a  way that
\[
\partial_{t}^{2}u-\dive\sigma=f.
\]
The stress is related to $u$ via the following so-called stress-strain
relation (here Hooke's law)\index{Hooke's law}
\[
\sigma=T\grad u,
\]
where the so-called \index{elasticity tensor}elasticity tensor, $T\colon\Omega\to\R^{d\times d}$, is bounded, measurable, and satisfies
\[
T(x)=T(x)^{*}\geq c
\]
for some $c>0$ uniformly in $x\in\Omega$. The quantity $\grad u$
is referred to as the strain. We think of $u$ as being fixed at $\partial\Omega$
(``clamped boundary condition'').\index{clamped boundary condition} This is modelled by $u\in\dom(\grad_{0})$. 

Using $v\coloneqq\partial_{t}u$ as an unknown, we can rewrite the
balance of momentum and Hooke's law as $2\times2$-block-operator
matrix equation
\[
\left(\partial_{t}\begin{pmatrix}
1 & 0\\
0 & T^{-1}
\end{pmatrix}-\begin{pmatrix}
0 & \dive\\
\grad_{0} & 0
\end{pmatrix}\right)\begin{pmatrix}
v\\
\sigma
\end{pmatrix}=\begin{pmatrix}
f\\
0
\end{pmatrix}.
\]
The solution theory of evolutionary equations for the wave equation
now reads as follows:
\begin{thm}
\label{thm:st_wave} Let $\Omega\subseteq\R^{d}$ be open, and $T$ as
indicated above. Then, for all $\nu>0$, 
\[
\td{\nu}\begin{pmatrix}
1 & 0\\
0 & T^{-1}
\end{pmatrix}-\begin{pmatrix}
0 & \dive\\
\grad_{0} & 0
\end{pmatrix}
\]
is densely defined and closable in $\Lnu\left(\R;\L(\Omega)\times\L(\Omega)^{d}\right)$.
The respective closure is continuously invertible with causal inverse
being eventually independent of $\nu$. 
\end{thm}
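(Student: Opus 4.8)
The plan is to derive the result as a direct application of Picard's theorem (\prettyref{thm:Solution_theory_EE}), exactly as was done for the heat equation in \prettyref{thm:wp_heat}. First I would set
\[
M(z)\coloneqq\begin{pmatrix}
1 & 0\\
0 & T^{-1}
\end{pmatrix}\quad\text{and}\quad A\coloneqq-\begin{pmatrix}
0 & \dive\\
\grad_{0} & 0
\end{pmatrix},
\]
and check the three hypotheses of Picard's theorem for the Hilbert space $H\coloneqq\L(\Omega)\times\L(\Omega)^{d}$ and any $\nu_0>0$.

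The verification splits into three routine steps. \textbf{(1) $M$ is a material law with $\sbb{M}<\nu_0$.} Since $T$ is bounded, measurable and selfadjoint with $T\geq c$, part \ref{prop:block_op_realinv:item:2} of \prettyref{prop:block_op_realinv} gives $T^{-1}\in\bo(\L(\Omega)^{d})$; hence $M$ is a constant (in particular holomorphic, bounded) $\bo(H)$-valued map, so $\sbb{M}=-\infty<\nu_0$ by \prettyref{exa:material-laws}~(a). \textbf{(2) Positive definiteness of $\Re zM(z)$.} For $z\in\C_{\Re\geq\nu_0}$ and $(v,\sigma)\in H$ we compute
\[
\Re\scp{(v,\sigma)}{zM(z)(v,\sigma)}_{H}
= \Re z\,\norm{v}_{\L(\Omega)}^{2} + \Re z\,\scp{\sigma}{T^{-1}\sigma}_{\L(\Omega)^{d}}
\geq \nu_0\bigl(\norm{v}^{2}+c\norm{T}^{-2}\norm{\sigma}^{2}\bigr),
\]
using $\Re T^{-1}\geq c\norm{T}^{-2}$ from \prettyref{prop:block_op_realinv}\ref{prop:block_op_realinv:item:2}; this bounds the expression below by $\min\{\nu_0,\nu_0 c\norm{T}^{-2}\}\norm{(v,\sigma)}_H^2$, so the coercivity constant $c'>0$ exists. \textbf{(3) Skew-selfadjointness of $A$.} By definition $\grad_0=-\dive^{*}$ and $\dive=-\grad_{\rmc}^{*}$, so $\dive$ and $\grad_0$ are densely defined closed operators that are (negative) adjoints of one another; \prettyref{prop:block_op_realinv}\ref{prop:block_op_realinv:item:1} then yields
\[
\begin{pmatrix}0&\dive\\\grad_0&0\end{pmatrix}^{*}=\begin{pmatrix}0&\grad_0^{*}\\\dive^{*}&0\end{pmatrix}=\begin{pmatrix}0&-\dive\\-\grad_0&0\end{pmatrix},
\]
i.e.\ $A^{*}=-A$. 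With all three hypotheses checked, \prettyref{thm:Solution_theory_EE} applies and gives, for every $\nu\geq\nu_0$ (equivalently every $\nu>0$, since $\nu_0>0$ was arbitrary), that $\td{\nu}M(\td{\nu})+A$ is densely defined and closable, with $S_\nu=(\overline{\td{\nu}M(\td{\nu})+A})^{-1}\in\bo(\Lnu(\R;H))$ causal and eventually independent of $\nu$; dense definedness of the operator itself follows since $\calDH\otimes\dom(A)$ lies in its domain.

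There is no real obstacle here — the only mild point of care is matching the sign conventions: Picard's theorem is stated for $\td{\nu}M(\td{\nu})+A$ with $A$ skew-selfadjoint, and the wave equation is written with a \emph{minus} in front of the block operator, so one must take $A=-\bigl(\begin{smallmatrix}0&\dive\\\grad_0&0\end{smallmatrix}\bigr)$ and note that this is again skew-selfadjoint (the negative of a skew-selfadjoint operator is skew-selfadjoint). Everything else is a verbatim repetition of the heat-equation argument, with $T^{-1}$ in place of $a^{-1}$ in the lower-right block and the coercivity now coming from \emph{both} diagonal entries rather than one.
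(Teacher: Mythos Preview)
Your proposal is correct and follows essentially the same approach as the paper: you apply \prettyref{thm:Solution_theory_EE} with the constant material law $M(z)=\bigl(\begin{smallmatrix}1&0\\0&T^{-1}\end{smallmatrix}\bigr)$ and $A=-\bigl(\begin{smallmatrix}0&\dive\\\grad_0&0\end{smallmatrix}\bigr)$, verify skew-selfadjointness via \prettyref{prop:block_op_realinv}\ref{prop:block_op_realinv:item:1}, and obtain the positive definiteness estimate from the selfadjointness of $T^{-1}$ together with \prettyref{prop:block_op_realinv}\ref{prop:block_op_realinv:item:2}. The paper's proof is essentially identical, including the same coercivity computation with constant $\nu_0\min\{1,c\norm{T}^{-2}\}$.
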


\begin{proof}
We apply \prettyref{thm:Solution_theory_EE} to $A=-\begin{pmatrix}
0 & \dive\\
\grad_{0} & 0
\end{pmatrix}$, which is skew-selfadjoint by \prettyref{prop:block_op_realinv}\ref{prop:block_op_realinv:item:1},
and $M(z)=\begin{pmatrix}
1 & 0\\
0 & T^{-1}
\end{pmatrix}$, which defines a material law with $\sbb{M}=-\infty$. The positive
definiteness constraint needed in \prettyref{thm:Solution_theory_EE}
is satisfied by \prettyref{prop:block_op_realinv}\ref{prop:block_op_realinv:item:2} on account of the selfadjointness
of $T$, which implies the same for $T^{-1}$. Indeed, for $\nu_0 >0$ and $z\in \C_{\Re\geq \nu_0}$ we estimate
\begin{align*}
 \Re \scp{(x,y)}{zM(z)(x,y)}_{\L(\Omega)\times \L(\Omega)^d} 
 &= \Re \scp{x}{z x}_{\L(\Omega)}+\Re \scp{y}{zT^{-1}y}_{\L(\Omega)^d}\\
 &\geq \nu_0 \norm{x}^2_{\L(\Omega)} + \nu_0 \frac{c}{\|T\|^2}\norm{y}_{\L(\Omega)^d}^2\\
 &\geq \nu_0 \min\{1,c/\|T\|^2\} \norm{(x,y)}_{\L(\Omega)\times \L(\Omega)^d}^2
\end{align*}
for each $(x,y)\in \L(\Omega)\times \L(\Omega)^d$, where we used the selfadjointness of $T^{-1}$ in the second line.
\end{proof}

\begin{rem}
Let $f\in\Lnu(\R;\L(\Omega))$, $\nu>0$, and define
\[
\begin{pmatrix}
u\\
\tilde{\sigma}
\end{pmatrix}=\left(\overline{\td{\nu}\begin{pmatrix}
1 & 0\\
0 & T^{-1}
\end{pmatrix}-\begin{pmatrix}
0 & \dive\\
\grad_{0} & 0
\end{pmatrix}}\right)^{-1}\begin{pmatrix}
\td{\nu}^{-1}f\\
0
\end{pmatrix}.
\]
By \prettyref{thm:Solution_theory_EE}, we obtain $\begin{pmatrix} u\\\tilde{\sigma}\end{pmatrix}\in\dom(\td{\nu})\cap\dom\left(\begin{pmatrix}
0 & \dive\\
\grad_{0} & 0
\end{pmatrix}\right).$ Hence, we have
\begin{align*}
\td{\nu}u-\dive\tilde{\sigma} & =\td{\nu}^{-1}f\\
\td{\nu}T^{-1}\tilde{\sigma} & =\grad_{0}u
\end{align*}
or
\begin{align*}
\td{\nu}u-\dive\tilde{\sigma} & =\td{\nu}^{-1}f\\
\tilde{\sigma} & =T\td{\nu}^{-1}\grad_{0}u.
\end{align*}
Thus, formally, after another time-differentiation and the setting of
$\sigma=\td{\nu}\tilde{\sigma}$ we obtain a solution of the wave equation,
$(u,\sigma)$. Notice, however, that differentiating $\dive\tilde{\sigma}$
cannot be done without any additional knowledge of the regularity
of $\tilde{\sigma}$. In fact, in order to arrive at the balance of
momentum equation, one would need to have $\dive\tilde{\sigma}\in\dom(\td{\nu})$.
However, one only has $\tilde{\sigma}\in\dom(\td{\nu})\cap\dom(\dive)$.
It is an elementary argument, see \cite[Lemma 4.6]{SW16_SD}, that
we in fact have $\dive\td{\nu}^{-1}=\overline{\td{\nu}^{-1}\dive},$
which suggests that, in general, $\dive\tilde{\sigma}\notin\dom(\td{\nu})$, see \prettyref{exer:commute_integral}.
\end{rem}

\subsection*{Maxwell's Equations}

\index{Maxwell's equations, evolutionary equation}
The final example in this lecture forms the archetypical evolutionary
equation -- Maxwell's equations in a medium $\Omega\subseteq\R^{3}$.
In order to see this (and to finally conclude the $2\times2$-block
matrix formulation historically due to the work of \cite{Minkowski1910,Schmidt1968,Leis1968}), we start out with \index{Faraday's law}Faraday's law of induction,
which relates the unknown electric field, $E\colon\R\times\Omega\to\R^{3}$,
to the magnetic induction, $B\colon\R\times\Omega\to\R^{3}$, via
\[
\partial_{t}B+\curl E=0.
\]
We assume that the medium is contained in a perfect conductor, which is reflected
in the so-called electric boundary condition\index{electric boundary condition} which asks for the vanishing of the
tangential component of $E$ at the boundary. This is modelled by $E\in\dom(\curl_{0})$.
The next constituent of Maxwell's equations is Ampere's law\index{Ampere's law}
\[
\partial_{t}D+J_{c}-\curl H=J_{0},
\]
which relates the unknown electric displacement\index{electric displacement}, $D\colon\R\times\Omega\to\R^{3},$
charge\index{charge}, $J_{c}\colon\R\times\Omega\to\R^{3}$, and magnetic field\index{magnetic field}, $H\colon\R\times\Omega\to\R^{3}$,
to the (given) external currents, \index{external current} $J_{0}\colon\R\times\Omega\to\R^{3}$.
Maxwell's equations are completed by constitutive relations specific
to each material at hand. Indeed, the (bounded, measurable) dielectricity\index{dielectricity $\varepsilon$},
$\varepsilon\colon\Omega\to\R^{3\times3}$, and the (bounded, measurable)
magnetic permeability\index{magnetic permeability $\mu$}, $\mu\colon\Omega\to\R^{3\times3}$, are symmetric matrix-valued
functions which couple the electric displacement \index{electric displacement} to the electric field \index{electric field}
and the magnetic field \index{magnetic field} to the magnetic induction \index{magnetic induction} via
\[
D=\varepsilon E,\text{ and } B=\mu H.
\]
Finally, Ohm's law \index{Ohm's law} relates the charge to the electric field via the
(bounded, measurable) electric conductivity\index{electric conductivity $\sigma$}, $\sigma\colon\Omega\to\R^{3\times3}$,
as
\[
J_{c}=\sigma E.
\]
All in all, in terms of $(E,H)$,  Maxwell's equations read 
\[
\left(\partial_{t}\begin{pmatrix}
\varepsilon & 0\\
0 & \mu
\end{pmatrix}+\begin{pmatrix}
\sigma & 0\\
0 & 0
\end{pmatrix}+\begin{pmatrix}
0 & -\curl\\
\curl_{0} & 0
\end{pmatrix}\right)\begin{pmatrix}
E\\
H
\end{pmatrix}=\begin{pmatrix}
J_{0}\\
0
\end{pmatrix}.
\]
For the time being, we shall assume that there exists $c>0$ and $\nu_{0}>0$
such that for all $\nu\geq \nu_{0}$ we have
\[
\nu\varepsilon(x)+\Re\sigma(x)\geq c,\quad\mu(x)\geq c\quad (x\in \Omega)
\]
in the sense of positive definiteness. Note that the latter condition
allows particularly for $\varepsilon=0$ on certain regions, if $\Re\sigma$
compensates for this. This situation is referred to as the eddy current
approximation in these regions. With the above preparations at hand, we may now formulate
the well-posedness result concerning Maxwell's equations.
\begin{thm}
\label{thm:st_maxwell} Let $\Omega\subseteq\R^{3}$ be open and $\nu\geq \nu_{0}$.
Then
\[
\td{\nu}\begin{pmatrix}
\varepsilon & 0\\
0 & \mu
\end{pmatrix}+\begin{pmatrix}
\sigma & 0\\
0 & 0
\end{pmatrix}+\begin{pmatrix}
0 & -\curl\\
\curl_{0} & 0
\end{pmatrix}
\]
is densely defined and closable in $\Lnu\left(\R;\L(\Omega)^{3}\times\L(\Omega)^{3}\right)$.
The respective closure is continuously invertible with causal inverse
being eventually independent of $\nu$. 
\end{thm}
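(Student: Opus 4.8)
The plan is to apply Picard's theorem, \prettyref{thm:Solution_theory_EE}, exactly as in the proofs of \prettyref{thm:wp_heat} and \prettyref{thm:st_wave}, to the data
\[
M(z)\coloneqq\begin{pmatrix}\varepsilon & 0\\ 0 & \mu\end{pmatrix}+z^{-1}\begin{pmatrix}\sigma & 0\\ 0 & 0\end{pmatrix}\quad\text{and}\quad A\coloneqq\begin{pmatrix}0 & -\curl\\ \curl_{0} & 0\end{pmatrix},
\]
where $\varepsilon,\mu,\sigma$ are read as the multiplication operators on $\L(\Omega)^{3}$ induced by the respective bounded measurable matrix-valued functions; the block operators then lie in $\bo(\L(\Omega)^{3}\times\L(\Omega)^{3})$. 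As a polynomial in $z^{-1}$ with bounded operator coefficients, $M$ is a material law with $\sbb{M}\leq 0<\nu_{0}$ by \prettyref{exa:material-laws}, and $A$ is skew-selfadjoint: \prettyref{prop:block_op_realinv}\ref{prop:block_op_realinv:item:1} applied with the densely defined operators $B=\curl_{0}$ and $C=-\curl$ gives $A^{*}=\begin{pmatrix}0 & \curl_{0}^{*}\\ -\curl^{*} & 0\end{pmatrix}$, and since $\curl_{0}=\curl^{*}$ with $\curl$ closed, $\curl_{0}^{*}=\curl^{**}=\curl$ and $\curl^{*}=\curl_{0}$, so $A^{*}=-A$.

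The one hypothesis requiring genuine work is the coercivity of $zM(z)$ on $\C_{\Re\geq\nu_{0}}$. For such $z$ one has $zM(z)=z\begin{pmatrix}\varepsilon & 0\\ 0 & \mu\end{pmatrix}+\begin{pmatrix}\sigma & 0\\ 0 & 0\end{pmatrix}$, and since $\varepsilon(\m),\mu(\m)$ are selfadjoint (the matrices being symmetric), a short computation gives for $(E,H)\in\L(\Omega)^{3}\times\L(\Omega)^{3}$
\[
\Re\scp{(E,H)}{zM(z)(E,H)}=\scp{E}{(\Re(z)\varepsilon+\Re\sigma)E}+\Re(z)\scp{H}{\mu H}.
\]
Letting $\nu\to\infty$ in the hypothesis $\nu\varepsilon(x)+\Re\sigma(x)\geq c$ forces $\varepsilon(x)\geq 0$, so for $\Re z\geq\nu_{0}$ one gets $\Re(z)\varepsilon+\Re\sigma=(\Re(z)-\nu_{0})\varepsilon+(\nu_{0}\varepsilon+\Re\sigma)\geq c$ and $\Re(z)\mu\geq\nu_{0}\mu\geq\nu_{0}c$, whence
\[
\Re\scp{(E,H)}{zM(z)(E,H)}\geq c\norm{E}^{2}+\nu_{0}c\norm{H}^{2}\geq c\min\{1,\nu_{0}\}\norm{(E,H)}^{2}.
\]
This is the required positive-definiteness estimate, with constant $c\min\{1,\nu_{0}\}>0$.

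With all hypotheses of \prettyref{thm:Solution_theory_EE} in place, the theorem yields, for every $\nu\geq\nu_{0}$, that $\td{\nu}M(\td{\nu})+A$ is densely defined and closable, that its closure is continuously invertible with $\norm{S_{\nu}}_{\bo(\Lnu)}\leq 1/(c\min\{1,\nu_{0}\})$, and that $S_{\nu}$ is causal and eventually independent of $\nu$. The only point where this argument deviates from the heat and wave cases is that $\sigma$ need not be symmetric, which is why one must pass to $\Re\sigma$ and use the full strength of the assumption $\nu\varepsilon+\Re\sigma\geq c$ (already containing $\varepsilon\geq 0$) to secure coercivity on the whole half-plane rather than just along $\Re z=\nu_{0}$; I expect this verification to be the only mildly delicate step.
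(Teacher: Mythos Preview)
Your proof is correct and follows exactly the approach of the paper: apply \prettyref{thm:Solution_theory_EE} to the material law $M(z)=\begin{pmatrix}\varepsilon & 0\\ 0 & \mu\end{pmatrix}+z^{-1}\begin{pmatrix}\sigma & 0\\ 0 & 0\end{pmatrix}$ and the skew-selfadjoint operator $A=\begin{pmatrix}0 & -\curl\\ \curl_{0} & 0\end{pmatrix}$. The paper's proof consists of nothing more than naming $M$ and $A$; you supply the verification of the hypotheses explicitly, including the useful observation that $\varepsilon\geq 0$ follows from the standing assumption by letting $\nu\to\infty$, which is what makes the coercivity extend from the line $\Re z=\nu_{0}$ to the half-plane $\C_{\Re\geq\nu_{0}}$.
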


\begin{proof}
The assertion follows from \prettyref{thm:Solution_theory_EE} applied
to the material law 
\[M(z)=\begin{pmatrix}
\varepsilon & 0\\
0 & \mu
\end{pmatrix}+z^{-1}\begin{pmatrix}
\sigma & 0\\
0 & 0
\end{pmatrix}\]
and the skew-selfadjoint operator 
\[A=\begin{pmatrix}
0 & -\curl\\
\curl_{0} & 0
\end{pmatrix}.\qedhere\]
\end{proof}
\begin{rem}
In the physics literature (see e.g.~\cite[Chapter 18]{Feynman1964}), Maxwell's equations are usually
complemented by Gauss' law,
\[
\dive_{0}B=0,
\]
as well as the introduction of the charge density, $\rho=\dive\varepsilon E$, and
the current, $J=J_{0}-J_{c}$, by the continuity equation
\[
\partial_{t}\rho=\dive J.
\]
We shall argue in the following that these equations are \emph{automatically}
satisfied if $(E,H)$ is a solution to Maxwell's equation. Indeed,
assuming $J_{0}\in\dom(\td{\nu})$, then, as a consequence of \prettyref{thm:Solution_theory_EE},
we have that 
\begin{align*}
\begin{pmatrix}
E\\
H
\end{pmatrix} & = \left(\overline{\td{\nu}\begin{pmatrix}
\varepsilon & 0\\
0 & \mu
\end{pmatrix}+\begin{pmatrix}
\sigma & 0\\
0 & 0
\end{pmatrix}+\begin{pmatrix}
0 & -\curl\\
\curl_{0} & 0
\end{pmatrix}}\right)^{-1}\begin{pmatrix}
J_{0}\\
0
\end{pmatrix}\\
 & \quad\in\dom\left(\td{\nu}\right)\cap\dom\left(\begin{pmatrix}
0 & -\curl\\
\curl_{0} & 0
\end{pmatrix}\right).
\end{align*}
Reformulating the latter equation yields
\begin{align*}
B & =\mu H=-\td{\nu}^{-1}\curl_{0}E,\\
\varepsilon E & =\td{\nu}^{-1}\left(-\sigma E+J_{0}+\curl H\right)=\td{\nu}^{-1}J+\td{\nu}^{-1}\curl H.
\end{align*}
Since $\curl_{0}E\in\ran(\curl_{0})$, we have by \prettyref{prop:interchange_integral}
that $\td{\nu}^{-1}\curl_{0}E\in\cran(\curl_{0})$. Thus, by \prettyref{prop:ker_ran_vec_ana},
we obtain
\[
\dive_{0}B=\dive_{0}\left(-\td{\nu}^{-1}\curl_{0}E\right)=0.
\]
Similarly, we deduce that 
\[
\rho=\dive\varepsilon E=\dive\td{\nu}^{-1}J.
\]
If, in addition, we have that $J\in\dom(\dive)$, we recover the continuity
equation. In general, the continuity equation is satisfied in the
integrated sense just derived.
\end{rem}

We shall keep the list of examples to that for now. In the course
of this internet seminar, we will see more (involved) examples. Furthermore,
we will study the boundary conditions more deeply and shall relate
the conditions introduced abstractly here to more classical formulations
involving trace spaces.

\section{Proof of Picard's Theorem}

In this section we shall prove the well-posedness theorem. For this,
we recall an elementary result from functional analysis. It is remindful
of the Lax--Milgram lemma.\index{Lax--Milgram lemma}
\begin{prop}
\label{prop:accr_inv}Let $H$ be a Hilbert space and $B\colon\dom(B)\subseteq H\to H$
densely defined and closed with $\dom(B)\supseteq\dom(B^{*})$. Assume
there exists $c>0$ such that 
\[
\Re\scp{\phi}{B\phi}_H\geq c\norm{\phi}_H^{2}\quad(\phi\in\dom(B)).
\]
Then $B^{-1}\in\bo(H)$ and $\norm{B^{-1}}\leq1/c$.
\end{prop}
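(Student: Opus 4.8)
The plan is to establish injectivity with a quantitative lower bound, dense range, and closedness of the range, which together give $B^{-1}\in\bo(H)$ with the asserted norm bound. This is exactly the same strategy used in the proof of \prettyref{prop:block_op_realinv}\ref{prop:block_op_realinv:item:2}, only now $B$ is unbounded, so the role of boundedness has to be replaced by the closedness hypothesis together with \prettyref{lem:bbc}.

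First I would show that $B$ is bounded below: for $\phi\in\dom(B)$, the Cauchy--Schwarz inequality gives
\[
\norm{\phi}_H\norm{B\phi}_H\geq\abs{\scp{\phi}{B\phi}_H}\geq\Re\scp{\phi}{B\phi}_H\geq c\norm{\phi}_H^2,
\]
hence $\norm{B\phi}_H\geq c\norm{\phi}_H$ for all $\phi\in\dom(B)$. In particular $B$ is one-to-one, and the inverse relation $B^{-1}$, which is an operator defined on $\ran(B)$, satisfies $\norm{B^{-1}\psi}_H\leq\frac{1}{c}\norm{\psi}_H$ for all $\psi\in\ran(B)$; that is, $B^{-1}$ is bounded.

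Next I would prove that $\ran(B)$ is dense. By \prettyref{thm:ran-kernerl} applied to the closed operator $B$ (note $\overline{B}=B$), we have $\cran(B)=\ker(B^*)^\bot$, so it suffices to show $\ker(B^*)=\{0\}$. Let $\phi\in\ker(B^*)$. By hypothesis $\dom(B^*)\subseteq\dom(B)$, so $\phi\in\dom(B)$ and we may compute
\[
c\norm{\phi}_H^2\leq\Re\scp{\phi}{B\phi}_H=\Re\overline{\scp{B\phi}{\phi}_H}=\Re\scp{\phi}{B^*\phi}_H=\Re\scp{\phi}{0}_H=0,
\]
forcing $\phi=0$. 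Hence $\ker(B^*)=\{0\}$ and $\ran(B)$ is dense in $H$.

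Finally I would upgrade density of the range to $\ran(B)=H$. Since $B$ is closed, so is $B^{-1}$ by \prettyref{prop:inv-sum-comp-closed}(a), and $B^{-1}$ is a bounded (everywhere-on-its-domain) closed operator with $\dom(B^{-1})=\ran(B)$. By \prettyref{lem:bbc}, a closed bounded operator has closed domain, so $\ran(B)$ is closed; being also dense, $\ran(B)=H$. Therefore $B^{-1}\in\bo(H)$ with $\norm{B^{-1}}\leq1/c$. The only slightly delicate point — and the one place the hypothesis $\dom(B^*)\subseteq\dom(B)$ is genuinely used — is the computation showing $\ker(B^*)=\{0\}$: without that inclusion one cannot feed an element of $\dom(B^*)$ into the coercivity estimate. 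Everything else is routine bookkeeping with the results already established.
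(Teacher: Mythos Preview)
Your proof is correct and follows essentially the same approach as the paper: the paper's proof is a terse two-sentence sketch noting that (i) the coercivity estimate gives injectivity with the bound and closed range, and (ii) dense range follows from $\Re\scp{\phi}{B^{*}\phi}_H\geq c\norm{\phi}_H^{2}$ for $\phi\in\dom(B^{*})\subseteq\dom(B)$, which is exactly your computation $\Re\scp{\phi}{B\phi}_H=\Re\scp{\phi}{B^{*}\phi}_H$ used to kill $\ker(B^{*})$. Your write-up simply makes explicit the invocations of \prettyref{thm:ran-kernerl}, \prettyref{prop:inv-sum-comp-closed}, and \prettyref{lem:bbc} that the paper leaves implicit.
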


\begin{proof}
The proof is a refinement of the argument in \prettyref{prop:block_op_realinv}.
In fact, the assumed inequality implies closedness of the range of $B$ as
well as continuous invertibility with $B^{-1}\colon\ran(B)\to H$.
The fact that $\ran(B)$ is dense in $H$ follows from the fact that
$\Re\scp{\phi}{B^{*}\phi}_H\geq c\norm{\phi}_H^{2}$ for all $\phi\in\dom(B^{*})\subseteq\dom(B)$
which, in turn, also follows from the assumed inequality.
\end{proof}
\begin{proof}[Proof of \prettyref{thm:Solution_theory_EE}]
Let $\nu \geq \nu_0$ and 
% We first prove that $\td{\nu}M(\td{\nu})+A$ is closable for each $\nu\geq \nu_0$. By unitary equivalence, it suffices to show that $(\i\m+\nu)M(\i\m+\nu)+A$ is closable. For doing so, let $(U_n)_n$ be a sequence in $\dom((\i\m+\nu)M(\i\m+\nu)+A)$ with $U_n\to 0$ and $((\i\m+\nu)M(\i\m+\nu)+A)U_n\to F$ in $\L(\R;H)$ for some $F\in \L(\R;H)$. For $R>0$ we observe that $\1_{[-R,R]} U_n \in \dom((\i\m+\nu)M(\i\m+\nu)+A)$ with
% \begin{align*}
%  \bigl((\i\m+\nu)M(\i\m+\nu)+A\bigr)\1_{[-R,R]}U_n & =\1_{[-R,R]}\bigl((\i\m+\nu)M(\i\m+\nu)+A\bigr)U_n \\
%  & \to \1_{[-R,R]} F
% \end{align*}
% as $n\to \infty$. However, on $\L([-R,R];H)$ the operator $(\i\m+\nu)M(\i\m+\nu)$ is bounded and hence, $(\i\m+\nu)M(\i\m+\nu)+A$ is closed. Since $\1_{[-R,R]}U_n\to 0$ we infer that
% \[
%  \1_{[-R,R]} F=0
% \]
% and since $R>0$ was chosen arbitrarily, we obtain $F=0$. Hence, $(\i\m+\nu)M(\i\m+\nu)+A$ is closable.
% 
  $z\in\C_{\Re\geq\nu}$. Define $B(z)\coloneqq zM(z)+A$. Since
$M(z)\in\bo(H)$ it follows from \prettyref{thm:adj-sum} that $B(z)^{*}=\left(zM(z)\right)^{*}-A$
and $\dom(B(z))=\dom(B(z)^{*})=\dom(A)$. Moreover, for all $\phi\in\dom(A)$
we have
\[
\Re\scp{\phi}{B(z)\phi}_H=\Re\scp{\phi}{\left(zM(z)+A\right)\phi}_H=\Re\scp{\phi}{zM(z)\phi}_H\geq c\norm{\phi}_H^{2},
\]
due to the skew-selfadjointness of $A$. Thus, by \prettyref{prop:accr_inv} applied to $B(z)$ instead of
$B$, we deduce that 
\[
S\colon\C_{\Re\geq \nu}\ni z\mapsto B(z)^{-1}
\]
is bounded  and assumes values in $\bo(H)$ with norm bounded by $1/c$.
By \prettyref{exer:comp_ana}, we have that $S$ is holomorphic. Thus,
$S$ is a material law and $\norm{S(\td{\nu})}\leq1/c$ by \prettyref{prop:mat_law_function_of_td}.
Moreover, \prettyref{thm:material law independent of nu} implies
that $S(\td{\nu})$ is independent of $\nu$ and causal.

Next, if $f\in\dom(\td{\nu}),$ it follows that $\left(\i\m+\nu\right)\mathcal{L}_{\nu}f\in\L(\R;H)$.
Hence, for all $t\in\R$ we obtain 
\begin{align*}
AS(\i t+\nu)\mathcal{L}_{\nu}f(t) & =A\bigl(\left(\i t+\nu\right)M(\i t+\nu)+A\bigr)^{-1}\mathcal{L}_{\nu}f(t)\\
 & =\mathcal{L}_{\nu}f(t)-\left(\i t+\nu\right)M(\i t+\nu)S(\i t+\nu)\mathcal{L}_{\nu}f(t).
\end{align*}
Thus, by the boundedness of $M$ and $S$, we deduce $S(\i\cdot+\nu)\mathcal{L}_{\nu}f\in\L(\R;\dom(A))$.
This implies $S(\td{\nu})f\in\Lnu(\R;\dom(A))$ by \prettyref{exer:abstractFourier_exten}.
Similarly, but more easily, it follows that $\left(\i\cdot+\nu\right)S(\i\cdot+\nu)\mathcal{L}_{\nu}f\in\L(\R;H)$ also, which shows $S(\td{\nu})f\in \dom(\td{\nu})$.

We now define the operator $B(\i\m+\nu)$ by
\begin{align*}
 \dom(B(\i\m+\nu))\coloneqq \bigl\{f\in \L(\R;H)\,;\, & f(t)\in \dom(A) \mbox{ for a.e. } t\in \R,\\
 & (t\mapsto B(\i t+\nu)f(t))\in \L(\R;H)\bigr\}
\end{align*}
and
\[
 B(\i\m+\nu)f\coloneqq (t\mapsto B(\i t+\nu)f(t))\quad (f\in \dom(B(\i\m+\nu))).
\]
Then one easily sees that $B(\i\m+\nu)=S(\i\m+\nu)^{-1}$ and since $S(\i\m+\nu)$ is closed, it follows that $B(\i\m+\nu)$ is closed as well. Moreover
\[
 (\i\m+\nu)M(\i\m+\nu)+A\subseteq B(\i\m+\nu)
\]
and hence, the operator $(\i\m+\nu)M(\i\m+\nu)+A$ is closable, which also yields the closability of $\td{\nu}M(\td{\nu})+A$ by unitary equivalence. To complete the proof, we have to show that
\[
 \overline{(\i\m+\nu)M(\i\m+\nu)+A}=B(\i\m+\nu),
\]
as this equality would imply $S(\td{\nu})=\bigl(\overline{\td{\nu}M(\td{\nu})+A}\bigr)^{-1}$ by unitary equivalence. For showing the asserted equality, let $f\in \dom(B(\i\m+\nu))$. For $n\in \N$ we define $f_n\coloneqq \1_{\ci{-n}{n}} f$. Then $f_n\in \dom(\i\m+\nu)\cap \dom(A) \subseteq \dom\bigl((\i\m+\nu)M(\i\m+\nu)+A\bigr)$ for each $n\in \N$ and by dominated convergence, we have that $f_n\to f$ as $n\to \infty$ as well as
\[
 \bigl((\i\m+\nu)M(\i\m+\nu)+A\bigr)f_n=B(\i\m+\nu)f_n=\1_{\ci{-n}{n}} B(\i\m+\nu)f\to B(\i\m+\nu)f\quad (n\to \infty).
\]
This shows that $f\in \dom\bigl(\overline{(\i\m+\nu)M(\i\m+\nu)+A}\bigr)$ and hence, the assertion follows.
\end{proof}

\begin{rem}\label{rem:Aaccretive}
  Note that \prettyref{thm:Solution_theory_EE} can partly be generalised in the following way (with the same proof). Let $M\from \C_{\Re>\nu_0}\to \bo(H)$ be holomorphic and $A$ a closed, densely defined operator in $H$
  such that $zM(z)+A$ is boundedly invertible for all $z\in \C_{\Re>\nu_0}$ and that $\sup_{z\in \C_{\Re>\nu_0}}\norm{(zM(z)+A)^{-1}}_{\bo(H)}<\infty$. Then $S_\nu\in \bo(\Lnu(\R;H))$ is causal and eventually independent of $\nu$.
\end{rem}

\begin{rem}\label{rem:somlo}
  As the proof of \prettyref{thm:Solution_theory_EE} shows, for $\nu\geq\nu_0$ we have that $S\from \C_{\Re\geq \nu}\ni z\mapsto (zM(z)+A)^{-1}\in \bo(H)$ is a material law and $S_\nu = S(\td{\nu})$. Thus, the solution operator is a material law operator, and by \prettyref{rem:mlohom} applied to $S$ and $z\mapsto \frac{1}{z}1_{H}$ we obtain 
  \[S_\nu \td{\nu}\subseteq \td{\nu} S_\nu.\]
\end{rem}

\section{Comments}

%Some comments about the history of Maxwell's equations. Maybe mention that $D$ is the invention of Maxwell's in order to `hyperbolise' Maxwell's equations. Also a hint to the notion of divergence and convergence. In fact, in Maxwell's book $-\text{conv}=\dive$. Also commenting about Cataneo-modification of heat conduction (also done by Maxwell). Moreover, some stuff that Maxwell did not really use vector analysis in the first place. It was Heaviside to attribute the more concise vector formulation to. See also Tartar's book on this. 

The proof of \prettyref{thm:Solution_theory_EE} here is rather close to the strategy originally employed in \cite{PicPhy}, at least where existence and uniqueness are concerned. The causality part is a consequence of some observations detailed in \cite{KPSTW14_OD,W15_CB}. The original process of proving causality used the Theorem of Paley and Wiener, which we shall discuss later on.

The eddy current approximation \index{eddy current approximation} has enjoyed great interest in the mathematical and physical community,
in particular for the case when $\varepsilon=0$ everywhere. The reason
being that then Maxwell's equations are merely of parabolic type. We
shall refer to \cite{Pauly2018} and the references therein for an
extensive discussion. 

Both \prettyref{prop:accr_inv} and the Lax--Milgram\index{Lax--Milgram lemma}
lemma have been put into a general perspective in \cite{PTW15_WP_P}.

\section*{Exercises}
\addcontentsline{toc}{section}{Exercises}

\begin{xca}
\label{exer:tensorprod}Let $\left(\Omega,\Sigma,\mu\right)$ be a
$\sigma$-finite measure space and let $H_0,H_1$ be Hilbert spaces. Let $A\colon\dom(A)\subseteq H_0\to H_1$
be densely defined and closed. Show that the operator
\begin{align*}
A_{\mu}\colon\L(\mu;\dom(A))\subseteq\L(\mu;H_0) & \to\L(\mu;H_1)\\
f & \mapsto\bigl(\omega\mapsto Af(\omega)\bigr)
\end{align*}
is densely defined and closed. Moreover, show that $\left(A_{\mu}\right)^{*}=\left(A^{*}\right)_{\mu}$.
\end{xca}

\begin{xca}
\label{exer:abstractFourier_exten}In the situation of \prettyref{exer:tensorprod},
if $(\Omega_{1},\Sigma_{1},\mu_{1})$ is another $\sigma$-finite measure space and $\mathcal{F}\colon\L(\mu)\to\L(\mu_{1})$
is unitary, show that for $j\in \{0,1\}$ there exists a unique unitary operator $\mathcal{F}_{H_j}\colon\L(\mu;H_j)\to\L(\mu_{1};H_j)$
such that 
\[
 \mathcal{F}_{H_j} (\phi x)=(\mathcal{F}\phi) x\quad (\phi\in \L(\mu),x\in H_j).
\]
Furthermore, prove that
\[
\mathcal{F}_{H_1}A_{\mu}\mathcal{F}_{H_0}^{*}=A_{\mu_{1}}.
\]
\end{xca}

\begin{xca}
\label{exer:Ccinfty_dense_hd}Show that for $\Omega\subseteq\R^{d}$
open, the set $\cci(\Omega)\subseteq\L(\Omega)$ is dense. 
\end{xca}

\begin{xca}
\label{exer:Sobolev_elem}Prove \prettyref{thm:Sobolev_space}.
\end{xca}

\begin{xca}
\label{exer:comp_ana}Let $H$ be a Hilbert space, $A\colon\dom(A)\subseteq H\to H$
skew-selfadjoint, and $c>0$. Moreover, let $M\colon\dom(M)\subseteq\C\to\bo(H)$ be holomorphic with 
\[
 \Re M(z)\geq c \quad (z\in \dom (M)).
\]
Show that $\dom(M)\ni z\mapsto\left(M(z)+A\right)^{-1}$ is holomorphic.
\end{xca}

\begin{xca}
\label{exer:commute_integral} Let $C\colon \dom(C)\subseteq H_0 \to H_1$ be a densely defined and closed linear operator acting in Hilbert spaces $H_0$ and $H_1$. For $\nu>0$ show that
\[
    \overline{\td{\nu}^{-1}C}=C\td{\nu}^{-1}.
\]
Hint: Apply \prettyref{exer:abstractFourier_exten} and show $\overline{(\i\m+\nu)^{-1}C}=C(\i\m+\nu)^{-1}$ with a suitable approximation argument.
\end{xca}

\begin{xca}
\label{exer:H=00003DW+Kasuga}
\begin{enumerate}
\item Compute $H_{0}^{1}(\Omega)^{\bot}$ where the orthogonal complement
is computed in $H^{1}(\Omega)$.
\item Assume that
\[
D\coloneqq\set{\phi\in H^{1}(\Omega)}{\grad\phi\in\dom(\dive),\phi=\dive\grad\phi}\subseteq C^{\infty}(\Omega).
\] and show that $C^{\infty}(\Omega)\cap H^{1}(\Omega)\subseteq H^{1}(\Omega)$
is dense.
\end{enumerate}
Remark: The regularity assumption in (b) always holds and is known as Weyl's Lemma, see e.g. \cite[Corollary 8.11]{Gilbarg1983}, where the more general situation of an elliptic operator with smooth coefficients is treated. See also \cite[p.127]{Donoghue1969}, where the regularity is shown for harmonic distributions.
\end{xca}

\printbibliography[heading=subbibliography]

\chapter{Examples of Evolutionary Equations}

This chapter is devoted to a small tour through a variety of evolutionary
equations. More precisely, we shall look into the equations of poro-elastic
media, (time-)fractional elasticity, thermodynamic media with delay
as well as visco-elastic media. The discussion of these examples will
be similar to that of the examples in the previous chapter
in the sense that we shall present the equations first, reformulate
them suitably and then apply the solution theory to them. The
study of visco-elastic media within the framework of partial integro-differential
equations will be carried out in the exercises section.

\section{Poro-Elastic Deformations}

In this section we will discuss the equations of poro-elasticity\index{poro-elasticity}, which form a coupled system of equations. More precisely, the equations of (linearised)
elasticity are coupled with the diffusion equation. Before properly
writing these equations we introduce the following notation and differential operators.
\begin{defn*}
  Let $\K^{d\times d}_\rmsym\coloneqq \set{A \in \K^{d\times d}}{A=A^\top}\subseteq \K^{d\times d}$ the (closed) subspace of symmetric $d\times d$ matrices. 
  Let $\Omega\subseteq\R^d$ be open. Then define
  \begin{align*}
    \L(\Omega)_{\rmsym}^{d\times d}
    & \coloneqq \L(\Omega; \K^{d\times d}_\rmsym) \\
    & = \set{(\Phi_{jk})_{j,k\in\{1,\ldots,d\}}\in\L(\Omega)^{d\times d}}{\forall j,k\in\{1,\ldots,d\}\colon\Phi_{jk}=\Phi_{kj}}.
  \end{align*}
  Analogously, we set
  $\cci(\Omega)_{\rmsym}^{d\times d} \coloneqq \cci(\Omega;\K^{d\times d}_\rmsym)$.
\end{defn*}
Note that the symmetry of a $d\times d$ matrix here means that the matrix elements are symmetric with respect to the main diagonal. For $\K=\C$, this does not correspond to the symmetry of the associated linear operator (which would rather be selfadjointness).

\begin{defn*}
Let $\Omega\subseteq\mathbb{\R}^{d}$ be open. Then we define
\begin{align*}
\Grad_{\rmc}\colon\cci(\Omega)^{d}\subseteq\L(\Omega)^{d} & \to\L(\Omega)_{\rmsym}^{d\times d}\\
\left(\phi_{j}\right)_{j\in\{1,\ldots,d\}} & \mapsto\frac{1}{2}\left(\partial_{k}\phi_{j}+\partial_{j}\phi_{k}\right)_{j,k\in\{1,\ldots,d\}},
\end{align*}
and
\begin{align*}
\Dive_{\rmc}\colon\cci(\Omega)_{\rmsym}^{d\times d}\subseteq\L(\Omega)_{\rmsym}^{d\times d} & \to\L(\Omega)^{d}\\
\left(\Phi_{jk}\right)_{j,k\in\{1,\ldots,d\}} & \mapsto\left(\sum_{k=1}^{d}\partial_{k}\Phi_{jk}\right)_{j\in\{1,\ldots,d\}}.
\end{align*}
Similarly to the definitions in the previous chapter, we put $\Grad\coloneqq-\Dive_{\rmc}^{*}$,
$\Dive\coloneqq-\Grad_{\rmc}^{*}$ and $\Grad_{0}\coloneqq-\Dive^{*}$,
$\Dive_{0}\coloneqq-\Grad^{*}$, where (analogously to the scalar-valued case)
we observe that $\Grad_{\rmc}\subseteq-\Dive_{\rmc}^{*}$ motivating the
notation $\Grad$ and $\Grad_{0}$. 
\end{defn*}

\begin{rem}
Note that in the literature $\Grad u$ is also denoted by $\varepsilon(u)$
and is called the \emph{strain tensor}\index{strain tensor}. Due
to the (obvious) similarity to the scalar case, we refrain from using
$\varepsilon$ in this context and prefer $\Grad$ instead. Again,
the index $0$ in the operators refers to generalised Dirichlet (for $\Grad_0$) or
Neumann (for $\Dive_0$) boundary conditions.
\end{rem}

We are now properly equipped to formulate the equations of poro-elasticity\index{poro-elasticity};
see also \cite{Murad1996} and below for further details. In an elastic
body $\Omega\subseteq\R^{d}$, the displacement field, $u\colon\R\times\Omega\to\R^{d}$,
and the pressure field, $p\colon\R\times\Omega\to\R$, of a fluid
diffusing through $\Omega$ satisfy the following two energy balance
equations 
\begin{align*}
\partial_{t}\rho\partial_{t}u-\grad\partial_{t}\lambda\dive u-\Dive C\Grad u+\grad\alpha^{*}p & =f,\\
\partial_{t}(c_{0}p+\alpha\dive u)-\dive k\grad p & =g.
\end{align*}
The right-hand sides $f\colon\R\times\Omega\to\R^{d}$ and $g\colon\R\times\Omega\to\R$
describe some given external forcing. We assume homogeneous Neumann
boundary conditions for the diffusing fluid as well as homogeneous
Dirichlet (i.e.~clamped) boundary conditions for the elastic body.
The operator $\rho\in\bo(\L(\Omega)^{d})$ describes the density of
the medium $\Omega$ (usually realised as a multiplication operator by a bounded, measurable, scalar
function). The bounded linear operators $C\in\bo(\L(\Omega)_{\rmsym}^{d\times d})$
and $k\in\bo(\L(\Omega)^{d})$ are the elasticity tensor and the hydraulic
conductivity of the medium, whereas $c_{0},\lambda\in\bo(\L(\Omega))$
are the porosity of the medium and the compressibility of the fluid,
respectively. The operator $\alpha\in\bo(\L(\Omega))$ is the so-called
Biot--Willis constant. Note that in many applications $\rho,c_{0},\lambda$ and $\alpha$
are just positive real numbers, and $C$ and $k$ are strictly positive
definite tensors or matrices.

The reformulation of the equations for poro-elasticity involves several
`tricks'. One of these is to introduce the matrix trace as the operator
\begin{align*}
\trace\colon\L(\Omega)_{\rmsym}^{d\times d} & \to\L(\Omega)\\
(\Phi_{jk})_{j,k\in\{1,\ldots,d\}} & \mapsto\sum_{j=1}^{d}\Phi_{jj}.
\end{align*}
Note that the adjoint is given by $\trace^{*}f=\diag(f,f,\ldots,f)\in\L(\Omega)_{\rmsym}^{d\times d}$. 
It is then elementary to obtain $\trace\Grad\subseteq\dive$ as well
as $\grad=\Dive\trace^{*}$. Hence, we get formally
\begin{align*}
\partial_{t}\rho\partial_{t}u-\Dive\bigl(\left(\partial_{t}\trace^{*}\lambda\trace+C\right)\Grad u-\trace^{*}\alpha^{*}p\bigr) & =f,\\
\partial_{t}\left(c_{0}p+\alpha\trace\Grad u\right)-\dive k\grad p & =g.
\end{align*}
Next, we introduce a new set of unknowns
\begin{align*}
v & \coloneqq\partial_{t}u,\\
T & \coloneqq C\Grad u,\\
\omega & \coloneqq\lambda\trace\Grad v-\alpha^{*}p,\\
q & \coloneqq-k\grad p.
\end{align*}
Here, $v$ is the velocity, $T$ is the stress tensor and $q$ is the heat flux. The quantity $\omega$ is an additional variable, which helps to rewrite the system into the form of evolutionary equations.

In order to finalise the reformulation we shall assume some additional properties
on the coefficients involved. Throughout the rest of this section,
we assume that
\begin{align*}
\rho=\rho^{*} & \geq c,\\
c_{0}=c_{0}^{*} & \geq c,\\
\Re\lambda & \geq c,\\
\Re k & \geq c,\text{ and }\\
C=C^{*} & \geq c
\end{align*}
for some $c>0$, where all inequalities are thought of in the sense of positive definiteness (compare Chapter \ref{chap:Solution_Theory}). As a consequence, we obtain
\[
\trace\Grad v=\lambda^{-1}\omega+\lambda^{-1}\alpha^{*}p.
\]
Rewriting the defining equations for $T,$ $\omega,$ and $q$
together with the two equations we started out with, we obtain the
system
\begin{align*}
\partial_{t}\rho v-\Dive\left(T+\trace^{*}\omega\right) & =f,\\
\partial_{t}c_{0}p+\alpha\lambda^{-1}\omega+\alpha\lambda^{-1}\alpha^{*}p+\dive q & =g,\\
\lambda^{-1}\omega+\lambda^{-1}\alpha^{*}p-\trace\Grad v & =0,\\
\partial_{t}C^{-1}T-\Grad v & =0,\\
k^{-1}q+\grad p & =0.
\end{align*}
Note that at this stage of modelling we did assume that we can freely interchange the order of differenation, so that
$\Grad \partial_t u = \partial_t \Grad u$.
Introducing 
\begin{align}
M_{0} & \coloneqq\begin{pmatrix}
 \rho  &  0  &  0  &  0  &  0\\
 0  &  c_{0}  &  0  &  0  &  0\\
 0  &  0  &  0  &  0  &  0\\
 0  &  0  &  0  &  C^{-1}  &  0\\
 0  &  0  &  0  &  0  &  0 
\end{pmatrix},\quad 
M_{1} \coloneqq\begin{pmatrix}
0 & 0 & 0 & 0 & 0\\
0 & \alpha\lambda^{-1}\alpha^{*} & \alpha\lambda^{-1} & 0 & 0\\
0 & \lambda^{-1}\alpha^{*} & \lambda^{-1} & 0 & 0\\
0 & 0 & 0 & 0 & 0\\
0 & 0 & 0 & 0 & k^{-1}
\end{pmatrix},\label{eq:PoroM}\\
V & \coloneqq\begin{pmatrix}
1 & 0 & 0 & 0 & 0\\
0 & 1 & 0 & 0 & 0\\
0 & 0 & 1 & \trace & 0\\
0 & 0 & 0 & 1 & 0\\
0 & 0 & 0 & 0 & 1
\end{pmatrix}, \quad
A \coloneqq\begin{pmatrix}
0 & 0 & 0 & -\Dive & 0\\
0 & 0 & 0 & 0 & \dive_{0}\\
0 & 0 & 0 & 0 & 0\\
-\Grad_{0} & 0 & 0 & 0 & 0\\
0 & \grad & 0 & 0 & 0
\end{pmatrix}, \label{eq:PoroVA}
\end{align}
we obtain 
\[
\left(\partial_{t}M_{0}+M_{1}+VAV^{*}\right)\begin{pmatrix}
v\\
p\\
\omega\\
T\\
q
\end{pmatrix}=\begin{pmatrix}
f\\
g\\
0\\
0\\
0
\end{pmatrix}.
\]
This perspective enables us to prove well-posedness for the equations
of poro-elasticity by applying \prettyref{thm:Solution_theory_EE}.

\begin{thm}
\label{thm:wp-Poro} Put $H\coloneqq\L(\Omega)^{d}\times\L(\Omega)\times\L(\Omega)\times\L(\Omega)_{\rmsym}^{d\times d}\times\L(\Omega)^{d}$
and let $M_{0},M_{1},V\in\bo(H)$ and $A$ be given as in \prettyref{eq:PoroM} and \prettyref{eq:PoroVA}.
Then there exists $\nu_{0}>0$ such that for all $\nu\geq\nu_{0}$
the operator $\overline{\td{\nu}M_{0}+M_{1}+VAV^{*}}$
is continuously invertible on $\Lnu(\R;H)$. The inverse
$S_{\nu}$ of this operator is causal and eventually independent of $\nu$. Moreover,
$\sup_{\nu\geq\nu_{0}}\norm{S_{\nu}}<\infty$ and $F\in \dom(\td{\nu})$
implies $S_{\nu}F\in\dom(\td{\nu})\cap\dom(VAV^{*})$.
\end{thm}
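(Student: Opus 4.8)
The plan is to apply Picard's theorem, \prettyref{thm:Solution_theory_EE}, to the material law $M(z)\coloneqq M_{0}+z^{-1}M_{1}$ and the skew-selfadjoint operator $VAV^{*}$. First I would record the structural preliminaries. By \prettyref{exa:material-laws}, $M$ is a material law with $\sbb{M}\leq 0$, and by \prettyref{exa:material_law_revisited} we have $M(\td{\nu})=M_{0}+\td{\nu}^{-1}M_{1}$ (as extended operators), so on its natural domain $\td{\nu}M(\td{\nu})=\td{\nu}M_{0}+M_{1}$; hence $\td{\nu}M(\td{\nu})+VAV^{*}=\td{\nu}M_{0}+M_{1}+VAV^{*}$. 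Next, $A$ is skew-selfadjoint: it is the direct sum of the zero operator on the $\omega$-component and two operators of the shape treated in \prettyref{prop:block_op_realinv}\ref{prop:block_op_realinv:item:1}, namely the block pairing $v$ with $T$ via $-\Grad_{0}$ and $-\Dive=\Grad_{0}^{*}$, and the block pairing $p$ with $q$ via $\grad$ and $\dive_{0}=-\grad^{*}$; thus $A$ is densely defined and skew-selfadjoint. Writing $V=1+N$ with $N\coloneqq V-1$, one checks $N^{2}=0$, so $V$ is a bounded bijection with $V^{-1}=1-N$ bounded; the identity $(VAV^{*})^{*}=VA^{*}V^{*}=-VAV^{*}$ together with the matching of domains $\dom(VAV^{*})=(V^{*})^{-1}[\dom(A)]$ (which is dense) then shows $VAV^{*}$ is densely defined and skew-selfadjoint as well.

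The heart of the proof is the positive definiteness estimate. Fix $z\in\C$ with $\mu\coloneqq\Re z\geq\nu_{0}$ and $\phi=(v,p,\omega,T,q)\in H$, and compute $\Re\scp{\phi}{zM(z)\phi}_{H}=\Re\scp{\phi}{(zM_{0}+M_{1})\phi}_{H}$ component by component. The $v$-, $T$- and $q$-contributions are direct: from $\rho=\rho^{*}\geq c$ we get $\Re\scp{v}{z\rho v}=\mu\scp{v}{\rho v}\geq\mu c\norm{v}^{2}$; from $C=C^{*}\geq c$ (so $\norm{C^{-1}T}\geq\norm{C}^{-1}\norm{T}$ and $C^{-1}=(C^{-1})^{*}$) we get $\Re\scp{T}{zC^{-1}T}\geq\mu c\norm{C}^{-2}\norm{T}^{2}$; and from $\Re k\geq c$, \prettyref{prop:block_op_realinv}\ref{prop:block_op_realinv:item:2} gives $\Re\scp{q}{k^{-1}q}\geq c\norm{k}^{-2}\norm{q}^{2}$. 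The $(p,\omega)$-block is the delicate one because $M_{0}$ vanishes on the $\omega$-component. Here I would substitute $w\coloneqq\alpha^{*}p+\omega$ and use $\alpha\lambda^{-1}\alpha^{*}p+\alpha\lambda^{-1}\omega=\alpha\lambda^{-1}w$ together with $\lambda^{-1}\alpha^{*}p+\lambda^{-1}\omega=\lambda^{-1}w$ to collapse the block to $\mu\scp{p}{c_{0}p}+\Re\scp{w}{\lambda^{-1}w}\geq\mu c\norm{p}^{2}+c\norm{\lambda}^{-2}\norm{w}^{2}$, again by \prettyref{prop:block_op_realinv}\ref{prop:block_op_realinv:item:2} applied to $\lambda$. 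Finally $\norm{\omega}^{2}\leq 2\norm{w}^{2}+2\norm{\alpha}^{2}\norm{p}^{2}$ yields $\norm{w}^{2}\geq\tfrac{1}{2}\norm{\omega}^{2}-\norm{\alpha}^{2}\norm{p}^{2}$, so the $(p,\omega)$-block is bounded below by $(\mu c-c\norm{\alpha}^{2}\norm{\lambda}^{-2})\norm{p}^{2}+\tfrac{c}{2}\norm{\lambda}^{-2}\norm{\omega}^{2}$.

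Choosing $\nu_{0}>\max\{0,\norm{\alpha}^{2}\norm{\lambda}^{-2}\}$ makes every coefficient above bounded below, uniformly for $\mu\geq\nu_{0}$, by a constant $c'>0$ depending only on $c,\nu_{0},\norm{\alpha},\norm{\lambda},\norm{C},\norm{k}$; hence $\Re\scp{\phi}{zM(z)\phi}_{H}\geq c'\norm{\phi}_{H}^{2}$ for all $\phi\in H$ and all $z\in\C_{\Re\geq\nu_{0}}$. Then \prettyref{thm:Solution_theory_EE} (with material law $M$ and skew-selfadjoint $VAV^{*}$) applies and gives directly, for every $\nu\geq\nu_{0}$: closability of $\td{\nu}M(\td{\nu})+VAV^{*}=\td{\nu}M_{0}+M_{1}+VAV^{*}$; that $S_{\nu}=(\overline{\td{\nu}M_{0}+M_{1}+VAV^{*}})^{-1}\in\bo(\Lnu(\R;H))$ is causal and eventually independent of $\nu$; the bound $\norm{S_{\nu}}\leq 1/c'$, whence $\sup_{\nu\geq\nu_{0}}\norm{S_{\nu}}<\infty$; and that $F\in\dom(\td{\nu})$ implies $S_{\nu}F\in\dom(\td{\nu})\cap\dom(VAV^{*})$. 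The only genuine obstacle is the $(p,\omega)$-coupling: the relevant $2\times2$ block of $\Re M_{1}$ is merely positive semidefinite, so the substitution $w=\alpha^{*}p+\omega$ combined with the large-$\nu_{0}$ choice is exactly what turns the coupling into a true positive lower bound; everything else is the routine verification that the block operators $\rho,c_{0},C^{-1},k^{-1}$ are coercive and that $A$, and hence $VAV^{*}$, is skew-selfadjoint.
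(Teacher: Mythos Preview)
Your proof is correct and follows the paper's intended route in broad strokes: verify that $VAV^{*}$ is skew-selfadjoint (this is \prettyref{prop:UAU}, which you essentially rederive via $V=1+N$, $N^{2}=0$) and then check the positive definiteness hypothesis of \prettyref{thm:Solution_theory_EE} for $M(z)=M_{0}+z^{-1}M_{1}$.

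Where you differ is in the treatment of the $(p,\omega)$-block. The paper's intended argument (left to \prettyref{exer:Poro}) is to invoke the abstract \prettyref{prop:EuclidM0M1}: one checks that $M_{0}$ is coercive on $\ran(M_{0})$ and that $\Re M_{1}$ is coercive on $\ker(M_{0})$ (the $(\omega,q)$-components), and the proposition then handles the cross-terms via a Peter--Paul estimate. Your substitution $w=\alpha^{*}p+\omega$ instead exploits the special factorisation of the $(p,\omega)$-block of $M_{1}$ as $\bigl(\begin{smallmatrix}\alpha\\1\end{smallmatrix}\bigr)\lambda^{-1}\bigl(\begin{smallmatrix}\alpha^{*}&1\end{smallmatrix}\bigr)$, which collapses the computation cleanly and avoids the abstract proposition altogether. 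This is more direct and gives an explicit threshold $\nu_{0}>\norm{\alpha}^{2}\norm{\lambda}^{-2}$; the trade-off is that \prettyref{prop:EuclidM0M1} is reusable in situations without such a factorisation.
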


We will provide two prerequisites for the proof. We ask for the details of the proof in \prettyref{exer:Poro}. 

\begin{prop}
\label{prop:UAU} Let $H_{0}$, $H_{1}$ be Hilbert spaces, $B\colon\dom(B)\subseteq H_{0}\to H_{0}$
skew-selfadjoint, $V\in \bo(H_{0},H_{1})$ bijective.
Then $\left(VBV^{*}\right)^{*}=-VBV^{*}$.
\end{prop}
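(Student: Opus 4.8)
The plan is to prove that $VBV^*$ is skew-selfadjoint by computing $(VBV^*)^*$ directly from the composition rules for adjoints, namely \prettyref{thm:adj-comp} and \prettyref{rem:adj-comp}(a), once a couple of closedness facts are in place.

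First I would collect the elementary observations on $V$. Since $V\in\bo(H_0,H_1)$ is bijective, the open mapping theorem gives $V^{-1}\in\bo(H_1,H_0)$; by \prettyref{lem:bdds}, $V^*\in\bo(H_1,H_0)$, and by \prettyref{lem:ass} we have $V^{**}=\overline{V}=V$. Also $B=-B^*$ is closed and densely defined. Next I would establish two closedness statements. The operator $BV^*$ is closed by \prettyref{prop:inv-sum-comp-closed}(c), being the composition of the closed operator $B$ with the bounded operator $V^*$ on the right. The operator $VB\colon\dom(B)\subseteq H_0\to H_1$ is also closed: if $x_n\to x$ and $VBx_n\to y$, then $Bx_n=V^{-1}(VBx_n)\to V^{-1}y$ since $V^{-1}$ is bounded, and closedness of $B$ yields $x\in\dom(B)$ with $Bx=V^{-1}y$, i.e. $VBx=y$. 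Here the bijectivity of $V$ is used in an essential way.

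With these in hand, the computation is short. Writing $VBV^*=V\bigl(BV^*\bigr)$ and using that $BV^*$ is closed, \prettyref{rem:adj-comp}(a) (left composition with the bounded operator $V$) gives $(VBV^*)^*=(BV^*)^*V^*$. Applying \prettyref{thm:adj-comp} to the closed relation $B$ and the bounded operator $V^*$,
\[
(BV^*)^*=\overline{(V^*)^*B^*}=\overline{VB^*}=\overline{-VB}=-\overline{VB}=-VB,
\]
where I used $(V^*)^*=V$, $B^*=-B$, and the closedness of $VB$. Hence $(VBV^*)^*=-VBV^*$, which is the assertion.

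The only genuine subtlety is the second closedness claim: left composition with a bounded operator does not preserve closedness in general (take $V=0$), so one must exploit that $V$ is bijective — equivalently, bounded below — in order to conclude that $VB$ is closed; without this the identity $(BV^*)^*=-VB$ would break down. Everything else amounts to carefully tracking the Hilbert spaces through the statements of \prettyref{thm:adj-comp} and \prettyref{rem:adj-comp}.
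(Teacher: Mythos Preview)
Your proof is correct. The paper leaves \prettyref{prop:UAU} as \prettyref{exer:Poro}\ref{exer:Poro_a} and does not supply its own argument, so there is nothing to compare against; your approach via \prettyref{thm:adj-comp} and \prettyref{rem:adj-comp}(a), together with the key observation that bijectivity of $V$ makes $VB$ closed, is exactly the intended route.
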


The proof of \prettyref{prop:UAU} is left as (part of) \prettyref{exer:Poro}.

\begin{prop}
\label{prop:EuclidM0M1} Let $H$ be a Hilbert space, $N_{0},N_{1}\in\bo(H)$
with $N_{0}=N_{0}^{*}$. Assume there exist $c_0,c_1>0$ such that $\scp{x}{N_{0}x}\geq c_{0}\norm{x}^{2}$ for all $x\in \ran(N_0)$
and $\Re\scp{y}{N_{1}y}\geq c_{1}\norm{y}^{2}$ for all $y\in\ker(N_{0})$. 
Then for all $0<c_{1}'<c_{1}$ there exists $\nu_{0}>0$ such that
for all $\nu\geq\nu_{0}$ we have that
\[
\nu N_{0}+\Re N_{1}\geq c'_{1}.
\]
\end{prop}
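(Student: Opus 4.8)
The plan is to exploit the orthogonal decomposition of $H$ induced by the selfadjoint operator $N_0$ and then estimate the quadratic form $z\mapsto\scp{z}{(\nu N_0+\Re N_1)z}$ separately on the two summands, absorbing the mixed terms by Young's inequality once $\nu$ is large enough.

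First I would record the structural facts. Since $N_0=N_0^{*}\in\bo(H)$ is closed, \prettyref{cor:abstrHelmH} yields the orthogonal decomposition $H=\cran(N_0)\oplus\ker(N_0)$. The assumption $\scp{x}{N_0x}\geq c_0\norm{x}^{2}$ for $x\in\ran(N_0)$ extends by continuity of both sides to all of $\cran(N_0)$; note also that $N_0$ maps $H$ into $\ran(N_0)\subseteq\cran(N_0)$. Writing an arbitrary $z\in H$ as $z=u+w$ with $u\in\cran(N_0)$ and $w\in\ker(N_0)$, we have $N_0z=N_0u$, and since $N_0$ is selfadjoint with $N_0w=0$ the cross term vanishes, giving
\[
\scp{z}{N_0z}=\scp{u}{N_0u}\geq c_0\norm{u}^{2}.
\]

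Next I would expand $\Re\scp{z}{N_1z}$ along $z=u+w$ into the four contributions $\Re\scp{u}{N_1u}$, $\Re\scp{u}{N_1w}$, $\Re\scp{w}{N_1u}$, $\Re\scp{w}{N_1w}$. The last is bounded below by $c_1\norm{w}^{2}$ by hypothesis, while the other three are bounded in modulus by $\norm{N_1}\norm{u}^{2}$ and $\norm{N_1}\norm{u}\norm{w}$ (the latter appearing twice). Combining with the bound for the $N_0$-form,
\[
\scp{z}{(\nu N_0+\Re N_1)z}\geq \nu c_0\norm{u}^{2}+c_1\norm{w}^{2}-\norm{N_1}\norm{u}^{2}-2\norm{N_1}\norm{u}\norm{w}.
\]
Given $0<c_1'<c_1$, I would apply Young's inequality with parameter $\varepsilon\coloneqq c_1-c_1'$ to the mixed term, namely $2\norm{N_1}\norm{u}\norm{w}\leq \tfrac{\norm{N_1}^{2}}{c_1-c_1'}\norm{u}^{2}+(c_1-c_1')\norm{w}^{2}$. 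This leaves the coefficient of $\norm{w}^{2}$ equal to $c_1'$ and the coefficient of $\norm{u}^{2}$ equal to $\nu c_0-\norm{N_1}-\tfrac{\norm{N_1}^{2}}{c_1-c_1'}$. Since $c_0>0$, choosing $\nu_0$ so large that this last expression is $\geq c_1'$ gives, for all $\nu\geq\nu_0$ and using $\norm{z}^{2}=\norm{u}^{2}+\norm{w}^{2}$,
\[
\scp{z}{(\nu N_0+\Re N_1)z}\geq c_1'\norm{u}^{2}+c_1'\norm{w}^{2}=c_1'\norm{z}^{2},
\]
which is exactly the claim $\nu N_0+\Re N_1\geq c_1'$.

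There is no real obstacle; the only points needing a little care are the observation that selfadjointness of $N_0$ together with $N_0w=0$ removes the $u$--$w$ cross term in the $N_0$-form (so the growing $\nu$-term only has to control $\norm{u}^{2}$), and the bookkeeping in the choice of $\varepsilon$ and $\nu_0$ so that precisely $c_1'$ survives on both blocks.
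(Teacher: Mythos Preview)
Your proof is correct and follows essentially the same approach as the paper: decompose $H=\cran(N_0)\oplus\ker(N_0)$ via selfadjointness of $N_0$, expand the quadratic form, and absorb the cross terms with the Peter--Paul/Young inequality before choosing $\nu_0$ large. Your specific choice $\varepsilon=c_1-c_1'$ and the remark that the hypothesis on $\ran(N_0)$ extends to $\cran(N_0)$ by continuity are minor elaborations, but the argument is otherwise line-for-line the same.
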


\begin{proof}
Note that by the selfadjointness of $N_{0}$ we can decompose $H=\cran(N_{0})\oplus\ker(N_{0})$, see \prettyref{cor:abstrHelmH}. 
Let $z\in H$, and $x\in\cran(N_{0})$, $y\in\ker(N_{0})$ such that $z=x+y$. For $\varepsilon,\nu>0$ we estimate
\begin{align*}
 & \nu\scp{x+y}{N_{0}(x+y)}+\Re\scp{x+y}{N_{1}(x+y)}\\
 & =\nu\scp{x}{N_{0}x}+\Re\scp{y}{N_{1}y}+\Re\scp{x}{N_{1}x}+\Re\scp{x}{N_{1}y}+\Re\scp{y}{N_{1}x}\\
 & \geq\nu c_{0}\norm{x}^2+c_{1}\norm{y}^2-\norm{N_{1}}\norm{x}^2-2\norm{N_{1}}\norm{x}\norm{y}\\
 & \geq\nu c_{0}\norm{x}^2+c_{1}\norm{y}^2-\norm{N_{1}}\norm{x}^2-\frac{1}{\varepsilon}\norm{N_{1}}^{2}\norm{x}^{2}-\varepsilon\norm{y}^{2}\\
 & =\left(\nu c_{0}-\frac{1}{\varepsilon}\norm{N_{1}}^{2}-\norm{N_{1}}\right)\norm{x}^2+\left(c_{1}-\varepsilon\right)\norm{y}^2,
\end{align*}
where we have used the Peter--Paul inequality (i.e., Young's inequality for products of non-negative numbers).
For $0<c_{1}'<c_{1}$ we find $\varepsilon>0$ such that $c_{1}-\varepsilon>c_{1}'$.
Then we choose $\nu>\frac{1}{c_{0}}\left(c_{1}'+\frac{1}{\varepsilon}\norm{N_{1}}^{2}+\norm{N_{1}}\right)$.
With this choice of $\nu_{0}$ we deduce for all $\nu\geq\nu_{0}$ that
\[
\nu\scp{z}{N_{0}z}+\Re\scp{z}{N_{1}z}\geq c_{1}'\left(\norm{x}^{2}+\norm{y}^{2}\right)=c_{1}'\norm{z}^{2},
\]
which yields the assertion.
\end{proof}

\section{Fractional Elasticity\index{fractional elasticity}}

Let $\Omega\subseteq\R^d$ be open.
In order to better fit to the experimental data of visco-elastic solids (i.e., to incorporate solids that `memorise' previous force applied to them) the equations of linearised elasticity need to be extended in some way. The balance law for
the momentum, however, is still satisfied; that is, for the displacement
$u\colon\R\times\Omega\to\R^{d}$ we still have that
\[
\partial_{t}\rho\partial_{t}u-\Dive T=f,
\]
where $\rho\in\bo(\L(\Omega)^{d})$ models the density and $f\colon\R\times\Omega\to\R^{d}$
is a given external forcing term. The stress tensor, $T\colon\R\times\Omega\to\R_{\textnormal{sym}}^{d\times d}$,
does \textbf{not} follow the classical Hooke's law, which, if it did, would look like 
\[
T=C\Grad u
\]
for $C\in\bo(\L(\Omega)_{\textnormal{sym}}^{d\times d})$. Instead it is
amended by another material dependent coefficient $D\in\bo(\L(\Omega)_{\textnormal{sym}}^{d\times d})$
and a fractional time derivative; that is,
\[
T=C\Grad u+D\partial_{t}^{\alpha}\Grad u,
\]
for some $\alpha\in\ci{0}{1}$, where $\partial_t^\alpha \coloneqq \partial_t \partial_t^{\alpha-1}$, see \prettyref{exa:material-laws}\ref{exa:material-laws:item:5}. We shall simplify the present consideration
slightly and refer to \prettyref{exer:fracEla} instead for a more
involved example. Throughout this section, we shall assume that 
\[
C=0,\; D=D^{*}\geq c,\text{ and }\rho=\rho^{*}\geq c
\]
for some $c>0$. Thus, putting $v\coloneqq\partial_{t}u$ and assuming
the clamped boundary conditions again, we study well-posedness of
\begin{align}
\partial_{t}\rho v-\Dive T & =f,\label{eq:FEmomentum}\\
T & =D\partial_{t}^{\alpha}\Grad_{0}u.\label{eq:FEmatlaw}
\end{align}
%In order to do that, we start out considering the term $D\partial_{t}^{\alpha}$ and its real part. 

In order to do that, we first rewrite the second equation. 
We will make use of the following proposition which will serve us to show bounded invertibility of $\partial_t^\alpha$ (in the space $\Lnu$), and which will also be employed to obtain well-posedness.

\begin{prop}
\label{prop:pdfrac}Let $\nu>0$, $z\in\C_{\Re\geq\nu}$, $\alpha\in[0,1]$. Then
\[
\Re z^{\alpha}\geq (\Re z)^\alpha \geq \nu^{\alpha}.
\]
\end{prop}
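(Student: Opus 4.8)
The plan is to pass to polar coordinates and reduce the first inequality to a scalar trigonometric estimate. Since $\Re z\geq\nu>0$ we have $z\neq 0$ and $z\notin\Rle{0}$, so the fractional power is defined; writing $z=r\e^{\i\theta}$ with $r=\abs{z}>0$ and $\theta\in\oi{-\pi}{\pi}$, the condition $\Re z=r\cos\theta\geq\nu>0$ forces $\cos\theta>0$, i.e.\ $\theta\in\oi{-\tfrac{\pi}{2}}{\tfrac{\pi}{2}}$. By the chosen branch of the fractional power, $z^{\alpha}=r^{\alpha}\e^{\i\alpha\theta}$, hence $\Re z^{\alpha}=r^{\alpha}\cos(\alpha\theta)$, while $(\Re z)^{\alpha}=(r\cos\theta)^{\alpha}=r^{\alpha}(\cos\theta)^{\alpha}$. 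Thus $\Re z^{\alpha}\geq(\Re z)^{\alpha}$ is equivalent to
\[
\cos(\alpha\theta)\geq(\cos\theta)^{\alpha},
\]
and by the evenness of both sides in $\theta$ it suffices to verify this for $\theta\in\roi{0}{\tfrac{\pi}{2}}$.

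For this estimate I would invoke the concavity of $\theta\mapsto\log\cos\theta$ on $\roi{0}{\tfrac{\pi}{2}}$, which holds since its second derivative equals $-\sec^{2}\theta<0$. Writing $\alpha\theta=\alpha\cdot\theta+(1-\alpha)\cdot 0$ as a convex combination and using $\log\cos 0=0$, concavity gives
\[
\log\cos(\alpha\theta)\geq\alpha\log\cos\theta+(1-\alpha)\log\cos 0=\alpha\log\cos\theta,
\]
and exponentiating yields $\cos(\alpha\theta)\geq(\cos\theta)^{\alpha}$. (The endpoints $\alpha\in\{0,1\}$ are trivial, and $\cos\theta>0$ here so the logarithms are finite.) This proves $\Re z^{\alpha}\geq(\Re z)^{\alpha}$.

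For the second inequality, since $\alpha\geq 0$ the map $t\mapsto t^{\alpha}$ is nondecreasing on $\Rg{0}$, so $\Re z\geq\nu>0$ immediately gives $(\Re z)^{\alpha}\geq\nu^{\alpha}$. The only genuinely substantive point is the trigonometric inequality $\cos(\alpha\theta)\geq(\cos\theta)^{\alpha}$; phrased through the concavity of $\log\cos$ it is routine, so I do not expect any real obstacle.
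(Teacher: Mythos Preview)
Your proof is correct and follows essentially the same approach as the paper: both reduce to the trigonometric inequality $\cos(\alpha\theta)\geq(\cos\theta)^{\alpha}$ and prove it via the concavity of $\log\cos$ together with the convex combination $\alpha\theta=\alpha\cdot\theta+(1-\alpha)\cdot 0$. The only cosmetic difference is that the paper first normalises to $\Re z=1$ whereas you factor out $r^{\alpha}$ directly.
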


\begin{proof}
Let us prove the first inequality. Note that without loss of generality, we may assume that $\Re z=1$. Let $\varphi\coloneqq \arg z \in\oi{-\tfrac{\pi}{2}}{\tfrac{\pi}{2}}$.
Since $\ln\circ\cos$ is concave on $\oi{-\tfrac{\pi}{2}}{\tfrac{\pi}{2}}$ (as $(\ln\circ\cos)' = -\tan$ is decreasing) and $(\ln\circ\cos)(0) = 0$, we obtain
\[\ln\cos(\alpha \varphi) = \ln\cos(\alpha\varphi + (1-\alpha)0) \geq \alpha \ln\cos(\varphi) + (1-\alpha)\ln\cos(0) = \ln \bigl(\cos(\varphi)^\alpha\bigr),\]
and therefore $\cos(\alpha\varphi)\geq \cos(\varphi)^\alpha$.
Since $\Re z=1$ implies $\abs{z} = \frac{1}{\cos(\varphi)}$, we obtain
\[\Re z^\alpha = \frac{\cos(\alpha\varphi)}{(\cos\varphi)^\alpha} \geq 1 = (\Re z)^\alpha.\]
The second inequality follows from monotonicity of $x\mapsto x^\alpha$.
\end{proof}

Applying \prettyref{prop:pdfrac} and noting that $D$ is boundedly invertible we can reformulate  \prettyref{eq:FEmatlaw} as 
\[
\td{\nu}^{-\alpha}D^{-1}T-\Grad_{0}u=0,
\]
so that \prettyref{eq:FEmatlaw} and \prettyref{eq:FEmomentum} read
\[
\left(\td{\nu}\begin{pmatrix}
\rho & 0\\
0 & \td{\nu}^{-\alpha}D^{-1}
\end{pmatrix}-\begin{pmatrix}
0 & \Dive\\
\Grad_{0} & 0
\end{pmatrix}\right)\begin{pmatrix}
v\\
T
\end{pmatrix}=\begin{pmatrix}
f\\
0
\end{pmatrix}.
\]
A solution theory for the latter equation, thus, reads as follows,
where again $v\coloneqq\td{\nu}u$.
\begin{thm}
\label{thm:wp-FE} Put $H\coloneqq\L(\Omega)^{d}\times\L(\Omega)_{\textnormal{sym}}^{d\times d}.$
Then for all $\nu>0$ the operator
\[
\td{\nu}\begin{pmatrix}
\rho & 0\\
0 & \td{\nu}^{-\alpha}D^{-1}
\end{pmatrix}-\begin{pmatrix}
0 & \Dive\\
\Grad_{0} & 0
\end{pmatrix}
\]
is densely defined and closable in $\Lnu(\R;H)$. The inverse of the
closure is continuous, causal and eventually independent of $\nu$.
\end{thm}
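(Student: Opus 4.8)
The plan is to apply Picard's theorem, \prettyref{thm:Solution_theory_EE}, on the Hilbert space $H=\L(\Omega)^{d}\times\L(\Omega)_{\textnormal{sym}}^{d\times d}$ with the material law
\[
M(z)\coloneqq\begin{pmatrix}\rho & 0\\ 0 & z^{-\alpha}D^{-1}\end{pmatrix}\quad(z\in\C\setminus\Rle{0})
\]
and the operator $A\coloneqq-\begin{pmatrix}0 & \Dive\\ \Grad_{0} & 0\end{pmatrix}$. By the functional calculus (cf.\ \prettyref{exa:material_law_revisited}) one has $M(\td{\nu})=\begin{pmatrix}\rho & 0\\ 0 & \td{\nu}^{-\alpha}D^{-1}\end{pmatrix}$, so that $\td{\nu}M(\td{\nu})+A$ is exactly the operator in the statement. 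Once the hypotheses of \prettyref{thm:Solution_theory_EE} are verified, closability and the asserted properties of the inverse of the closure (boundedness, causality, eventual independence of $\nu$) are immediate; dense definedness follows as well, since $\lin\set{\varphi\cdot x}{\varphi\in\cci(\R),\,x\in\dom(A)}$ is contained in the domain and is total in $\Lnu(\R;H)$ by \prettyref{lem:dense sets} together with the density of $\dom(A)$ in $H$.

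First I would check that $M$ is a material law with $\sbb{M}\leq 0$: its domain $\C\setminus\Rle{0}$ is open, $M$ is holomorphic there, and for $\nu_{0}>0$ and $z\in\C_{\Re>\nu_{0}}$ one has $\norm{z^{-\alpha}D^{-1}}\leq\abs{z}^{-\alpha}\norm{D^{-1}}\leq\nu_{0}^{-\alpha}\norm{D^{-1}}$ because $\abs{z}\geq\Re z$ and $\alpha\geq 0$ (compare \prettyref{exa:material-laws}\ref{exa:material-laws:item:5}). Next, $A$ is skew-selfadjoint: since $\Dive$ and $\Grad_{0}$ are densely defined and closed, \prettyref{prop:block_op_realinv}\ref{prop:block_op_realinv:item:1} gives
\[
\begin{pmatrix}0 & \Dive\\ \Grad_{0} & 0\end{pmatrix}^{*}=\begin{pmatrix}0 & \Grad_{0}^{*}\\ \Dive^{*} & 0\end{pmatrix},
\]
and from the definitions $\Grad_{0}=-\Dive^{*}$ and $\Dive=-\Grad_{\rmc}^{*}$ (so that $\Dive$ is closed and $\Grad_{0}^{*}=-\overline{\Dive}=-\Dive$) the right-hand side equals $-\begin{pmatrix}0 & \Dive\\ \Grad_{0} & 0\end{pmatrix}$; hence $A^{*}=-A$.

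The main step is the positive-definiteness estimate. Fix $\nu_{0}>0$. For $z\in\C_{\Re\geq\nu_{0}}$ and $(v,T)\in H$, using $D^{-1}=(D^{-1})^{*}$, one computes
\[
\Re\scp{(v,T)}{zM(z)(v,T)}_{H}=\Re(z)\,\scp{v}{\rho v}_{\L(\Omega)^{d}}+\Re\bigl(z^{1-\alpha}\bigr)\,\scp{T}{D^{-1}T}_{\L(\Omega)_{\textnormal{sym}}^{d\times d}}.
\]
Since $\rho=\rho^{*}\geq c$ the first term is at least $\nu_{0}c\norm{v}^{2}$. For the second term, note that $1-\alpha\in[0,1]$, so \prettyref{prop:pdfrac} yields $\Re(z^{1-\alpha})\geq\nu_{0}^{1-\alpha}>0$, while \prettyref{prop:block_op_realinv}\ref{prop:block_op_realinv:item:2} applied to $D$ gives $D^{-1}\geq c\norm{D}^{-2}$; hence that term is at least $\nu_{0}^{1-\alpha}c\norm{D}^{-2}\norm{T}^{2}$. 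Altogether $\Re zM(z)\geq\min\{\nu_{0}c,\,\nu_{0}^{1-\alpha}c\norm{D}^{-2}\}>0$ on $\C_{\Re\geq\nu_{0}}$, so \prettyref{thm:Solution_theory_EE} applies for every $\nu\geq\nu_{0}$; since $\nu_{0}>0$ was arbitrary and $\sbb{M}\leq 0$, the conclusion holds for all $\nu>0$. The only genuine obstacle here is the bookkeeping: one must observe that after multiplication by $z$ the relevant fractional exponent is $1-\alpha$ rather than $\alpha$, which is still in $[0,1]$ so that \prettyref{prop:pdfrac} is applicable, and then assemble the two diagonal lower bounds into a single uniform coercivity constant.
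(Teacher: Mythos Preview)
Your proof is correct and follows the same approach as the paper: apply \prettyref{thm:Solution_theory_EE} with the displayed material law and $A$, verify skew-selfadjointness of $A$ via \prettyref{prop:block_op_realinv}\ref{prop:block_op_realinv:item:1}, and establish the positive-definiteness of $\Re zM(z)$ by using \prettyref{prop:pdfrac} on $\Re z^{1-\alpha}$ together with \prettyref{prop:block_op_realinv}\ref{prop:block_op_realinv:item:2} for $D^{-1}$. The paper is terser (it only spells out the $D^{-1}$-block, the $\rho$-block being routine), but the argument is the same.
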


\begin{proof}
The proof rests on \prettyref{thm:Solution_theory_EE}. Since $\begin{pmatrix}
0 & \Dive\\
\Grad_{0} & 0
\end{pmatrix}$ is skew-selfadjoint by \prettyref{prop:block_op_realinv}\ref{prop:block_op_realinv:item:1}, it suffices to confirm the
positive definiteness condition for the material law. For this let
$z\in\C_{\Re\geq\nu}$ and compute for $x\in\L(\Omega)_{\textnormal{sym}}^{d\times d}$,
using \prettyref{prop:pdfrac} and \prettyref{prop:block_op_realinv}\ref{prop:block_op_realinv:item:2},
\[
\Re\scp{x}{zz^{-\alpha}D^{-1}x}=\Re\scp{x}{z^{1-\alpha}D^{-1}x}\geq\nu^{1-\alpha}\scp{x}{D^{-1}x} \geq \nu^{1-\alpha}\frac{c}{\norm{D}^2}\norm{x}^2.
\]
This yields the assertion.
\end{proof}

\section{The Heat Equation with Delay}

Let $\Omega\subseteq\R^{d}$ be open. In this section we concentrate
on a generalisation of the heat equation discussed in the previous
chapter. Although we keep the heat-flux-balance in the sense that 
\[
\partial_{t}\theta+\dive q=Q,
\]
with $q\colon\R\times\Omega\to\R^{d}$ being the heat flux and $\theta\colon\R\times\Omega\to\R$
being the heat, we shall now modify Fourier's law to the extent that
\[
q=-a\grad\theta-b\tau_{-h}\grad\theta
\]
for some $a,b\in\bo(\L(\ensuremath{\Omega)^{d})}$ with $\Re a\geq c$
for some $c>0$, and $h>0$. We shall again assume homogeneous Neumann
boundary conditions for $q$. Written in the now standard block
operator matrix form, this modified heat equation reads
\[
\left(\td{\nu}\begin{pmatrix}
1 & 0\\
0 & 0
\end{pmatrix}+\begin{pmatrix}
0 & 0\\
0 & \left(a+b\tau_{-h}\right)^{-1}
\end{pmatrix}+\begin{pmatrix}
0 & \dive_{0}\\
\grad & 0
\end{pmatrix}\right)\begin{pmatrix}
\theta\\
q
\end{pmatrix}=\begin{pmatrix}
Q\\
0
\end{pmatrix}.
\]
In order to actually justify the existence of the operator $\left(a+b\tau_{-h}\right)^{-1}$
 as a bounded linear operator, we provide the following lemma.
\begin{lem}
\label{lem:delayInv}
\begin{enumerate}
\item
There exists $\nu_{0}>0$ such that for all
$\nu\geq\nu_{0}$ the operator $a+b\tau_{-h}$ is continuously invertible
in $\Lnu(\R;\L(\Omega)^{d})$.
\item For all $0<c'<c/\norm{a}^{2}$ there is $\nu_{1}\geq\nu_{0}$
such that for all $z\in\C_{\Re\geq\nu_{1}}$ we have $\Re\left(a+b\e^{-zh}\right)^{-1}\geq c'$. 
\end{enumerate}
\end{lem}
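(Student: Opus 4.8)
The plan is to reduce both statements to \prettyref{prop:block_op_realinv}\ref{prop:block_op_realinv:item:2}, exploiting that the delay perturbation becomes small — in the operator norm on $\Lnu(\R;\L(\Omega)^d)$ for part (a), and in the operator norm on $\L(\Omega)^d$ after evaluation at $z$ for part (b) — once $\nu$, respectively $\Re z$, is taken large.

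For part (a), first note that the extension of $a$ to $\Lnu(\R;\L(\Omega)^d)$ still satisfies $\Re a\geq c$, since the defining inequality passes through the integral defining $\scp{\cdot}{\cdot}_{\Lnu}$ (cf.\ \prettyref{exer:tensorprod}); hence by \prettyref{prop:block_op_realinv}\ref{prop:block_op_realinv:item:2} the inverse $a^{-1}$ is bounded on $\Lnu(\R;\L(\Omega)^d)$ with $\norm{a^{-1}}\leq 1/c$. Writing $a+b\tau_{-h}=a\bigl(\idop+a^{-1}b\tau_{-h}\bigr)$ and using \prettyref{lem:time-shift-norm} (which gives $\norm{\tau_{-h}}_{\bo(\Lnu(\R;\L(\Omega)^d))}=\e^{-h\nu}$) we obtain $\norm{a^{-1}b\tau_{-h}}\leq\frac{\norm{b}}{c}\e^{-h\nu}$. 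Choosing $\nu_0>0$ so large that $\frac{\norm{b}}{c}\e^{-h\nu_0}<1$, the Neumann series shows that $\idop+a^{-1}b\tau_{-h}$, and therefore $a+b\tau_{-h}$, is continuously invertible for every $\nu\geq\nu_0$.

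For part (b), fix $z\in\C_{\Re\geq\nu_1}$ with $\nu_1\geq\nu_0$ and regard $a+b\e^{-zh}$ as an element of $\bo(\L(\Omega)^d)$. Since $\abs{\e^{-zh}}=\e^{-h\Re z}\leq\e^{-h\nu_1}$, we get $\Re\bigl(a+b\e^{-zh}\bigr)\geq\Re a-\norm{b}\e^{-h\nu_1}\geq c-\norm{b}\e^{-h\nu_1}$, which is some $\tilde c>0$ once $\nu_1$ is large. Then \prettyref{prop:block_op_realinv}\ref{prop:block_op_realinv:item:2} gives both boundedness of $\bigl(a+b\e^{-zh}\bigr)^{-1}$ and the estimate $\Re\bigl(a+b\e^{-zh}\bigr)^{-1}\geq\tilde c\,\norm{a+b\e^{-zh}}^{-2}\geq\frac{c-\norm{b}\e^{-h\nu_1}}{(\norm{a}+\norm{b}\e^{-h\nu_1})^2}$. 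As $\nu_1\to\infty$ the right-hand side tends to $c/\norm{a}^2$, so given $0<c'<c/\norm{a}^2$ one may enlarge $\nu_1\geq\nu_0$ so that this lower bound is at least $c'$ for all $z\in\C_{\Re\geq\nu_1}$, which is the claim.

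The only real (and minor) obstacle is the bookkeeping in part (b): one must simultaneously control the numerator $c-\norm{b}\e^{-h\nu_1}$ and the denominator $(\norm{a}+\norm{b}\e^{-h\nu_1})^2$ and verify that their quotient can be forced arbitrarily close to the sharp constant $c/\norm{a}^2$. Everything else is a routine smallness/Neumann-series argument together with a single application of \prettyref{prop:block_op_realinv}.
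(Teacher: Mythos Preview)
Your proof is correct. Part~(a) is essentially the paper's argument (the paper factors as $(1+b\tau_{-h}a^{-1})a$ rather than $a(1+a^{-1}b\tau_{-h})$, which makes no difference).

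For part~(b) you take a genuinely different and somewhat slicker route. The paper expands $(a+b\e^{-zh})^{-1}=a^{-1}\sum_{k\geq0}(-b\e^{-zh}a^{-1})^k$ via the Neumann series, isolates the $k=0$ term to get $\Re a^{-1}\geq c/\norm{a}^2$ from \prettyref{prop:block_op_realinv}\ref{prop:block_op_realinv:item:2}, and then bounds the norm of the tail by $\frac{2\varepsilon}{c}$ for a suitably chosen $\varepsilon$. You instead apply \prettyref{prop:block_op_realinv}\ref{prop:block_op_realinv:item:2} \emph{directly} to the perturbed operator $a+b\e^{-zh}$, observing that both its real part and its norm converge to those of $a$ as $\nu_1\to\infty$. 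This avoids the explicit series manipulation and the auxiliary choice of $\varepsilon$; the trade-off is that the paper's argument makes the perturbative structure (and the role of $a^{-1}$) more visible, which may be conceptually helpful when one later wants to view $(a+b\tau_{-h})^{-1}$ as a material law operator.
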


\begin{proof}
Note that $a$ is invertible with $\norm{a^{-1}}\leq\frac{1}{c}$ and $\Re a^{-1}\geq \frac{c}{\norm{a}^2}$ by \prettyref{prop:block_op_realinv}\ref{prop:block_op_realinv:item:2}.
\begin{enumerate}
\item
By \prettyref{exa:material_law_revisited}\ref{exa:material_law_revisited:item:3}, for all $\nu>0$ we obtain
\[
\norm{b\tau_{-h}}_{\bo(\Lnu)}\leq\norm{b}_{\bo(\L(\Omega)^{d})}\sup_{t\in\R}\abs{\e^{-\left(it+\nu\right)h}}=\norm{b}_{\bo(\L(\Omega)^{d})}\e^{-h\nu}.
\]
Thus, we find $\nu_{0}>0$ such that for all $\nu\geq\nu_{0}$ we
obtain $\norm{b\tau_{-h}a^{-1}}_{\bo(\Lnu)}\leq\frac{1}{c}\norm{b\tau_{-h}}_{\bo(\Lnu)}<1.$
Thus, 
\[
a+b\tau_{-h}=\left(1+b\tau_{-h}a^{-1}\right)a
\]
is continuously invertible by a Neumann series argument.
\item 
Let $0<c'<c/\norm{a}^{2}$. We choose $\nu_{1}\geq\nu_{0}$ such that $\norm{b\e^{-zh}a^{-1}}_{\bo(\L(\Omega)^{d})}\leq\min\{\frac{1}{2},\varepsilon\}$
for all $z\in\C_{\Re\geq\nu_{1}}$, where $0<\varepsilon\leq\frac{1}{2}c\left(\frac{c}{\norm{a}^{2}}-c'\right)$.
For $z\in\C_{\Re\geq\nu_{1}}$ we compute
\begin{align*}
\Re\left(a+b\e^{-zh}\right)^{-1} & =\Re a^{-1}\left(1+b\e^{-zh}a^{-1}\right)^{-1}
 =\Re\left(a^{-1}\sum_{k=0}^{\infty}\left(-b\e^{-zh}a^{-1}\right)^{k}\right)\\
 & =\Re\left(a^{-1}+\sum_{k=1}^{\infty}a^{-1}\left(-b\e^{-zh}a^{-1}\right)^{k}\right)\\
 & \geq\frac{c}{\norm{a}^{2}}-\norm{\sum_{k=1}^{\infty}a^{-1}\left(-b\e^{-zh}a^{-1}\right)^{k}}\\
 & \geq\frac{c}{\norm{a}^{2}}-\sum_{k=1}^{\infty}\norm{a^{-1}\left(-b\e^{-zh}a^{-1}\right)^{k}}\\
 & \geq\frac{c}{\norm{a}^{2}}-\frac{1}{c}\sum_{k=1}^{\infty}\norm{\left(-b\e^{-zh}a^{-1}\right)}^{k}\\
 & =\frac{c}{\norm{a}^{2}}-\frac{1}{c}\frac{\norm{\left(-b\e^{-zh}a^{-1}\right)}}{1-\norm{\left(-b\e^{-zh}a^{-1}\right)}}
 \geq\frac{c}{\norm{a}^{2}}-\frac{1}{c}2\varepsilon\geq c'.\tag*{{\qedhere}}
\end{align*}
\end{enumerate}
\end{proof}
With this lemma we are in the position to provide the well-posedness
for the modified heat equation, as well.
\begin{thm}
\label{thm:wp-heatdelay} Let $H=\L(\Omega)\times\L(\Omega)^{d}$. There exists $\nu_{0}>0$ such that for
all $\nu\geq\nu_{0}$ the operator
\[
\td{\nu}\begin{pmatrix}
1 & 0\\
0 & 0
\end{pmatrix}+\begin{pmatrix}
0 & 0\\
0 & \left(a+b\tau_{-h}\right)^{-1}
\end{pmatrix}+\begin{pmatrix}
0 & \dive_{0}\\
\grad & 0
\end{pmatrix}
\]
is densely defined and closable with continuously invertible closure
in $\Lnu(\R;H)$. The inverse
of the closure is causal and eventually independent of $\nu$. 
\end{thm}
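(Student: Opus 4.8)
The plan is to apply \prettyref{thm:Solution_theory_EE} in the same manner as in the proof of \prettyref{thm:wp_heat}, with the only genuinely new ingredient being the verification that the material law built from $\left(a+b\tau_{-h}\right)^{-1}$ satisfies the required positive definiteness estimate on a half-plane. Concretely, I would choose $\nu_0$ according to \prettyref{lem:delayInv}(b) (taking, say, some fixed $0<c'<c/\norm{a}^2$, which also forces $\nu_0\geq$ the threshold from part (a)), and set
\[
M(z)\coloneqq\begin{pmatrix}
1 & 0\\
0 & 0
\end{pmatrix}+z^{-1}\begin{pmatrix}
0 & 0\\
0 & \left(a+b\e^{-zh}\right)^{-1}
\end{pmatrix},\qquad
A\coloneqq\begin{pmatrix}
0 & \dive_{0}\\
\grad & 0
\end{pmatrix}.
\]
I would first argue that $M$ is a material law on $\C_{\Re\geq\nu_0}$: the $(2,2)$-entry $z\mapsto\left(a+b\e^{-zh}\right)^{-1}$ is holomorphic and bounded there (by the Neumann series from \prettyref{lem:delayInv}(a), together with $\norm{b\e^{-zh}a^{-1}}\leq\norm{b}\norm{a^{-1}}\e^{-h\Re z}<1$ for $\Re z\geq\nu_0$), and multiplication by $z^{-1}$ only improves boundedness; hence $\sbb{M}\leq\nu_0$ and, crucially, $M$ is holomorphic and bounded on $\C_{\Re>\nu_0}$. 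One also needs $M(\td{\nu})$ to realise the operator $\left(a+b\tau_{-h}\right)^{-1}$ as claimed; this follows because for $\Re z\geq\nu_0$ the material law $z\mapsto a+b\e^{-zh}$ has material law operator $a+b\tau_{-h}$ (by \prettyref{exa:material_law_revisited}\ref{exa:material_law_revisited:item:3} and \prettyref{rem:mlohom}), and its inverse material law is $z\mapsto\left(a+b\e^{-zh}\right)^{-1}$, whose operator is $\left(a+b\tau_{-h}\right)^{-1}$ again by \prettyref{rem:mlohom}.

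Next I would verify the accretivity estimate. For $z\in\C_{\Re\geq\nu_0}$ and $(x,y)\in\L(\Omega)\times\L(\Omega)^d$ we compute, using that $\Re z\geq\nu_0$ for the first component and \prettyref{lem:delayInv}(b) for the second,
\begin{align*}
\Re\scp{(x,y)}{zM(z)(x,y)}_{H}
&=\Re z\,\norm{x}_{\L(\Omega)}^2+\Re\scp{y}{\left(a+b\e^{-zh}\right)^{-1}y}_{\L(\Omega)^d}\\
&\geq\nu_0\norm{x}_{\L(\Omega)}^2+c'\norm{y}_{\L(\Omega)^d}^2
\geq\min\{\nu_0,c'\}\,\norm{(x,y)}_{H}^2,
\end{align*}
so the hypothesis of \prettyref{thm:Solution_theory_EE} holds with constant $\min\{\nu_0,c'\}>0$. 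Moreover $A$ is skew-selfadjoint by \prettyref{prop:block_op_realinv}\ref{prop:block_op_realinv:item:1} (with $B=\grad$, $C=\dive_0$, noting $\dive_0=-\grad^*$ and $\grad=-\dive_0^*$ since $\dive_0$ is closed). Then \prettyref{thm:Solution_theory_EE} immediately gives, for every $\nu\geq\nu_0$, that $\td{\nu}M(\td{\nu})+A$ is densely defined (its domain contains $\calDH\otimes\dom(A)$-type elements) and closable, that $S_\nu\coloneqq\bigl(\overline{\td{\nu}M(\td{\nu})+A}\bigr)^{-1}\in\bo(\Lnu(\R;H))$ with norm at most $1/\min\{\nu_0,c'\}$, that $S_\nu$ is causal, and that $S_\nu$ is eventually independent of $\nu$. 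Unpacking the definition of $M(\td{\nu})$ gives exactly the operator in the statement.

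The main obstacle — and the only place where delay enters non-trivially — is keeping the two $\nu$-dependencies straight: the operator $\left(a+b\tau_{-h}\right)^{-1}$ only \emph{exists} as a bounded operator on $\Lnu$ once $\nu$ is large (part (a) of \prettyref{lem:delayInv}), and its \emph{accretivity} needs $\nu$ possibly larger still (part (b)); both are subsumed by choosing $\nu_0=\nu_1$ from \prettyref{lem:delayInv}(b). A secondary subtlety is that $M$ as written must be checked to be holomorphic \emph{and} bounded on a genuine half-plane $\C_{\Re>\nu_0}$ rather than merely on $\C_{\Re\geq\nu_0}$; this is harmless since the Neumann-series bound $\sum_{k\geq0}\norm{b\e^{-zh}a^{-1}}^k\leq(1-\norm{b}\norm{a^{-1}}\e^{-h\nu_0})^{-1}$ is uniform for $\Re z\geq\nu_0$, so $\sbb{M}\leq\nu_0$ with room to spare, and one may work with $\nu>\nu_0$ and then note eventual independence of $\nu$ lets us state the result for $\nu\geq\nu_0$. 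Everything else is a verbatim repetition of the heat equation argument, so I would keep the write-up short and refer to \prettyref{thm:wp_heat} for the structure.
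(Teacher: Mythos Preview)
Your proposal is correct and follows exactly the approach the paper takes: the paper's proof consists of the single sentence ``The proof rests on \prettyref{thm:Solution_theory_EE} and \prettyref{lem:delayInv},'' and you have simply unpacked those two ingredients in the natural way. Your handling of the $\nu_0$ thresholds and the $\sbb{M}$ subtlety is more careful than the paper bothers to be, but entirely in the intended spirit.
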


\begin{proof}
The proof rests on \prettyref{thm:Solution_theory_EE} and \prettyref{lem:delayInv}.
\end{proof}

\section{Dual Phase Lag Heat Conduction\index{dual phase lag heat conduction}\label{sec:Dual-phase-lag}}

The last example is concerned with a different modification of Fourier's
law. The heat-flux balance
\begin{equation}
\partial_{t}\theta+\dive q=Q\label{eq:dualHB}
\end{equation}
is accompanied by the modified Fourier's law
\begin{equation}
\bigl(1+s_{q}\partial_{t}+\frac{1}{2}s_{q}^{2}\partial_{t}^{2}\bigr)q=-(1+s_{\theta}\partial_{t})\grad\theta,\label{eq:dualFL}
\end{equation}
where $s_{q}\in\R,s_{\theta}>0$ are given numbers, which are called
`phases'. 
\begin{rem}
The modified Fourier's law in \prettyref{eq:dualFL} is an attempt
to resolve the problem of inifinite propagation speed which stems
from a truncated Taylor series expansion of a model given by 
\[
\tau_{s_{q}}q=-\tau_{s_{\theta}}\grad\theta.
\]
Note that it can be shown that such a model would even be ill-posed,
see \cite{Dreher2009}.
\end{rem}

Let us turn back to the system \prettyref{eq:dualHB} and \prettyref{eq:dualFL}.
Notice, since $s_{\theta}>0$, and due to a strictly positive real
part of the derivative in our functional analytic setting, we deduce
that $(1+s_{\theta}\td{\nu})$ is continuously invertible for $\nu\geq 0$. Thus,
we obtain
\[
\td{\nu}\bigl(\td{\nu}^{-1}+s_{q}+\frac{1}{2}s_{q}^{2}\td{\nu}\bigr)(1+s_{\theta}\td{\nu})^{-1}q=-\grad\theta
\]
The block operator matrix formulation of the dual phase lag heat conduction
model is thus
\[
\left(\td{\nu}\begin{pmatrix}
1 & 0\\
0 & \bigl(\td{\nu}^{-1}+s_{q}+\frac{1}{2}s_{q}^{2}\td{\nu}\bigr)(1+s_{\theta}\td{\nu})^{-1}
\end{pmatrix}+\begin{pmatrix}
0 & \dive_{0}\\
\grad & 0
\end{pmatrix}\right)\begin{pmatrix}
\theta\\
q
\end{pmatrix}=\begin{pmatrix}
Q\\
0
\end{pmatrix}.
\]
\begin{thm}
\label{thm:wp-duallag} Let $H=\L(\Omega)\times\L(\Omega)^{d}$. Assume $s_{q}\in\R\setminus\{0\}$, $s_{\theta}>0$. Then there
exists $\nu_{0}>0$ such that for all $\nu\geq\nu_{0}$ the operator
\[
\td{\nu}\begin{pmatrix}
1 & 0\\
0 & \bigl(\td{\nu}^{-1}+s_{q}+\frac{1}{2}s_{q}^{2}\td{\nu}\bigr)(1+s_{\theta}\td{\nu})^{-1}
\end{pmatrix}+\begin{pmatrix}
0 & \dive_{0}\\
\grad & 0
\end{pmatrix}
\]
is densely defined and closable with continuously invertible closure
in $\Lnu(\R;H)$. The inverse
of the closure is causal and eventually independent of $\nu$. 
\end{thm}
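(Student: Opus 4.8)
The plan is to recognise the block operator as one of the form $\td{\nu}M(\td{\nu})+A$ covered by Picard's theorem, \prettyref{thm:Solution_theory_EE}, and to verify its hypotheses. As in the treatment of the heat equation, I would take $A\coloneqq\begin{pmatrix}0 & \dive_{0}\\ \grad & 0\end{pmatrix}$, which is skew-selfadjoint by \prettyref{prop:block_op_realinv}\ref{prop:block_op_realinv:item:1}, together with the block-diagonal candidate material law
\[
M(z)\coloneqq\begin{pmatrix}1 & 0\\ 0 & m(z)\end{pmatrix},\qquad m(z)\coloneqq\bigl(z^{-1}+s_{q}+\tfrac12 s_{q}^{2}z\bigr)(1+s_{\theta}z)^{-1}.
\]
Since $z^{-1}$, the constants and $(1+s_{\theta}z)^{-1}$ are material laws for every positive $\nu$ (and $\tfrac{z}{1+s_{\theta}z}=\tfrac1{s_{\theta}}\bigl(1-\tfrac1{1+s_{\theta}z}\bigr)$ is one too), the algebra homomorphism property \prettyref{rem:mlohom} identifies $M(\td{\nu})$ with the operator appearing in the statement.

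Next I would check that $M$ is a material law with $\sbb{M}\leq0$. The rational function $m$ has poles only at $z=0$ and $z=-1/s_{\theta}$, both with non-positive real part because $s_{\theta}>0$, so $m$ is holomorphic on $\C_{\Re>0}$. Writing $m(z)=\dfrac{1+s_{q}z+\frac12 s_{q}^{2}z^{2}}{z(1+s_{\theta}z)}$ and, for $\Re z>\nu>0$, using $|z|>\nu$, $|1+s_{\theta}z|\geq 1+s_{\theta}\nu$, and $|1+s_{\theta}z|\geq\tfrac12 s_{\theta}|z|$ once $|z|\geq 2/s_{\theta}$, one obtains a bound for $|m(z)|$ uniform on $\C_{\Re>\nu}$ by treating $|z|\leq 2/s_{\theta}$ and $|z|\geq 2/s_{\theta}$ separately. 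Hence $\sbb{M}\leq0<\nu_{0}$ for any $\nu_{0}>0$ fixed below.

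The heart of the argument, and the step I expect to be the main obstacle, is the accretivity estimate $\Re\scp{\phi}{zM(z)\phi}_{H}\geq c\norm{\phi}_{H}^{2}$ on $\C_{\Re\geq\nu_{0}}$ for suitable $\nu_{0}>0$ and $c>0$. Since $M(z)$ is block diagonal, this reduces to the scalar bounds $\Re z\geq\nu_{0}$ (immediate) and $\Re\bigl(zm(z)\bigr)\geq c'$ for some $c'>0$. Here I would use the identity $1+s_{q}z+\tfrac12 s_{q}^{2}z^{2}=\tfrac12\bigl((1+s_{q}z)^{2}+1\bigr)$, so that polynomial division (this is exactly where $s_{q}\neq0$ enters) gives
\[
zm(z)=\frac{(1+s_{q}z)^{2}+1}{2(1+s_{\theta}z)}=\frac{s_{q}^{2}}{2s_{\theta}}\left(z+\Bigl(\frac{2}{s_{q}}-\frac{1}{s_{\theta}}\Bigr)+\frac{(1/s_{q}-1/s_{\theta})^{2}+1/s_{q}^{2}}{z+1/s_{\theta}}\right).
\]
Estimating the last term by $|z+1/s_{\theta}|\geq\Re z+1/s_{\theta}\geq\nu_{0}$ yields
\[
\Re\bigl(zm(z)\bigr)\geq\frac{s_{q}^{2}}{2s_{\theta}}\Bigl(\nu_{0}+\frac{2}{s_{q}}-\frac{1}{s_{\theta}}-\frac{C}{\nu_{0}}\Bigr),\qquad C\coloneqq\Bigl(\frac{1}{s_{q}}-\frac{1}{s_{\theta}}\Bigr)^{2}+\frac{1}{s_{q}^{2}},
\]
which is bounded below by a positive constant once $\nu_{0}$ is large enough to dominate $2/s_{q}$, $1/s_{\theta}$ and $\sqrt{C}$. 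Note that $s_{q}\neq0$ is essential: for $s_{q}=0$ one would have $zm(z)=(1+s_{\theta}z)^{-1}$, whose real part tends to $0$ as $|z|\to\infty$, so no such lower bound could hold.

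Finally, with $A$ skew-selfadjoint, $M$ a material law with $\sbb{M}<\nu_{0}$, and the accretivity verified, \prettyref{thm:Solution_theory_EE} applies for every $\nu\geq\nu_{0}$: the operator $\td{\nu}M(\td{\nu})+A$ is densely defined and closable in $\Lnu(\R;H)$, its closure is continuously invertible, and the solution operator $S_{\nu}$ is causal and eventually independent of $\nu$, which is exactly the assertion. The only genuinely non-routine part is the polynomial-division estimate for $\Re(zm(z))$ and making the threshold $\nu_{0}$ explicit in terms of $s_{q}$ and $s_{\theta}$.
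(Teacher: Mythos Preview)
Your proposal is correct and follows essentially the same route as the paper: apply \prettyref{thm:Solution_theory_EE} with the skew-selfadjoint $A=\begin{pmatrix}0 & \dive_{0}\\ \grad & 0\end{pmatrix}$, and reduce the accretivity condition to the scalar bound $\Re\bigl(zm(z)\bigr)\geq c'$ via polynomial division of $\frac{1+s_{q}z+\frac12 s_{q}^{2}z^{2}}{1+s_{\theta}z}$, where the leading coefficient $\frac{s_{q}^{2}}{2s_{\theta}}>0$ carries the day (the paper writes this as $\frac12 s_{q}\sigma$ with $\sigma=s_{q}/s_{\theta}$). Your presentation via the identity $1+s_{q}z+\tfrac12 s_{q}^{2}z^{2}=\tfrac12\bigl((1+s_{q}z)^{2}+1\bigr)$ is a cosmetic variant of the same division; note incidentally that the remainder term $\frac{C}{z+1/s_{\theta}}$ actually has positive real part for $\Re z>0$, so your $-C/\nu_{0}$ bound is valid but slightly wasteful.
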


The proof of \prettyref{thm:wp-duallag} is again based on \prettyref{thm:Solution_theory_EE}.
Thus, we shall only record
the decisive observation in the next result. For this, we define 
\[
M(z)\coloneqq\frac{z^{-1}+s_{q}+\frac{1}{2}s_{q}^{2}z}{1+s_{\theta}z}\in\C \quad(z\in\C\setminus\{0,-\tfrac{1}{s_\theta}\}).
\]

\begin{lem}
Let $s_{q}\in\R\setminus\{0\}, s_{\theta}>0$. Then there exists $\nu_{0}\in\R$
and $c>0$ such that for all $z\in\C_{\Re\geq\nu_{0}}$ we have
\[
\Re zM(z)\geq c.
\]
\end{lem}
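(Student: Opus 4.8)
The plan is to reduce the claim to an elementary estimate for a scalar rational function. First I would observe that
\[
zM(z)=\frac{1+s_{q}z+\tfrac{1}{2}s_{q}^{2}z^{2}}{1+s_{\theta}z}\qquad\bigl(z\in\C\setminus\{0,-\tfrac{1}{s_{\theta}}\}\bigr),
\]
so that $z\mapsto zM(z)$ is well-defined (and holomorphic) on $\C_{\Re\geq\nu_{0}}$ for any $\nu_{0}>0$. The next step is to carry out the polynomial division of the numerator by $1+s_{\theta}z$; a short computation (which can also be checked directly by multiplying out) yields
\[
zM(z)=\frac{s_{q}^{2}}{2s_{\theta}}z+\beta+\frac{1-\beta}{1+s_{\theta}z},\qquad\beta\coloneqq\frac{s_{q}}{s_{\theta}}-\frac{s_{q}^{2}}{2s_{\theta}^{2}}.
\]

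Then I would take real parts and estimate the three summands for $z\in\C_{\Re\geq\nu_{0}}$ with $\nu_{0}>0$. Since $s_{\theta}>0$ and $\Re z\geq\nu_{0}>0$, we have $\abs{1+s_{\theta}z}\geq\Re(1+s_{\theta}z)=1+s_{\theta}\Re z\geq1$, so the remainder term satisfies $\bigl|\Re\tfrac{1-\beta}{1+s_{\theta}z}\bigr|\leq\abs{1-\beta}$; and because $s_{q}\neq0$ the coefficient $\frac{s_{q}^{2}}{2s_{\theta}}$ is strictly positive, so the first summand is at least $\frac{s_{q}^{2}}{2s_{\theta}}\nu_{0}$. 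Hence
\[
\Re zM(z)\geq\frac{s_{q}^{2}}{2s_{\theta}}\nu_{0}+\beta-\abs{1-\beta}\qquad(z\in\C_{\Re\geq\nu_{0}}).
\]
It remains to choose $\nu_{0}>0$ large enough that the right-hand side is positive (e.g.\ equal to $1$), and to take $c$ equal to that value.

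The computation has essentially no obstacle: the linear growth of $\frac{s_{q}^{2}}{2s_{\theta}}z$ along the right half-plane dominates the bounded remainder regardless of the sign of $\beta$. The only points requiring a little care are performing the division correctly and the elementary lower bound $\abs{1+s_{\theta}z}\geq1$, both of which rely on $\nu_{0}>0$ and $s_{\theta}>0$ (and the hypothesis $s_{q}\neq 0$ is precisely what makes the leading coefficient $\frac{s_q^2}{2s_\theta}$ nonzero, hence positive).
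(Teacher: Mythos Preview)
Your proof is correct and follows essentially the same approach as the paper: both perform the polynomial division of $zM(z)$ to extract the linear term $\frac{s_q^2}{2s_\theta}z$ plus a constant plus a bounded remainder, then use that the linear coefficient is strictly positive (since $s_q\neq 0$) to dominate the rest for large $\Re z$. The only cosmetic difference is that the paper writes the constants via $\sigma=s_q/s_\theta$ and argues the remainder tends to $0$ as $\Re z\to\infty$, whereas you give the explicit uniform bound $\abs{1-\beta}$; both conclude in the same way.
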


\begin{proof}
We put $\sigma\coloneqq \tfrac{s_{q}}{s_{\theta}}$. Let $z\in\C\setminus\{0,-\tfrac{1}{s_{\theta}}\}$.
We compute
\[
zM(z)=\frac{1+s_{q}z+\frac{1}{2}s_{q}^{2}z^{2}}{1+s_{\theta}z}=\frac{1}{2}s_{q}z\sigma+\sigma\left(1-\frac{1}{2}\sigma\right)+\frac{1-\sigma\left(1-\frac{1}{2}\sigma\right)}{1+s_{\theta}z}
\]
and therefore 
\[
\Re zM(z)=\frac{1}{2}s_{q}\sigma\Re z+\sigma\left(1-\frac{1}{2}\sigma\right)+\frac{\bigl(1-\sigma\left(1-\frac{1}{2}\sigma\right)\bigr)\left(1+s_{\theta}\Re z\right)}{\abs{1+s_{\theta}z}^{2}}.
\]
By assumption 
\[
0<\frac{s_{q}^{2}}{s_{\theta}}=s_{q}\sigma,
\]
and since 
\[\frac{\bigl(1-\sigma\left(1-\frac{1}{2}\sigma\right)\bigr)\left(1+s_{\theta}\Re z\right)}{\abs{1+s_{\theta}z}^{2}}\to 0\]
as $\Re z\to\infty$, we obtain
\[
\Re zM(z)\geq\frac{1}{2}s_{q}\sigma\Re z-\delta
\]
for some $\delta>0$ and all $z\in\C$ with $\Re z$ large
enough.
\end{proof}

\section{Comments}

The equations of poro-elasticity have been proposed in \cite{Murad1996}
and were mathematically studied in \cite{Showalter2000,McGhee2010}.

Equations of fractional elasticity are discussed in \cite{PTW15_FI,W14_FE,CW17_1D,Nolte2003}.
The well-posedness conditions stated here and in \prettyref{exer:fracEla} can
be generalised as it is outlined in \cite{PTW15_FI} to the case
where both $C$ and $D$ are non-negative, selfadjoint operators
so that $C$ and $D$ satisfy the conditions imposed on $N_{1}$ and
$N_{0}$ in \prettyref{prop:EuclidM0M1}. We refrained from presenting
this argument here, as it seemed too technical for the time being.
Note however that the proof is neither fundamentally different nor
considerably less elementary. 

The heat equation with delay has also been studied in \cite{Khusainov2015}
with an entirely different strategy; the dual phase lag models have
been dealt with in \cite{MPTW14_TE,Tzou1995}.

Other ideas to rectify infinite propagation speed of the heat equation
can be found in \cite{Andreu2006}, where nonlinear models for heat
conduction are being discussed.

The visco-elastic equations discussed in \prettyref{exer:integro-diif} are
studied with convolution operators more general than below in \cite{Trostorff2015};
see also \cite{Cannarsa2003,Trostorff2018,Dafermos1970,Pruess2009}.

\section*{Exercises}
\addcontentsline{toc}{section}{Exercises}

\begin{xca}
\label{exer:Poro} 
\begin{enumerate}
\item
\label{exer:Poro_a}
Prove \prettyref{prop:UAU}.
\item
Prove \prettyref{thm:wp-Poro}.
\item
Let $\Omega\subseteq\R^d$ be open, $\nu>0$, $f\in H_{\nu}^{1}(\R;\L(\Omega)^{d})$ and $g\in H_{\nu}^{1}(\R;\L(\Omega))$.
With the help of \prettyref{thm:wp-Poro} show that for large enough
$\nu>0$ there exist a unique $u\in\dom\left(\td{\nu}^2\right)\cap\dom\left(\grad\lambda\dive\td{\nu}\right)\cap\dom\left(\Dive C\Grad_{0}\right)$
and $p\in\dom(\td{\nu})\cap\dom(\grad\alpha^{*})\cap\dom(\dive_{0}k\grad)$
such that 
\begin{align*}
\td{\nu}\rho\td{\nu}u-\grad\lambda\dive\td{\nu}u-\Dive C\Grad_{0}u+\grad\alpha^{*}p & =f\\
\td{\nu}c_{0}p+\alpha\dive\td{\nu}u-\dive_{0}k\grad p & =g.
\end{align*}
\end{enumerate}
\end{xca}

\begin{xca}
\label{exer:fracEla}Let $\Omega\subseteq\R^d$ be open, $C,D\in\bo(\L(\Omega)_{\rmsym}^{d\times d})$,
$D=D^{*}\geq c$ for some $c>0$ and $\alpha\in[\tfrac{1}{2},1]$. Show that
there exists $\nu_{0}>0$ such that for all $\nu\geq\nu_{0}$ the
system 
\begin{align*}
\td{\nu}\rho v-\Dive T & =f,\\
T & =\left(C+D\td{\nu}^{\alpha}\right)\Grad_{0}u,
\end{align*}
where $v=\td{\nu}u$, admits a unique solution $(v,T)\in\Lnu(\R;\L(\Omega)^{d}\times\L(\Omega)_{\rmsym}^{d\times d})$
for all $f\in H_{\nu}^{1}(\R;\L(\Omega)^{d})$.
\end{xca}

The following exercises are devoted to showing the well-posedness
of certain equations in visco-elasticity\index{visco-elasticity},
where the `viscous part' is modelled by convolution with certain integral
kernels. The proof of the required positive definiteness property
requires some preliminary results. We assume the reader to be equipped
with the basics from the theory of functions of one complex variable.

For $U\subseteq\C$ open, $u\colon U\to\C$ holomorphic,
define $f_{\Re u}\colon\tilde{U}\to\R$ by $f_{\Re u}(x,y)\coloneqq\Re u(x+\i y)$
for $(x,y)\in\tilde{U}\coloneqq\set{(x,y)\in\R^{2}}{x+\i y\in U}$.
We put
\[
H_{\Re}(U)\coloneqq\set{f_{\Re u}}{u\colon U\to\C\text{ holomorphic}}.
\]
\begin{xca}
\label{exer:mvp}
Let $U\subseteq\C$ be open.
\begin{enumerate}
\item\label{exer:mvp:item:1}
Let $f\in H_{\Re}(U)$. Show that $f$ satisfies the \emph{mean
value property}; that is, for all $(x,y)\in\tilde{U}$ and $r>0$
with $\overline{\ball{(x,y)}{r}}\subseteq\tilde{U}$ we have
\[
f(x,y)=\frac{1}{2\pi}\int_{0}^{2\pi}f(x+r\cos\theta,y+r\sin\theta)\d\theta.
\]
\item\label{exer:mvp:item:2} Let $U\coloneqq \C_{\Im >0}$ and $f\in H_{\Re}(U)\cap C(\R\times \R_{\geq 0})$. Moreover, assume that $f(x,0)=0$ for each $x\in \R$ and $f(x,y)\to 0$ as $|(x,y)|\to \infty$. Show that $f=0$ on $\R\times \R_{\geq 0}$.
\end{enumerate}
\end{xca}

\begin{xca}
\label{exer:Poisson}
In this exercise we show a version of \emph{Poisson's formula}.\index{Poisson's formula}  Let $U\coloneqq \C_{\Im>0}$ and $f\in H_{\Re}(U)\cap C(\R\times \R_{\geq 0})$. 
\begin{enumerate}
\item Assume that $f(\cdot,0)\in L_p(\R)$ for some $1\leq p<\infty$. Show that $\C_{\Im>0}\ni z\mapsto\frac{1}{\pi}\int_{\R}\frac{\Im z'+\i\left(\Re z-x'\right)}{(\Re z-x')^{2}+(\Im z)^{2}}f(x',0)\d x'$
is holomorphic.
\item Assume that $f(\cdot,0)\in L_\infty(\R)$. Show that $\frac{1}{\pi} \int_\R \frac{y}{(x-x')^2+y^2)} f(x',0)\d x'\to f(x_0,0)$ as $x\to x_0$ and $y\to 0+$.
\item (Poisson's formula)
Assume that $f(\cdot,0)\in L_p(\R)$ for some $1\leq p<\infty$ and $f(x,y)\to 0$ as $|(x,y)|\to \infty$ in $\R\times \R_{\geq 0}$. Show that
\[
f(x,y)=\frac{1}{\pi}\int_{\R}\frac{y}{(x-x')^{2}+y^{2}}f(x',0)\d x'\quad ((x,y)\in \R\times \R_{>0}).
\]
Hint: Apply \prettyref{exer:mvp}\ref{exer:mvp:item:2}.
\end{enumerate}
\end{xca}

\begin{xca}
\label{exer:convInt}Let $\nu_{0}\in\R$ and $k\in L_{1,\nu_{0}}(\R;\R)$
with $\spt k\subseteq\R_{\geq0}$. \begin{enumerate}
\item\label{exer:convInt:item:1}
Show that for all $(x,\nu)\in\R\times\Rg{\nu_{0}}$ we have
\[
\Im(\mathcal{L}k)(\i x+\nu)=\frac{1}{\pi}\int_{\R}\frac{\nu-\nu_{0}}{(x-x')^{2}+(\nu-\nu_{0})^{2}}\Im(\mathcal{L}k)(\i x'+\nu_{0})\d x'.
\]
Hint: Approximate $k$ by functions in $\cci(\R_{\geq 0};\R)$ and use Poisson's formula (see \prettyref{exer:Poisson}).
\item\label{exer:convInt:item:2}
Assume there exists $d\geq0$ such that for all $x\in\R$ 
\[
x\Im(\mathcal{L}k)(\i x+\nu_{0})\leq d.
\]
Show that for all $\nu\geq\nu_{0}$ and $x\in\R$ we have
\[
x\Im(\mathcal{L}k)(\i x+\nu)\leq4d.
\]
Hint: Use the formula in \ref{exer:convInt:item:1} and split the integral into positive
and negative part of $\R$; use symmetry of $(\mathcal{L}k)$ under conjugation
due to the realness of $k$.
\end{enumerate}
\end{xca}

\begin{xca}
\label{exer:integro-diif} Let $\Omega\subseteq\R^{d}$ be open, $\nu_{0}\in\R$
and $k\in L_{1,\nu_{0}}(\R;\R)$ with $\spt k\subseteq\R_{\geq0}$.
Assume there exists $d\geq0$ such that 
\[
x\Im(\mathcal{L}k)(\i x+\nu_{0})\leq d \quad(x\in \R).
\]
Show that there exists $\nu_{1}\geq\nu_{0}$ such that for all $\nu\geq\nu_1$ the operator
\[
\td{\nu}\begin{pmatrix}
1 & 0\\
0 & \left(1-k*\right)^{-1}
\end{pmatrix}+\begin{pmatrix}
0 & \Dive\\
\Grad_{0} & 0
\end{pmatrix}
\]
is well-defined, densely defined and closable in $\Lnu(\R;H)$ with $H=\L(\Omega)^{d}\times\L(\Omega)_{\rmsym}^{d\times d}$.
Further, show that its closure is continuously invertible, and that the corresponding inverse
is causal and eventually independent of $\nu$.
\end{xca}

\begin{xca}
\label{exer:examplConv} Let $\nu_{0}\in\R$ and $k\in L_{1,\nu_{0}}(\R;\R)$
with $\spt k\subseteq\R_{\geq0}$.
\begin{enumerate}
\item\label{exer:examplConv:item:1}
Assume that $k$ is absolutely continuous with $k'\in L_{1,\nu_{0}}(\R;\R)$. Show that there exist $\nu_{1}\geq\nu_{0}$
and $d\geq0$ with
\[
x\Im(\mathcal{L}k)(\i x+\nu_{1})\leq d\quad(x\in\R).
\]

\item\label{exer:examplConv:item:2}
Assume that $k(t)\geq0$ for all $t\in\R$ and that $k(t)\leq k(s)$, whenever
$s\leq t$. Show that there exist $\nu_{1}\geq\nu_{0}$ 
\[
x\Im(\mathcal{L}k)(\i x+\nu_{1})\leq 0\quad(x\in\R).
\]
Hint: For part (b) use the explicit formula for $\Im (\mathcal{L}k)$ as an integral and the periodicity of $\sin$.

NB: The condition in \ref{exer:examplConv:item:1} is a standard assumption for convolution
kernels in the framework of visco-elastic equations; the condition
in \ref{exer:examplConv:item:2} is from \cite{Pruess2009}.
\end{enumerate}
\end{xca}

\printbibliography[heading=subbibliography]

\chapter{Causality and a Theorem of Paley and Wiener}

In this chapter we turn our focus back to causal operators. In Chapter \ref{chap:The-Fourier-Laplace-transformati} we
 found out that material laws provide a class of causal and autonomous
bounded operators. In this chapter we will present another proof of
this fact, which rests on a result which characterises functions in $\L(\R;H)$
with support contained in the non-negative reals; the celebrated Theorem of Paley and Wiener. With
the help of this theorem, which is interesting in its own right, the
proof of causality for material laws becomes very easy. At a first
glance it seems that holomorphy of a material law is
a rather strong assumption. In the second part of this chapter, however, we
shall see that in designing autonomous and causal solution operators,
there is no way of circumventing holomorphy.

In the following, let $H$ be a Hilbert space, and we consider $\Lnu(\R_{\geq0};H)$ as the subspace of functions in $\Lnu(\R;H)$ vanishing on $\oi{-\infty}{0}$.

\section{A Theorem of Paley and Wiener}

We start with the following lemma, for which we need the notion of locally integrable functions.
We define
\begin{align*}
  L_{1,\loc}(\R;H) & \coloneqq\set{f}{\forall\, K\subseteq \R\text{ compact}: \1_K f \in L_1(\R;H)} \\
  & \,= \set{f}{\forall\, \varphi\in\cci(\R): \varphi f \in L_1(\R;H)}.
\end{align*}
\begin{lem}
\label{lem:pos_support}Let $f\in L_{1,\loc}(\R;H)$. Then we have $f\in L_{2}(\R_{\geq0};H)$
if and only if $f\in\bigcap_{\nu>0}\Lnu(\R;H)$ with $\sup_{\nu>0}\norm{f}_{\Lnu(\R;H)}<\infty$.
In the latter case we have that 
\[
\norm{f}_{\L(\R_{\geq0};H)}=\lim_{\nu\to0\rlim}\norm{f}_{\Lnu(\R;H)}=\sup_{\nu>0}\norm{f}_{\Lnu(\R;H)}.
\]
\end{lem}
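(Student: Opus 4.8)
The plan is to reduce everything to the monotone convergence theorem applied to the nonnegative measurable function $t\mapsto\norm{f(t)}_H^2$ (measurable since $f$ is Bochner-measurable) integrated against the weights $t\mapsto\e^{-2\nu t}$, using the identity $\norm{f}_{\Lnu(\R;H)}^2=\int_\R\norm{f(t)}_H^2\e^{-2\nu t}\d t$. The two directions are then almost symmetric, the only genuine point being that a uniform bound over $\nu>0$ forces $f$ to vanish on the negative half-line.

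First I would treat "$\Rightarrow$". If $f\in\L(\R_{\geq0};H)$ then $f=0$ a.e.\ on $\oi{-\infty}{0}$, and since $\e^{-2\nu t}\leq1$ for $t\geq0$, $\nu>0$, we get $\norm{f}_{\Lnu(\R;H)}^2=\int_0^\infty\norm{f(t)}^2\e^{-2\nu t}\d t\leq\int_0^\infty\norm{f(t)}^2\d t=\norm{f}_{\L(\R_{\geq0};H)}^2$; hence $f\in\bigcap_{\nu>0}\Lnu(\R;H)$ and $\sup_{\nu>0}\norm{f}_{\Lnu(\R;H)}\leq\norm{f}_{\L(\R_{\geq0};H)}<\infty$. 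Since for such $f$ the integrand $\norm{f(t)}^2\e^{-2\nu t}$ increases pointwise on $\Rge{0}$ as $\nu\to0\rlim$, the map $\nu\mapsto\norm{f}_{\Lnu(\R;H)}^2$ is nonincreasing and, by monotone convergence, tends to $\int_0^\infty\norm{f(t)}^2\d t=\norm{f}_{\L(\R_{\geq0};H)}^2$; therefore $\sup_{\nu>0}\norm{f}_{\Lnu(\R;H)}=\lim_{\nu\to0\rlim}\norm{f}_{\Lnu(\R;H)}=\norm{f}_{\L(\R_{\geq0};H)}$.

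For "$\Leftarrow$", assume $M\coloneqq\sup_{\nu>0}\norm{f}_{\Lnu(\R;H)}<\infty$. The crucial step is to show $f=0$ a.e.\ on $\oi{-\infty}{0}$. If not, then for some $n\in\N$ the set $B\coloneqq\set{t<0}{\norm{f(t)}^2\geq 1/n}$ has positive Lebesgue measure, and writing $B=\bigcup_{m\in\N}\bigl(B\cap\oi{-\infty}{-1/m}\bigr)$ we find $\varepsilon>0$ with $\lambda\bigl(B\cap\oi{-\infty}{-\varepsilon}\bigr)>0$; but then for every $\nu>0$
\[
\norm{f}_{\Lnu(\R;H)}^2\geq\int_{B\cap\oi{-\infty}{-\varepsilon}}\norm{f(t)}^2\e^{-2\nu t}\d t\geq\frac{1}{n}\lambda\bigl(B\cap\oi{-\infty}{-\varepsilon}\bigr)\e^{2\nu\varepsilon}\xrightarrow[\nu\to\infty]{}\infty,
\]
contradicting the bound $M$. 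Once $f$ vanishes a.e.\ on $\oi{-\infty}{0}$, monotone convergence as $\nu\to0\rlim$ gives $\int_0^\infty\norm{f(t)}^2\d t=\lim_{\nu\to0\rlim}\int_0^\infty\norm{f(t)}^2\e^{-2\nu t}\d t\leq M^2<\infty$, so $f\in\L(\R_{\geq0};H)$, and the displayed chain of norm identities follows by invoking the "$\Rightarrow$" part for this $f$.

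The main obstacle is precisely the argument that the uniform-in-$\nu$ bound kills the negative half-line: this is where one must exploit the blow-up $\e^{-2\nu t}\to\infty$ for $t<0$ as $\nu\to\infty$, together with a small truncation $\oi{-\infty}{-\varepsilon}$ to get a uniform exponential lower bound on a set of positive measure. Everything else — the $L_\infty$-type comparison $\e^{-2\nu t}\leq1$ on $\Rge{0}$, the monotonicity of $\nu\mapsto\norm{f}_{\Lnu}$ once the support is in $\Rge{0}$ (so that the supremum is attained as the limit at $0\rlim$), and the passage to the limit in the norms — is a routine application of monotone convergence.
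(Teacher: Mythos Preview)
Your proof is correct and follows essentially the same approach as the paper: the forward direction via $\e^{-2\nu t}\leq1$ on $\Rge{0}$ and monotone convergence as $\nu\to0\rlim$, and the backward direction by first killing the negative half-line (exploiting $\e^{-2\nu t}\to\infty$ for $t<0$ as $\nu\to\infty$) and then applying monotone convergence on $\Rge{0}$. The only cosmetic difference is that the paper phrases the ``vanishing on $\oi{-\infty}{0}$'' step as a direct application of the monotone convergence theorem to the increasing family $\nu\mapsto\norm{f(t)}^2\e^{-2\nu t}$ (the limit function lies in $L_1$, hence is finite a.e., forcing $f=0$ a.e.), whereas you unpack this into an explicit contradiction argument with the truncation to $\oi{-\infty}{-\varepsilon}$; the content is the same.
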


\begin{proof}
Let $f\in\L(\R_{\geq0};H)$ and $\nu>0$. Then we estimate 
\begin{align*}
\int_{\R}\norm{f(t)}_{H}^{2}\e^{-2\nu t}\d t & =\int_{\R_{\geq0}}\norm{f(t)}_{H}^{2}\e^{-2\nu t}\d t
\leq\int_{\R_{\geq0}}\norm{f(t)}_{H}^{2}\d t
=\norm{f}_{\L(\R_{\geq0};H)}^{2},
\end{align*}
which proves that $f\in\Lnu(\R;H)$ with $\norm{f}_{\Lnu(\R;H)}\leq\norm{f}_{\L(\R_{\geq0};H)}$
for each $\nu>0$. Moreover, $\norm{f}_{\Lnu(\R;H)}\to\norm{f}_{\L(\R_{\geq0};H)}$
as $\nu\to0$ by monotone convergence and since clearly $\norm{f}_{\Lnu(\R;H)}\leq\norm{f}_{\Lm{\mu}(\R;H)}$
for $0<\mu\leq\nu$ we obtain 
\[
\norm{f}_{\L(\R_{\geq0};H)}=\lim_{\nu\to0\rlim}\norm{f}_{\Lnu(\R;H)}=\sup_{\nu>0}\norm{f}_{\Lnu(\R;H)}.
\]
Assume now that $f\in\bigcap_{\nu>0}\Lnu(\R;H)$ with $C\coloneqq\sup_{\nu>0}\norm{f}_{\Lnu(\R;H)}<\infty$. This inequality yields
\[
     \sup_{\nu\in \roi{0}{\infty}}\int_{\oi{-\infty}{0}} \norm{f(t)}^2\e^{-2\nu t} \d t\leq C.
\]Hence, the monotone convergence theorem yields that $g(t)\coloneqq \lim_{\nu\to\infty}\norm{f(t)}^2\e^{-2\nu t}$ for $t\in \oi{-\infty}{0}$ defines a function $g\in L_1\oi{-\infty}{0}$. Thus, $[g=\infty]$ is a set of measure zero and thus $[f=0]=\oi{-\infty}{0}\setminus [g=\infty]$ has full measure in $\oi{-\infty}{0}$ implying that $\spt f\subseteq\R_{\geq0}$.

Finally, from 
\[
      \sup_{\nu\in \roi{0}{\infty}}\int_{\oi{0}{\infty}} \norm{f(t)}^2\e^{-2\nu t} \d t\leq C.
\]
we infer again by the monotone convergence theorem that $t\mapsto \lim_{\nu\to 0}\norm{f(t)}^2\e^{-2\nu t}=\norm{f(t)}^2$ defines a function in $L_1(0,\infty)$, showing the remaining assertion.
\end{proof}

For the proof of the Paley--Wiener theorem we need a suitable space
of holomorphic functions on the right half-plane, the so-called \index{Hardy space}\emph{Hardy
space $\cHt(\C_{\Re>\nu};H)$}, which we introduce in the following.

\begin{defn*}
For $\nu\in\R$ we define the Hardy space
\[
\cHt(\C_{\Re>\nu};H)\coloneqq\set{g\from\C_{\Re>\nu}\to H}{g\text{ holomorphic, }\sup_{\rho>\nu}\intop_{\R}\norm{g(\i t+\rho)}_{H}^{2}\d t<\infty}
\]
and equip it with the norm $\norm{\cdot}_{\cHt(\C_{\Re>\nu};H)}$ defined by
\[
\norm{g}_{\cHt(\C_{\Re>\nu};H)}\coloneqq\sup_{\rho>\nu}\left(\intop_{\R}\norm{g(\i t+\rho)}_{H}^{2}\d t\right)^{\frac{1}{2}}.
\]
\end{defn*}

We motivate the Theorem of Paley--Wiener first. For this, let $f\in \Lnu(\R_{\geq0};H)$ and define its \emph{Laplace transform}\index{Laplace transform}
as
\begin{equation}\label{eq:Lapl}
  \C_{\Re>\nu}\ni  z\mapsto  \mathcal{L}f(z) \coloneqq \frac{1}{\sqrt{2\pi}}\int_{0}^\infty f(t)\e^{-z t}\d t.
\end{equation}Note that $\mathcal{L}f(z)=\mathcal{L}_{\Re z}f(\Im z)$ for all $z\in \C_{\Re>\nu}$ due to the support constraint on $f$. Moreover, it is not difficult to see that the integral on the right-hand side of \prettyref{eq:Lapl} exists as $\bigl(t\mapsto\e^{-\rho t}f(t)\bigr)\in L_{1}(\R_{\geq0};H)\cap\L(\R_{\geq0};H)$
for all $\rho>\nu$.
Hence, $\mathcal{L}f\from \C_{\Re>\nu}\to H$ is holomorphic (cf.~\prettyref{exer:analytic_on_strip}). Moreover, by
\prettyref{lem:pos_support}
\begin{align*}
\sup_{\rho>\nu}\norm{\mathcal{L}f(\i\cdot+\rho)}_{\L(\R;H)} & =\sup_{\rho>\nu}\norm{\mathcal{L}_{\rho}f}_{\L(\R;H)}
 =\sup_{\rho>\nu}\norm{f}_{\Lm{\rho}(\R;H)}
 =\sup_{\rho>0}\norm{\e^{-\nu\cdot}f}_{\Lm{\rho}(\R;H)}\\
 & =\norm{\e^{-\nu\cdot}f}_{\L(\R;H)} = \norm{f}_{\Lnu(\R;H)},
\end{align*}
which proves that 
\begin{align*}
\mathcal{L}\from \Lnu(\R_{\geq0};H) & \to\cHt(\C_{\Re>\nu};H)\\
f & \mapsto\bigl(z\mapsto\left(\mathcal{L}_{\Re z}f\right)(\Im z)\bigr)
\end{align*}
is well-defined and isometric. It turns out that $\mathcal{L}$ is actually surjective, see \prettyref{cor:Lapalce_unitary} below. The difficult surjectivity statement is contained in the following Theorem of Paley--Wiener\index{Theorem of Paley--Wiener}, \cite{Paley_Wiener}.
We mainly follow the proof given in \cite[19.2 Theorem]{rudin1987real}.

\begin{thm}[Paley--Wiener]
\label{thm:Paley-Wiener}
Let $g\in\cHt(\C_{\Re>0};H)$. Then there exists an $f\in\L(\R_{\geq0};H)$ such that
\[
\mathcal{L}_{\nu}f=g(\i\cdot+\nu)\quad(\nu>0).
\]
\end{thm}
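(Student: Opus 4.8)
The plan is to follow the classical Paley--Wiener argument as in \cite[19.2 Theorem]{rudin1987real}, adapted to the Hilbert space-valued setting. Fix $g \in \cHt(\C_{\Re>0};H)$. For each $\nu>0$ the function $t\mapsto g(\i t+\nu)$ lies in $\L(\R;H)$ with norm bounded uniformly in $\nu$, so I can set $f_{\nu}\coloneqq \mathcal{L}_{\nu}^{\ast}\bigl(g(\i\cdot+\nu)\bigr)\in\Lnu(\R;H)$ and aim to show that $f_{\nu}$ is, up to the exponential weight, independent of $\nu$ and supported in $\R_{\geq0}$. The first main step is to prove the $\nu$-independence: for $0<\mu<\nu$ I would apply \prettyref{lem:F-L idenpendent of nu} to the strip $\set{z\in\C}{\mu<\Re z<\nu}$ with the function $g$ itself. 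The hypotheses of that lemma that need checking are $g(\i\cdot+\mu),g(\i\cdot+\nu)\in\L(\R;H)$ (immediate from $g\in\cHt$) and the existence of a sequence $R_n\to\infty$ with $\int_{\mu}^{\nu}\norm{g(\pm\i R_n+\rho)}\d\rho\to0$. This last point is the technical heart: it follows by a Fubini/mean-value argument from the fact that $\rho\mapsto\int_{\R}\norm{g(\i t+\rho)}^2\d t$ is bounded on $\roi{\mu}{\infty}$, so that $\int_{\mu}^{\nu}\int_{\R}\norm{g(\i t+\rho)}^2\d t\,\d\rho<\infty$, whence $\int_{\mu}^{\nu}\norm{g(\i t+\rho)}^2\d\rho$ is integrable in $t$ and therefore tends to $0$ along a subsequence of $\abs{t}\to\infty$; one also needs local boundedness of $g$ near the real axis, which comes from holomorphy together with the mean value property applied on small disks. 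Granting this, \prettyref{lem:F-L idenpendent of nu} yields $\mathcal{L}_{\mu}^{\ast}g(\i\cdot+\mu)=\mathcal{L}_{\nu}^{\ast}g(\i\cdot+\nu)=:f$, a single function in $\bigcap_{\nu>0}\Lnu(\R;H)$.

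The second main step is to show $\spt f\subseteq\R_{\geq0}$. Here I would invoke \prettyref{lem:pos_support}: since $\mathcal{L}_{\nu}$ is unitary, $\norm{f}_{\Lnu(\R;H)}=\norm{g(\i\cdot+\nu)}_{\L(\R;H)}\leq\norm{g}_{\cHt(\C_{\Re>0};H)}$ for every $\nu>0$, so $\sup_{\nu>0}\norm{f}_{\Lnu(\R;H)}<\infty$. But \prettyref{lem:pos_support} is stated for $\nu>0$ tending to $0$ as well as to $\infty$; what I actually need is the behaviour as $\nu\to\infty$, which is exactly the direction used in that lemma's proof to force $\spt f\subseteq\R_{\geq0}$. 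Concretely, the uniform bound gives $\sup_{\nu>0}\int_{\oi{-\infty}{0}}\norm{f(t)}^2\e^{-2\nu t}\d t<\infty$, and since $\e^{-2\nu t}\to\infty$ as $\nu\to\infty$ for $t<0$, monotone convergence forces $f=0$ a.e.\ on $\oi{-\infty}{0}$. Then, again by \prettyref{lem:pos_support} (the $\nu\to 0\rlim$ direction), $f\in\L(\R_{\geq0};H)$ with $\norm{f}_{\L(\R_{\geq0};H)}=\sup_{\nu>0}\norm{f}_{\Lnu(\R;H)}$.

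Finally, having produced $f\in\L(\R_{\geq0};H)$ with $\mathcal{L}_{\nu}f=g(\i\cdot+\nu)$ for all $\nu>0$ by construction, the theorem is proved. I expect the surjectivity-type content to be entirely concentrated in the verification of condition \prettyref{eq:integral_to_zero} for \prettyref{lem:F-L idenpendent of nu}, i.e.\ the decay of $g$ along horizontal segments at large imaginary parts; this is where holomorphy (via Cauchy's theorem and the mean value property of harmonic functions, or the subharmonicity of $\norm{g(\cdot)}^2$) must be combined with the $\cHt$-bound. The Hilbert space-valuedness causes no real difficulty: all the estimates are in terms of $\norm{\cdot}_H$, and one may reduce holomorphy and boundedness questions to scalars by pairing against elements of $H$ as in \prettyref{exer:holomorphic}. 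A minor auxiliary point worth isolating as a sublemma is that $\norm{g(\cdot)}_H^2$ is subharmonic (or at least that $g$ satisfies a sub-mean-value inequality), which legitimises the pointwise bounds near the boundary line $\Re z=\mu$ needed above.
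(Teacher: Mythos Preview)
Your plan is correct and is essentially the paper's own argument, reorganised to cite \prettyref{lem:F-L idenpendent of nu} rather than redoing the rectangle contour integral by hand. The paper applies Cauchy's theorem directly to $z\mapsto \e^{zx}g(z)$ on rectangles with vertical sides at $\Re z=\rho$ and $\Re z=1$, deduces $\e^{\rho(\cdot)}f_{\rho}=\e^{(\cdot)}f_{1}$ pointwise a.e., and then invokes \prettyref{lem:pos_support} exactly as you do; your route simply observes that this contour step is already packaged as \prettyref{lem:F-L idenpendent of nu}, and your Fubini argument for \prettyref{eq:integral_to_zero} is precisely the $\L$-in-$a$ estimate the paper uses to find the sequence $(a_n)$.

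Two cosmetic points: the remarks about ``local boundedness of $g$ near the real axis'' and the subharmonicity sublemma are not needed. You apply \prettyref{lem:F-L idenpendent of nu} on strips $\{\mu<\Re z<\nu\}$ with $\mu>0$, whose closure lies inside $\C_{\Re>0}$, so continuity of $g$ on $\overline{U}$ is immediate from holomorphy; and the Fubini/Cauchy--Schwarz argument alone already delivers the decay of $\int_{\mu}^{\nu}\norm{g(\pm\i R_n+\rho)}\d\rho$ along a common subsequence (take $R_n$ where the sum of the two integrands tends to zero). You can safely drop those hedges.
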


\begin{proof}
For $\nu>0$ we set $g_{\nu}\coloneqq g(\i\cdot+\nu)\in\L(\R;H)$
and $f_{\nu}\coloneqq\mathcal{F}^{\ast}g_{\nu}\in\L(\R;H)$. Moreover,
we set $f\coloneqq\e^{(\cdot)}f_{1}$. We first prove that $f\in\bigcap_{\nu>0}\Lnu(\R;H)$
with $\sup_{\nu>0}\norm{f}_{\Lnu(\R;H)}<\infty.$ For doing so, let
$a>0$, $\rho>0$ and $x\in\R$. Applying Cauchy's integral theorem
to the function $z\mapsto\e^{zx}g(z)$ and the curve $\gamma$, as indicated in Figure \ref{fig:Cauchy-curve}, we obtain 
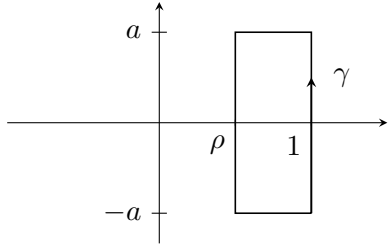
\begin{figure}[htb]
  \centering
  \begin{tikzpicture}[>=stealth,xscale=2,yscale=0.8]
    % axes
    \draw[->] (-1,0)--(1.5,0);
    \draw[->] (0,-2)--(0,2);
    % ticks
    \def\A{1.5}
    \def\RHO{0.5}
    \draw (\RHO,0.05)--(\RHO,-0.05) node[below left]{$\rho$};
    \draw (1,0.05)--(1,-0.05)node[below left]{$1$};
    \draw (0.05,-\A)--(-0.05,-\A) node[left]{$-a$};
    \draw (0.05,\A)--(-0.05,\A) node[left]{$a$};
    \draw[->,semithick] (1,-\A)--(1,\A/2);
    \draw[semithick] (1,-\A)--(1,\A)--(\RHO,\A)--(\RHO,-\A)--(1,-\A);
    \draw (1.2,\A/2) node{$\gamma$};
  \end{tikzpicture}
  \caption{Curve $\gamma$.}
  \label{fig:Cauchy-curve}
\end{figure}
\begin{align}
0 & = \i\intop_{-a}^{a}\e^{(\i t+1)x}g(\i t+1)\d t-\intop_{\rho}^{1}\e^{(\i a+\kappa)x}g(\i a+\kappa)\d\kappa \label{eq:Cauchy} \\
& -\i\intop_{-a}^{a}\e^{(\i t+\rho)x}g(\i t+\rho)\d t+\intop_{\rho}^{1}\e^{(-\i a+\kappa)x}g(-\i a+\kappa)\d\kappa.\nonumber 
\end{align}
Moreover, since 
\begin{align*}
\intop_{\R}\norm{\intop_{\rho}^{1}\e^{(\pm\i a+\kappa)x}g(\pm\i a+\kappa)\d\kappa}_{H}^{2}\d a
& \leq \intop_{\R}\abs{\intop_{\rho}^{1}\abs{\e^{(\pm\i a+\kappa)x}}^2\d \kappa \intop_\rho^1 \norm{g(\pm\i a+\kappa)}_H^2\d\kappa}\d a\\
& \leq \abs{\intop_{\rho}^{1}\e^{2\kappa x}\d \kappa}  \abs{\intop_\rho^1 \intop_{\R}\norm{g(\pm\i a+\kappa)}_H^2 \d a \d\kappa}\\
& \leq\abs{\int_{\rho}^{1}\e^{2\kappa x}\d\kappa} \abs{1-\rho}\norm{g}_{\cHt(\C_{\Re>0};H)}^{2} <\infty,
\end{align*}
we infer that $\left(a\mapsto\intop_{\rho}^{1}\e^{(\pm\i a+\kappa)x}g(\pm\i a+\kappa)\d\kappa\right)\in\L(\R;H)$
and thus, we find a sequence $(a_{n})_{n\in\mathbb{N}}$ in $\R_{>0}$
such that $a_{n}\to\infty$ and 
\[
\intop_{\rho}^{1}\e^{(\pm\i a_{n}+\kappa)x}g(\pm\i a_{n}+\kappa)\d\kappa\to0
\]
as $n\to\infty$. Hence, using \prettyref{eq:Cauchy} with $a$ replaced
by $a_{n}$ and letting $n$ tend to infinity, we derive that
\[
\intop_{-a_{n}}^{a_{n}}\e^{(\i t+1)x}g(\i t+1)\d t-\intop_{-a_{n}}^{a_{n}}\e^{(\i t+\rho)x}g(\i t+\rho)\d t\to0\quad(n\to\infty).
\]
Noting that for each $\mu>0$ we have
\[
\intop_{-a_{n}}^{a_{n}}\e^{(\i t+\mu)x}g(\i t+\mu)\d t=\sqrt{2\pi}\e^{\mu x}\mathcal{F}^{\ast}(\1_{[-a_{n},a_{n}]}g_{\mu})(x) \quad(x\in\R)
\]
and that $\1_{[-a_{n},a_{n}]}g_{\mu}\to g_{\mu}$ in $\L(\R;H)$
as $n\to\infty$, we may choose a subsequence (again denoted by $(a_n)$)
such that
\begin{align*}
0= & \lim_{n\to\infty}\left(\,\intop_{-a_{n}}^{a_{n}}\e^{(\i t+1)x}g(\i t+1)\d t-\intop_{-a_{n}}^{a_{n}}\e^{(\i t+\rho)x}g(\i t+\rho)\d t\right)\\
= & \lim_{n\to\infty}\left(\sqrt{2\pi}\e^{x}\mathcal{F}^{\ast}(\1_{[-a_{n},a_{n}]}g_{1})(x)-\sqrt{2\pi}\e^{\rho x}\mathcal{F}^{\ast}(1_{[-a_{n},a_{n}]}g_{\rho})(x)\right)\\
= & \sqrt{2\pi}\bigl(\e^{x}f_{1}(x)-\e^{\rho x}f_{\rho}(x)\bigr)
\end{align*}
for almost every $x\in\R$. Hence, $f=\e^{(\cdot)}f_{1}=\exp(\rho\m)f_{\rho}$
for each $\rho>0$ and thus, 
\[
\intop_{\R}\norm{f(t)}_{H}^{2}\e^{-2\rho t}\d t=\intop_{\R}\norm{f_{\rho}(t)}_{H}^{2}\d t<\infty
\]
which shows $f\in\bigcap_{\rho>0}\Lm{\rho}(\R;H)$ with 
\[
\sup_{\rho>0}\norm{f}_{\Lm{\rho}(\R;H)}=\sup_{\rho>0}\norm{f_{\rho}}_{\L(\R;H)}=\sup_{\rho>0}\norm{g_{\rho}}_{\L(\R;H)}=\norm{g}_{\cHt(\mathbb{C}_{\Re>0};H)}.
\]
Thus, $f\in\L(\R_{\geq0};H)$ with $\norm{f}_{\L(\R_{\geq0};H)}=\norm{g}_{\cHt(\mathbb{C}_{\Re>0};H)}$
by \prettyref{lem:pos_support}. Moreover, 
\[
\mathcal{L}_{\nu}f=\mathcal{F}\exp(-\nu\m)f=\mathcal{F}\exp(-\nu m)\exp(\nu \m)f_{\nu}=\mathcal{F}f_{\nu}=g_{\nu}=g(\i\cdot+\nu)
\]
for each $\nu>0$, which shows the representation formula for $g$.
\end{proof}

Summarising the results of \prettyref{thm:Paley-Wiener} and the arguments carried out just before \prettyref{thm:Paley-Wiener}, we obtain the following statement.

\begin{cor}
\label{cor:Lapalce_unitary}Let $\nu\in\R$. Then the mapping 
\begin{align*}
\mathcal{L}\from \Lnu(\R_{\geq0};H) & \to\cHt(\C_{\Re>\nu};H)\\
f & \mapsto\bigl(z\mapsto\left(\mathcal{L}_{\Re z}f\right)(\Im z)\bigr)
\end{align*}
is an isometric isomorphism. In particular, $\cHt(\C_{\Re>\nu};H)$ is
a Hilbert space.
\end{cor}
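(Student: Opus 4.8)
The plan is to assemble \prettyref{cor:Lapalce_unitary} from the pieces already established in the text surrounding \prettyref{thm:Paley-Wiener}. Recall that in the paragraphs just before the theorem, it was shown that the map
\[
\mathcal{L}\from \Lnu(\R_{\geq0};H)\to\cHt(\C_{\Re>\nu};H),\quad f\mapsto\bigl(z\mapsto(\mathcal{L}_{\Re z}f)(\Im z)\bigr)
\]
is well-defined and isometric (using \prettyref{lem:pos_support} together with the isometry of the Fourier--Laplace transformations $\mathcal{L}_\rho$). So the only thing left to prove is surjectivity, and after that the Hilbert space claim will follow formally.

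First I would reduce the surjectivity statement to the case $\nu=0$, which is exactly \prettyref{thm:Paley-Wiener}. For general $\nu\in\R$ and a given $g\in\cHt(\C_{\Re>\nu};H)$, consider the shifted function $\tilde g(z)\coloneqq g(z+\nu)$, which lies in $\cHt(\C_{\Re>0};H)$ since $\norm{\tilde g}_{\cHt(\C_{\Re>0};H)}=\norm{g}_{\cHt(\C_{\Re>\nu};H)}$. By \prettyref{thm:Paley-Wiener} there is $\tilde f\in\L(\R_{\geq0};H)$ with $\mathcal{L}_{\rho}\tilde f=\tilde g(\i\cdot+\rho)$ for all $\rho>0$. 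Setting $f\coloneqq\exp(\nu\m)\tilde f$, which is again supported in $\R_{\geq 0}$ and lies in $\Lnu(\R_{\geq0};H)$ because $\norm{f}_{\Lnu(\R;H)}=\norm{\tilde f}_{\L(\R;H)}<\infty$, one computes for $\mu>\nu$
\[
\mathcal{L}_{\mu}f=\mathcal{F}\exp(-\mu\m)\exp(\nu\m)\tilde f=\mathcal{F}\exp(-(\mu-\nu)\m)\tilde f=\mathcal{L}_{\mu-\nu}\tilde f=\tilde g(\i\cdot+(\mu-\nu))=g(\i\cdot+\mu),
\]
so that $(\mathcal{L}f)(z)=g(z)$ for all $z\in\C_{\Re>\nu}$. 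Hence $\mathcal{L}$ is onto.

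Having established that $\mathcal{L}$ is a bijective isometry between $\Lnu(\R_{\geq0};H)$ and $\cHt(\C_{\Re>\nu};H)$, the final claim is immediate: $\Lnu(\R_{\geq0};H)$ is a closed subspace of the Hilbert space $\Lnu(\R;H)$ (it is the kernel of the bounded projection onto functions supported on $\oi{-\infty}{0}$, hence closed), therefore itself a Hilbert space, and an isometric isomorphism transports the Hilbert space structure onto $\cHt(\C_{\Re>\nu};H)$ — concretely, the inner product $\scp{g_1}{g_2}_{\cHt(\C_{\Re>\nu};H)}\coloneqq\scp{\mathcal{L}^{-1}g_1}{\mathcal{L}^{-1}g_2}_{\Lnu(\R;H)}$ induces the norm $\norm{\cdot}_{\cHt(\C_{\Re>\nu};H)}$ and is complete. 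I do not expect any genuine obstacle here; the one point requiring a little care is bookkeeping the shift argument so that the support condition $\spt f\subseteq\R_{\geq0}$ and the membership $f\in\Lnu(\R;H)$ are both correctly inherited from $\tilde f$, but this is routine given \prettyref{lem:pos_support} and the unitarity of the weighting operators $\exp(-\nu\m)$ from \prettyref{cor:nu=00003D0}. The substantive content is entirely contained in \prettyref{thm:Paley-Wiener}, which has already been proved.
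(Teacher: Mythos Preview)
Your proof is correct and follows essentially the same approach as the paper: both reduce surjectivity to the case $\nu=0$ via the shift $\tilde g(z)=g(z+\nu)$, apply \prettyref{thm:Paley-Wiener} to obtain $\tilde f\in\L(\R_{\geq0};H)$, and then set $f=\e^{\nu\cdot}\tilde f\in\Lnu(\R_{\geq0};H)$. Your additional remarks about the Hilbert space structure are a bit more explicit than the paper's, but the substance is identical.
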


\begin{proof}
We have argued already that  $\mathcal{L}$ is well-defined and isometric. Thus, we show that $\mathcal{L}$ is onto, next. For this, let $g\in\cHt(\C_{\Re>\nu};H)$ and define $\tilde{g}(z)\coloneqq g(z+\nu)$
for $z\in\C_{\Re>0}$. Then $\tilde{g}\in\cHt(\C_{\Re>0};H)$ and
thus, \prettyref{thm:Paley-Wiener} yields the existence of $\tilde{f}\in\L(\R_{\geq0};H)$
with 
\[
g(\i\cdot+\rho)=\tilde{g}(\i\cdot+\rho-\nu)=\mathcal{L}_{\rho-\nu}\tilde{f}=\mathcal{L}_{\rho}\bigl(\e^{\nu\cdot}\tilde{f}\,\bigr)\quad(\rho>\nu).
\]
Hence, setting $f\coloneqq\e^{\nu\cdot}\tilde{f}\in\Lnu(\R_{\geq0};H)$,
we obtain $\mathcal{L}f=g$.
\end{proof}
We can now provide an alternative proof of \prettyref{thm:material law independent of nu}
by proving causality with the help of the Theorem of Paley--Wiener.
\begin{prop}
\label{prop:M_causal_via_PW}Let $M\from\dom(M)\subseteq\C\to L(H)$ be
a material law. Then for $\nu>\sbb{M}$ we have $M(\td{\nu})\in \bo(\Lnu(\R;H))$ and $M(\td{\nu})$
is causal and autonomous (see \prettyref{exer:causality_autonomous}).
\end{prop}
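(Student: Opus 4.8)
The boundedness $M(\td{\nu})\in\bo(\Lnu(\R;H))$, together with $\norm{M(\td{\nu})}\leq\norm{M}_{\infty,\C_{\Re>\nu}}$, has already been established in \prettyref{prop:mat_law_function_of_td}, so nothing new is needed there. For causality the plan is to exploit \prettyref{cor:Lapalce_unitary}, i.e.\ that the Laplace transform $\mathcal{L}$ is an isometric isomorphism from $\Lnu(\R_{\geq0};H)$ onto $\cHt(\C_{\Re>\nu};H)$. Since $M(\td{\nu})$ is linear, causality is equivalent to the support statement ``$\spt u\subseteq\roi{a}{\infty}$ implies $\spt M(\td{\nu})u\subseteq\roi{a}{\infty}$''. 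Using $u=\tau_{-a}(\tau_a u)$ with $\spt(\tau_a u)\subseteq\R_{\geq0}$ and autonomy $M(\td{\nu})\tau_{-a}=\tau_{-a}M(\td{\nu})$ (established next), it suffices to treat the case $a=0$; that is, to show that $v\in\Lnu(\R_{\geq0};H)$ implies $M(\td{\nu})v\in\Lnu(\R_{\geq0};H)$.

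First I would dispatch autonomy. Recall from \prettyref{lem:time-shift-norm} that $\tau_h\in\bo(\Lnu(\R;H))$ for every $h\in\R$, and from the computation in \prettyref{exa:material_law_revisited}\ref{exa:material_law_revisited:item:3} (carried out there for $h\leq0$, but valid verbatim for arbitrary $h\in\R$ on $\cc(\R;H)$ and hence, by density, on all of $\Lnu(\R;H)$) that $\mathcal{L}_\nu\tau_h=\e^{(\i\m+\nu)h}\mathcal{L}_\nu$ as operators on $\Lnu(\R;H)$. Since for each $t\in\R$ the scalar $\e^{(\i t+\nu)h}$ commutes with $M(\i t+\nu)\in\bo(H)$, conjugating by $\mathcal{L}_\nu$ gives $M(\td{\nu})\tau_h=\mathcal{L}_\nu^{*}M(\i\m+\nu)\e^{(\i\m+\nu)h}\mathcal{L}_\nu=\mathcal{L}_\nu^{*}\e^{(\i\m+\nu)h}M(\i\m+\nu)\mathcal{L}_\nu=\tau_h M(\td{\nu})$, which is autonomy (this is also \prettyref{exer:causality_autonomous}).

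For the main step, let $v\in\Lnu(\R_{\geq0};H)$ and set $G\coloneqq\mathcal{L}v\in\cHt(\C_{\Re>\nu};H)$. Since $\nu>\sbb{M}$, $M$ is holomorphic and bounded on $\C_{\Re>\nu}$, so $z\mapsto M(z)G(z)$ is an $H$-valued holomorphic map on $\C_{\Re>\nu}$ (it is locally bounded and weakly holomorphic, so holomorphic by \prettyref{exer:holomorphic}), and
\[
\sup_{\rho>\nu}\intop_{\R}\norm{M(\i t+\rho)G(\i t+\rho)}_H^{2}\d t\leq\norm{M}_{\infty,\C_{\Re>\nu}}^{2}\,\norm{G}_{\cHt(\C_{\Re>\nu};H)}^{2}<\infty,
\]
so $M\cdot G\in\cHt(\C_{\Re>\nu};H)$. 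By the surjectivity part of \prettyref{cor:Lapalce_unitary} there is $h\in\Lnu(\R_{\geq0};H)$ with $\mathcal{L}h=M\cdot G$; evaluating on the line $\Re z=\nu$ yields $\mathcal{L}_\nu h=M(\i\m+\nu)\mathcal{L}_\nu v$, hence $h=\mathcal{L}_\nu^{*}M(\i\m+\nu)\mathcal{L}_\nu v=M(\td{\nu})v$. Thus $\spt M(\td{\nu})v\subseteq\R_{\geq0}$, which is exactly the $a=0$ case, and autonomy upgrades this to causality for all $a\in\R$.

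The proof is essentially assembly once \prettyref{thm:Paley-Wiener} (through \prettyref{cor:Lapalce_unitary}) is in hand; the only points that need a little care are the verification that $M\cdot\mathcal{L}v$ again lies in the Hardy space $\cHt(\C_{\Re>\nu};H)$ (using the defining sup-bound on $M$ and holomorphy of the operator-times-vector product) and the reduction to $a=0$ via the translation identity. I do not expect a genuine obstacle.
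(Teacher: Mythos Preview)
Your approach is exactly the paper's: reduce causality to the support-preservation statement on $\R_{\geq0}$, push $v$ through $\mathcal{L}$ into $\cHt(\C_{\Re>\nu};H)$, multiply by $M$, and pull back via \prettyref{cor:Lapalce_unitary}. There is one genuine gap, though. The identity $\mathcal{L}h=M\cdot G$ that you obtain holds only on the \emph{open} half-plane $\C_{\Re>\nu}$, because that is where $\mathcal{L}$ and the Hardy space are defined (see \prettyref{cor:Lapalce_unitary}). Your sentence ``evaluating on the line $\Re z=\nu$ yields $\mathcal{L}_\nu h=M(\i\m+\nu)\mathcal{L}_\nu v$'' is therefore not justified as written: you have $\mathcal{L}_\rho h=M(\i\m+\rho)\mathcal{L}_\rho v$ only for $\rho>\nu$, and $M(\td{\nu})$ is defined via $\mathcal{L}_\nu$, not via any $\mathcal{L}_\rho$ with $\rho>\nu$.

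The paper closes this gap by a limit $\rho\to\nu\rlim$: since $h,v\in\Lnu(\R_{\geq0};H)$, dominated convergence gives $\mathcal{L}_\rho h\to\mathcal{L}_\nu h$ and $\mathcal{L}_\rho v\to\mathcal{L}_\nu v$ in $\L(\R;H)$, and $M(\i\m+\rho)\to M(\i\m+\nu)$ strongly on $\L(\R;H)$ (this is \prettyref{exer:strong_conv_material law}). Combining these (a strongly convergent bounded operator sequence applied to a norm-convergent sequence of vectors converges) yields the identity at $\rho=\nu$. You should insert this step; otherwise the argument is incomplete. Note that you cannot shortcut by invoking \prettyref{thm:material law independent of nu}, since the present proposition is billed as an alternative proof of precisely that result.
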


\begin{proof}
Let $\nu>\sbb{M}$. Then $M\from\C_{\Re\geq\nu}\to L(H)$ is bounded and
holomorphic on $\C_{\Re>\nu}$. Hence, by unitary equivalence, $M(\td{\nu})\in L(\Lnu(\R;H))$.
Moreover, $M(\td{\nu})$ is autonomous by \prettyref{exer:causality_autonomous}.
Thus, for causality it suffices to check that $\spt M(\td{\nu})f\subseteq\R_{\geq0}$
whenever $f\in\Lnu(\R_{\geq0};H)$. So let $f\in\Lnu(\R_{\geq0};H)$.
Then $\mathcal{L}f\in\cHt(\C_{\Re>\nu};H)$ by \prettyref{cor:Lapalce_unitary}
and since $M$ is bounded and holomorphic on $\C_{\Re>\nu}$, we infer also
that
\[
\bigl(z\mapsto M(z)\left(\mathcal{L}f\right)(z)\bigr)\in\cHt(\C_{\Re>\nu};H).
\]
Again by \prettyref{cor:Lapalce_unitary} there exists $g\in\Lnu(\R_{\geq0};H)$
such that 
\[
\mathcal{L}g(z)=M(z)\left(\mathcal{L}f\right)(z)\quad(z\in\C_{\Re>\nu}).
\]
Thus, in particular
\[
\mathcal{L}_{\rho}g=M(\i\m+\rho)\mathcal{L}_{\rho}f\quad(\rho>\nu).
\]
Since $f,g\in\Lnu(\R_{\geq0};H)$ we infer that $\mathcal{L}_{\rho}g\to\mathcal{L}_{\nu}g$
and $\mathcal{L}_{\rho}f\to\mathcal{L}_{\nu}f$ in $\L(\R;H)$ as
$\rho\to\nu$ by dominated convergence. Moreover, $M(\i\m+\rho)\to M(\i\m+\nu)$
strongly on $\L(\R;H)$ as $\rho\to\nu$ (cf.~\prettyref{exer:strong_conv_material law}).
Hence, we derive 
\[
\mathcal{L}_{\nu}g=M(\i\m+\nu)\mathcal{L}_{\nu}f,
\]
and thus, $g=M(\td{\nu})f$ which shows the causality.
\end{proof}

\section{A Representation Result}

In this section we argue that our solution theory needs holomorphy
as a central property for the material law. There are two key properties for rendering $T\in L(\Lm{\nu_{0}}(\R;H))$ a material law operator. The first one is causality\index{causal} (i.e., $\1_{\loi{-\infty}{a}}(\m)T\1_{\loi{-\infty}{a}}(\m)= \1_{\loi{-\infty}{a}}(\m)T$ for all $a\in \R$) and, secondly, $T$ needs to be autonomous\index{autonomous} (i.e., $\tau_h T =T\tau_h$ for all $h\in \R$ where $\tau_hf=f(\cdot +h)$). The main theorem of this
section reads as follows:
\begin{thm}
\label{thm:representaion} Let $\nu_{0}\in\R$ and let $T\in L(\Lm{\nu_{0}}(\R;H))$
be causal and autonomous. Then $T|_{\Lm{\nu_{0}}\cap\Lnu}$ has a
unique extension $T_{\nu}\in L(\Lnu(\R;H))$ for each $\nu>\nu_{0}$
and there exists a unique $M\from\C_{\Re>\nu_{0}}\to L(H)$ holomorphic and
bounded such that $T_{\nu}=M(\td{\nu})$ for each $\nu>\nu_{0}$.
\end{thm}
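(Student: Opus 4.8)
The plan is to convert the operator-theoretic hypotheses on $T$ into an operator-valued symbol by feeding in functions that ``test'' one spatial vector and one Fourier frequency at a time. First I would fix $\nu>\nu_0$ and observe that, by the standard estimate recorded in the remark before \prettyref{lem:indepCaus} (functions supported in $\loi{-\infty}{a}$ or $\roi{a}{\infty}$ sit in nearby weighted spaces with explicit norm bounds), the space $\Lm{\nu_0}(\R;H)\cap\Lnu(\R;H)$ is dense in $\Lnu(\R;H)$; indeed $S_{\rmc}(\R;H)$ already lies in every $\Lm{\eta}(\R;H)$ by \prettyref{lem:simple_fcts_compact_spt}. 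Causality of $T$ together with the elementary estimate of \prettyref{lem:indepCaus}(ii) (the ``multiply by $\e^{2\nu(a-t)}$ and let $\nu\to\infty$'' trick) shows that $T$ maps $S_{\rmc}(\R;H)$ into $\bigcap_{\eta\geq\nu_0}\Lm{\eta}(\R;H)$ and that $T|_{S_{\rmc}}$ is uniformly Lipschitz continuous in the sense of the definition preceding \prettyref{thm:PicLind1}; hence \prettyref{lem:indepCaus}(iii) gives consistent extensions $T_\nu\in L(\Lnu(\R;H))$ for all $\nu>\nu_0$ with $\sup_{\nu>\nu_0}\norm{T_\nu}<\infty$, and these are unique by density. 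Each $T_\nu$ is again causal and, being a limit of the autonomous $T$, is autonomous.

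Next I would pass to the Fourier--Laplace side. Define $\hat T_\nu\coloneqq\mathcal{L}_\nu T_\nu\mathcal{L}_\nu^{\ast}\in L(\L(\R;H))$. Autonomy $\tau_h T_\nu=T_\nu\tau_h$ translates, via \prettyref{thm:spectral-repr-derivative} and \prettyref{exa:material_law_revisited}(c), into $\hat T_\nu$ commuting with multiplication by $\e^{\i(\cdot)h}$ for every $h\in\R$; a routine argument (commuting with all these exponentials forces commuting with all of $L_\infty(\R)$ acting as multipliers, since their span is weak-$\ast$ dense) shows $\hat T_\nu$ is a decomposable operator, i.e.\ there is a bounded weakly measurable $M_\nu\from\R\to L(H)$ with $\norm{M_\nu}_{L_\infty}\leq\norm{T_\nu}$ and $(\hat T_\nu f)(t)=M_\nu(t)f(t)$; this is precisely the converse direction of \prettyref{prop:mult1.5} lifted to the vector-valued setting. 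Concretely, I would recover $M_\nu(t)x$ for $x\in H$ by applying $\hat T_\nu$ to functions of the form $\varphi(\cdot)x$ with $\varphi$ a small bump near $t$ and dividing by $\varphi$, using Lebesgue differentiation. The consistency $T_\nu=T_\eta$ on the overlap then yields, through \prettyref{lem:F-L idenpendent of nu}-type reasoning, that $M_\nu(t)$ and $M_\eta(t)$ are boundary traces of one and the same object on $\C_{\Re>\nu_0}$.

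It remains to produce the holomorphic $M\from\C_{\Re>\nu_0}\to L(H)$. For $x\in H$ and $x'\in H'$ the scalar function $z\mapsto$ ``$x'(M_{\Re z}(\Im z)x)$'' should be holomorphic; I would prove this by testing $T$ against $f\in\Lnu(\R_{\geq0};H)\cap\Lm{\mu}(\R;H)$ and using causality plus \prettyref{cor:Lapalce_unitary} (Paley--Wiener): causality means $T_\nu$ preserves $\Lnu(\R_{\geq0};H)$, so $\mathcal{L}(T_\nu f)(z)=M_\nu(\Im z)\mathcal{L}f(z)$ extends the left-hand side holomorphically into $\C_{\Re>\nu}$; varying $\nu$ and invoking \prettyref{exer:analytic_on_strip} patches these into a single holomorphic $H$-valued map whose dependence on the ``input vector'' is linear and bounded, i.e.\ a holomorphic $M\in L(H)$-valued function bounded by $\sup_\nu\norm{T_\nu}$. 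Finally $M(\td{\nu})=T_\nu$ follows by checking both sides agree on $\calDH$ (using $\mathcal{L}_\nu$ and the pointwise multiplier description), and uniqueness of $M$ follows from uniqueness of the multiplier representation together with the identity theorem for holomorphic functions.

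\textbf{Main obstacle.} The genuinely technical step is the vector-valued decomposability: showing that an autonomous bounded operator on $\L(\R;H)$ is exactly multiplication by an $L_\infty$-bounded, weakly measurable $L(H)$-valued function, with the right norm bound. In the scalar case this is \prettyref{prop:mult1.5}; here one must handle measurability of $M_\nu(\cdot)$ carefully (weak measurability via Pettis, \prettyref{thm:Pettis}, applied to $t\mapsto M_\nu(t)x$) and justify the Lebesgue-differentiation recovery of $M_\nu$ in the Bochner setting. Everything after that is bookkeeping with Paley--Wiener and the identity theorem.
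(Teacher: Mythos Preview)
Your overall strategy (translation invariance $\Rightarrow$ Fourier multiplier $\Rightarrow$ holomorphic symbol via Paley--Wiener) is the classical Four\`es--Segal route and can be made to work, but there is a genuine gap in your Step~1. You invoke \prettyref{lem:indepCaus} to obtain the extensions $T_\nu$ with $\sup_{\nu>\nu_0}\norm{T_\nu}<\infty$, but that lemma runs in the opposite direction: it \emph{assumes} uniform Lipschitz continuity of $F$ on all the weighted spaces and \emph{deduces} causality. Here you only know $T$ is bounded on $\Lm{\nu_0}$; causality gives $\spt Tf\subseteq\roi{a}{\infty}$ for $f\in S_{\rmc}$ with $\spt f\subseteq\roi{a}{\infty}$, but this does not by itself yield the bound $\norm{Tf}_{\Lnu}\leq C\norm{f}_{\Lnu}$ for $\nu>\nu_0$. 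In the paper's argument this bound comes out \emph{after} $M$ is constructed, via $\norm{T\varphi}_{\Lnu}=\norm{M(\i\m+\nu)\mathcal{L}_\nu\varphi}_{\L}\leq\norm{M}_{\infty}\norm{\varphi}_{\Lnu}$; you cannot simply cite it as input.

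The paper takes a very different and much more elementary path, due to H.~Vogt. Rather than proving a general decomposability theorem, it constructs $M$ explicitly: for $x\in H$ set $f_{x,1}\coloneqq\1_{\oi{0}{1}}x$ and \emph{define}
\[
M(z)x\coloneqq\frac{\sqrt{2\pi}\,z}{1-\e^{-z}}\,\bigl(\mathcal{L}Tf_{x,1}\bigr)(z)\quad(z\in\C_{\Re>0}),
\]
using causality to ensure $Tf_{x,1}$ is supported in $\roi{0}{\infty}$ so that the Laplace transform makes sense. Autonomy is used only to check $\mathcal{L}Tf=M\,\mathcal{L}f$ on the dense set $\lin\set{\1_{\oi{a}{a+1/n}}x}{a\geq0,\,n\in\N,\,x\in H}$ by an elementary telescoping identity for the pieces $\1_{\oi{k/n}{(k+1)/n}}x$. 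Boundedness of $M$ is obtained by the single test function $f=\1_{\roi{0}{\infty}}\e^{-z^*(\cdot)}x$, whose Laplace transform at $z$ is a constant multiple of $x$. This completely bypasses the decomposability machinery you flag as the main obstacle, and the extension $T_\nu$ then falls out as a consequence rather than a prerequisite.
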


We consider the following (shifted) variant of \prettyref{thm:representaion} first.
\begin{thm}
\label{thm:weiss}Let $T\in L(\L(\R;H))$ be causal and autonomous.
Then there exists $M\from\C_{\Re>0}\to L(H)$ holomorphic and bounded such
that 
\[
\left(\mathcal{L}Tf\right)(z)=M(z)\left(\mathcal{L}f\right)(z)\quad(f\in\L(\R_{\geq0};H),z\in\C_{\Re>0}).
\]
\end{thm}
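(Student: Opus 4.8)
The plan is to transport $T$ to the (vector-valued) Hardy space via the Laplace transform and to recognise it there as a multiplication operator; the holomorphy of the symbol $M$ will then be a direct consequence of $T$ being causal, rather than merely autonomous.

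First I would use causality. Since $T$ is autonomous, \prettyref{exer:causality_autonomous} tells us that causality of $T$ is equivalent to $T$ mapping $\L(\R_{\geq0};H)$ into itself. Let $T_{+}\in L(\L(\R_{\geq0};H))$ denote this restriction, so $\norm{T_{+}}\leq\norm{T}$. By \prettyref{cor:Lapalce_unitary} (with $\nu=0$) the Laplace transform $\mathcal{L}\from\L(\R_{\geq0};H)\to\cHt(\C_{\Re>0};H)$ is an isometric isomorphism, hence
\[
S\coloneqq\mathcal{L}\,T_{+}\,\mathcal{L}^{-1}\in L\bigl(\cHt(\C_{\Re>0};H)\bigr),\qquad\norm{S}\leq\norm{T}.
\]
The assertion then reduces to: there is a (unique) bounded holomorphic $M\from\C_{\Re>0}\to L(H)$ with $(Sg)(z)=M(z)g(z)$ for all $g\in\cHt(\C_{\Re>0};H)$ and $z\in\C_{\Re>0}$; indeed, taking $g=\mathcal{L}f$ for $f\in\L(\R_{\geq0};H)$ gives $(\mathcal{L}Tf)(z)=(Sg)(z)=M(z)(\mathcal{L}f)(z)$, and $\sup_{z}\norm{M(z)}=\norm{S}<\infty$.

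Next I would extract the available multiplication structure from autonomy. For $h\leq0$ the shift $\tau_{h}$ preserves $\L(\R_{\geq0};H)$, and autonomy of $T$ together with causality gives $T_{+}\tau_{h}=\tau_{h}T_{+}$ on $\L(\R_{\geq0};H)$. Writing a convolution as a Bochner integral of shifts, $k\ast f=\int_{\R}k(h)\tau_{-h}f\d h$, and using that the bounded operator $T$ commutes with the Bochner integral (\prettyref{prop:interchange_integral}(a)), one also obtains $T_{+}(k\ast f)=k\ast(T_{+}f)$ for every $k\in L_{1}(\R)$ with $\spt k\subseteq\R_{\geq0}$. Transporting these identities through $\mathcal{L}$ and invoking \prettyref{exa:material_law_revisited} and \prettyref{exer:convolution}, $S$ commutes with the scalar multiplication operators $g\mapsto e^{zh}g$ (for $h\leq0$) and $g\mapsto\sqrt{2\pi}\,(\mathcal{L}k)(\cdot)\,g$ on $\cHt(\C_{\Re>0};H)$; crucially, every multiplier occurring here is bounded and holomorphic on $\C_{\Re>0}$.

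The crux is then the representation step: an operator $S\in L(\cHt(\C_{\Re>0};H))$ commuting with multiplication by $e^{-\lambda z}$ for all $\lambda\geq0$ is itself multiplication by a unique bounded holomorphic $M\from\C_{\Re>0}\to L(H)$. I would prove this directly: point evaluations $\operatorname{ev}_{z_{0}}\from\cHt(\C_{\Re>0};H)\to H$ are bounded (from the explicit Laplace integral and Cauchy--Schwarz), and commutation with the multipliers above forces $(Sg)(z_{0})$ to depend on $g$ only through $g(z_{0})$; this defines $M(z_{0})\in L(H)$ with $\norm{M(z_{0})}\leq\norm{S}$, the identity $(Sg)(z)=M(z)g(z)$ holds first on a dense set of $g$ — e.g.\ linear combinations of $z\mapsto(z+w)^{-1}x$, which are Laplace transforms of truncated exponentials and span a dense subspace — and then everywhere by continuity, while $z\mapsto M(z)$ is holomorphic because $z\mapsto(Sg)(z)$ is. (Alternatively one can first apply the Fourier transform to pass to $\L(\R;H)$, where autonomy alone makes the transported operator commute with all scalar $L_{\infty}$-multiplications, invoke the structure theorem for decomposable operators to obtain a measurable boundary symbol on $\i\R$, and then read off holomorphy from causality, i.e.\ from $S$ leaving the Hardy space invariant.) Feeding $M$ back through $\mathcal{L}$ yields the claim. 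This representation step is the main obstacle; everything else is bookkeeping with the unitary transforms already established. It is worth emphasising that autonomy by itself would only produce a bounded measurable symbol on $\i\R$, and it is causality that forces $M$ to extend holomorphically to the half-plane — the conceptual point of this chapter, complementary to \prettyref{prop:M_causal_via_PW}.
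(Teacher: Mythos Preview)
Your strategy is genuinely different from the paper's, and the overall architecture --- transport $T$ to $\cHt(\C_{\Re>0};H)$ via $\mathcal{L}$, observe that the resulting operator $S$ commutes with multiplication by the scalar symbols $e^{-\lambda z}$ ($\lambda\geq0$), and conclude that $S$ is itself a multiplication operator --- is a legitimate route. The difficulty is that the decisive step, ``commutation with the multipliers above forces $(Sg)(z_0)$ to depend on $g$ only through $g(z_0)$'', is asserted rather than argued. It is not clear how you intend to prove it: a factorisation argument ($g(z_0)=0\Rightarrow g=\varphi h$ with $\varphi$ a multiplier vanishing at $z_0$) runs into the problem that your available multipliers $e^{-\lambda z}$ never vanish, and the Laplace transforms of $L_1$-kernels all tend to $0$ at infinity, so the Blaschke factor $(z-z_0)/(z+\bar z_0)$ is not among them. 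The argument can be rescued by exploiting the reproducing-kernel structure of $\cHt$ (the adjoints of your multipliers have the kernel functions $k_{z_0}x$ as joint eigenvectors, which are separated by the eigenvalues, so $S^*$ preserves each fibre $\{k_{z_0}x:x\in H\}$ and both the pointwise identity and the bound $\norm{M(z_0)}\leq\norm{S}$ follow), but you neither mention this nor indicate any other concrete mechanism. Your alternative via decomposable operators is correct but imports machinery well beyond what the chapter has available.

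By contrast, the paper avoids any abstract commutant theorem. It \emph{defines} $M(z)x$ directly as $\tfrac{\sqrt{2\pi}\,z}{1-\e^{-z}}\,(\mathcal{L}Tf_{x,1})(z)$ with $f_{x,s}=\1_{\oi{0}{s}}x$, then verifies $(\mathcal{L}Tf)(z)=M(z)(\mathcal{L}f)(z)$ first for $f=f_{x,1/n}$ by a telescoping identity $f_{x,1}=\sum_{k=0}^{n-1}\tau_{-k/n}f_{x,1/n}$ together with autonomy, and extends by density of the span of such indicators. Boundedness of $M$ is obtained from a single explicit test function $f=\1_{\roi{0}{\infty}}\e^{-z^*(\cdot)}x$, whose Laplace transform at $z$ is a constant multiple of $x$. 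This is entirely elementary and self-contained; your approach trades that explicitness for a structural argument whose key lemma you have left open.
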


\begin{proof}
For $s >0$ and $x\in H$ we define $f_{x,s}\coloneqq \1_{\oi{0}{s}}x$ and compute
\begin{equation}\label{eq:hendrik0}
   \mathcal{L}f_{x,s}(z) = \frac{1}{\sqrt{2\pi}}\int_0^s \e^{-z t}x \d t =\frac{1}{\sqrt{2\pi}} \frac{1-\e^{-z s}}{z}x \quad(z\in \C_{\Re>0}).
\end{equation}
We define $M\colon \C_{\Re>0}\to L(H)$ via
\[
    M(z)x\coloneqq \frac{\sqrt{2\pi}z}{1-\e^{-z}}\mathcal{L}Tf_{x,1}(z),
\]which is well-defined since $\spt Tf_{x,1} \subseteq [0,\infty)$ (use causality of $T$); $M(z)\in L(H)$, since $T$ is bounded. Also, $M(\cdot)x$ is evidently holomorphic for every $x\in H$ as a product of two holomorphic mappings and thus by \prettyref{exer:holomorphic}, $M$ is holomorphic itself. Next, we show that for all $z\in \C_{\Re>0}$ and $f\in \L(\R_{\geq 0};H)$, we have
\begin{equation}\label{eq:hendrik}
  \left(\mathcal{L}Tf\right)(z)=M(z)\left(\mathcal{L}f\right)(z).
\end{equation}
By definition of $M$, the equality is true for $f$ replaced by $f_{x,1}$, $x\in H$. Next, observe that $\lin\set{\1_{\oi{a}{a+1/n}}x}{a\geq 0, n\in \N, x\in H}$ is dense in $\L(\R_{\geq0};H)$. Hence, for \prettyref{eq:hendrik}, it suffices to show
\begin{equation}\label{eq:hendrik2}
   \left(\mathcal{L}T\1_{\oi{a}{a+1/n}}x\right)(z)=M(z)\left(\mathcal{L}\1_{\oi{a}{a+1/n}}x\right)(z)
\end{equation} for all $a\geq 0$, $n\in \N$, $x\in H$, and $z\in \C_{\Re>0}$. Next, using that $T$ is autonomous in the situation of \prettyref{eq:hendrik2}, we see $\left(T\1_{\oi{a}{a+1/n}}x\right)=\left(T\tau_{-a}\1_{\oi{0}{1/n}}x\right)=\tau_{-a}\left(T\1_{\oi{0}{1/n}}x\right)$ and, by a straightforward computation, $(\mathcal{L}\tau_{-a}f)(z)=\e^{-za}\mathcal{L}f(z)$ for all $f\in \L(\R_{\geq 0};H)$. Thus,
\[
  \left(\mathcal{L}T\1_{\oi{a}{a+1/n}}x\right)(z) = \e^{-za}\left(\mathcal{L}T\1_{\oi{0}{1/n}}x\right)(z),
\]which yields that it suffices to show \prettyref{eq:hendrik2} for $a=0$ only, that is, for $f=f_{x,1/n}$. Furthermore, we compute for $n\in\N$ and $z\in \C_{\Re>0}$
\begin{align*}
  \mathcal{L}Tf_{x,1}(z)& = \sum_{k=0}^{n-1} (\mathcal{L}T\1_{\oi{k/n}{(k+1)/n}}x)(z)\\
  & =  \sum_{k=0}^{n-1} \e^{-zk/n}(\mathcal{L}T\1_{\oi{0}{1/n}}x)(z) =  \frac{1-\e^{-z}}{1-\e^{-z/n}}(\mathcal{L}Tf_{x,1/n})(z).
\end{align*}
Thus, using \prettyref{eq:hendrik0} for $s=1/n$, we deduce from the definition of $M$,
\begin{align*}
   \mathcal{L}Tf_{x,1/n}(z) &= \frac{1-\e^{-z/n}}{\sqrt{2\pi}z}\frac{\sqrt{2\pi}z}{1-\e^{-z}}  \mathcal{L}Tf_{x,1}(z) \\
   & =  \frac{1-\e^{-z/n}}{\sqrt{2\pi}z}M(z)x \\
   & = M(z) \mathcal{L}f_{x,1/n}(z).
\end{align*}
Hence, \prettyref{eq:hendrik} holds for all $f\in \L(\R_{\geq0};H)$. It remains to show boundedness of $M$. For this, let $z\in \C_{\Re>0}$ and $x\in H$. Set $f\coloneqq \1_{\roi{0}{\infty}}\e^{-z^*}x$ as well as $c\coloneqq 2\Re z\sqrt{2\pi}$. Then
\[
  \mathcal{L}f(z) = \frac{1}{\sqrt{2\pi}}\int_0^\infty \e^{-zt-z^*t}x\d t = \frac{x}{c}.
\]
By virtue of \prettyref{eq:hendrik}, we get $\mathcal{L}Tf(z)=M(z)\mathcal{L}f(z)$ and thus $M(z)x=c\mathcal{L}Tf(z)$. This leads to
\begin{align*}
   \norm{M(z)x} & \leq \frac{c}{\sqrt{2\pi}}\int_0^\infty \norm{\e^{-zt}Tf(t)}\d t\leq \frac{c}{\sqrt{2\pi}} \norm{\1_{\roi{0}{\infty}}\e^{-z(\cdot)}}_{\L(\R)}\norm{Tf}_{\L(\R)} \\
   & \leq \frac{c}{\sqrt{2\pi}} \norm{\1_{\roi{0}{\infty}}\e^{-z(\cdot)}}_{\L(\R)}^2\norm{T}_{\bo(\L(\R;H))}\norm{x}_H = \norm{T}_{\bo(\L(\R;H))}\norm{x}_H,
\end{align*}
where we used that $\norm{f}_{\L(\R;H)}= \norm{\1_{\roi{0}{\infty}}\e^{-z(\cdot)}}_{\L(\R)}\norm{x}_H$. Thus, $\norm{M(z)}\leq \norm{T}$, which yields boundedness of $M$ and the assertion of the theorem.
\end{proof}

We can now prove our main result of this section.

\begin{proof}[Proof of \prettyref{thm:representaion}]
We just prove the existence of a function $M$. The proof of its
uniqueness is left as \prettyref{exer:Uniqueness_material_law}.\\
We first prove the assertion for $\nu_{0}=0$. So, let $T\in L(\L(\R;H))$ be
causal and autonomous. According to \prettyref{thm:weiss} we find
$M\from\C_{\Re>0}\to L(H)$ holomorphic and bounded such that 
\[
\left(\mathcal{L}Tf\right)(z)=M(z)\left(\mathcal{L}f\right)(z)\quad(f\in\L(\R_{\geq0};H),z\in\C_{\Re>0}).
\]
Let now $\varphi\in \cci(\R;H)$ and set $a\coloneqq\inf\spt\varphi$.
Then $\tau_{a}\varphi\in\L(\R_{\geq0};H)$, and for $\nu>0$ we compute
\begin{align}
\mathcal{L}_{\nu}T\varphi & =\mathcal{L}_{\nu}\tau_{-a}T\tau_{a}\varphi
 =\e^{-(\i\m+\nu)a}\mathcal{L}_{\nu}T\tau_{a}\varphi
 =\e^{-(\i\m+\nu)a}M(\i\m+\nu)\mathcal{L}_{\nu}\tau_{a}\varphi \nonumber\\
 & =M(\i\m+\nu)\mathcal{L}_{\nu}\varphi.\label{eq:unitar_equiv_on_dense_set}
\end{align}
 The latter implies 
\begin{align*}
\norm{T\varphi}_{\Lnu(\R;H)} & =\norm{\mathcal{L}_{\nu}T\varphi}_{\L(\R;H)}
 =\norm{M(\i\m+\nu)\mathcal{L}_{\nu}\varphi}_{\L(\R;H)}
 \leq\norm{M}_{\infty,\C_{\Re>0}}\norm{\varphi}_{\Lnu(\R;H)}
\end{align*}
and hence, $T|_{\cci(\R;H)}$ has a unique continuous extension
$T_{\nu}\in L(\Lnu(\R;H))$. Using \prettyref{eq:unitar_equiv_on_dense_set}
we obtain 
\[
T_\nu=\mathcal{L}_{\nu}^{\ast}M(\i\m+\nu)\mathcal{L}_{\nu}=M(\td{\nu})
\]
by approximation.

Let now $\nu_{0}\in\R$. Then the operator 
\[
\tilde{T}\coloneqq\e^{-\nu_{0}\m}T\e^{\nu_{0}\m}\in L(\L(\R;H))
\]
is causal and autonomous as well. Thus, $\tilde{T}|_{\cci(\R;H)}$
has continuous extensions $\tilde{T}_{\rho}\in L(\Lm{\rho}(\R;H))$
for each $\rho>0$ and there is $\tilde{M}\from\C_{\Re>0}\to L(H)$ holomorphic
and bounded such that $\tilde{T}_{\rho}=\tilde{M}(\td{\rho})$ for
each $\rho>0$. Using $T|_{\cci(\R;H)}=\e^{\nu_{0}\m}\tilde{T}|_{\cci(\R;H)}\e^{-\nu_{0}\m}$,
we derive that $T|_{\cci(\R;H)}$ has the unique continuous
extension $T_{\nu}=\e^{\nu_{0}\m}\tilde{T}_{\nu-\nu_{0}}\e^{-\nu_{0}\m}\in L(\Lnu(\R;H))$
for each $\nu>\nu_{0}$ and 
\begin{align*}
\mathcal{L}_{\nu}T_{\nu} & =\mathcal{L}_{\nu}\e^{\nu_{0}\m}\tilde{T}_{\nu-\nu_{0}}\e^{-\nu_{0}\m}
 =\mathcal{L}_{\nu-\nu_{0}}\tilde{T}_{\nu-\nu_{0}}\e^{-\nu_{0}\m}
 =\tilde{M}(\i\m+\nu-\nu_{0})\mathcal{L}_{\nu-\nu_{0}}\e^{-\nu_{0}\m}\\
 & =\tilde{M}(\i\m+\nu-\nu_{0})\mathcal{L}_{\nu}.
\end{align*}
Hence, 
\[
T_{\nu}=M(\td{\nu})
\]
for the holomorphic and bounded function $M$ given by $M(z)\coloneqq\tilde{M}(z-\nu_{0})$
for $z\in\C_{\Re>\nu_{0}}$.
\end{proof}

\section{Comments}

The stated Theorem of Paley and Wiener is of course not the only theorem
characterising properties of the support of $L_{2}$-functions in
terms of their Fourier or Laplace transform. For instance, a similar
result holds for functions having compact support, see e.g. \cite[19.3 Theorem]{rudin1987real} and \prettyref{exer:PW_compact}.
These theorems provide a nice connection between $L_{2}$-functions
and spaces of holomorphic functions in the form of the so-called Hardy spaces. In this
chapter we just introduced the Hardy space $\cHt$ and it is not surprising
that there are also the Hardy spaces $\mathcal{H}_{p}$ for $1\leq p\leq\infty$.
We refer to \cite{Duren1970} for this topic.

The representation result presented in the second part of this chapter
was originally proved by Fourès and Segal in 1955, \cite{Foures1955}.
In this article the authors prove an analogous representation result
for causal operators on $\L(\R^{d};H)$, where causality is defined
with respect to a closed and convex cone on $\R^{d}$. The quite elementary
proof of \prettyref{thm:weiss} for $d=1$ presented here was kindly communicated to us by Hendrik Vogt. % is due to Weiss in 1991, \cite{Weiss1991}.

\section*{Exercises}
\addcontentsline{toc}{section}{Exercises}

\begin{xca}
\label{exer:dense in L1}Let $\Lambda\subseteq\R_{>0}$ be a set with
an accumulation point in $\R_{>0}$. Prove that $\{\left(x\mapsto\e^{-\lambda x}\right)\,;\,\lambda\in\Lambda\}$
is a total set in $L_{1}(\R_{\geq0})$.

Hint: Use that the set is total if and only if 
\[
\forall f\in L_{\infty}(\R_{\geq0}):\:\left(\forall\lambda\in\Lambda:\,\intop_{\R_{\geq0}}\e^{-\lambda x}f(x)\d x=0\Rightarrow f=0\right).
\]
\end{xca}

% \begin{xca}
% ????
% \end{xca}

\begin{xca}
\label{exer:strong_conv_material law} Let $M\from\dom(M)\subseteq\C\to L(H)$
be a material law. Moreover, let $\nu>\sbb{M}$. Show that $\lim_{\rho\to\nu\rlim}M(\i\m+\rho)=M(\i\m+\nu)$
where the limit is meant in the strong operator topology on $\L(\R;H)$. 
\end{xca}

% \begin{xca}
% \label{exer:causality_cut_off_op} Let $T\from\Lnu(\R;H)\to\Lnu(\R;H)$
% linear and autonomous for some $\nu\in\R.$ Show that $T$ is causal
% if and only if $\1_{\R_{\leq0}}(\m)T=\1_{\R_{\leq0}}(\m)T\1_{\R_{\leq0}}(\m)$. 
% \end{xca}

\begin{xca}
\label{exer:Uniqueness_material_law} Prove the uniqueness statement
in \prettyref{thm:representaion}.
\end{xca}

\begin{xca}
\label{exer:noncausal_M} Give an example of a continuous and bounded
function $M\from\C_{\Re>0}\to L(H)$ such that the corresponding operator
$M(\td{\nu})$ is not causal for any $\nu>0$.
\end{xca}

\begin{xca}
\label{exer:PW_distributional} Prove the following distributional
variant of the Paley--Wiener theorem:
Let $\nu_0>0$, $k\in\N$, $f\from\C_{\Re>\nu_0}\to\C$, and set $h(z)\coloneqq\frac{1}{z^{k}}f(z)$
for $z\in\C_{\Re>\nu_0}$. We assume that $h\in\cHt(\C_{\Re>\nu_0};\C)$.
For $\nu>\nu_0$ we define the distribution $u\from \cci(\R)\to\C$
by
\[
u(\psi)\coloneqq\scp{\mathcal{L}_{\nu}^{\ast}h(\i\cdot+\nu)}{(\td{\nu}^{\ast})^k\psi}_{\Lnu(\R;\C)}\quad(\psi\in\cci(\R;\C)).
\]
Prove that $\spt u\subseteq\R_{\geq0}$, where 
\[
\spt u\coloneqq\R\setminus\bigcup\set{U\subseteq\R\text{ open}}{\forall\,\psi\in \cci(U;\C): u(\psi)=0}.
\]

What is $u$ if $f=\1_{\C_{\Re>\nu_0}}$?
\end{xca}

\begin{xca}
 \label{exer:PW_compact1}
 Let $g\in \L(\R),a>0$ such that $\spt g\subseteq \ci{-a}{a}$. Show that $f\coloneqq \mathcal{F}g$ extends to a holomorphic function $\tilde{f}\from\C\to \C$ with $\tilde{f}(\i t)=f(t)$ for each $t\in \R$ such that
 \[
  \exists C\geq 0\,\forall z\in \C:\, |f(z)|\leq C\e^{a|\Re z|}.
 \]
\end{xca}

\begin{xca}
 \label{exer:PW_compact}
 Let $f:\C\to \C$ be holomorphic such that
 \begin{enumerate}
  \item\label{exer:PW_compact:item:1} $\exists C\geq 0,\, a>0\, \forall z\in \C:\, |f(z)|\leq C\e^{a|\Re z|}$,
  \item\label{exer:PW_compact:item:2} $f(\i \cdot)\in \L(\R)$.
 \end{enumerate}
Prove that $g\coloneqq \mathcal{F}^\ast f(\i\cdot)$ satisfies $\spt g\subseteq \ci{-a}{a}$.

Hint: Apply \prettyref{thm:Paley-Wiener} to the function $h:\C_{\Re>0} \to \C$ given by 
\[
 h(z)\coloneqq \e^{-z a} \frac{f(z)}{z+1}\quad (z\in \C_{\Re>0})
\]
to derive that $\spt g\subseteq \R_{\geq -a}$.

Note: The assertion even holds true if one replaces condition \ref{exer:PW_compact:item:1} by
\[
 \exists C\geq 0,\, a>0\, \forall z\in \C:\, |f(z)|\leq C\e^{a|z|}.
\]
\end{xca}

% \bibliographystyle{abbrv}
% \bibliography{ISEM23_lit}

\printbibliography[heading=subbibliography]

\chapter{Initial Value Problems and Extrapolation Spaces}

Up until now we have dealt with evolutionary equations of the form
\[
\bigl(\overline{\td{\nu}M(\td{\nu})+A}\bigr)U=F
\]
for some given $F\in\Lnu(\R;H)$ for some Hilbert space $H$, a skew-selfadjoint
operator $A$ in $H$ and a material law $M$ defined on a suitable
half space satisfying an appropriate positive definiteness condition with
$\nu\in\R$ chosen suitably large. Under these conditions, we
 established that the solution operator, $S_{\nu}\coloneqq\bigl(\overline{\td{\nu}M(\td{\nu})+A}\bigr)^{-1}\in\bo(\Lnu(\R;H))$,
is eventually independent of $\nu$ and causal; that is, if $F=0$
on $\loi{-\infty}{a}$ for some $a\in\R$, then so too is $U$. 

Solving for $U\in\Lnu(\R;H)$ for some non-negative $\nu$ penalises
$U$ having support on $\Rle{0}$. This might be interpreted as an
implicit \emph{initial condition at $-\infty$.} In this chapter,
we shall study how to obtain a solution for initial value problems with an
initial condition at $0$, based on the solution theory developed
in the previous chapters.

\section{What are Initial Values?}

This section is devoted to the motivation of the framework to follow in the
subsequent section. Let us consider the following, arguably easiest
but not entirely trivial, initial value problem: find a `causal' $u\colon\R\to\R$
such that for $u_{0}\in\R$ we have
\begin{equation}\label{eq:IVP_initial}
\begin{cases}
u'(t)=0 & (t>0),\\
u(0)=u_{0}.
\end{cases}
\end{equation}
First of all note that there is no condition for $u$ on $\Rl{0}$.
Since, there is no source term or right-hand side supported on $\Rl{0}$,
causality would imply that $u=0$ on $\oi{-\infty}{0}$. Moreover,
$u=c$ for some constant $c\in\R$ on $\oi{0}{\infty}$. Thus, in
order to match with the initial condition, 
\[
u(t)=u_{0}\1_{\roi{0}{\infty}}(t)\quad(t\in\R).
\]
Notice also that $u$ is not continuous. Hence, by the Sobolev embedding
theorem (\prettyref{thm:Sobolev_emb}), $u\notin\bigcup_{\nu>0}\dom(\td{\nu})$.
\begin{prop}
Let $H$ be a Hilbert space, $u_{0}\in H$. Define 
\begin{align*}
\delta_{0}u_{0}\colon\cci(\R;H) & \to\K\\
f & \mapsto\scp{u_{0}}{f(0)}_{H}.
\end{align*}
Then, for all $\nu\in\R_{>0}$, $\delta_{0}u_{0}$ extends to a continuous
linear functional on $\dom(\td{\nu})$. Re-using the notation for
this extension, for all $f\in\dom(\td{\nu})$ we have
\begin{equation}
\left(\delta_{0}u_{0}\right)(f)=-\scp{\1_{\roi{0}{\infty}}u_{0}}{\left(\td{\nu}-2\nu\right)f}_{\Lnu(\R;H)}.
\label{eq:delta_ibp}
\end{equation}
\end{prop}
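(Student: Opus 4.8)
The plan is to verify the integration-by-parts formula \prettyref{eq:delta_ibp} on the dense subspace $\calDH = \lin\set{\varphi\cdot x}{\varphi\in\cci(\R),\,x\in H}$ first, and then use it to conclude that $\delta_0 u_0$ is bounded with respect to the graph norm of $\td{\nu}$, so that it extends by density (recall $\calDH$ is a core for $\td{\nu}$ by \prettyref{prop:C_cinfty_core}, and $\dom(\td{\nu})$ equipped with the graph norm is a Banach space by \prettyref{lem:grscp}).

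First I would fix $\nu>0$ and take $f = \varphi\cdot x$ with $\varphi\in\cci(\R)$ and $x\in H$. Then $(\delta_0 u_0)(f) = \scp{u_0}{x}_H\,\varphi(0)$. On the other hand, since $\1_{\roi{0}{\infty}}u_0$ and $f$ are both in $\Lnu(\R;H)$ and $\td{\nu}f = \varphi'\cdot x$,
\[
\scp{\1_{\roi{0}{\infty}}u_0}{(\td{\nu}-2\nu)f}_{\Lnu(\R;H)} = \scp{u_0}{x}_H\int_0^\infty\bigl(\varphi'(t)-2\nu\varphi(t)\bigr)\e^{-2\nu t}\d t.
\]
Now $\bigl(\varphi'(t)-2\nu\varphi(t)\bigr)\e^{-2\nu t} = \frac{\d}{\d t}\bigl(\varphi(t)\e^{-2\nu t}\bigr)$, so by the fundamental theorem of calculus (\prettyref{cor:Hauptsatz}), using that $\varphi$ has compact support, the integral equals $0 - \varphi(0) = -\varphi(0)$. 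Hence $\scp{\1_{\roi{0}{\infty}}u_0}{(\td{\nu}-2\nu)f}_{\Lnu(\R;H)} = -\scp{u_0}{x}_H\varphi(0) = -(\delta_0 u_0)(f)$, which is \prettyref{eq:delta_ibp} for $f\in\calDH$ (the case of general $f\in\calDH$ follows by linearity).

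Next I would establish the bound. For $f\in\calDH$, the right-hand side of \prettyref{eq:delta_ibp} is estimated via Cauchy--Schwarz by
\[
\abs{(\delta_0 u_0)(f)} \leq \norm{\1_{\roi{0}{\infty}}u_0}_{\Lnu(\R;H)}\bigl(\norm{\td{\nu}f}_{\Lnu(\R;H)} + 2\nu\norm{f}_{\Lnu(\R;H)}\bigr) \leq C_\nu\norm{f}_{\td{\nu}},
\]
where $\norm{\1_{\roi{0}{\infty}}u_0}_{\Lnu(\R;H)}^2 = \norm{u_0}_H^2\int_0^\infty\e^{-2\nu t}\d t = \norm{u_0}_H^2/(2\nu)<\infty$ and $\norm{\cdot}_{\td{\nu}}$ denotes the graph norm. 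Thus $\delta_0 u_0$ is a bounded linear functional on $(\calDH,\norm{\cdot}_{\td{\nu}})$, which is dense in the Banach space $(\dom(\td{\nu}),\norm{\cdot}_{\td{\nu}})$; hence it extends uniquely to a bounded linear functional on $\dom(\td{\nu})$. Finally, since both sides of \prettyref{eq:delta_ibp} are continuous in $f$ with respect to the graph norm (the right-hand side because $\td{\nu}-2\nu$ is continuous from $(\dom(\td{\nu}),\norm{\cdot}_{\td{\nu}})$ to $\Lnu(\R;H)$, and then Cauchy--Schwarz), and they agree on the dense set $\calDH$, they agree on all of $\dom(\td{\nu})$.

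I do not anticipate a serious obstacle here; the only point requiring a little care is making sure the extension is well-defined and that the identity \prettyref{eq:delta_ibp} persists after passing to the limit, which is exactly the standard density argument spelled out above. The role of $\calDH$ being a core (rather than merely $\cci(\R;H)$) is what licenses the density step in the graph norm.
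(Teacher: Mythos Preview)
Your proof is correct and follows essentially the same approach as the paper's: verify \prettyref{eq:delta_ibp} on a dense subset via the fundamental theorem of calculus, read off continuity from Cauchy--Schwarz, and extend. The paper works directly on $\cci(\R;H)$ rather than the smaller space $\calDH$, but since $\calDH$ is a core this is a cosmetic difference; your reduction to a scalar integral makes the FTC step more explicit than the paper's terse ``obvious''.
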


\begin{proof}
The equality \prettyref{eq:delta_ibp} is obvious for $f\in\cci(\R;H)$ as it is a
direct consequence of the fundamental theorem of calculus (look at the right-hand side first). The continuity
of $\delta_{0}u_{0}$ follows from the Cauchy--Schwarz inequality
applied to the right-hand side of \prettyref{eq:delta_ibp}. Note that
$\1_{\roi{0}{\infty}}u_{0}\in\Lnu(\R;H)$.
\end{proof}
Recall from \prettyref{cor:adjoint_time_derivative} that 
\[
\td{\nu}^{*}=-\td{\nu}+2\nu.
\]
Hence, if we \emph{formally} apply this formula to \prettyref{eq:delta_ibp},
we obtain
\[
\scp{\td{\nu}\1_{\roi{0}{\infty}}u_{0}}{f}=\scp{\1_{\roi{0}{\infty}}u_{0}}{\td{\nu}^{*}f}=\left(\delta_{0}u_{0}\right)(f).
\]
Therefore, in order to use the introduced time derivative operator
for the above initial value problem, we need to extend the time derivative
to a broader class of functions than just $\dom(\td{\nu})$. To utilise the adjoint operator in this way will be central to the construction to
follow. It will turn out that indeed
\[
\td{\nu}\1_{\roi{0}{\infty}}u_{0}=\delta_{0}u_{0}.
\]
Moreover, we shall show below that 
\[
\td{\nu}u=\delta_{0}u_{0}
\]
considered on the full time-line $\R$ is one possible replacement
of the initial value problem \prettyref{eq:IVP_initial}.

\section{Extrapolating Operators}

Since we are dealing with functionals, let us recall the definition
of the dual space.
Throughout this section let $H,H_{0},H_{1}$ be Hilbert spaces.

\begin{defn*}
The space
\[
H'\coloneqq\set{\varphi\from H\to\K}{\varphi\text{ linear and bounded}}
\]
is called the \emph{dual space of $H$}. We equip $H'$ with the linear
structure
\[
(\lambda \odot\varphi+\psi)(x)\coloneqq\lambda^{\ast}\varphi(x)+\psi(x)\quad(\lambda\in\K,\varphi,\psi\in H',x\in H).
\]
\end{defn*}
\begin{rem}
Note that $H'$ is a Hilbert space itself, since by the Riesz representation
theorem for each $\varphi\in H'$ we find a unique element $R_{H}\varphi\in H$
such that 
\[
\forall x\in H:\:\varphi(x)=\scp{R_{H}\varphi}{x}.
\]
Due to the linear structure on $H'$, the so induced mapping $R_{H}\from H'\to H$ (which is one-to-one and onto)
becomes linear and 
\[
H'\times H'\ni(\varphi,\psi)\mapsto\scp{R_{H}\varphi}{R_{H}\psi}
\]
defines an inner product on $H'$, which induces the usual norm on
functionals. 
\end{rem}

From now on we will identify elements $x\in H$ with their representatives in $H'$; that is, we identify $x$ with $R_H^{-1}x$.

Let $C\colon\dom(C)\subseteq H_{0}\to H_{1}$ be linear,
densely defined and closed. We recall that in this case $\dom(C)$
endowed with the graph inner product
\[
(u,v)\mapsto\scp{u}{v}_{H_{0}}+\scp{Cu}{Cv}_{H_{1}}
\]
becomes a Hilbert space. Clearly, $\dom(C)\hookrightarrow H_{0}$
is continuous with dense range. Moreover, we see that $\dom(C)\ni x\mapsto Cx\in H_{1}$
is continuous. We define 
\begin{align*}
  C^{\diamond}\colon H_{1} & \to\dom(C)' \eqqcolon H^{-1}(C),\\
 (C^{\diamond}\phi)(x)& \coloneqq\scp{\phi}{Cx}_{H_{1}}\quad(\phi\in H_{1},x\in\dom(C)).
\end{align*}
Note that $C^{\diamond}$ is related to the dual operator $C'$ of $C$ considered as a bounded operator from $\dom(C)$ to $H_1$ by
\[
 C^\diamond =C'R_{H_1}^{-1}.
\]

\begin{prop}
\label{prop:CdiaCadj} With the notions and definitions from this
section, the following statements hold:
\begin{enumerate}
\item\label{prop:CdiaCadj:item:1} $C^{\diamond}$ is continuous and linear.

\item\label{prop:CdiaCadj:item:2} $C^{*}\subseteq C^{\diamond}$.

\item\label{prop:CdiaCadj:item:3} $\ker(C^{*})=\ker(C^{\diamond})$.

\item\label{prop:CdiaCadj:item:4} $C\subseteq\left(C^{*}\right)^{\diamond}\colon H_{0}\to\dom(C^{*})'=H^{-1}(C^{*})$.

\item\label{prop:CdiaCadj:item:5} $H_{0}\cong H_{0}'\hookrightarrow H^{-1}(C)$ densely and continuously.
\end{enumerate}
\end{prop}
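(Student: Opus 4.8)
The plan is to first record the canonical embedding implicit in the statement and to establish (a) together with the continuity part of (e), then to deduce (b), (c) and (d) from the definitions and the duality results already available, and finally to treat the one substantial point, the density assertion in (e).

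The map underlying (e) is $\iota\colon H_0\cong H_0'\to H^{-1}(C)=\dom(C)'$ sending $\varphi$ to its restriction $\varphi|_{\dom(C)}$. Since $\norm{x}_{H_0}\leq\norm{x}_C$ for $x\in\dom(C)$, this restriction is bounded with $\norm{\varphi|_{\dom(C)}}_{H^{-1}(C)}\leq\norm{\varphi}_{H_0'}$, so $\iota$ is linear and continuous, and it is injective because $\dom(C)$ is dense in $H_0$. This gives the continuous embedding $H_0\cong H_0'\hookrightarrow H^{-1}(C)$. For (a), for $\phi\in H_1$ and $x\in\dom(C)$ the Cauchy--Schwarz inequality yields
\[
\abs{(C^{\diamond}\phi)(x)}=\abs{\scp{\phi}{Cx}_{H_1}}\leq\norm{\phi}_{H_1}\norm{Cx}_{H_1}\leq\norm{\phi}_{H_1}\norm{x}_C,
\]
so $C^{\diamond}\phi\in H^{-1}(C)$ with norm at most $\norm{\phi}_{H_1}$; linearity of $C^{\diamond}$ with respect to the linear structure on the dual is immediate, which proves (a) and moreover $\norm{C^{\diamond}}\leq 1$.

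For (b), if $\phi\in\dom(C^*)$ then $C^*\phi\in H_0$ and, by definition of the adjoint operator $C^{*}$, $\scp{C^*\phi}{x}_{H_0}=\scp{\phi}{Cx}_{H_1}$ for all $x\in\dom(C)$; the left-hand side is $\bigl(\iota(C^*\phi)\bigr)(x)$ and the right-hand side is $(C^{\diamond}\phi)(x)$, so $C^{\diamond}\phi=C^*\phi$ in $H^{-1}(C)$ under the identification $H_0\cong H_0'\hookrightarrow H^{-1}(C)$, i.e. $C^*\subseteq C^{\diamond}$. For (c), unwinding the definition gives $\ker(C^{\diamond})=\set{\phi\in H_1}{\forall x\in\dom(C)\colon\scp{\phi}{Cx}_{H_1}=0}=\ran(C)^{\bot}$, and $\ran(C)^{\bot}=\ker(C^*)$ by \prettyref{thm:ran-kernerl} applied to the linear relation $C$. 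For (d), note that $C$ closed is in particular closable, so $C^*$ is densely defined by \prettyref{lem:ddc}, and $C^*$ is closed; thus the hypotheses of the proposition hold for $C^*$ in place of $C$, and (b) applied to $C^*$ gives $(C^*)^*\subseteq(C^*)^{\diamond}$, while $(C^*)^*=\overline{C}=C$ by \prettyref{lem:ass} since $C$ is closed. This is exactly (d) (with $H_1\cong H_1'\hookrightarrow H^{-1}(C^*)$ the embedding obtained as in (e) applied to $C^*$).

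The only genuinely nontrivial point, and the one I expect to be the main obstacle, is the density of $\iota(H_0')$ in $H^{-1}(C)$. I would argue by duality: $\dom(C)$ with the graph inner product is a Hilbert space, hence reflexive, so $\bigl(H^{-1}(C)\bigr)'=\bigl(\dom(C)'\bigr)'$ is canonically $\dom(C)$; if a functional $\Lambda\in\bigl(H^{-1}(C)\bigr)'$ vanishes on $\iota(H_0')$, then $\Lambda$ is evaluation at some $x_0\in\dom(C)$ and $0=\Lambda(\iota(\varphi))=\varphi(x_0)$ for every $\varphi\in H_0'$, so $x_0=0$ because $H_0'$ separates the points of $H_0$ and $x_0\in\dom(C)\subseteq H_0$; hence $\Lambda=0$, and by the Hahn--Banach theorem $\iota(H_0')$ is dense in $H^{-1}(C)$, which completes (e). A more hands-on alternative is to represent an arbitrary $\psi\in\dom(C)'$ via the Riesz theorem and the isometric embedding $\dom(C)\ni x\mapsto(x,Cx)\in H_0\oplus H_1$ as $\psi(x)=\scp{a}{x}_{H_0}+\scp{b}{Cx}_{H_1}$ with $(a,b)\in H_0\oplus H_1$, and to approximate such $\psi$ by the functionals obtained for $b=0$; but the reflexivity argument is cleaner.
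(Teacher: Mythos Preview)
Your proof is correct and follows essentially the same approach as the paper's for parts (a)--(d); the only minor variation is that for (c) you invoke \prettyref{thm:ran-kernerl} to identify $\ran(C)^\bot=\ker(C^*)$, whereas the paper reproves this directly from the definition of the adjoint. For (e) the paper simply cites the general fact that a dense continuous embedding $V\hookrightarrow H$ dualises to a dense continuous embedding $H'\hookrightarrow V'$ (deferred to \prettyref{exer:dual_embedding}), while you supply a full reflexivity/Hahn--Banach argument; this is a perfectly good proof of that exercise in the Hilbert space setting.
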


\begin{proof}
\ref{prop:CdiaCadj:item:1} Let $\phi,\psi\in H_1$, $\lambda\in\K$. Then
\[C^\diamond(\lambda\phi+\psi)(x) = \lambda^*(C^\diamond\phi)(x) + (C^\diamond\psi)(x) = (\lambda\odot C^\diamond\phi + C^\diamond\psi)(x) \quad(x\in \dom(C)).\]
To show continuity, let $\phi\in H_{1}$ and $x\in\dom(C)$. Then
\[
\abs{\scp{\phi}{Cx}_{H_{1}}}\leq\norm{\phi}_{H_{1}}\norm{Cx}_{H_{1}}\leq\norm{\phi}_{H_{1}}\norm{x}_{\dom(C)}.
\]
Hence, $\norm{C^{\diamond}}=\sup_{\phi\in H_{1},\norm{\phi}_{H_1}\leq1}\norm{C^{\diamond}\phi}_{\dom(C)'}\leq1.$

\ref{prop:CdiaCadj:item:2} Let $\phi\in\dom(C^{\ast})$. Then we have for all $x\in\dom(C)$
\[
\left(C^{\diamond}\phi\right)(x)=\scp{\phi}{Cx}_{H_1}=\scp{C^{*}\phi}{x}_{H_0}=\left(C^{*}\phi\right)(x).
\]
We obtain $C^{\diamond}\phi=C^{*}\phi$. (Note that a functional on
$H_{0}$ is uniquely determined by its values on $\dom(C)$.)

\ref{prop:CdiaCadj:item:3} Using \ref{prop:CdiaCadj:item:2}, we are left with showing $\ker(C^{\diamond})\subseteq\ker(C^{*})$.
So, let $\phi\in\ker(C^{\diamond})$. Then for all $x\in\dom(C)$
we have
\[
0=\left(C^{\diamond}\phi\right)(x)=\scp{\phi}{Cx}_{H_1},
\]
which leads to $\phi\in\dom(C^{*})$ and $\phi\in\ker(C^{*})$.

\ref{prop:CdiaCadj:item:4} is a direct consequence of \ref{prop:CdiaCadj:item:2} applied to
$C^{*}$.

\ref{prop:CdiaCadj:item:5} Since $\dom(C)\hookrightarrow H_{0}$ is dense and continuous,
we obtain that $H_{0}'\hookrightarrow\dom(C)'$ is so as well; cf.\ \prettyref{exer:dual_embedding}.
\end{proof}
We will also write $C_{-1}\coloneqq\left(C^{*}\right)^{\diamond}$ for the so-called \emph{\index{extrapolated operator}extrapolated operator} of $C$. Then $(C^*)_{-1}=C^{\diamond}$. We will
record the index $-1$ at the beginning, but in order to avoid too
much clutter in the notation we will drop this index again, bearing
in mind that $C_{-1}\supseteq C$ and $\left(C^{*}\right)_{-1}\supseteq C^{*}$.

\begin{example}
We have shown that for all $\nu\in\R$ the operator $\td{\nu}$ is
densely defined and closed. Then for $f\in\Lnu(\R)$ we have for all
$\phi\in\cci(\R)$
\begin{align*}
\left((\td{\nu})_{-1}f\right)(\phi) & =\scp{f}{\td{\nu}^{*}\phi}_{\Lnu}
 =\scp{f}{\left(-\td{\nu}+2\nu\right)\phi}_{\Lnu}
 =-\int_{\R}\scp{f}{(\e^{-2\nu\cdot}\phi)'}_{\C}.
\end{align*}
Hence, $(\td{\nu})_{-1}f$ acts as the `usual' distributional derivative
taking into account the exponential weight in the scalar product.

With this observation we deduce that for $\nu>0$ we have
\[
\left({\td{\nu}}\right)_{-1}\1_{\roi{0}{\infty}}={\td{\nu}}\1_{\roi{0}{\infty}}=\delta_{0}.
\]
Hence, the initial value problem from the beginning reads: find $u$
such that
\[
(\td{\nu})_{-1}u=\delta_{0}u_{0}.
\]
\end{example}

\begin{example}
Let $\Omega\subseteq\R^{d}$ be open. Consider $\grad_{0}\colon H_{0}^{1}(\Omega)\subseteq\L(\Omega)\to\L(\Omega)^{d}$.
We compute $\dive_{-1}\colon\L(\Omega)^{d}\to H^{-1}(\Omega)$ with
$H^{-1}(\Omega)\coloneqq H_{0}^{1}(\Omega)'$. For $q\in\L(\Omega)^{d}$
we obtain for all $\phi\in H_{0}^{1}(\Omega)$
\begin{align*}
\left(\dive_{-1}q\right)(\phi) & =\scp{q}{\dive^{*}\phi}_{\L(\Omega)^d}
 =-\scp{q}{\grad_{0}\phi}_{\L(\Omega)^d}.
\end{align*}
Also, with similar arguments, we see that
\[
\left(\left(\grad\right)_{-1}f\right)(q)=-\scp{f}{\dive_{0}q}_{\L(\Omega)}
\]
for all $f\in\L(\Omega)$ and $q\in H_{0}(\dive,\Omega)$.
\end{example}

We consider a case of particular interest within the framework
of evolutionary equations.
\begin{prop}
\label{prop:A_neg}Let $A\colon\dom(C)\times\dom(C^{*})\subseteq H_{0}\times H_{1}\to H_{0}\times H_{1}$
be given by
\[
A\begin{pmatrix}
\phi\\
\psi
\end{pmatrix}=\begin{pmatrix}
0 & C^{*}\\
-C & 0
\end{pmatrix}\begin{pmatrix}
\phi\\
\psi
\end{pmatrix}=\begin{pmatrix}
C^{*}\psi\\
-C\phi
\end{pmatrix}.
\]
Then $A_{-1}\colon H_{0}\times H_{1}\to H^{-1}(C)\times H^{-1}(C^{*})$
acts as
\[
A_{-1}\begin{pmatrix}
\phi\\
\psi
\end{pmatrix}=\begin{pmatrix}
0 & (C^{*})_{-1}\\
-C_{-1} & 0
\end{pmatrix}\begin{pmatrix}
\phi\\
\psi
\end{pmatrix}=\begin{pmatrix}
(C^{*})_{-1}\psi\\
-C_{-1}\phi
\end{pmatrix}.
\]
\end{prop}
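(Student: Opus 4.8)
The plan is to compute $A^{*}$ explicitly, to identify the dual of its (graph) domain as a product, and then to simply unfold the definition of the $\diamond$-operation.

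First I would observe that, since $C$ is densely defined and closed, it is in particular closable, so $C^{*}$ is densely defined by \prettyref{lem:ddc}; hence \prettyref{prop:block_op_realinv}\ref{prop:block_op_realinv:item:1} applies to $A$ (written in the form $\begin{pmatrix}0&C^{*}\\-C&0\end{pmatrix}$) and yields
\[
A^{*}=\begin{pmatrix}0&(-C)^{*}\\(C^{*})^{*}&0\end{pmatrix}=\begin{pmatrix}0&-C^{*}\\C&0\end{pmatrix}
\]
on $\dom(C)\times\dom(C^{*})$, where we used the elementary identity $(-C)^{*}=-C^{*}$ and $(C^{*})^{*}=\overline{C}=C$ (by \prettyref{lem:ass} and closedness of $C$). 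In particular $A^{*}$ is densely defined and closed, so $A_{-1}=(A^{*})^{\diamond}\colon H_{0}\times H_{1}\to\dom(A^{*})'$ is well-defined.

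Next I would record that for $(x,y)\in\dom(C)\times\dom(C^{*})$ one has
\[
\norm{(x,y)}_{A^{*}}^{2}=\norm{x}_{H_{0}}^{2}+\norm{y}_{H_{1}}^{2}+\norm{C^{*}y}_{H_{0}}^{2}+\norm{Cx}_{H_{1}}^{2}=\norm{x}_{\dom(C)}^{2}+\norm{y}_{\dom(C^{*})}^{2},
\]
so that $\dom(A^{*})$, equipped with the graph norm, is the orthogonal direct sum of the Hilbert spaces $\dom(C)$ and $\dom(C^{*})$. Consequently $\dom(A^{*})'$ is canonically identified with $\dom(C)'\times\dom(C^{*})'=H^{-1}(C)\times H^{-1}(C^{*})$, a pair $(\alpha,\beta)$ acting on $(x,y)$ by $\alpha(x)+\beta(y)$.

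It then remains to evaluate $A_{-1}$. For $(\phi,\psi)\in H_{0}\times H_{1}$ and $(x,y)\in\dom(C)\times\dom(C^{*})$ the definition of $\diamond$ gives
\[
\left(A_{-1}\begin{pmatrix}\phi\\\psi\end{pmatrix}\right)\begin{pmatrix}x\\y\end{pmatrix}=\scp{(\phi,\psi)}{A^{*}(x,y)}_{H_{0}\times H_{1}}=\scp{\phi}{-C^{*}y}_{H_{0}}+\scp{\psi}{Cx}_{H_{1}}=\bigl(C^{\diamond}\psi\bigr)(x)-\bigl((C^{*})^{\diamond}\phi\bigr)(y).
\]
Under the identification above this reads $A_{-1}(\phi,\psi)=(C^{\diamond}\psi,\,-(C^{*})^{\diamond}\phi)$. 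Finally, $(C^{*})_{-1}=(C^{**})^{\diamond}=(\overline{C})^{\diamond}=C^{\diamond}$ (again by \prettyref{lem:ass} and closedness of $C$), while $C_{-1}=(C^{*})^{\diamond}$ by definition, so $A_{-1}(\phi,\psi)=\bigl((C^{*})_{-1}\psi,\,-C_{-1}\phi\bigr)=\begin{pmatrix}0&(C^{*})_{-1}\\-C_{-1}&0\end{pmatrix}\begin{pmatrix}\phi\\\psi\end{pmatrix}$, which is the claim. The only point that needs a little care is the canonical (anti-)linear identification of $\dom(A^{*})'$ with the product $H^{-1}(C)\times H^{-1}(C^{*})$; everything else is a direct computation.
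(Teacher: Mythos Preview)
Your proof is correct. The paper states this proposition without proof, treating it as a direct consequence of the definitions just introduced; your argument spells out exactly the computation one would expect: compute $A^{*}$ via \prettyref{prop:block_op_realinv}\ref{prop:block_op_realinv:item:1}, identify $\dom(A^{*})'$ with the product $H^{-1}(C)\times H^{-1}(C^{*})$ through the graph-norm decomposition, and unfold the definition of $(A^{*})^{\diamond}$.
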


Next, we will look at the solution theory when carried over to
distributional right-hand sides. 

An immediate consequence of the introduction of extrapolated operators, however, is
that we are now in the position to omit the closure bar for the operator sum in an evolutionary
equation, which we will see in an abstract version in \prettyref{thm:ComputeClosureDistr} and for evolutionary equations in \prettyref{thm:NoClosureEEDistr}.
The main advantage is that we can calculate an operator sum much easier than the closure of it. 
The price we have to pay is that we have to work in a larger space $H^{-1}$ of an operator in $\Lnu(\R;H)$ rather than in the original Hilbert space $\Lnu(\R;H)$.
Put differently, this provides another notion of ``solutions'' for evolutionary equations.
For this, we need to introduce the set 
\[
\Fun(H)\coloneqq\set{\phi\colon\dom(\phi)\subseteq H\to\K}{\phi\text{ linear}}
\]
of not necessearily everywhere defined linear functionals on $H$.
Any $u\in H$ is thus identified with an element in $\Fun(H)$ via
$\psi\mapsto\scp{u}{\psi}_{H}$. Note that we can add and scalarly multiply elements in $\Fun(H)$ with respect to the same addition and multiplication
defined on $H'$ and with their natural domains. As usual, we will use the
$\subseteq$-sign for extension/restriction of mappings.

\begin{thm}
\label{thm:ComputeClosureDistr}Let $A\colon\dom(A)\subseteq H\to H$,
$B\colon\dom(B)\subseteq H\to H$ be densely defined and closed such that $A+B$
is closable, and assume that there exists $\left(T_{n}\right)_{n\in\N}$
in $\bo(H)$ such that $T_{n}\to\idop_{H}$ in the strong operator topology
with $\ran(T_{n})\subseteq\dom(B)$ and
\begin{align*}
T_{n}A & \subseteq AT_{n}, \quad T_{n}B \subseteq BT_{n}\text{ for all }n\in\N.
\end{align*}
Then
 $T_n^\ast  A^\ast \subseteq A^\ast T_n^\ast$ and $T_n^\ast B^\ast \subseteq B^\ast T_n^\ast$ for each $n\in \N$ and $\ran(T_n^\ast)\subseteq \dom(B^\ast)$.
Moreover, for $x,f\in H$ the following conditions are equivalent:
\begin{enumerate}
\renewcommand{\labelenumi}{{\upshape (\roman{enumi})}}
 \item $x\in\dom(\overline{A+B})$ and $(\overline{A+B})x=f$.
 \item $A_{-1}x+B_{-1}x\subseteq f\in\Fun(H)$.
\end{enumerate}
\end{thm}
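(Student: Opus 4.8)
The plan is to work throughout in the space $\Fun(H)$ of partially defined linear functionals. First I would dispose of the preliminary claims about adjoints: from $T_n A\subseteq AT_n$ one obtains $T_n^\ast A^\ast\subseteq A^\ast T_n^\ast$ by taking adjoints (using \prettyref{thm:adj-comp} and \prettyref{cor:adj-comp1}, since $T_n\in\bo(H)$, so that $(AT_n)^\ast=T_n^\ast A^\ast$ and $(T_nA)^\ast=\overline{A^\ast T_n^\ast}\supseteq A^\ast T_n^\ast$), and likewise for $B$; the inclusion $\ran(T_n^\ast)\subseteq\dom(B^\ast)$ then follows from $T_n^\ast B^\ast\subseteq B^\ast T_n^\ast$ together with the fact that $B^\ast$ is densely defined (as $B$ is closable, by \prettyref{lem:ddc}) combined with the boundedness of $T_n^\ast B^\ast$ on $\dom(B^\ast)$, which is dense. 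Actually the cleaner route for $\ran(T_n^\ast)\subseteq\dom(B^\ast)$ is: for $y\in H$ and $\phi\in\dom(B)$ we have $\scp{B\phi}{T_n^\ast y}=\scp{T_nB\phi}{y}=\scp{BT_n\phi}{y}$, and since $\phi\mapsto\scp{BT_n\phi}{y}$ is bounded on $H$ (as $BT_n\in\bo(H)$ because $T_n$ maps into $\dom(B)$ and the closed graph theorem applies to $BT_n$), this shows $T_n^\ast y\in\dom(B^\ast)$.

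For the main equivalence, the direction (i)$\Rightarrow$(ii) is the routine one: if $x\in\dom(\overline{A+B})$ with $(\overline{A+B})x=f$, pick $(x_k)_k$ in $\dom(A)\cap\dom(B)$ with $x_k\to x$ and $(A+B)x_k\to f$. For $\psi\in\dom(A^\ast)$, $\scp{A x_k}{\psi}=\scp{x_k}{A^\ast\psi}\to\scp{x}{A^\ast\psi}=(A_{-1}x)(\psi)$ by definition of $A_{-1}=(A^\ast)^\diamond$, and similarly for $B$ on $\dom(B^\ast)$; hence for $\psi\in\dom(A^\ast)\cap\dom(B^\ast)$ we get $(A_{-1}x)(\psi)+(B_{-1}x)(\psi)=\lim_k\scp{(A+B)x_k}{\psi}=\scp{f}{\psi}$. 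Since $\dom(A_{-1}x+B_{-1}x)=\dom(A^\ast)\cap\dom(B^\ast)$ by the definition of addition in $\Fun(H)$, this gives $A_{-1}x+B_{-1}x\subseteq f$ as elements of $\Fun(H)$.

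The harder direction is (ii)$\Rightarrow$(i), and this is where the regularising sequence $(T_n)$ does the real work. Assume $A_{-1}x+B_{-1}x\subseteq f$, i.e.\ $\scp{x}{A^\ast\psi}+\scp{x}{B^\ast\psi}=\scp{f}{\psi}$ for all $\psi\in\dom(A^\ast)\cap\dom(B^\ast)$. I would set $x_n\coloneqq T_n x$. Then $x_n\to x$; moreover $x_n\in\ran(T_n)\subseteq\dom(B)$, and I must also show $x_n\in\dom(A)$ — this should follow because $T_nA\subseteq AT_n$ forces $T_n$ to map $H$ into $\dom(A)$ by the same closed-graph argument as above (note $AT_n$ is everywhere defined and closed, hence bounded, and $T_nx\in\dom(A)$ with $AT_nx=AT_n x$). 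So $x_n\in\dom(A)\cap\dom(B)=\dom(A+B)$. Next, compute $\scp{(A+B)x_n}{\psi}$ for $\psi\in\dom(A^\ast)\cap\dom(B^\ast)$: using $AT_n=T_nA$ on the relevant domain one gets $\scp{AT_nx}{\psi}=\scp{T_nAx}{\psi}$ — but this requires $x\in\dom(A)$, which we do not have. The correct manoeuvre is instead to push $T_n$ onto $\psi$: $\scp{(A+B)T_nx}{\psi}=\scp{T_n x}{(A^\ast+B^\ast)\psi}$ (valid since $T_nx\in\dom(A)\cap\dom(B)$) $=\scp{x}{T_n^\ast(A^\ast+B^\ast)\psi}$, and then use $T_n^\ast A^\ast\subseteq A^\ast T_n^\ast$, $T_n^\ast B^\ast\subseteq B^\ast T_n^\ast$ (and $T_n^\ast\psi\in\dom(A^\ast)\cap\dom(B^\ast)$, which we established above) to rewrite this as $\scp{x}{A^\ast T_n^\ast\psi}+\scp{x}{B^\ast T_n^\ast\psi}=\scp{f}{T_n^\ast\psi}\to\scp{f}{\psi}$, where the last step uses $T_n^\ast\to\idop_H$ strongly (which follows from $T_n\to\idop_H$ strongly together with a uniform bound on $\norm{T_n}$ coming from the uniform boundedness principle). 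Thus $\scp{(A+B)x_n}{\psi}\to\scp{f}{\psi}$ for $\psi$ in the dense set $\dom(A^\ast)\cap\dom(B^\ast)$; combined with a uniform bound $\sup_n\norm{(A+B)x_n}<\infty$ — which I would extract from the boundedness of the sequence of functionals $\scp{(A+B)x_n}{\cdot}$ on the dense subspace and a uniform-boundedness argument, noting $(A+B)x_n$ lives in $H$ — this yields $(A+B)x_n\rightharpoonup f$ weakly in $H$. Since $x_n\to x$ strongly and $(A+B)x_n\rightharpoonup f$, and $\overline{A+B}$ is (weak-)closed as a closed subspace of $H\times H$, we conclude $(x,f)\in\overline{A+B}$, i.e.\ $x\in\dom(\overline{A+B})$ and $(\overline{A+B})x=f$. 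The main obstacle is precisely getting a \emph{genuine limit} rather than just weak testing: one has to be careful that $(A+B)x_n$ is uniformly bounded in $H$ so that weak compactness applies, since a priori (ii) only controls pairings against the subspace $\dom(A^\ast)\cap\dom(B^\ast)$; establishing that uniform bound (and confirming $x_n\in\dom(A)$) is the delicate point.
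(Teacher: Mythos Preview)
Your preliminary claims and the direction (i)$\Rightarrow$(ii) are fine and match the paper. The direction (ii)$\Rightarrow$(i), however, has a genuine gap.

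You assert that $x_n=T_nx\in\dom(A)$ ``by the same closed-graph argument''. This is unjustified: the hypotheses give $\ran(T_n)\subseteq\dom(B)$ but make \emph{no} such assumption for $A$. The inclusion $T_nA\subseteq AT_n$ only says that $T_n$ maps $\dom(A)$ into $\dom(A)$; it does not imply that $AT_n$ is everywhere defined. Your closed-graph argument for $BT_n$ works precisely because $\ran(T_n)\subseteq\dom(B)$ is assumed---there is no analogue for $A$. Without $x_n\in\dom(A)$ your subsequent computation $\scp{(A+B)T_nx}{\psi}=\scp{T_nx}{(A^\ast+B^\ast)\psi}$ cannot even be written down, and the rest of the argument collapses.

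The paper resolves this asymmetry by \emph{proving} $x_n\in\dom(A)$ rather than assuming it. One shows the identity $\scp{T_nf-Bx_n}{y}=\scp{x_n}{A^\ast y}$ first for $y\in\dom(A^\ast)\cap\dom(B^\ast)$ directly from (ii), and then extends it to all $y\in\dom(A^\ast)$ by approximating with $T_k^\ast y$ (note $T_k^\ast y\in\dom(A^\ast)$ since $y\in\dom(A^\ast)$ and $T_k^\ast A^\ast\subseteq A^\ast T_k^\ast$, while $T_k^\ast y\in\dom(B^\ast)$ since $\ran(T_k^\ast)\subseteq\dom(B^\ast)$) and letting $k\to\infty$ using only $T_k\to\idop_H$ strongly. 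This identity for all $y\in\dom(A^\ast)$ yields $x_n\in\dom(A^{**})=\dom(A)$ with the \emph{exact} formula $(A+B)x_n=T_nf$, whence $(A+B)x_n\to f$ strongly. This completely bypasses your weak-convergence route and the uniform bound you correctly flagged as delicate (and which your proposed uniform-boundedness argument would not deliver, since $\dom(A^\ast)\cap\dom(B^\ast)$ with the $H$-norm is not complete).

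A minor additional point: $T_n\to\idop_H$ strongly does \emph{not} imply $T_n^\ast\to\idop_H$ strongly in general (only weakly); the paper's argument never needs strong convergence of the adjoints.
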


\begin{proof}
Let $n\in \N$.  Taking adjoints in the inclusion $T_nA\subseteq AT_n$, we derive $(AT_n)^\ast\subseteq (T_nA)^\ast$. By \prettyref{thm:adj-comp} and \prettyref{rem:adj-comp} we obtain
 \[
  T_n^\ast A^\ast \subseteq \overline{T_n^\ast A^\ast}=(AT_n)^\ast\subseteq (T_nA)^\ast=A^\ast T_n^\ast.
 \]
The same argument shows the claim for $B^\ast$. Moreover, since $BT_n$ is a closed linear operator defined on the whole space $H$, it follows that $BT_n\in L(H)$ by the closed graph theorem. Hence, $(BT_n)^\ast$ is bounded by \prettyref{lem:bdds} and since $(BT_n)^\ast \subseteq (T_nB)^\ast=B^\ast T_n^\ast$, we derive that $\dom(B^\ast T_n^\ast)=H$, showing that $\ran(T_n^\ast)\subseteq \dom(B^\ast)$.\\
We now prove the asserted equivalence.

(i)$\Rightarrow$(ii): By definition, there exists $(x_{n})_{n}$
in $\dom(A)\cap \dom(B)$ such that $x_{n}\to x$ in $H$ and $Ax_{n}+Bx_{n}\to f.$
By continuity, we obtain $A_{-1}x_{n}\to A_{-1}x$ and $B_{-1}x_{n}\to B_{-1}x$
in $H^{-1}(A^{*})$ and $H^{-1}(B^{*})$, respectively. Thus, we have
\begin{align*}
 (A_{-1}x+B_{-1}x)(y)=\lim_{n\to \infty}(A_{-1}x_n+B_{-1}x_n)(y)&=\lim_{n\to \infty}\scp{Ax_n+Bx_n}{y}=\scp{f}{y},
\end{align*}
for each $y\in \dom(A^\ast)\cap \dom(B^\ast)$, which shows the asserted inclusion.

(ii)$\Rightarrow$(i): For $n\in\N$ we put $x_{n}\coloneqq T_{n}x$.
Then $x_{n}\in\dom(B)$ and for all $y\in\dom(A^{*})\cap\dom(B^{*})$,
we obtain 
\begin{align*}
\scp{T_{n}f-Bx_{n}}{y} & =\scp{T_{n}f}{y}-\scp{T_{n}x}{B^{*}y}
 =\scp{f}{T_{n}^{*}y}-\scp{x}{T_{n}^{*}B^{\ast}y}\\
 &=\scp{f}{T_{n}^{*}y}-\scp{x}{B^{*}T_{n}^{*}y}= f(T_n^\ast y)-(B_{-1}x)(T_n^*y) = (A_{-1}x)(T_n^\ast y) \\
  &=\scp{x}{A^{*}T_{n}^{*}y}
 =\scp{x}{T_{n}^{*}A^{*}y}
 =\scp{x_{n}}{A^{*}y},
\end{align*}
where we have used that $T_n^\ast y\in \dom(A^\ast)\cap \dom(B^\ast)$. Let now $y\in \dom(A^\ast)$. Then $T_k^\ast y \in \dom(A^\ast)\cap \dom(B^\ast)$ for each $k\in \N$ and thus, by what we have shown above
\[
 \scp{T_k(T_nf-Bx_n)}{y}=\scp{T_nf-Bx_n}{T_k^\ast y}=\scp{x_n}{A^* T_k^\ast y}=\scp{x_n}{T_k^\ast A^\ast y}=\scp{T_kx_n}{A^\ast y}
\]
for each $k\in \N$. Letting $k$ tend to infinity, we derive
\[
 \scp{T_n f-Bx_n}{y}=\scp{x_n}{A^\ast y}.
\]
Since this holds for each $y\in \dom(A^\ast)$, this implies that $x_{n}\in\dom(A)$ and $Ax_{n}+Bx_{n}=T_{n}f$.
Letting $n\to\infty,$ we deduce $x_{n}\to x$ and $Ax_{n}+Bx_{n}\to f$;
that is, (i).
\end{proof}
% \begin{comment}
% We shall revisit the space $H^{-1}(A^{*},B^{*})$. Let $\phi\in H^{-1}(A^{*},B^{*})$
% and $\psi\in H^{-1}(A^{*})$, $\eta\in H^{-1}(B^{*})$ with property
% $\psi\cap\eta\supseteq\phi.$ Under the conditions stated in \prettyref{thm:ComputeClosureDistr}
% and assuming $\Re A\geq0$, for all $\phi\subseteq\psi_{1}\in H^{-1}(A^{*})$
% and $\phi\subseteq\eta_{1}\in H^{-1}(B^{*})$, we have 
% \[
% \psi=\psi_{1}\text{ and }\eta=\eta_{1}.
% \]
%  Moreover, given $\psi\in H^{-1}(A^{*})$ we have $\psi|_{\dom(A^{*})\cap\dom(B^{*})}\in H^{-1}(A^{*},B^{*})$.
% (or the other way round...) For the first statement, we need to show
% that $\dom(A^{*})\cap\dom(B^{*})$ is dense in $\dom(A^{*})$. For
% this, let $x\in\dom(A^{*})$. Then $x_{n}\coloneqq(1+n^{-1}T^{*})^{-1}x\in\dom(A^{*})\cap\dom(B^{*})$
% and $x_{n}\to x$ in $\dom(A^{*})$. Next, since $B^{*}(1+n^{-1}T^{*})^{-1}$
% is bounded, we obtain $\dom(T^{*})\subseteq\dom(B^{*})$. We shall
% show that $\dom(T^{*})\cap\dom(A^{*})$ is dense in $\dom(T^{*})$
% first. For this, let $x\in\dom(T^{*})$ and consider $x_{n}\coloneqq(1+n^{-1}T^{*})^{-1}\left(1+n^{-1}A^{*}\right)^{-1}x\in\dom(A^{*})\cap\dom(T^{*})$.
% It is easy to see that $x_{n}\to x$ in $\dom(T^{*})$. Next, if $x\in\dom(B^{*})$,
% then $x_{n}\coloneqq(1+n^{-1}T^{*})^{-1}x\in\dom(B^{*})\cap\dom(T^{*})$
% and $x_{n}\to x$ in $\dom(B^{*})$.
% \end{comment}

\begin{lem}
\label{lem:ComputingDualNorm}Let $T\colon\dom(T)\subseteq H\to H$ be
densely defined and closed with $0\in\rho(T)$. Then $T_{-1}\colon H\to H^{-1}(T^{*})$
is an isomorphsim. In particular, the norms $\norm{(T_{-1})^{-1}\cdot}_{H}$
and $\norm{\cdot}_{H^{-1}(T^{*})}$ are equivalent.
\end{lem}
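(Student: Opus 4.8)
The statement to prove is \prettyref{lem:ComputingDualNorm}: if $T\colon\dom(T)\subseteq H\to H$ is densely defined, closed with $0\in\rho(T)$, then $T_{-1}\colon H\to H^{-1}(T^*)$ is an isomorphism, and consequently $\norm{(T_{-1})^{-1}\cdot}_H$ and $\norm{\cdot}_{H^{-1}(T^*)}$ are equivalent norms on $H^{-1}(T^*)$. Recall from the preceding material that $H^{-1}(T^*) = \dom(T^*)'$, and that $T_{-1} = (T^*)^{\diamond}$, where $(T^*)^{\diamond}\phi$ is the functional $x\mapsto\scp{\phi}{T^*x}_H$ on $\dom(T^*)$. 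By \prettyref{prop:CdiaCadj}\ref{prop:CdiaCadj:item:1} (applied with $C=T^*$), $T_{-1}$ is continuous and linear; by \prettyref{exer:resolvent_adj}, $0\in\rho(T)$ gives $0\in\rho(T^*)$, so $T^*$ is boundedly invertible as well. The task is therefore to show $T_{-1}$ is bijective with continuous inverse; the latter then follows automatically from the bounded inverse theorem once bijectivity is established, or can simply be read off from the explicit inverse constructed below.

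\textbf{Key steps.} First I would show injectivity of $T_{-1}$: by \prettyref{prop:CdiaCadj}\ref{prop:CdiaCadj:item:3} (again with $C=T^*$) we have $\ker(T_{-1}) = \ker((T^*)^{\diamond}) = \ker((T^*)^*) = \ker(\overline{T}) = \ker(T)$, and $\ker(T) = \{0\}$ since $0\in\rho(T)$. Next, surjectivity. Given $\varphi\in H^{-1}(T^*) = \dom(T^*)'$, I want $x\in H$ with $T_{-1}x = \varphi$, i.e.\ $\scp{x}{T^*y}_H = \varphi(y)$ for all $y\in\dom(T^*)$. Since $T^*$ is boundedly invertible, the map $\dom(T^*)\ni y\mapsto T^*y\in H$ is a bijection, so every $z\in H$ is of the form $T^*y$ for a unique $y = (T^*)^{-1}z$; moreover, because $(T^*)^{-1}\in\bo(H)$, the functional $z\mapsto\varphi((T^*)^{-1}z)$ is bounded and linear on $H$. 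By the Riesz representation theorem there is a unique $x\in H$ with $\scp{x}{z}_H = \varphi((T^*)^{-1}z)$ for all $z\in H$; substituting $z = T^*y$ gives $\scp{x}{T^*y}_H = \varphi(y)$ for all $y\in\dom(T^*)$, that is, $T_{-1}x = \varphi$. Hence $T_{-1}$ is a continuous linear bijection between the Hilbert spaces $H$ and $H^{-1}(T^*)$, and by the bounded inverse theorem $(T_{-1})^{-1}$ is continuous; equivalently, one reads off from the construction that $(T_{-1})^{-1}\varphi = x$ with $\norm{x}_H = \sup_{z\neq 0}\abs{\varphi((T^*)^{-1}z)}/\norm{z}_H \le \norm{(T^*)^{-1}}\norm{\varphi}_{H^{-1}(T^*)}$, while $\norm{\varphi}_{H^{-1}(T^*)} = \sup_{y\neq 0}\abs{\scp{x}{T^*y}}/\norm{y}_{\dom(T^*)}\le\norm{x}_H$ since $\norm{T^*y}_H\le\norm{y}_{\dom(T^*)}$. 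This gives the claimed norm equivalence $\norm{\varphi}_{H^{-1}(T^*)}\le\norm{(T_{-1})^{-1}\varphi}_H\le\norm{(T^*)^{-1}}\norm{\varphi}_{H^{-1}(T^*)}$.

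\textbf{Main obstacle.} There is no deep obstacle here; the proof is essentially a bookkeeping exercise combining \prettyref{prop:CdiaCadj}, \prettyref{exer:resolvent_adj}, and the Riesz representation theorem. The one point deserving a little care is making the identifications precise: the domain $\dom(T^*)$ carries the graph norm, and $H^{-1}(T^*)$ is its dual, so one must be consistent about which inner product/norm the Riesz isomorphism is taken with respect to — I would use the graph inner product on $\dom(T^*)$ only where needed (for the norm-equivalence estimates) while the surjectivity argument itself only uses that $(T^*)^{-1}$ is a bounded bijection $H\to\dom(T^*)$ (continuity into the graph norm being automatic since $T^*(T^*)^{-1} = \idop_H$). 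A second minor point is to confirm that the functional $z\mapsto\varphi((T^*)^{-1}z)$ is indeed bounded on $(H,\norm{\cdot}_H)$: this needs $(T^*)^{-1}\colon H\to\dom(T^*)$ to be bounded for the graph norm on the target, which holds because $\norm{(T^*)^{-1}z}_{\dom(T^*)}^2 = \norm{(T^*)^{-1}z}_H^2 + \norm{z}_H^2\le(\norm{(T^*)^{-1}}^2+1)\norm{z}_H^2$. With these identifications pinned down, the argument closes.
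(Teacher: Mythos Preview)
Your proof is correct and follows essentially the same approach as the paper. Injectivity is handled identically via \prettyref{prop:CdiaCadj}\ref{prop:CdiaCadj:item:3}; for surjectivity the paper applies the Riesz representation theorem on $\dom(T^*)$ equipped with the equivalent inner product $\scp{T^*\cdot}{T^*\cdot}$ to produce $\phi\in\dom(T^*)$ and then takes $x=T^*\phi$, whereas you pull the functional back through $(T^*)^{-1}$ and apply Riesz directly on $H$ --- these are two phrasings of the same construction, since $T^*$ is a unitary from $(\dom(T^*),\scp{T^*\cdot}{T^*\cdot})$ onto $H$.
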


\begin{proof}
Note that since $0\in\rho(T)$ we obtain $\{0\}=\ker(T)=\ker(\left(T^{*}\right)^{\diamond})=\ker(T_{-1})$, see \prettyref{prop:CdiaCadj}\ref{prop:CdiaCadj:item:3}.
Thus, $T_{-1}$ is one-to-one. Next, let $f\in H^{-1}(T^{*})$. Since
$0\in\rho(T)$, we obtain $0\in\rho(T^{*})$ by \prettyref{exer:resolvent_adj}, which implies that $\scp{T^{*}\cdot}{T^{*}\cdot}$
defines an equivalent scalar product on $\dom(T^{*})$. Thus, by the
Riesz representation theorem, we find $\phi\in\dom(T^{*})$ such that
for all $\psi\in\dom(T^{*})$ we have
\[
f(\psi)=\scp{T^{*}\phi}{T^{*}\psi}=\left(\left(T^{*}\right)^{\diamond}\left(T^{*}\phi\right)\right)(\psi).
\]
Hence, $f\in\ran(\left(T^{*}\right)^{\diamond})=\ran(T_{-1})$, thus proving
that $T_{-1}$ is onto.
\end{proof}
The following  alternative description of $H^{-1}(T^{*})$ is content of \prettyref{exer:completion}.

\begin{prop}\label{prop:extrapolation_completion}
Let $T\colon\dom(T)\subseteq H\to H$ be densely defined and closed with
$0\in\rho(T)$. Then 
\[
H^{-1}(T^{*})\cong\tilde{\left(H,\norm{T^{-1}\cdot}_H\right)},
\]
where $\cong$ means isomorphic as Banach spaces and $\tilde{(\cdot)}$ denotes
the completion.
\end{prop}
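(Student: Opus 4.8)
\textbf{Proof proposal for Proposition \ref{prop:extrapolation_completion}.}

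The plan is to produce an isometric isomorphism between $H^{-1}(T^{*}) = \dom(T^{*})'$ and the completion $\widetilde{(H,\norm{T^{-1}\cdot}_{H})}$ by identifying both spaces with the same dense subspace of $H^{-1}(T^{*})$ and comparing norms. The key observation is \prettyref{lem:ComputingDualNorm}: the extrapolated operator $T_{-1}\colon H\to H^{-1}(T^{*})$ is an isomorphism of Banach spaces, with $\norm{(T_{-1})^{-1}\cdot}_{H}$ equivalent to $\norm{\cdot}_{H^{-1}(T^{*})}$. More precisely, I would first sharpen the computation in \prettyref{lem:ComputingDualNorm} to an \emph{isometry}: for $x\in H$, using that $0\in\rho(T^{*})$ and the Riesz representation as in that proof, one has $\norm{T_{-1}x}_{H^{-1}(T^{*})} = \sup_{\psi\in\dom(T^{*})\setminus\{0\}}\frac{\abs{\scp{x}{T^{*}\psi}_{H}}}{\norm{\psi}_{\dom(T^{*})}}$; substituting $\psi = (T^{*})^{-1}\eta$ and noting $\scp{x}{T^{*}\psi}_H = \scp{x}{\eta}_H$ gives $\norm{T_{-1}x}_{H^{-1}(T^{*})} = \sup_{\eta\neq 0}\frac{\abs{\scp{x}{\eta}_H}}{\norm{(T^{*})^{-1}\eta}_{\dom(T^{*})}}$, and since $0\in\rho(T^{*})$ the norm $\norm{(T^{*})^{-1}\cdot}_{\dom(T^{*})}$ is equivalent to (and, choosing the graph inner product built from $T^{*}$ alone, equal to) $\norm{T^{*}(T^{*})^{-1}\cdot}_H = \norm{\cdot}_H$ — so this reduces to $\sup_{\eta\neq 0}\frac{\abs{\scp{x}{\eta}_H}}{\norm{\eta}_H}=\norm{x}_H$ modulo the equivalence constants. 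To avoid fighting with the constants I would instead argue more robustly: equip $\dom(T^{*})$ with the equivalent inner product $\scp{u}{v}_{*}\coloneqq \scp{T^{*}u}{T^{*}v}_H$ (legitimate since $0\in\rho(T^{*})$), under which $T^{*}\colon(\dom(T^{*}),\scp{\cdot}{\cdot}_{*})\to H$ is unitary; then the dual of $(\dom(T^{*}),\norm{\cdot}_{*})$ is isometrically $H$ via $x\mapsto \scp{x}{T^{*}\cdot}_H$, which is precisely $T_{-1}$, so $\norm{T_{-1}x}_{H^{-1}(T^{*})}=\norm{x}_H$ with this choice of norm on $\dom(T^{*})$.

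Having fixed that, the argument runs as follows. First I would note that $H\cong H'\hookrightarrow H^{-1}(T^{*})$ densely and continuously by \prettyref{prop:CdiaCadj}\ref{prop:CdiaCadj:item:5} (applied to $T^{*}$ in place of $C$), so $H$ sits inside $H^{-1}(T^{*})$ as a dense subspace. Second, on this copy of $H$ the norm induced from $H^{-1}(T^{*})$ is, by the isometry statement above, exactly $\norm{(T_{-1})^{-1}\cdot}_{H^{-1}(T^{*})}$-pullback; but more usefully I want to compare it with $\norm{T^{-1}\cdot}_H$. For $x\in H$ I would compute $\norm{x}_{H^{-1}(T^{*})}$ directly from the definition of the dual norm on $\dom(T^{*})$: writing $x$ as the functional $\psi\mapsto\scp{x}{\psi}_H$ on $\dom(T^{*})$ and using $\psi = T^{-*}\eta$ this becomes $\sup_{\eta}\abs{\scp{x}{T^{-*}\eta}_H}/\norm{T^{-*}\eta}_{*} = \sup_{\eta}\abs{\scp{T^{-1}x}{\eta}_H}/\norm{\eta}_H = \norm{T^{-1}x}_H$, where I used $\scp{x}{T^{-*}\eta}_H = \scp{T^{-1}x}{\eta}_H$ (the adjoint of $T^{-1}$ is $T^{-*}=(T^{*})^{-1}$, cf.\ \prettyref{exer:resolvent_adj}) and $\norm{T^{-*}\eta}_{*}=\norm{T^{*}T^{-*}\eta}_H=\norm{\eta}_H$. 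Hence the subspace $H\subseteq H^{-1}(T^{*})$ carries \emph{exactly} the norm $\norm{T^{-1}\cdot}_H$.

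Third and last, I would invoke uniqueness of completions: $H^{-1}(T^{*})$ is a Banach space (it is a dual space) containing $(H,\norm{T^{-1}\cdot}_H)$ isometrically as a dense subspace by the two points above, so $H^{-1}(T^{*})$ \emph{is} the completion of $(H,\norm{T^{-1}\cdot}_H)$, which is the claimed isometric (in particular Banach-space) isomorphism; I would remark that changing the equivalent inner product on $\dom(T^{*})$ only changes the isomorphism, not its existence, so the statement holds as phrased ("isomorphic as Banach spaces"). The only genuinely delicate point — and the one I would be most careful about — is the bookkeeping with the dual-norm computation and the adjoint of the unbounded operator $T^{-1}$ (i.e.\ the identity $(T^{-1})^{*}=(T^{*})^{-1}$ and the interchange $\scp{x}{T^{-*}\eta}_H=\scp{T^{-1}x}{\eta}_H$ for all $\eta\in H$), together with making sure every functional appearing is identified with its Riesz representative consistently; everything else is a direct application of \prettyref{lem:ComputingDualNorm}, \prettyref{prop:CdiaCadj}, and the universal property of the completion.
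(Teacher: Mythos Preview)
Your argument is correct. The paper leaves this proposition as an exercise (\prettyref{exer:completion}) and gives no proof of its own, so there is nothing to compare against beyond the surrounding lemmas; your route --- embed $H$ densely into $H^{-1}(T^{*})$ via \prettyref{prop:CdiaCadj}\ref{prop:CdiaCadj:item:5}, compute the induced dual norm on $H$ as $\norm{T^{-1}\cdot}_H$ using $(T^{-1})^{*}=(T^{*})^{-1}$, and invoke uniqueness of completions --- is precisely the intended one and uses exactly the tools the paper has set up (\prettyref{lem:ComputingDualNorm}, \prettyref{exer:resolvent_adj}). Your remark that the isometry versus mere isomorphism depends on whether one equips $\dom(T^{*})$ with the graph norm or with the equivalent norm $\norm{T^{*}\cdot}_H$ is also correct and worth keeping; the proposition only claims a Banach-space isomorphism, so either choice suffices.
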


\begin{prop}
\label{prop:ContDistributionalExtn}Let $B\in\bo(H)$. Assume that $T\colon\dom(T)\subseteq H\to H$ is
densely defined and closed with $0\in\rho(T)$ and $T^{-1}B=BT^{-1}$.
Then $B$ admits a unique continuous extension $\overline{{B}}\in\bo(H^{-1}(T^{*}))$.
\end{prop}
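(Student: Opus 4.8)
The statement to prove is \prettyref{prop:ContDistributionalExtn}: for $B\in\bo(H)$ commuting with $T^{-1}$ (where $T$ is densely defined, closed, $0\in\rho(T)$), the operator $B$ extends uniquely to a bounded operator on $H^{-1}(T^{*})$. The natural approach is to transport $B$ through the isomorphism $T_{-1}\colon H\to H^{-1}(T^{*})$ from \prettyref{lem:ComputingDualNorm}, or equivalently to work directly with the dual norm $\norm{\cdot}_{H^{-1}(T^{*})}$, which by that lemma is equivalent to $\norm{(T_{-1})^{-1}\cdot}_H$. Uniqueness is immediate: $H\cong H'$ is dense in $H^{-1}(T^{*})$ by \prettyref{prop:CdiaCadj}\ref{prop:CdiaCadj:item:5}, so any two continuous extensions of $B$ agree on a dense set and hence everywhere.

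For existence, first I would observe that the hypothesis $T^{-1}B=BT^{-1}$ passes to the adjoints: taking adjoints of the bounded operator identity $T^{-1}B=BT^{-1}$ gives $B^{*}(T^{-1})^{*}=(T^{-1})^{*}B^{*}$, and since $0\in\rho(T)$ implies $0\in\rho(T^{*})$ (\prettyref{exer:resolvent_adj}) with $(T^{*})^{-1}=(T^{-1})^{*}$, this says $B^{*}(T^{*})^{-1}=(T^{*})^{-1}B^{*}$. In particular $B^{*}$ maps $\dom(T^{*})=\ran((T^{*})^{-1})$ into itself and $T^{*}B^{*}\phi = B^{*}T^{*}\phi$ for $\phi\in\dom(T^{*})$. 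This is the key structural fact that makes the transpose of $B$ well-behaved on the graph-norm Hilbert space $\dom(T^{*})$: from $\norm{T^{*}B^{*}\phi} = \norm{B^{*}T^{*}\phi}\leq\norm{B}\norm{T^{*}\phi}$ and the fact that $\norm{T^{*}\cdot}$ is an equivalent norm on $\dom(T^{*})$ (using $0\in\rho(T^{*})$), we get that $B^{*}\colon\dom(T^{*})\to\dom(T^{*})$ is bounded with respect to the graph norm, with norm controlled by $\norm{B}$.

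Now define the extension $\overline{B}$ on $H^{-1}(T^{*}) = \dom(T^{*})'$ as the Banach-space transpose of this bounded operator $B^{*}\colon\dom(T^{*})\to\dom(T^{*})$; that is, for $f\in H^{-1}(T^{*}) = \dom(T^{*})'$ set $(\overline{B}f)(\phi)\coloneqq f(B^{*}\phi)$ for $\phi\in\dom(T^{*})$ — one has to be mildly careful here with the conjugate-linear identification in the definition of $\dom(T^{*})'$, but this is routine. Then $\overline{B}\in\bo(H^{-1}(T^{*}))$ with $\norm{\overline{B}}\leq C\norm{B}$ for the equivalence constant $C$. It remains to check $\overline{B}|_H = B$: for $x\in H$, viewed as the functional $\phi\mapsto\scp{x}{\phi}_H$ on $\dom(T^{*})$, we compute $(\overline{B}x)(\phi) = \scp{x}{B^{*}\phi}_H = \scp{Bx}{\phi}_H$, which is exactly the functional represented by $Bx\in H$. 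So $\overline{B}$ restricts to $B$ on $H$, as required.

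\textbf{Main obstacle.} The genuine content is the step $T^{-1}B=BT^{-1}\implies T^{*}B^{*}\subseteq B^{*}T^{*}$ together with the boundedness of $B^{*}$ on $(\dom(T^{*}),\norm{\cdot}_{T^{*}})$; everything after that is formal duality. A secondary nuisance — not a real difficulty — is keeping the conjugate-linear/sesquilinear bookkeeping straight between $H\cong H'$, the definition of $C^{\diamond}$, and the transpose, so that the extension one writes down genuinely agrees with $B$ rather than with $B^{*}$ or a conjugate. One could alternatively bypass all of this by using \prettyref{prop:extrapolation_completion}: $H^{-1}(T^{*})\cong\widetilde{(H,\norm{T^{-1}\cdot}_H)}$, and on this completion $B$ is bounded precisely because $\norm{T^{-1}Bx}_H = \norm{BT^{-1}x}_H\leq\norm{B}\norm{T^{-1}x}_H$, so $B$ is a $\norm{T^{-1}\cdot}_H$-contraction up to the factor $\norm{B}$ and hence extends to the completion by uniform continuity. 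I would probably present this second route as the clean argument and mention the first as the conceptual reason it works.
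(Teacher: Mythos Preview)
Your proposal is correct. Both routes you sketch are valid, and in fact your second route---extending $B$ by uniform continuity on the completion $\widetilde{(H,\norm{T^{-1}\cdot}_H)}$ via the estimate $\norm{T^{-1}Bx}_H=\norm{BT^{-1}x}_H\leq\norm{B}\norm{T^{-1}x}_H$---is essentially the paper's argument. The paper does not invoke \prettyref{prop:extrapolation_completion} explicitly but instead computes the dual norm directly: for $\phi\in H$ and $q\in\dom(T^{*})$,
\[
\abs{(B\phi)(q)}=\abs{\scp{B\phi}{(T^{*})^{-1}T^{*}q}}=\abs{\scp{T^{-1}B\phi}{T^{*}q}}=\abs{\scp{BT^{-1}\phi}{T^{*}q}}\leq\norm{B}\,\norm{T^{-1}\phi}_H\,\norm{q}_{\dom(T^{*})},
\]
and then appeals to \prettyref{lem:ComputingDualNorm} for the equivalence of $\norm{T^{-1}\cdot}_H$ with the $H^{-1}(T^{*})$-norm. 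Unwinding this, the key inequality is exactly your $\norm{T^{-1}B\phi}\leq\norm{B}\norm{T^{-1}\phi}$.

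Your first route---passing to adjoints to get $B^{*}\colon\dom(T^{*})\to\dom(T^{*})$ bounded in graph norm, then transposing---is a genuinely different packaging. It is more conceptual (it exhibits $\overline{B}$ as the Banach-space dual of a concrete operator) and makes the extension formula $(\overline{B}f)(\phi)=f(B^{*}\phi)$ explicit, at the cost of the sesquilinear bookkeeping you flag. The paper's direct estimate is shorter and avoids that bookkeeping entirely, but your transpose description has the advantage of telling you what $\overline{B}$ actually \emph{is} on a general element of $H^{-1}(T^{*})$, not just that it exists by density.
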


\begin{proof}
By \prettyref{prop:CdiaCadj}\ref{prop:CdiaCadj:item:5}, $\dom(B)=H$ is dense in $H^{-1}(T^{*})$.
Thus, it suffices to show that $B\colon H\subseteq H^{-1}(T^{*})\to H^{-1}(T^{*})$
is continuous. For this, let $\phi\in H$ and compute for all $q\in\dom(T^{*})$
\begin{align*}
\abs{\left(B\phi\right)(q)} & =\abs{\scp{B\phi}{q}}
 =\abs{\scp{B\phi}{\left(T^{*}\right)^{-1}T^{*}q}}
 =\abs{\scp{T^{-1}B\phi}{T^{*}q}}
 =\abs{\scp{BT^{-1}\phi}{T^{*}q}}\\
 & \leq\norm{B}\norm{T^{-1}\phi}\norm{q}_{\dom(T^{*})}.
\end{align*}
The statement now follows upon invoking \prettyref{lem:ComputingDualNorm}.
\end{proof}
The abstract notions and concepts just developed will be applied to
evolutionary equations next.

\section{Evolutionary Equations in Distribution Spaces}

In this section, we will specialise the results from the previous
section and provide an extension of the solution theory in $\Lnu(\R;H)$.
For this, and throughout this whole section, we let $H$ be a Hilbert space, $\mu\in\R$ and
$M\colon\C_{\Re>\mu}\to\bo(H)$ be a material law.
Furthermore, let $\nu>\max\{\sbb{M},0\}$ and $A\colon\dom(A)\subseteq H\to H$ be
skew-selfadjoint.  In order to keep track
of the Hilbert spaces involved, we shall put
\begin{align*}
H_{\nu}^1(\R;H) & \coloneqq \dom(\td{\nu}),\\
H_{\nu}^{-1}(\R;H)& \coloneqq\dom(\td{\nu})'\cong\dom(\td{\nu}^{*})'.
\end{align*}

\begin{prop}\label{prop:DB}
 Let $H$ be a Hilbert space. Let $D\colon \dom(D)\subseteq H\to H$ be densely defined and closed and $B\in \bo(H)$. Assume that $DB$ is densely defined. Then for all $\phi\in H$,  $(DB)_{-1}(\phi) = (D_{-1}B)(\phi)$ on $\dom(D^*)$. 
\end{prop}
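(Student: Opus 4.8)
The plan is to unravel both sides of the claimed identity directly from the definition of the extrapolated operator and then exploit the identity $(DB)^* = B^* D^*$, which holds because $B$ is bounded. First I would recall that for a densely defined, closed operator $C$ with dense domain, $C_{-1} = (C^*)^\diamond$, so that for $\phi \in H$ the functional $C_{-1}\phi$ acts on $\dom(C^*)$ by $(C_{-1}\phi)(y) = \scp{\phi}{C^* y}_H$. Applied to $C = DB$, whose adjoint I would compute next, this gives $(DB)_{-1}\phi$ as a functional on $\dom((DB)^*)$; applied to $D$, it gives $D_{-1}\phi$ on $\dom(D^*)$, and then $D_{-1}B$ is simply the precomposition, i.e.\ $(D_{-1}B)\phi = D_{-1}(B\phi)$, a functional on $\dom(D^*)$ acting by $(D_{-1}B\phi)(y) = \scp{B\phi}{D^* y}_H$.

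The key algebraic step is the adjoint computation: since $B \in \bo(H)$ and $D$ is densely defined, \prettyref{thm:adj-sum}/\prettyref{thm:adj-comp} (more precisely, the boundedness of $B$ together with \prettyref{lem:bdds} and the composition rule for adjoints) yields $(DB)^* \supseteq B^* D^*$, and since $DB$ is assumed densely defined, $(DB)^*$ is a well-defined operator with $\dom(D^*) \subseteq \dom(B^* D^*) \subseteq \dom((DB)^*)$; moreover on $\dom(D^*)$ we have $(DB)^* y = B^* D^* y$. Now for $y \in \dom(D^*)$ I would compute
\[
\bigl((DB)_{-1}\phi\bigr)(y) = \scp{\phi}{(DB)^* y}_H = \scp{\phi}{B^* D^* y}_H = \scp{B\phi}{D^* y}_H = \bigl(D_{-1}(B\phi)\bigr)(y) = \bigl((D_{-1}B)\phi\bigr)(y),
\]
which is exactly the assertion that the two functionals agree on $\dom(D^*)$.

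The only point requiring a little care — and the place I expect to be the main obstacle — is the bookkeeping of domains: one must be sure that $\dom(D^*)$ genuinely sits inside the domain of $(DB)^*$ so that the first equality above makes sense, and that $(DB)^*$ restricted to $\dom(D^*)$ really coincides with $B^* D^*$ rather than being a proper extension on that set. This follows because $B^* \in \bo(H)$ is everywhere defined, so $B^* D^*$ has domain exactly $\dom(D^*)$, and the inclusion $B^* D^* \subseteq (DB)^*$ is an equality of the values on the common domain; the density hypothesis on $DB$ ensures $(DB)^*$ is an operator (not a relation) so there is no ambiguity in writing $(DB)^* y$. With these observations the chain of equalities is routine, and the statement follows. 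A remark worth making is that this is precisely the formal manipulation used implicitly throughout the treatment of evolutionary equations in distribution spaces, e.g.\ when applied with $D = \td{\nu}$ and $B = M(\td{\nu})$ it licenses writing $(\td{\nu}M(\td{\nu}))_{-1} = (\td{\nu})_{-1}M(\td{\nu})$ on the appropriate test space.
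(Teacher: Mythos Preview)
Your proposal is correct and follows essentially the same route as the paper: the paper invokes \prettyref{thm:adj-comp} to obtain $(DB)^*=\overline{B^*D^*}$ and then, for $x\in\dom(D^*)$, runs the identical chain $((DB)_{-1}\phi)(x)=\scp{\phi}{(DB)^*x}=\scp{\phi}{B^*D^*x}=\scp{B\phi}{D^*x}=(D_{-1}B\phi)(x)$. The only cosmetic difference is that the paper cites the full equality $(DB)^*=\overline{B^*D^*}$, whereas you (equivalently, and slightly more economically) use only the elementary inclusion $B^*D^*\subseteq (DB)^*$, which already suffices on $\dom(D^*)$.
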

\begin{proof}
First of all, note that $(DB)^*=\overline{B^*D^*}$, by \prettyref{thm:adj-comp}. Next, let $\phi\in H$ and $x\in \dom(D^*)$. Then
\begin{align*}
  ((DB)_{-1}\phi)(x)  &=\scp{\phi}{(DB)^*x}  =\scp{\phi}{\overline{B^*D^*}x} \\
& =\scp{\phi}{B^*D^*x} =\scp{B\phi}{D^*x}  = (D_{-1}B\phi)(x).\qedhere
\end{align*}

\end{proof}
The first application of the theory developed in the previous section
reads as follows.
\begin{thm}
\label{thm:NoClosureEEDistr}Let $U,F\in\Lnu(\R;H)$. Then the following
statements are equivalent:
\begin{enumerate}
\renewcommand{\labelenumi}{{\upshape (\roman{enumi})}}
\item $U\in\dom(\overline{\td{\nu}M(\td{\nu})+A})$ and $\bigl(\overline{\td{\nu}M(\td{\nu})+A}\bigr)U=F$.
\item  $\td{\nu}M(\td{\nu})U+AU\subseteq F$ where the left-hand
side is considered as an element of $H_{\nu}^{-1}(\R;H)\cap\Lnu(\R;H^{-1}(A))\subseteq\Fun(\Lnu(\R;H))$.
\end{enumerate}
\end{thm}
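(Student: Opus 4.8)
The plan is to apply \prettyref{thm:ComputeClosureDistr} with the choices $A\coloneqq A$ (the extended skew-selfadjoint operator on $\Lnu(\R;H)$) and $B\coloneqq \td{\nu}M(\td{\nu})$. First I would check the structural hypotheses of that theorem. Both operators are densely defined and closed: $A$ on $\Lnu(\R;H)$ is skew-selfadjoint (hence closed and densely defined) by \prettyref{exer:tensorprod}, while $\td{\nu}M(\td{\nu})$ is closed and densely defined since $M(\td{\nu})\in\bo(\Lnu(\R;H))$ maps $\dom(\td{\nu})$ into $\dom(\td{\nu})$ (see \prettyref{rem:somlo}) and $\td{\nu}$ is closed with dense domain $H_\nu^1(\R;H)$. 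The closability of the sum $\td{\nu}M(\td{\nu})+A$ is precisely the content of \prettyref{thm:Solution_theory_EE}. It remains to produce the approximating sequence $(T_n)_n$ in $\bo(\Lnu(\R;H))$ commuting with both $A$ and $\td{\nu}M(\td{\nu})$ and converging strongly to the identity.

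For the sequence I would take $T_n\coloneqq \1_{\ci{-n}{n}}(\m)$, the cut-off multipliers in the time variable, or — if a smoother choice is needed to land inside $\dom(\td{\nu})$ — a mollified variant; but the cleanest option is to use $T_n\coloneqq (\idop+\tfrac1n\td{\nu})^{-1}$ or, even simpler, functions of $\td{\nu}$ of the form $\chi_n(\td{\nu})$ with $\chi_n$ a bounded holomorphic function tending to $1$. Since $A$ is the extended operator acting pointwise in time, it commutes with any material law operator (a Fourier--Laplace multiplier in time), so $T_n A\subseteq A T_n$; and $T_n$ commutes with $\td{\nu}M(\td{\nu})$ because both are functions of $\td{\nu}$ (using \prettyref{rem:mlohom} and \prettyref{rem:somlo}), so $T_n \td{\nu}M(\td{\nu})\subseteq \td{\nu}M(\td{\nu}) T_n$, and $\ran(T_n)\subseteq\dom(\td{\nu}M(\td{\nu}))=\dom(\td{\nu})$ when $\chi_n$ is chosen so that $z\mapsto z\chi_n(z)$ is bounded. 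Strong convergence $T_n\to\idop$ follows from the spectral representation of $\td{\nu}$ (\prettyref{thm:spectral-repr-derivative}) and dominated convergence.

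With these hypotheses verified, \prettyref{thm:ComputeClosureDistr} gives, for $U,F\in\Lnu(\R;H)$, the equivalence of ``$U\in\dom(\overline{\td{\nu}M(\td{\nu})+A})$ and $(\overline{\td{\nu}M(\td{\nu})+A})U=F$'' with ``$(\td{\nu}M(\td{\nu}))_{-1}U + A_{-1}U \subseteq F$ in $\Fun(\Lnu(\R;H))$''. The last step is to identify $(\td{\nu}M(\td{\nu}))_{-1}U$ with $\td{\nu}M(\td{\nu})U$ read as an element of $H_\nu^{-1}(\R;H)$: by \prettyref{prop:DB} applied to $D=\td{\nu}$, $B=M(\td{\nu})$, one has $(\td{\nu}M(\td{\nu}))_{-1}U=(\td{\nu})_{-1}M(\td{\nu})U$ on $\dom(\td{\nu}^*)$, and since $M(\td{\nu})U\in\Lnu(\R;H)$ this is exactly the distributional time derivative of $M(\td{\nu})U$. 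Likewise $A_{-1}U\in\Lnu(\R;H^{-1}(A))$ by \prettyref{prop:A_neg} and the boundedness of $A_{-1}$. Thus the left-hand side of (ii) is the asserted element of $H_\nu^{-1}(\R;H)\cap\Lnu(\R;H^{-1}(A))\subseteq\Fun(\Lnu(\R;H))$, and the two displayed conditions coincide.

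\textbf{Main obstacle.} The delicate point is the construction of $(T_n)_n$: it must simultaneously (a) have range inside $\dom(\td{\nu})$, (b) commute with both $\td{\nu}M(\td{\nu})$ and the extended operator $A$, and (c) converge strongly to $\idop$. Choosing $T_n$ as a function of $\td{\nu}$ handles (b) for the first operator and, crucially, still commutes with $A$ because $A$ and $\td{\nu}$ act on ``different variables'' (space versus time) — this is the observation recorded in \prettyref{exer:abstractFourier_exten}. Verifying (a) requires picking the symbol $\chi_n$ so that $z\chi_n(z)$ remains bounded on $\C_{\Re\geq\nu}$; and (c) together with the commutation identities $T_n^* A^* \subseteq A^* T_n^*$ etc.\ is then automatic from \prettyref{thm:ComputeClosureDistr} itself. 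Once the sequence is in place, everything else is bookkeeping with the extrapolated operators.
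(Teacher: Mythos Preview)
Your overall strategy matches the paper's: apply \prettyref{thm:ComputeClosureDistr} with $B=\td{\nu}M(\td{\nu})$ and the extended operator $A$, using $T_n=(1+\tfrac1n\td{\nu})^{-1}$. The verification of the hypotheses is essentially as in the paper (the commutation $T_nA\subseteq AT_n$ is obtained there from Hille's theorem $\td{\nu}^{-1}A\subseteq A\td{\nu}^{-1}$ together with \prettyref{lem:resolvTD}, rather than from \prettyref{exer:abstractFourier_exten}, but this is a minor point).

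There is, however, a genuine gap in your final ``identification'' step. \prettyref{thm:ComputeClosureDistr} yields the equivalence of (i) with
\[
(\td{\nu}M(\td{\nu}))_{-1}U + A_{-1}U \subseteq F,
\]
which is an inclusion of functionals on $\dom\bigl((\td{\nu}M(\td{\nu}))^\ast\bigr)\cap\dom(A)$. Statement~(ii), by contrast, is the inclusion $(\td{\nu})_{-1}M(\td{\nu})U + A_{-1}U \subseteq F$ on the \emph{smaller} domain $\dom(\td{\nu}^\ast)\cap\dom(A)$ (note that $\dom(\td{\nu}^\ast)\subseteq\dom\bigl((\td{\nu}M(\td{\nu}))^\ast\bigr)$ since $(\td{\nu}M(\td{\nu}))^\ast=\overline{M(\td{\nu})^\ast\td{\nu}^\ast}$). \prettyref{prop:DB} indeed shows the two functionals agree on $\dom(\td{\nu}^\ast)$, so the first display trivially restricts to (ii). But the converse --- passing from the inclusion on the smaller domain to the one on the larger domain --- is not free, and you do not address it.

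The paper closes this gap by an approximation argument: given $\phi\in\dom\bigl((\td{\nu}M(\td{\nu}))^\ast\bigr)\cap\Lnu(\R;\dom(A))$, set $\phi_n\coloneqq T_n^\ast\phi$. Then $\phi_n\in\dom(\td{\nu}^\ast)\cap\Lnu(\R;\dom(A))$, one has $\phi_n\to\phi$ in $\Lnu(\R;\dom(A))$, and $(\td{\nu}M(\td{\nu}))^\ast\phi_n = T_n^\ast(\td{\nu}M(\td{\nu}))^\ast\phi\to(\td{\nu}M(\td{\nu}))^\ast\phi$; this lets one pass to the limit in (ii) evaluated at $\phi_n$ and recover the required inclusion at $\phi$. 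You have all the ingredients for this (the same $T_n$ works), but the argument needs to be made explicit.
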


Before we come to the proof, we state the following lemma, whose proof is left as \prettyref{exer:proof_Lemma}.

\begin{lem}
\label{lem:resolvTD} 
\begin{enumerate}
 \item Let $B:\dom(B)\subseteq H\to H$ and $C:\dom(C)\subseteq H\to H$ be densely defined closed linear operators. Moreover, let $\lambda,\mu\in \rho(C)$ be in the same connected component of $\rho(C)$ and
 \[
  (\mu-C)^{-1}B\subseteq B(\mu-C)^{-1}. 
 \]
Then $(\lambda-C)^{-1}B\subseteq B(\lambda -C)^{-1}$. 
\item For $\nu>0$ we have $(1+\varepsilon \td{\nu})^{-1}\to 1_{\Lnu(\R;H)}$ and $(1+\varepsilon \td{\nu}^\ast)^{-1}\to 1_{\Lnu(\R;H)}$ strongly as $\varepsilon \to 0\rlim$.
\end{enumerate}
\end{lem}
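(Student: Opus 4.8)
The plan is to treat the two parts separately, with (a) being a soft ``clopen'' argument on the resolvent set and (b) a direct computation through the Fourier--Laplace transform.

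For (a), I would introduce the set
\[
  S\coloneqq\set{\lambda\in\rho(C)}{(\lambda-C)^{-1}B\subseteq B(\lambda-C)^{-1}},
\]
which is nonempty since $\mu\in S$ by hypothesis, and show that $S$ is both open and relatively closed in $\rho(C)$; since $\rho(C)$ is open in $\K$, it follows that $S$ contains the whole connected component of $\mu$, and in particular $\lambda\in S$, which is the claim. For openness, fix $\lambda_{0}\in S$; an easy induction (using $(\lambda_{0}-C)^{-1}B\subseteq B(\lambda_{0}-C)^{-1}$ together with the fact that $x\in\dom(B)$ implies $(\lambda_{0}-C)^{-1}x\in\dom(B)$) shows $\bigl((\lambda_{0}-C)^{-1}\bigr)^{k}B\subseteq B\bigl((\lambda_{0}-C)^{-1}\bigr)^{k}$ for every $k\in\N$. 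For $\abs{\lambda-\lambda_{0}}<1\big/\norm{(\lambda_{0}-C)^{-1}}$ the Neumann series expansion of the resolvent from \prettyref{prop:resolvent}, namely $(\lambda-C)^{-1}=\sum_{k=0}^{\infty}(\lambda_{0}-\lambda)^{k}\bigl((\lambda_{0}-C)^{-1}\bigr)^{k+1}$, has partial sums $s_{N}$ which all map $\dom(B)$ into $\dom(B)$ and satisfy $Bs_{N}x=\sum_{k=0}^{N}(\lambda_{0}-\lambda)^{k}\bigl((\lambda_{0}-C)^{-1}\bigr)^{k+1}Bx$ for $x\in\dom(B)$; letting $N\to\infty$ and invoking the closedness of $B$ yields $(\lambda-C)^{-1}x\in\dom(B)$ and $B(\lambda-C)^{-1}x=(\lambda-C)^{-1}Bx$, i.e.\ $\lambda\in S$. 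For relative closedness, if $(\lambda_{n})_{n}$ in $S$ converges to some $\lambda\in\rho(C)$, then by the operator-norm continuity of $z\mapsto(z-C)^{-1}$ on $\rho(C)$ (again \prettyref{prop:resolvent}) we have $\abs{\lambda-\lambda_{n}}<1\big/\norm{(\lambda_{n}-C)^{-1}}$ for $n$ large, so the openness step applied with centre $\lambda_{n}\in S$ gives $\lambda\in S$.

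For (b), I would pass to the spectral picture of the time derivative. By \prettyref{thm:spectral-repr-derivative} and \prettyref{cor:adjoint_time_derivative} we have $\td{\nu}=\mathcal{L}_{\nu}^{\ast}(\i\m+\nu)\mathcal{L}_{\nu}$ and $\td{\nu}^{\ast}=-\td{\nu}+2\nu=\mathcal{L}_{\nu}^{\ast}(-\i\m+\nu)\mathcal{L}_{\nu}$. For $\varepsilon>0$ put $V_{\varepsilon}^{\pm}(t)\coloneqq 1+\varepsilon(\pm\i t+\nu)$, so that $1+\varepsilon\td{\nu}=\mathcal{L}_{\nu}^{\ast}V_{\varepsilon}^{+}(\m)\mathcal{L}_{\nu}$ and $1+\varepsilon\td{\nu}^{\ast}=\mathcal{L}_{\nu}^{\ast}V_{\varepsilon}^{-}(\m)\mathcal{L}_{\nu}$. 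Since $\abs{V_{\varepsilon}^{\pm}(t)}=\sqrt{(1+\varepsilon\nu)^{2}+\varepsilon^{2}t^{2}}\geq 1+\varepsilon\nu>0$ for all $t$ (this is where the hypothesis $\nu>0$ enters), part (d) of \prettyref{thm:mult1} gives that these operators are boundedly invertible with $(1+\varepsilon\td{\nu})^{-1}=\mathcal{L}_{\nu}^{\ast}\tfrac{1}{V_{\varepsilon}^{+}}(\m)\mathcal{L}_{\nu}$ and analogously for $\td{\nu}^{\ast}$, and $\norm{\tfrac{1}{V_{\varepsilon}^{\pm}}}_{\infty}\leq(1+\varepsilon\nu)^{-1}\leq1$. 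Finally, for fixed $g\in\L(\R;H)$ one has $\tfrac{1}{V_{\varepsilon}^{\pm}}(t)\to1$ as $\varepsilon\to0\rlim$ for every $t$, and $\bigl|\tfrac{1}{V_{\varepsilon}^{\pm}}(t)-1\bigr|\leq2$, so dominated convergence yields $\tfrac{1}{V_{\varepsilon}^{\pm}}(\m)g\to g$ in $\L(\R;H)$; transporting back by the unitary $\mathcal{L}_{\nu}$ then gives $(1+\varepsilon\td{\nu})^{-1}\to 1_{\Lnu(\R;H)}$ and $(1+\varepsilon\td{\nu}^{\ast})^{-1}\to 1_{\Lnu(\R;H)}$ strongly as $\varepsilon\to0\rlim$.

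I expect the only genuinely delicate point to be the bookkeeping in part (a): ensuring the partial sums of the Neumann series actually land in $\dom(B)$ before passing to the limit, and keeping track of the fact that the two resolvents commute with $B$ only as inclusions rather than honest equalities, which is exactly where closedness of $B$ is used and where one must resist assuming $B$ is bounded. Part (b) is, by contrast, a routine multiplier computation once the spectral representation of $\td{\nu}$ and $\td{\nu}^{\ast}$ is invoked.
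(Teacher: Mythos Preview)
Your proof is correct on both counts. The paper leaves this lemma as an exercise (\prettyref{exer:proof_Lemma}), so there is no detailed proof to compare against, only the hint ``Prove a similar equality with $\td{\nu}^{-1}$ formally replaced by $z\in\partial\ball{r}{r}\subseteq\C$ and deduce the assertion with the help of \prettyref{thm:spectral-repr-derivative}''. Your treatment of part (b) via the Fourier--Laplace picture is exactly what this hint points to: the circle $\partial\ball{r}{r}$ with $r=\tfrac{1}{2\nu}$ is precisely the range of $t\mapsto\frac{1}{\i t+\nu}$, so ``replacing $\td{\nu}^{-1}$ by $z$'' amounts to working with the multiplier, which you do.

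For part (a), the hint is cryptic about the intended route, but your clopen argument on $\rho(C)$ is the standard, clean way to propagate the commutation relation along a connected component, and the details are handled correctly---in particular the use of closedness of $B$ to pass to the limit in the Neumann series, and the observation that openness plus norm-continuity of the resolvent already forces relative closedness. One could alternatively prove closedness of $S$ directly (for $\lambda_n\in S$, $\lambda_n\to\lambda\in\rho(C)$, and $x\in\dom(B)$, pass to the limit in $B(\lambda_n-C)^{-1}x=(\lambda_n-C)^{-1}Bx$ using closedness of $B$), but your shortcut via openness is equally valid. A minor remark: the hypothesis $\nu>0$ in (b) is slightly stronger than necessary for the multiplier bound (one really only needs $1+\varepsilon\nu\neq 0$), but since that is what the statement assumes, your parenthetical comment is harmless.
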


\begin{proof}[Proof of \prettyref{thm:NoClosureEEDistr}]
For $n\in \N$ we set $T_n\coloneqq (1+\frac{1}{n}\td{\nu})^{-1}$. By \prettyref{lem:resolvTD} we obtain $T_n,T_n^\ast\to 1_{\Lnu(\R;H)}$ strongly in $\Lnu(\R;H)$ as $n\to \infty$. Moreover, by Hille's theorem (see \prettyref{prop:interchange_integral}) we have $\td{\nu}^{-1}A\subseteq A\td{\nu}^{-1}$ and thus, $T_nA\subseteq AT_n$ for each $n\in \N$ by \prettyref{lem:resolvTD}, which also yields $T_n^\ast A\subseteq AT_n^\ast$ for each $n\in \N$ by \prettyref{thm:ComputeClosureDistr}.
The latter, together with the strong convergence of $(T_n)$ and $(T_n^\ast)$, yields that $T_n,T_n^\ast \to 1_{\Lnu(\R;\dom(A))}$ strongly in $\Lnu(\R;\dom(A))$ as $n\to \infty$.

Consider the inclusion
\begin{equation}\label{eq:-1-1}
   (\td{\nu}M(\td{\nu}))_{-1}U+A_{-1}U\subseteq F.
\end{equation}
We show next that \prettyref{eq:-1-1} is equivalent to (ii) by applying \prettyref{prop:DB} to the case $ D=\td{\nu}, B=M(\td{\nu})$. For this assume that \prettyref{eq:-1-1} holds.
By \prettyref{prop:DB}, we deduce that 
\[
    ((\td{\nu}M(\td{\nu}))_{-1}U+A_{-1}U)|_{\dom(\td{\nu}^*)\cap \dom(A)}=    ((\td{\nu})_{-1}M(\td{\nu})U+A_{-1}U)|_{\dom(\td{\nu}^*)\cap \dom(A)}
\]
Thus, \prettyref{eq:-1-1} implies (ii). 

On the other hand, assume that (ii) holds. Let $\phi\in \dom((\td{\nu}M(\td{\nu}))^*)\cap \Lnu(\R;\dom(A))$. Then, for $n\in \N$, $\phi_{n}\coloneqq T_n^\ast \phi \to \phi$ as $
n\to \infty$ in $\Lnu(\R;\dom(A))$ and
 \[
(\td{\nu}M(\td{\nu}))^*\phi_{n}=T_n^\ast(\td{\nu}M(\td{\nu}))^*\phi \to (\td{\nu}M(\td{\nu}))^*\phi\quad(n\to \infty)
\] in $\Lnu(\R;H)$. By (ii) we obtain
\[
   ((\td{\nu})_{-1}M(\td{\nu})U+A_{-1}U)(\phi_n)=F(\phi_{n}).
\]
Using \prettyref{prop:DB}, we infer
\[
   ((\td{\nu}M(\td{\nu}))_{-1}U+A_{-1}U)(\phi_n)=F(\phi_{n}).
\]Letting $n\to \infty$, we deduce \prettyref{eq:-1-1}.

We are now in the position to apply \prettyref{thm:ComputeClosureDistr} from above to the
case $\Lnu(\R;H)$ being the Hilbert space, $A$ the operator in $\Lnu(\R;H)$, $B=\td{\nu}M(\td{\nu})$, and $T_{n}=\left(1+\frac{1}{n}\td{\nu}\right)^{-1}$. We need to establish the commutativity properties next. The relation $T_n A\subseteq AT_n$ was already shown above.
Next, we infer $\ran(T_n)\subseteq \dom(\td{\nu})\subseteq \dom (B)$ and 
\[
T_{n}B\subseteq BT_n
\]
for all $n\in\N$ by using the Fourier--Laplace transformation, see also \prettyref{thm:spectral-repr-derivative}.
The closability of $A+B$ is implied by \prettyref{thm:Solution_theory_EE}.
\end{proof}

Assume now that there exists $c>0$ such that
\[
\Re zM(z)\geq c \quad(z\in\C_{\Re\geq\nu}).
\]
We recall from \prettyref{thm:Solution_theory_EE} that the operator
$
\overline{\td{\nu}M(\td{\nu})+A}
$
is continuously invertible in $\Lnu(\R;H)$.

\begin{thm}
\label{thm:extSolTh} The operator $S_{\nu}\coloneqq\bigl(\overline{\td{\nu}M(\td{\nu})+A}\bigr)^{-1}\in\bo(\Lnu(\R;H))$
admits a continuous extension to $L(H_{\nu}^{-1}(\R;H))$. 
\end{thm}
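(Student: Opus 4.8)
The plan is to obtain the extension of $S_\nu$ by identifying it with a bounded operator between the pair of Hilbert spaces $H_\nu^{-1}(\R;H)$ and a suitable "$H^1$-type" space, and then to observe that the whole construction is unitarily equivalent, via $\mathcal{L}_\nu$, to a multiplication operator statement on $\L(\R;H)$ which is essentially trivial. More precisely, first I would note that by \prettyref{thm:NoClosureEEDistr} the operator $\td{\nu}M(\td{\nu})+A$ (no closure) maps $\dom(\td{\nu})\cap\Lnu(\R;\dom(A))$ bijectively onto $\Lnu(\R;H)$ with inverse $S_\nu$, and that on the larger space $H^{-1}_\nu(\R;H)$ the natural candidate for the extension is the operator $(\td{\nu}M(\td{\nu}))_{-1}+A_{-1}$ read in the distributional sense of the previous section.

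Next I would carry out the Fourier--Laplace reduction. Recall from \prettyref{thm:spectral-repr-derivative} that $\mathcal{L}_\nu$ conjugates $\td{\nu}$ to $\i\m+\nu$, and from the proof of \prettyref{thm:Solution_theory_EE} that $S_\nu = S(\td{\nu})$ where $S(z)=(zM(z)+A)^{-1}$ is a bounded material law operator. Since $\td{\nu}$ has $0\in\rho(\td{\nu})$, the extrapolation space $H^{-1}_\nu(\R;H)=\dom(\td{\nu}^*)'$ is, by \prettyref{prop:extrapolation_completion}, isomorphic to the completion of $\Lnu(\R;H)$ under the norm $\norm{\td{\nu}^{-1}\cdot}_{\Lnu}$. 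Because $S_\nu$ commutes with $\td{\nu}^{-1}$ (see \prettyref{rem:somlo}, which gives $S_\nu\td{\nu}\subseteq\td{\nu}S_\nu$, hence $\td{\nu}^{-1}S_\nu = S_\nu\td{\nu}^{-1}$ on $\Lnu(\R;H)$), the operator $S_\nu$ is an isometry-up-to-constant for this new norm: for $f\in\Lnu(\R;H)$,
\[
\norm{\td{\nu}^{-1}S_\nu f}_{\Lnu} = \norm{S_\nu\td{\nu}^{-1}f}_{\Lnu} \leq \tfrac{1}{c}\norm{\td{\nu}^{-1}f}_{\Lnu}.
\]
Thus $S_\nu$, viewed as a map from the dense subspace $\Lnu(\R;H)\subseteq H^{-1}_\nu(\R;H)$ into $H^{-1}_\nu(\R;H)$, is bounded with norm at most $1/c$, and therefore extends uniquely to an element of $L(H^{-1}_\nu(\R;H))$.

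Alternatively — and this is the version I would actually write out, since it reuses \prettyref{prop:ContDistributionalExtn} verbatim — I would apply \prettyref{prop:ContDistributionalExtn} with $B\coloneqq S_\nu\in\bo(\Lnu(\R;H))$ and $T\coloneqq\td{\nu}^*$ (which is densely defined, closed, with $0\in\rho(\td{\nu}^*)$ by \prettyref{exer:resolvent_adj} since $0\in\rho(\td{\nu})$), noting that $T^{-1}=\td{\nu}^{*-1}$ commutes with $S_\nu$: indeed $S_\nu$ is a material law operator, hence autonomous and causal by \prettyref{prop:M_causal_via_PW}, so it commutes with $\td{\nu}$ and $\td{\nu}^*$ and their resolvents. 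Since $(T)' $'s extrapolation space is exactly $\dom(T^*)' = \dom(\td{\nu})' = H^{-1}_\nu(\R;H)$, \prettyref{prop:ContDistributionalExtn} immediately yields a unique $\overline{S_\nu}\in\bo(H^{-1}_\nu(\R;H))$. The main obstacle is bookkeeping: one must be careful that the extrapolation space appearing in \prettyref{prop:ContDistributionalExtn}, namely $H^{-1}((\td{\nu}^*)^*)=H^{-1}(\td{\nu})=\dom(\td{\nu})'$, really coincides with the $H^{-1}_\nu(\R;H)$ defined at the start of this section via $\dom(\td{\nu})'\cong\dom(\td{\nu}^*)'$, and that $S_\nu$ genuinely commutes with $\td{\nu}^{*-1}$ rather than merely with $\td{\nu}^{-1}$; both follow from $S_\nu$ being autonomous, hence commuting with $\tau_h$ for all $h$, hence commuting with the functional calculus of $\td{\nu}$, which contains both $\td{\nu}^{-1}$ and $\td{\nu}^{*-1}=(2\nu-\td{\nu})^{-1}$.
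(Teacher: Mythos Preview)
Your proposal is correct and essentially identical to the paper's proof: both apply \prettyref{prop:ContDistributionalExtn} with $B=S_\nu$. The only difference is that the paper chooses $T=\td{\nu}$ rather than your $T=\td{\nu}^*$; with this choice the required commutation $T^{-1}S_\nu=S_\nu T^{-1}$ is just $\td{\nu}^{-1}S_\nu=S_\nu\td{\nu}^{-1}$, which follows immediately from $S_\nu=S(\td{\nu})$ being a material law operator (cf.\ \prettyref{rem:somlo}), so the extra bookkeeping you flagged about commuting with $(\td{\nu}^*)^{-1}=(2\nu-\td{\nu})^{-1}$ is avoided. The target space $H^{-1}(\td{\nu}^*)$ then matches $H_\nu^{-1}(\R;H)$ via the identification $\dom(\td{\nu})'\cong\dom(\td{\nu}^*)'$ already built into the definition.
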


\begin{proof}
We apply \prettyref{prop:ContDistributionalExtn} to $\Lnu(\R;H)$ being the Hilbert space,
$T=\td{\nu}$ and $B=S_{\nu}$. For this, it remains to prove that
$T^{-1}S_{\nu}=S_{\nu}T^{-1}$. This however follows from the fact
that $z\mapsto S(z)\coloneqq\left(zM(z)+A\right)^{-1}$ is a material
law and $S(\td{\nu}) = S_\nu$.
\end{proof}

\section{Initial Value Problems for Evolutionary Equations}

Let $H$ be a Hilbert space, $\mu\in\R$,
$M\colon\C_{\Re>\mu}\to\bo(H)$ a material law, $\nu>\max\{\sbb{M},0\}$ and $A\colon\dom(A)\subseteq H\to H$
skew-selfadjoint. 
In this section we shall focus on the implementation of initial value
problems for evolutionary equations. A priori there is no explicit
initial condition implemented in the theory established in $\Lnu(\R;H)$.
Indeed, choosing $\nu>0$ we have only an implicit exponential decay
condition at $-\infty$. For initial values at $0$, we would rather want to
solve the following type of equation. In the situation of the previous
section, for a given initial value $U_{0}\in H$ we seek to solve 
the initial value problem
\begin{equation}
\begin{cases}
\left(\td{\nu}M(\td{\nu})+A\right)U=0 & \text{ on }\oi{0}{\infty},\\
U(0\rlim)=U_{0}.
\end{cases}\label{eq:IVP_PDE}
\end{equation}
In this generality the initial value problem cannot be solved. Indeed,
for $U\in\Lnu(\R;H)$ evaluation at $0$ is not well-defined.
A way to overcome this difficulty is to weaken the attainment of the
initial value. For this, we specialise to the case when
\[
M(\td{\nu})=M_{0}+\td{\nu}^{-1}M_{1}
\]
with $M_0, M_1\in \bo(H)$.

We start with the following lemma, which will also be useful in the next chapter.

\begin{lem}\label{lem:ivp}
 Let $U_{0}\in\dom(A)$, $U\in\Lnu(\R;H)$ such that $M_{0}U-\1_{\roi{0}{\infty}}M_{0}U_{0}:\R\to H^{-1}(A)$ is continuous,
$\spt U\subseteq\roi{0}{\infty}$ and 
\begin{align*}
\begin{cases}
\td{\nu}M_{0}U+M_{1}U+AU=0 & \text{on }\oi{0}{\infty},\\
M_{0}U(0\rlim)=M_{0}U_{0} & \text{in } H^{-1}(A),
\end{cases}
\end{align*}
where the first equality is meant in the sense that
\[
 \forall \varphi\in H_\nu^{1}(\R;H)\cap \Lnu(\R;\dom(A)),\, \spt\varphi \subseteq \roi{0}{\infty}:  \bigl(\td{\nu}M_{0}U+M_{1}U+AU\bigr)(\varphi)=0.
\]
Then $U-\1_{\roi{0}{\infty}}U_0\in \dom(\overline{\td{\nu}M_0+M_1+A})$ and 
\[
 \bigl(\overline{\td{\nu}M_{0}+M_{1}+A}\bigr)(U-\1_{\roi{0}{\infty}}U_{0})=-(M_{1}+A)U_{0}\1_{\roi{0}{\infty}}.
\]
\end{lem}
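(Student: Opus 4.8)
The plan is to reduce the assertion to the distributional reformulation provided by \prettyref{thm:NoClosureEEDistr} and then to identify the arising Dirac term by a temporal cut-off argument. Throughout put $V\coloneqq U-\1_{\roi{0}{\infty}}U_{0}\in\Lnu(\R;H)$ and $F\coloneqq-(M_{1}+A)U_{0}\1_{\roi{0}{\infty}}$. Since $\nu>0$ and $(M_{1}+A)U_{0}\in H$ (as $U_{0}\in\dom(A)$), we have $F\in\Lnu(\R;H)$; moreover $\td{\nu}M(\td{\nu})+A=\td{\nu}M_{0}+M_{1}+A$ as operators, because $\td{\nu}^{-1}M_{1}x\in\dom(\td{\nu})$ for every $x\in H$, so that $M(\td{\nu})x\in\dom(\td{\nu})\iff M_{0}x\in\dom(\td{\nu})$; hence the two closures coincide. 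By \prettyref{thm:NoClosureEEDistr} it therefore suffices to prove that $\td{\nu}M(\td{\nu})V+AV\subseteq F$ in $\Fun(\Lnu(\R;H))$, i.e. that
\[
\scp{M_{0}V}{\td{\nu}^{\ast}\varphi}_{\Lnu}+\scp{M_{1}V}{\varphi}_{\Lnu}-\scp{V}{A\varphi}_{\Lnu}=\scp{F}{\varphi}_{\Lnu}
\]
for all $\varphi\in\dom(\td{\nu}^{\ast})\cap\dom(A)=H_{\nu}^{1}(\R;H)\cap\Lnu(\R;\dom(A))$ (here I used $\td{\nu}^{-1}M_{1}V\in\dom(\td{\nu})$ to rewrite $\td{\nu}M(\td{\nu})V$ as $\td{\nu}(M_{0}V)+M_{1}V$, and $A^{\ast}=-A$).

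Next I would subtract the explicit part. Using $M_{0}(\1_{\roi{0}{\infty}}U_{0})=\1_{\roi{0}{\infty}}M_{0}U_{0}$ together with $(\td{\nu})_{-1}(\1_{\roi{0}{\infty}}x)(\varphi)=\scp{x}{\varphi(0)}_{H}$ for $x\in H$ (the fundamental theorem of calculus argument used for $(\td{\nu})_{-1}\1_{\roi{0}{\infty}}=\delta_{0}$, valid since $\varphi\in\dom(\td{\nu})$ decays by \prettyref{cor:vanish_at_inf}), and $A(\1_{\roi{0}{\infty}}U_{0})=\1_{\roi{0}{\infty}}AU_{0}$ since $U_{0}\in\dom(A)$, one computes $(\td{\nu}M_{0}+M_{1}+A)(\1_{\roi{0}{\infty}}U_{0})=\delta_{0}(M_{0}U_{0})+\1_{\roi{0}{\infty}}(M_{1}+A)U_{0}$. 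Hence the displayed identity for $V$ is equivalent to
\[
\scp{M_{0}U}{\td{\nu}^{\ast}\varphi}_{\Lnu}+\scp{M_{1}U}{\varphi}_{\Lnu}-\scp{U}{A\varphi}_{\Lnu}=\scp{M_{0}U_{0}}{\varphi(0)}_{H}\qquad(\star)
\]
for all $\varphi\in H_{\nu}^{1}(\R;H)\cap\Lnu(\R;\dom(A))$. The hypothesis of the lemma is precisely $(\star)$ with right-hand side $0$, but only for those $\varphi$ with $\spt\varphi\subseteq\roi{0}{\infty}$ — note such $\varphi$, being continuous, satisfy $\varphi(0)=0$, so there is no contradiction — and it remains to remove this support restriction.

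For this I would first establish $(\star)$ for $\varphi\in\cci(\R;\dom(A))$. Choose smooth, nondecreasing cut-offs $\chi_{n}$ with $\chi_{n}=0$ on $\oi{-\infty}{0}$, $\chi_{n}=1$ on $\roi{1/n}{\infty}$, $0\leq\chi_{n}\leq1$; then $\chi_{n}\varphi\in\cci(\R;\dom(A))$ with $\spt(\chi_{n}\varphi)\subseteq\roi{0}{\infty}$, so the hypothesis applies to $\chi_{n}\varphi$, and with $\td{\nu}^{\ast}(\chi_{n}\varphi)=-\chi_{n}'\varphi+\chi_{n}\td{\nu}^{\ast}\varphi$ we obtain
\[
\scp{M_{0}U}{\chi_{n}'\varphi}_{\Lnu}=\scp{M_{0}U}{\chi_{n}\td{\nu}^{\ast}\varphi}_{\Lnu}+\scp{M_{1}U}{\chi_{n}\varphi}_{\Lnu}-\scp{U}{\chi_{n}A\varphi}_{\Lnu}.
\]
As $n\to\infty$, dominated convergence together with $\spt(M_{0}U),\spt U\subseteq\roi{0}{\infty}$ shows the right-hand side tends to $\scp{M_{0}U}{\td{\nu}^{\ast}\varphi}_{\Lnu}+\scp{M_{1}U}{\varphi}_{\Lnu}-\scp{U}{A\varphi}_{\Lnu}$. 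For the left-hand side, write $M_{0}U=W+\1_{\roi{0}{\infty}}M_{0}U_{0}$ with $W\coloneqq M_{0}U-\1_{\roi{0}{\infty}}M_{0}U_{0}$, which by assumption has a representative that is continuous $\R\to H^{-1}(A)$, vanishes on $\oi{-\infty}{0}$, and satisfies $W(0)=\lim_{t\to0\rlim}W(t)=M_{0}U(0\rlim)-M_{0}U_{0}=0$ in $H^{-1}(A)$. Then
\[
\scp{M_{0}U}{\chi_{n}'\varphi}_{\Lnu}=\int_{0}^{1/n}\chi_{n}'(t)\scp{W(t)}{\varphi(t)}_{H}\e^{-2\nu t}\d t+\int_{0}^{1/n}\chi_{n}'(t)\scp{M_{0}U_{0}}{\varphi(t)}_{H}\e^{-2\nu t}\d t,
\]
where, since $\int_{0}^{1/n}\chi_{n}'=1$ and $\chi_{n}'\geq0$, the first integral is bounded in modulus by $\sup_{t\in\oi{0}{1/n}}\norm{W(t)}_{H^{-1}(A)}\cdot\sup_{t\in\R}\norm{\varphi(t)}_{\dom(A)}\to0$ (using $\abs{\scp{W(t)}{\varphi(t)}_{H}}\leq\norm{W(t)}_{H^{-1}(A)}\norm{\varphi(t)}_{\dom(A)}$ for a.e.\ $t$ and that $\varphi$ is continuous and compactly supported into $\dom(A)$), while $\chi_{n}'(t)\d t$ concentrates at $0$ as a probability measure and $t\mapsto\scp{M_{0}U_{0}}{\varphi(t)}_{H}\e^{-2\nu t}$ is continuous, so the second integral tends to $\scp{M_{0}U_{0}}{\varphi(0)}_{H}$. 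Combining, $(\star)$ holds for all $\varphi\in\cci(\R;\dom(A))$.

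Finally I would extend $(\star)$ to all $\varphi\in H_{\nu}^{1}(\R;H)\cap\Lnu(\R;\dom(A))$ by density: both sides of $(\star)$ are continuous for the norm $\varphi\mapsto\norm{\varphi}_{\dom(\td{\nu})}+\norm{\varphi}_{\Lnu(\R;\dom(A))}$ (the summand $\scp{U}{A\varphi}_{\Lnu}$ being controlled by the second term and the evaluation $\varphi(0)$ by the first via \prettyref{thm:Sobolev_emb}), and $\cci(\R;\dom(A))$ is dense in this space — one mollifies a given $\varphi$ in time, which commutes with $A$ by \prettyref{prop:interchange_integral} and hence converges in $\Lnu(\R;\dom(A))$ (and in $\dom(\td{\nu})$ as usual), and then cuts off in time. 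Thus $(\star)$ holds in general, hence so does the displayed identity for $V$, i.e. $\td{\nu}M(\td{\nu})V+AV\subseteq F$, and \prettyref{thm:NoClosureEEDistr} yields $V=U-\1_{\roi{0}{\infty}}U_{0}\in\dom(\overline{\td{\nu}M_{0}+M_{1}+A})$ together with $\bigl(\overline{\td{\nu}M_{0}+M_{1}+A}\bigr)(U-\1_{\roi{0}{\infty}}U_{0})=F=-(M_{1}+A)U_{0}\1_{\roi{0}{\infty}}$. The main obstacle is exactly the passage from test functions supported in $\roi{0}{\infty}$ to arbitrary ones, i.e. extracting the Dirac contribution $\delta_{0}(M_{0}U_{0})$; this is where the continuity of $M_{0}U-\1_{\roi{0}{\infty}}M_{0}U_{0}$ into $H^{-1}(A)$ and the attainment $M_{0}U(0\rlim)=M_{0}U_{0}$ enter decisively.
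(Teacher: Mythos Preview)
Your argument is correct and follows essentially the same route as the paper's proof: both reduce via \prettyref{thm:NoClosureEEDistr} to testing against $\psi\in\cci(\R;\dom(A))$, insert a temporal cut-off supported in $\roi{0}{\infty}$ so that the hypothesis applies, and then isolate the Dirac contribution $\delta_{0}M_{0}U_{0}$ using the continuity of $M_{0}U-\1_{\roi{0}{\infty}}M_{0}U_{0}$ into $H^{-1}(A)$ together with the initial condition. The only cosmetic differences are that the paper uses the piecewise-linear cut-offs $\varphi_{n}(t)=nt$ on $\oi{0}{1/n}$ (so that $\varphi_{n}'=n\1_{\ci{0}{1/n}}$) and organises the computation around $(1-\varphi_{n})\psi$, whereas you use smooth $\chi_{n}$ and argue via $\chi_{n}\varphi$; the bookkeeping is otherwise identical.
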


\begin{proof}
 We apply \prettyref{thm:NoClosureEEDistr} for showing the claim; that is, we show that
\[
 \bigl((\td{\nu}M_{0}+M_{1})(U-\1_{\roi{0}{\infty}}U_0)+A(U-\1_{\roi{0}{\infty}}U_{0})\bigr)(\psi)=(-(M_{1}+A)U_{0}\1_{\roi{0}{\infty}})(\psi)
\]
for each $\psi \in H_\nu^1(\R;H)\cap \Lnu(\R;\dom(A))$. Note that by continuity, it suffices to show the equality for $\psi\in \cci(\R;\dom(A))$. So, let $\psi \in \cci(\R;\dom(A))$ and for $n\in \N$ we  define the function $\varphi_n\in H_\nu^1(\R)$ by   
\[
 \varphi_n(t)\coloneqq \begin{cases}
                        0 & \text{if } t\leq 0,\\
                        nt & \text{if } t\in \oi{0}{1/n},\\
                        1 &\text{if } t \geq 1/n.
                       \end{cases}
\]
Note that $\varphi_n \psi \in  H_\nu^1(\R;H)\cap \Lnu(\R;\dom(A))$ and $\spt (\varphi_n\psi) \subseteq \roi{0}{\infty}$ for each $n\in \N$. Thus, we obtain
\begin{align*}
 &\bigl((\td{\nu}M_{0}+M_{1}+A)(U-\1_{\roi{0}{\infty}}U_0)\bigr)(\psi)\\
 &= \bigl((\td{\nu}M_{0}+M_{1}+A)U\bigr)(\psi)- \bigl((\td{\nu}M_{0}+M_{1}+A)(\1_{\roi{0}{\infty}}U_0)\bigr)(\psi)\\
 &= \bigl((\td{\nu}M_{0}+M_{1}+A)U\bigr)(\varphi_n \psi)+ \bigl((\td{\nu}M_{0}+M_{1}+A)U\bigr)((1-\varphi_n)\psi)-\\
 &\quad -\bigl((\td{\nu}M_{0}+M_{1}+A)(\1_{\roi{0}{\infty}}U_0)\bigr)(\psi)\\
 &= \bigl((\td{\nu}M_{0}+M_{1}+A)U\bigr)((1-\varphi_n)\psi)- (\delta_0 M_0 U_0)(\psi)-\bigl((M_1+A)(\1_{\roi{0}{\infty}} U_0)\bigr)(\psi) 
\end{align*}
for each $n\in \N$. Thus, the claim follows if we can show that 
\[
 \bigl((\td{\nu}M_{0}+M_{1}+A)U\bigr)((1-\varphi_n)\psi)- (\delta_0 M_0 U_0)(\psi)\to 0\quad (n\to \infty).
\]
For doing so, we first observe that for all $n\in\N$ we have
\[
 (\delta_0 M_0 U_0)(\psi)=(\delta_0 M_0 U_0)((1-\varphi_n)\psi)=(\td{\nu} M_0 \1_{\roi{0}{\infty}} U_0)((1-\varphi_n)\psi),
\]
since $\varphi_n(0)=0$. Moreover,
\[
 \bigl((M_{1}+A)U\bigr)((1-\varphi_n)\psi)=\scp{U}{(1-\varphi_n)(M_1^\ast+A^\ast)\psi}_{\Lnu}\to 0\quad (n\to \infty), 
\]
since $1-\varphi_n(\m) \to \1_{\loi{-\infty}{0}}(\m)$ strongly in $\Lnu(\R;H)$ and $\spt U\subseteq \roi{0}{\infty}$. Thus, it remains to show that
\[
 \bigl(\td{\nu}M_{0}(U-\1_{\roi{0}{\infty}}U_0)\bigr)((1-\varphi_n)\psi)\to 0\quad (n\to \infty).
\]
We compute
\begin{align*}
  & \bigl(\td{\nu}M_{0}(U-\1_{\roi{0}{\infty}}U_0)\bigr)((1-\varphi_n)\psi) \\
  & = \scp{M_0(U-\1_{\roi{0}{\infty}}U_0)}{\td{\nu}^\ast((1-\varphi_n)\psi)}_{\Lnu}\\
  & =\scp{M_0(U-\1_{\roi{0}{\infty}}U_0)}{n\1_{\ci{0}{1/n}} \psi}_{\Lnu}
  -\scp{M_0(U-\1_{\roi{0}{\infty}}U_0)}{(1-\varphi_n) \td{\nu}\psi}_{\Lnu}\\
  &  +2\nu\scp{M_0(U-\1_{\roi{0}{\infty}}U_0)}{(1-\varphi_n)\psi}_{\Lnu}.
\end{align*}
Note that the last two terms on the right-hand side tend to $0$ as $n\to \infty$ since, as above, $1-\varphi_n(\m) \to \1_{\loi{-\infty}{0}}(\m)$ strongly in $\Lnu(\R;H)$ and $\spt U\subseteq \roi{0}{\infty}$. For the first term, we observe that
\begin{align*}
 & \abs{\scp{M_0(U-\1_{\roi{0}{\infty}}U_0)}{n\1_{\ci{0}{1/n}} \psi}_{\Lnu}} \\
 &\leq n\int_{0}^{1/n} \abs{\scp{M_0 (U(t)-U_0)}{\psi(t)}_{H}} \e^{-2\nu t} \d t\\
 &\leq n\int_{0}^{1/n} \norm{M_0 (U(t)-U_0)}_{H^{-1}(A)}\norm{\psi(t)}_{\dom(A^\ast)} \e^{-2\nu t} \d t 
 \to 0\quad (n\to \infty),
\end{align*}
by the fundamental theorem of calculus, since $(M_0U)(t)\to M_0U_0$ in $H^{-1}(A)$ as $t\to 0\rlim$.
\end{proof}

Assume now additionally that there exists $c>0$ such that
\[
 zM_0+M_1\geq c\quad (z\in \C_{\Re\geq \nu}).
\]
Then we can actually prove a stronger result than in the previous lemma.

\begin{thm}
\label{thm:IVPs_PDEs}Let $U_{0}\in\dom(A)$, $U\in\Lnu(\R;H)$. Then
the following statements are equivalent:
\begin{enumerate}
\renewcommand{\labelenumi}{{\upshape (\roman{enumi})}}
\item $M_{0}U-\1_{\roi{0}{\infty}}M_{0}U_{0}\from\R\to H^{-1}(A)$ is continuous,
$\spt U\subseteq\roi{0}{\infty}$ and 
\begin{align*}
\begin{cases}
\td{\nu}M_{0}U+M_{1}U+AU=0 & \text{on }\oi{0}{\infty},\\
M_{0}U(0\rlim)=M_{0}U_{0} & \text{in } H^{-1}(A),
\end{cases}
\end{align*}
where the first equality is meant as in \prettyref{lem:ivp}.
\item $U-\1_{\roi{0}{\infty}}U_{0}\in \dom (\overline{\td{\nu}M_{0}+M_{1}+A})$, and $\bigl(\overline{\td{\nu}M_{0}+M_{1}+A}\bigr)(U-\1_{\roi{0}{\infty}}U_{0})=-(M_{1}+A)U_{0}\1_{\roi{0}{\infty}}$.
\item $U=S_{\nu}\delta_{0}M_{0}U_{0}$, with $S_{\nu}\in\bo(H_{\nu}^{-1}(\R;H))$ as in \prettyref{thm:extSolTh}.
\end{enumerate}
Moreover, in either case we have $M_{0}U-\1_{\roi{0}{\infty}}M_{0}U_{0}\in H_\nu^1(\R;H^{-1}(A))$.
\end{thm}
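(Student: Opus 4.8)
The plan is to prove the chain of equivalences (i) $\Leftrightarrow$ (ii) $\Leftrightarrow$ (iii) and then the final regularity statement. First I would dispose of (i) $\Rightarrow$ (ii), which is precisely \prettyref{lem:ivp} --- the hypotheses in (i) are exactly the hypotheses of that lemma, so nothing remains to do here. For (ii) $\Rightarrow$ (iii), I would apply \prettyref{thm:extSolTh}: under the extra positive-definiteness assumption $zM_{0}+M_{1}\geq c$ on $\C_{\Re\geq\nu}$ (recall $M(\td{\nu})=M_{0}+\td{\nu}^{-1}M_{1}$, so $zM(z)=zM_{0}+M_{1}$ and the hypothesis of \prettyref{thm:Solution_theory_EE} is met), the operator $\overline{\td{\nu}M_{0}+M_{1}+A}$ is continuously invertible with inverse $S_{\nu}$ which extends to $\bo(H_{\nu}^{-1}(\R;H))$. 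Rewriting (ii) as $\bigl(\overline{\td{\nu}M_{0}+M_{1}+A}\bigr)(U-\1_{\roi{0}{\infty}}U_{0})=-(M_{1}+A)U_{0}\1_{\roi{0}{\infty}}$ and applying $S_{\nu}$, I obtain $U-\1_{\roi{0}{\infty}}U_{0}=-S_{\nu}(M_{1}+A)U_{0}\1_{\roi{0}{\infty}}$, so the whole point is to identify the right-hand side with $S_{\nu}\delta_{0}M_{0}U_{0}-\1_{\roi{0}{\infty}}U_{0}$, i.e.\ to show $S_{\nu}\bigl(\delta_{0}M_{0}U_{0}\bigr)=\1_{\roi{0}{\infty}}U_{0}-S_{\nu}(M_{1}+A)U_{0}\1_{\roi{0}{\infty}}$, equivalently $\bigl(\overline{\td{\nu}M_{0}+M_{1}+A}\bigr)_{-1}\bigl(\1_{\roi{0}{\infty}}U_{0}\bigr)=\delta_{0}M_{0}U_{0}-(M_{1}+A)U_{0}\1_{\roi{0}{\infty}}$ in $H_{\nu}^{-1}(\R;H)$. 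This is a direct computation using $(\td{\nu})_{-1}\1_{\roi{0}{\infty}}=\delta_{0}$ (established in the Example after \prettyref{prop:A_neg}) together with $M_{0}$ being a bounded constant operator and the fact that $M_{1},A$ applied to the $H$-valued constant $U_{0}$ just give $\1_{\roi{0}{\infty}}(M_{1}+A)U_{0}$.

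For (iii) $\Rightarrow$ (i), I would run the previous step in reverse: from $U=S_{\nu}\delta_{0}M_{0}U_{0}$ and the identity just derived, $U-\1_{\roi{0}{\infty}}U_{0}\in\dom(\overline{\td{\nu}M_{0}+M_{1}+A})$ with the stated right-hand side, which is (ii); then I would extract (i) from (ii) via \prettyref{thm:NoClosureEEDistr}, which characterises membership in the closure in terms of the distributional equation $\td{\nu}M_{0}(\cdot)+M_{1}(\cdot)+A(\cdot)\subseteq(\cdot)$ in $H_{\nu}^{-1}(\R;H)\cap\Lnu(\R;H^{-1}(A))$. Applying that distributional equation to test functions $\varphi$ supported in $\oi{0}{\infty}$ kills the $\1_{\roi{0}{\infty}}U_{0}$ contributions and the $\delta_{0}$ term, giving $\td{\nu}M_{0}U+M_{1}U+AU=0$ on $\oi{0}{\infty}$. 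Causality of $S_{\nu}$ (\prettyref{thm:Solution_theory_EE}, \prettyref{thm:extSolTh}) and the fact that $\delta_{0}M_{0}U_{0}$ is supported in $\roi{0}{\infty}$ give $\spt U\subseteq\roi{0}{\infty}$. The continuity of $t\mapsto(M_{0}U)(t)-\1_{\roi{0}{\infty}}(t)M_{0}U_{0}$ into $H^{-1}(A)$ and the attainment $M_{0}U(0\rlim)=M_{0}U_{0}$ in $H^{-1}(A)$ follow from the last assertion of the theorem, which I would prove next.

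For the final regularity claim, $M_{0}U-\1_{\roi{0}{\infty}}M_{0}U_{0}\in H_{\nu}^{1}(\R;H^{-1}(A))$: starting from (ii), I would apply $M_{0}$ (bounded, constant, hence commuting with $\td{\nu}$ in the appropriate sense and mapping $\Lnu(\R;H)$ into itself) to the equation $\td{\nu}M_{0}V+M_{1}V+AV=-(M_{1}+A)U_{0}\1_{\roi{0}{\infty}}$ for $V\coloneqq U-\1_{\roi{0}{\infty}}U_{0}$, which rearranges to $\td{\nu}(M_{0}V)=-(M_{1}+A)U_{0}\1_{\roi{0}{\infty}}-M_{1}V-AV$ as an identity in $H_{\nu}^{-1}(\R;H)$; viewing $A$ as the continuous extrapolated operator $H\to H^{-1}(A)$, every term on the right lies in $\Lnu(\R;H^{-1}(A))$, so $\td{\nu}(M_{0}V)\in\Lnu(\R;H^{-1}(A))$, hence $M_{0}V\in H_{\nu}^{1}(\R;H^{-1}(A))$ by the characterisation of $\dom(\td{\nu})$ (\prettyref{prop:char_dom(tdnu)}), applied with $H$ replaced by $H^{-1}(A)$. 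Then \prettyref{thm:Sobolev_emb} gives the continuous $H^{-1}(A)$-valued representative and, combined with $\spt M_{0}V\subseteq\roi{0}{\infty}$ and \prettyref{cor:vanish_at_inf}, the one-sided limit at $0$. The main obstacle I anticipate is the bookkeeping in (ii) $\Leftrightarrow$ (iii): carefully justifying that the extrapolated operator $\bigl(\overline{\td{\nu}M_{0}+M_{1}+A}\bigr)_{-1}$ applied to $\1_{\roi{0}{\infty}}U_{0}$ equals $\delta_{0}M_{0}U_{0}-(M_{1}+A)U_{0}\1_{\roi{0}{\infty}}$, i.e.\ that the time-derivative of $\1_{\roi{0}{\infty}}M_{0}U_{0}$ produces exactly the Dirac-type functional $\delta_{0}M_{0}U_{0}$ while $M_{1}$ and $A$ only contribute the absolutely continuous piece --- this requires using the extended operators consistently and checking the pairing against test functions in $H_{\nu}^{1}(\R;H)\cap\Lnu(\R;\dom(A))$.
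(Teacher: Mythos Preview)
Your overall strategy is sound and matches the paper's structure: (i)$\Rightarrow$(ii) via \prettyref{lem:ivp}, (ii)$\Leftrightarrow$(iii) via the solution operator, then (i) recovered from (ii) and the regularity claim. The differences lie in execution.

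For (ii)$\Leftrightarrow$(iii), you aim to compute the extrapolated closure operator on $\1_{\roi{0}{\infty}}U_0$ directly. The paper instead applies $\td{\nu}^{-1}$ to both sides of (ii), which keeps everything in $\Lnu(\R;H)$: one gets $\td{\nu}^{-1}(U-\1_{\roi{0}{\infty}}U_0)=-S_\nu\bigl((M_1+A)\td{\nu}^{-1}\1_{\roi{0}{\infty}}U_0\bigr)$, and since $\td{\nu}^{-1}\1_{\roi{0}{\infty}}U_0\in\dom(\td{\nu})\cap\Lnu(\R;\dom(A))$, the identity $S_\nu(\td{\nu}M_0+M_1+A)W=W$ applies literally, yielding $\td{\nu}^{-1}U=S_\nu(M_0\1_{\roi{0}{\infty}}U_0)$; applying $\td{\nu}$ then gives (iii). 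Your direct computation is equivalent (it is this same identity seen through the isomorphism $(\td{\nu})_{-1}\colon\Lnu\to H_\nu^{-1}$), but the paper's detour through $\td{\nu}^{-1}$ avoids having to justify what the extrapolated closure operator means on $H_\nu^{-1}$ --- the ``main obstacle'' you flag simply does not arise.

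Two small points to tighten. First, for $\spt U\subseteq\roi{0}{\infty}$ you invoke causality of the \emph{extended} $S_\nu$ on $H_\nu^{-1}$, which the paper never states; but since you already have (ii), you can argue as the paper does: $U-\1_{\roi{0}{\infty}}U_0=-S_\nu\bigl((M_1+A)U_0\1_{\roi{0}{\infty}}\bigr)$ in $\Lnu$, and ordinary causality of $S_\nu$ on $\Lnu$ gives the support condition. Second, your regularity argument says the rearranged equation $\td{\nu}(M_0V)=F-M_1V-AV$ holds ``as an identity in $H_\nu^{-1}(\R;H)$''; it does not, because $A_{-1}V\in\Lnu(\R;H^{-1}(A))$ and not $H_\nu^{-1}(\R;H)$. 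Your idea of then reading the equation in $\Lnu(\R;H^{-1}(A))$ is salvageable, but it requires a careful density argument to pass from test functions in $H_\nu^1(\R;H)\cap\Lnu(\R;\dom(A))$ to the characterisation in \prettyref{prop:char_dom(tdnu)}. The paper bypasses this entirely by writing $M_0V$ explicitly as $\pm\td{\nu}^{-1}(M_1+A)\1_{\roi{0}{\infty}}U_0\mp(M_1+A)S_\nu\bigl(\td{\nu}^{-1}(M_1+A)\1_{\roi{0}{\infty}}U_0\bigr)$, each term manifestly in $H_\nu^1(\R;H^{-1}(A))$ since $A_{-1}\colon H\to H^{-1}(A)$ is bounded and $S_\nu$ maps $\dom(\td{\nu})$ into $\dom(\td{\nu})$.
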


\begin{proof}
(i)$\Rightarrow$(ii): This was shown in \prettyref{lem:ivp}.

(ii)$\Rightarrow$(iii): We have that
\[
 U-\1_{\roi{0}{\infty}}U_0 =-S_\nu((M_1+A)\1_{\roi{0}{\infty}}U_0).
\]
Applying $\td{\nu}^{-1}$ to both sides of this equality we infer that
\begin{align*}
 \td{\nu}^{-1}(U-\1_{\roi{0}{\infty}}U_0)&=-S_\nu((M_1+A)\td{\nu}^{-1}\1_{\roi{0}{\infty}}U_0)\\
 &= -\td{\nu}^{-1}\1_{\roi{0}{\infty}}U_0+S_\nu(\td{\nu}M_0 \td{\nu}^{-1}\1_{\roi{0}{\infty}}U_0),
\end{align*}
which gives 
\[
 \td{\nu}^{-1}U=S_\nu(\td{\nu}M_0 \td{\nu}^{-1}\1_{\roi{0}{\infty}}U_0)=S_\nu(M_0\1_{\roi{0}{\infty}}U_0).
\]
Applying $\td{\nu}$ to both sides and taking into account \prettyref{thm:extSolTh}, we derive the claim.

(iii)$\Rightarrow$(ii): We do the argument in the proof of (ii)$\Rightarrow$(iii) backwards. First, we apply $\td{\nu}^{-1}$ to
 $U=S_\nu(\delta_0 M_0 U_0)$, which yields
 \begin{align*}
  \td{\nu}^{-1}U & = \td{\nu}^{-1}S_\nu(\delta_0 M_0 U_0) = S_\nu(M_0 \1_{\roi{0}{\infty}} U_0) = S_\nu(\td{\nu}M_0 \td{\nu}^{-1}\1_{\roi{0}{\infty}}U_0).
 \end{align*}
 Thus,
 \begin{align*}
 \td{\nu}^{-1}(U-\1_{\roi{0}{\infty}}U_0)
 &= S_\nu(\td{\nu}M_0 \td{\nu}^{-1}\1_{\roi{0}{\infty}}U_0)-\td{\nu}^{-1}\1_{\roi{0}{\infty}}U_0\\
 &=-S_\nu((M_1+A)\td{\nu}^{-1}\1_{\roi{0}{\infty}}U_0).
 \end{align*}
 An application of $\td{\nu}$ yields the claim.

(ii),(iii)$\Rightarrow$(i): 
Since $U=S_\nu(\delta_0 M_0 U_0)$, we derive that
\[
 (\td{\nu}M_0+M_1+A)U\subseteq \delta_0M_0U_0,
\]
which in particular yields $(\td{\nu}M_0+M_1+A)U=0$ on $\oi{0}{\infty}$. 
By (ii) we infer 
\[U-\1_{\roi{0}{\infty}}U_0 =-S_\nu((M_1+A)\1_{\roi{0}{\infty}}U_0),\]
which shows that $\spt (U-\1_{\roi{0}{\infty}}U_0)\subseteq \roi{0}{\infty}$ due to causality and hence, $\spt U\subseteq \roi{0}{\infty}$. It remains to show that $M_0(U-\1_{\roi{0}{\infty}}U_0)\in H_\nu^1(\R;H^{-1}(A))$, since this would imply the continuity of $M_0(U-\1_{\roi{0}{\infty}}U_0)$ with values in $H^{-1}(A)$ by \prettyref{thm:Sobolev_emb} and thus, 
\[
 M_0(U-\1_{\roi{0}{\infty}}U_0)(0\rlim)=M_0(U-\1_{\roi{0}{\infty}}U_0)(0\llim)=0 \text{ in } H^{-1}(A)
\]
since the function is supported on $\roi{0}{\infty}$ only. We compute 
\begin{align*}
 M_0(U-\1_{\roi{0}{\infty}}U_0) &=-M_0 S_\nu((M_1+A)\1_{\roi{0}{\infty}}U_0)\\
 &= \td{\nu}M_0 S_\nu(\td{\nu}^{-1}(M_1+A)\1_{\roi{0}{\infty}}U_0)\\
 &= \td{\nu}^{-1}(M_1+A)\1_{\roi{0}{\infty}}U_0-(M_1+A)S_\nu(\td{\nu}^{-1}(M_1+A)\1_{\roi{0}{\infty}}U_0),
\end{align*}
and since the right-hand side belongs to $H_\nu^1(\R;H^{-1}(A))$, the assertion follows.
\end{proof}

\begin{rem}
By \prettyref{thm:extSolTh}, we always have $U=S_{\nu}\delta_{0}M_{0}U_{0}\in H_{\nu}^{-1}(\R;H)$.
This then serves as our generalisation for the initial value problem
even if $U_{0}\notin\dom(A)$. 
\end{rem}

The upshot of \prettyref{thm:IVPs_PDEs}(ii) is that, provided $U_0\in \dom(A)$,  we can reformulate
initial value problems with the help of our theory as evolutionary equations with $\Lnu$-right-hand
sides. Thus, we do not need the detour to extrapolation spaces
for being able to solve the initial value problem \prettyref{eq:IVP_PDE}
(with an adapted initial condition as in (i)) in this situation.

Also note that  it may seem that $U$ does depend on the `full
information' of $U_{0}$ as it is indicated in (ii). In fact, $U$
only depends on the values of $U_{0}$ orthogonal to the kernel of
$M_{0}$ as it is seen in (iii). We conclude this chapter with two
examples; the first one is the heat equation, the second example considers
Maxwell's equations.

\begin{example}[Initial Value Problems for the Heat Equation]
We recall the setting for the heat equation outlined in \prettyref{thm:wp_heat}.
This time, we will use homogeneous Dirichlet boundary conditions for the heat
distribution $\theta$. Let $\Omega\subseteq\R^{d}$ be open and bounded,
$a\in L_{\infty}(\Omega)^{d\times d}$ with $\Re a(x)\geq c>0$ for
a.e.~$x\in\Omega$ for some $c>0$. In this case, we have 
\begin{align*}
M_{0} & =\begin{pmatrix}
1 & 0\\
0 & 0
\end{pmatrix},\quad
M_{1} =\begin{pmatrix}
0 & 0\\
0 & a^{-1}
\end{pmatrix},\quad
A =\begin{pmatrix}
0 & \dive\\
\grad_{0} & 0
\end{pmatrix}.
\end{align*}
For the unknown heat distribution, $\theta$, we ask it to have the
initial value $\theta_{0}\in\dom(\grad_{0})$. Let $\nu>0$ and $V\in\Lnu\bigl(\R;\L(\Omega)\times\L(\Omega)^{d}\bigr)$ be the unique solution of
\begin{align*}
\bigl(\overline{\td{\nu}M_{0}+M_{1}+A}\bigr)V & =-\left(M_{1}+A\right)\1_{\roi{0}{\infty}}\begin{pmatrix}
\theta_{0} \\
0
\end{pmatrix} =-\1_{\roi{0}{\infty}}\begin{pmatrix}
 0\\
 \grad_{0}\theta_{0}
\end{pmatrix}.
\end{align*}
Then $(\theta,q)\coloneqq U\coloneqq V+\1_{\roi{0}{\infty}}\begin{pmatrix}
\theta_{0} \\
0 
\end{pmatrix}\in\Lnu\bigl(\R;\L(\Omega)\times\L(\Omega)^{d}\bigr)$ satisfies (ii) from \prettyref{thm:IVPs_PDEs}. Hence, on $\oi{0}{\infty}$
we have
\[
\begin{pmatrix}
\td{\nu}\theta\\
a^{-1}q
\end{pmatrix}+\begin{pmatrix}
\dive q\\
\grad_{0}\theta
\end{pmatrix}=0
\]
and the initial value is attained in the sense that 
\[
\left(M_{0}\left(\theta,q\right)\right)(0\rlim)=\begin{pmatrix}
\theta(0\rlim)\\
0
\end{pmatrix}=\begin{pmatrix}
\theta_{0}\\
0
\end{pmatrix}\quad\text{in}\quad H^{-1}(A)=H^{-1}(\grad_{0})\times H^{-1}(\dive),
\]
which follows from \prettyref{prop:A_neg} where we computed $H^{-1}(A)$. Let us
have a closer look at the attainment of the initial value. As a particular
consequence of strong convergence in $H^{-1}(\grad_{0})$, we obtain
 for all $\phi\in\dom(\dive)$ 
 \[
\scp{\theta(t)}{\dive\phi}\to\scp{\theta_{0}}{\dive\phi}
\]
as $t\to0\rlim$. Since $\grad_{0}$ is one-to-one and has closed range,
we see that $\dive$ has dense and closed range. Hence $\dive$
is onto. This implies that for all $\psi\in\L(\Omega)$ 
\[
\scp{\theta(t)}{\psi}\to\scp{\theta_{0}}{\psi} \quad(t\to0\rlim).
\]
\end{example}
We deduce that the initial value is attained weakly. This might seem
a bit unsatisfactory, however, we shall see stronger assertions for more particular
cases in the next chapter. 

Next, we have a look at Maxwell's equations.

\begin{example}[Initial Value Problems for Maxwell's equations]
We briefly recall the situation of Maxwell's equations from \prettyref{thm:st_maxwell}.
Let $\varepsilon,\mu,\sigma\colon\Omega\to\R^{3\times3}$ satisfy
the assumptions in \prettyref{thm:st_maxwell} and let $\left(E_{0},H_{0}\right)\in\dom(\curl_{0})\times\dom(\curl)$.
Let $(\hat{E},\hat{H})\in\Lnu(\R;\L(\Omega)^{6})$ satisfy
\begin{align*}
 & \left(\overline{\td{\nu}\begin{pmatrix}
\varepsilon & 0\\
0 & \mu
\end{pmatrix}+\begin{pmatrix}
\sigma & 0\\
0 & 0
\end{pmatrix}+\begin{pmatrix}
0 & -\curl\\
\curl_{0} & 0
\end{pmatrix}}\right)\begin{pmatrix}
\hat{E}\\
\hat{H}
\end{pmatrix}\\
 & =-\left(\begin{pmatrix}
\sigma & 0\\
0 & 0
\end{pmatrix}+\begin{pmatrix}
0 & -\curl\\
\curl_{0} & 0
\end{pmatrix}\right)\1_{\roi{0}{\infty}}\begin{pmatrix}
E_{0}\\
H_{0}
\end{pmatrix} =\1_{\roi{0}{\infty}}\begin{pmatrix}
-\sigma E_{0}+\curl H_{0}\\
-\curl_{0}E_{0}
\end{pmatrix}.
\end{align*}
Then, as we have argued for the heat equation,
\[
\begin{pmatrix}
E\\
H
\end{pmatrix}\coloneqq\begin{pmatrix}
\hat{E}\\
\hat{H}
\end{pmatrix}+\1_{\roi{0}{\infty}}\begin{pmatrix}
E_{0}\\
H_{0}
\end{pmatrix}
\]
satisfies a corresponding initial value problem. We note here
that although often the second component in the right-hand side is
set to $0$, as there are `no magnetic monopoles', in the theory of evolutionary equations the
second component of the right-hand side does appear as an initial
value in disguise.
\end{example}

\section{Comments}

There are many ways to define spaces generalising the action of an
operator to a bigger class of elements; both in a concrete setting
and in abstract situations; see e.g.~\cite{Cojuhari2010,Engel2000}.
People have also taken into account simultaneous extrapolation
spaces for operators that commute, see e.g.~\cite{Palais1965,Picard2000}. 

These spaces are particularly useful for formulating initial value
problems as was exemplified above; see also the concluding chapter
of \cite{A11} for more insight. Yet there is more to it as one can in fact generalise the
equation under consideration or even force the attainment of the initial
value in a stronger sense. These issues, however, imply that either
the initial value is attained in a much weaker sense, or that there are
other structural assumptions needed to be imposed on the material law $M$ (as well as on
the operator $A$). 

In fact, quite recently, it was established that a particular proper subclass of evolutionary equations can be put into the framework of  $C_0$-semigroups. The conditions required to allow for statements in this
direction are, on the other hand, rather hard to check in practice;
see \cite{Trostorff2018}.   

% In the next lecture, we shall provide a glimpse on this theory by
% restricting ourselves to the arguably easiest non-trivial case $A=0$
% and $M(\td{\nu})=M_{0}+\td{\nu}^{-1}M_{1}$. 

\section*{Exercises}
\addcontentsline{toc}{section}{Exercises}

\begin{xca}
Let $H_0$ be a Hilbert space, $T\in\bo(H_{0})$. Compute $H^{-1}(T)$ and $H^{-1}(T^{*})$.
\end{xca}

\begin{xca}\label{exer:dual_embedding}
 Let $H_0,H_1$ be Hilbert spaces such that $H_0\hookrightarrow H_1$ is dense and continuous. Prove that $H_1'\hookrightarrow H_0'$ is dense and continuous as well.
\end{xca}

\begin{xca}
Prove the following statement which generalises \prettyref{prop:ContDistributionalExtn} from
above: Let $H_0$ be a Hilbert space, $A\in\bo(H_{0})$. Assume that $T\colon\dom(T)\subseteq H_{0}\to H_{0}$ is
densely defined and closed with $0\in\rho(T)$ and $T^{-1}A=AT^{-1}+T^{-1}BT^{-1}$
for some bounded $B\in\bo(H_{0})$. Then $A$ admits a unique continuous
extension, $\overline{{A}}\in\bo(H^{-1}(T^{*}))$.
\end{xca}

\begin{xca}
Let $H_0$ be a Hilbert space, $N\colon\dom(N)\subseteq H_{0}\to H_{0}$ be a \emph{normal} operator;
that is, $N$ is densely defined and closed and $NN^{*}=N^{*}N$.
Show that $H^{-1}(N)\cong H^{-1}(N^{*})$ and deduce $H^{-1}(\td{\nu})\cong H^{-1}(\td{\nu}^{*})$.
\end{xca}

\begin{xca} \label{exer:completion}
Prove \prettyref{prop:extrapolation_completion}.
\end{xca}

\begin{xca}
Let $H_0$ be a Hilbert space, $n\in\N$ and $T\colon\dom(T)\subseteq H_{0}\to H_{0}$ be a densely
defined, closed linear operator with $0\in\rho(T)$. We define $H^{n}(T)\coloneqq\dom(T^{n})$
and $H^{-n}(T)\coloneqq H^{-1}(T^{n})$. Show that
for all $k\in\N$ and $\ell\in\Z$ we have that $H^{k+\ell}(T)\hookrightarrow H^{\ell}(T)$
continuously and densely. Also show that $\mathcal{D}\coloneqq\bigcap_{n\in\N}\dom(T^{n})$
is dense in $H^{\ell}(T)$ and dense in $H^{-\ell}(T^\ast)$ for all $\ell\in \N$ and that $T|_{\mathcal{{D}}}$
can be continuously extended to a topological isomorphism $H^{\ell}(T)\to H^{\ell-1}(T)$  and to an isomorphism $H^{-\ell+1}(T^\ast)\to H^{-\ell}(T^\ast)$ for each $\ell\in \N$.
\end{xca}

\begin{xca}
\label{exer:proof_Lemma}
Prove \prettyref{lem:resolvTD}. 

Hint: Prove a similar equality with $\td{\nu}^{-1}$ formally replaced
by $z\in\partial\ball{r}{r}\subseteq\C$ and deduce the assertion
with the help of \prettyref{thm:spectral-repr-derivative}.
\end{xca}

% \begin{xca}
% \label{exer:delta_sequence_weighted} Construct a sequence $(\phi_{n})_{n}$
% as claimed to exist in the beginning of the proof of \prettyref{thm:IVPs_PDEs}
% (iii)$\Rightarrow$(ii).
% \end{xca}
% 
% \begin{xca}
% \label{exer:density_sum} Show that $\left(\td{\nu}^{*}\right)^{-1}\left[H_{\nu}^{1}(\R;H)\cap\Lnu(\R;H^{1}(A))\right]$
% is dense in $H_{\nu}^{1}(\R;H)\cap\Lnu(\R;H^{1}(A)).$
% \end{xca}

\printbibliography[heading=subbibliography]

\chapter{Differential Algebraic Equations}

Let $H$ be a Hilbert space and $\nu\in\R$.
We saw in the previous chapter how initial value problems can
be formulated within the framework of evolutionary equations. More
precisely, we have studied problems of the form 
\begin{equation}
\begin{cases}
\left(\td{\nu}M_{0}+M_{1}+A\right)U=0 & \text{ on }\oi{0}{\infty},\\
M_{0}U(0\rlim)=M_{0}U_{0}
\end{cases}\label{eq:IV}
\end{equation}
for $U_{0}\in H$, $M_{0},M_{1}\in\bo(H)$ and $A\from\dom(A)\subseteq H\to H$ skew-selfadjoint; that is, we
have considered material laws of the form 
\[
M(z)\coloneqq M_{0}+z^{-1}M_{1}\quad(z\in\C\setminus\{0\}).
\]
Here, the initial value is attained in a weak sense as an
equality in the extrapolation space $H^{-1}(A)$. The first line
is also meant in a weak sense since the left-hand side turned out
to be a functional in $H_{\nu}^{-1}(\R;H)\cap\Lnu(\R;H^{-1}(A))$.
In \prettyref{thm:IVPs_PDEs} it was shown that the latter problem
can be rewritten as 
\[
\left(\td{\nu}M_{0}+M_{1}+A\right)U=\delta_{0}M_{0}U_{0}.
\]
In this chapter we aim to inspect initial value problems a little
closer but in the particularly simple case when $A=0$. However, we want to impose the initial condition for $U$ and not just 
$M_{0}U$. Thus, we want to deal with the problem 
\begin{equation}
\begin{cases}
\left(\td{\nu}M_{0}+M_{1}\right)U=0 & \text{ on }\oi{0}{\infty},\\
U(0\rlim)=U_{0}
\end{cases}\label{eq:DAE}
\end{equation}
for two bounded operators $M_{0},M_{1}$ and an initial value $U_{0}\in H$.
This class of differential equations is known as \emph{differential
algebraic equations} since the operator $M_{0}$ is allowed to have
a non-trivial kernel. Thus, \prettyref{eq:DAE} is a coupled problem
of a differential equation
(on $(\ker M_{0})^{\bot}$) and an algebraic equation (on $\ker M_{0}$). We begin by treating these equations in
the finite-dimensional case; that is, $H=\C^{n}$ and $M_{0},M_{1}\in\C^{n\times n}$
for some $n\in\N$.

\section{The finite-dimensional Case}

Throughout this section let $n\in\N$ and $M_{0},M_{1}\in\C^{n\times n}$.
\begin{defn*}
We define the \emph{\index{spectrum, matrix pair}spectrum of the
matrix pair} $(M_{0},M_{1})$ by 
\[
\sigma(M_{0},M_{1})\coloneqq\set{z\in\C}{\det(zM_{0}+M_{1})=0},
\]
and the \emph{resolvent set of the matrix pair\index{resolvent set, matrix pair} $(M_{0},M_{1})$
}by 
\[
\rho(M_{0},M_{1})\coloneqq\C\setminus\sigma(M_{0},M_{1}).
\]
\end{defn*}

\begin{rem}
\begin{enumerate}
\item It is immediate that $\sigma(M_{0},M_{1})$ is closed since
the mapping $z\mapsto\det(zM_{0}+M_{1})$ is continuous.
\item Note in particular that the spectrum (the set of eigenvalues) of
a matrix $A$ corresponds in this setting to the spectrum of the matrix pair $(1,-A)$.
\end{enumerate}
\end{rem}

In contrast to the case of the spectrum of one matrix, it may happen
that $\sigma(M_{0},M_{1})=\C$ (for example we can choose $M_{0}=0$ and $M_{1}$ singular).
More precisely, we have the following result.

\begin{lem}
\label{lem:spectrum_matrix_pair}The set $\sigma(M_{0},M_{1})$ is
either finite or equals the whole complex plane $\C$. If $\sigma(M_{0},M_{1})$
is finite then $\card{\sigma(M_{0},M_{1})}\leq n.$
\end{lem}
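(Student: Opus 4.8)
The key object is the polynomial $p(z) \coloneqq \det(zM_0 + M_1)$ in the single complex variable $z$. The entries of the matrix $zM_0 + M_1$ are affine-linear functions of $z$, so by the Leibniz formula for the determinant, $p$ is a polynomial in $z$ of degree at most $n$. Now a standard fact from complex analysis (or just algebra over $\C$): a polynomial is either identically zero or has only finitely many zeros, and in the latter case the number of zeros is bounded by its degree.

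The plan is therefore as follows. First I would observe that $\sigma(M_0,M_1) = p^{-1}[\{0\}]$ by definition. Second, I would split into the two cases: either $p$ is the zero polynomial, in which case every $z\in\C$ satisfies $p(z)=0$ and hence $\sigma(M_0,M_1)=\C$; or $p$ is not the zero polynomial, in which case it has at most $\deg p \leq n$ roots, so $\sigma(M_0,M_1)$ is finite with cardinality at most $n$. That $p$ has degree at most $n$ follows because each of the $n!$ terms in the Leibniz expansion is a product of $n$ affine-linear functions of $z$, hence a polynomial of degree at most $n$, and a sum of such is again of degree at most $n$.

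There is essentially no obstacle here; the only thing worth being slightly careful about is justifying that $p$ really is a polynomial of degree $\leq n$ rather than, say, a rational function or something of higher degree — this is immediate from the multilinearity of the determinant in the rows (or columns) together with the fact that each row of $zM_0+M_1$ depends affinely on $z$. One could also phrase the finiteness dichotomy by invoking the identity theorem for holomorphic functions applied to $p$, but the purely algebraic statement about polynomials over a field is cleaner and entirely elementary. No reference to earlier results in the text is even needed.
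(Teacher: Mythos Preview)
Your proof is correct and matches the paper's approach exactly: the paper also observes that $z\mapsto\det(zM_0+M_1)$ is a polynomial of degree at most $n$, then splits into the cases where it is identically zero (giving $\sigma(M_0,M_1)=\C$) or not (giving at most $n$ roots). Your only addition is the explicit justification via the Leibniz formula that the degree is at most $n$, which the paper takes for granted.
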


\begin{proof}
The function $z\mapsto\det(zM_{0}+M_{1})$ is a polynomial of order
less than or equal to $n$. If it is constantly zero, then $\sigma(M_{0},M_{1})=\C$
and otherwise $\card{\sigma(M_{0},M_{1})}\leq n$.
\end{proof}

\begin{defn*}
The matrix pair $(M_{0},M_{1})$ is called \emph{regular}\index{regular, matrix pair}
if $\sigma(M_{0},M_{1})\ne\C$.
\end{defn*}
The main problem in solving an initial value problem of the form \prettyref{eq:DAE}
is that one cannot expect a solution for each initial value $U_{0}\in\C^n$
as the following simple example shows.
\begin{example}
Let $M_{0}=\begin{pmatrix}
1 & 1\\
0 & 0
\end{pmatrix}, \,M_{1}=\begin{pmatrix}
1 & 0\\
0 & 1
\end{pmatrix}$ and let $U_{0}\in\C^{2}$. We assume that there exists a solution
$U\from\R_{\geq0}\to\C^{2}$ satisfying \prettyref{eq:DAE}; that is, 
\begin{align*}
U_{1}'(t)+U_{2}'(t)+U_{1}(t) & =0\quad(t>0),\\
U_{2}(t) & =0\quad(t>0),\\
U(0\rlim) & =U_{0}.
\end{align*}
The second and third equation yield that the second coordinate of $U_{0}$
has to be zero. Then, for $U_{0}=(x,0)\in\C^{2}$ the unique solution
of the above problem is given by 
\[
U(t)=\bigl(U_{1}(t),U_{2}(t)\bigr)=(x\e^{-t},0) \quad (t\geq0).
\]
\end{example}

\begin{defn*}
We call an initial value $U_{0}\in\C^{n}$ \emph{consistent} \index{consistent initial value} for \prettyref{eq:DAE},
if there exists $\nu>0$ and $U\in C(\R_{\geq0};\C^{n})\cap\Lnu(\R_{\geq0};\C^{n})$ such that \prettyref{eq:DAE} holds. We denote the
set of all consistent initial values for \prettyref{eq:DAE} by 
\[
\IV(M_{0},M_{1})\coloneqq\set{U_{0}\in\C^{n}}{U_{0}\text{ consistent}}.
\]
\end{defn*}
\begin{rem}
It is obvious that $\IV(M_{0},M_{1})$ is a subspace of $\C^{n}$.
In particular, $0\in\IV(M_{0},M_{1})$.
\end{rem}

It is now our goal to determine the space $\IV(M_{0},M_{1})$. One
possibility for doing so uses the so-called \emph{quasi-Weierstraß normal
form}.
\begin{prop}[quasi-Weierstraß normal form\index{quasi-Weierstraß normal form}]
\label{prop:Weierstrass_normal_form} Assume that $(M_{0},M_{1})$
is regular. Then there exist invertible matrices $P,Q\in\C^{n\times n}$
such that 
\[
PM_{0}Q=\begin{pmatrix}
1 & 0\\
0 & N
\end{pmatrix},\quad PM_{1}Q=\begin{pmatrix}
C & 0\\
0 & 1
\end{pmatrix},
\]
where $C\in\C^{k\times k}$ and $N\in\C^{(n-k)\times(n-k)}$ for some
$k\in\{0,\ldots,n\}$. Moreover, the matrix $N$ is nilpotent; that is, there exists $\ell\in\N$ such that $N^{\ell}=0$.
\end{prop}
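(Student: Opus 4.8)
The statement is the classical quasi-Weierstraß (or Weierstraß--Kronecker) normal form for a regular matrix pencil. The plan is to exploit regularity to choose a good shift parameter and then perform a Jordan-type decomposition of a single matrix. First I would use that $(M_0,M_1)$ is regular to pick $\lambda\in\rho(M_0,M_1)$, which exists by \prettyref{lem:spectrum_matrix_pair} since $\sigma(M_0,M_1)$ is then finite (hence a proper closed subset of $\C$). For such $\lambda$ the matrix $\lambda M_0+M_1$ is invertible, so I may replace the pencil by the equivalent (left-multiplied) pencil with $\tilde M_0\coloneqq (\lambda M_0+M_1)^{-1}M_0$ and $\tilde M_1\coloneqq (\lambda M_0+M_1)^{-1}M_1 = 1-\lambda\tilde M_0$. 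Thus everything is governed by the single matrix $\tilde M_0$, and the two pencil coefficients commute.

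The key step is then the decomposition of $\C^n$ into the generalised eigenspace of $\tilde M_0$ at $0$ and the sum of all the other generalised eigenspaces. Concretely, set $V_0\coloneqq \ker(\tilde M_0^{\,n})$ and $V_1\coloneqq \ran(\tilde M_0^{\,n})$; by the Fitting lemma (the finite-dimensional core--nilpotent decomposition) we have $\C^n = V_0\oplus V_1$, both subspaces are $\tilde M_0$-invariant, $\tilde M_0|_{V_0}$ is nilpotent, and $\tilde M_0|_{V_1}$ is invertible. Choosing a basis adapted to this splitting gives an invertible $Q_1$ with $Q_1^{-1}\tilde M_0 Q_1 = \diag(N_0, S)$ where $N_0$ is nilpotent (acting on $V_0$, of size $n-k$) and $S$ is invertible (acting on $V_1$, of size $k$). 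Correspondingly $Q_1^{-1}\tilde M_1 Q_1 = 1-\lambda\diag(N_0,S) = \diag(1-\lambda N_0,\, 1-\lambda S)$.

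Finally I would normalise each block. On the $V_1$-block, $S$ is invertible, so left-multiplying that block by $S^{-1}$ turns $(S, 1-\lambda S)$ into $(1, S^{-1}-\lambda\cdot 1)$, i.e.\ the first coefficient becomes the identity and the second becomes some matrix $C\coloneqq S^{-1}-\lambda$. On the $V_0$-block, the second coefficient $1-\lambda N_0$ is invertible (its only eigenvalue is $1$, since $N_0$ is nilpotent), so left-multiplying that block by $(1-\lambda N_0)^{-1}$ turns $(N_0,\,1-\lambda N_0)$ into $\bigl((1-\lambda N_0)^{-1}N_0,\, 1\bigr)$; the matrix $N\coloneqq (1-\lambda N_0)^{-1}N_0$ is still nilpotent because it is a polynomial in the nilpotent matrix $N_0$ with no constant term (expand $(1-\lambda N_0)^{-1}=\sum_{j\geq 0}\lambda^j N_0^{\,j}$, a finite sum), and two nilpotent matrices that are polynomials in each other share the same nilpotency. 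Assembling the block-diagonal left factor with the earlier $(\lambda M_0+M_1)^{-1}$ gives the matrix $P$, and $Q\coloneqq Q_1$; this yields exactly the asserted form with $PM_0Q=\diag(1,N)$, $PM_1Q=\diag(C,1)$, and $N^\ell=0$ for some $\ell$ (one may take $\ell$ equal to the nilpotency index of $N_0$).

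The main obstacle — really the only non-formal point — is verifying that the renormalisation on the nilpotent block keeps nilpotency, i.e.\ that $(1-\lambda N_0)^{-1}$ makes sense as a polynomial in $N_0$ and that multiplying a nilpotent matrix by such a polynomial (with nonzero constant term) preserves nilpotency; this is where one uses that $N_0^{\,n-k}=0$ so the Neumann series terminates, and that all these matrices commute. Everything else is bookkeeping with block matrices and change of basis.
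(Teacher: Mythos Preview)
Your proof is correct and follows essentially the same route as the paper: shift by $\lambda\in\rho(M_0,M_1)$, reduce to the single matrix $\tilde M_0=(\lambda M_0+M_1)^{-1}M_0$, split off its nilpotent and invertible parts (you invoke the Fitting lemma, the paper invokes the Jordan normal form---same decomposition), and then normalise each block by the obvious left multiplication, noting that $(1-\lambda N_0)^{-1}N_0$ stays nilpotent. The only slip is a harmless block-ordering inconsistency at the very end: with your ordering $\diag(N_0,S)$ you actually obtain $PM_0Q=\diag(N,1)$ rather than $\diag(1,N)$, which a block permutation fixes.
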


\begin{proof}
Since $(M_{0},M_{1})$ is regular we find $\lambda\in\C$ such that
$\lambda M_{0}+M_{1}$ is invertible.
We set $P_{1}\coloneqq(\lambda M_{0}+M_{1})^{-1}$ and obtain 
\begin{align*}
M_{0,1} & \coloneqq P_{1}M_{0}=(\lambda M_{0}+M_{1})^{-1}M_{0},\\
M_{1,1} & \coloneqq P_{1}M_{1}=(\lambda M_{0}+M_{1})^{-1}M_{1}=1-\lambda M_{0,1}.
\end{align*}
Let now $P_{2}\in\C^{n\times n}$ such that 
\[
M_{0,2}\coloneqq P_{2}M_{0,1}P_{2}^{-1}=\begin{pmatrix}
J & 0\\
0 & \tilde{N}
\end{pmatrix}
\]
for some invertible matrix $J\in\C^{k\times k}$ and a nilpotent matrix
$\tilde{N}\in\C^{(n-k)\times(n-k)}$ (use the Jordan normal form
of $M_{0,1}$ here). Then 
\[
M_{1,2}\coloneqq P_{2}M_{1,1}P_{2}^{-1}=\begin{pmatrix}
1-\lambda J & 0\\
0 & 1-\lambda\tilde{N}
\end{pmatrix}.
\]
Now, by the nilpotency of $\tilde{N}$, the matrix $(1-\lambda\tilde{N})$ is
invertible by the Neumann series. We set 
\[
P_{3}\coloneqq\begin{pmatrix}
J^{-1} & 0\\
0 & (1-\lambda\tilde{N})^{-1}
\end{pmatrix}
\]
and obtain 
\[
P_{3}M_{0,2}=\begin{pmatrix}
1 & 0\\
0 & (1-\lambda\tilde{N})^{-1}\tilde{N}
\end{pmatrix},\text{ and }\quad P_{3}M_{1,2}=\begin{pmatrix}
J^{-1}-\lambda & 0\\
0 & 1
\end{pmatrix}.
\]
Note that $(1-\lambda\tilde{N})^{-1}\tilde{N}$ is nilpotent, since
the matrices commute and $\tilde{N}$ is nilpotent. Thus, the assertion
follows with $N\coloneqq(1-\lambda\tilde{N})^{-1}\tilde{N}$, $C\coloneqq J^{-1}-\lambda$,
$P=P_{3}P_{2}P_{1}$, and $Q=P_{2}^{-1}$.
\end{proof}

It is clear that the matrices $P$, $Q$, $C$ and $N$ in the previous proposition
are not uniquely determined by $M_0$ and $M_1$. However, the size of $N$ and $C$
as well as the degree of nilpotency of $N$ are determined by $M_{0}$
and $M_{1}$ as the following proposition shows.

\begin{prop}
\label{prop:index_and_uniqueness}Let $P,Q\in\C^{n\times n}$ be invertible
such that 
\[
PM_{0}Q=\begin{pmatrix}
1 & 0\\
0 & N
\end{pmatrix},\quad PM_{1}Q=\begin{pmatrix}
C & 0\\
0 & 1
\end{pmatrix},
\]
where $C\in\C^{k\times k}$, $N\in\C^{(n-k)\times(n-k)}$ for some
$k\in\{0,\ldots,n\}$, and $N$ is nilpotent. Then $(M_{0},M_{1})$
is regular and
\begin{enumerate}
\item\label{prop:index_and_uniqueness:item:1} $k$ is the degree of the polynomial $z\mapsto\det(zM_{0}+M_{1})$.
\item\label{prop:index_and_uniqueness:item:2} $N^{\ell}=0$ if and only if
\[
\sup_{|z|\geq r}\norm{z^{-\ell+1}(zM_{0}+M_{1})^{-1}}<\infty
\]
for one (or equivalently all) $r>0$ such that $\ball{0}{r}\supseteq\sigma(M_{0},M_{1})$.
\end{enumerate}
\end{prop}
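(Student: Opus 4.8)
The regularity of $(M_0,M_1)$ is immediate: since $P,Q$ are invertible, $\det(zM_0+M_1) = \det(P)^{-1}\det(Q)^{-1}\det\bigl(z(PM_0Q)+(PM_1Q)\bigr)$, and the latter determinant factorises as $\det(z+C)\cdot\det(zN+1)$. The second factor is not identically zero (it equals $1$ at $z=0$), so $\det(zM_0+M_1)$ is not the zero polynomial, whence $\sigma(M_0,M_1)\neq\C$ by \prettyref{lem:spectrum_matrix_pair}.

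\textbf{Part (a).} Compute $\det(z+C)\det(zN+1)$ more carefully. Since $N$ is nilpotent, all its eigenvalues are $0$, so $\det(zN+1)=1$ identically (the eigenvalues of $zN+1$ are all $1$). Hence $\det(zM_0+M_1)=\det(P)^{-1}\det(Q)^{-1}\det(z+C)$, and $\det(z+C)$ is a polynomial of degree exactly $k$ (it is monic of degree $k$ in $z$, being the characteristic polynomial of $-C\in\C^{k\times k}$ up to sign). So the degree of $z\mapsto\det(zM_0+M_1)$ is $k$. This part is routine once the block factorisation is set up.

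\textbf{Part (b).} The plan is to reduce to the block normal form. For $z\in\rho(M_0,M_1)$ we have
\[
(zM_0+M_1)^{-1} = Q\begin{pmatrix} (z+C)^{-1} & 0\\ 0 & (zN+1)^{-1}\end{pmatrix}P,
\]
and since $P,Q$ are fixed invertible matrices, $\sup_{|z|\geq r}\norm{z^{-\ell+1}(zM_0+M_1)^{-1}}<\infty$ holds if and only if both $\sup_{|z|\geq r}\norm{z^{-\ell+1}(z+C)^{-1}}<\infty$ and $\sup_{|z|\geq r}\norm{z^{-\ell+1}(zN+1)^{-1}}<\infty$. The first of these is automatic for every $\ell\geq 1$: for $|z|$ large, $(z+C)^{-1}=z^{-1}(1+z^{-1}C)^{-1}$ is $O(|z|^{-1})$, so $z^{-\ell+1}(z+C)^{-1}=O(|z|^{-\ell})$ is bounded. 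Thus the condition is equivalent to boundedness of $z^{-\ell+1}(zN+1)^{-1}$ for large $|z|$. Now expand via the Neumann series: since $N$ is nilpotent, say $N^m=0$ with $m$ minimal, we have the \emph{finite} sum $(zN+1)^{-1}=\sum_{j=0}^{m-1}(-1)^j z^j N^j$, valid for \emph{all} $z\in\C$ (no convergence issue). Therefore
\[
z^{-\ell+1}(zN+1)^{-1}=\sum_{j=0}^{m-1}(-1)^j z^{j-\ell+1}N^j.
\]
This is bounded as $|z|\to\infty$ if and only if every term with $N^j\neq 0$ has exponent $j-\ell+1\leq 0$, i.e. $j\leq\ell-1$ for all $j\leq m-1$ with $N^j\neq 0$. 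The largest $j$ with $N^j\neq 0$ is $m-1$ (by minimality of $m$), so the condition is $m-1\leq\ell-1$, i.e. $m\leq\ell$, which is exactly $N^\ell=0$. Finally, the independence of $r$ is clear since for $r'>r$ the supremum over $|z|\geq r'$ is dominated by that over $|z|\geq r$, and conversely on the compact annulus $r\leq|z|\leq r'$ (minus $\sigma(M_0,M_1)$, which lies in $\ball{0}{r}$, hence does not meet this annulus) the resolvent is continuous, hence bounded; so finiteness on $|z|\geq r$ and on $|z|\geq r'$ are equivalent.

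\textbf{Main obstacle.} There is no serious obstacle here; the only point requiring a little care is checking that the exponent bookkeeping in the finite Neumann expansion genuinely matches $N^\ell=0$ (as opposed to being off by one), and observing that the $C$-block never contributes an obstruction. Everything else is bookkeeping with the block structure and the fact that $\sigma(M_0,M_1)\subseteq\ball{0}{r}$ keeps the resolvent continuous and bounded on the annulus.
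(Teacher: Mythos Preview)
Your proof is correct and follows essentially the same approach as the paper: reduce to the block form, read off the determinant factorisation for part (a), and use the finite Neumann expansion of $(zN+1)^{-1}$ for part (b). Your converse argument in (b) --- reading off the nilpotency index directly from the leading term of the Laurent polynomial $\sum_j(-1)^j z^{j-\ell+1}N^j$ --- is slightly more direct than the paper's version, which instead multiplies by $N^{p-\ell-1}$ and takes a limit along $z=n\to\infty$ to isolate $N^{p-1}$; but the underlying idea is the same.
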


\begin{proof}
First, note that
\[
\det(zM_{0}+M_{1})=\frac{1}{\det P\,\det Q}\det\begin{pmatrix}
z+C & 0\\
0 & zN+1
\end{pmatrix}=\frac{1}{\det P\,\det Q}\det(z+C)\quad(z\in\C).
\]
Hence, $(M_{0},M_{1})$ is regular and 
\[
k=\deg\det((\cdot)+C)=\deg\det((\cdot)M_{0}+M_{1}),
\]
which shows \ref{prop:index_and_uniqueness:item:1}. Moreover, we have $\rho(M_{0},M_{1})=\rho(-C)$ and
\[
\left(zM_{0}+M_{1}\right)^{-1}=Q\begin{pmatrix}
(z+C)^{-1} & 0\\
0 & (zN+1)^{-1}
\end{pmatrix}P\quad(z\in\rho(M_{0},M_{1})),
\]
and hence, for $r>0$ with $\ball{0}{r}\supseteq\sigma(M_{0},M_{1})$
we have
\[
\norm{(zM_{0}+M_{1})^{-1}}\leq K_1\norm{(zN+1)^{-1}} \quad(|z|\geq r)
\]
for some $K_1\geq0$, since $\sup_{|z|\geq r}\norm{(z+C)^{-1}}<\infty$.
Now let $\ell\in\N$ such that $N^{\ell}=0$. Then
\[
\norm{(zN+1)^{-1}}=\norm{\sum_{k=0}^{\ell-1}(-1)^{k}z^{k}N^{k}}\leq K_2\abs{z}^{\ell-1}\quad(\abs{z}\geq r)
\]
for some constant $K_2\geq0$ and thus, 
\[
\norm{(zM_{0}+M_{1})^{-1}}\leq K_1K_2\abs{z}^{\ell-1}\quad(|z|\geq r).
\]
Assume on the other hand that 
\[
\sup_{|z|\geq r}\norm{z^{-\ell+1}(zM_{0}+M_{1})^{-1}}<\infty
\]
for some $\ell\in\N$ and $r>0$ with $\sigma(M_{0},M_{1})\subseteq\ball{0}{r}$.
Then there exist $\tilde{K}_{1},\tilde{K}_{2}\geq0$ such that
\[
\norm{(zN+1)^{-1}}\leq\norm{\begin{pmatrix}
(z+C)^{-1} & 0\\
0 & (zN+1)^{-1}
\end{pmatrix}}\leq\tilde{K}_{1}\norm{\left(zM_{0}+M_{1}\right)^{-1}}\leq\tilde{K_{2}}\abs{z}{}^{\ell-1}
\]
for all $z\in\C$ with $\abs{z}\geq r$.
Now, let $p\in\N$ be minimal such that $N^{p}=0$. We show that $p\leq\ell$
by contradiction. Assume $p>\ell$. Then we compute 
\begin{align*}
0& =\lim_{n\to\infty}\frac{1}{n^{\ell}}(nN+1)^{-1}N^{p-\ell-1}=\lim_{n\to\infty}\sum_{k=0}^{p-1}(-1)^{k}n^{k-\ell}N^{k+p-\ell-1}\\
 & = \lim_{n\to\infty}\sum_{k=0}^{\ell-1}(-1)^{k}n^{k-\ell}N^{k+p-\ell-1} + (-1)^{\ell}N^{p-1} \\
 & = (-1)^{\ell}N^{p-1},
\end{align*}
which contradicts the minimality of $p$.
\end{proof}
\begin{thm}
\label{thm:iv_matrix_case}Let $(M_{0},M_{1})$ be regular
and $P,Q\in\C^{n\times n}$ be chosen according to \prettyref{prop:Weierstrass_normal_form}. Let $k=\deg\det((\cdot)M_{0}+M_{1})$.
Then 
\[
\IV(M_{0},M_{1})=\set{U_{0}\in\C^{n}}{Q^{-1}U_{0}\in\C^{k}\times\{0\}}.
\]
Moreover, for each $U_{0}\in\IV(M_{0},M_{1})$
the solution $U$ of \prettyref{eq:DAE} is unique and satisfies $U\in C(\R_{\geq0};\C^{n})\cap C^{1}(\R_{>0};\C^{n})$
as well as 
\begin{align*}
M_{0}U'(t)+M_{1}U(t) & =0\quad(t>0),\\
U(0\rlim) & =U_{0}.
\end{align*}
\end{thm}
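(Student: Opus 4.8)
The plan is to reduce the system to its quasi-Weierstraß normal form and solve each block separately. Writing $V \coloneqq Q^{-1}U$ and multiplying the equation $\bigl(\td{\nu}M_0 + M_1\bigr)U = 0$ from the left by $P$, the problem becomes
\[
\left(\td{\nu}\begin{pmatrix} 1 & 0 \\ 0 & N \end{pmatrix} + \begin{pmatrix} C & 0 \\ 0 & 1 \end{pmatrix}\right)\begin{pmatrix} V_1 \\ V_2 \end{pmatrix} = 0,
\]
with $V_1$ taking values in $\C^k$ and $V_2$ in $\C^{n-k}$. The key observation is that these two blocks decouple: the first line reads $V_1' + CV_1 = 0$ on $\oi{0}{\infty}$, a genuine ODE, and the second reads $NV_2' + V_2 = 0$, a purely algebraic constraint once the nilpotency of $N$ is exploited. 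I would note that $U_0 \in \IV(M_0,M_1)$ if and only if the corresponding transformed initial value $V_0 \coloneqq Q^{-1}U_0$ is consistent for the transformed system, since $Q$ is a bijective bounded operator preserving continuity and membership in $\Lnu(\R_{\geq 0};\C^n)$.

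First I would treat the $V_2$-block. Applying $N$ repeatedly to $NV_2' + V_2 = 0$ and using $N^\ell = 0$ (for $\ell$ the degree of nilpotency, guaranteed by \prettyref{prop:Weierstrass_normal_form}), one obtains by induction $N^j V_2^{(j)} = (-1)^j V_2$ for all $j$, hence $V_2 = (-1)^\ell N^\ell V_2^{(\ell)} = 0$ on $\oi{0}{\infty}$ — provided $V_2$ is sufficiently differentiable, which is where I would invoke a bootstrap argument (the equation $V_2 = -NV_2'$ shows $V_2$ inherits one more derivative than $V_2'$ a priori has, iterating to $C^\infty$), or alternatively argue directly via the spectral representation / Fourier--Laplace calculus as in the hint to \prettyref{prop:index_and_uniqueness}. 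By continuity, $V_2(0\rlim) = 0$, so consistency forces $V_{0,2} = 0$; conversely if $V_{0,2} = 0$ then $V_2 \equiv 0$ solves the second block. This establishes $\IV(M_0,M_1) = \set{U_0 \in \C^n}{Q^{-1}U_0 \in \C^k \times \{0\}}$.

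Next I would handle the $V_1$-block: $V_1' + C V_1 = 0$, $V_1(0\rlim) = V_{0,1}$, which by the classical theory of linear ODEs with constant coefficients (or by \prettyref{thm:PicLind_class}) has the unique solution $V_1(t) = \e^{-tC}V_{0,1}$, which lies in $C^\infty(\R_{\geq 0};\C^k)$ and, extended by zero to $\Rl 0$, lies in $\Lnu(\R_{\geq 0};\C^k)$ for $\nu$ large enough (larger than $\max\set{-\Re\lambda}{\lambda \in \sigma(-C)}$). Assembling $U(t) = Q(V_1(t), 0)^\top = Q\,\e^{-t\,\mathrm{diag}(C)}\text{-part}\cdot V_0$ gives the claimed regularity $U \in C(\R_{\geq 0};\C^n) \cap C^1(\R_{>0};\C^n)$ and, multiplying the normal-form identity back by $P^{-1}$, recovers $M_0 U'(t) + M_1 U(t) = 0$ pointwise for $t > 0$ with $U(0\rlim) = U_0$. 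Uniqueness follows since any solution transforms to a solution of the decoupled system, where each block has a unique solution.

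The main obstacle is the regularity bootstrap for $V_2$: the definition of consistent initial value only requires $U \in C(\R_{\geq 0};\C^n) \cap \Lnu$, not differentiability, yet the algebraic elimination $N^j V_2^{(j)} = (-1)^j V_2$ needs $\ell$ derivatives of $V_2$. I expect the cleanest route is to first establish, from $V_2 = -NV_2'$ interpreted in the distributional/extrapolation sense (cf.\ the material of the previous chapter), that $V_2$ is actually smooth — or to observe that the equation $\td{\nu}NV_2 + V_2 = 0$ together with boundedness of $(zN+1)^{-1}$ and its growth bound from \prettyref{prop:index_and_uniqueness}\ref{prop:index_and_uniqueness:item:2} forces $V_2 = 0$ directly via the functional calculus, sidestepping the differentiability issue entirely. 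Everything else is routine linear algebra and standard ODE theory.
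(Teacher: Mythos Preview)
Your overall strategy---reduce to quasi-Weierstra\ss\ normal form, solve the $V_1$-block as an ODE, and argue the $V_2$-block must vanish---is exactly the paper's. The one substantive difference is how you eliminate $V_2$, and this is precisely the point you flag as a difficulty.

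You try to show $N^j V_2^{(j)} = (-1)^j V_2$ by repeated \emph{differentiation}, which indeed requires $V_2$ to be $\ell$ times differentiable and forces you into a bootstrap or functional-calculus workaround. The paper instead runs the induction in the \emph{other} direction: from $V_2 = -\td{\nu}(NV_2)$ apply $N^{\ell-1}$ to get $N^{\ell-1}V_2 = -\td{\nu}(N^\ell V_2) = 0$; then apply $N^{\ell-2}$ to the same equation to get $N^{\ell-2}V_2 = -\td{\nu}(N^{\ell-1}V_2) = -\td{\nu}(0) = 0$; and so on down to $V_2 = 0$. This ``top-down'' elimination only ever needs $NV_2$ to be (weakly) differentiable, which is already built into the equation $\td{\nu}(NV_2) + V_2 = 0$. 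No bootstrap is needed; the regularity obstacle you identify simply does not arise.

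One further difference: for uniqueness, the paper forward-references a later proposition (\prettyref{prop:uniqueness_DAE}) that uses the Fourier--Laplace transform. Your direct argument---any solution decouples, the $V_2$-block forces $V_2 = 0$, and the $V_1$-block is a linear ODE with unique solution---is self-contained and arguably cleaner here.
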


\begin{proof}
Let $C\in\C^{k\times k}$ and $N\in\C^{(n-k)\times(n-k)}$ be nilpotent
as in \prettyref{prop:Weierstrass_normal_form}. Obviously $U$ is
a solution of \prettyref{eq:DAE} if and only if $V\coloneqq Q^{-1}U$
is continuous on $\R_{\geq0}$ and solves 
\begin{align}
\left(\td{\nu}\begin{pmatrix}
1 & 0\\
0 & N
\end{pmatrix}+\begin{pmatrix}
C & 0\\
0 & 1
\end{pmatrix}\right)V & =0\quad\text{on }\oi{0}{\infty},\label{eq:auxP}\\
V(0\rlim) & =Q^{-1}U_{0}\eqqcolon V_{0}.\nonumber 
\end{align}
Clearly, if $Q^{-1}U_{0}=(x,0)\in\C^{k}\times\{0\}$ then $V$ given by $V(t)\coloneqq(\e^{-tC}x,0)$ for $t\geq0$ is a solution of \prettyref{eq:auxP} for $\nu>0$ large enough.
On the other hand, if $V$ given by $V(t)=(V_{1}(t),V_{2}(t))\in\C^{k}\times\C^{n-k}$ ($t\geq 0$) is
a solution of \prettyref{eq:auxP} then we have 
\[
\td{\nu}NV_{2}+V_{2}=0\quad\text{on }\oi{0}{\infty}.
\]
Since $N$ is nilpotent, there exists $\ell\in\N$ with $N^{\ell}=0$.
Hence, 
\[
N^{\ell-1}V_{2}(t) = -N^{\ell-1}\td{\nu}NV_{2}(t) = \td{\nu}N^\ell V_{2}(t) =0\quad(t>0),
\]
which in turn implies $\td{\nu}N^{\ell-1}V_{2}=0$ on $\oi{0}{\infty}$.
Using again the differential equation, we infer $N^{\ell-2}V_{2}(t)=0$
for $t>0$. Inductively, we deduce $V_{2}(t)=0$ for $t>0$
and by continuity $V_{2}(0\rlim)=0$, which yields $V_{0}=Q^{-1}U_{0}\in\C^{k}\times\{0\}$.
The uniqueness follows from \prettyref{prop:uniqueness_DAE} below.
\end{proof}

\section{The infinite-dimensional Case}

Let now $M_{0},M_{1}\in L(H)$.
Again, it is our aim to determine the space of consistent initial
values for the problem 
\begin{equation}
\begin{cases}
\left(\td{\nu}M_{0}+M_{1}\right)U=0 & \text{on }\oi{0}{\infty},\\
U(0\rlim)=U_{0}.
\end{cases}\label{eq:DAE_infinite}
\end{equation}

Here, consistent initial values are defined as in the finite-dimensional
setting:
\begin{defn*}
We call an initial value $U_{0}\in H$ \emph{consistent} \index{consistent initial value} for \prettyref{eq:DAE_infinite},
if there exist $\nu>0$ and $U\in C(\R_{\geq0};H)\cap\Lnu(\R_{\geq0};H)$ such that \prettyref{eq:DAE_infinite} holds. We denote
the set of all consistent initial values for \prettyref{eq:DAE_infinite} by 
\[
\IV(M_{0},M_{1})\coloneqq\set{U_{0}\in H}{U_{0}\text{ consistent}}.
\]
\end{defn*}
Before we try to determine $\IV(M_{0},M_{1})$ we prove a regularity
result for solutions of \prettyref{eq:DAE_infinite}.
\begin{prop}
\label{prop:regularity_DAE}Let $\nu>0$, $U_0\in H$ and $U\in C(\R_{\geq0};H)\cap\Lnu(\R_{\geq0};H)$
be a solution of \prettyref{eq:DAE_infinite}.
Then $M_{0}(U-\1_{\roi{0}{\infty}}U_{0})\in H_{\nu}^{1}(\R;H)$ and 
\[
\td{\nu}M_{0}\left(U-\1_{\roi{0}{\infty}}U_{0}\right)+M_{1}U=0.
\]
\end{prop}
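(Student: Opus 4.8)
\textbf{Proof plan for \prettyref{prop:regularity_DAE}.}

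The plan is to reduce the statement to the distributional reformulation of evolutionary equations developed in \prettyref{thm:NoClosureEEDistr} and \prettyref{lem:ivp}, specialised to the case $A=0$. First I would observe that the hypotheses of \prettyref{lem:ivp} are satisfied with $A=0$: indeed, with $A=0$ we have $H^{-1}(A)=H^{-1}(0)$, and since $U\in C(\R_{\geq0};H)$ with $\spt U\subseteq\roi{0}{\infty}$ (the latter following from causality, once we note $U$ vanishes on $\oi{-\infty}{0}$ since there is no right-hand side there), the map $M_0U-\1_{\roi{0}{\infty}}M_0U_0$ is continuous from $\R$ to $H$, hence certainly continuous with values in the (coarser) space $H^{-1}(0)$. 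The differential equation $\td{\nu}M_0U+M_1U=0$ on $\oi{0}{\infty}$ holds in the required weak sense: for a test function $\varphi\in H_\nu^1(\R;H)\cap\Lnu(\R;\dom(0))=H_\nu^1(\R;H)$ supported in $\roi{0}{\infty}$, one verifies the pairing vanishes by an integration-by-parts argument using that $U$ is continuous and solves the DAE classically on $\oi{0}{\infty}$ — this is where one uses that $U$ is differentiable away from $0$ once more regularity is known, or alternatively one interprets the DAE directly via the extrapolated time derivative. Actually the cleanest route is to check that, with $A=0$, $U$ satisfying \prettyref{eq:DAE_infinite} in the sense of $C(\R_{\geq0};H)\cap\Lnu(\R_{\geq0};H)$-solutions implies precisely the weak formulation in hypothesis (i) of \prettyref{lem:ivp}; I would spell this out via testing with $\varphi\in\cci(\oi{0}{\infty};H)$ and using density.

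Granting that, \prettyref{lem:ivp} with $A=0$ yields directly that $U-\1_{\roi{0}{\infty}}U_0\in\dom\bigl(\overline{\td{\nu}M_0+M_1}\bigr)$ and
\[
\bigl(\overline{\td{\nu}M_0+M_1}\bigr)(U-\1_{\roi{0}{\infty}}U_0)=-(M_1+0)U_0\1_{\roi{0}{\infty}}=-M_1U_0\1_{\roi{0}{\infty}}.
\]
Since $M_0$ is bounded, $\td{\nu}M_0(U-\1_{\roi{0}{\infty}}U_0)=\td{\nu}M_0U-\td{\nu}M_0\1_{\roi{0}{\infty}}U_0$ makes sense, and unwinding the operator sum gives $\td{\nu}M_0(U-\1_{\roi{0}{\infty}}U_0)+M_1(U-\1_{\roi{0}{\infty}}U_0)=-M_1U_0\1_{\roi{0}{\infty}}$, i.e. $\td{\nu}M_0(U-\1_{\roi{0}{\infty}}U_0)+M_1U-M_1\1_{\roi{0}{\infty}}U_0=-M_1\1_{\roi{0}{\infty}}U_0$, which simplifies to the asserted identity $\td{\nu}M_0(U-\1_{\roi{0}{\infty}}U_0)+M_1U=0$. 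In particular $M_0(U-\1_{\roi{0}{\infty}}U_0)\in\dom(\td{\nu})=H_\nu^1(\R;H)$, which is the first claim.

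The main obstacle, as I see it, is the careful verification that the rather weak notion of ``solution'' used in \prettyref{prop:regularity_DAE} (merely $U\in C(\R_{\geq0};H)\cap\Lnu(\R_{\geq0};H)$ with \prettyref{eq:DAE_infinite} holding ``on $\oi{0}{\infty}$'') coincides with hypothesis (i) of \prettyref{lem:ivp}; this requires pinning down in what exact sense the equation holds pointwise/distributionally on $\oi{0}{\infty}$ and matching the test-function classes. An alternative that avoids invoking \prettyref{lem:ivp} as a black box would be a direct computation: set $V\coloneqq U-\1_{\roi{0}{\infty}}U_0$, show $\spt V\subseteq\roi{0}{\infty}$ by causality, and compute $\td{\nu}M_0 V$ in the extrapolation space $H_\nu^{-1}(\R;H)$ by testing against $\varphi\in\cci(\R;H)$, splitting the integral at $0$, integrating by parts on $\oi{0}{\infty}$ using the equation there, and showing the boundary term at $0$ vanishes because $M_0V$ is continuous with $M_0V(0\rlim)=0$; the resulting functional is then represented by $-M_1U\in\Lnu(\R;H)$, whence $\td{\nu}M_0V\in\Lnu(\R;H)$ and $M_0V\in H_\nu^1(\R;H)$. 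Either way the substance is the boundary-term bookkeeping at $t=0$, and the continuity of $M_0V$ at $0$ is exactly what makes it vanish.
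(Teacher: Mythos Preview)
Your approach via \prettyref{lem:ivp} with $A=0$ is exactly the route the paper takes, and your verification of the hypotheses (continuity of $M_0(U-\1_{\roi{0}{\infty}}U_0)$ with values in $H^{-1}(0)\cong H$, etc.) is fine. However, there is a real gap at the step where you ``unwind the operator sum''. \prettyref{lem:ivp} only gives you
\[
U-\1_{\roi{0}{\infty}}U_0\in\dom\bigl(\overline{\td{\nu}M_0+M_1}\bigr),
\]
i.e.\ membership in the domain of the \emph{closure}. From this alone you cannot split off $\td{\nu}M_0(U-\1_{\roi{0}{\infty}}U_0)$ and $M_1(U-\1_{\roi{0}{\infty}}U_0)$ as separate well-defined $\Lnu$-elements, nor conclude $M_0(U-\1_{\roi{0}{\infty}}U_0)\in\dom(\td{\nu})$.

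The missing observation---and it is the crux of the paper's proof---is that $\td{\nu}M_0+M_1$ is \emph{already closed}: $\td{\nu}$ is closed and $M_0\in\bo(H)$, so $\td{\nu}M_0$ is closed; then $M_1\in\bo(H)$ implies $\td{\nu}M_0+M_1$ is closed (\prettyref{prop:inv-sum-comp-closed}). Hence the closure bar is vacuous, $U-\1_{\roi{0}{\infty}}U_0\in\dom(\td{\nu}M_0+M_1)=\dom(\td{\nu}M_0)$, so $M_0(U-\1_{\roi{0}{\infty}}U_0)\in\dom(\td{\nu})=H_\nu^1(\R;H)$, and now your algebraic simplification is legitimate. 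Add this one line and your argument is complete and matches the paper's.
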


\begin{proof}
  We extend $U$ to $\R$ by $0$.
  First, observe that $M_0 (U - \1_{\roi{0}{\infty}} U_0)\from \R\to H$ is continuous, since $U$ is continuous and $U(0\rlim) = U_0$.
  By \prettyref{lem:ivp} (with $A=0$), we obtain $U-\1_{\roi{0}{\infty}} U_0\in \dom\bigl(\overline{\td{\nu}M_0 + M_1}\bigr)$ and $\bigl(\overline{\td{\nu}M_0+M_1}\bigr)(U - \1_{\roi{0}{\infty}} U_0) = -M_1U_0 \1_{\roi{0}{\infty}}$. Since $\td{\nu}$ is closed and $M_0$ is bounded, $\td{\nu}M_0$ is closed as well. Since $M_1$ is bounded, therefore also
  $\td{\nu}M_0+M_1$ is closed. Thus,
  $U-\1_{\roi{0}{\infty}}U_0\in \dom(\td{\nu}M_0 + M_1) = \dom(\td{\nu} M_0)$ and therefore $M_0(U-\1_{\roi{0}{\infty}}U_0)\in \dom(\td{\nu})$, and
  \[\td{\nu}M_0(U - \1_{\roi{0}{\infty}} U_0) + M_1U = 0. \qedhere\]
\end{proof}

We now come back to the space $\IV(M_{0},M_{1})$. Since we
are now dealing with an infinite-dimensional setting, we cannot use
normal forms to determine $\IV(M_{0},M_{1})$ without dramatically restricting
the class of operators. Thus, we follow a different approach
 using so-called Wong sequences.
\begin{defn*}
We set 
\[
\IV_{0}\coloneqq H
\]
and for $k\in\N_0$ we set 
\[
\IV_{k+1}\coloneqq M_{1}^{-1}[M_{0}[\IV_{k}]].
\]
The sequence $(\IV_{k})_{k\in\N_0}$ is called the \emph{Wong sequence\index{Wong sequence}}
associated with $(M_{0},M_{1})$.
\end{defn*}
\begin{rem}
By induction, we infer $\IV_{k+1}\subseteq\IV_{k}$ for each $k\in\N_0$.
\end{rem}

As in the matrix case, we denote by 
\[
\rho(M_{0},M_{1})\coloneqq\set{z\in\C}{(zM_{0}+M_{1})^{-1}\in \bo(H)}
\]
the \emph{resolvent set of $(M_{0},M_{1})$}.
\begin{lem}
\label{lem:properties-Wong}Let $k\in\N_0$. Then:
\begin{enumerate}
\item\label{lem:properties-Wong:item:1} $M_{1}(zM_{0}+M_{1})^{-1}M_{0}=M_{0}(zM_{0}+M_{1})^{-1}M_{1}$
for each $z\in\rho(M_{0},M_{1})$.
\item\label{lem:properties-Wong:item:2} $(zM_{0}+M_{1})^{-1}M_{0}[\IV_{k}]\subseteq\IV_{k+1}$ for
each $z\in\rho(M_{0},M_{1})$.
\item\label{lem:properties-Wong:item:3} If $x\in\IV_{k}$ we find $x_{1},\ldots,x_{k+1}\in H$ such
that for each $z\in\rho(M_{0},M_{1})\setminus\{0\}$
\[
(zM_{0}+M_{1})^{-1}M_{0}x=\frac{1}{z}x+\sum_{\ell=1}^{k}\frac{1}{z^{\ell+1}}x_{\ell}+\frac{1}{z^{k+1}}(zM_{0}+M_{1})^{-1}x_{k+1}.
\]
\item\label{lem:properties-Wong:item:4} If $\rho(M_{0},M_{1})\ne\varnothing$ then $M_{1}^{-1}[M_{0}[\overline{\IV_{k}}]]\subseteq\overline{\IV_{k+1}}$.
\end{enumerate}
\end{lem}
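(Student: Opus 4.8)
\textbf{Proof plan for \prettyref{lem:properties-Wong}.}

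The plan is to establish the four statements in order, since each one feeds into the next. For \ref{lem:properties-Wong:item:1}, I would simply note the algebraic identity $M_1(zM_0+M_1)^{-1}M_0 = M_0(zM_0+M_1)^{-1}M_1$ follows by writing $M_1 = (zM_0+M_1) - zM_0$ on the left and $M_0 = \frac{1}{z}\bigl((zM_0+M_1)-M_1\bigr)$ (for $z\neq 0$; the case $z=0$ is trivial since then $M_1^{-1}\in\bo(H)$ and both sides are $M_0 M_1^{-1}M_0 = M_0 M_1^{-1} M_0$), and cancel the outer resolvent factors against their inverses. Concretely, $M_1(zM_0+M_1)^{-1}M_0 = M_0 - zM_0(zM_0+M_1)^{-1}M_0$ and $M_0(zM_0+M_1)^{-1}M_1 = M_0 - M_0(zM_0+M_1)^{-1}zM_0$, and these agree.

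For \ref{lem:properties-Wong:item:2}, I proceed by induction on $k$. The case $k=0$ holds because $\IV_1 = M_1^{-1}[M_0[H]]$, and for any $x\in H$, setting $y\coloneqq (zM_0+M_1)^{-1}M_0 x$ we have $M_1 y = M_1(zM_0+M_1)^{-1}M_0 x = M_0(zM_0+M_1)^{-1}M_1 x \in M_0[H] = M_0[\IV_0]$ by \ref{lem:properties-Wong:item:1}, so $y\in\IV_1$. For the inductive step, take $x\in\IV_{k+1} = M_1^{-1}[M_0[\IV_k]]$, so $M_1 x = M_0 w$ for some $w\in\IV_k$. Put $y\coloneqq(zM_0+M_1)^{-1}M_0 x$. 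Then $M_1 y = M_0(zM_0+M_1)^{-1}M_1 x = M_0(zM_0+M_1)^{-1}M_0 w$, and by the induction hypothesis $(zM_0+M_1)^{-1}M_0 w\in\IV_{k+1}$, hence $M_1 y\in M_0[\IV_{k+1}]$, i.e. $y\in\IV_{k+2}$.

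For \ref{lem:properties-Wong:item:3}, I would argue by induction on $k$ again, expanding the resolvent via a \enquote{resolvent-type} recursion. For $k=0$ and $x\in\IV_0=H$: write $(zM_0+M_1)^{-1}M_0 x = \frac{1}{z}\bigl((zM_0+M_1)^{-1}(zM_0+M_1) - (zM_0+M_1)^{-1}M_1\bigr)x = \frac{1}{z}x - \frac{1}{z}(zM_0+M_1)^{-1}M_1 x$, so set $x_1\coloneqq -M_1 x$. For the inductive step, suppose $x\in\IV_{k+1}$, so $M_1 x = M_0 w$ with $w\in\IV_k$; applying the induction hypothesis to $w$ gives constants $w_1,\dots,w_{k+1}$ with the stated expansion for $(zM_0+M_1)^{-1}M_0 w$. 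Then substitute the $k=0$ computation $(zM_0+M_1)^{-1}M_0 x = \frac{1}{z}x - \frac{1}{z}(zM_0+M_1)^{-1}M_1 x = \frac{1}{z}x - \frac{1}{z}(zM_0+M_1)^{-1}M_0 w$ and plug in the expansion of $(zM_0+M_1)^{-1}M_0 w$ from the induction hypothesis; collecting powers of $z^{-1}$ yields the required form with $x_1 = -w$, $x_{\ell+1} = -w_\ell$ for $\ell=1,\dots,k+1$. The bookkeeping with the powers of $z$ is routine but must be done carefully to match the stated indices.

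Finally, \ref{lem:properties-Wong:item:4} is where I expect the main (modest) obstacle, namely transferring a statement about $\IV_k$ to its closure using continuity of the bounded operator $(zM_0+M_1)^{-1}$ rather than of $M_1^{-1}$ (which need not be bounded). Fix $z\in\rho(M_0,M_1)$ and let $x\in M_1^{-1}[M_0[\overline{\IV_k}]]$, so $M_1 x = M_0 w$ for some $w\in\overline{\IV_k}$; choose $w_n\in\IV_k$ with $w_n\to w$. By \ref{lem:properties-Wong:item:2}, $y_n\coloneqq(zM_0+M_1)^{-1}M_0 w_n\in\IV_{k+1}$, and by \ref{lem:properties-Wong:item:1}, $M_1 y_n = M_0(zM_0+M_1)^{-1}M_1 w_n$. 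The difficulty is that I need to identify the limit of $y_n$ and show it relates to $x$. I would instead argue directly: from $M_1 x = M_0 w$ we get, using the $k=0$ computation, $(zM_0+M_1)^{-1}M_0 x = \frac{1}{z}x - \frac{1}{z}(zM_0+M_1)^{-1}M_0 w$; since $(zM_0+M_1)^{-1}M_0$ is bounded and $w_n\to w$, the elements $z_n\coloneqq(zM_0+M_1)^{-1}M_0 w_n\in\IV_{k+1}$ converge to $(zM_0+M_1)^{-1}M_0 w$, so $(zM_0+M_1)^{-1}M_0 w\in\overline{\IV_{k+1}}$; hence $(zM_0+M_1)^{-1}M_0 x = \frac{1}{z}x - \frac{1}{z}(zM_0+M_1)^{-1}M_0 w$. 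Rearranging, $x = z(zM_0+M_1)^{-1}M_0 x + (zM_0+M_1)^{-1}M_0 w$; the second summand lies in $\overline{\IV_{k+1}}$, and for the first, $(zM_0+M_1)^{-1}M_0 x\in\IV_{k+1}$ by \ref{lem:properties-Wong:item:2} (applied with $k$ there equal to $0$, since $x\in\IV_0$) — wait, that only gives $\IV_1$; so here I would instead observe that $x\in M_1^{-1}[M_0[\overline{\IV_k}]]$ already means we want $x\in\overline{\IV_{k+1}}$, and the cleanest route is: $x = (zM_0+M_1)^{-1}(zM_0+M_1)x = (zM_0+M_1)^{-1}(zM_0 x + M_1 x) = z(zM_0+M_1)^{-1}M_0 x + (zM_0+M_1)^{-1}M_0 w$. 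By \ref{lem:properties-Wong:item:2} with the induction hypothesis $x\in\overline{\IV_k}$ (or rather, noting $(zM_0+M_1)^{-1}M_0[\overline{\IV_k}]\subseteq\overline{\IV_{k+1}}$, which follows from \ref{lem:properties-Wong:item:2} and continuity), both summands lie in $\overline{\IV_{k+1}}$, so $x\in\overline{\IV_{k+1}}$. I would present this last argument with the auxiliary observation $(zM_0+M_1)^{-1}M_0[\overline{\IV_k}]\subseteq\overline{\IV_{k+1}}$ stated and proved first (immediate from \ref{lem:properties-Wong:item:2} by taking closures and using boundedness of the resolvent), which makes \ref{lem:properties-Wong:item:4} a two-line consequence.
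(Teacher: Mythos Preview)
Your arguments for \ref{lem:properties-Wong:item:1}--\ref{lem:properties-Wong:item:3} are correct and standard; the paper in fact leaves these as an exercise, so you have filled them in appropriately.

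For \ref{lem:properties-Wong:item:4}, your final decomposition
\[
x=z(zM_0+M_1)^{-1}M_0 x+(zM_0+M_1)^{-1}M_0 w
\]
together with the closure version of \ref{lem:properties-Wong:item:2} is exactly the paper's argument. However, you invoke ``the induction hypothesis $x\in\overline{\IV_k}$'' without ever having set up an induction, and this step is not free: from $x\in M_1^{-1}[M_0[\overline{\IV_k}]]$ alone you do \emph{not} know $x\in\overline{\IV_k}$. The paper (and, implicitly, your own argument) proves \ref{lem:properties-Wong:item:4} by induction on $k$. The base case $k=0$ is trivial since $M_1^{-1}[M_0[H]]=\IV_1\subseteq\overline{\IV_1}$. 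For the inductive step, from $M_1x=M_0w$ with $w\in\overline{\IV_k}\subseteq\overline{\IV_{k-1}}$ the induction hypothesis gives $x\in M_1^{-1}[M_0[\overline{\IV_{k-1}}]]\subseteq\overline{\IV_k}$, and \emph{then} your decomposition shows both summands lie in $\overline{\IV_{k+1}}$. You should rewrite the paragraph to make this induction explicit; as it stands the argument reads as circular, even though the underlying idea is right and coincides with the paper's.
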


\begin{proof}
The proof of the statements \ref{lem:properties-Wong:item:1} to \ref{lem:properties-Wong:item:3} are left as \prettyref{exer:lemma_wong_sequence}. We now prove \ref{lem:properties-Wong:item:4}. If $k=0$ there is nothing to show. So assume that
the statement holds for some $k\in\N_0$ and let $x\in M_{1}^{-1}\left[M_{0}\left[\overline{\IV_{k+1}}\right]\right]$.
Since $\overline{\IV_{k+1}}\subseteq\overline{\IV_{k}}$, we infer
$x\in M_{1}^{-1}\left[M_{0}\left[\overline{\IV_{k}}\right]\right]\subseteq\overline{\IV_{k+1}}$
by induction hypothesis. Hence, we find a sequence $(w_{n})_{n\in\N}$
in $\IV_{k+1}$ with $w_{n}\to x$. Let now $z\in\rho(M_{0},M_{1})$.
Then, by \ref{lem:properties-Wong:item:2}, we have $(zM_{0}+M_{1})^{-1}M_{0}w_{n}\subseteq\IV_{k+2}$ for
each $n\in\N$ and hence, $(zM_{0}+M_{1})^{-1}M_{0}x\in\overline{\IV_{k+2}}$.
Moreover, since $M_{1}x\in M_{0}\left[\overline{\IV_{k+1}}\right]$,
we find a sequence $(y_{n})_{n\in\N}$ in $\IV_{k+1}$ with $M_{0}y_{n}\to M_{1}x$.
Setting now 
\[x_{n}\coloneqq(zM_{0}+M_{1})^{-1}zM_{0}x+(zM_{0}+M_{1})^{-1}M_{0}y_{n}\in\overline{\IV_{k+2}}\]
(where, again, we have used \ref{lem:properties-Wong:item:2}) for $n\in\N$, we derive 
\begin{align*}
x_{n} & =(zM_{0}+M_{1})^{-1}zM_{0}x+(zM_{0}+M_{1})^{-1}M_{0}y_{n}
 =x-\left(zM_{0}+M_{1}\right)^{-1}(M_{1}x-M_{0}y_{n}) \\
 & \to x
\end{align*}
as $n\to\infty$ and thus, $x\in\overline{\IV_{k+2}}$.
\end{proof}

The importance of the Wong sequence becomes apparent if we consider
solutions of \prettyref{eq:DAE_infinite}.

\begin{lem}
\label{lem:initial_values_Wong-sequence}Assume that $\rho(M_{0},M_{1})\ne\varnothing$.
Let $\nu>0$ and $U\in\Lnu(\R_{\geq0};H)\cap C(\R_{\geq0};H)$
be a solution of \prettyref{eq:DAE_infinite}. Then $U(t)\in\bigcap_{k\in\N_0}\overline{\IV_{k}}$
for each $t\geq0$.
\end{lem}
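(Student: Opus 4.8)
The plan is to show by induction on $k\in\N_0$ that any solution $U$ of \prettyref{eq:DAE_infinite} satisfies $U(t)\in\overline{\IV_k}$ for all $t\geq 0$. The case $k=0$ is trivial since $\overline{\IV_0}=H$. For the induction step, the key tool is \prettyref{lem:properties-Wong}\ref{lem:properties-Wong:item:4}, which tells us $M_1^{-1}[M_0[\overline{\IV_k}]]\subseteq\overline{\IV_{k+1}}$ whenever $\rho(M_0,M_1)\neq\varnothing$. So it suffices to show that if $U(t)\in\overline{\IV_k}$ for all $t\geq 0$, then $M_1 U(t)\in M_0[\overline{\IV_k}]$ for all $t\geq 0$, i.e.\ $M_1 U(t)$ is the image under $M_0$ of something in $\overline{\IV_k}$.

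First I would invoke \prettyref{prop:regularity_DAE}: extending $U$ by zero to $\R$ and setting $U_0\coloneqq U(0\rlim)$, we get $M_0(U-\1_{\roi{0}{\infty}}U_0)\in H_\nu^1(\R;H)=\dom(\td{\nu})$ together with
\[
\td{\nu}M_0(U-\1_{\roi{0}{\infty}}U_0)+M_1U=0.
\]
By the Sobolev embedding theorem (\prettyref{thm:Sobolev_emb}), $M_0(U-\1_{\roi{0}{\infty}}U_0)$ has a continuous representative and is (continuously) differentiable on $\oi{0}{\infty}$ with derivative equal to $\td{\nu}M_0(U-\1_{\roi{0}{\infty}}U_0)$, using \prettyref{exer:weak_diff+cont}. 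Hence for $t>0$,
\[
M_1 U(t) = -\td{\nu}M_0(U-\1_{\roi{0}{\infty}}U_0)(t) = -\frac{\d}{\d t}\,M_0 U(t) = -M_0\,U'(t),
\]
where $U'(t)$ exists because $M_1 U$ is continuous (as $U$ is) and, on the smooth part, $M_0 U$ is differentiable; more carefully one argues that $U-\1_{\roi{0}{\infty}}U_0$ (and hence $U$ on $\oi{0}{\infty}$) is actually in $C^1$ by bootstrapping: $\td{\nu}M_0(U-\1_{\roi{0}{\infty}}U_0)=-M_1U$ is continuous, so by \prettyref{exer:weak_diff+cont} the function $M_0(U-\1_{\roi{0}{\infty}}U_0)$ is $C^1$ on $\oi{0}{\infty}$.

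The main obstacle, and the heart of the argument, is to identify $U'(t)$ (or a suitable limit of difference quotients of $U$) as an element of $\overline{\IV_k}$. Since $\overline{\IV_k}$ is a closed subspace and $U(s)\in\overline{\IV_k}$ for every $s\geq 0$ by the induction hypothesis, and difference quotients $\tfrac{1}{h}(U(t+h)-U(t))$ lie in $\overline{\IV_k}$ (a subspace), their limit $U'(t)$ lies in $\overline{\IV_k}$ too — provided the difference quotients of $U$ itself converge, not merely those of $M_0 U$. This is exactly where one needs $\rho(M_0,M_1)\neq\varnothing$: picking $z\in\rho(M_0,M_1)$, from $(zM_0+M_1)U(t)=zM_0U(t)-M_0U'(t)$ for $t>0$ we get $U(t)=(zM_0+M_1)^{-1}M_0(zU(t)-U'(t))$, showing $U$ inherits the differentiability of $M_0U$ on $\oi{0}{\infty}$. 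Thus $U'(t)\in\overline{\IV_k}$ for $t>0$, whence $M_1U(t)=-M_0U'(t)\in M_0[\overline{\IV_k}]$, so $U(t)\in M_1^{-1}[M_0[\overline{\IV_k}]]\subseteq\overline{\IV_{k+1}}$ for $t>0$; by continuity of $U$ and closedness of $\overline{\IV_{k+1}}$ we extend to $t=0$. Intersecting over all $k\in\N_0$ gives the claim $U(t)\in\bigcap_{k\in\N_0}\overline{\IV_k}$ for all $t\geq 0$.
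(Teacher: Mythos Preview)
Your overall induction scheme and the use of \prettyref{prop:regularity_DAE} and \prettyref{lem:properties-Wong}\ref{lem:properties-Wong:item:4} match the paper, but the step where you deduce that $U$ is $C^1$ on $\oi{0}{\infty}$ has a genuine gap. You write $(zM_{0}+M_{1})U(t)=zM_{0}U(t)-M_{0}U'(t)$ and then factor the right-hand side as $M_{0}(zU(t)-U'(t))$; this factorisation already presupposes that $U'(t)$ exists, which is what you are trying to prove. If instead you only use what is available, namely that $V\coloneqq M_{0}U$ is $C^{1}$ with $V'=-M_{1}U$, then $U(t)=(zM_{0}+M_{1})^{-1}(zV(t)-V'(t))$ expresses $U$ as a bounded operator applied to a merely \emph{continuous} function (since $V'$ is only $C^{0}$), so you recover continuity of $U$ but not differentiability. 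There is no bootstrap here: improving $U$ to $C^{1}$ would require $V\in C^{2}$, which in turn would need $U\in C^{1}$.

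Even if you abandon the $C^{1}$ claim and try to argue directly that $M_{1}U(t)=-(M_{0}U)'(t)\in M_{0}[\overline{\IV_{k}}]$ via $(M_{0}U)'(t)=\lim_{h\to 0}M_{0}\tfrac{U(t+h)-U(t)}{h}$ with $\tfrac{U(t+h)-U(t)}{h}\in\overline{\IV_{k}}$, you run into the problem that $M_{0}[\overline{\IV_{k}}]$ need not be closed. The paper sidesteps both issues by integrating rather than differentiating: from $\td{\nu}M_{0}(U-\1_{\roi{0}{\infty}}U_{0})=-M_{1}U$ one gets $M_{1}\int_{t}^{t+h}U(s)\d s = M_{0}\bigl(U(t)-U(t+h)\bigr)\in M_{0}[\overline{\IV_{k}}]$ \emph{without any limit}, hence $\int_{t}^{t+h}U(s)\d s\in\overline{\IV_{k+1}}$; then $U(t)=\lim_{h\to 0}\tfrac{1}{h}\int_{t}^{t+h}U(s)\d s$ by continuity, and this limit is taken inside the \emph{closed} space $\overline{\IV_{k+1}}$.
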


\begin{proof}
We prove the claim, $U(t)\in \overline{\IV_k}$ for all $t\geq 0$ and $k\in \N_0$, by induction. For $k=0$ there is nothing to show.
Assume now that $U(t)\in\overline{\IV_{k}}$ for each $t\geq0$ and
some $k\in\N_0$. By \prettyref{prop:regularity_DAE} we know that 
\[
\td{\nu}M_{0}(U-\1_{\roi{0}{\infty}}U_{0})+M_{1}U=0
\]
and thus, in particular, 
\[
M_{0}U(t)-M_{0}U_{0}+\int_{0}^{t}M_{1}U(s)\d s=0\quad(t\geq0).
\]
Let now $t\geq0$ and $h>0$. Then we infer 
\[
M_{0}U(t+h)-M_{0}U(t)+M_{1}\int_{t}^{t+h}U(s)\d s=0
\]
and hence, 
\[
\int_{t}^{t+h}U(s)\d s\in M_{1}^{-1}\bigl[M_{0}[\overline{\IV_{k}}]\bigr]\subseteq\overline{\IV_{k+1}}
\]
by \prettyref{lem:properties-Wong}\ref{lem:properties-Wong:item:4}. Since $U$ is continuous,
the fundamental theorem of calculus implies $U(t)\in\overline{\IV_{k+1}}$,
which yields the assertion.
\end{proof}

In particular, the space of consistent initial values has to be a
subspace of $\bigcap_{k\in\N_0}\overline{\IV_{k}}$. We now impose an
additional constraint on the operator pair $(M_{0},M_{1})$, which is equivalent to being regular
in the finite-dimensional setting (cf.~\prettyref{prop:index_and_uniqueness}).

\begin{defn*}
We call the operator pair $(M_{0},M_{1})$ \emph{regular} if there
exists $\nu_{0}\geq0$ such that
\begin{enumerate}
\item $\C_{\Re>\nu_{0}}\subseteq\rho(M_{0},M_{1})$, and
\item\label{def:regular_pair:2} there exist $C\geq0$ and $\ell\in\N$ such that for all $z\in\C_{\Re>\nu_{0}}$ we have $\norm{(zM_{0}+M_{1})^{-1}}\leq C\abs{z}^{\ell-1}$.
\end{enumerate}
Moreover, we call the smallest $\ell\in\N$ satisfying \ref{def:regular_pair:2} the \emph{index
of $(M_{0},M_{1})$\index{index of operator pair}}, which is denoted by $\ind(M_{0},M_{1})$.
\end{defn*}

\begin{rem}
Note that for matrices $M_{0}$ and $M_{1}$ the index equals the
degree of nilpotency of $N$ in the quasi-Weierstraß normal form by \prettyref{prop:index_and_uniqueness}.
\end{rem}

From now on, we will require that $(M_{0},M_{1})$ is regular. First, we
prove an important result on the Wong sequence in this case.
\begin{prop}
\label{prop:Wong-terminates} Let $(M_{0},M_{1})$ be regular, $k\in\N_0$, and $k\geq\ind(M_{0},M_{1})$. Then
\[
\overline{\IV_{k}}=\overline{\IV_{\ind(M_{0},M_{1})}}.
\]
\end{prop}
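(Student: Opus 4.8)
The plan is to reduce the statement to the single inclusion $\IV_{\ell}\subseteq\overline{\IV_{\ell+1}}$, where $\ell\coloneqq\ind(M_{0},M_{1})$. Since the Wong sequence is decreasing we always have $\overline{\IV_{k+1}}\subseteq\overline{\IV_{k}}$, so once $\overline{\IV_{\ell}}=\overline{\IV_{\ell+1}}$ is known, an induction on $m$ gives $\overline{\IV_{\ell+m}}=\overline{\IV_{\ell}}$ for all $m\in\N_{0}$: assuming $\overline{\IV_{\ell+m}}=\overline{\IV_{\ell}}$, we have
\[
\IV_{\ell+1}=M_{1}^{-1}\bigl[M_{0}[\IV_{\ell}]\bigr]\subseteq M_{1}^{-1}\bigl[M_{0}[\overline{\IV_{\ell}}]\bigr]=M_{1}^{-1}\bigl[M_{0}[\overline{\IV_{\ell+m}}]\bigr]\subseteq\overline{\IV_{\ell+m+1}},
\]
where the last inclusion is \prettyref{lem:properties-Wong}\ref{lem:properties-Wong:item:4}, applicable because regularity of $(M_{0},M_{1})$ gives $\C_{\Re>\nu_{0}}\subseteq\rho(M_{0},M_{1})\ne\varnothing$. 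Hence $\overline{\IV_{\ell}}=\overline{\IV_{\ell+1}}\subseteq\overline{\IV_{\ell+m+1}}\subseteq\overline{\IV_{\ell+m}}=\overline{\IV_{\ell}}$, closing the induction; this covers all $k\geq\ell$.

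For the crucial inclusion I would fix $x\in\IV_{\ell}$ and apply \prettyref{lem:properties-Wong}\ref{lem:properties-Wong:item:3} with $k=\ell$ to obtain $x_{1},\dots,x_{\ell+1}\in H$ with
\[
(zM_{0}+M_{1})^{-1}M_{0}x=\frac{1}{z}x+\sum_{j=1}^{\ell}\frac{1}{z^{j+1}}x_{j}+\frac{1}{z^{\ell+1}}(zM_{0}+M_{1})^{-1}x_{\ell+1}
\]
for every $z\in\rho(M_{0},M_{1})\setminus\{0\}$. Multiplying by $z$ and rearranging yields
\[
x=z(zM_{0}+M_{1})^{-1}M_{0}x-\sum_{j=1}^{\ell}\frac{1}{z^{j}}x_{j}-\frac{1}{z^{\ell}}(zM_{0}+M_{1})^{-1}x_{\ell+1}.
\]
Now I specialise to $z=n\in\N$ with $n>\nu_{0}$ and let $n\to\infty$: the finite sum $\sum_{j=1}^{\ell}n^{-j}x_{j}$ tends to $0$, and the regularity bound $\norm{(nM_{0}+M_{1})^{-1}}\leq Cn^{\ell-1}$ forces $\norm{n^{-\ell}(nM_{0}+M_{1})^{-1}x_{\ell+1}}\leq Cn^{-1}\norm{x_{\ell+1}}\to0$. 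Therefore $x=\lim_{n\to\infty}n(nM_{0}+M_{1})^{-1}M_{0}x$. By \prettyref{lem:properties-Wong}\ref{lem:properties-Wong:item:2} each $(nM_{0}+M_{1})^{-1}M_{0}x$ lies in $\IV_{\ell+1}$, which is a linear subspace (a preimage under $M_{1}$ of an image under $M_{0}$ of a subspace), so each approximant $n(nM_{0}+M_{1})^{-1}M_{0}x$ lies in $\IV_{\ell+1}$ as well; hence $x\in\overline{\IV_{\ell+1}}$, and thus $\overline{\IV_{\ell}}\subseteq\overline{\IV_{\ell+1}}$.

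The main obstacle is matching the growth rate of the resolvent $(zM_{0}+M_{1})^{-1}$ against the powers of $z$ produced by the expansion in \prettyref{lem:properties-Wong}\ref{lem:properties-Wong:item:3}: that expansion leaves a remainder of order $z^{-(\ell+1)}(zM_{0}+M_{1})^{-1}$, and it is exactly the definition of the index $\ell$ as the \emph{least} exponent with $\norm{(zM_{0}+M_{1})^{-1}}=O(\abs{z}^{\ell-1})$ that makes this remainder, after multiplication by $z$, decay like $1/\abs{z}$. This is why the stabilisation occurs precisely at $\ell=\ind(M_{0},M_{1})$; getting the power-counting right, and ensuring \prettyref{lem:properties-Wong}\ref{lem:properties-Wong:item:3} is invoked with the correct value of $k$, is the only delicate point. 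Everything else — the monotonicity of the Wong sequence and the closure-stability property \prettyref{lem:properties-Wong}\ref{lem:properties-Wong:item:4} — is routine.
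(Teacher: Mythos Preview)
Your proof is correct and follows essentially the same route as the paper: the heart of both arguments is the expansion from \prettyref{lem:properties-Wong}\ref{lem:properties-Wong:item:3} combined with the regularity bound $\norm{(zM_{0}+M_{1})^{-1}}\leq C\abs{z}^{\ell-1}$ to show that $z(zM_{0}+M_{1})^{-1}M_{0}x\to x$ as $\Re z\to\infty$, together with \prettyref{lem:properties-Wong}\ref{lem:properties-Wong:item:2} to place the approximants in $\IV_{k+1}$. The only cosmetic difference is that the paper runs this argument directly for every $k\geq\ell$ (the same estimate works verbatim since $k\geq\ell$ only improves the decay of the remainder), which immediately gives $\overline{\IV_{k}}=\overline{\IV_{k+1}}$ for all such $k$; you instead establish it only at $k=\ell$ and then propagate via an induction using \prettyref{lem:properties-Wong}\ref{lem:properties-Wong:item:4}, which is a slight detour but entirely valid.
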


\begin{proof}
We show that $\overline{\IV_{k}}=\overline{\IV_{k+1}}$ for each $k\geq\ind(M_{0},M_{1}).$
Since the inclusion ``$\supseteq$'' holds trivially, it suffices
to show $\IV_{k}\subseteq\overline{\IV_{k+1}}$. For doing so, let
$k\geq\ind(M_{0},M_{1})$ and $x\in\IV_{k}$. By \prettyref{lem:properties-Wong}\ref{lem:properties-Wong:item:3} we find $x_{1},\ldots,x_{k+1}\in H$ such that 
\[
(zM_{0}+M_{1})^{-1}M_{0}x=\frac{1}{z}x+\sum_{\ell=1}^{k}\frac{1}{z^{\ell+1}}x_{\ell}+\frac{1}{z^{k+1}}(zM_{0}+M_{1})^{-1}x_{k+1}
\]
for each $z\in\C_{\Re>\nu_{0}}$. Since $k\geq\ind(M_{0},M_{1}),$
we derive 
\[
z(zM_{0}+M_{1})^{-1}M_{0}x\to x\quad(\Re z\to\infty),
\]
and since the elements on the left-hand side belong to $\IV_{k+1}$,
by \prettyref{lem:properties-Wong}\ref{lem:properties-Wong:item:2}, the assertion immediately follows.
\end{proof}

We now prove that in case of a regular operator pair $(M_{0},M_{1})$
the solution of \prettyref{eq:DAE_infinite} for a consistent initial
value $U_{0}$ is uniquely determined.

\begin{prop}
\label{prop:uniqueness_DAE} Let $(M_{0},M_{1})$ be regular, $U_{0}\in\IV(M_{0},M_{1})$, and $\nu>0$
such that a solution $U\in C(\R_{\geq0};H)\cap\Lnu(\R_{\geq0};H)$ of \prettyref{eq:DAE_infinite} exists. Then this solution is unique.
%. Then
%the solution $U\in C(\R_{\geq0};H)\cap\Lnu(\R_{\geq0};H)$ of \prettyref{eq:DAE_infinite} is unique. 
In particular
\[
(\mathcal{L}_{\rho}U)(t)=\frac{1}{\sqrt{2\pi}}\bigl((\i t+\rho)M_{0}+M_{1}\bigr)^{-1}M_{0}U_{0}\quad(\text{a.e. }t\in\R)
\]
for each $\rho>\max\{\nu,\nu_{0}\}$. 
\end{prop}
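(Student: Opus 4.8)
The plan is to pass to the Fourier--Laplace transform in a sufficiently heavily weighted space, where an explicit formula for $\mathcal{L}_\rho U$ in terms of the data $U_0$ can be read off directly; uniqueness then follows from the injectivity of $\mathcal{L}_\rho$ together with the continuity of solutions on $\R_{\geq0}$.

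First I would extend $U$ by $0$ to all of $\R$ and invoke \prettyref{prop:regularity_DAE} to get $M_0(U-\1_{\roi{0}{\infty}}U_0)\in H_{\nu}^{1}(\R;H)$ with $\td{\nu}M_0(U-\1_{\roi{0}{\infty}}U_0)+M_1U=0$. Next, fix $\rho>\max\{\nu,\nu_0\}$. Since $\spt U\subseteq\roi{0}{\infty}$ and $U\in\Lnu(\R;H)$, the usual support estimate gives $U\in\Lm{\rho}(\R;H)$, and likewise $\1_{\roi{0}{\infty}}U_0\in\Lm{\rho}(\R;H)$ (here $\rho>0$) and $M_0(U-\1_{\roi{0}{\infty}}U_0)\in\Lm{\rho}(\R;H)$. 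Because the distributional characterisation of $\dom(\td{\cdot})$ in \prettyref{prop:char_dom(tdnu)} is independent of the weight, the relation $\td{\nu}M_0(U-\1_{\roi{0}{\infty}}U_0)=-M_1U$ upgrades to $M_0(U-\1_{\roi{0}{\infty}}U_0)\in\dom(\td{\rho})$ with $\td{\rho}M_0(U-\1_{\roi{0}{\infty}}U_0)=-M_1U$.

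Now I would apply $\mathcal{L}_\rho$. Using the spectral representation \prettyref{thm:spectral-repr-derivative} and the fact that the extended bounded operators $M_0,M_1$ commute with $\mathcal{L}_\rho$ (\prettyref{exer:abstractFourier_exten}), this yields
\[
(\i\m+\rho)M_0\bigl(\mathcal{L}_\rho U-\mathcal{L}_\rho\1_{\roi{0}{\infty}}U_0\bigr)=-M_1\mathcal{L}_\rho U
\]
as an identity in $\Lm{\rho}(\R;H)$, hence pointwise for a.e.\ $t\in\R$. Computing $\mathcal{L}_\rho\1_{\roi{0}{\infty}}U_0$ as in the computation at the beginning of the proof of \prettyref{thm:material law independent of nu} (take $a=0$ and let $b\to\infty$, using $\rho>0$) gives $\bigl(\mathcal{L}_\rho\1_{\roi{0}{\infty}}U_0\bigr)(t)=\frac{1}{\sqrt{2\pi}\,(\i t+\rho)}U_0$. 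Substituting and rearranging, for a.e.\ $t\in\R$ one obtains
\[
\bigl((\i t+\rho)M_0+M_1\bigr)(\mathcal{L}_\rho U)(t)=\frac{1}{\sqrt{2\pi}}M_0U_0 .
\]
Since $\rho>\nu_0$, we have $\i t+\rho\in\C_{\Re>\nu_0}\subseteq\rho(M_0,M_1)$, so $(\i t+\rho)M_0+M_1$ is boundedly invertible and the asserted formula for $\mathcal{L}_\rho U$ follows.

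Finally, the right-hand side of that formula depends only on $U_0$ (and $M_0,M_1,\rho$). Thus, given two solutions $U^{(1)},U^{(2)}$ in the stated class (or, after choosing $\rho$ larger than both their decay rates and $\nu_0$, in a common space $\Lm{\rho}(\R_{\geq0};H)$), we get $\mathcal{L}_\rho U^{(1)}=\mathcal{L}_\rho U^{(2)}$; the unitarity of $\mathcal{L}_\rho$ gives $U^{(1)}=U^{(2)}$ in $\Lm{\rho}$, and continuity on $\R_{\geq0}$ upgrades this to pointwise equality. The only genuinely delicate point is the passage from the weight $\nu$ to the larger weight $\rho$ — justifying membership in $\dom(\td{\rho})$ and the commutation properties of the extended multiplication operators with $\mathcal{L}_\rho$ — but this is entirely a matter of bookkeeping with supports and weighted integrability, and presents no real obstacle once set up carefully.
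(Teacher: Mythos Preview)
Your proposal is correct and follows essentially the same route as the paper: invoke \prettyref{prop:regularity_DAE}, pass to the Fourier--Laplace transform at a weight $\rho>\max\{\nu,\nu_0\}$, compute $\mathcal{L}_\rho(\1_{\roi{0}{\infty}}U_0)$, and invert $(\i t+\rho)M_0+M_1$ pointwise using regularity of the pair. You are simply more explicit than the paper about the passage from weight $\nu$ to weight $\rho$ (via the support condition and \prettyref{prop:char_dom(tdnu)}/\prettyref{exer:derivative_invariant}), which the paper leaves implicit.
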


\begin{proof}
By \prettyref{prop:regularity_DAE} we have $M_{0}(U-\1_{\roi{0}{\infty}}U_{0})\in H_{\nu}^{1}(\R;H)$
and 
\[
\td{\nu}M_{0}(U-\1_{\roi{0}{\infty}}U_{0})+M_{1}U=0.
\]
Applying the Fourier--Laplace transformation, $\mathcal{L}_{\rho}$, for $\rho>\max\{\nu,\nu_{0}\}$
the latter yields 
\[
(\i t+\rho)M_{0}\bigl(\mathcal{L}_{\rho}U(t)-\frac{1}{\sqrt{2\pi}}\frac{1}{\i t+\rho}U_{0}\bigr)+M_{1}\mathcal{L}_{\rho}U(t)=0\quad(\text{a.e. }t\in\R)
\]
which in turn yields 
\[
\mathcal{L}_{\rho}U(t)=\frac{1}{\sqrt{2\pi}}\bigl((\i t+\rho)M_{0}+M_{1}\bigr)^{-1}M_{0}U_{0}\quad(\text{a.e. }t\in\R)
\]
and, in particular, proves the uniqueness of the solution. 
\end{proof}

\begin{rem}
The formula in \prettyref{prop:uniqueness_DAE} shows that $U\in\Lm{\nu_{0}}(\R;H)$
for all solutions $U$ of \prettyref{eq:DAE_infinite} for a consistent
initial value $U_{0}$ and hence, we also have $M_{0}(U-\1_{\roi{0}{\infty}}U_{0})\in H_{\nu_{0}}^{1}(\R;H)$.
\end{rem}

One interesting consequence of the latter proposition is the following.
\begin{cor}
\label{cor:M_0-injective}Let $(M_{0},M_{1})$ be regular. Then the
operator $M_{0}\from \IV(M_{0},M_{1})\to H$ is injective.
\end{cor}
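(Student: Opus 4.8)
The plan is to read the statement off directly from the explicit solution formula obtained in \prettyref{prop:uniqueness_DAE}. So I would begin with an arbitrary $U_{0}\in\IV(M_{0},M_{1})$ satisfying $M_{0}U_{0}=0$ and show that $U_{0}=0$. By the definition of consistency there are $\nu>0$ and a solution $U\in C(\R_{\geq0};H)\cap\Lnu(\R_{\geq0};H)$ of \prettyref{eq:DAE_infinite}, which as usual we regard as an element of $\Lnu(\R;H)$ by extending it by $0$ to the whole line. Since $(M_{0},M_{1})$ is regular, I would fix $\nu_{0}\geq0$ as in the definition of regularity, so that $\C_{\Re>\nu_{0}}\subseteq\rho(M_{0},M_{1})$, and then choose any $\rho>\max\{\nu,\nu_{0}\}$; this guarantees that $\i t+\rho\in\rho(M_{0},M_{1})$ for every $t\in\R$, so that the formula of \prettyref{prop:uniqueness_DAE} is applicable.

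Next I would invoke that formula,
\[
(\mathcal{L}_{\rho}U)(t)=\frac{1}{\sqrt{2\pi}}\bigl((\i t+\rho)M_{0}+M_{1}\bigr)^{-1}M_{0}U_{0}\quad(\text{a.e. }t\in\R),
\]
and note that $M_{0}U_{0}=0$ forces the right-hand side to vanish for almost every $t$, hence $\mathcal{L}_{\rho}U=0$ in $\L(\R;H)$. As $\mathcal{L}_{\rho}=\mathcal{F}\exp(-\rho\m)$ is a composition of the two unitary maps from \prettyref{cor:nu=00003D0} and \prettyref{thm:Plancherel}, it is injective, and therefore $U=0$ in $\Lm{\rho}(\R;H)$; in particular $U(t)=0$ for almost every $t\in\R$.

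Finally I would use that $U$ is continuous on $\R_{\geq0}$: a continuous function vanishing almost everywhere vanishes identically, so $U\equiv0$ on $\R_{\geq0}$, and hence $U_{0}=U(0\rlim)=0$. This shows $\ker\bigl(M_{0}|_{\IV(M_{0},M_{1})}\bigr)=\{0\}$, which is exactly the asserted injectivity of $M_{0}\colon\IV(M_{0},M_{1})\to H$. I do not expect a genuine obstacle here; the only points deserving a word of care are the use of regularity, which is precisely what makes the line $\i\R+\rho$ lie in $\rho(M_{0},M_{1})$ so that the solution formula is meaningful, and the elementary step of upgrading ``$U=0$ almost everywhere'' to ``$U(0\rlim)=0$'' via continuity of the representative on $\R_{\geq0}$.
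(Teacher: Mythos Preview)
Your proposal is correct and follows essentially the same approach as the paper: both arguments invoke the solution formula from \prettyref{prop:uniqueness_DAE}, observe that $M_{0}U_{0}=0$ forces $\mathcal{L}_{\rho}U=0$, deduce $U=0$, and conclude $U_{0}=U(0\rlim)=0$. You have simply spelled out in more detail the steps the paper leaves implicit (injectivity of $\mathcal{L}_{\rho}$ via unitarity, and the passage from $U=0$ a.e.\ to $U(0\rlim)=0$ via continuity).
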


\begin{proof}
Let $U_{0}\in\IV(M_{0},M_{1})$ with $M_{0}U_{0}=0$. By \prettyref{prop:uniqueness_DAE},
the solution $U$ of \prettyref{eq:DAE_infinite} with $U(0\rlim)=U_{0}$
satisfies 
\[
\mathcal{L}_{\rho}U(t)=\bigl((\i t+\rho)M_{0}+M_{1}\bigr)^{-1}M_{0}U_{0}=0
\]
and hence, $U=0$, which in turn implies $U_{0}=U(0\rlim)=0$.
\end{proof}

We now want to determine the space $\IV(M_{0},M_{1})$ in terms of
the Wong sequence.
\begin{prop}
\label{prop:IV_k-in-IV} Let $(M_{0},M_{1})$ be regular. Then
\[
\IV_{\ind(M_{0},M_{1})}\subseteq\IV(M_{0},M_{1})\subseteq\overline{\IV_{\ind(M_{0},M_{1})}}.
\]
\end{prop}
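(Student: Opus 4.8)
The plan is to prove the two inclusions separately. The inclusion $\IV(M_{0},M_{1})\subseteq\overline{\IV_{\ind(M_{0},M_{1})}}$ is the routine one: if $U_{0}\in\IV(M_{0},M_{1})$, pick $\nu>0$ and a solution $U\in C(\R_{\geq0};H)\cap\Lnu(\R_{\geq0};H)$ of \prettyref{eq:DAE_infinite}. By \prettyref{lem:initial_values_Wong-sequence} we have $U(t)\in\bigcap_{k\in\N_{0}}\overline{\IV_{k}}$ for every $t\geq0$, and regularity of $(M_{0},M_{1})$ together with \prettyref{prop:Wong-terminates} gives $\bigcap_{k\in\N_{0}}\overline{\IV_{k}}=\overline{\IV_{\ind(M_{0},M_{1})}}$. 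Evaluating at $t=0\rlim$ and using $U(0\rlim)=U_{0}$ and the continuity of $U$ yields $U_{0}\in\overline{\IV_{\ind(M_{0},M_{1})}}$.

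For $\IV_{\ind(M_{0},M_{1})}\subseteq\IV(M_{0},M_{1})$, put $\ell\coloneqq\ind(M_{0},M_{1})$, fix $\nu>\max\{\nu_{0},0\}$ (so that $\C_{\Re>\nu}\subseteq\C_{\Re>\nu_{0}}\subseteq\rho(M_{0},M_{1})$ and $0\notin\C_{\Re>\nu}$), and let $U_{0}\in\IV_{\ell}$. The idea is to construct the solution directly through the inverse Fourier--Laplace transform, guided by the formula in \prettyref{prop:uniqueness_DAE}: one wants $U$ with $\mathcal{L}_{\rho}U=g(\i\cdot+\rho)$, where $g(z)\coloneqq\tfrac{1}{\sqrt{2\pi}}\bigl(zM_{0}+M_{1}\bigr)^{-1}M_{0}U_{0}$. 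The delicate point is that the regularity estimate only gives $\norm{(zM_{0}+M_{1})^{-1}}\leq C\abs{z}^{\ell-1}$, so $g(\i\cdot+\rho)$ is a priori \emph{not} in $\L(\R;H)$; this is exactly where $U_{0}\in\IV_{\ell}$ is needed. By \prettyref{lem:properties-Wong}\ref{lem:properties-Wong:item:3} there are $x_{1},\dots,x_{\ell+1}\in H$ with
\[
g(z)=\frac{1}{\sqrt{2\pi}}\Bigl(\frac{1}{z}U_{0}+\sum_{k=1}^{\ell}\frac{1}{z^{k+1}}x_{k}+\frac{1}{z^{\ell+1}}(zM_{0}+M_{1})^{-1}x_{\ell+1}\Bigr)\quad(z\in\C_{\Re>\nu_{0}}),
\]
and the last summand now has norm at most $\tfrac{C}{\abs{z}^{2}}\norm{x_{\ell+1}}$. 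Hence each summand restricted to a vertical line $\Re z=\rho>\nu$ lies in $\L(\R;H)$, a short estimate shows $\sup_{\rho>\nu}\norm{g(\i\cdot+\rho)}_{\L(\R;H)}<\infty$, so $g\in\cHt(\C_{\Re>\nu};H)$, and \prettyref{thm:Paley-Wiener}/\prettyref{cor:Lapalce_unitary} provide a unique $U\in\Lnu(\R_{\geq0};H)$ with $\mathcal{L}U=g$; in particular $\spt U\subseteq\R_{\geq0}$ and $\mathcal{L}_{\rho}U=g(\i\cdot+\rho)$ for all $\rho>\nu$.

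Three verifications remain. First, $U$ satisfies the differential-algebraic equation on $\oi{0}{\infty}$: from $\bigl((\i t+\rho)M_{0}+M_{1}\bigr)\mathcal{L}_{\rho}U(t)=\tfrac{1}{\sqrt{2\pi}}M_{0}U_{0}$ one rearranges, exactly as in the computation of \prettyref{prop:regularity_DAE} run backwards, to get $M_{0}U-\1_{\roi{0}{\infty}}M_{0}U_{0}=-\td{\rho}^{-1}M_{1}U\in\dom(\td{\rho})$ and $\td{\rho}(M_{0}U-\1_{\roi{0}{\infty}}M_{0}U_{0})+M_{1}U=0$, which (testing against functions supported in $\oi{0}{\infty}$, as in \prettyref{lem:ivp} with $A=0$) is the DAE on $\oi{0}{\infty}$. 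Second, $U$ has a continuous representative on $\R_{\geq0}$: by linearity of $\mathcal{L}^{-1}$ and the displayed decomposition (each term individually lying in $\cHt(\C_{\Re>\nu};H)$ with $\mathcal{L}$-preimages $\1_{\roi{0}{\infty}}U_{0}$, $\1_{\roi{0}{\infty}}\tfrac{(\cdot)^{k}}{k!}x_{k}$, and $w\coloneqq\mathcal{L}^{-1}\bigl(z\mapsto\tfrac{1}{\sqrt{2\pi}}\tfrac{1}{z^{\ell+1}}(zM_{0}+M_{1})^{-1}x_{\ell+1}\bigr)$) we have $U=\1_{\roi{0}{\infty}}U_{0}+\sum_{k=1}^{\ell}\1_{\roi{0}{\infty}}\tfrac{(\cdot)^{k}}{k!}x_{k}+w$, and since $\mathcal{L}_{\rho}w\in L_{1}(\R;H)$ for each $\rho>\nu$ (again by the $\abs{z}^{-2}$-bound), $w=\exp(\rho\m)\mathcal{F}^{\ast}(\mathcal{L}_{\rho}w)$ is continuous. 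Third, $U(0\rlim)=U_{0}$: the polynomial terms vanish at $t=0\rlim$ for $k\geq1$, and $\spt w\subseteq\R_{\geq0}$ together with continuity of $w$ forces $w(0)=0$. Thus $U\in C(\R_{\geq0};H)\cap\Lnu(\R_{\geq0};H)$ solves \prettyref{eq:DAE_infinite}, so $U_{0}\in\IV(M_{0},M_{1})$.

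The main obstacle is this first inclusion, and within it the fact that the naive ``Laplace transform of the solution formula'' fails to be integrable along vertical lines. The decomposition coming from the Wong sequence (\prettyref{lem:properties-Wong}\ref{lem:properties-Wong:item:3}), combined with the index estimate, is precisely what rescues both the applicability of Paley--Wiener and the continuity of $U$ up to $t=0$ with the correct value $U_{0}$; the remaining work is bookkeeping, keeping track of which of the spaces $\cHt(\C_{\Re>\nu};H)$, $L_{1}(\R;H)$ along lines, and $\Lnu(\R_{\geq0};H)$ each of the four types of summands of $g$ belongs to.
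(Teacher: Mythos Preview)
Your proof is correct and follows the paper's strategy closely: both inclusions are handled the same way (the second via \prettyref{lem:initial_values_Wong-sequence} and \prettyref{prop:Wong-terminates}; the first by defining $g(z)=\tfrac{1}{\sqrt{2\pi}}(zM_0+M_1)^{-1}M_0U_0$, using the Wong-sequence decomposition from \prettyref{lem:properties-Wong}\ref{lem:properties-Wong:item:3} to put $g$ into $\cHt$, and invoking Paley--Wiener).

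The only genuine difference is in how you obtain continuity of $U$ and the initial value. You decompose $U$ explicitly as $\1_{\roi{0}{\infty}}U_0+\sum_{k=1}^\ell\1_{\roi{0}{\infty}}\tfrac{(\cdot)^k}{k!}x_k+w$ and argue that the remainder $w$ is continuous because $\mathcal{L}_\rho w\in L_1(\R;H)$ (via Riemann--Lebesgue), then read off $w(0)=0$ from the support. The paper instead observes that $z\mapsto zg(z)-\tfrac{1}{\sqrt{2\pi}}U_0$ again lies in $\cHt(\C_{\Re>\nu_0};H)$ (by the same decomposition, shifted by one power of $z$), which says directly that $U-\1_{\roi{0}{\infty}}U_0\in H^1_{\nu_0}(\R;H)$; continuity and $U(0\rlim)=U_0$ then follow in one stroke from the Sobolev embedding (\prettyref{thm:Sobolev_emb}) and $\spt(U-\1_{\roi{0}{\infty}}U_0)\subseteq\R_{\geq0}$. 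Your route is more hands-on and makes the structure of $U$ visible; the paper's is shorter and avoids the term-by-term inversion and the separate $L_1$-argument for $w$.
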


\begin{proof}
The second inclusion follows from \prettyref{lem:initial_values_Wong-sequence}
and \prettyref{prop:Wong-terminates}. Let now $U_{0}\in\IV_{\ind(M_{0},M_{1})}$
and set 
\[
V(z)\coloneqq\frac{1}{\sqrt{2\pi}}(zM_{0}+M_{1})^{-1}M_{0}U_{0}\quad(z\in\C_{\Re>\nu_{0}}).
\]
Let $k\coloneqq\ind(M_{0},M_{1})$. By \prettyref{lem:properties-Wong}\ref{lem:properties-Wong:item:3} we find $x_{1},\ldots,x_{k+1}\in H$ such that 
\[
V(z)=\frac{1}{\sqrt{2\pi}}\left(\frac{1}{z}U_{0}+\sum_{\ell=1}^{k}\frac{1}{z^{\ell+1}}x_{\ell}+\frac{1}{z^{k+1}}\left(zM_{0}+M_{1}\right)^{-1}x_{k+1}\right)\quad(z\in\C_{\Re>\nu_{0}}).
\]
In particular, we read off that $V\in\cHt(\C_{\Re>\nu_{0}};H)$ and
hence, by the Theorem of Paley--Wiener (more precisely by \prettyref{cor:Lapalce_unitary})
there exists $U\in\Lm{\nu_{0}}(\R_{\geq0};H)$ such that 
\[
\left(\mathcal{L}_{\rho}U\right)(t)=V(\i t+\rho)\quad(\text{a.e. }t\in\R,\rho>\nu_{0}).
\]
Moreover, 
\[
zV(z)-\frac{1}{\sqrt{2\pi}}U_{0}=\frac{1}{\sqrt{2\pi}}\left(\sum_{\ell=1}^{k}\frac{1}{z^{\ell}}x_{\ell}+\frac{1}{z^{k}}\left(zM_{0}+M_{1}\right)^{-1}x_{k+1}\right)\quad(z\in\C_{\Re>\nu_{0}})
\]
and hence $\left(z\mapsto zV(z)-\frac{1}{\sqrt{2\pi}}U_{0}\right)\in\cHt(\C_{\Re>\nu_{0}};H)$ as well.
Since 
\begin{align*}
\left(\mathcal{L}_{\rho}\td{\rho}(U-\1_{\roi{0}{\infty}}U_{0})\right)(t) & =(\i t+\rho)\left(\mathcal{L}_{\rho}U\right)(t)-\frac{1}{\sqrt{2\pi}}U_{0}\\
 & =(\i t+\rho)V(\i t+\rho)-\frac{1}{\sqrt{2\pi}}U_{0}\quad(\text{a.e. }t\in\R,\rho>\nu_{0}),
\end{align*}
we infer $U-\1_{\roi{0}{\infty}}U_{0}\in H_{\nu_{0}}^{1}(\R;H)$ and,
thus, $U-\1_{\roi{0}{\infty}}U_{0}$ is continuous by \prettyref{thm:Sobolev_emb}.
Hence, $U\in C(\R_{\geq0};H)$ and since $\spt U\subseteq\R_{\geq0}$
we derive $U(0\rlim)=U_{0}$. Finally, by the definition of $V$,
\[
M_{0}\left(zV(z)-\frac{1}{\sqrt{2\pi}}U_{0}\right)=-\frac{1}{\sqrt{2\pi}}M_{1}(zM_{0}+M_{1})^{-1}M_{0}U_{0}=-M_{1}V(z)\quad(z\in\C_{\Re>\nu_{0}}).
\]
Hence, 
\[
\td{\nu_{0}}M_{0}(U-\1_{\roi{0}{\infty}}U_{0})+M_{1}U=0,
\]
from which we see that $U$ solves \prettyref{eq:DAE_infinite}.
\end{proof}

Finally, we treat the case when $\IV(M_{0},M_{1})$ is closed. 

\begin{thm}
\label{thm:IV-closed}
Let $(M_{0},M_{1})$ be regular and $\IV(M_{0},M_{1})$ closed. Then the
operator $S\from\IV(M_{0},M_{1})\to C(\R_{\geq0};H)$, which assigns to each
initial state, $U_0\in \IV(M_{0},M_{1})$, its corresponding solution, $U\in C(\R_{\geq0};H)$, of \prettyref{eq:DAE_infinite} is bounded in the
sense that 
\[
S_{n}\from \IV(M_{0},M_{1})\to C([0,n];H),\quad U_{0}\mapsto SU_{0}|_{[0,n]}
\]
is bounded for each $n\in\N$.
\end{thm}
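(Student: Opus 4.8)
The plan is to prove the boundedness of each $S_n$ by two successive applications of the closed graph theorem: first in an $L_2$-setting, where the Paley--Wiener formula gives control essentially for free, and then upgrading to the space of continuous functions. First I would collect the ingredients. Since $\IV(M_0,M_1)$ is closed in $H$ it is itself a Hilbert space, hence a Banach space; likewise $C([0,n];H)$ with the sup-norm is a Banach space. By \prettyref{prop:uniqueness_DAE} (and the remark following it) each consistent $U_0$ determines its solution $U=SU_0$ uniquely, $SU_0\in\Lm{\nu_0}(\R;H)\cap C(\R_{\geq0};H)$ with $\spt SU_0\subseteq\R_{\geq0}$, and
\[
  (\mathcal{L}_\rho U)(t)=\tfrac{1}{\sqrt{2\pi}}\bigl((\i t+\rho)M_0+M_1\bigr)^{-1}M_0U_0\quad(\text{a.e. }t\in\R)
\]
for every $\rho>\nu_0$. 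In particular $S\colon\IV(M_0,M_1)\to\Lm\rho(\R;H)$ is a well-defined linear map for each fixed $\rho>\nu_0$, since the right-hand side above equals $\mathcal{L}_\rho(SU_0)\in\L(\R;H)$ and $\mathcal{L}_\rho$ is unitary.

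\textbf{Step 1 (boundedness into $\Lm\rho$).} I would fix $\rho>\nu_0$ and consider the linear map $T_\rho\colon\IV(M_0,M_1)\to\L(\R;H)$, $T_\rho U_0\coloneqq\bigl(t\mapsto\tfrac{1}{\sqrt{2\pi}}((\i t+\rho)M_0+M_1)^{-1}M_0U_0\bigr)$, which is well-defined as just observed ($T_\rho U_0=\mathcal{L}_\rho(SU_0)$). To see that $T_\rho$ is closed, let $U_0^{(k)}\to U_0$ in $\IV(M_0,M_1)$ and $T_\rho U_0^{(k)}\to g$ in $\L(\R;H)$; passing to a subsequence we may assume $(T_\rho U_0^{(k)})(t)\to g(t)$ for a.e. $t$, while for every fixed $t$ the continuity of $x\mapsto((\i t+\rho)M_0+M_1)^{-1}M_0x$ gives $(T_\rho U_0^{(k)})(t)\to(T_\rho U_0)(t)$; hence $g=T_\rho U_0$. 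The closed graph theorem (both spaces Banach) yields $T_\rho\in\bo(\IV(M_0,M_1),\L(\R;H))$, and therefore $S=\mathcal{L}_\rho^\ast T_\rho\colon\IV(M_0,M_1)\to\Lm\rho(\R;H)$ is bounded and linear.

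\textbf{Step 2 (closed graph for $S_n$).} Now I would show that each $S_n$ has closed graph. Suppose $U_0^{(k)}\to U_0$ in $\IV(M_0,M_1)$ and $S_nU_0^{(k)}\to V$ in $C([0,n];H)$. By Step 1, $SU_0^{(k)}\to SU_0$ in $\Lm\rho(\R;H)$; since on the bounded interval $[0,n]$ the weight $\e^{-2\rho(\cdot)}$ is bounded below by $\e^{-2\rho n}$, this gives $SU_0^{(k)}|_{[0,n]}\to SU_0|_{[0,n]}$ in $L_2([0,n];H)$. On the other hand, uniform convergence on $[0,n]$ gives $S_nU_0^{(k)}\to V$ in $L_2([0,n];H)$ as well. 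Uniqueness of $L_2$-limits forces $V=SU_0|_{[0,n]}$ almost everywhere, and since both $V$ and $SU_0|_{[0,n]}$ are continuous they agree everywhere, i.e. $V=S_nU_0$. Hence the graph of $S_n$ is closed, and as $\IV(M_0,M_1)$ and $C([0,n];H)$ are Banach spaces, the closed graph theorem gives $S_n\in\bo(\IV(M_0,M_1),C([0,n];H))$.

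The main obstacle is the upgrade from $L_2$-control to sup-norm control: one cannot simply invoke the Sobolev embedding $\dom(\td{\rho})\hookrightarrow C_\rho(\R;H)$, because only $M_0(U-\1_{\roi0\infty}U_0)$ lies in $H^1_\rho(\R;H)$ and $M_0$ may be far from invertible, so the ``algebraic'' part of $U$ is not directly controlled. The device that circumvents this is precisely the two-stage argument above: one first obtains $L_2$-estimates automatically from the Paley--Wiener/uniqueness formula, and then exploits that the putative uniform limit $V$ and the genuine solution restriction $SU_0|_{[0,n]}$ are both continuous, so that their coincidence in $L_2$ forces pointwise coincidence. The remaining points --- well-definedness of $T_\rho$, the admissibility of the uniform choice $\rho>\nu_0$ (so that $\rho\in\rho(M_0,M_1)$ and $SU_0\in\Lm\rho$), and completeness of $\IV(M_0,M_1)$ (where the closedness hypothesis enters) --- are routine.
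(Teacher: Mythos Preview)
Your proof is correct and follows the same overall two--closed--graph architecture as the paper: first establish boundedness of $S$ into an $L_2$-type space, then upgrade to $C([0,n];H)$ via a second closed graph argument. Step~2 is essentially identical to the paper's.

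The difference lies in Step~1. The paper first produces an \emph{explicit} estimate
\[
\norm{\td{\nu}^{-k}SU_0}_{\Lm{\nu}(\R_{\geq0};H)}=\norm{z\mapsto z^{-k}(zM_0+M_1)^{-1}M_0U_0}_{\cHt(\C_{\Re>\nu};H)}\leq C\sqrt{\tfrac{\pi}{\nu}}\,\norm{M_0U_0}_H
\]
using the regularity index $k=\ind(M_0,M_1)$, thereby obtaining boundedness of $S$ into the extrapolation space $H^{-1}(\td{\nu}^k)$; only then does it invoke the closed graph theorem (with the continuous embedding $\Lm{\nu_0}\hookrightarrow H^{-1}(\td{\nu}^k)$) to reach $\Lm{\nu_0}$. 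You bypass this detour entirely: by checking closedness of $T_\rho$ via pointwise limits, you apply the closed graph theorem directly to land in $\L(\R;H)$. Your route is more economical and does not require introducing the extrapolation space or explicitly invoking the index; the paper's route has the compensating advantage of delivering a quantitative bound in terms of $\norm{M_0U_0}$ along the way.
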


\begin{proof}
By \prettyref{prop:IV_k-in-IV} we infer that $\IV(M_{0},M_{1})=\overline{\IV_{k}}$
with $k\coloneqq\ind(M_{0},M_{1})$. Let $\nu>\nu_{0}\geq0$. By \prettyref{prop:uniqueness_DAE} and \prettyref{cor:Lapalce_unitary},
there exists $C\geq0$ such that
\[
\norm{\td{\nu}^{-k}Su_{0}}_{\Lm{\nu}(\Rge{0};H)}=\norm{\left(z\mapsto z^{-k}(zM_{0}+M_{1})^{-1}M_{0}U_{0}\right)}_{\cHt(\C_{\Re>\nu};H)}\leq C\sqrt{\frac{\pi}{\nu}}\norm{M_{0}U_{0}}_{H}
\]
for each $U_{0}\in\IV(M_{0},M_{1})$, where
we have used the regularity of $(M_{0},M_{1})$ and 
\[
\norm{(z\mapsto z^{-1}M_{0}U_{0})}_{\cHt(\C_{\Re>\nu};H)}=\sqrt{\frac{\pi}{\nu}}\norm{M_{0}U_{0}}_{H}.
\]
In particular, $S\from \IV(M_{0},M_{1})\to H^{-1}(\td{\nu}^k)$ is bounded. Since $\Lm{\nu_{0}}(\R_{\geq0};H)\hookrightarrow H^{-1}(\td{\nu}^k)$
continuously, we infer that $S\from \IV(M_{0},M_{1})\to\Lm{\nu_{0}}(\R_{\geq0};H)$
is bounded by the closed graph theorem. Hence, also 
\[
S_{n}\from \IV(M_{0},M_{1})\to\L([0,n];H),\quad U_{0}\mapsto SU_{0}|_{[0,n]}
\]
is bounded for each $n\in\N$ and since $C([0,n];H)\hookrightarrow\L([0,n];H)$
continuously, we infer that $S_{n}$ is bounded with values in $C([0,n];H)$
again by the closed graph theorem.
\end{proof}

\begin{rem}
 The variant of the closed graph theorem used in the proof above is the following: Let $X,Y$ be Banach spaces and $Z$ a Hausdorff topological vector space (e.g.~a Banach space) such that $Y\hookrightarrow Z$ continuously. Let $T\from X\to Z$ be linear and continuous with $T[X]\subseteq Y$. Then $T\in \bo(X,Y)$. Indeed, by the closed graph theorem it suffices to show that $T:X\to Y$ is closed. For doing so, let $(x_n)_n$ be a sequence in $X$ with $x_n\to x$ and $Tx_n \to y$ for some $x\in X, y\in Y$. Then $Tx_n\to Tx$ in $Z$ by the continuity of $T$ and $Tx_n\to y$ in $Z$ be the continuous embedding. Hence, $y=Tx$ and thus, $T$ is closed.
\end{rem}

\section{Comments}

The theory of differential algebraic equations in finite dimensions is a very
active field. The main motivation for studying these equations comes
from the modelling of electrical circuits and from control theory
(see e.g.~\cite{Dai1989} and \prettyref{exer:concrete_example}). The main reference for the statements
presented in the first part of this lecture is the book by Kunkel
and Mehrmann \cite{Kunkel2006}. Of course, also in the finite-dimensional
case Wong sequences can be used to determine the consistent initial
values, see \prettyref{exer:Wong-sequence-matrix}. For instance, in \cite{Berger2012} the connection between
Wong sequences and the quasi-Weierstraß normal form for matrix pairs
is studied. Of course, the theory is not restricted to linear and
homogeneous problems. Indeed, in the non-homogeneous case it turns
out that the set of consistent initial values also depends on the
given right-hand side.

The theory of differential algebraic equations in infinite dimensions is less well studied than the finite-dimensional case.
We refer to \cite{Thaller1996}, where the theory of $C_{0}$-semigroups
is used to deal with such equations. Moreover, we refer to \cite{Reis2005,Reis2007}, where sequences of projectors are used to decouple the system. Moreover, there exist several references in the Russian literature, where the equations are called Sobolev type equations (see e.g. \cite{Sviridyuk2003}). The results on infinite-dimensional
problems presented here are based on \cite{TW17_ID,TW17_HI,Trostorff2019}. In \cite{TW17_ID} the focus was on systems with index $0$ with an emphasis on exponential stability and dichotomy.

We also add the following remark concerning the result in \prettyref{thm:IV-closed}.
By \prettyref{cor:M_0-injective} we know that $M_{0}\from \IV(M_{0},M_{1})\to H$
is injective. If $\IV(M_{0},M_{1})$ is closed, it follows that the
operator $C\from \dom(C)\subseteq\IV(M_{0},M_{1})\to\IV(M_{0},M_{1})$
given by 
\begin{align*}
\dom(C) & \coloneqq\set{U_{0}\in\IV(M_{0},M_{1})}{M_{1}U_{0}\in M_{0}\left[\IV(M_{0},M_{1})\right]},\\
CU_{0} & \coloneqq M_{0}^{-1}M_{1}U_{0}\quad(U_{0}\in\dom(C))
\end{align*}
is well-defined and closed. Using this operator, $C$, \prettyref{thm:IV-closed}
states that if $\IV(M_{0},M_{1})$ is closed then $-C$ generates a
$C_{0}$-semigroup on $\IV(M_{0},M_{1})$. The precise statement can
be found in \cite[Theorem 5.7]{Trostorff2019}. Moreover, $C$ is
bounded if $\IV_{\ind(M_{0},M_{1})}$ is closed (cf.~\prettyref{exer:IV_k_closed}).

\section*{Exercises}
\addcontentsline{toc}{section}{Exercises}

\begin{xca}
\label{exer:Wong-sequence-matrix}Let $M_{0},M_{1}\in\C^{n\times n}$
such that $(M_{0},M_{1})$ is regular and define the Wong sequence
$(\IV_{j})_{j\in\N_0}$ associated with $(M_{0},M_{1})$. Moreover,
let $P,Q\in\C^{n\times n}$, $C\in\C^{k\times k},$ and $N\in\C^{(n-k)\times(n-k)}$
be as in the quasi-Weierstraß normal form for $(M_{0},M_{1})$ with
$N$ nilpotent (cf.~\prettyref{prop:Weierstrass_normal_form}). We
decompose a vector $x\in\C^{n}$ into $\check{x}\in\C^{k}$ and $\hat{x}\in\C^{n-k}$
such that $x=(\check{x},\hat{x})$. Prove that 
\[
x\in\IV_{j}\Leftrightarrow \hat{Q^{-1}x} \in\ran N^{j}\quad(j\in\N_0).
\]
Moreover, show that for each $z\in\rho(M_{0},M_{1})$ we have
\[
\IV_{j}=\ran\left((zM_{0}+M_{1})^{-1}M_{0}\right)^{j}\quad(j\in\N_0).
\]
\end{xca}

\begin{xca}
\label{exer:Drazin-inverse} Let $E\in\C^{n\times n}$. We set $k\coloneqq\ind(E,1)$,
where $1$ denotes the identity matrix in $\C^{n\times n}$. A matrix
$X\in\C^{n\times n}$ is called a \emph{Drazin inverse of $E$} if
the following properties hold:
\begin{itemize}
\item $EX=XE$,
\item $XEX=X,$
\item $XE^{k+1}=E^{k}$.
\end{itemize}
Prove that each matrix $E\in\C^{n\times n}$ has a unique Drazin inverse.

Hint: For the existence consider the quasi-Weierstraß form for $(E,1)$.
\end{xca}

\begin{xca}
\label{exer:IV-via-Drazin} Let $M_{0},M_{1}\in\C^{n\times n}$ with
$(M_{0},M_{1})$ regular and $M_{0}M_{1}=M_{1}M_{0}.$ Denote by $\Drazin{M_{0}}$
the Drazin inverse of $M_{0}$ (see \prettyref{exer:Drazin-inverse}).
Prove:
\begin{enumerate}
\item $\Drazin{M_{0}}M_{1}=M_{1}\Drazin{M_{0}}$,
\item $\ran \Drazin{M_{0}}M_{0}=\IV(M_{0},M_{1})$,
\item For all $U_{0}\in\IV(M_{0},M_{1})$ the solution $U$ of \prettyref{eq:DAE}
is given by 
\[
U(t)=\e^{-t\Drazin{M_{0}}M_{1}}U_{0}\quad(t\geq0).
\]
\end{enumerate}
\end{xca}

\begin{xca}
\label{exer:M_0 and M_1 commute} Let $M_{0},M_{1}\in\C^{n\times n}$
with $(M_{0},M_{1})$ regular. Prove that there exist two matrices
$E,A\in\C^{n\times n}$ with $(E,A)$ regular and $EA=AE$ such that
\begin{itemize}
\item $\IV(E,A)=\IV(M_{0},M_{1})$,
\item $U$ solves the initial value problem \prettyref{eq:DAE} for the
matrices $M_{0},M_{1}$ if and only if $U$ solves the initial value
problem \prettyref{eq:DAE} for the matrices $E,A$ with the same
initial value $U_{0}\in\IV(M_{0},M_{1})$.
\end{itemize}
\end{xca}

\begin{xca}
\label{exer:concrete_example} We consider the following electrical
circuit (see Figure \ref{fig:circuit}) with a resistor with resistance
$R>0$, an inductor with inductance $L>0$ and a capacitor with capacitance
$C>0$. 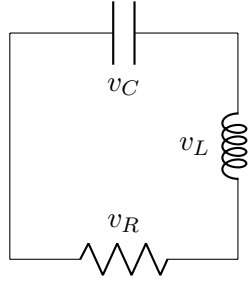
\begin{figure}
\begin{center}
\begin{circuitikz}
\draw (0,0)
to[R,R=$v_R$](3,0)
to[L,L=$v_L$](3,3)
to[C,C=$v_C$](0,3)
to[short](0,0);
\end{circuitikz}
\caption{Electrical circuit}\label{fig:circuit}
\end{center}
\end{figure}We denote the respective voltage drops by $v_{R},v_{L}$ and $v_{C}$.
Moreover, the current is denoted by $i$. The constitutive relations
for resistor, inductor and capacitor are given by 
\begin{align*}
Ri & =v_{R},\\
Li' & =v_{L},\\
Cv_{C}' & =i,
\end{align*}
respectively. Moreover, by Kirchhoff's second law we have 
\[
v_{R}+v_{C}+v_{L}=0.
\]
Write these equations as a differential algebraic equation and compute
the index and the space of consistent initial values. Moreover, compute
the solution for each consistent initial value for $R=2$ and $C=L=1$.
\end{xca}

\begin{xca}
\label{exer:lemma_wong_sequence} Prove the assertions \ref{lem:properties-Wong:item:1} to \ref{lem:properties-Wong:item:3} in
\prettyref{lem:properties-Wong}.
\end{xca}

\begin{xca}
\label{exer:IV_k_closed} Let $M_{0},M_{1}\in L(H)$.
\begin{enumerate}
\item Assume that $\rho(M_{0},M_{1})\ne\varnothing$. Prove that for
each $k\in\N$ the space $\IV_{k}$ is closed if and only
if $M_{0}\left[\IV_{k-1}\right]$ is closed.
\item Assume that $(M_{0},M_{1})$ is regular with $\ind(M_{0},M_{1})\geq1$.
Prove that if $\IV_{\ind(M_{0},M_{1})}$ is closed then the operator
\[
M_{0}|_{\IV_{\ind(M_{0},M_{1})}} \from \IV_{\ind(M_{0},M_{1})}\to M_{0}\left[\IV_{\ind(M_{0},M_{1})-1}\right]
\]
is an isomorphism.
\end{enumerate}
\end{xca}

\printbibliography[heading=subbibliography]

\chapter{Exponential Stability of Evolutionary Equations}

In this chapter we study the exponential stability of evolutionary
equations. Roughly speaking, exponential stability of a well-posed
evolutionary equation 
\[
\left(\td{\nu}M(\td{\nu})+A\right)U=F
\]
means that exponentially decaying right-hand sides $F$ lead to exponentially
decaying solutions $U$. The main problem in defining the notion of
exponential decay for a solution of an evolutionary equation is the
lack of continuity with respect to time, so a pointwise definition would not make sense
in this framework. Instead, we will use our exponentially weighted
spaces $\Lnu(\R;H)$, but this time for negative $\nu$, and define
the exponential stability by the invariance of these spaces under
the solution operator associated with the evolutionary equation under
consideration. 

\section{The Notion of Exponential Stability\label{sec:The-notion-of}}

Throughout this section, let $H$ be a Hilbert space, $M\from\dom(M)\subseteq\C\to\bo(H)$ a
material law and $A\from\dom(A)\subseteq H\to H$
a skew-selfadjoint operator. Moreover, we assume that there exist
$\nu_{0}>\sbb{M}$ and $c>0$ such that 
\[
\Re zM(z)\geq c\quad(z\in\C_{\Re\geq \nu_{0}}).
\]
By Picard's theorem (\prettyref{thm:Solution_theory_EE}) we know that for $\nu\geq \nu_{0}$ the operator
\[
S_{\nu}\coloneqq\bigl(\overline{\td{\nu}M(\td{\nu})+A}\bigr)^{-1}\in\bo(\Lnu(\R;H))
\]
is causal and independent of the particular choice of $\nu$. We now define the notion of exponential stability.
\begin{defn*}
We call the solution operators $(S_\nu)_{\nu\geq \nu_0}$
% \begin{equation}
% \bigl(\td{\nu}M(\td{\nu})+A\bigr)U=F\label{eq:evol_prob_exp}
% \end{equation}
\emph{exponentially stable\index{exponentially stable} with decay
rate $\rho_{0}>0$} if for all $\rho\in\roi{0}{\rho_{0}}$ and $\nu\geq\nu_0$
we have 
\[
S_{\nu}F\in\Lm{-\rho}(\R;H)\quad(F\in\Lnu(\R;H)\cap\Lm{-\rho}(\R;H)).
\]
\end{defn*}
\begin{rem}
\label{rem:exp_decay_negative_weight}We emphasise that the definition
of exponential stability does not mean that the evolutionary equation
is just solvable for some negative weights. Indeed, if we consider
%the following easy example with 
$H=\C$, $A=0$ and $M(z)=1$ for $z\in\C$
we obtain that the corresponding evolutionary equation 
\begin{equation}
\label{eq:dtnuUeqF}
\td{\nu}U=F
\end{equation}
is well-posed for each $\nu\ne0$. However, we also place a demand for
causality on our solution operator. Thus, we only have to consider parameters $\nu>0$.
We obtain the solution $U$ by
\[
U(t)=\int_{-\infty}^{t}F(s)\d s.
\]
As it turns out, the problem \prettyref{eq:dtnuUeqF} is not exponentially stable. Indeed, for $F\coloneqq\1_{[0,1]}\in\bigcap_{\nu\in\R}\Lnu(\R)$ the solution $U$ is given by
\[
U(t)=\begin{cases}
0 & \text{ if }t<0,\\
t & \text{ if }0\leq t\leq1,\\
1 & \text{ if }t>1,
\end{cases}
\]
which does not belong to the space $\Lm{-\rho}(\R)$ for any $\rho>0$. 
\end{rem}

We first show that the aforementioned notion of exponential stability
also yields a pointwise exponential decay of solutions if we assume
more regularity for our source term $F$. 
\begin{prop}
\label{prop:decay_regular_rhs} Let $(S_\nu)_{\nu\geq\nu_0}$ be exponentially stable with decay
rate $\rho_{0}>0$, $\nu\geq\nu_{0}$, $\rho\in\roi{0}{\rho_{0}}$ and $F\in\dom(\td{\nu})\cap\dom(\td{-\rho})$.
Then $U\coloneqq S_{\nu}F$
is continuous and satisfies 
\[
U(t)\e^{\rho t}\to0\quad(t\to\infty).
\]
\end{prop}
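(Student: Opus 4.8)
The plan is to transfer the regularity of $F$ to $U\coloneqq S_\nu F$, i.e.\ to show $U\in\dom(\td{-\rho})$, and then to read off the conclusion from the Sobolev embedding theorem and \prettyref{cor:vanish_at_inf}, but applied in the exponentially weighted space $\Lm{-\rho}(\R;H)$ rather than in $\Lnu(\R;H)$.

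First I would use \prettyref{rem:somlo}, which gives $S_\nu\td{\nu}\subseteq\td{\nu}S_\nu$: since $S_\nu\in\bo(\Lnu(\R;H))$ is everywhere defined, $F\in\dom(\td{\nu})$ forces $U=S_\nu F\in\dom(\td{\nu})$ with $\td{\nu}U=S_\nu\td{\nu}F$. Next, by \prettyref{prop:char_dom(tdnu)}, both $\td{\nu}F\in\Lnu(\R;H)$ and $\td{-\rho}F\in\Lm{-\rho}(\R;H)$ represent the (weight-independent) distributional derivative of $F$, hence agree as locally integrable functions; call this common function $F'$, so $F'\in\Lnu(\R;H)\cap\Lm{-\rho}(\R;H)$. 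Applying exponential stability with decay rate $\rho_0$, the given $\nu\geq\nu_0$ and $\rho\in\roi{0}{\rho_0}$ to $F$ and to $F'$ then yields $U=S_\nu F\in\Lm{-\rho}(\R;H)$ and $\td{\nu}U=S_\nu F'\in\Lm{-\rho}(\R;H)$. Now $U$ and its distributional derivative $\td{\nu}U$ both lie in $\Lm{-\rho}(\R;H)$, so \prettyref{prop:char_dom(tdnu)}, with its parameter $\nu$ replaced by $-\rho$, gives $U\in\dom(\td{-\rho})$ with $\td{-\rho}U=\td{\nu}U$. Finally, \prettyref{thm:Sobolev_emb} applied in $\Lm{-\rho}(\R;H)$ provides a continuous representative of $U$ in $C_{-\rho}(\R;H)$, and \prettyref{cor:vanish_at_inf}, again with weight $-\rho$, gives $\norm{\e^{\rho t}U(t)}_H\to 0$ as $t\to\pm\infty$, in particular as $t\to\infty$. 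The same chain of arguments covers the borderline case $\rho=0$, using $\td{0}$ and the corresponding instances of \prettyref{thm:Sobolev_emb} and \prettyref{cor:vanish_at_inf}.

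The step that needs the most care is the identification $\td{\nu}U=\td{-\rho}U\in\Lm{-\rho}(\R;H)$: one cannot control $\td{\nu}U$ by applying exponential stability to $F$ itself, so the argument must first commute $S_\nu$ past $\td{\nu}$ (via \prettyref{rem:somlo}) and exploit that the distributional derivative does not depend on the exponential weight, and only then invoke exponential stability for the genuinely $L_2$-type source term $F'$. Once $U\in\dom(\td{-\rho})$ is established, the remaining assertions are immediate from the already available regularity theory for $\td{-\rho}$.
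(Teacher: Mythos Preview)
Your proof is correct and follows essentially the same approach as the paper's: commute $S_\nu$ past $\td{\nu}$ via \prettyref{rem:somlo}, identify the derivatives in the two weights, apply exponential stability to both $F$ and its derivative, and conclude with the Sobolev embedding and \prettyref{cor:vanish_at_inf} at weight $-\rho$. The only cosmetic difference is that the paper packages the weight-independence of the derivative into \prettyref{exer:derivative_invariant} (parts (a) and (b)), whereas you argue directly via \prettyref{prop:char_dom(tdnu)}; since the exercise is essentially a corollary of that proposition, the two routes are interchangeable.
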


\begin{proof}
We first note that $\td{\nu}F=\td{-\rho}F$ by \prettyref{exer:derivative_invariant}.
Moreover, since $S_\nu$ is a material law operator (i.e., $S_\nu = S(\td{\nu})$ for some material law $S$; see \prettyref{rem:somlo}) we have
\[
S_{\nu}\td{\nu}\subseteq\td{\nu}S_{\nu}.
\]
Thus, in particular, we have
\[
S_{\nu}\td{\nu}F=\td{\nu}S_{\nu}F=\td{\nu}U;
\]
that is, $U\in\dom(\td{\nu})$. Moreover, since $\td{\nu}F=\td{-\rho}F\in\Lm{-\rho}(\R;H)$,
we infer also $U,\td{\nu}U\in\Lm{-\rho}(\R;H)$ by exponential
stability. By \prettyref{exer:derivative_invariant} this yields 
$U\in\dom(\td{-\rho})$ with $\td{-\rho}U=\td{\nu}U$. The assertion
now follows from the Sobolev embedding theorem (\prettyref{thm:Sobolev_emb}
and \prettyref{cor:vanish_at_inf}).
\end{proof}

\section{A Criterion for Exponential Stability of Parabolic-type Equations}

In this section we will prove a useful criterion for exponential stability
of a certain class of evolutionary equations. The easiest example
we have in mind is the heat equation with homogeneous Dirichlet
boundary conditions, which can be written as an evolutionary equation
of the form (cf.~\prettyref{thm:wp_heat})
\[
\left(\td{\nu}\begin{pmatrix}
1 & 0\\
0 & 0
\end{pmatrix}+\begin{pmatrix}
0 & 0\\
0 & a^{-1}
\end{pmatrix}+\begin{pmatrix}
0 & \dive\\
\grad_{0} & 0
\end{pmatrix}\right)\begin{pmatrix}
\theta\\
q
\end{pmatrix}=\begin{pmatrix}
Q\\
0
\end{pmatrix}
\]
in $\Lnu(\R;H)$, where $H=\L(\Omega)\oplus\L(\Omega)^{d}$ with $\Omega\subseteq\R^{d}$
open, and $a\in L(\L(\Omega)^{d})$ with 
\[
\Re a\geq c
\]
for some $c>0$ which models the heat conductivity, and $\nu>0$.

% We will now state a general theorem, which provides a criterion for
% exponential stability which is applicable in case of the heat equation
% but also in more general situations.
\begin{thm}
\label{thm:exp_stability_parabolic}Let $H_{0},H_{1}$ be Hilbert
spaces and $C\from\dom(C)\subseteq H_{0}\to H_{1}$ a densely defined closed
linear operator which is boundedly invertible. Moreover, let $M_{0}\in\bo(H_{0})$
be selfadjoint with 
\[
M_{0}\geq c_{0}
\]
for some $c_{0}>0$ and $M_{1}\from\dom(M_{1})\subseteq\C\to\bo(H_{1})$ be
a material law satisfying $\sbb{M_1}< -\rho_1$ for some $\rho_{1}>0$ and 
\[
\exists\, c_{1}>0\:\forall z\in\C_{\Re>-\rho_{1}}: \Re M_{1}(z)\geq c_{1}.
\]
Then
\[
S_\nu\coloneqq \left(\overline{\td{\nu}\begin{pmatrix}
M_{0} & 0\\
0 & 0
\end{pmatrix}+\begin{pmatrix}
0 & 0\\
0 & M_{1}(\td{\nu})
\end{pmatrix}+\begin{pmatrix}
0 & -C^{\ast}\\
C & 0
\end{pmatrix}}\right)^{-1} \in\bo\bigl(\Lnu(\R;H_0\oplus H_1)\bigr)
\]
for each $\nu>0$. Moreover, for all $\nu_0>0$ the family
$(S_\nu)_{\nu\geq \nu_0}$ is exponentially stable with decay rate $\rho_{0}\coloneqq\min\left\{ \rho_{1},c_{1}/\bigl(\norm{M_{1}}^2_{\infty,\C_{\Re>-\rho_{1}}}\norm{M_{0}}\norm{C^{-1}}^{2}\bigr)\right\}$.
\end{thm}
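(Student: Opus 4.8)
The strategy is to recast the $2\times2$ block system into a scalar evolutionary equation for the first component $u$ by eliminating the second component. Writing the unknown as $(u,v)$ and the right-hand side as $(f,g)$, the second row reads $M_1(\td{\nu})v + Cu = g$, so that (using bounded invertibility of $M_1(\td{\nu})$, which follows from $\Re M_1(z)\geq c_1$ together with \prettyref{prop:accr_inv} applied fibrewise and the functional calculus) we may solve $v = M_1(\td{\nu})^{-1}(g - Cu)$. Substituting into the first row $\td{\nu}M_0 u - C^{*}v = f$ yields
\[
\td{\nu}M_0 u + C^{*}M_1(\td{\nu})^{-1}C u = f + C^{*}M_1(\td{\nu})^{-1}g.
\]
Equivalently, after multiplying through by $\td{\nu}^{-1}$ and regrouping, the equation for $u$ is governed by the material law $z\mapsto M_0 + z^{-1}C^{*}M_1(z)^{-1}C$ — but since $C$ is unbounded this needs the extrapolation-space formalism or a direct argument; cleanest is to keep $A_C\coloneqq C^{*}M_1(\td{\nu})^{-1}C$ in mind as a nonnegative selfadjoint-type operator and verify well-posedness of $\td{\nu}M_0 + A_C$ directly, noting $\Re\scp{u}{A_C u} = \Re\scp{M_1(\td{\nu})^{-1}Cu}{Cu}\geq \frac{c_1}{\norm{M_1}_{\infty}^2}\norm{Cu}^2\geq 0$ by \prettyref{prop:block_op_realinv}\ref{prop:block_op_realinv:item:2}, and combining with $M_0\geq c_0$. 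The bounded invertibility statement $S_\nu\in\bo$ for $\nu>0$ then follows from \prettyref{thm:Solution_theory_EE} (or its generalisation in \prettyref{rem:Aaccretive}), applied to the full block operator, since the block material law $M(z)=\diag(M_0,0)+\diag(0,z^{-1}M_1(z))$ satisfies $\Re zM(z)\geq c$ with $c=\min\{\nu c_0, c_1\}$ on $\C_{\Re\geq\nu}$ by the block estimate already used in \prettyref{thm:wp_heat}.

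\textbf{Exponential stability.} The key point is that all the positivity estimates persist on the shifted half-plane $\C_{\Re\geq-\rho}$ for small $\rho>0$. Fix $\rho\in\roi{0}{\rho_0}$. Since $\sbb{M_1}<-\rho_1\leq-\rho$ and $\Re M_1(z)\geq c_1$ for $\Re z>-\rho_1$, the material law $M_1$ is still bounded and holomorphic on $\C_{\Re>-\rho}$, and $M_1(z)$ is still boundedly invertible there with $\norm{M_1(z)^{-1}}\leq 1/c_1$ and $\Re M_1(z)^{-1}\geq c_1/\norm{M_1}_{\infty,\C_{\Re>-\rho_1}}^2$ by \prettyref{prop:block_op_realinv}\ref{prop:block_op_realinv:item:2}. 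Now I compute, for $z\in\C_{\Re\geq-\rho}$ and $(x,y)\in H_0\oplus H_1$, a lower bound for $\Re\scp{(x,y)}{zM(z)(x,y)}$ where $M(z)=\diag(M_0, z^{-1}M_1(z))$: the first component contributes $\Re z\,\scp{x}{M_0 x}\geq -\rho\norm{M_0}\,\norm{x}^2$ (which is negative, the dangerous term), and the second contributes $\Re\scp{y}{M_1(z)y}\geq c_1\norm{y}^2$. This alone is not positive, so the crude block estimate fails for $\rho>0$; instead I must exploit the coupling through $C$. The correct device is to estimate $\Re\scp{(x,y)}{(zM(z)+A)(x,y)}$ on the graph of the \emph{resolvent}, i.e. show directly that $zM(z)+A$ is boundedly invertible on $\C_{\Re\geq-\rho}$ with uniformly bounded inverse, by solving the block system at the level of symbols: eliminating $y$ as above gives, for the Schur complement, $\big(zM_0 + C^{*}M_1(z)^{-1}C\big)x = (\text{data})$, and $\Re\scp{x}{(zM_0+C^{*}M_1(z)^{-1}C)x}\geq -\rho\norm{M_0}\norm{x}^2 + \frac{c_1}{\norm{M_1}_\infty^2}\norm{Cx}^2\geq\big(\frac{c_1}{\norm{M_1}_\infty^2\norm{C^{-1}}^2}-\rho\norm{M_0}\big)\norm{x}^2$, using $\norm{x}\leq\norm{C^{-1}}\norm{Cx}$. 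By the choice of $\rho_0$ this is $\geq\delta\norm{x}^2$ for some $\delta>0$ whenever $\rho<\rho_0$. Hence the Schur complement is boundedly invertible on $\C_{\Re\geq-\rho}$ with inverse bounded by $1/\delta$, and then so is the whole block operator $zM(z)+A$, with a bound uniform in $z\in\C_{\Re\geq-\rho}$ (a routine $2\times2$ block-matrix inversion estimate using $\norm{M_1(z)^{-1}}\leq 1/c_1$, $\norm{C^{-1}}$, and boundedness of $M_1$).

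\textbf{Conclusion via Picard on the shifted line.} Having established $\sup_{z\in\C_{\Re\geq-\rho}}\norm{(zM(z)+A)^{-1}}<\infty$, I invoke \prettyref{rem:Aaccretive} (the generalisation of \prettyref{thm:Solution_theory_EE}, which needs only holomorphy of $M$ on $\C_{\Re>-\rho}$ and a uniform resolvent bound, not the accretivity of $M$ itself): this gives a bounded, causal solution operator $\tilde S_{-\rho}\in\bo(\Lm{-\rho}(\R;H_0\oplus H_1))$, eventually independent of the weight, with $\tilde S_{-\rho}=S(\td{-\rho})$ for the material law $S(z)=(zM(z)+A)^{-1}$. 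Since $S$ is the same material law as the one defining $S_\nu$ for $\nu>0$, the eventual-independence-of-$\nu$ property from \prettyref{thm:material law independent of nu} forces $S_\nu F = \tilde S_{-\rho}F$ for all $F\in\Lnu(\R;H)\cap\Lm{-\rho}(\R;H)$; in particular $S_\nu F\in\Lm{-\rho}(\R;H)$, which is exactly exponential stability with decay rate $\rho_0$. The main obstacle is the middle step: one must resist the temptation to use the crude diagonal block estimate (which genuinely fails for negative weights because the $M_0$-block has negative real part there) and instead push the positivity through the Schur complement, where the ellipticity of $C$ compensates the loss; getting the constant $\rho_0$ to come out as stated is then just bookkeeping in that estimate.
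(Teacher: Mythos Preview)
Your overall strategy matches the paper's: establish a uniform bound for $(zM(z)+A)^{-1}$ on $\C_{\Re\geq-\rho}$ via a Schur-complement estimate, then deduce exponential stability from the material-law framework. The paper isolates the resolvent bound as a separate lemma (\prettyref{lem:pointwise_inverse}).

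There is one technical snag in your Schur complement. You eliminate $v$ to reach the equation $(zM_0+C^{*}M_1(z)^{-1}C)u=f+C^{*}M_1(z)^{-1}g$ on $H_0$; but $M_1(z)^{-1}g$ need not lie in $\dom(C^{*})$ for general $g\in H_1$, so the right-hand side is not defined, and the Schur-complement operator $C^{*}M_1(z)^{-1}C$ is unbounded with an awkward domain. The paper circumvents this by eliminating through the \emph{first} row instead, using that $C^{*}$ is boundedly invertible: from $zM_0u-C^{*}v=f$ one gets $v=z(C^{*})^{-1}M_0u-(C^{*})^{-1}f$, and substitution into the second row yields $S(z)\,Cu=(C^{*})^{-1}f+M_1(z)^{-1}g$ with the \emph{bounded} operator $S(z)=z(C^{-1})^{*}M_0C^{-1}+M_1(z)^{-1}\in\bo(H_1)$. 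Your accretivity computation then transfers verbatim to $S(z)$ and produces the same constant $\rho_0$, but now everything is well-defined for arbitrary data and \prettyref{prop:block_op_realinv}\ref{prop:block_op_realinv:item:2} applies directly to invert $S(z)$; the explicit formulae for $u$ and $v$ then give the uniform bound on $T(z)^{-1}$.

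For the concluding step, your appeal to \prettyref{rem:Aaccretive} together with \prettyref{thm:material law independent of nu} is a legitimate shortcut. The paper instead restricts to $F\in S_{\rmc}(\R;H)$, shifts to $\R_{\geq0}$ via $\tau_{-n}$, and invokes Paley--Wiener (\prettyref{cor:Lapalce_unitary}) to pass between $\Lnu$ and $\Lm{-\rho}$ through the Hardy space $\cHt(\C_{\Re>-\rho};H)$. Both routes rest on the same analytic-extension principle; yours is more economical once the resolvent bound is in hand.
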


\begin{comment}
In order to prove this theorem, we need some prerequisites. We begin
with the well-known theorem of Riemann on removable singularities
for meromorphic functions with values in a Banach space.
\begin{prop}[Riemann's Theorem on removable singularities]
\label{prop:Riemann} Let $X$ be a Banach space, $\Omega\subseteq\C$
open, $z_{0}\in\Omega$ and $f:\Omega\setminus\{z_{0}\}\to X$ analytic
and bounded. Then $z_{0}$ is a removable sngularity of $f$, i.e.,
there is an analytic function $F:\Omega\to X$ with $F(z)=f(z)$ for
each $z\in\Omega\setminus\{z_{0}\}$.
\end{prop}

\begin{proof}
We consider the function 
\[
h(z)\coloneqq\begin{cases}
(z-z_{0})^{2}f(z) & \text{ if }z\ne z_{0},\\
0 & \text{ if }z=z_{0},
\end{cases}\quad(z\in\Omega).
\]
Then $h$ is continuously differentiable in $z_{0}$ with $h'(z_{0})=0$
since 
\[
\frac{h(z)}{z-z_{0}}=(z-z_{0})f(z)\to0\quad(z\to z_{0})
\]
by the boundedness of $f$. Hence, $h$ is holomorphic and thus, we
can expand $h$ in a power series in a neighbourhood of $z_{0}$,
i.e. 
\[
h(z)=\sum_{n=0}^{\infty}a_{n}(z-z_{0})^{n}\quad(z\in\ball{z_{0}}{r})
\]
for some $r>0$ small enough and $a_{n}\in X$ for $n\in\N.$ Since
$h(z_{0})=h'(z_{0})=0,$ we infer that $a_{0}=a_{1}=0.$ Thus, we
have 
\[
f(z)=\frac{h(z)}{(z-z_{0})^{2}}=\sum_{n=0}^{\infty}a_{n+2}(z-z_{0})^{n}\quad(z\in\ball{z_{0}}{r}\setminus\{z_{0}\}),
\]
where the right hand side is also analytic in $z_{0}.$ 
\end{proof}
\end{comment}

In order to prove this theorem we need a preparatory result.

\begin{lem}
\label{lem:pointwise_inverse}
Assume the hypotheses of \prettyref{thm:exp_stability_parabolic}.
Then for each $z\in\C_{\Re>-\rho_{0}}$ the operator 
\[
T(z)\coloneqq\begin{pmatrix}
zM_{0} & 0\\
0 & M_{1}(z)
\end{pmatrix}+\begin{pmatrix}
0 & -C^{\ast}\\
C & 0
\end{pmatrix}\from \dom(C)\times\dom(C^{\ast})\subseteq H_{0}\oplus H_{1}\to H_{0}\oplus H_{1}
\]
is boundedly invertible. Moreover, 
\[
\sup_{z\in\C_{\Re\geq-\rho}}\norm{T(z)^{-1}}<\infty
\]
for each $\rho<\rho_{0}$.
\end{lem}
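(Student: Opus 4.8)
The plan is to establish bounded invertibility of $T(z)$ for $z\in\C_{\Re>-\rho_0}$ by a Schur-complement argument, exploiting that $C$ is boundedly invertible so that we may eliminate the second block component explicitly. First I would observe that $T(z)$ is closed with $\dom(T(z)) = \dom(C)\times\dom(C^*)$ (this is the operator from \prettyref{prop:block_op_realinv}\ref{prop:block_op_realinv:item:1} perturbed by the bounded operator $\operatorname{diag}(zM_0, M_1(z))$, so closedness follows from \prettyref{prop:inv-sum-comp-closed}). Given $(f,g)\in H_0\oplus H_1$, solving $T(z)(u,v) = (f,g)$ amounts to
\begin{align*}
  zM_0 u - C^*v &= f,\\
  Cu + M_1(z)v &= g.
\end{align*}
From the second equation $u = C^{-1}(g - M_1(z)v)$, which is legitimate since $\ran(M_1(z))\subseteq H_1 = \dom(C^{-1})$ and $u$ then lies in $\dom(C)$; substituting into the first yields
\[
  \bigl(zM_0 C^{-1}M_1(z) + C^*\bigr)v = zM_0 C^{-1}g - f.
\]
So the key point is the bounded invertibility of the operator $R(z)\coloneqq zM_0 C^{-1}M_1(z) + C^*$ on $\dom(C^*)$, together with appropriate bounds.

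For that I would apply \prettyref{prop:accr_inv}: $R(z)$ is densely defined and closed (again by \prettyref{prop:inv-sum-comp-closed}, since $zM_0C^{-1}M_1(z)$ is bounded and $C^*$ is closed and densely defined), with $\dom(R(z)) = \dom(C^*) \supseteq \dom(C^*)$. The accretivity estimate: for $v\in\dom(C^*)$ write $w\coloneqq M_1(z)v$ and compute, using that $M_0\geq c_0>0$ is selfadjoint (hence $M_0^{-1}$ exists, and $\Re\langle M_0 x, x\rangle \geq c_0\|x\|^2$), and using skew-symmetry of $\begin{pmatrix}0 & -C^*\\ C & 0\end{pmatrix}$. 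Actually the cleanest route is to test the original system: for the solution $(u,v)$ one has
\[
  \Re\bigl(\scp{zM_0u}{u}_{H_0} + \scp{M_1(z)v}{v}_{H_1}\bigr) = \Re\bigl(\scp{f}{u}_{H_0} + \scp{g}{v}_{H_1}\bigr),
\]
because the $C,C^*$ terms cancel. Here $\Re z \geq -\rho_0 \geq -\rho_1$ need not make $\Re z\,\scp{M_0 u}{u}$ positive, so one estimates $\Re z\,\scp{M_0u}{u}_{H_0}\geq -\rho_0\norm{M_0}\norm{u}_{H_0}^2$ and combines with $\Re\scp{M_1(z)v}{v}_{H_1}\geq c_1\norm{v}_{H_1}^2$ and $\norm{u}_{H_0} = \norm{C^{-1}Cu}_{H_0}\leq\norm{C^{-1}}\,\norm{Cu}_{H_1} = \norm{C^{-1}}\,\norm{g - M_1(z)v}_{H_1}$. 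Tracking constants and using $\rho_0 \leq c_1/(\norm{M_1}_{\infty,\C_{\Re>-\rho_1}}^2\norm{M_0}\norm{C^{-1}}^2)$ should give a uniform lower bound of the form $\Re(\cdots) \geq \tilde c(\norm{u}_{H_0}^2 + \norm{v}_{H_1}^2)$ on $\C_{\Re\geq-\rho}$ for $\rho<\rho_0$; an application of \prettyref{prop:accr_inv} (after checking the analogous estimate for the adjoint, which has the same block structure with $C\leftrightarrow -C^*$ and $M_1(z)\leftrightarrow M_1(z)^*$, noting $\Re M_1(z)^* = \Re M_1(z)$) then yields $T(z)^{-1}\in\bo(H_0\oplus H_1)$ with $\norm{T(z)^{-1}}\leq 1/\tilde c$.

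The main obstacle I anticipate is bookkeeping the constants so that the stated decay rate $\rho_0$ comes out exactly, in particular handling the indefinite term $\Re z\,\scp{M_0u}{u}$ when $\Re z\in(-\rho_0,0)$: one must use the Peter--Paul (Young) inequality to absorb the cross term $\norm{M_1(z)v}\,\norm{u}$ into the positive contributions, and verify the threshold $\rho_0$ is precisely what makes the coefficient of $\norm{u}_{H_0}^2$ nonnegative. A secondary point is confirming that $\dom(T(z))$ and the invertibility are genuinely independent of $z$ and that the supremum bound is uniform — but this is immediate from the $z$-independent constant $\tilde c$. Once \prettyref{lem:pointwise_inverse} is in hand, $z\mapsto T(z)^{-1}$ is holomorphic on $\C_{\Re>-\rho_0}$ (by \prettyref{exer:comp_ana}-type reasoning applied to the Schur complement, or directly since it is a composition of holomorphic resolvent-type maps), hence a material law on each half-plane $\C_{\Re>-\rho}$, and \prettyref{thm:Solution_theory_EE} together with \prettyref{thm:material law independent of nu} gives $S_\nu = T(\td{\nu})^{-1}$ mapping $\Lm{-\rho}(\R;H_0\oplus H_1)$ to itself, which is exactly exponential stability with decay rate $\rho_0$; but that final assembly belongs to the proof of \prettyref{thm:exp_stability_parabolic}, not of the lemma.
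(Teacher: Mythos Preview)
Your proposal has a genuine gap at the step where you invoke \prettyref{prop:accr_inv}. That proposition requires $\Re\scp{T(z)(u,v)}{(u,v)}\geq\tilde c\,\norm{(u,v)}^{2}$ for \emph{all} $(u,v)\in\dom(C)\times\dom(C^{*})$, but $T(z)$ is simply not accretive once $\Re z<0$: take $v=0$ and any $u\neq 0$ to get $\Re\scp{T(z)(u,0)}{(u,0)}=\Re z\,\scp{M_{0}u}{u}\leq c_{0}\Re z\,\norm{u}^{2}<0$. Your attempted remedy, the bound $\norm{u}\leq\norm{C^{-1}}\norm{g-M_{1}(z)v}$, already uses the image $g=Cu+M_{1}(z)v$, so what you are really sketching is an a~priori estimate $\norm{(u,v)}\leq C\norm{T(z)(u,v)}$ (which, together with the analogous bound for $T(z)^{*}$, \emph{would} suffice), not accretivity. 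That route can be made to work but needs the constants handled carefully; as written, substituting your bound on $\norm{u}$ into the energy identity produces a negative term involving $\norm{g}$ rather than the claimed lower bound $\tilde c(\norm{u}^{2}+\norm{v}^{2})$.

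The paper avoids this difficulty by a different Schur complement: instead of eliminating $u$ and landing on the unbounded operator $R(z)=zM_{0}C^{-1}M_{1}(z)+C^{*}$, it inverts $C^{*}$ in the first row and $M_{1}(z)$ in the second, then adds, reducing to the \emph{bounded} operator
\[
S(z)\coloneqq z\,(C^{-1})^{*}M_{0}C^{-1}+M_{1}(z)^{-1}\in\bo(H_{1}).
\]
The point is that sandwiching $M_{0}$ between $C^{-1}$ and $(C^{-1})^{*}$ tames the negative contribution: for $\Re z\geq-\rho$ one has $\Re\scp{x}{S(z)x}\geq\bigl(c_{1}/\norm{M_{1}}_{\infty}^{2}-\rho\,\norm{M_{0}}\norm{C^{-1}}^{2}\bigr)\norm{x}^{2}$, using \prettyref{prop:block_op_realinv}\ref{prop:block_op_realinv:item:2} for the $M_{1}(z)^{-1}$ term. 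This is strictly positive precisely when $\rho<\rho_{0}$, which is where the stated decay rate comes from. Bounded invertibility of $S(z)$ then yields an explicit formula for $(u,v)$ and hence the uniform bound on $T(z)^{-1}$. Your instinct to reduce to a Schur-type operator was right, but the choice of which blocks to invert is what makes the accretivity visible.
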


\begin{proof}
Let $z\in\C_{\Re\geq-\rho}$ for some $\rho<\rho_{0}$. We note that
$M_{1}(z)$ is boundedly invertible with $\norm{M_{1}(z)^{-1}}\leq 1/c_{1}$ (see \prettyref{prop:block_op_realinv}\ref{prop:block_op_realinv:item:2}) and $(C^\ast)^{-1}=(C^{-1})^\ast \in \bo(H_0,H_1)$ (see \prettyref{lem:adj} and \prettyref{lem:bdds}). The beginning of the proof deals with a reformulation of $T(z)$. For this, let $u, f\in H_{0}$, $v,g\in H_{1}$. Then, by definition, $(u,v)\in\dom(T(z))=\dom(C)\times\dom(C^*)$ and $T(z)(u,v)=(f,g)$ if and only if $v\in \dom(C^*)$ and $u\in \dom(C)$ together with
\begin{align*}
 zM_0 u - C^*v &= f\\
 Cu + M_1 (z)v &=g.
\end{align*}
Since both $C^*$ and $M_1(z)$ are continuously invertible, we obtain equivalently $u\in \dom(C)$ together with
\begin{align*}
  z(C^*)^{-1}M_0 u - v &= (C^*)^{-1}f \\
   M_1 (z)^{-1}Cu + v &=M_1 (z)^{-1}g.
\end{align*}
Adding the latter two equations and retaining the first equation, we obtain the following equivalent system subject to the condition $u\in \dom(C)$
\begin{align*}
     v &=z(C^*)^{-1}(M_0 u - f)\in \dom(C^*), \\
    (z(C^*)^{-1}M_0C^{-1}  +   M_1 (z)^{-1})Cu  &=M_1 (z)^{-1}g + (C^*)^{-1}f.
\end{align*}
%%% We first show that a pair $(u,v)\in H_{0}\times H_{1}$
%%%satisfies $(u,v)\in\dom(T(z))$ and $T(z)(u,v)=(f,g)$ if and only if $u\in\dom(C)$ and
%%%\begin{equation}
%%%\begin{cases}\bigl(z(C^{\ast})^{-1}M_{0}C^{-1}+M_{1}(z)^{-1}\bigr)Cu  =\left(C^{\ast}\right)^{-1}f+M_{1}(z)^{-1}g, \\
%%%M_{1}(z)v  =g-Cu.&\end{cases}\label{eq:crazy_formula_u_v}
%%%\end{equation}
%%%Assume first that $(u,v)\in\dom(T(z))$ and $T(z)(u,v)=(f,g)$. Then we have 
%%%\begin{align*}
%%%zM_{0}u-C^{\ast}v & =f,\\
%%%M_{1}(z)v+Cu & =g.
%%%\end{align*}
%%%Using that $M_{1}(z)$ is invertible, we derive 
%%%\[
%%%zM_{0}u-C^{\ast}M_{1}(z)^{-1}(g-Cu)=f,
%%%\]
%%%which in turn implies 
%%%\[
%%%\bigl(z\left(C^{\ast}\right)^{-1}M_{0}C^{-1}+M_{1}(z)^{-1}\bigr)Cu=\left(C^{\ast}\right)^{-1}f+M_{1}(z)^{-1}g.
%%%\]
%%%On the other hand, if $(u,v)$ is given by \prettyref{eq:crazy_formula_u_v}
%%%then we have 
%%%\begin{align*}
%%%v & =M_{1}(z)^{-1}(g-Cu)
%%% =M_{1}(z)^{-1}g+z\left(C^{\ast}\right)^{-1}M_{0}u-\left(C^{\ast}\right)^{-1}f-M_{1}(z)^{-1}g \\
%%%& =\left(C^{\ast}\right)^{-1}(zM_{0}u-f),
%%%\end{align*}
%%%and hence, $v\in\dom(C^{\ast})$ with $zM_{0}u-C^{\ast}v=f$. Thus indeed
%%%$T(z)(u,v)=(f,g)$.
We now inspect the operator $S(z)\coloneqq z(C^{-1})^{\ast}M_{0}C^{-1}+M_{1}(z)^{-1}\in\bo(H_{1})$.
By \prettyref{prop:block_op_realinv} for $x\in H_{1}$ we estimate 
\begin{align*}
\Re\scp{x}{S(z)x} & =\Re\scp{C^{-1}x}{zM_{0}C^{-1}x}+\Re\scp{x}{M_{1}(z)^{-1}x}\\
 & \geq-\rho\norm{M_{0}}\norm{C^{-1}}^{2}\norm{x}^{2}+\frac{c_{1}}{\norm{M_{1}(z)}^{2}}\norm{x}^{2}\\
 & \geq\underbrace{\bigl(\frac{c_{1}}{\norm{M_{1}}_{\infty,\C_{\Re>-\rho_{1}}}^{2}}-\rho\norm{M_{0}}\norm{C^{-1}}^{2}\bigr)}_{\eqqcolon\mu}\norm{x}^{2}.
\end{align*}
Since $\rho<\rho_{0}$ and by the definition of $\rho_{0}$
we infer that $\mu>0$. Hence, $S(z)$ is boundedly invertible with
\[
\norm{S(z)^{-1}}\leq\frac{1}{\mu}.
\]
We now set 
\begin{align*}
u & \coloneqq C^{-1}S(z)^{-1}\bigl((C^{\ast})^{-1}f+M_{1}(z)^{-1}g\bigr)\in\dom(C),\\
v & \coloneqq z(C^*)^{-1}(M_0 u - f)\in \dom(C^*).
\end{align*}
By the first part of the proof we have that $\left(u,v\right)$
is the unique solution of $T(z)(u,v)=(f,g)$. Moreover, we can estimate
\begin{align*}
\norm{u} & \leq\norm{C^{-1}}\frac{1}{\mu}\Bigl(\norm{(C^{\ast})^{-1}}\norm{f}+\frac{1}{c_{1}}\norm{g}\Bigr),\text{ and }\\
\norm{v} & \leq\frac{1}{c_1}(\norm{g}+\norm{Cu}) \leq \frac{1}{c_1}\Bigl(\norm{g}+\frac{1}{\mu} \bigl(\norm{(C^{\ast})^{-1}}\norm{f}+\frac{1}{c_{1}}\norm{g}\bigr)\Bigr),
\end{align*}
which proves that $T(z)$ is boundedly invertible with 
\[
\sup_{z\in\C_{\Re\geq-\rho}}\norm{T(z)^{-1}}<\infty.\tag*{\qedhere}
\]
\end{proof}

\begin{proof}[Proof of \prettyref{thm:exp_stability_parabolic}]
Let $H\coloneqq H_{0}\oplus H_{1}$.
We set 
\[
M(z)\coloneqq\begin{pmatrix}
M_{0} & 0\\
0 & z^{-1}M_{1}(z)
\end{pmatrix}\quad(z\in\dom(M_{1})\setminus\{0\}).
\]
Let $\nu>0$. Then
\[
\forall z\in\C_{\Re\geq\nu}: \Re zM(z)\geq\min\{\nu c_{0},c_{1}\}
\]
and hence, the first assertion of the theorem follows from \prettyref{thm:Solution_theory_EE}.

Next, we focus on exponential stability. For this note that $(S_\nu)_{\nu\geq\nu_0}$ is exponentially stable if $S_\nu \in \bo(\Lnu(\R;H)\cap \Lm{-\rho}(\R;H))$ for all $\rho\in \roi{0}{\rho_0}$ and $\nu\geq\nu_0$. For this, by \prettyref{lem:simple_fcts_compact_spt},  it suffices to show the corresponding estimate on $ S_{\rmc}(\R;X)$. Therefore, we let $F\in S_{\rmc}(\R;H)\subseteq \Lnu(\R;H)$ and define
\[
U\coloneqq\left(\overline{\td{\nu}\begin{pmatrix}
M_{0} & 0\\
0 & 0
\end{pmatrix}+\begin{pmatrix}
0 & 0\\
0 & M_{1}(\td{\nu})
\end{pmatrix}+\begin{pmatrix}
0 & -C^{\ast}\\
C & 0
\end{pmatrix}}\right)^{-1}F.
\] Since the estimate $\norm{U}_{\Lnu}\leq \tilde{C} \norm{F}_{\Lnu}$ for some $\tilde{C}\geq 0$ is a consequence of the continuity of the solution operator, it is left to show that for all $\rho\in \roi{0}{\rho_0}$ there exists $C\geq 0$ such that $\norm{U}_{\Lm{-\rho}}\leq C\norm{F}_{\Lm{-\rho}}$. 

For this, we observe that since $F$ has compact support, we find  $n\in \N$ such that $\spt F\subseteq \R_{\geq-n}$. Then $U\in\Lnu(\R_{\geq-n};H)$ by causality. Since \[\left(\overline{\td{\nu}\begin{pmatrix}
M_{0} & 0\\
0 & 0
\end{pmatrix}+\begin{pmatrix}
0 & 0\\
0 & M_{1}(\td{\nu})
\end{pmatrix}+\begin{pmatrix}
0 & -C^{\ast}\\
C & 0
\end{pmatrix}}\right)^{-1}\] is a material law operator (induced by the material law $T(\cdot)^{-1}$ from \prettyref{lem:pointwise_inverse}), it commutes with the material law operator $\tau_{-n}$ (see \prettyref{rem:mlohom} for the commutation and \prettyref{rem:somlo} as well as \prettyref{exa:material_law_revisited} for the material law operator properties). Thus, we obtain
\[
\tau_{-n}U=\left(\overline{\td{\nu}\begin{pmatrix}
M_{0} & 0\\
0 & 0
\end{pmatrix}+\begin{pmatrix}
0 & 0\\
0 & M_{1}(\td{\nu})
\end{pmatrix}+\begin{pmatrix}
0 & -C^{\ast}\\
C & 0
\end{pmatrix}}\right)^{-1}\tau_{-n}F
\]
and  $\tau_{-n}F\in\Lm{-\rho}(\R_{\geq0};H)$ by assumption.
Hence, \prettyref{cor:Lapalce_unitary} yields $\mathcal{L}\tau_{-n}F\in\cHt(\C_{\Re>-\rho};H)$. Let $T(\cdot)$ be as in \prettyref{lem:pointwise_inverse}. We note here that thus \[
T(\cdot)^{-1}(\td{\nu})=\left(\overline{\td{\nu}\begin{pmatrix}
M_{0} & 0\\
0 & 0
\end{pmatrix}+\begin{pmatrix}
0 & 0\\
0 & M_{1}(\td{\nu})
\end{pmatrix}+\begin{pmatrix}
0 & -C^{\ast}\\
C & 0
\end{pmatrix}}\right)^{-1}.\]
Since $T(\cdot)^{-1}$ has a bounded analytic extension to $\C_{\Re>-\rho}$, we obtain $T(\cdot)^{-1}\mathcal{L}\tau_{-n}F \in\cHt(\C_{\Re>-\rho};H)$.
Since $\tau_{-n}U\in \Lnu(\R_{\geq0};H)$ and 
\[
\bigl(\mathcal{L}\tau_{-n}U \bigr)(z)=T(z)^{-1}\bigl(\mathcal{L}\tau_{-n}F \bigr)(z)\quad(z\in\C_{\Re>\nu}),
\]
we conclude that $\tau_{-n}U \in \Lm{-\rho}(\R_{\geq0};H)$ by \prettyref{cor:Lapalce_unitary}. The boundedness of $T(\cdot)^{-1}$ on $\C_{\Re>-\rho}$ implies the desired continuity estimate. In order to see this, let $C\coloneqq\sup_{z\in\C_{\Re\geq-\rho}}\norm{T(z)^{-1}}$. Using that $\mathcal{L}_{-\rho}$ is unitary from $\Lm{-\rho}(\R;H)$ to $\L(\R;H)$,
we compute 
\begin{align*}
\norm{U}_{\Lm{-\rho}} =\norm{\mathcal{L}_{-\rho}U}_{\L} 
 =\norm{T(\i\m-\rho)^{-1}\mathcal{L}_{-\rho}F}_{\L}
  \leq C\norm{\mathcal{L}_{-\rho}F}_{\L}
 =C\norm{F}_{\Lm{-\rho}},
\end{align*}
which is the desired continuity estimate.
\end{proof}

\section{Three Exponentially Stable Models for Heat Conduction}
\label{sec:Three_Exponentially_Stable_Models_for_Heat_Conduction}

\subsection*{The Classical Heat Equation}

We recall the classical heat equation (cf.~\prettyref{thm:wp_heat})
on an open subset $\Omega\subseteq\R^{d}$ consisting of two equations,
the heat-flux balance 
\[
\partial_t \theta+\dive q=f
\]
and Fourier's law 
\[
q=-a\grad\theta,
\]
where $f$ is a given source term and $a\in\bo(\L(\Omega)^{d})$ is
an operator modelling the heat conductivity of the underlying medium.
We will impose Dirichlet boundary conditions which will be incorporated
in our equation by replacing the operator $\grad$ by $\grad_{0}$
in Fourier's law (cf.~\prettyref{sec:First-order-Sobolev}).

In order to apply \prettyref{thm:exp_stability_parabolic} we need
that $\grad_{0}$ is boundedly invertible in some sense. This can
be shown using \emph{Poincar\'{e}'s inequality\index{Poincar\'{e}'s inequality}}.
\begin{prop}[Poincaré inequality]
\label{prop:Poincare} Let $\Omega\subseteq\R^{d}$ be open and contained
in a slab; that is, there exist 
$e\in\R^d$ with $\norm{e}=1$ and $a,b\in\R$, $a<b$ such that
\[
\Omega\subseteq\set{x\in\R^{d}}{a<\scp{e}{x}<b}.
\]
Then for each $u\in\dom(\grad_{0})$ we have
\[
\norm{u}_{\L(\Omega)}\leq(b-a)\norm{\grad_{0}u}_{\L(\Omega)^{d}}.
\]
\end{prop}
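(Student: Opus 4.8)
The plan is to prove the estimate first on the core $\cci(\Omega)$ and then pass to general $u\in\dom(\grad_{0})$ by the density characterisation of $\dom(\grad_0)$.

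\emph{Step 1: the inequality on $\cci(\Omega)$.} Let $u\in\cci(\Omega)$ and extend it by $0$ to a function in $\cci(\R^{d})$, still denoted $u$. Introduce the directional derivative $\partial_{e}u\coloneqq\scp{e}{\grad u}$ and decompose $x\in\R^{d}$ as $x=se+y$ with $s=\scp{e}{x}\in\R$ and $y\in e^{\perp}$. Since $\scp{e}{se+y}=s$ and $\Omega$ is contained in the slab $\{a<\scp{e}{\cdot}<b\}$, the smooth compactly supported function $s\mapsto u(se+y)$ vanishes for $s\notin\ci{a}{b}$; in particular it vanishes for $s\leq a$, so the fundamental theorem of calculus gives $u(se+y)=\int_{a}^{s}(\partial_{e}u)(te+y)\d t$ for all $s\in\R$ and $y\in e^{\perp}$. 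Cauchy--Schwarz then yields, for $s\in\ci{a}{b}$,
\[
\abs{u(se+y)}^{2}\leq(s-a)\int_{a}^{s}\abs{(\partial_{e}u)(te+y)}^{2}\d t\leq(b-a)\int_{a}^{b}\abs{(\partial_{e}u)(te+y)}^{2}\d t.
\]

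\emph{Step 2: integrate.} Integrating the last estimate over $s\in\ci{a}{b}$ and then over $y\in e^{\perp}$ (using Fubini and that $u$ vanishes off the slab) gives
\[
\norm{u}_{\L(\Omega)}^{2}=\int_{e^{\perp}}\int_{a}^{b}\abs{u(se+y)}^{2}\d s\d y\leq(b-a)^{2}\int_{e^{\perp}}\int_{a}^{b}\abs{(\partial_{e}u)(te+y)}^{2}\d t\d y=(b-a)^{2}\norm{\partial_{e}u}_{\L(\Omega)}^{2}.
\]
Since $\norm{e}=1$, we have the pointwise bound $\abs{\partial_{e}u}=\abs{\scp{e}{\grad u}}\leq\norm{\grad u}_{\R^{d}}$, hence $\norm{\partial_{e}u}_{\L(\Omega)}\leq\norm{\grad u}_{\L(\Omega)^{d}}=\norm{\grad_{0}u}_{\L(\Omega)^{d}}$, and taking square roots proves the claim for $u\in\cci(\Omega)$.

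\emph{Step 3: density.} For general $u\in\dom(\grad_{0})$, \prettyref{thm:Sobolev_space}(b) provides a sequence $(u_{k})_{k}$ in $\cci(\Omega)$ with $u_{k}\to u$ in $\L(\Omega)$ and $\grad u_{k}\to\grad_{0}u$ in $\L(\Omega)^{d}$. Applying Step 1--2 to each $u_{k}$ and letting $k\to\infty$ yields $\norm{u}_{\L(\Omega)}\leq(b-a)\norm{\grad_{0}u}_{\L(\Omega)^{d}}$.

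The argument is essentially routine; there is no genuine obstacle. The only point that requires a little care is the change of variables $x=se+y$ together with Fubini's theorem used to reduce the $d$-dimensional integral to the one-dimensional estimate in the $e$-direction, which is justified by the rotation invariance of Lebesgue measure (mapping $e$ to the first coordinate axis), and the bookkeeping needed to see that the extended $u$ genuinely vanishes outside the slab so that the integrals over $\Omega$ and over the slab coincide.
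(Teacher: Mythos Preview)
Your proof is correct and follows essentially the same approach as the paper: reduce to $\cci(\Omega)$ by density, apply the fundamental theorem of calculus in the slab direction, then Cauchy--Schwarz and Fubini/Tonelli. The only cosmetic difference is that the paper first rotates to make $e$ the first coordinate axis, whereas you work directly with the decomposition $x=se+y$ and invoke rotation invariance of Lebesgue measure at the end.
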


\begin{proof}
Without loss of generality, let $e=(1,0,\ldots,0)$.
Recall that, by definition, $\cci(\Omega)$ is a core for
$\grad_{0}$. Thus, it suffices to prove the assertion for functions
in $\cci(\Omega)$. Let $\varphi\in \cci(\Omega)$.
We identify $\varphi$ with its extension by $0$ to the whole of
$\R^{d}$. By the fundamental theorem of calculus, we may compute
\[
\varphi(x)=\int_{a}^{x_1}\partial_{1}\varphi(s,x_2,\ldots,x_{d})\d s\quad (x\in\Omega).
\]
Hence, by the Cauchy--Schwarz inequality and Tonelli's theorem
\begin{align*}
\int_{\Omega}\abs{\varphi(x)}^{2}\d x & =\int_{\Omega}\abs{\int_{a}^{x_1}\partial_{1}\varphi(s,x_{2},\ldots,x_{d})\d s}^{2}\d x \\
 & \leq\int_{\Omega}(b-a)\int_{a}^{b}(\partial_{1}\varphi(s,x_{2},\ldots,x_{d}))^{2}\d s\d x
 = (b-a)^{2}\int_{\Omega}\abs{\partial_{1}\varphi(x)}^{2}\d x\\
 & \leq(b-a)^{2}\norm{\grad_{0}\varphi}_{\L(\Omega)^{d}}^{2},
\end{align*}
which shows the assertion. 
\end{proof}
\begin{cor}
\label{cor:range(grad) closed} Under the assumptions of \prettyref{prop:Poincare}
the operator $\grad_{0}$ is one-to-one and $\ran(\grad_{0})$ is
closed.
\end{cor}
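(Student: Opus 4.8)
The plan is to read off both assertions directly from the Poincar\'{e} inequality established in \prettyref{prop:Poincare}, namely $\norm{u}_{\L(\Omega)}\leq(b-a)\norm{\grad_{0}u}_{\L(\Omega)^{d}}$ for all $u\in\dom(\grad_{0})$. For injectivity, suppose $u\in\dom(\grad_{0})$ satisfies $\grad_{0}u=0$. Then the inequality gives $\norm{u}_{\L(\Omega)}\leq(b-a)\cdot 0=0$, hence $u=0$; thus $\ker(\grad_{0})=\{0\}$.

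For closedness of the range, I would invoke the standard fact that a closed operator which is bounded below has closed range, spelled out here for $\grad_{0}$ (which is closed, as recorded right after the definitions in \prettyref{sec:First-order-Sobolev}). Let $(y_{n})_{n}$ be a sequence in $\ran(\grad_{0})$ with $y_{n}\to y$ in $\L(\Omega)^{d}$, and choose $u_{n}\in\dom(\grad_{0})$ with $\grad_{0}u_{n}=y_{n}$. Applying the Poincar\'{e} inequality to $u_{n}-u_{m}$ yields
\[
\norm{u_{n}-u_{m}}_{\L(\Omega)}\leq(b-a)\norm{\grad_{0}(u_{n}-u_{m})}_{\L(\Omega)^{d}}=(b-a)\norm{y_{n}-y_{m}}_{\L(\Omega)^{d}},
\]
so $(u_{n})_{n}$ is a Cauchy sequence in $\L(\Omega)$ and converges to some $u\in\L(\Omega)$. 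Since $\grad_{0}$ is closed, $u_{n}\to u$ together with $\grad_{0}u_{n}=y_{n}\to y$ forces $u\in\dom(\grad_{0})$ and $\grad_{0}u=y$. Hence $y\in\ran(\grad_{0})$, and $\ran(\grad_{0})$ is closed.

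There is no real obstacle in this argument; the only points requiring a little care are the explicit use of the closedness of $\grad_{0}$ and the observation that the (non-unique) choice of preimages $u_{n}$ is harmless, since we only need one convergent sequence of preimages to identify the limit of $(y_{n})_{n}$ as an element of $\ran(\grad_{0})$. As a by-product, the estimate shows that $\grad_{0}$ has a bounded inverse on its range, which is what makes \prettyref{thm:exp_stability_parabolic} applicable to the heat equation once the range is, in addition, seen to equal $\cran(\grad_{0})$.
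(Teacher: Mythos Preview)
Your proof is correct and follows essentially the same approach as the paper: both derive injectivity directly from the Poincar\'{e} inequality and establish closedness of the range by taking a convergent sequence in $\ran(\grad_{0})$, using the inequality to show the preimages form a Cauchy sequence, and invoking closedness of $\grad_{0}$ to identify the limit.
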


\begin{proof}
The injectivity follows immediately from Poincar\'{e}'s inequality. To
prove the closedness of $\ran(\grad_{0})$, let $(u_{k})_{k\in\N}$
in $\dom(\grad_{0})$ with $\grad_{0}u_{k}\to v$ in $\L(\Omega)^{d}$
for some $v\in\L(\Omega)^{d}$. By Poincar\'{e}'s inequality, we infer
that $(u_{k})_{k\in\N}$ is a Cauchy-sequence in $\L(\Omega)$ and
hence convergent to some $u\in\L(\Omega)$. By the closedness of $\grad_{0}$
we obtain $u\in\dom(\grad_{0})$ and $v=\grad_{0}u\in\ran(\grad_{0}).$ 
\end{proof}

We need another auxiliary result which is interesting in its own
right.

\begin{lem}
\label{lem:embeddings_projections} Let $H$ be a Hilbert space and
$V\subseteq H$ a closed subspace. We denote by 
\[
\iota_{V}\from V\to H,\quad x\mapsto x
\]
the canonical embedding of $V$ into $H$. Then $\iota_{V}\iota_{V}^{\ast}\from H\to H$
is the orthogonal projection on $V$ and $\iota_{V}^{\ast}\iota_{V}\from V\to V$
is the identity on $V$. 
\end{lem}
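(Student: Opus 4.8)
The statement is purely Hilbert-space bookkeeping, so the plan is to unwind the definition of the adjoint and then verify the two defining properties of an orthogonal projection. First I would record that $\iota_V$ is a bounded (in fact isometric) linear operator from the Hilbert space $V$ (with the inherited inner product) into $H$, so that $\iota_V^*\in\bo(H,V)$ exists and is characterised by
\[
\scp{\iota_V^*x}{v}_V = \scp{x}{\iota_V v}_H = \scp{x}{v}_H \qquad (x\in H,\ v\in V).
\]

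Next I would treat $\iota_V^*\iota_V$. For $v,w\in V$ one computes $\scp{\iota_V^*\iota_V v}{w}_V = \scp{\iota_V v}{\iota_V w}_H = \scp{v}{w}_H = \scp{v}{w}_V$; since this holds for every $w\in V$ and $V$ is a Hilbert space, $\iota_V^*\iota_V v = v$, i.e. $\iota_V^*\iota_V = \id_V$. Using this identity, $P\coloneqq\iota_V\iota_V^*\in\bo(H)$ satisfies $P^2 = \iota_V(\iota_V^*\iota_V)\iota_V^* = \iota_V\iota_V^* = P$, and $P^* = (\iota_V\iota_V^*)^* = \iota_V^{**}\iota_V^* = \iota_V\iota_V^* = P$ (invoking $\iota_V^{**}=\overline{\iota_V}=\iota_V$, valid as $\iota_V$ is bounded and closed). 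Moreover $\ran(P)\subseteq V$ trivially, and for $v\in V$ one has $Pv = \iota_V\iota_V^*\iota_V v = \iota_V v = v$, so $\ran(P)=V$ and $P$ restricts to the identity on $V$.

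Finally I would conclude that a bounded, self-adjoint, idempotent operator with closed range $V$ is the orthogonal projection onto $V$: since $P$ is closed and $\ran(P)=V$ is closed, \prettyref{thm:ran-kernerl} (or \prettyref{cor:abstrHelmH}) gives $\ker(P)=\ker(P^*)=\ran(P)^{\bot}=V^{\bot}$; and for arbitrary $x\in H$ the decomposition $x = Px + (x-Px)$ has $Px\in V$ and $P(x-Px)=Px-P^2x=0$, so $x-Px\in\ker(P)=V^{\bot}$. Hence $P$ sends each $x$ to its component in $V$ along $V^\bot$, which is exactly the orthogonal projection onto $V$. I do not anticipate any real obstacle here; the only point requiring a moment's care is to keep the two inner products (on $V$ and on $H$) straight and to use that $V$ being closed makes $V$ a Hilbert space in its own right, so that the adjoint and the orthogonal decomposition are available.
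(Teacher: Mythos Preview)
Your proof is correct and complete. The paper does not actually give a proof of this lemma; it states ``The proof is left as \prettyref{exer:embeddings}'' and the corresponding exercise simply asks the reader to prove the lemma, so there is no argument in the paper to compare against. Your route---first establishing $\iota_V^*\iota_V=\id_V$ from the defining identity of the adjoint, then deducing that $P=\iota_V\iota_V^*$ is a selfadjoint idempotent with range $V$, and finally invoking the orthogonal decomposition $H=\ran(P)\oplus\ker(P)=V\oplus V^\bot$---is exactly the kind of argument the exercise is inviting.
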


\begin{proof}
The proof is left as \prettyref{exer:embeddings}.
\end{proof}

We now come to the exponential stability of the heat equation. First,
we need to formulate both the heat-flux balance and Fourier's law
as a suitable evolutionary equation. For doing so, we assume that
$\Omega\subseteq\R^{d}$ is open and contained in a slab. Then $\ran(\grad_{0})$
is closed by \prettyref{cor:range(grad) closed}. It is clear that
we can write Fourier's law as 
\[
q=-a\grad_{0}\theta=-a\iota_{\ran(\grad_{0})}\iota_{\ran(\grad_{0})}^{\ast}\grad_{0}\theta.
\]
Hence, defining $\tilde{q}\coloneqq\iota_{\ran(\grad_{0})}^{\ast}q$
and $\tilde{a}\coloneqq\iota_{\ran(\grad_{0})}^{\ast}a\iota_{\ran(\grad_{0})}\in\bo(\ran(\grad_{0}))$,
we arrive at 
\[
\tilde{q}=-\tilde{a}\iota_{\ran(\grad_{0})}^{\ast}\grad_{0}\theta.
\]
Moreover, since $\ran(\grad_{0})^{\bot}=\ker(\dive)$, we derive from
the heat-flux balance 
\begin{align*}
f & =\partial_t\theta+\dive q
 =\partial_t\theta+\dive\iota_{\ran(\grad_{0})}\tilde{q}
\end{align*}
and hence, assuming that $\tilde{a}$ is invertible, we may write both
equations with the unknowns $(\theta,\tilde{q})$ as an evolutionary
equation in $\Lnu(\R;H)$ for $\nu>0$, where $H\coloneqq\L(\Omega)\oplus\ran(\grad_{0})$. This yields 
\begin{equation}
\left(\td{\nu}\begin{pmatrix}
1 & 0\\
0 & 0
\end{pmatrix}+\begin{pmatrix}
0 & 0\\
0 & \tilde{a}^{-1}
\end{pmatrix}+\begin{pmatrix}
0 & \dive\iota_{\ran(\grad_{0})}\\
\iota_{\ran(\grad_{0})}^{\ast}\grad_{0} & 0
\end{pmatrix}\right) \begin{pmatrix}
\theta\\
\tilde{q}
\end{pmatrix}=\begin{pmatrix}
f\\
0
\end{pmatrix}.\label{eq:heat_modified}
\end{equation}

For notational convenience, we set
\begin{equation}
C\coloneqq\iota_{\ran(\grad_{0})}^\ast\grad_{0}\from \dom(\grad_{0})\subseteq\L(\Omega)\to\ran(\grad_{0}).\label{eq:C}
\end{equation}

\begin{lem}
\label{lem:C_good} Let $\Omega\subseteq\R^{d}$ be open and contained
in a slab and $C$ as above. Then $C$ is densely defined, closed
and boundedly invertible. Moreover 
\[
C^{\ast}=-\dive\iota_{\ran(\grad_{0})}.
\]
\end{lem}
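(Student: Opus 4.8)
The plan is to recognise $C$ as essentially the operator $\grad_{0}$ with its codomain corestricted to the (closed, by \prettyref{cor:range(grad) closed}) subspace $\ran(\grad_{0})\subseteq\L(\Omega)^{d}$, and then transfer all the relevant properties of $\grad_{0}$ along the isometry $\iota_{\ran(\grad_{0})}$. Throughout write $V\coloneqq\ran(\grad_{0})$ and $\iota\coloneqq\iota_{V}$, and recall from \prettyref{lem:embeddings_projections} that $\iota^{*}\iota=\id_{V}$ while $\iota\iota^{*}$ is the orthogonal projection of $\L(\Omega)^{d}$ onto $V$.

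First, since $\iota^{*}$ is bounded, $\dom(C)=\dom(\grad_{0})$, which is dense in $\L(\Omega)$ because $\cci(\Omega)$ is a core for $\grad_{0}$; hence $C$ is densely defined. For closedness the key identity is $\iota C=\grad_{0}$: indeed $\iota C u=\iota\iota^{*}\grad_{0}u$ is the projection of $\grad_{0}u$ onto $V$, and $\grad_{0}u\in V$ already, so $\iota Cu=\grad_{0}u$ for all $u\in\dom(\grad_{0})$. Now if $u_{n}\to u$ in $\L(\Omega)$ and $Cu_{n}\to w$ in $V$, then $\grad_{0}u_{n}=\iota Cu_{n}\to\iota w$ in $\L(\Omega)^{d}$; closedness of $\grad_{0}$ yields $u\in\dom(\grad_{0})$ and $\grad_{0}u=\iota w$, whence $Cu=\iota^{*}\grad_{0}u=\iota^{*}\iota w=w$. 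So $C$ is closed.

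Next, bounded invertibility. Injectivity: $Cu=0$ gives $\grad_{0}u=\iota Cu=0$, so $u=0$ by \prettyref{cor:range(grad) closed}. Surjectivity onto $V$: given $w\in V$, write $\iota w=\grad_{0}u$ for some $u\in\dom(\grad_{0})$ (using $\iota w\in V=\ran(\grad_{0})$); then $Cu=\iota^{*}\grad_{0}u=\iota^{*}\iota w=w$. For the boundedness of the inverse, note that $\iota^{*}$ restricted to $V$ is isometric (it is the inverse of the isometry $\iota\colon V\to\iota[V]$), and $\grad_{0}u\in V$, so $\norm{Cu}_{V}=\norm{\iota^{*}\grad_{0}u}_{V}=\norm{\grad_{0}u}_{\L(\Omega)^{d}}$; Poincar\'e's inequality (\prettyref{prop:Poincare}, valid since $\Omega$ lies in a slab of width $b-a$) then gives $\norm{u}_{\L(\Omega)}\leq(b-a)\norm{Cu}_{V}$, i.e.\ $\norm{C^{-1}}\leq b-a$. (Alternatively, closedness of $C$ together with injectivity and surjectivity onto the Hilbert space $V$ gives bounded invertibility by \prettyref{lem:bbc} and the open mapping theorem.)

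Finally, the adjoint. Since $\grad_{0}$ is closed and $\iota^{*}\in\bo(\L(\Omega)^{d},V)$, we apply the composition rule for adjoints in the form of \prettyref{rem:adj-comp}(a) (with $B=\iota^{*}$ and $A=\grad_{0}=\overline{\grad_{0}}$, cf.\ also \prettyref{thm:adj-comp}) to obtain
\[
C^{*}=\bigl(\iota^{*}\grad_{0}\bigr)^{*}=\grad_{0}^{*}\,(\iota^{*})^{*}=\grad_{0}^{*}\,\iota .
\]
By the definitions $\grad_{0}=-\dive^{*}$ and closedness of $\dive$ we have $\grad_{0}^{*}=-\dive^{**}=-\dive$, so $C^{*}=-\dive\,\iota=-\dive\,\iota_{\ran(\grad_{0})}$, as claimed.

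The only genuinely delicate point is bookkeeping the two roles of $\ran(\grad_{0})$ — as a closed subspace of $\L(\Omega)^{d}$ and as an abstract Hilbert space carrying the operators $C$ and $\iota$ — and in particular keeping straight where $\iota$, $\iota^{*}$, $\iota\iota^{*}$, $\iota^{*}\iota$ act and which of them are isometries; once the identity $\iota C=\grad_{0}$ and the isometry property of $\iota^{*}|_{V}$ are in place, every remaining step is routine.
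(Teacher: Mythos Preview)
Your proof is correct and follows essentially the same approach as the paper, which defers the proof to \prettyref{exer:closed_range}: that exercise asks for the general fact that for a densely defined closed operator $A$ with closed range, $(\iota_{\ran(A)}^{*}A)^{*}=A^{*}\iota_{\ran(A)}$, and that $\iota_{\ran(A)}^{*}A$ is boundedly invertible when $A$ is one-to-one; your argument is precisely the solution to that exercise specialised to $A=\grad_{0}$, using the same tools (\prettyref{lem:embeddings_projections}, \prettyref{rem:adj-comp}(a), \prettyref{cor:range(grad) closed}, and Poincar\'e).
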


\begin{proof}
The proof is left as \prettyref{exer:closed_range}.
\end{proof}
\begin{prop}
\label{prop:exp_stab_heat}Let $\Omega\subseteq\R^{d}$ be open and contained
in a slab, $a\in\bo(\L(\Omega)^{d})$, and $c_1>0$ such that
\[
\Re a\geq c_{1}.
\]
Then $\tilde{a}\coloneqq\iota_{\ran(\grad_{0})}^{\ast}a\iota_{\ran(\grad_{0})}$
is boundedly invertible and the solution operators associated with \prettyref{eq:heat_modified} are exponentially
stable.
\end{prop}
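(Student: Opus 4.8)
The plan is to recognise \prettyref{eq:heat_modified} as an instance of the abstract parabolic setting of \prettyref{thm:exp_stability_parabolic} and to invoke that theorem. Write $\iota\coloneqq\iota_{\ran(\grad_{0})}$ for the canonical embedding. I would first record that $\iota$ is isometric and that $\iota^{\ast}\iota$ is the identity on $\ran(\grad_{0})$, both of which are contained in \prettyref{lem:embeddings_projections}. Consequently $\norm{\tilde a}=\norm{\iota^{\ast}a\iota}\leq\norm{a}$, and for every $x\in\ran(\grad_{0})$ we have $\Re\scp{x}{\tilde a x}=\Re\scp{\iota x}{a\iota x}\geq c_{1}\norm{\iota x}^{2}=c_{1}\norm{x}^{2}$; that is, $\Re\tilde a\geq c_{1}$. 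Applying \prettyref{prop:block_op_realinv}\ref{prop:block_op_realinv:item:2} in the Hilbert space $\ran(\grad_{0})$ then yields that $\tilde a$ is boundedly invertible with $\norm{\tilde a^{-1}}\leq 1/c_{1}$ and $\Re\tilde a^{-1}\geq c_{1}\norm{\tilde a}^{-2}\geq c_{1}\norm{a}^{-2}\eqqcolon c_{1}'>0$, which is the first claim.

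For exponential stability, I would put $H_{0}\coloneqq\L(\Omega)$, $H_{1}\coloneqq\ran(\grad_{0})$, $M_{0}\coloneqq\id_{\L(\Omega)}$, $M_{1}(z)\coloneqq\tilde a^{-1}$ for $z\in\C$, and $C\coloneqq\iota^{\ast}\grad_{0}$ as in \prettyref{eq:C}. The hypotheses of \prettyref{thm:exp_stability_parabolic} are then checked one by one: $C$ is densely defined, closed, and boundedly invertible, with $C^{\ast}=-\dive\iota$, by \prettyref{lem:C_good} (here the slab assumption enters, through Poincar\'e's inequality \prettyref{prop:Poincare} and \prettyref{cor:range(grad) closed}); $M_{0}$ is selfadjoint and $M_{0}\geq 1=:c_{0}>0$; and $M_{1}$, being a constant $\bo(H_{1})$-valued function, is a material law with $\sbb{M_{1}}=-\infty$, so any $\rho_{1}>0$ is admissible, while $\Re M_{1}(z)=\Re\tilde a^{-1}\geq c_{1}'$ holds on all of $\C$. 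With these identifications the operator in \prettyref{eq:heat_modified} is literally the operator $\td{\nu}\diag(M_{0},0)+\diag(0,M_{1}(\td{\nu}))+\left(\begin{smallmatrix}0&-C^{\ast}\\ C&0\end{smallmatrix}\right)$ from \prettyref{thm:exp_stability_parabolic}, since $-C^{\ast}=\dive\iota$. Hence \prettyref{thm:exp_stability_parabolic} gives that its closure is boundedly invertible on $\Lnu(\R;H_{0}\oplus H_{1})$ for each $\nu>0$ and that the associated family of solution operators is exponentially stable, with explicit decay rate $\rho_{0}=\min\{\rho_{1},\,c_{1}'/(\norm{M_{1}}_{\infty,\C_{\Re>-\rho_{1}}}^{2}\norm{M_{0}}\norm{C^{-1}}^{2})\}$.

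Essentially all the analytic work has been done in the preceding results, so I do not anticipate a genuine difficulty; the only point demanding attention is the bookkeeping, namely confirming that the substitution $C^{\ast}=-\dive\iota_{\ran(\grad_{0})}$ turns \prettyref{eq:heat_modified} into exactly the block form required by \prettyref{thm:exp_stability_parabolic}, and that a constant, strictly accretive operator such as $\tilde a^{-1}$ legitimately counts as a material law with a strictly positive lower bound on its real part on a half-plane reaching strictly to the left of the imaginary axis. The slab hypothesis is used precisely once, to secure the bounded invertibility of $C$ via \prettyref{lem:C_good}.
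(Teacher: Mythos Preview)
Your proposal is correct and follows essentially the same approach as the paper: verify $\Re\tilde a\geq c_{1}$ by direct computation, deduce bounded invertibility and $\Re\tilde a^{-1}\geq c_{1}/\norm{\tilde a}^{2}$ via \prettyref{prop:block_op_realinv}\ref{prop:block_op_realinv:item:2}, then identify \prettyref{eq:heat_modified} with the setting of \prettyref{thm:exp_stability_parabolic} using $M_{0}=\idop$, $M_{1}(z)=\tilde a^{-1}$ and $C$ from \prettyref{eq:C}. Your write-up is merely more explicit about the hypothesis-checking than the paper's.
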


\begin{proof}
For $x\in\ran(\grad_{0})$ we have 
\begin{align*}
\Re\scp{x}{\tilde{a}x}_{\ran(\grad_{0})} & =\Re\scp{\iota_{\ran(\grad_{0})}x}{a\iota_{\ran(\grad_{0})}x}_{\L(\Omega)^{d}}\\
 & \geq c_{1}\norm{\iota_{\ran(\grad_{0})}x}_{\L(\Omega)^{d}}^{2}
 =c_{1}\norm{x}_{\ran(\grad_{0})}^{2},
\end{align*}
and thus, $\tilde{a}$ is boundedly invertible. Hence, \prettyref{eq:heat_modified}
is an evolutionary equation of the form considered in \prettyref{thm:exp_stability_parabolic}
with $M_{0}\coloneqq \idop$, $M_{1}(z)\coloneqq\tilde{a}^{-1}$ for $z\in\C$
and $C$ given by \prettyref{eq:C}. Since $\Re\tilde{a}^{-1}\geq\frac{c_{1}}{\norm{\tilde{a}}^{2}}$,
\prettyref{thm:exp_stability_parabolic} is applicable and we derive
the exponential stability.
\end{proof}

\subsection*{The Heat Equation with Additional Delay}

Again we consider the heat equation, but now we replace Fourier's
law by
\[
q=-a_{1}\grad_{0}\theta-a_{2}\tau_{-h}\grad_{0}\theta
\]
for some operators $a_{1},a_{2}\in\bo(\L(\Omega)^{d})$ and $h>0$.
As above, we assume that $\Omega\subseteq\R^d$ is open and contained in a slab. We may
introduce $\tilde{q}\coloneqq\iota_{\ran(\grad_{0})}^{\ast}q$ and
$\tilde{a}_{j}\coloneqq\iota_{\ran(\grad_{0})}^{\ast}a_{j}\iota_{\ran(\grad_{0})}\in\bo(\L(\Omega)^{d})$
for $j\in\{1,2\}$. Moreover, we assume that there exists $c>0$ such that
\[
\Re a_{1}\geq c.
\]
By \prettyref{lem:delayInv} there exists $\nu_0>0$ such that the operator $\tilde{a}_{1}+\tilde{a}_{2}\tau_{-h}$
is boundedly invertible in $\Lnu(\R;\ran(\grad_{0}))$ and its inverse is uniformly strictly positive definite for each $\nu\geq \nu_0$. Hence, we may write the heat equation with additional
delay as an evolutionary equation of the form 
\begin{equation}
\left(\td{\nu}\begin{pmatrix}
1 & 0\\
0 & 0
\end{pmatrix}+\begin{pmatrix}
0 & 0\\
0 & (\tilde{a}_{1}+\tilde{a}_{2}\tau_{-h})^{-1}
\end{pmatrix}+\begin{pmatrix}
0 & -C^{\ast}\\
C & 0
\end{pmatrix}\right)\begin{pmatrix}
\theta\\
\tilde{q}
\end{pmatrix}=\begin{pmatrix}
f\\
0
\end{pmatrix}\label{eq:heat_delay_exp_stability}
\end{equation}
with $C$ given by \prettyref{eq:C}. 
\begin{prop}
Let $\Omega\subseteq\R^{d}$ be open and contained in a slab, $h>0$, $a_{1},a_{2}\in\bo(\L(\Omega)^{d})$, and $c>0$ such that
\[
\Re a_{1}\geq c
\]
and  $\norm{a_{2}}<c$. Then the solution operators $(S_\nu)_{\nu\geq \nu_0}$ associated with \prettyref{eq:heat_delay_exp_stability}
are exponentially stable.
\end{prop}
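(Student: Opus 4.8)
The plan is to reduce the assertion to \prettyref{thm:exp_stability_parabolic}, following the argument for the undelayed case in \prettyref{prop:exp_stab_heat}; the only genuinely new point is that the delay term still produces a material law that is bounded and has strictly positive real part on a half-plane reaching \emph{strictly past} the imaginary axis, which is exactly where the hypothesis $\norm{a_{2}}<c$ is needed.

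First I would set up the spatial operator. Writing $\iota\coloneqq\iota_{\ran(\grad_{0})}$ and $\tilde a_{j}\coloneqq\iota^{\ast}a_{j}\iota\in\bo(\ran(\grad_{0}))$ for $j\in\{1,2\}$, one has $\Re\tilde a_{1}\geq c$ (since $\Re\scp{x}{\tilde a_{1}x}=\Re\scp{\iota x}{a_{1}\iota x}\geq c\norm{\iota x}^{2}=c\norm{x}^{2}$) and $\norm{\tilde a_{2}}\leq\norm{a_{2}}<c$ because $\norm{\iota}=\norm{\iota^{\ast}}=1$. By \prettyref{lem:C_good} -- which relies on $\Omega$ lying in a slab via \prettyref{prop:Poincare} -- the operator $C\coloneqq\iota^{\ast}\grad_{0}$ from \prettyref{eq:C} is densely defined, closed and boundedly invertible, with $C^{\ast}=-\dive\iota$; and I take $M_{0}\coloneqq\idop_{\L(\Omega)}$, which is selfadjoint with $M_{0}\geq 1$.

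The heart of the proof is the construction of the material law for the second diagonal block. I would fix $\rho_{1}\in\oi{0}{\tfrac{1}{h}\ln(c/\norm{a_{2}})}$ (a non-empty interval as $\norm{a_{2}}<c$), set $\delta\coloneqq c-\norm{a_{2}}\e^{\rho_{1}h}>0$ and $K\coloneqq\norm{a_{1}}+\norm{a_{2}}\e^{\rho_{1}h}$, and define $W(z)\coloneqq\tilde a_{1}+\tilde a_{2}\e^{-zh}$ for $z\in\C$. For $z\in\C_{\Re>-\rho_{1}}$ one has $\norm{\tilde a_{2}\e^{-zh}}\leq\norm{a_{2}}\e^{\rho_{1}h}$, hence $\Re W(z)\geq\delta$ and $\norm{W(z)}\leq K$; so \prettyref{prop:block_op_realinv}\ref{prop:block_op_realinv:item:2} gives $W(z)^{-1}\in\bo(\ran(\grad_{0}))$ with $\norm{W(z)^{-1}}\leq\delta^{-1}$ and $\Re W(z)^{-1}\geq\delta K^{-2}=:c_{1}>0$. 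Thus $M_{1}\coloneqq W(\cdot)^{-1}$ is holomorphic (composition of the holomorphic map $z\mapsto W(z)$ with operator inversion) and bounded on $\C_{\Re>-\rho_{1}}$, i.e.\ a material law with $\sbb{M_{1}}\leq-\rho_{1}<0$ satisfying $\Re M_{1}(z)\geq c_{1}$ there.

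Then I would invoke \prettyref{thm:exp_stability_parabolic} with $H_{0}=\L(\Omega)$, $H_{1}=\ran(\grad_{0})$, and the $M_{0}$, $M_{1}$, $C$ just described (with $c_{0}=1$ and $\rho_{1}$, $c_{1}$ as above): all hypotheses hold, so the associated solution operators are exponentially stable. It remains only to check that the operator produced by the theorem coincides with the one in \prettyref{eq:heat_delay_exp_stability}: $C=\iota^{\ast}\grad_{0}$ and $-C^{\ast}=\dive\iota$ by \prettyref{lem:C_good}, while for $\nu\geq\nu_{0}$ (with $\nu_{0}>0$ as in \prettyref{lem:delayInv}, so that $\tilde a_{1}+\tilde a_{2}\tau_{-h}$ is boundedly invertible on $\Lnu(\R;\ran(\grad_{0}))$) one has $W(\td{\nu})=\tilde a_{1}+\tilde a_{2}\tau_{-h}$ by \prettyref{exa:material_law_revisited}\ref{exa:material_law_revisited:item:3} and linearity, whence $M_{1}(\td{\nu})=W(\td{\nu})^{-1}=(\tilde a_{1}+\tilde a_{2}\tau_{-h})^{-1}$ by the algebra-homomorphism property of the material-law functional calculus (\prettyref{rem:mlohom}). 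The main obstacle, and the only place the smallness assumption $\norm{a_{2}}<c$ is used, is the positivity/boundedness estimate for $W(z)^{-1}$ on a half-plane strictly to the left of $\i\R$; everything else is bookkeeping essentially identical to \prettyref{prop:exp_stab_heat}.
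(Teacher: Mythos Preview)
Your proof is correct and follows essentially the same approach as the paper: choose $\rho_{1}$ so that $\norm{\tilde a_{2}}\e^{\rho_{1}h}<c$, deduce $\Re(\tilde a_{1}+\tilde a_{2}\e^{-zh})\geq c-\norm{\tilde a_{2}}\e^{\rho_{1}h}>0$ on $\C_{\Re>-\rho_{1}}$, invert via \prettyref{prop:block_op_realinv}\ref{prop:block_op_realinv:item:2}, and apply \prettyref{thm:exp_stability_parabolic}. You spell out more of the bookkeeping (the identification $M_{1}(\td{\nu})=(\tilde a_{1}+\tilde a_{2}\tau_{-h})^{-1}$ and the explicit constant $c_{1}=\delta K^{-2}$), but the substance is the same.
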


\begin{proof}
Note that $\norm{\tilde{a}_2}\leq \norm{a_2}<c$. We choose
\[
0<\rho_{1}<\frac{1}{h}\log\frac{c}{\norm{\tilde{a}_{2}}}.
\]
Then we estimate for $z\in\C_{\Re>-\rho_{1}}$
\[
\Re\scp{x}{\bigl(\tilde{a}_{1}+\tilde{a}_{2}\e^{-zh}\bigr)x}_{\ran(\grad_{0})}\geq(c-\norm{\tilde{a}_{2}}\e^{\rho_{1}h})\norm{x}_{\ran(\grad_{0})}^{2}.
\]
By the choice of $\rho_{1}$, we infer  $\tilde{c}\coloneqq(c-\norm{\tilde{a}_{2}}\e^{\rho_{1}h})>0$.
Hence, 
\[
M_{1}(z)\coloneqq\bigl(\tilde{a}_{1}+\tilde{a}_{2}\e^{-hz}\bigr)^{-1}\quad(z\in\C_{\Re>-\rho_{1}})
\]
is well-defined and satisfies 
\[
\Re M_{1}(z)\geq c_{1}\quad(z\in\C_{\Re>-\rho_{1}})
\]
for some $c_{1}>0$ by \prettyref{prop:block_op_realinv}. Thus, \prettyref{thm:exp_stability_parabolic}
is applicable and yields the exponential stability of \prettyref{eq:heat_delay_exp_stability}. 
\end{proof}

\subsection*{A Dual Phase Lag Model}

In this last variant of heat conduction, we replace Fourier's law
by 
\[
(1+s_{q}\partial_t)q=(1+s_{\theta}\partial_t)\grad_{0}\theta,
\]
where $s_{q},s_{\theta}>0$ are the so-called ``phases'' (cf.~\prettyref{sec:Dual-phase-lag},
where a different type of dual phase lag model is studied). The latter
equation can be reformulated as 
\[
(1+s_{q}\td{\nu})(1+s_{\theta}\td{\nu})^{-1}q=\grad_{0}\theta
\]
for $\nu>0$. Assuming that $\Omega\subseteq\R^d$ is open and contained in a slab, and defining $\tilde{q}\coloneqq\iota_{\ran(\grad_{0})}^{\ast}q$,
the dual phase lag model may be written as 
\begin{equation}
\left(\td{\nu}\begin{pmatrix}
1 & 0\\
0 & 0
\end{pmatrix}+\begin{pmatrix}
0 & 0\\
0 & (1+s_{q}\td{\nu})(1+s_{\theta}\td{\nu})^{-1}
\end{pmatrix}+\begin{pmatrix}
0 & -C^{\ast}\\
C & 0
\end{pmatrix}\right)\begin{pmatrix}
\theta\\
\tilde{q}
\end{pmatrix}=\begin{pmatrix}
f\\
0
\end{pmatrix}\label{eq:dual_phase_exp}
\end{equation}
with $C$ given by \prettyref{eq:C}.
\begin{prop}
Let $\Omega\subseteq\R^{d}$ be open and contained in a slab, $\nu_0>0$. Moreover,
let $s_{\theta}>s_{q}>0$. Then the solution operators $(S_\nu)_{\nu\geq \nu_0}$ associated with \prettyref{eq:dual_phase_exp} are
exponentially stable.
\end{prop}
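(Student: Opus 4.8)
The plan is to reduce the claim to \prettyref{thm:exp_stability_parabolic}. Set $H_0\coloneqq\L(\Omega)$, $H_1\coloneqq\ran(\grad_{0})$, let $C\coloneqq\iota_{\ran(\grad_{0})}^{\ast}\grad_{0}$ be the operator from \prettyref{eq:C}, and define the coefficients $M_{0}\coloneqq\idop_{\L(\Omega)}$ and
\[
M_{1}(z)\coloneqq(1+s_{q}z)(1+s_{\theta}z)^{-1}\quad\bigl(z\in\C\setminus\{-\tfrac{1}{s_{\theta}}\}\bigr).
\]
Since $\Omega$ is contained in a slab, \prettyref{lem:C_good} guarantees that $C$ is densely defined, closed and boundedly invertible with $C^{\ast}=-\dive\iota_{\ran(\grad_{0})}$, so that \prettyref{eq:dual_phase_exp} is exactly an evolutionary equation of the type considered in \prettyref{thm:exp_stability_parabolic}. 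Moreover $M_{0}$ is selfadjoint with $M_{0}\geq 1=:c_{0}$. It thus remains to check the hypotheses on $M_{1}$.

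The only substantial point is a scalar estimate on $M_{1}$, for which I would use the partial fraction decomposition
\[
M_{1}(z)=\frac{s_{q}}{s_{\theta}}+\frac{s_{\theta}-s_{q}}{s_{\theta}}\cdot\frac{1}{1+s_{\theta}z}.
\]
For $z\in\C$ with $\Re z>-\tfrac{1}{s_{\theta}}$ we have $\abs{1+s_{\theta}z}\geq\Re(1+s_{\theta}z)=1+s_{\theta}\Re z>0$, so $M_{1}$ is holomorphic on $\C_{\Re>-1/s_{\theta}}$, and for $\Re z\geq\nu>-\tfrac{1}{s_{\theta}}$ the decomposition yields
\[
\abs{M_{1}(z)}\leq\frac{s_{q}}{s_{\theta}}+\frac{s_{\theta}-s_{q}}{s_{\theta}(1+s_{\theta}\nu)}<\infty,
\]
so that $M_{1}$ is a material law with $\sbb{M_{1}}\leq-\tfrac{1}{s_{\theta}}$. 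Now fix any $\rho_{1}\in\oi{0}{1/s_{\theta}}$; then $\sbb{M_{1}}\leq-\tfrac{1}{s_{\theta}}<-\rho_{1}$, and for $z\in\C_{\Re>-\rho_{1}}$ we have $\Re(1+s_{\theta}z)>1-s_{\theta}\rho_{1}>0$, hence, using $s_{\theta}>s_{q}>0$,
\[
\Re M_{1}(z)=\frac{s_{q}}{s_{\theta}}+\frac{s_{\theta}-s_{q}}{s_{\theta}}\cdot\frac{\Re(1+s_{\theta}z)}{\abs{1+s_{\theta}z}^{2}}>\frac{s_{q}}{s_{\theta}}=:c_{1}>0.
\]

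All hypotheses of \prettyref{thm:exp_stability_parabolic} are now satisfied, so its conclusion gives $S_{\nu}\in\bo(\Lnu(\R;\L(\Omega)\oplus\ran(\grad_{0})))$ for each $\nu>0$ and exponential stability of $(S_{\nu})_{\nu\geq\nu_{0}}$ with decay rate $\min\{\rho_{1},\,c_{1}/(\norm{M_{1}}_{\infty,\C_{\Re>-\rho_{1}}}^{2}\norm{M_{0}}\norm{C^{-1}}^{2})\}$. I do not anticipate any genuine obstacle here: the argument is just the reduction to \prettyref{thm:exp_stability_parabolic} together with the elementary observation that the assumption $s_{\theta}>s_{q}$ is precisely what makes the residue coefficient $\tfrac{s_{\theta}-s_{q}}{s_{\theta}}$ positive and thereby keeps $\Re M_{1}$ bounded below by a positive constant on a half-plane extending past the imaginary axis. $\qed$
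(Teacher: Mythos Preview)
Your proof is correct and follows the same route as the paper: reduce to \prettyref{thm:exp_stability_parabolic} with $M_0=\idop$ and $M_1(z)=(1+s_qz)(1+s_\theta z)^{-1}$, then use the partial fraction decomposition $M_1(z)=\tfrac{s_q}{s_\theta}+\tfrac{s_\theta-s_q}{s_\theta}\cdot\tfrac{1}{1+s_\theta z}$ together with $s_\theta>s_q$ to get $\Re M_1(z)\geq\tfrac{s_q}{s_\theta}$ on a half-plane past the imaginary axis. You supply more detail than the paper (boundedness of $M_1$, the explicit decay rate), but the argument is the same.
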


\begin{proof}
Again, we note that \prettyref{eq:dual_phase_exp} is of the form
considered in \prettyref{thm:exp_stability_parabolic} with $M_{0}\coloneqq \idop$
and 
\[
M_{1}(z)\coloneqq\frac{1+s_{q}z}{1+s_{\theta}z}\quad(z\in\C\setminus\{-s_{\theta}^{-1}\}).
\]
Setting $\mu\coloneqq\frac{s_{q}}{s_{\theta}}<1$ we compute 
\[
\Re M_{1}(z)=\Re\left(\mu+\frac{(1-\mu)}{1+s_{\theta}z}\right)=\mu+(1-\mu)\frac{1+s_{\theta}\Re z}{|1+s_{\theta}z|^{2}}\geq\mu\quad(z\in\C_{\Re>-s_{\theta}^{-1}}).
\]
Thus, \prettyref{thm:exp_stability_parabolic} is applicable and hence,
the claim follows.
\end{proof}

\section{Comments}

The results of this chapter are based on the results obtained in \cite[Section 2]{Trostorff2018}.
There, Laplace transform techniques are used to characterise the exponential
stability of evolutionary equations in a slightly more general setting.
Moreover, besides parabolic-type equations, also hyperbolic-type equations
are considered, such as the damped wave equation or the equations of
visco-elasticity.

The exponential stability of partial differential equations is a well-studied
field. In particular, in the framework of $C_{0}$-semigroups, where
the exponential stability is defined via pointwise estimates due
to the continuity of solutions, several results are known. We just
mention Datko's theorem \cite{Datko1972} (see also \cite[Theorem 5.1.2]{ABHN_2011}),
which states that a $C_{0}$-semigroup is exponentially stable if
and only if the solution operator associated with the equation 
\[
\left(\td{\nu}+A\right)U=F
\]
leaves $L_{p}(\R_{\geq0};H)$ invariant for some (or equivalently
all) $p\in\roi{1}{\infty}$. As it turns out, the latter is equivalent
to the invariance of $\Lm{-\rho}(\R;H)$ for some $\rho>0$ and thus,
our notion of exponential stability coincides with the usual one used
in the theory of $C_{0}$-semigroups. Another important theorem on the exponential
stability of $C_{0}$-semigroups on Hilbert spaces is the Theorem
of Gearhart--Pr\"u{\ss} \cite{Pruess1984} (see also \cite[Chapter 5, Theorem 1.11]{Engel2000}),
where the exponential stability of a $C_{0}$-semigroup is characterised
in terms of the resolvent of its generator.

Besides the exponential stability, which is the only type of stability
studied so far within the current framework, different kinds of
asymptotic behaviours were considered for $C_{0}$-semigroups. We
just mention the celebrated Arendt--Batty--Lyubich--Vu theorem \cite{Arendt1988,Lyubich1988}
on strong stability of $C_{0}$-semigroups or the Theorem of Borichev--Tomilov
\cite{Borichev2010} on the polynomial stability of $C_{0}$-semigroups
on Hilbert spaces.

\section*{Exercises}
\addcontentsline{toc}{section}{Exercises}

\begin{xca}
\label{exer:derivative_invariant} Let $H$ be a Hilbert space, $\nu,\rho\in\R$
and $u\in L_{1,\loc}(\R;H)$. Prove the following statements:
\begin{enumerate}
\item If $u\in\dom(\td{\nu})\cap\dom(\td{\rho})$ then $\td{\nu}u=\td{\rho}u$. 
\item If $u\in\dom(\td{\nu})$ such that $u,\td{\nu}u\in\Lm{\rho}(\R;H)$
then $u\in\dom(\td{\rho})$.
\end{enumerate}
\end{xca}

\begin{xca}
\label{exer:embeddings}
Prove \prettyref{lem:embeddings_projections}.
\end{xca}

\begin{xca}
\label{exer:closed_range}Let $H_{0},H_{1}$ be Hilbert spaces and
$A\from\dom(A)\subseteq H_{0}\to H_{1}$ a densely defined closed
linear operator. Moreover, we assume that $A$ has closed
range. Show that the adjoint of the operator $\iota_{\ran(A)}^{\ast}A\from \dom(A)\subseteq H_{0}\to\ran(A)$
is given by $A^{\ast}\iota_{\ran(A)}$. If additionally $A$ is one-to-one, show that $\iota_{\ran(A)}^\ast A$ is boundedly invertible.
\end{xca}

\begin{xca}
\label{exer:integro} Let $\Omega\subseteq\R^{d}$ be open and contained
in a slab. We consider the heat conduction with a memory term given
by the equations 
\begin{align}
\td{\nu}\theta+\dive q & =f,\nonumber \\
q & =-(\idop-k\ast)\grad_{0}\theta,\label{eq:heat_integro}
\end{align}
where $k\in L_{1,-\rho_{1}}(\R_{\geq0};\R)$ for some $\rho_{1}>0$
with
\[
\int_{0}^{\infty}\abs{k(t)}\d t<1.
\]
Write \prettyref{eq:heat_integro} as a suitable evolutionary equation
and prove that this equation is exponentially stable. 
\end{xca}

\begin{xca}
\label{exer:classical_exp_decay} Let $A\in\C^{n\times n}$ for some
$n\in\N$ and consider the evolutionary equation 
\[
(\td{\nu}+A)U=F.
\]
Prove that the solution operators associated with this problem are exponentially stable if and only if $A$ has only eigenvalues with strictly positive real part. 
\end{xca}

\begin{xca}
\label{exer:Korn} Let $\Omega\subseteq\R^{d}$ be open. 

\begin{enumerate}
\item Let $\varphi\in \cci(\Omega)^{d}$. Prove \emph{Korn's
inequality \index{Korn's inequality}}
\[
\norm{\Grad\varphi}_{\L(\Omega)_{\rmsym}^{d\times d}}^{2}\geq\frac{1}{2}\sum_{j=1}^{d}\norm{\grad\varphi_{j}}_{\L(\Omega)^{d}}^{2}.
\]
\item Use Korn's inequality to prove that for $u\in\L(\Omega)^{d}$
we have 
\[
u\in\dom(\Grad_{0})\quad\Longleftrightarrow\quad\forall j\in\{1,\ldots,d\}:\:u_{j}\in\dom(\grad_{0}).
\]
Moreover, show that in either case 
\[
\frac{1}{2}\sum_{j=1}^{d}\norm{\grad_{0}u_{j}}_{\L(\Omega)^{d}}^{2}\leq\norm{\Grad_{0}u}_{\L(\Omega)_{\rmsym}^{d\times d}}^{2}\leq\sum_{j=1}^{d}\norm{\grad_{0}u_{j}}_{\L(\Omega)^{d}}^{2}.
\]
\item Let now $\Omega$ be contained in a slab. Prove
that $\Grad_{0}$ is one-to-one and has closed range. 
\end{enumerate}
\end{xca}

\begin{xca}
  Let $\Omega \subseteq \R^d$ open and $a\in \bo(\L(\Omega)^d)$ with $\Re a\geq c>0$.
  \begin{enumerate}
   \item Let $\nu>0$ and $f\in \Lnu(\R;\L(\Omega))$. 
   Moreover, assume that $\Omega$ is contained in a slab and define $\tilde{a}\coloneqq \iota_{\ran(\grad_0)}^\ast a \iota_{\ran(\grad_0)}$.
   Let $\theta\in \Lnu(\R;\L(\Omega))$, $q\in \Lnu(\R;\L(\Omega)^d)$ satisfy
   \[
    \left(\td{\nu}\begin{pmatrix}
1 & 0\\
0 & 0
\end{pmatrix}+\begin{pmatrix}
0 & 0\\
0 & {a}^{-1}
\end{pmatrix}+\begin{pmatrix}
0 & \dive\\
\grad_{0} & 0
\end{pmatrix}\right) \begin{pmatrix}
\theta\\
q
\end{pmatrix}=\begin{pmatrix}
f\\
0
\end{pmatrix}
   \]
   and $\tilde{\theta}\in \Lnu(\R;\L(\Omega))$, $\tilde{q}\in \Lnu(\R;\ran(\grad_0))$ satisfy
   \[
    \left(\td{\nu}\begin{pmatrix}
1 & 0\\
0 & 0
\end{pmatrix}+\begin{pmatrix}
0 & 0\\
0 & \tilde{a}^{-1}
\end{pmatrix}+\begin{pmatrix}
0 & \dive\iota_{\ran(\grad_{0})}\\
\iota_{\ran(\grad_{0})}^{\ast}\grad_{0} & 0
\end{pmatrix}\right) \begin{pmatrix}
\tilde{\theta}\,\\
\tilde{q}
\end{pmatrix}=\begin{pmatrix}
f\\
0
\end{pmatrix}.
   \]
Show that $(\theta,\iota_{\ran(\grad_0)}^\ast q)=(\tilde{\theta},\tilde{q}\,)$.
\item Let $\Omega$ be bounded and consider the evolutionary equation
\[
 \left(\td{\nu}\begin{pmatrix}
1 & 0\\
0 & 0
\end{pmatrix}+\begin{pmatrix}
0 & 0\\
0 & a^{-1}
\end{pmatrix}+\begin{pmatrix}
0 & \dive_0\\
\grad & 0
\end{pmatrix}\right) \begin{pmatrix}
\theta\\
q
\end{pmatrix}=\begin{pmatrix}
f\\
0
\end{pmatrix}.
\]
Show that the associated solution operators are not exponentially stable.
  \end{enumerate}

\end{xca}

\printbibliography[heading=subbibliography]

\chapter{Boundary Value Problems and Boundary Value Spaces}

This chapter is devoted to the study of inhomogeneous boundary value
problems. For this, we shall reformulate the boundary value problem
again into a form which fits within the general framework of evolutionary equations.
In order to have an idea of the type of boundary values which make sense
to study, we start off with a section that deals with the boundary
values of functions in the domain of the gradient operator defined
on a half space in $\R^d$ (for $d=1$ we have $\L(\R^{d-1})=\K$).

\section{The Boundary Values of Functions in the Domain of the Gradient}

In this section we let $\Omega\coloneqq\R^{d-1}\times\Rg{0}$ and $f\in H^{1}(\Omega)$; our aim is
to make sense of the function $\R^{d-1}\ni\check{{x}}\mapsto f(\check{{x}},0)$.
Note that this makes no sense if we only assume $f\in \L(\Omega)$
since $\R^{d-1}\times\{0\}=\partial \Omega$ is a set of
($d$-dimensional) Lebesgue-measure zero. However, if we assume $f$
to be weakly differentiable, something more can be said and the boundary
values can be defined by means of a continuous extension of the so-called
trace map. In order to properly formulate this, we need the following density
result.
\begin{thm}
\label{thm:CinfbdryDense} The set $\mathcal{D}\coloneqq\set{\phi\colon\Omega\to\K}{\exists\psi\in\cci(\R^{d})\colon\psi|_{\Omega}=\phi}$
is dense in the space $H^{1}(\Omega)$.
\end{thm}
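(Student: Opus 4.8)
The plan is to prove density in two stages: first approximate an arbitrary $f \in H^1(\Omega)$ by $H^1$-functions that are smooth up to the boundary, and then cut off and mollify to get functions extending to $\cci(\R^d)$. For the first stage I would use a translation argument: for $\varepsilon > 0$ define $f_\varepsilon(\check x, x_d) \coloneqq f(\check x, x_d + \varepsilon)$ on $\Omega$. Since $f \in H^1(\Omega)$, one checks directly from the characterisation in \prettyref{thm:Sobolev_space} that $f_\varepsilon \in H^1(\R^{d-1}\times\Rg{-\varepsilon}) \subseteq H^1(\Omega)$, and moreover $\grad f_\varepsilon$ is the corresponding translate of $\grad f$. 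By continuity of translation in $\L$ (which follows from density of $\cc(\R^d)$ in $\L(\R^d)$ together with the fact that translation is an $\L$-isometry, applied componentwise to $f$ and $\grad f$ extended by $0$ outside $\Omega$), we get $f_\varepsilon \to f$ in $H^1(\Omega)$ as $\varepsilon \to 0\rlim$. Thus it suffices to approximate each $f_\varepsilon$, i.e.\ we may assume $f$ is (the restriction to $\Omega$ of) an $H^1$-function defined on the larger half-space $\R^{d-1}\times\Rg{-\varepsilon}$.

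For the second stage, take such an $f$ defined on $\R^{d-1}\times\Rg{-\varepsilon}$ and first truncate in space: choose a sequence $(\chi_n)_n$ in $\cci(\R^d)$ with $0 \leq \chi_n \leq 1$, $\chi_n = 1$ on the ball $\ball{0}{n}$, and $\norm{\grad\chi_n}_\infty$ bounded uniformly in $n$ (e.g.\ $\chi_n(x) = \chi(x/n)$ for a fixed bump $\chi$). Then $\chi_n f \to f$ in $H^1(\Omega)$ by dominated convergence, using the product rule $\grad(\chi_n f) = \chi_n \grad f + f \grad\chi_n$ and the uniform gradient bound; moreover $\chi_n f$ still extends to an $H^1$-function on $\R^{d-1}\times\Rg{-\varepsilon}$ with compact support. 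So we may further assume $f$ has compact support in $\R^{d-1}\times\Rg{-\varepsilon}$. Now mollify: let $(\varphi_k)_k$ be a $\delta$-sequence as in \prettyref{exer:delta-seq}, and consider $f \ast \varphi_k$, which by \prettyref{exer:C_cinfty dense}(a) lies in $C^\infty(\R^d)$ with $\partial_j(f\ast\varphi_k) = (\partial_j f)\ast\varphi_k$ and with support shrinking towards $\spt f$. For $k$ large enough the support of $f \ast \varphi_k$ stays inside $\R^{d-1}\times\Rg{-\varepsilon/2}$, so its restriction to $\Omega$ is smooth up to $\partial\Omega$ and extends to a $\cci(\R^d)$-function (after one more harmless compactly supported cutoff). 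Finally $f\ast\varphi_k \to f$ and $(\partial_j f)\ast\varphi_k \to \partial_j f$ in $\L(\R^{d-1}\times\Rg{-\varepsilon/2})$ by \prettyref{exer:C_cinfty dense}(b), hence $f\ast\varphi_k|_\Omega \to f|_\Omega$ in $H^1(\Omega)$.

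Chaining the three approximations (translation, spatial cutoff, mollification) and using that $H^1(\Omega)$ is a Banach space (so a diagonal/triangle-inequality argument produces a single approximating sequence in $\mathcal{D}$) yields the claim. The main obstacle, and the only genuinely geometric point, is the first stage: one must be careful that translating \emph{towards} the boundary (not away from it) is what enlarges the domain, and that the translated function genuinely lies in $H^1$ of the larger half-space — this is where the explicit product-structure of $\Omega = \R^{d-1}\times\Rg 0$ is used, and it is exactly what makes the half-space case elementary compared to general domains with curved boundary. The remaining steps are standard cutoff-and-mollify bookkeeping, with \prettyref{exer:C_cinfty dense} supplying the convolution estimates.
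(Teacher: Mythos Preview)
Your strategy --- translate inward, cut off, mollify --- is exactly the paper's. The only organisational difference is that the paper mollifies first and defers the spatial cutoff to \prettyref{lem:CinfwholespaceDense}, whereas you cut off before mollifying; this is cosmetic.

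There is, however, a real gap in your mollification step. After translation and cutoff, your $f$ lies in $H^1(\R^{d-1}\times\Rg{-\varepsilon})$ with bounded support; extending it by zero to $\R^d$ gives an $\L$-function that is in general \emph{not} in $H^1(\R^d)$, since it may jump across $\{x_d=-\varepsilon\}$. So the identity $\partial_j(f\ast\varphi_k)=(\partial_j f)\ast\varphi_k$ is not available from \prettyref{exer:C_cinfty dense}(a) (which puts the derivative on $\varphi_k$, not on $f$) nor from \prettyref{exer:convSmooth} (which requires $f\in H^1(\R^d)$). Relatedly, your claim that $\spt(f\ast\varphi_k)\subseteq\R^{d-1}\times\Rg{-\varepsilon/2}$ is false --- convolution can only enlarge supports --- and in any case irrelevant, since you only care about the restriction to $\Omega$. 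What you actually need is that $\grad(f\ast\varphi_k)=\varphi_k\ast(\grad f)$ holds \emph{on $\Omega$} for $k>1/\varepsilon$: for such $k$ and $x\in\Omega$, every $x-y$ with $y\in\spt\varphi_k$ lies in the larger half-space, so the convolution integral never sees the zero-extension, and testing against $\cci(\Omega)^d$ yields the identity on $\Omega$. This is precisely the computation the paper carries out, and it is the one non-routine point in the proof; you have swept it under a mis-citation.
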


We will need a density result for $H^1(\R^d)$ first.

\begin{lem}
\label{lem:CinfwholespaceDense} $\cci(\R^{d})$ is dense in $H^{1}(\R^{d})$.
\end{lem}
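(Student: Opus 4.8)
The plan is to combine two standard operations: truncation (to reduce to compactly supported functions) and mollification (to reduce to smooth functions), checking at each stage that convergence holds in the graph norm of $\grad$, i.e. that both the function and its gradient converge in $\L(\R^d)$. Recall that by \prettyref{thm:Sobolev_space}(a) a function $f\in\L(\R^d)$ lies in $H^1(\R^d)=\dom(\grad)$ with $\grad f = g$ precisely when $-\int f\partial_j\phi = \int g_j\phi$ for all $\phi\in\cci(\R^d)$ and $j\in\{1,\ldots,d\}$, so all manipulations can be carried out by testing against $\cci(\R^d)$.

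First I would handle truncation. Fix a cutoff $\chi\in\cci(\R^d)$ with $\chi=1$ on $\ball{0}{1}$, $0\le\chi\le 1$, and set $\chi_n(x)\coloneqq\chi(x/n)$; then $\norm{\grad\chi_n}_\infty \leq \tfrac{1}{n}\norm{\grad\chi}_\infty\to 0$. For $f\in H^1(\R^d)$ one checks, using the product rule (valid here since $\chi_n$ is smooth — this follows by testing against $\cci(\R^d)$ and integrating by parts, cf.\ the argument in \prettyref{prop:inv-sum-comp-closed}(b) for how boundedness lets one pass factors through), that $\chi_n f\in H^1(\R^d)$ with $\grad(\chi_n f)=\chi_n\grad f + f\grad\chi_n$. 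By dominated convergence $\chi_n f\to f$ and $\chi_n\grad f\to\grad f$ in $\L(\R^d)$, while $\norm{f\grad\chi_n}_{\L(\R^d)}\le\norm{\grad\chi_n}_\infty\norm{f}_{\L(\R^d)}\to 0$. Hence $\chi_n f\to f$ in $H^1(\R^d)$, and it suffices to approximate compactly supported $f\in H^1(\R^d)$.

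Second, given $f\in H^1(\R^d)$ with compact support, take a $\delta$-sequence $(\varphi_n)_n$ as in \prettyref{exer:delta-seq} and consider $f\ast\varphi_n$. By \prettyref{exer:C_cinfty dense}(a), $f\ast\varphi_n\in C^\infty(\R^d)$ with $\partial_j(f\ast\varphi_n)=f\ast\partial_j\varphi_n$ and $\spt(f\ast\varphi_n)\subseteq\spt f+\spt\varphi_n$, so $f\ast\varphi_n\in\cci(\R^d)$. Moreover a standard integration-by-parts identity gives $f\ast\partial_j\varphi_n = (\partial_j f)\ast\varphi_n$ in the weak sense (again justified by testing against $\cci(\R^d)$ and using $\int(\partial_j f)\phi = -\int f\partial_j\phi$ together with Fubini). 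By \prettyref{exer:C_cinfty dense}(b), $f\ast\varphi_n\to f$ and $(\partial_j f)\ast\varphi_n\to\partial_j f$ in $\L(\R^d)$ for each $j$, so $f\ast\varphi_n\to f$ in $H^1(\R^d)$. Chaining the two approximations yields the claim.

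The only mildly delicate point — the main obstacle — is verifying the two product/convolution identities for the gradient in the weak sense (that $\grad(\chi_n f)=\chi_n\grad f+f\grad\chi_n$ and $(\grad f)\ast\varphi_n=\grad(f\ast\varphi_n)$); both are routine once one unwinds the definition via test functions as in \prettyref{thm:Sobolev_space}(a) and applies Fubini, but they must be stated carefully since $f$ itself is only weakly differentiable.
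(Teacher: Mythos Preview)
Your proposal is correct and follows essentially the same approach as the paper: first truncate with a scaled cutoff $\chi(\cdot/n)$ to reduce to compactly supported $f\in H^1(\R^d)$, verifying the product rule $\grad(\chi_n f)=\chi_n\grad f+f\grad\chi_n$ by testing against $\cci(\R^d)$, then mollify with a $\delta$-sequence and use the identity $\grad(f\ast\varphi_n)=(\grad f)\ast\varphi_n$ together with \prettyref{exer:C_cinfty dense}. The paper's proof is virtually identical, carrying out the product-rule computation explicitly and invoking \prettyref{exer:convSmooth} for the convolution identity; your parenthetical reference to \prettyref{prop:inv-sum-comp-closed}(b) is a bit off-target (that proposition concerns closedness of operator sums, not the product rule), but the substance of your argument is the same.
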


\begin{proof}
Let $f\in H^{1}(\R^{d})$. We first show that $f$ can be approximated
by functions with compact support. For this let $\phi\in\cci(\R^{d})$
with the properties $0\leq\phi\leq1$, $\phi=1$ on $\ball{0}{1/2}$
and $\phi=0$ on $\R\setminus\ball{0}{1}$. For all $k\in\N$
we put $\phi_{k}\coloneqq\phi(\cdot/k)$ and $f_{k}\coloneqq\phi_{k}f\in\L(\R^{d})$.
Then $f_k$ has support contained in $\cball{0}{k}$. The dominated
convergence theorem implies that $f_{k}\to f$ in $\L(\R^{d})$
as $k\to\infty$. Next, let $\psi\in\cci(\R^{d})^{d}$ and compute
for all $k\in\N$
\begin{align*}
-\scp{f_{k}}{\dive\psi} & =-\scp{\phi_{k}f}{\dive\psi}
 =-\scp{f}{\phi_{k}\dive\psi}
 =-\scp{f}{\dive\left(\phi_{k}\psi\right)-\left(\grad\phi_{k}\right)\cdot\psi}\\
 & =-\scp{f}{\dive\left(\phi_{k}\psi\right)}+\scp{f\grad\phi_{k}}{\psi}
 =\scp{(\grad f)\phi_{k}+\frac{1}{k}f(\grad\phi)(\cdot/k)}{\psi},
\end{align*}
which shows that $f_{k}\in\dom(\grad)=H^{1}(\R^{d})$ and $\grad f_{k}=(\grad f)\phi_{k}+\frac{1}{k}f\left(\grad\phi\right)(\cdot/k)$.
From this expression of $\grad f_k$ we observe $\grad f_k\to \grad f$ in $\L(\R^d)^d$ by dominated convergence. Hence, $f_{k}\to f$ in $\dom(\grad)=H^1(\R^d)$. 

To conclude the proof of this lemma it suffices to revisit \prettyref{exer:C_cinfty dense}.
For this, let $(\psi_{k})_{k}$ in $\cci(\R^{d})$ be a $\delta$-sequence.
Then, by \prettyref{exer:C_cinfty dense}, we infer $\psi_{k}\ast f\to f$ in $\L(\R^{d})$
as $k\to\infty$ and hence, by \prettyref{exer:convSmooth}, it follows also that
$\grad\left(\psi_{k}\ast f\right)=\psi_{k}\ast\grad f\to\grad f$
(note the component-wise definition of the convolution). A combination
of the first part of this proof together with an estimate for the
support of the convolution (see again \prettyref{exer:C_cinfty dense})
yields the assertion.
\end{proof}
\begin{proof}[Proof of \prettyref{thm:CinfbdryDense}]
Let $f\in H^{1}(\Omega)$. The approximation of $f$ by functions in $\mathcal{D}$ is done in two steps.
First, we shift $f$ in the negative $e_d$-direction to avoid the boundary, and then we convolve the shifted $f$ to obtain smooth approximants in $\mathcal{D}$.

Let $\tilde{f}\in\L(\R^{d})$ be the extension of $f$ by zero.
Put $e_{d}\coloneqq(\delta_{jd})_{j\in\{1,\ldots d\}}$, the $d$-th
unit vector. Then for all $\tau>0$ we have $\Omega+\tau e_{d}\subseteq\Omega$
and, thus by \prettyref{exer:H1translate}, we deduce  $f_{\tau}\coloneqq\tilde{f}(\cdot+\tau e_{d})|_{\Omega}\to f$ in $H^{1}(\Omega)$ as $\tau\to 0$.
Thus, it suffices to approximate $f_{\tau}$ for $\tau>0$.

Let $\tau>0$ and let $\left(\psi_{k}\right)_{k}$ in $\cci(\R^d)$ be a $\delta$-sequence.
Then $\psi_{k}\ast\tilde{f}(\cdot+\tau e_{d})\in H^{1}(\R^{d})$,
by \prettyref{exer:convSmooth}. Define $f_{k,\tau}\coloneqq\bigl(\psi_{k}\ast\tilde{f}(\cdot+\tau e_{d})\bigr)|_{\Omega}$.
Then we obtain that $f_{k,\tau}\to f_{\tau}$ in
$H^{1}(\Omega)$ as $k\to\infty$. Indeed, the only thing left to
prove is that $\grad f_{k,\tau}\to\grad f_{\tau}$ in $\L(\Omega)^{d}$
as $k\to\infty$. For this, we denote by $g$ the extension of $\grad f$
by $0$. Since $g\in\L(\R^{d})^{d}$ it suffices to show that $\grad f_{k,\tau}=\psi_{k}\ast g_{\tau}$
on $\Omega$ for all large enough $k\in\N,$ where $g_{\tau}=g(\cdot+\tau e_{d})$.
Let $k>\frac{1}{\tau}$. Then for all $x\in\Omega$
and $y\in\spt\psi_{k}\subseteq\ci{-1/k}{1/k}^{d}$ we infer $x-y+\tau e_{d}\in\Omega$.
In particular, $f(\cdot-y+\tau e_{d})\in H^{1}(\Omega)$ and $\grad f(\cdot-y+\tau e_{d})=g(\cdot-y+\tau e_{d})$.
Take $\eta\in\cci(\Omega)^{d}$ and compute
\begin{align*}
-\scp{f_{k,\tau}}{\dive\eta}_{\L(\Omega)} & =-\int_{\Omega}\int_{\R^{d}}\psi_{k}(x-y)\tilde{f}(y+\tau e_{d})^*\d y\dive\eta(x)\d x\\
 & =-\int_{\Omega}\int_{\R^{d}}\psi_{k}(y)\tilde{f}(x-y+\tau e_{d})^*\d y\dive\eta(x)\d x\\
 & =-\int_{\Omega}\int_{\ci{-1/k}{1/k}^{d}}\psi_{k}(y)f(x-y+\tau e_{d})^*\d y\dive\eta(x)\d x\\
 & =-\int_{\ci{-1/k}{1/k}^{d}}\psi_{k}(y)\scp{f(\cdot-y+\tau e_{d})}{\dive\eta}_{\L(\Omega)}\d y\\
 & =\int_{\ci{-1/k}{1/k}^{d}}\psi_{k}(y)\scp{g(\cdot-y+\tau e_{d})}{\eta}_{\L(\Omega)^{d}}\d y\\
 & =\scp{\psi_{k}\ast g_{\tau}}{\eta}_{\L(\Omega)^d}.
\end{align*}
As $\psi_{k}\ast\tilde{f}(\cdot+\tau e_{d})\in H^{1}(\R^{d})$, we
conclude the proof using \prettyref{lem:CinfwholespaceDense}.
\end{proof}
With these preparations at hand, we can define the boundary trace
of $H^{1}(\Omega)$. 
\begin{thm}
\label{thm:trace} The operator
\begin{align*}
\gamma\colon\mathcal{D}\subseteq H^{1}(\Omega) & \to\L(\R^{d-1})\\
f & \mapsto\bigl(\R^{d-1}\ni\check{x}\mapsto f(\check{x},0)\bigr)
\end{align*}
is continuous, densely defined and, thus, admits a unique continuous
extension to $H^{1}(\Omega)$ again denoted by $\gamma$. Moreover, we have
\[
\norm{\gamma f}_{\L(\R^{d-1})}\leq\left(2\norm{f}_{\L(\Omega)}\norm{\grad f}_{\L(\Omega)^{d}}\right)^{\frac{1}{2}}\leq\norm{f}_{H^{1}(\Omega)} \quad(f\in H^1(\Omega)).
\]
 
\end{thm}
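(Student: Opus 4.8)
The plan is to establish the displayed inequality first on the dense subspace $\mathcal{D}$ by an elementary one-dimensional argument, and then extend by continuity using \prettyref{thm:CinfbdryDense} and \prettyref{cor:uniquecont}.

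\textbf{Step 1: the estimate on $\mathcal{D}$.} Let $f\in\mathcal{D}$, so that $f=\psi|_\Omega$ for some $\psi\in\cci(\R^d)$. Fix $\check x\in\R^{d-1}$. Since $t\mapsto\abs{\psi(\check x,t)}^2$ is continuously differentiable with compact support in $\R$, the fundamental theorem of calculus gives
\[
 \abs{f(\check x,0)}^2 = -\int_0^\infty \partial_d\abs{\psi(\check x,t)}^2\d t = -\int_0^\infty 2\Re\bigl(\psi(\check x,t)^\ast\,\partial_d\psi(\check x,t)\bigr)\d t.
\]
Taking absolute values and integrating over $\check x\in\R^{d-1}$, Tonelli's theorem and the Cauchy--Schwarz inequality in $\L(\Omega)$ yield
\[
 \norm{\gamma f}_{\L(\R^{d-1})}^2 \leq 2\int_\Omega\abs{f(x)}\,\abs{\partial_d f(x)}\d x \leq 2\norm{f}_{\L(\Omega)}\norm{\partial_d f}_{\L(\Omega)} \leq 2\norm{f}_{\L(\Omega)}\norm{\grad f}_{\L(\Omega)^d}.
\]
Applying the inequality $2ab\leq a^2+b^2$ for $a,b\geq0$ to $a=\norm{f}_{\L(\Omega)}$ and $b=\norm{\grad f}_{\L(\Omega)^d}$, we obtain
\[
 \norm{\gamma f}_{\L(\R^{d-1})} \leq \bigl(2\norm{f}_{\L(\Omega)}\norm{\grad f}_{\L(\Omega)^d}\bigr)^{1/2} \leq \bigl(\norm{f}_{\L(\Omega)}^2+\norm{\grad f}_{\L(\Omega)^d}^2\bigr)^{1/2} = \norm{f}_{H^1(\Omega)}.
\]
In particular $\gamma\colon\mathcal{D}\subseteq H^1(\Omega)\to\L(\R^{d-1})$ is bounded with $\norm{\gamma}\leq1$.

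\textbf{Step 2: continuous extension.} By \prettyref{thm:CinfbdryDense}, $\mathcal{D}$ is dense in $H^1(\Omega)$, so $\gamma$ is densely defined and bounded; \prettyref{cor:uniquecont} provides a unique continuous extension to all of $H^1(\Omega)$, again denoted $\gamma$, with the same operator norm.

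\textbf{Step 3: passing the refined inequality to the limit.} It remains to check that the sharper bound $\norm{\gamma f}_{\L(\R^{d-1})}\leq\bigl(2\norm{f}_{\L(\Omega)}\norm{\grad f}_{\L(\Omega)^d}\bigr)^{1/2}$ holds for \emph{all} $f\in H^1(\Omega)$, not only on $\mathcal{D}$. Given $f\in H^1(\Omega)$, choose $(f_k)_k$ in $\mathcal{D}$ with $f_k\to f$ in $H^1(\Omega)$. Then $f_k\to f$ and $\grad f_k\to\grad f$ in the respective $L_2$-spaces, and $\gamma f_k\to\gamma f$ in $\L(\R^{d-1})$ by continuity of $\gamma$. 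Passing to the limit in $\norm{\gamma f_k}_{\L(\R^{d-1})}\leq\bigl(2\norm{f_k}_{\L(\Omega)}\norm{\grad f_k}_{\L(\Omega)^d}\bigr)^{1/2}$ (all three norms being continuous functions of their arguments) gives the claimed inequality, and the second inequality $\bigl(2\norm{f}_{\L(\Omega)}\norm{\grad f}_{\L(\Omega)^d}\bigr)^{1/2}\leq\norm{f}_{H^1(\Omega)}$ follows again from $2ab\leq a^2+b^2$. There is no serious obstacle here; the only point requiring a little care is precisely this last step, since the refined bound is not merely a norm estimate but an estimate by a product of two continuous seminorms, so one must invoke convergence of $\grad f_k$ as well, which is available because convergence is taken in the graph norm of $\grad$.
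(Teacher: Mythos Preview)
Your proof is correct and follows essentially the same approach as the paper: the fundamental theorem of calculus applied to $t\mapsto\abs{f(\check x,t)}^2$, integration over $\R^{d-1}$, Cauchy--Schwarz, and the elementary inequality $2ab\leq a^2+b^2$. You are somewhat more explicit than the paper in Step~3, where you spell out how the refined product inequality passes to the limit via graph-norm convergence; the paper leaves this to the reader.
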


\begin{proof}
Note that $\gamma$ is densely defined by \prettyref{thm:CinfbdryDense}. Let $f\in\cci(\R^{d})$ and
$\check{x}\in\R^{d-1}$. Let $R>0$ be such that $\spt f\subseteq\ball{0}{R}$.
Then
\begin{align*}
\int_{\R^{d-1}}\abs{f(\check{x},0)}^{2}\d\check{x} & = -\int_{\R^{d-1}}\int_{0}^{R}\partial_{d}\abs{f(\check{x},\hat{x})}^{2}\d\hat{x}\d\check{x} \\
& = -\int_{\Omega}\bigl(f(x)^{*}\partial_{d}f(x)+\partial_{d}f^{*}(x)f(x)\bigr)\d x\\
 & \leq2\norm{f}_{\L(\Omega)}\norm{\grad f}_{\L(\Omega)^{d}}.
\end{align*}
The remaining inequality follows from $2ab\leq a^{2}+b^{2}$ for all $a,b\in\R$.
\end{proof}

Except for one spatial dimension, where the boundary trace can be obtained
by point evaluation, the boundary trace $\gamma$ does not map onto
the whole of $\L(\R^{d-1})$. Hence, in order to define the space of
all possible boundary values for a function in $H^{1}$ one uses a
quotient construction: we set
\[
H^{1/2}(\R^{d-1})\coloneqq\set{\gamma f}{f\in H^{1}(\Omega)}
\]
 and endow $H^{1/2}(\R^{d-1})$ with the norm 
\[
\norm{\gamma f}_{H^{1/2}(\R^{d-1})}\coloneqq\inf\set{\norm{g}_{H^{1}(\Omega)}}{g\in H^{1}(\Omega),\gamma g=\gamma f}.
\]
It is not difficult to see that $H^{1/2}(\R^{d-1})$ is unitarily
equivalent to $\left(\ker\gamma\right)^{\bot}$, where the orthogonal
complement is computed with respect to the scalar product in $H^{1}(\Omega)$.
Thus, $H^{1/2}(\R^{d-1})$ is a Hilbert space.

\begin{rem}
 The norm defined on the space $H^{1/2}(\R^{d-1})$ given above is not the standard norm defined on this space. Indeed, following \cite[Section 2.3.8]{Necas} the usual norm is given by 
 \[
  \left(\norm{u}_{\L(\R^{d-1})}^2+\int_{\R^{d-1}}\int_{\R^{d-1}} \frac{|u(x)-u(y)|^2}{|x-y|^{d}}\d x\d y\right)^{1/2}
 \]
for $u\in H^{1/2}(\R^{d-1})$. However, this norm turns out to be equivalent to the norm given above, see e.g. \cite[Section 4]{Trostorff2014}. \\
As the notation of this space suggests, it is can also defined as an interpolation space between $H^1(\R^{d-1})$ and $\L(\R^{d-1})$, see \cite[Theorem 15.1]{Lions}. 
\end{rem}

\section{The Boundary Values of Functions in the Domain of the Divergence}

Let $\Omega:=\R^{d-1}\times\Rg{0}$.
There is also a space of corresponding boundary traces for the divergence operator.
Similarly to the boundary values for the domain of the gradient operator, $H^{1}(\Omega)$, the construction of the boundary trace for $H(\dive)$-vector
fields rests on a density result. The proof can be done along the
lines of \prettyref{thm:CinfbdryDense} and will be addressed in \prettyref{exer:divdesnity}.
\begin{thm}
\label{thm:divdensity} $\mathcal{D}^{d}$ is dense in $H(\dive,\Omega)$. 
\end{thm}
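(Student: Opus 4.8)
The plan is to transcribe the proof of \prettyref{thm:CinfbdryDense} with $H^{1}$ replaced by $H(\dive,\cdot)$ throughout. The first step is to establish the whole-space analogue of \prettyref{lem:CinfwholespaceDense}, namely that $\cci(\R^{d})^{d}$ is dense in $H(\dive,\R^{d})$. Given $q\in H(\dive,\R^{d})$, one first cuts off: with $\phi\in\cci(\R^{d})$ as in \prettyref{lem:CinfwholespaceDense} (so $0\leq\phi\leq1$, $\phi=1$ on $\ball{0}{1/2}$, $\phi=0$ off $\ball{0}{1}$) and $\phi_{k}\coloneqq\phi(\cdot/k)$, set $q_{k}\coloneqq\phi_{k}q$. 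For $\varphi\in\cci(\R^{d})$ one computes $-\scp{q_{k}}{\grad\varphi}=-\scp{q}{\grad(\phi_{k}\varphi)}+\scp{q\cdot\grad\phi_{k}}{\varphi}=\scp{\phi_{k}\dive q+\tfrac{1}{k}q\cdot(\grad\phi)(\cdot/k)}{\varphi}$, so $q_{k}\in H(\dive,\R^{d})$ with $\dive q_{k}=\phi_{k}\dive q+\tfrac{1}{k}q\cdot(\grad\phi)(\cdot/k)$, whence $q_{k}\to q$ in $H(\dive,\R^{d})$ by dominated convergence, with each $q_{k}$ compactly supported. Mollifying $q_{k}$ with a $\delta$-sequence and using $\dive(\psi_{k}\ast q_{k})=\psi_{k}\ast\dive q_{k}$ (as in \prettyref{exer:convSmooth}) together with the support estimate for convolutions (\prettyref{exer:C_cinfty dense}) then produces the desired $\cci(\R^{d})^{d}$-approximants.

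Next, given $q\in H(\dive,\Omega)$ with $\Omega=\R^{d-1}\times\Rg{0}$, write $\tilde{q}\in\L(\R^{d})^{d}$ for the extension of $q$ by zero, $g\coloneqq\dive q$, and $\tilde{g}\in\L(\R^{d})$ for the extension of $g$ by zero. For $\tau>0$ put $q_{\tau}\coloneqq\tilde{q}(\cdot+\tau e_{d})|_{\Omega}$, where $e_{d}$ is the $d$-th unit vector. Since $\Omega+\tau e_{d}\subseteq\Omega$, testing against $\phi\in\cci(\Omega)$ and translating shows $q_{\tau}\in H(\dive,\Omega)$ with $\dive q_{\tau}=\tilde{g}(\cdot+\tau e_{d})|_{\Omega}$; strong continuity of translations in $\L(\R^{d})$ then gives $q_{\tau}\to q$ in $H(\dive,\Omega)$ as $\tau\to0\rlim$. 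Thus it suffices to approximate $q_{\tau}$ for each fixed $\tau>0$. For this, let $(\psi_{k})_{k}$ be a $\delta$-sequence and set $q_{k,\tau}\coloneqq\bigl(\psi_{k}\ast\tilde{q}(\cdot+\tau e_{d})\bigr)|_{\Omega}$. Each $\psi_{k}\ast\tilde{q}(\cdot+\tau e_{d})$ is smooth and lies in $H(\dive,\R^{d})$, its divergence being $\sum_{j}(\partial_{j}\psi_{k})\ast\tilde{q}_{j}(\cdot+\tau e_{d})\in\L(\R^{d})$. The key observation — exactly the one used in the displayed computation in the proof of \prettyref{thm:CinfbdryDense} — is that for $k>1/\tau$, $x\in\Omega$ and $y\in\spt\psi_{k}\subseteq\ci{-1/k}{1/k}^{d}$ one has $x-y+\tau e_{d}\in\Omega$, so that $q(\cdot-y+\tau e_{d})$ belongs to $H(\dive,\Omega)$ with divergence $g(\cdot-y+\tau e_{d})$. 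Testing $q_{k,\tau}$ against $\phi\in\cci(\Omega)$, using Fubini's theorem and this fact, one obtains $\dive q_{k,\tau}=\psi_{k}\ast\tilde{g}(\cdot+\tau e_{d})=\psi_{k}\ast\dive q_{\tau}$ on $\Omega$ for all $k>1/\tau$; hence $q_{k,\tau}\to q_{\tau}$ in $H(\dive,\Omega)$ as $k\to\infty$.

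Finally, since $\psi_{k}\ast\tilde{q}(\cdot+\tau e_{d})\in H(\dive,\R^{d})$ is already $C^{\infty}$, the first step lets us approximate it in $H(\dive,\R^{d})$ by fields in $\cci(\R^{d})^{d}$; restricting to $\Omega$ — a norm-nonincreasing map $H(\dive,\R^{d})\to H(\dive,\Omega)$ with $\dive(\cdot|_{\Omega})=(\dive\cdot)|_{\Omega}$ — these restrictions lie in $\mathcal{D}^{d}$ and approximate $q_{k,\tau}$ in $H(\dive,\Omega)$. A triangle-inequality argument combining this with the two approximation steps above, choosing the parameters successively, yields that $\mathcal{D}^{d}$ is dense in $H(\dive,\Omega)$. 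I expect the main obstacle to be the bookkeeping in the Fubini/translation computation identifying $\dive q_{k,\tau}$ on $\Omega$, since one must keep careful track of which translates of $\Omega$ contain the relevant supports; but this is a direct transcription of the corresponding step for the gradient and involves no new ideas.
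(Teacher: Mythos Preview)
Your proposal is correct and follows exactly the approach the paper intends: the paper leaves this result as \prettyref{exer:divdesnity}, explicitly stating that the proof ``can be done along the lines of \prettyref{thm:CinfbdryDense}'', and your argument is precisely that transcription --- whole-space density via cutoff and mollification, then shift into the half-space, mollify, and invoke the whole-space result to pass to $\cci(\R^{d})^{d}$. The Fubini/translation computation identifying $\dive q_{k,\tau}$ on $\Omega$ is handled correctly, since for $k>1/\tau$ and $y\in\spt\psi_{k}$ the shift $\tau e_{d}-y$ has positive $d$-th component, so $\Omega+(\tau e_{d}-y)\subseteq\Omega$ as needed.
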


Equipped with this result, we can describe all possible boundary values
of $H(\dive,\Omega)$. It will turn out that vector
fields in $H(\dive,\Omega)$ have a well-defined \emph{normal}
trace, which for $\Omega=\R^{d-1}\times \R_{>0}$ is just the negative of the last coordinate of the vector field.
\begin{thm}
\label{thm:divetrace} The operator 
\begin{align*}
\gamma_{\mathrm{n}}\colon\mathcal{\mathcal{D}}^{d}\subseteq H(\dive,\Omega) & \to\left(H^{1/2}(\R^{d-1})\right)'\eqqcolon H^{-1/2}(\R^{d-1})\\
q & \mapsto\bigl(\R^{d-1}\ni\check{x}\mapsto -q_{d}(\check{x},0)\bigr),
\end{align*}
is densely defined, continuous with norm bounded by $1$ and has dense
range. Thus $\gamma_{\mathrm{n}}$ admits a unique extension to $H(\dive,\Omega)$ again denoted by $\gamma_{\mathrm{n}}$. 
Here, $-q_{d}$ is the negative of the $d$-th component of $q$ pointing into
the outward normal direction of $\Omega$ and $-q_{d}$ is identified with the
linear functional 
\[
H^{1/2}(\R^{d})\ni\gamma f\mapsto\scp{-q_{d}(\cdot,0)}{\gamma f}_{\L(\R^{d-1})}.
\]
Moreover, for all $f\in\dom(\grad)$ and $q\in\dom(\dive)$ we have
\begin{equation}
\scp{\dive q}{f}+\scp{q}{\grad f}=(\gamma_{\mathrm{n}} q)(\gamma f),\label{eq:ibptrace}
\end{equation}
where we denoted the extension of $\gamma_{\mathrm{n}}$ again by $\gamma_{\mathrm{n}}$.
\end{thm}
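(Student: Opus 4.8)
The plan is to establish, in order: density of the domain, the integration by parts identity on smooth data, the continuity estimate (hence the existence of the continuous extension), the integration by parts formula in full generality, and finally density of the range. First, $\mathcal{D}^{d}$ is dense in $H(\dive,\Omega)=\dom(\dive)$ by \prettyref{thm:divdensity}, so $\gamma_{\mathrm{n}}$ is densely defined. The core computation is the identity
\[
\scp{\dive q}{f}_{\L(\Omega)}+\scp{q}{\grad f}_{\L(\Omega)^{d}}=\scp{-q_{d}(\cdot,0)}{\gamma f}_{\L(\R^{d-1})}
\]
for $q\in\mathcal{D}^{d}$ and $f\in\mathcal{D}$. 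This is just the divergence theorem on the half space: writing $q,f$ as restrictions of $\cci(\R^{d})$-functions, one integrates by parts in each coordinate; for the directions $j<d$ there is no boundary term (compact support), while in the $d$-th direction the only contribution comes from the hyperplane $\hat x=0$ with outward normal $-e_{d}$, producing exactly $-\int_{\R^{d-1}}\overline{q_{d}(\check x,0)}f(\check x,0)\d\check x$.

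Next I would fix $q\in\mathcal{D}^{d}$ and observe that $q_{d}(\cdot,0)\in\cci(\R^{d-1})\subseteq\L(\R^{d-1})$, while $H^{1/2}(\R^{d-1})\hookrightarrow\L(\R^{d-1})$ continuously (by the quotient definition of the $H^{1/2}$-norm together with \prettyref{thm:trace}); hence $\gamma f\mapsto\scp{-q_{d}(\cdot,0)}{\gamma f}_{\L(\R^{d-1})}$ is a well-defined bounded functional, i.e.\ an element $\gamma_{\mathrm{n}}q\in H^{-1/2}(\R^{d-1})$. Using that $\gamma\colon H^{1}(\Omega)\to\L(\R^{d-1})$ is bounded (\prettyref{thm:trace}) and that $\mathcal{D}$ is dense in $H^{1}(\Omega)$ (\prettyref{thm:CinfbdryDense}), the identity above extends by continuity to all $f\in H^{1}(\Omega)$. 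Then, for every $f\in H^{1}(\Omega)$,
\[
\abs{(\gamma_{\mathrm{n}}q)(\gamma f)}=\abs{\scp{\dive q}{f}+\scp{q}{\grad f}}\le\norm{q}_{H(\dive,\Omega)}\norm{f}_{H^{1}(\Omega)}
\]
by the Cauchy--Schwarz inequality in $\R^{2}$; taking the infimum over all representatives $f$ of a fixed boundary value $u=\gamma f$ gives $\norm{\gamma_{\mathrm{n}}q}_{H^{-1/2}(\R^{d-1})}\le\norm{q}_{H(\dive,\Omega)}$. By \prettyref{cor:uniquecont}, $\gamma_{\mathrm{n}}$ extends uniquely to a bounded operator on $H(\dive,\Omega)$ with norm at most $1$. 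For general $q\in\dom(\dive)$ and $f\in\dom(\grad)=H^{1}(\Omega)$, formula \prettyref{eq:ibptrace} then follows by approximating $q$ by $q_{n}\in\mathcal{D}^{d}$ in $H(\dive,\Omega)$ (so $\dive q_{n}\to\dive q$, $q_{n}\to q$ in the respective $\L$-spaces and $\gamma_{\mathrm{n}}q_{n}\to\gamma_{\mathrm{n}}q$ in $H^{-1/2}(\R^{d-1})$) and passing to the limit, invoking continuity of the $H^{-1/2}$--$H^{1/2}$ duality pairing on the right-hand side.

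For the dense range, fix $\chi\in\cci(\R)$ with $\chi(0)=-1$ and, for $\phi\in\cci(\R^{d-1})$, set $q^{\phi}(\check x,\hat x)\coloneqq(0,\dots,0,\phi(\check x)\chi(\hat x))\in\mathcal{D}^{d}\subseteq\dom(\dive)$. From the definition of $\gamma_{\mathrm{n}}$ one reads off $(\gamma_{\mathrm{n}}q^{\phi})(\gamma f)=\scp{-\phi\,\chi(0)}{\gamma f}_{\L(\R^{d-1})}=\scp{\phi}{\gamma f}_{\L(\R^{d-1})}$, i.e.\ $\gamma_{\mathrm{n}}q^{\phi}$ is the image of $\phi$ under the canonical continuous embedding $\L(\R^{d-1})\hookrightarrow H^{-1/2}(\R^{d-1})$. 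Since $\cci(\R^{d-1})$ is dense in $\L(\R^{d-1})$, and $\L(\R^{d-1})$ is dense in $H^{-1/2}(\R^{d-1})$ — because $H^{1/2}(\R^{d-1})$ is dense in $\L(\R^{d-1})$ (it contains $\gamma[\mathcal{D}]\supseteq\cci(\R^{d-1})$, via boundary traces of functions $\psi(\check x)\eta(\hat x)$ with $\eta\in\cci(\R)$, $\eta(0)=1$), so \prettyref{exer:dual_embedding} applies — the set $\set{\gamma_{\mathrm{n}}q^{\phi}}{\phi\in\cci(\R^{d-1})}$ is dense in $H^{-1/2}(\R^{d-1})$, which gives the claim.

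I expect the main difficulty to be bookkeeping of topologies rather than any single hard estimate: in particular, obtaining the sharp constant $1$ forces one to extend the integration by parts identity to all of $H^{1}(\Omega)$ \emph{before} invoking the quotient definition of the $H^{1/2}$-norm, and the two approximation arguments must correctly use continuity of the $H^{-1/2}$--$H^{1/2}$ pairing together with boundedness of $\gamma$ and of the already-extended $\gamma_{\mathrm{n}}$. The remaining ingredients — the divergence theorem on the half space and the density reductions for the range — are routine.
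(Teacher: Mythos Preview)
Your proof is correct and follows essentially the same approach as the paper's: integration by parts on $\mathcal{D}\times\mathcal{D}^{d}$, extension in $f$ to all of $H^{1}(\Omega)$ before taking the infimum to get the sharp constant $1$, and then extension in $q$ by density. The only minor difference is in the dense range argument: the paper uses the annihilator characterisation (if $\gamma f\in H^{1/2}(\R^{d-1})$ annihilates every $\gamma_{\mathrm{n}}g$ then it annihilates $\cci(\R^{d-1})$ in $\L(\R^{d-1})$, hence vanishes), whereas you argue directly that $\ran(\gamma_{\mathrm{n}})$ contains the image of $\cci(\R^{d-1})$ and invoke \prettyref{exer:dual_embedding}; both use the same vector fields $q^{\phi}$ and are equivalent.
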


\begin{proof}
Let $f\in\mathcal{D}$
and $q\in\mathcal{D}^{d}$. Then integration by parts yields
\begin{align*}
\scp{\dive q}{f}+\scp{q}{\grad f} & =\int_{\Omega}\dive(q^\ast f)
 =\int_{\R^{d-1}}\scp{q^\ast(\check{x},0)f(\check{x},0)}{-e_d}\d\check{x}\\
 & =-\int_{\R^{d-1}} \gamma q_{d}^\ast \gamma f
 =\scp{\gamma_{\mathrm{n}}q}{\gamma f}_{\L(\R^{d-1})}=(\gamma_{\mathrm{n}} q)(\gamma f).
\end{align*}
Hence, 
\[
\abs{\scp{\gamma_{\mathrm{n}}q}{\gamma f}_{\L(\R^{d-1})}}\leq\norm{q}_{H(\dive)}\norm{f}_{H^{1}}.
\]
Since $\mathcal{D}$ is dense in $H^{1}(\Omega)$, the inequality remains
true for all $f\in H^{1}(\Omega)$. Thus,
\[
\abs{\scp{\gamma_{\mathrm{n}}q}{\gamma f}_{\L(\R^{d-1})}}\leq\norm{q}_{H(\dive)}\norm{f}_{H^{1}}\quad(f\in H^{1}(\Omega)).
\]
Computing the infimum over all $g\in H^{1}(\Omega)$ with $\gamma g=\gamma f,$
we deduce
\[
\abs{\scp{\gamma_{\mathrm{n}}q}{\gamma f}_{\L(\R^{d-1})}}\leq\norm{q}_{H(\dive)}\norm{\gamma f}_{H^{1/2}(\R^{d-1})}\quad(f\in H^{1}(\Omega)).
\]
Therefore $\gamma_{\mathrm{n}}q\in H^{-1/2}(\R^{d-1})$ and $\norm{\gamma_{\mathrm{n}}q}_{H^{-1/2}}\leq\norm{q}_{H(\dive)}$,
which shows continuity of $\gamma_{\mathrm{n}}$. It is left to show
that ${\gamma}_{\mathrm{n}}$ has dense range. For this, take $\gamma f\in H^{1/2}(\R^{d-1})$
for some $f\in H^{1}(\Omega)$ such that
\[
\scp{\gamma_{\mathrm{n}}g}{\gamma f}_{\L(\R^{d-1})}=0
\]
for all $g\in\mathcal{D}^{d}$. Next, take $\tilde{g}\in\cci(\R^{d-1})$
and $\psi\in\cci(\R)$ with $\psi(0)=1$. Then we set $g\colon\Omega\ni(\check{x},\hat{x})\mapsto -e_{d}\tilde{g}(\check{x})\psi(\hat{x}) \in\mathcal{D}^{d}$
and note that $\gamma_{\mathrm{n}}g=\tilde{g}$. Hence 
\[
\scp{\gamma f}{\tilde{g}}_{\L(\R^{d-1})}=0\quad(\tilde{g}\in\cci(\R^{d-1})).
\]
Thus, $\gamma f=0$, which implies that the range of $\gamma_{\mathrm{n}}$
is dense, as $H^{1/2}(\R^{d-1})$ is a Hilbert space. The remaining
formula \prettyref{eq:ibptrace} follows by continuously extending both the left- and right-hand
side of the integration by parts formula from the beginning of the
proof. Note that for this, we have used both \prettyref{thm:CinfbdryDense}
and \prettyref{thm:divdensity}.
\end{proof}

\begin{cor}
\label{cor:CharDiv0}Let $f\in H^1(\Omega)$, $q\in H(\dive,\Omega)$. Then $f\in \dom(\grad_0)$ if and only if $\gamma f = 0$, and 
$q\in\dom(\dive_{0})$ if and only if $\gamma_{\mathrm{n}}q=0$.
\end{cor}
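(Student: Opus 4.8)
The plan is to prove each of the two equivalences via its two implications. First I would record the preliminary observation that $\grad_{\rmc}\subseteq\grad$ and $\dive_{\rmc}\subseteq\dive$ imply, after taking closures, $\grad_{0}\subseteq\grad$ and $\dive_{0}\subseteq\dive$; this guarantees that $\gamma f$ makes sense for $f\in\dom(\grad_{0})$ and $\gamma_{\mathrm{n}}q$ makes sense for $q\in\dom(\dive_{0})$, so the statement is well posed. The ``only if'' directions will use density together with continuity of the trace maps, while the ``if'' directions will use the integration by parts identity \prettyref{eq:ibptrace} together with the defining property of the adjoint.

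For the ``only if'' part of the first equivalence, given $f\in\dom(\grad_{0})$ I would invoke \prettyref{thm:Sobolev_space}~(b) to obtain a sequence $(f_{k})_{k}$ in $\cci(\Omega)$ with $f_{k}\to f$ in $\L(\Omega)$ and $\grad f_{k}\to\grad_{0}f$ in $\L(\Omega)^{d}$, i.e.\ $f_{k}\to f$ in $H^{1}(\Omega)$. Extending each $f_{k}$ by zero places it in $\cci(\R^{d})$, hence in $\mathcal{D}$, and since $\spt f_{k}$ is a compact subset of the open set $\Omega$ one has $\gamma f_{k}=0$; continuity of $\gamma$ (\prettyref{thm:trace}) then forces $\gamma f=\lim_{k\to\infty}\gamma f_{k}=0$. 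The same scheme---using \prettyref{thm:Sobolev_space}~(d) to approximate $q\in\dom(\dive_{0})$ by $\cci(\Omega)^{d}$-fields, noting these lie in $\mathcal{D}^{d}$ with vanishing normal trace, and using continuity of $\gamma_{\mathrm{n}}$ (\prettyref{thm:divetrace})---yields $\gamma_{\mathrm{n}}q=0$.

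For the ``if'' part of the first equivalence, I would start from $f\in H^{1}(\Omega)$ with $\gamma f=0$ and recall that $\grad_{0}=-\dive^{*}$, so $\dom(\grad_{0})=\dom(\dive^{*})$; by the definition of the adjoint, membership $f\in\dom(\dive^{*})$ amounts to the existence of $h\in\L(\Omega)^{d}$ with $\scp{f}{\dive q}_{\L(\Omega)}=\scp{h}{q}_{\L(\Omega)^{d}}$ for every $q\in\dom(\dive)$. Applying \prettyref{eq:ibptrace} and using $\gamma f=0$ gives $\scp{\dive q}{f}_{\L(\Omega)}=-\scp{q}{\grad f}_{\L(\Omega)^{d}}$ for all such $q$; conjugating, this reads $\scp{f}{\dive q}_{\L(\Omega)}=\scp{-\grad f}{q}_{\L(\Omega)^{d}}$, so $h=-\grad f$ works and $f\in\dom(\dive^{*})=\dom(\grad_{0})$ (incidentally with $\grad_{0}f=\grad f$). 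The ``if'' part of the second equivalence is entirely symmetric: for $q\in H(\dive,\Omega)$ with $\gamma_{\mathrm{n}}q=0$ I would use $\dive_{0}=-\grad^{*}$, so that $q\in\dom(\dive_{0})$ means there is $h\in\L(\Omega)$ with $\scp{q}{\grad f}_{\L(\Omega)^{d}}=\scp{h}{f}_{\L(\Omega)}$ for all $f\in\dom(\grad)$, and \prettyref{eq:ibptrace} with $\gamma_{\mathrm{n}}q=0$ delivers precisely this with $h=-\dive q$.

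I do not expect any genuine obstacle here; the only care needed is in the conjugation bookkeeping relating $\scp{\dive q}{f}$ and $\scp{f}{\dive q}$ in the integration by parts formula, and in matching the convention of the adjoint relation used earlier in the text so that the pairings line up. All structural ingredients are already available: \prettyref{thm:Sobolev_space} for the approximating sequences, \prettyref{thm:trace} and \prettyref{thm:divetrace} for continuity of the trace operators, and \prettyref{eq:ibptrace} for integration by parts.
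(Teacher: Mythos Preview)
Your proposal is correct and takes essentially the same approach as the paper's proof: approximation by $\cci$-functions plus continuity of the trace maps for the ``only if'' directions, and the integration by parts identity \prettyref{eq:ibptrace} combined with the adjoint characterisations $\grad_0=-\dive^*$, $\dive_0=-\grad^*$ for the ``if'' directions. The paper only writes out the argument for $q$ and declares the $f$ case analogous, whereas you spell out both; otherwise the arguments coincide.
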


\begin{proof}
We only show the statement for $q$. The proof for $f$ is analogous.
If $q\in\dom(\dive_{0})$, then there exists a sequence $(\psi_{n})_{n}$
in $\cci(\Omega)^{d}$ such that $\psi_{n}\to q$ in
$H(\dive,\Omega)$ as $n\to\infty$. Thus, by continuity
of $\gamma_{\mathrm{n}}$, we infer $0=\gamma_{\mathrm{n}}\psi_{n}\to\gamma_{\mathrm{n}}q$.
Assume on the other hand that $q\in\dom(\dive)$ with $\gamma_{\mathrm{n}}q=0$.
Using \prettyref{eq:ibptrace}, we obtain for all $f\in\dom(\grad)$
\[
\scp{\dive q}{f}+\scp{q}{\grad f}=0.
\]
This equality implies that $q\in\dom(\grad^{*})=\dom(\dive_{0})$, which shows the remaining assertion.
\end{proof}
The remaining part of this section is devoted to showing that the continuous
extension of $\gamma_{\mathrm{n}}$ maps onto $H^{-1/2}(\R^{d-1})$.
For this we require the following observation, which will also be needed
later on.
\begin{prop}
\label{prop:BDdiv}Let $U\subseteq\R^{d}$ be open. Then
\[
H_{0}(\dive,U)^{\bot_{H(\dive,U)}}=\set{q\in H(\dive,U)}{\dive q\in H^{1}(U),q=\grad\dive q}.
\]
\end{prop}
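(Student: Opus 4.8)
The plan is to characterise the orthogonal complement of $H_0(\dive,U)$ inside the Hilbert space $H(\dive,U) = \dom(\dive)$ equipped with the graph inner product $\scp{p}{q}_{H(\dive,U)} = \scp{p}{q}_{\L(U)^d} + \scp{\dive p}{\dive q}_{\L(U)}$. First I would spell out what membership in the complement means: $q \in H(\dive,U)$ lies in $H_0(\dive,U)^{\bot}$ if and only if
\[
\scp{q}{\varphi}_{\L(U)^d} + \scp{\dive q}{\dive\varphi}_{\L(U)} = 0 \quad (\varphi \in H_0(\dive,U)).
\]
Since $\cci(U)^d$ is a core for $\dive_0$, it suffices to test this identity against $\varphi \in \cci(U)^d$, so the condition reads $\scp{q}{\varphi}_{\L(U)^d} = -\scp{\dive q}{\dive\varphi}_{\L(U)}$ for all such $\varphi$.

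Next I would unpack the right-hand side using the definition of the adjoint. Recall $\grad = -\dive_{\rmc}^*$, so $\dive\varphi = \dive_{\rmc}\varphi$ for $\varphi \in \cci(U)^d$, and the displayed identity says precisely that the map $\varphi \mapsto \scp{\dive q}{\dive_{\rmc}\varphi}_{\L(U)}$ coincides with $\varphi \mapsto -\scp{q}{\varphi}_{\L(U)^d}$, i.e. with the action of $-q$ as a functional. By the very definition of $\grad = -\dive_{\rmc}^*$ (see the definition preceding \prettyref{thm:Sobolev_space} and the characterisation in \prettyref{thm:Sobolev_space}(a)), this is equivalent to $\dive q \in \dom(\grad) = H^1(U)$ together with $\grad(\dive q) = -(-q) = q$. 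That is, $q \in H_0(\dive,U)^{\bot}$ if and only if $\dive q \in H^1(U)$ and $q = \grad \dive q$, which is exactly the claimed description.

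For the converse inclusion I would simply run this argument backwards: if $q \in H(\dive,U)$ with $\dive q \in H^1(U)$ and $q = \grad\dive q$, then for every $\varphi \in \cci(U)^d$ we have $\scp{\dive q}{\dive_{\rmc}\varphi}_{\L(U)} = \scp{\dive q}{-\dive^*\varphi}$... more carefully, by \prettyref{thm:Sobolev_space}(a) applied to $f = \dive q$ and $g = \grad\dive q = q$, one gets $-\int_U (\dive q)\partial_j\varphi = \int_U q_j \varphi$ for all $\varphi\in\cci(U)$, which when assembled over $j$ and combined with density of $\cci(U)^d$ in $H_0(\dive,U)$ gives the orthogonality relation against all of $H_0(\dive,U)$. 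The only mild subtlety — and the point I would be most careful about — is the bookkeeping between $\dive$, $\dive_{\rmc}$, $\grad$ and the signs in the adjoint relations $\dive = -\grad_{\rmc}^*$, $\grad = -\dive_{\rmc}^*$; once these are pinned down, the statement is essentially a direct translation of the definition of $\grad$ as an adjoint, with no hard analysis involved. I would also remark that this is the natural abstract statement behind \prettyref{prop:ker_ran_vec_ana}, and that the same proof template (replacing $\dive$ by $\curl$ or $\grad$) yields the analogous identities, which is presumably why the authors isolate it here before turning to the surjectivity of $\gamma_{\mathrm{n}}$.
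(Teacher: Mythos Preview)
Your proof is correct and follows essentially the same approach as the paper. The only cosmetic difference is that the paper tests against all $r\in H_0(\dive,U)$ directly and invokes $\dive_0^* = -\grad$, whereas you first pass to the core $\cci(U)^d$ and then use $\dive_{\rmc}^* = -\grad$; since $\dive_0 = \overline{\dive_{\rmc}}$ these are the same statement, and the paper's route spares you the density step.
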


\begin{proof}
Let $q\in H(\dive,U)$. Then $q\in H_{0}(\dive,U)^{\bot_{H(\dive,U)}}$
if and only if for all $r\in H_{0}(\dive,U)$ we have
\begin{align*}
0 & =\scp{r}{q}_{H(\dive,U)}
 =\scp{r}{q}_{\L(U)^{d}}+\scp{\dive r}{\dive q}_{\L(U)}
 =\scp{r}{q}_{\L(U)^{d}}+\scp{\dive_{0}r}{\dive q}_{\L(U)}.
\end{align*}
The latter, in turn, is equivalent to $\dive q\in\dom(\dive_{0}^{*})=\dom(\grad)=H^1(U)$
and $-\grad\dive q=\dive_{0}^{*}\dive q=-q$.
\end{proof}
\begin{thm}
\label{thm:NeuTrOnto}$\gamma_{\mathrm{n}}$ maps onto
$H^{-1/2}(\R^{d-1})$. In particular, we have
\[
\norm{q}_{H(\dive,\Omega)}\leq\norm{\gamma_{\mathrm{n}}q}_{H^{-1/2}(\R^{d-1})}
\]
for all $q\in H_{0}(\dive,\Omega)^{\bot_{H(\dive,\Omega)}}$.
\end{thm}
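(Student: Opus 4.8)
The plan is to produce a preimage under $\gamma_{\mathrm{n}}$ by solving a variational problem and then to read off the norm estimate from the same construction. Throughout write $H^1=H^1(\Omega)$, $H(\dive)=H(\dive,\Omega)$, and recall that by \prettyref{prop:BDdiv} we have
\[
H_{0}(\dive)^{\bot_{H(\dive)}}=\set{q\in H(\dive)}{\dive q\in H^{1},\ q=\grad\dive q}.
\]

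First I would fix $g\in H^{-1/2}(\R^{d-1})=\bigl(H^{1/2}(\R^{d-1})\bigr)'$. Since $\gamma\colon H^{1}\to\L(\R^{d-1})$ is continuous by \prettyref{thm:trace}, the map $H^{1}\ni v\mapsto g(\gamma v)$ is a bounded linear functional, so by the Riesz representation theorem there is a unique $u\in H^{1}$ with $\scp{u}{v}_{H^{1}}=g(\gamma v)$ for all $v\in H^{1}$; explicitly $\scp{u}{v}_{\L(\Omega)}+\scp{\grad u}{\grad v}_{\L(\Omega)^{d}}=g(\gamma v)$. Testing with $v=\phi\in\cci(\Omega)$, where $\gamma\phi=0$, gives $\scp{\grad u}{\grad_{\rmc}\phi}_{\L(\Omega)^{d}}=-\scp{u}{\phi}_{\L(\Omega)}$; since $\dive=-\grad_{\rmc}^{*}$ this means $\grad u\in\dom(\dive)$ with $\dive\grad u=u$. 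Hence $q\coloneqq\grad u\in H(\dive)$ satisfies $\dive q=u\in H^{1}$ and $\grad\dive q=\grad u=q$, so $q\in H_{0}(\dive)^{\bot_{H(\dive)}}$ by \prettyref{prop:BDdiv}. To compute $\gamma_{\mathrm{n}}q$, apply the integration-by-parts formula \prettyref{eq:ibptrace}: for every $f\in H^{1}$,
\[
(\gamma_{\mathrm{n}}q)(\gamma f)=\scp{\dive q}{f}_{\L(\Omega)}+\scp{q}{\grad f}_{\L(\Omega)^{d}}=\scp{u}{f}_{H^{1}}=g(\gamma f).
\]
As $\{\gamma f\,;\,f\in H^{1}\}=H^{1/2}(\R^{d-1})$ by definition, this yields $\gamma_{\mathrm{n}}q=g$, proving surjectivity.

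For the norm estimate, I would first note that $\gamma_{\mathrm{n}}$ is injective on $H_{0}(\dive)^{\bot_{H(\dive)}}$: if $q$ lies in this complement and $\gamma_{\mathrm{n}}q=0$, then $q\in\dom(\dive_{0})=H_{0}(\dive)$ by \prettyref{cor:CharDiv0}, hence $q\in H_{0}(\dive)\cap H_{0}(\dive)^{\bot_{H(\dive)}}=\{0\}$. Combined with the surjectivity construction above, this shows that \emph{every} $q\in H_{0}(\dive)^{\bot_{H(\dive)}}$ equals $\grad u$ for the Riesz representative $u$ of $v\mapsto(\gamma_{\mathrm{n}}q)(\gamma v)$, with $\dive q=u$. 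Therefore
\[
\norm{q}_{H(\dive)}^{2}=\norm{\grad u}_{\L(\Omega)^{d}}^{2}+\norm{u}_{\L(\Omega)}^{2}=\norm{u}_{H^{1}}^{2}=\scp{u}{u}_{H^{1}}=(\gamma_{\mathrm{n}}q)(\gamma u).
\]
Estimating the right-hand side by $\norm{\gamma_{\mathrm{n}}q}_{H^{-1/2}(\R^{d-1})}\norm{\gamma u}_{H^{1/2}(\R^{d-1})}$ and using $\norm{\gamma u}_{H^{1/2}(\R^{d-1})}=\inf\set{\norm{w}_{H^{1}}}{\gamma w=\gamma u}\leq\norm{u}_{H^{1}}=\norm{q}_{H(\dive)}$, we obtain $\norm{q}_{H(\dive)}^{2}\leq\norm{\gamma_{\mathrm{n}}q}_{H^{-1/2}(\R^{d-1})}\norm{q}_{H(\dive)}$, and dividing gives the claimed inequality.

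I do not expect a serious obstacle here; the only points needing care are the bookkeeping of the adjoint relations ($\grad=-\dive_{\rmc}^{*}$, $\dive=-\grad_{\rmc}^{*}$) when passing from the variational identity to $\dive\grad u=u$, and the use that $\gamma$ maps \emph{onto} $H^{1/2}(\R^{d-1})$ (true by definition) so that the identity $(\gamma_{\mathrm{n}}q)(\gamma f)=g(\gamma f)$ for all $f\in H^1$ really pins down $\gamma_{\mathrm{n}}q=g$ in $H^{-1/2}(\R^{d-1})$. Everything else reduces to the Riesz representation theorem, \prettyref{prop:BDdiv}, \prettyref{cor:CharDiv0}, and the integration-by-parts formula \prettyref{eq:ibptrace} already established.
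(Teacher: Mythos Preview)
Your proof is correct and reaches the same inequality as the paper, but the organization differs. The paper argues indirectly: it uses the dense-range statement from \prettyref{thm:divetrace} and then shows that $\gamma_{\mathrm{n}}$ has closed range by establishing the norm estimate on $\ker(\gamma_{\mathrm{n}})^{\bot}=H_{0}(\dive,\Omega)^{\bot}$; surjectivity then follows from ``dense and closed range''. You instead produce an explicit preimage via Riesz representation, thereby proving surjectivity directly without invoking the earlier dense-range result, and only afterwards extract the norm estimate by identifying any $q$ in the complement with its Riesz construction. The core computation---namely $(\gamma_{\mathrm{n}}q)(\gamma\dive q)=\norm{q}_{H(\dive)}^{2}$ for $q$ satisfying $q=\grad\dive q$---is identical in both arguments; your route just packages it constructively. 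One minor remark: when you justify that $v\mapsto g(\gamma v)$ is bounded, the relevant continuity is $\gamma\colon H^{1}\to H^{1/2}(\R^{d-1})$ (with norm $\leq 1$ by the definition of the $H^{1/2}$-norm), not $\gamma\colon H^{1}\to\L(\R^{d-1})$, since $g$ lives in $H^{-1/2}$ rather than $\L(\R^{d-1})'$.
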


\begin{proof}
By \prettyref{thm:divetrace}
it suffices to show that $\gamma_{\mathrm{n}}$ has closed
range. For this, it suffices to show that there exists $c>0$ such that
\[
\norm{q}_{H(\dive,\Omega)}\leq c\norm{\gamma_{\mathrm{n}}q}_{H^{-1/2}(\R^{d-1})}
\]
for all $q\in\ker(\gamma_{\mathrm{n}})^{\bot_{H(\dive,\Omega)}}$. 
By \prettyref{cor:CharDiv0}, we obtain $\ker(\gamma_{\mathrm{n}})=H_{0}(\dive,\Omega)$.
Hence, by \prettyref{prop:BDdiv}, we deduce that $q\in\ker(\gamma_{\mathrm{n}})^{\bot_{H(\dive,\Omega)}}$
if and only if $q\in\dom(\grad\dive)$ and $q=\grad\dive q$. So,
assume that $q\in\dom(\grad\dive)$ with $q=\grad\dive q$. Then
\prettyref{eq:ibptrace} applied to $q\in\dom(\dive)$ and $f=\dive q\in\dom(\grad)$ yields
\begin{align*}
(\gamma_{\mathrm{n}}q)(\gamma \dive q) & =\scp{\dive q}{\dive q}+\scp{ q}{\grad\dive q}
 =\scp{\dive q}{\dive q}+\scp{q}{q}\\
& =\norm{q}_{H(\dive,\Omega)}^{2},
\end{align*}
where we used $\grad\dive q=q$.
Hence 
\begin{align*}
\norm{q}_{H(\dive,\Omega)}^{2} & \leq\norm{\gamma\dive q}_{H^{1/2}}\norm{\gamma_{\mathrm{n}}q}_{H^{-1/2}}
 \leq\norm{\dive q}_{H^{1}(\Omega)}\norm{\gamma_{\mathrm{n}}q}_{H^{-1/2}}\\
&  =\norm{q}_{H(\dive,\Omega)}\norm{\gamma_{\mathrm{n}}q}_{H^{-1/2}}
\end{align*}
where we again used that $\grad\dive q = q$.
This yields the assertion.
\end{proof}

\section{Inhomogeneous Boundary Value Problems}

Let $\Omega\coloneqq \R^{d-1}\times\Rg{0}$.
With the notion of traces we now have a tool at hand that allows us
to formulate inhomogeneous boundary value problems.  Here we focus on the
scalar wave type equation for given Neumann data $\tilde{g}\in H^{-1/2}(\R^{d-1})$. We shall address
other boundary value problems in the exercises.
Let $M\colon\dom(M)\subseteq\C\to \bo\bigl(\L(\Omega)\times\L(\Omega)^{d}\bigr)$ be a material law with $\sbb{M}<\nu_0$ for some $\nu_0\in\R$. We assume that $M$ satisfies the positive definiteness condition in \prettyref{thm:Solution_theory_EE}; that is, we assume there exists $c>0$ such that for all $z\in \C_{\Re \geq \nu_0}$ we have $\Re zM(z) \geq c$.
For $\nu\geq\nu_0$ we want to solve
\[
\begin{cases}
\left(\td{\nu}M(\td{\nu})+\begin{pmatrix}
 0  &  \dive\\
 \grad\   &  0 
\end{pmatrix}\right)\begin{pmatrix}
v\\
q
\end{pmatrix}=\begin{pmatrix}
0\\
0
\end{pmatrix} & \text{ on }\Omega,\\
\gamma_{\mathrm{n}}q(t,\cdot)=\tilde{g} & \text{ on }\partial\Omega\text{ for all }t\in\R.
\end{cases}
\]
Let us reformulate this problem. Let $\phi\in C^{\infty}(\R)$ such
that $0\leq\phi\leq1$ with $\phi=1$ on $\roi{0}{\infty}$ and $\phi=0$
on $\loi{-\infty}{-1}$. We define the function $g\coloneqq \bigl(t\mapsto\phi(t)\tilde{g}\in H^{-1/2}(\R^{d-1})\bigr)\in\bigcap_{\nu>0}\Lnu(\R;H^{-1/2}(\R^{d-1}))$
and consider
\begin{equation}
\begin{cases}
\left(\td{\nu}M(\td{\nu})+\begin{pmatrix}
 0  &  \dive\\
 \grad\   &  0 
\end{pmatrix}\right)\begin{pmatrix}
v\\
q
\end{pmatrix}=\begin{pmatrix}
0\\
0
\end{pmatrix} & \text{ on }\Omega,\\
\gamma_{\mathrm{n}}q(t)=g(t) & \text{ for all }t>0.
\end{cases}\label{eq:inhBVP}
\end{equation}
instead. 
\begin{thm}
\label{thm:wpInhBVP} Let $\nu\geq \max\{\nu_0,0\},\nu\neq0$. Then \prettyref{eq:inhBVP} admits a unique solution
$(v,q)\in H_{\nu}^{1}\Bigl(\R;\dom\Bigl(\begin{pmatrix}
0 & \dive\\
\grad & 0
\end{pmatrix}\Bigr)\Bigr)$. 
\end{thm}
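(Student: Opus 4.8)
The plan is to reduce \prettyref{eq:inhBVP} to an evolutionary equation with $\Lnu$-right-hand side and homogeneous boundary data, and then invoke \prettyref{thm:Solution_theory_EE}. Write
\[
A \coloneqq \begin{pmatrix} 0 & \dive \\ \grad & 0\end{pmatrix}, \qquad A_0 \coloneqq \begin{pmatrix} 0 & \dive_0 \\ \grad & 0\end{pmatrix}.
\]
Since $\grad^* = -\dive_0$ and $\dive_0^* = -\grad$, \prettyref{prop:block_op_realinv}\ref{prop:block_op_realinv:item:1} shows $A_0$ is skew-selfadjoint, and as $\dive_0\subseteq\dive$ we have $\dom(A_0)\subseteq\dom(A)$ with $A|_{\dom(A_0)} = A_0$. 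By \prettyref{thm:NeuTrOnto} the normal trace $\gamma_{\mathrm n}\colon H(\dive,\Omega)\to H^{-1/2}(\R^{d-1})$ is onto, and by \prettyref{cor:CharDiv0} its kernel is $H_0(\dive,\Omega)$; hence $\gamma_{\mathrm n}$ restricts to a continuous bijection from $H_0(\dive,\Omega)^{\bot_{H(\dive,\Omega)}}$ onto $H^{-1/2}(\R^{d-1})$, and the open mapping theorem gives a bounded right inverse $E\colon H^{-1/2}(\R^{d-1})\to H(\dive,\Omega)$ with $\gamma_{\mathrm n}E = \idop$. Put $q_g \coloneqq \bigl(t\mapsto\phi(t)E\tilde g\bigr)$; since $E\tilde g$ is a fixed element of $H(\dive,\Omega)$ and $\phi\in C^\infty(\R)$ with $\phi\equiv1$ on $\roi 0\infty$ and $\phi'\in\cci(\R)$, the function $q_g$ together with all its temporal derivatives lies in $\Lnu(\R;H(\dive,\Omega))$ for every $\nu>0$ (this is exactly where the restriction $\nu>0$ enters, $\phi$ not decaying at $+\infty$), and $\gamma_{\mathrm n}q_g(t) = \phi(t)\tilde g = g(t)$ for all $t$.

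Next I would make the ansatz $(v,q) = (v,q_0) + (0,q_g)$ with $q_0(t)\in\dom(\dive_0)$, so that $q(t)\in\dom(\dive)$ and $\dive q = \dive_0 q_0 + \dive q_g$; then \prettyref{eq:inhBVP} becomes equivalent to
\[
\bigl(\td{\nu}M(\td{\nu}) + A_0\bigr)\begin{pmatrix}v\\q_0\end{pmatrix} = F, \qquad F \coloneqq -\bigl(\td{\nu}M(\td{\nu}) + A\bigr)\begin{pmatrix}0\\q_g\end{pmatrix},
\]
the boundary condition then being automatic. Using that $M(\td{\nu})$ is a material law operator, hence commutes with $\td{\nu}$ and leaves each $\dom(\td{\nu}^n)$ invariant (\prettyref{rem:mlohom}, \prettyref{rem:somlo}), and the temporal smoothness of $q_g$ and $\dive q_g$, one checks $F\in\dom(\td{\nu}^n)$ for all $n$, in particular $F\in\Lnu(\R;H)$ with $H = \L(\Omega)\times\L(\Omega)^d$. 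Picard's theorem applies to $(A_0,M)$ (positive definiteness and $\sbb{M}<\nu_0$ are assumed), so for $\nu\geq\max\{\nu_0,0\}$, $\nu\neq0$, the operator $S_\nu\coloneqq\bigl(\overline{\td{\nu}M(\td{\nu})+A_0}\bigr)^{-1}\in\bo(\Lnu(\R;H))$ exists and $(v,q_0)\coloneqq S_\nu F$ solves the reduced equation. Because $F\in\dom(\td{\nu})$, \prettyref{thm:Solution_theory_EE} gives $(v,q_0)\in\dom(\td{\nu})\cap\dom(A_0)$, and applying $S_\nu\td{\nu}\subseteq\td{\nu}S_\nu$ with $F\in\dom(\td{\nu}^2)$ yields $\td{\nu}(v,q_0) = S_\nu\td{\nu}F\in\dom(\td{\nu})\cap\dom(A_0)$, so $(v,q_0)\in H^1_\nu(\R;\dom(A_0))\subseteq H^1_\nu(\R;\dom(A))$; since also $(0,q_g)\in H^1_\nu(\R;\dom(A))$, we get $(v,q)\in H^1_\nu(\R;\dom(A))$. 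By \prettyref{thm:Sobolev_emb} applied with Hilbert space $\dom(A_0)$, $q_0$ has a continuous representative into $\dom(\dive_0)$, hence $\gamma_{\mathrm n}q_0(t) = 0$ and $\gamma_{\mathrm n}q(t) = g(t)$ for all $t$, while a direct computation using the definition of $F$ gives $\bigl(\td{\nu}M(\td{\nu})+A\bigr)(v,q) = F + \bigl(\td{\nu}M(\td{\nu})+A\bigr)(0,q_g) = 0$. This proves existence.

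For uniqueness it suffices to treat $\tilde g = 0$, i.e.\ $g=0$: if $(v,q)\in H^1_\nu(\R;\dom(A))$ solves the homogeneous problem, then $q(t)\in\dom(\dive)$ with $\gamma_{\mathrm n}q(t) = 0$ for a.e.\ $t$, so $q(t)\in\dom(\dive_0)$ by \prettyref{cor:CharDiv0}; hence $A(v,q) = A_0(v,q)$, $(v,q)\in\dom(\overline{\td{\nu}M(\td{\nu})+A_0})$ with $\bigl(\overline{\td{\nu}M(\td{\nu})+A_0}\bigr)(v,q)=0$, and injectivity of $S_\nu$ forces $(v,q)=0$; applying this to the difference of two solutions yields uniqueness.

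The conceptual core is the reduction via the bounded extension operator $E$. The main obstacle is the regularity bookkeeping required to land in $H^1_\nu(\R;\dom(A))$ rather than merely $\dom(\td{\nu})\cap\dom(A)$: this uses the commutation relation $S_\nu\td{\nu}\subseteq\td{\nu}S_\nu$ and the commutation of the extended operator $A_0$ with $\td{\nu}$ (cf.\ \prettyref{exer:abstractFourier_exten}), and the vector-valued Sobolev embedding with values in $\dom(A_0)$ to obtain attainment of the trace for every $t$ rather than only almost everywhere.
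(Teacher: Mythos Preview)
Your proof is correct and follows essentially the same route as the paper: lift the boundary datum via a section of $\gamma_{\mathrm n}$ (the paper's $\tilde G$ is your $E\tilde g$, and $G=q_g$), subtract to reduce to an evolutionary equation with the skew-selfadjoint operator $A_0$ and smooth-in-time right-hand side, apply Picard's theorem, and then bootstrap the regularity using $S_\nu\td{\nu}\subseteq\td{\nu}S_\nu$ together with $F\in\dom(\td{\nu}^2)$. The uniqueness arguments are also equivalent: the paper writes down the explicit formula for $(v,q-G)$ in terms of $S_\nu$, while you phrase the same observation as ``homogeneous boundary data forces $q\in\dom(\dive_0)$ via \prettyref{cor:CharDiv0}, hence $(v,q)\in\ker\bigl(\overline{\td{\nu}M(\td{\nu})+A_0}\bigr)=\{0\}$''.
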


\begin{proof}
We start with the existence part. By \prettyref{thm:NeuTrOnto}, we
find $\tilde{G}\in H(\dive,\Omega)$ such that $\gamma_{\mathrm{n}}\tilde{G}=\tilde{g}$;
set $G\coloneqq\phi(\cdot)\tilde{G}\in H_{\nu}^{3}(\R;H(\dive,\Omega))$.
Consider the following evolutionary equation
\begin{align*}
 & \left(\overline{\td{\nu}M(\td{\nu})+\begin{pmatrix}
0 & \dive_{0}\\
\grad & 0
\end{pmatrix}}\right)\begin{pmatrix}
u\\
r
\end{pmatrix}
 =\td{\nu}M(\td{\nu})\begin{pmatrix}
0\\
-G
\end{pmatrix}+\begin{pmatrix}
-\dive G\\
0
\end{pmatrix}.
\end{align*}
Note that the right-hand side is in $H_{\nu}^{2}(\R;\L(\Omega)\times \L(\Omega)^d)$.
By \prettyref{thm:Solution_theory_EE}, we obtain
\begin{align*}
 \begin{pmatrix}
u\\
r
\end{pmatrix}
 & =\left(\overline{\td{\nu}M(\td{\nu})+\begin{pmatrix}
0 & \dive_{0}\\
\grad & 0
\end{pmatrix}}\right)^{-1}\left(\td{\nu}M(\td{\nu})\begin{pmatrix}
0\\
-G
\end{pmatrix}+\begin{pmatrix}
-\dive G\\
0
\end{pmatrix}\right)\\
  & \in H_{\nu}^{1}(\R;\L(\Omega)\times \L(\Omega)^d)\cap \Lnu\Bigl(\R;\dom\Bigl(\begin{pmatrix}
0 & \dive\\
\grad & 0
\end{pmatrix}\Bigr)\Bigr) .
\end{align*}
Indeed, since the solution operator commutes with $\td{\nu}$ and the right-hand side lies in $H_\nu^2$, it even follows that $\begin{pmatrix}    
                                                                                                                          u\\
                                                                                                                          r
                                                                                                                         \end{pmatrix} \in H_\nu^2(\R;\L(\Omega)\times \L(\Omega)^d)$.
 From the equality 
\begin{align*}
 \left(\td{\nu}M(\td{\nu})+\begin{pmatrix}
0 & \dive_{0}\\
\grad & 0
\end{pmatrix}\right)\begin{pmatrix}
u\\
r
\end{pmatrix}
 & =\td{\nu}M(\td{\nu})\begin{pmatrix}
0\\
-G
\end{pmatrix}+\begin{pmatrix}
-\dive G\\
0
\end{pmatrix}
\end{align*}
it follows that 
\[
\begin{pmatrix}
u\\
r
\end{pmatrix}\in  H_{\nu}^{1}\Bigl(\R;\dom\Bigl(\begin{pmatrix}
0 & \dive_{0}\\
\grad & 0
\end{pmatrix}\Bigr)\Bigr).
\]
Also, we deduce that 
\[
\left(\td{\nu}M(\td{\nu})+\begin{pmatrix}
0 & \dive\\
\grad & 0
\end{pmatrix}\right)\begin{pmatrix}
u\\
r+G
\end{pmatrix}=\begin{pmatrix}
0\\
0
\end{pmatrix}.
\]
Since $r\in H_{\nu}^{1}(\R;\dom(\dive_{0}))$, by \prettyref{cor:CharDiv0}
and \prettyref{thm:Sobolev_emb} we obtain
\[
\gamma_{\mathrm{n}}\left((r+G)(t)\right)=\gamma_{\mathrm{n}}G(t)=g(t) \quad (t\in \R).
\]
Hence, $(u,r+G)$ solves \prettyref{eq:inhBVP}. 

Next we address the uniqueness result. For this we note that a straightforward
computation shows
\[
\begin{pmatrix}
v\\
q-G
\end{pmatrix}
  =\left(\overline{\td{\nu}M(\td{\nu})+\begin{pmatrix}
0 & \dive_{0}\\
\grad & 0
\end{pmatrix}}\right)^{-1}\left(\td{\nu}M(\td{\nu})\begin{pmatrix}
0\\
-G
\end{pmatrix}+\begin{pmatrix}
-\dive G\\
0
\end{pmatrix}\right),
 \]
which coincides with the formula for $(u,r+G)$. 
\end{proof}
 The upshot of the rationale exemplified in the proof is that inhomogeneous
boundary value problems can be reduced to an evolutionary equation
of the standard form with non-vanishing right-hand side. Of course
the treatment of inhomogeneous Dirichlet data works along similar
lines. 

\section{Abstract Boundary Data Spaces}

Of course inhomogeneous boundary value problems can be addressed for
other domains $\Omega$ than the half space $\R^{d-1}\times\Rg{0}$.
Classically, some more specific properties need to be imposed on the
description of the boundary $\partial\Omega$. In this section, however,
we deviate from the classical perspective in as much as we like to
consider \emph{arbitrary} open sets $\Omega\subseteq\R^{d}$. For
this we introduce
\begin{align*}
\BD(\dive) & =\set{q\in H(\dive,\Omega)}{\dive q\in\dom(\grad),\grad\dive q=q},\\
\BD(\grad) & =\set{u\in H^{1}(\Omega)}{\grad u\in\dom(\dive),\dive\grad u=u}.
\end{align*}
By \prettyref{prop:BDdiv} and \prettyref{exer:H=00003DW+Kasuga},
these spaces are closed subspaces of $H(\dive,\Omega)$ and $H^{1}(\Omega)$,
respectively, and therefore Hilbert spaces. Indeed, 
\[
\BD(\dive)=H_{0}(\dive,\Omega)^{\bot_{H(\dive,\Omega)}}
\]
and 
\[
\BD(\grad)=H^1_{0}(\Omega)^{\bot_{H^1(\Omega)}}.
\]
Now, we are in a position to solve inhomogeneous boundary
value problems, where the trace mappings $\gamma$ and $\gamma_{\mathrm{n}}$
are replaced by the canonical orthogonal projections $\pi_{\BD(\grad)}$
and $\pi_{\BD(\dive)}$ respectively; see \prettyref{exer:uniqunessBVP}.
We devote the rest of this section to describe the relationship between
the classical trace spaces introduced before and the $\BD$-spaces.
In the perspective outlined here, there is not much of a difference
between Neumann boundary values and Dirichlet boundary values. The
next result is an incarnation of this.
\begin{prop}
\label{prop:divgradunit} We have 
\[\grad[\BD(\grad)]\subseteq\BD(\dive)\quad\text{and} \quad \dive[\BD(\dive)]\subseteq\BD(\grad).\]
Moreover,
the mappings 
\begin{align*}
\grad_{\BD}\colon\mathrm{BD(\grad)} & \to \BD(\dive),\\
u & \mapsto\grad u
\end{align*}
 and 
\begin{align*}
\dive_{\BD}\colon\mathrm{BD(\dive)} & \to \BD(\grad),\\
q & \mapsto\dive q
\end{align*}
 are unitary, and $\grad_{\BD}^{*}=\dive_{\BD}$.
\end{prop}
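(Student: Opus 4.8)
The plan is to establish the four claims — the two inclusions, the unitarity of $\grad_{\BD}$ and $\dive_{\BD}$, and the adjoint relation $\grad_{\BD}^* = \dive_{\BD}$ — by exploiting the defining equations of the $\BD$-spaces together with the abstract decomposition $L_2(\Omega)^d = \cran(\grad_0) \oplus \ker(\dive)$ and its companion involving $\dive_0$, and the identification $\BD(\dive) = H_0(\dive,\Omega)^{\bot}$, $\BD(\grad) = H_0^1(\Omega)^{\bot}$.

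First I would check the inclusion $\grad[\BD(\grad)] \subseteq \BD(\dive)$. Take $u \in \BD(\grad)$, so $u \in H^1(\Omega)$, $\grad u \in \dom(\dive)$ and $\dive\grad u = u$. Then $\grad u \in H(\dive,\Omega)$; moreover $\dive(\grad u) = u \in H^1(\Omega) = \dom(\grad)$, and $\grad(\dive(\grad u)) = \grad u$ (using $\dive\grad u = u$ again). These are precisely the two conditions defining $\BD(\dive)$, so $\grad u \in \BD(\dive)$. The argument for $\dive[\BD(\dive)] \subseteq \BD(\grad)$ is entirely symmetric: for $q \in \BD(\dive)$ one has $\dive q \in H^1(\Omega)$, $\grad(\dive q) = q \in H(\dive,\Omega) = \dom(\dive)$, and $\dive(\grad(\dive q)) = \dive q$. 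Hence $\grad_{\BD}$ and $\dive_{\BD}$ are well-defined linear maps between the stated spaces. Note that on $\BD(\grad)$ we have $\dive_{\BD}\grad_{\BD}u = \dive\grad u = u$ and on $\BD(\dive)$ we have $\grad_{\BD}\dive_{\BD}q = \grad\dive q = q$, so $\grad_{\BD}$ and $\dive_{\BD}$ are mutually inverse bijections.

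Next I would verify that $\grad_{\BD}$ is isometric with respect to the graph norms. For $u \in \BD(\grad)$,
\[
\norm{\grad_{\BD}u}_{H(\dive,\Omega)}^2 = \norm{\grad u}_{L_2(\Omega)^d}^2 + \norm{\dive\grad u}_{L_2(\Omega)}^2 = \norm{\grad u}_{L_2(\Omega)^d}^2 + \norm{u}_{L_2(\Omega)}^2 = \norm{u}_{H^1(\Omega)}^2,
\]
using $\dive\grad u = u$. Combined with surjectivity (which follows from $\dive_{\BD}$ being a two-sided inverse, or directly from the symmetric isometry computation $\norm{\dive_{\BD}q}_{H^1(\Omega)}^2 = \norm{q}_{H(\dive,\Omega)}^2$ for $q \in \BD(\dive)$), this shows $\grad_{\BD}$ and $\dive_{\BD}$ are unitary, and that $\dive_{\BD} = \grad_{\BD}^{-1}$.

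Finally, the adjoint relation. Since $\grad_{\BD}$ is unitary we have $\grad_{\BD}^* = \grad_{\BD}^{-1} = \dive_{\BD}$, which already gives the claim. If a more hands-on verification is wanted, one computes for $u \in \BD(\grad)$ and $q \in \BD(\dive)$, using the graph inner products and the integration by parts formula (available since $u \in \dom(\grad)$ and $q \in \dom(\dive)$ — here one should be careful, as on a general $\Omega$ the boundary pairing need not vanish; instead one uses $\grad u = \grad_{\BD}u$ and $\dive q$ living in $\dom(\grad)$ together with the orthogonality $\BD(\dive) \perp H_0(\dive,\Omega)$ to reorganise the terms), that $\scp{\grad_{\BD}u}{q}_{H(\dive,\Omega)} = \scp{u}{\dive_{\BD}q}_{H^1(\Omega)}$; I expect the cleanest route is simply to invoke unitarity. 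The main obstacle, should one avoid the unitarity shortcut, is handling the boundary pairing on an arbitrary open $\Omega$ where no trace theorem is available — but the $\BD$ framework is precisely designed so that the defining equations $\grad\dive q = q$ etc.\ substitute for such trace identities, so the argument goes through purely algebraically.
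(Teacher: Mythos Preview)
Your proof is correct and follows essentially the same approach as the paper: verify the inclusions directly from the defining equations, observe that $\grad_{\BD}$ and $\dive_{\BD}$ are mutually inverse, check that $\grad_{\BD}$ is an isometry via $\dive\grad u = u$, and conclude $\grad_{\BD}^* = \grad_{\BD}^{-1} = \dive_{\BD}$. The decompositions and orthogonal complement characterisations you mention in the plan are not actually needed (and the paper does not use them either); your final paragraph's caution about boundary pairings is well-placed, and the paper likewise avoids that route by invoking unitarity directly.
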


\begin{proof}
Let $\phi\in\BD(\grad)$. Then $\grad\phi\in H(\dive,\Omega)$
and $\dive\grad\phi=\phi$. This implies $\dive\grad\phi\in\dom(\grad)$
and $\grad\dive\grad\phi=\grad\phi,$ which yields $\grad\phi\in\BD(\dive)$.
Thus, $\grad_{\BD}$ is defined everywhere; interchanging the roles
of $\grad$ and $\dive$, we obtain $\dive_{\BD}$ is also defined
everywhere. We infer $\dive_{\BD}\grad_{\BD}=\idop_{\BD(\grad)}$
and $\grad_{\BD}\dive_{\BD}=\idop_{\BD(\dive)}$ and thus $\grad_{\BD}$
is bijective with $\grad_{\BD}^{-1}=\dive_{\BD}$. It remains to show
that $\grad_{\BD}$ preserves the norm. For this we compute 
\begin{align*}
\scp{\grad_{\BD}\phi}{\grad_{\BD}\phi}_{\BD(\dive)} & =\scp{\grad\phi}{\grad\phi}_{H(\dive)}\\
 & =\scp{\grad\phi}{\grad\phi}_{\L(\Omega)^{d}}+\scp{\dive\grad\phi}{\dive\grad\phi}_{\L(\Omega)}\\
 & =\scp{\grad\phi}{\grad\phi}_{\L(\Omega)^{d}}+\scp{\phi}{\phi}_{\L(\Omega)}\\
 & =\scp{\phi}{\phi}_{\dom(\grad)}=\scp{\phi}{\phi}_{\BD(\grad)},
\end{align*}
which implies that $\grad_{\BD}$ is unitary. Hence, $\dive_{\BD}=\grad_{\BD}^{-1}=\grad_{\BD}^{*}$.
\end{proof}
It is also possible to show an `integration by parts' formula analogous
to \prettyref{eq:ibptrace} for the abstract situation:
\begin{prop}
\label{prop:ibpabstract}Let $u\in H^{1}(\Omega)$ and $q\in H(\dive,\Omega)$.
Then
\begin{align*}
\scp{\dive q}{u}_{\L(\Omega)}+\scp{q}{\grad u}_{\L(\Omega)^{d}} & =\scp{\dive_{\BD}\pi_{\BD(\dive)}q}{\pi_{\BD(\grad)}u}_{\BD(\grad)}\\
 & =\scp{\pi_{\BD(\dive)}q}{\grad_{\BD}\pi_{\BD(\grad)}u}_{\BD(\dive)}.
\end{align*}
\end{prop}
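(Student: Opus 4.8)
The plan is to reduce both sides to a single inner product over the $\BD$-spaces by means of the orthogonal decompositions $H^{1}(\Omega)=H_{0}^{1}(\Omega)\oplus\BD(\grad)$ and $H(\dive,\Omega)=H_{0}(\dive,\Omega)\oplus\BD(\dive)$ recorded just before \prettyref{prop:divgradunit}. Accordingly, write $u=u_{0}+u_{\BD}$ with $u_{0}\in H_{0}^{1}(\Omega)=\dom(\grad_{0})$ and $u_{\BD}\coloneqq\pi_{\BD(\grad)}u$, and $q=q_{0}+q_{\BD}$ with $q_{0}\in H_{0}(\dive,\Omega)=\dom(\dive_{0})$ and $q_{\BD}\coloneqq\pi_{\BD(\dive)}q$.

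First I would record two ``vanishing cross term'' identities, which are just the adjoint relations $\grad_{0}^{*}=-\dive$ and $\dive_{0}^{*}=-\grad$ (consequences of the definitions in Section~\ref{sec:First-order-Sobolev}) together with $\grad_{0}\subseteq\grad$ and $\dive_{0}\subseteq\dive$:
\[
\scp{\dive q}{\varphi}_{\L(\Omega)}+\scp{q}{\grad\varphi}_{\L(\Omega)^{d}}=0\qquad(\varphi\in H_{0}^{1}(\Omega),\ q\in H(\dive,\Omega)),
\]
\[
\scp{\dive r}{v}_{\L(\Omega)}+\scp{r}{\grad v}_{\L(\Omega)^{d}}=0\qquad(r\in H_{0}(\dive,\Omega),\ v\in H^{1}(\Omega)).
\]
Expanding the left-hand side of \prettyref{prop:ibpabstract} along $u=u_{0}+u_{\BD}$ and using the first identity with $\varphi=u_{0}$ eliminates every term containing $u_{0}$; expanding the remainder along $q=q_{0}+q_{\BD}$ and using the second identity with $r=q_{0}$, $v=u_{\BD}$ eliminates every term containing $q_{0}$. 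What survives is
\[
\scp{\dive q}{u}_{\L(\Omega)}+\scp{q}{\grad u}_{\L(\Omega)^{d}}=\scp{\dive q_{\BD}}{u_{\BD}}_{\L(\Omega)}+\scp{q_{\BD}}{\grad u_{\BD}}_{\L(\Omega)^{d}}.
\]

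To finish I would use that $u_{\BD}\in\BD(\grad)$ and $q_{\BD}\in\BD(\dive)$, so that $\dive q_{\BD}=\dive_{\BD}q_{\BD}\in\BD(\grad)$ and $\grad\dive q_{\BD}=q_{\BD}$ by the defining properties of $\BD(\dive)$ (and \prettyref{prop:divgradunit}). Since the inner product on $\BD(\grad)\subseteq H^{1}(\Omega)$ is the graph inner product of $\grad$,
\begin{align*}
\scp{\dive_{\BD}q_{\BD}}{u_{\BD}}_{\BD(\grad)} & =\scp{\dive q_{\BD}}{u_{\BD}}_{\L(\Omega)}+\scp{\grad\dive q_{\BD}}{\grad u_{\BD}}_{\L(\Omega)^{d}}\\
 & =\scp{\dive q_{\BD}}{u_{\BD}}_{\L(\Omega)}+\scp{q_{\BD}}{\grad u_{\BD}}_{\L(\Omega)^{d}},
\end{align*}
which is exactly the surviving expression, giving the first claimed equality. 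The second equality follows at once from \prettyref{prop:divgradunit}: since $\grad_{\BD}$ is unitary with $\grad_{\BD}^{*}=\dive_{\BD}$, we get $\scp{\dive_{\BD}q_{\BD}}{u_{\BD}}_{\BD(\grad)}=\scp{q_{\BD}}{\grad_{\BD}u_{\BD}}_{\BD(\dive)}$. The whole argument is bookkeeping; the only delicate points are getting the signs and the restriction/closure relations right in the two cross-term identities, and remembering that the norms on $\BD(\grad)$ and $\BD(\dive)$ are inherited from the graph norms of $\grad$ and $\dive$. I do not anticipate any real obstacle beyond that.
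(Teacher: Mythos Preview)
Your proof is correct and follows essentially the same route as the paper: decompose $u$ and $q$ along the orthogonal splittings, use the adjoint relations $\grad_{0}^{*}=-\dive$ and $\dive_{0}^{*}=-\grad$ to kill the cross terms, and then identify the surviving expression with the $\BD(\grad)$ graph inner product via $\grad\dive q_{\BD}=q_{\BD}$. The only cosmetic difference is the order in which the cross terms are eliminated (you drop the $u_{0}$-terms first, the paper drops the $q_{0}$-terms first), which is immaterial.
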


\begin{proof}
We decompose $u=u_{0}+u_{1}$ and $q=q_{0}+q_{1}$ with $u_{0}\in H_{0}^{1}(\Omega)$,
$q_{0}\in H_{0}(\dive,\Omega)$, $u_{1}=\pi_{\BD(\grad)}u$
and $q_{1}=\pi_{\BD(\dive)}q$. Then we obtain
\begin{align*}
 & \scp{\dive q}{u}_{\L(\Omega)}+\scp{q}{\grad u}_{\L(\Omega)^{d}}\\
% & =\scp{\dive q_{0}}{u}_{\L(\Omega)}+\scp{\dive q_{1}}{u}_{\L(\Omega)}+\scp{q}{\grad u}_{\L(\Omega)^{d}}\\
 & =\scp{\dive_{0}q_{0}}{u}_{\L(\Omega)}+\scp{\dive q_{1}}{u}_{\L(\Omega)}+\scp{q_{0}}{\grad u}_{\L(\Omega)^{d}}+\scp{q_{1}}{\grad u}_{\L(\Omega)^{d}}\\
 & =\scp{q_{0}}{-\grad u}_{\L(\Omega)^d}+\scp{\dive q_{1}}{u}_{\L(\Omega)}+\scp{q_{0}}{\grad u}_{\L(\Omega)^{d}}+\scp{q_{1}}{\grad u}_{\L(\Omega)^{d}}\\
 & =\scp{\dive q_{1}}{u_{0}}_{\L(\Omega)}+\scp{\dive q_{1}}{u_{1}}_{\L(\Omega)}+\scp{q_{1}}{\grad u_{0}}_{\L(\Omega)^{d}}+\scp{q_{1}}{\grad u_{1}}_{\L(\Omega)^{d}}\\
 & =\scp{q_{1}}{-\grad_0 u_{0}}_{\L(\Omega)^d}+\scp{\dive q_{1}}{u_{1}}_{\L(\Omega)}+\scp{q_{1}}{\grad_0 u_{0}}_{\L(\Omega)^{d}}+\scp{q_{1}}{\grad u_{1}}_{\L(\Omega)^{d}}\\
 & =\scp{\dive q_{1}}{u_{1}}_{\L(\Omega)}+\scp{q_{1}}{\grad u_{1}}_{\L(\Omega)^{d}}\\
 & =\scp{\dive q_{1}}{u_{1}}_{\L(\Omega)}+\scp{\grad\dive q_{1}}{\grad u_{1}}_{\L(\Omega)^{d}}=\scp{\dive q_{1}}{u_{1}}_{\BD(\grad)}.
\end{align*}
The remaining equality follows from the unitarity of $\grad_{\BD}$ and
$\grad_{\BD}^{-1}=\dive_{\BD}$ by \prettyref{prop:divgradunit}.
\end{proof}
In view of \prettyref{prop:ibpabstract} the proper replacement of $\gamma_{\mathrm{n}}$
appears to be $\dive_{\BD}\pi_{\BD(\dive)}$ instead of just
$\pi_{\BD(\dive)}$. Next, we show the equivalence of the
trace spaces for the half space and the abstract ones introduced in
this section.
\begin{thm}
\label{thm:abstractconcretesame}Let $\Omega\coloneqq\R^{d-1}\times\Rg{0}$.
Then $\gamma|_{\BD(\grad)}:\BD(\grad)\to H^{1/2}(\R^{d-1})$ and $\gamma_{\mathrm{n}}|_{\BD(\dive)}:\BD(\dive)\to H^{-1/2}(\R^{d-1})$
are unitary mappings.
\end{thm}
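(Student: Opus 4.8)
The plan is to show that $\gamma|_{\BD(\grad)}$ is an isometric bijection onto $H^{1/2}(\R^{d-1})$, and then deduce the corresponding statement for $\gamma_{\mathrm n}|_{\BD(\dive)}$ by an adjoint/duality argument using \prettyref{prop:divgradunit}. First I would recall that $\BD(\grad)=H_0^1(\Omega)^{\bot_{H^1(\Omega)}}$ by \prettyref{exer:H=00003DW+Kasuga} and \prettyref{prop:BDdiv}, and that $\ker\gamma=\dom(\grad_0)=H_0^1(\Omega)$ by \prettyref{cor:CharDiv0}. Hence $\gamma|_{\BD(\grad)}$ is injective. Surjectivity is immediate from the definition $H^{1/2}(\R^{d-1})=\gamma[H^1(\Omega)]$: given $\gamma f$ with $f\in H^1(\Omega)$, decompose $f=f_0+f_1$ with $f_0\in H_0^1(\Omega)$ and $f_1=\pi_{\BD(\grad)}f\in\BD(\grad)$; then $\gamma f=\gamma f_1$.

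Next I would verify that $\gamma|_{\BD(\grad)}$ is isometric for the chosen norms. By definition of the quotient norm, $\norm{\gamma f}_{H^{1/2}(\R^{d-1})}=\inf\set{\norm{g}_{H^1(\Omega)}}{g\in H^1(\Omega),\ \gamma g=\gamma f}$. Writing any such $g$ as $g=g_0+f_1$ with $g_0\in H_0^1(\Omega)$ (since $g-f_1\in\ker\gamma=H_0^1(\Omega)$) and using orthogonality of the decomposition in $H^1(\Omega)$, we get $\norm{g}_{H^1(\Omega)}^2=\norm{g_0}_{H^1(\Omega)}^2+\norm{f_1}_{H^1(\Omega)}^2\geq\norm{f_1}_{H^1(\Omega)}^2$, with equality for $g_0=0$, i.e.\ $g=f_1$. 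Hence the infimum equals $\norm{f_1}_{H^1(\Omega)}=\norm{\pi_{\BD(\grad)}f}_{\BD(\grad)}$, which is precisely $\norm{\gamma(\pi_{\BD(\grad)}f)}_{H^{1/2}(\R^{d-1})}$ with the $\BD(\grad)$-norm on the left. Since $\gamma|_{\BD(\grad)}$ is a norm-preserving linear bijection between Hilbert spaces, it is unitary.

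For the normal trace, I would use \prettyref{prop:divgradunit}: $\grad_{\BD}\colon\BD(\grad)\to\BD(\dive)$ and $\dive_{\BD}\colon\BD(\dive)\to\BD(\grad)$ are mutually inverse unitaries. The integration by parts identities \prettyref{eq:ibptrace} and \prettyref{prop:ibpabstract} should then be combined: for $f\in H^1(\Omega)$ and $q\in H(\dive,\Omega)$, \prettyref{eq:ibptrace} reads $(\gamma_{\mathrm n}q)(\gamma f)=\scp{\dive q}{f}+\scp{q}{\grad f}$, while \prettyref{prop:ibpabstract} gives the same left-hand side as $\scp{\dive_{\BD}\pi_{\BD(\dive)}q}{\pi_{\BD(\grad)}f}_{\BD(\grad)}$. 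Using that $\gamma|_{\BD(\grad)}$ is unitary and $\gamma$ ranges over all of $H^{1/2}(\R^{d-1})$ as $f$ ranges over $\BD(\grad)$, one can identify $\gamma_{\mathrm n}q$ with the functional $\gamma(\pi_{\BD(\grad)}f)\mapsto\scp{\dive_{\BD}\pi_{\BD(\dive)}q}{\pi_{\BD(\grad)}f}_{\BD(\grad)}$; pulling back along the unitary $\gamma|_{\BD(\grad)}$ shows that $\norm{\gamma_{\mathrm n}q}_{H^{-1/2}(\R^{d-1})}=\norm{\dive_{\BD}\pi_{\BD(\dive)}q}_{\BD(\grad)}=\norm{\pi_{\BD(\dive)}q}_{\BD(\dive)}$, the last step by unitarity of $\dive_{\BD}$. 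Restricting to $q\in\BD(\dive)$ (so $\pi_{\BD(\dive)}q=q$) gives that $\gamma_{\mathrm n}|_{\BD(\dive)}$ is isometric; it is onto $H^{-1/2}(\R^{d-1})$ by \prettyref{thm:NeuTrOnto}, and injective since $\ker\gamma_{\mathrm n}=H_0(\dive,\Omega)=\BD(\dive)^{\bot_{H(\dive,\Omega)}}$ by \prettyref{cor:CharDiv0} and \prettyref{prop:BDdiv}. Hence it is unitary.

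The main obstacle I anticipate is bookkeeping the three different norms carefully — the quotient norm on $H^{\pm1/2}$, the graph norms on $H^1(\Omega)$ and $H(\dive,\Omega)$, and the $\BD$-norms — and in particular making the duality identification for $\gamma_{\mathrm n}$ precise: one must check that testing against $\gamma f$ for $f\in\BD(\grad)$ alone already computes the full $H^{-1/2}$-norm of $\gamma_{\mathrm n}q$, which is exactly what unitarity of $\gamma|_{\BD(\grad)}$ delivers. Everything else is a direct consequence of the orthogonal-decomposition structure and the results already established.
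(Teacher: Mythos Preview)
Your argument is correct. For $\gamma|_{\BD(\grad)}$ you spell out in detail exactly what the paper compresses into one sentence (``$H^{1/2}(\R^{d-1})$ is isomorphic to $(\ker\gamma)^\bot$''): the quotient norm is attained at the $\BD(\grad)$-representative, so the restriction is an isometric bijection. That part matches the paper's approach, just with the bookkeeping made explicit.

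For $\gamma_{\mathrm n}|_{\BD(\dive)}$ you take a genuinely different route. The paper simply observes that both norm inequalities are already on record: \prettyref{thm:divetrace} gives $\norm{\gamma_{\mathrm n}q}_{H^{-1/2}}\leq\norm{q}_{H(\dive)}$, and \prettyref{thm:NeuTrOnto} gives the reverse inequality on $\BD(\dive)$, so equality and bijectivity follow at once. You instead transport the problem through the unitary $\gamma|_{\BD(\grad)}$ and the unitary $\dive_{\BD}$ of \prettyref{prop:divgradunit}, combining \prettyref{eq:ibptrace} and \prettyref{prop:ibpabstract} to get the Riesz-type identification $\norm{\gamma_{\mathrm n}q}_{H^{-1/2}}=\norm{\dive_{\BD}\pi_{\BD(\dive)}q}_{\BD(\grad)}=\norm{\pi_{\BD(\dive)}q}_{\BD(\dive)}$. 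This is a clean duality argument and has the merit of exhibiting $\gamma_{\mathrm n}|_{\BD(\dive)}$ as essentially the dual of $\gamma|_{\BD(\grad)}$ conjugated by $\dive_{\BD}$; the paper's argument is shorter only because the hard work (the reverse inequality in \prettyref{thm:NeuTrOnto}) has already been done explicitly. Note, incidentally, that the paper treats $\gamma_{\mathrm n}$ first and $\gamma$ second, whereas your duality argument forces the opposite order.
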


\begin{proof}
We begin with $\gamma_{\mathrm{n}}$. We have shown in \prettyref{thm:divetrace}
that $\gamma_{\mathrm{n}}|_{\BD(\dive)}$ is continuous and
in \prettyref{thm:NeuTrOnto} it has been shown that $(\gamma_{\mathrm{n}}|_{\BD(\dive)})^{-1}$
is continuous. Also the two norm inequalities have been established.

The injectivity of $\gamma|_{\BD(\grad)}$ follows from $\ker\gamma=H_{0}^{1}(\Omega)$ by \prettyref{cor:CharDiv0}.
All that remains simply relies upon recalling that $H^{1/2}(\R^{d-1})$ is isomorphic to $\left(\ker\gamma\right)^{\bot}$
with the orthogonal complement computed in $H^{1}(\Omega)$. 
\end{proof}

\section{Robin Boundary Conditions}

The classical Robin boundary conditions involve both traces, the Dirichlet
trace $\gamma$ and the Neumann trace $\gamma_{\mathrm{n}}$. To motivate
things, let us again have a look at the case $\Omega=\R^{d-1}\times\Rg{0}$.
We consider the boundary condition for given $q\in H(\dive,\Omega)$
and $u\in H^{1}(\Omega)$
\[
\gamma_{\mathrm{n}}q+\i\gamma u=0,
\]
in the sense that
\[
 (\gamma_{\mathrm{n}}q)(v)= \scp{-\i \gamma u}{v}_{\L(\R^{d-1})} \quad (v\in H^{1/2}(\R^{d-1})).
\]
Note that this is an implicit regularity statement as  $\gamma_{\mathrm{n}}q\in H^{-1/2}(\R^{d-1})$ is representable as an $\L(\R^{d-1})$ function. The next result
asserts that an evolutionary equation with a spatial operator of the
type $\begin{pmatrix}
0 & \dive\\
\grad & 0
\end{pmatrix}$ with the above Robin boundary condition fits into the setting rendered
by \prettyref{thm:Solution_theory_EE}. In other words:
\begin{thm}
\label{thm:Robinssa} Let $\Omega=\R^{d-1}\times\Rg{0}$. Then the
operator $A\colon\dom(A)\subseteq\L(\Omega)^{d+1}\to\L(\Omega)^{d+1}$
with $A\subseteq\begin{pmatrix}
0 & \dive\\
\grad & 0
\end{pmatrix}$ with domain
\[
\dom(A)=\set{(u,q)\in H^{1}(\Omega)\times H(\dive,\Omega)}{\gamma_{\mathrm{n}}q+\i\gamma u=0}
\]
is skew-selfadjoint.
\end{thm}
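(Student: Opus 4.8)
The plan is to show that $A$ is skew-symmetric and that $A^* \subseteq A$, which together with skew-symmetry gives skew-selfadjointness. The natural starting point is the integration-by-parts formula \prettyref{eq:ibptrace}: for $(u,q), (v,r) \in \dom(A)$ we have
\[
\scp{A(u,q)}{(v,r)} = \scp{\dive q}{v} + \scp{\grad u}{r} = \overline{\scp{\dive r}{u}} + \overline{\scp{q}{\grad v}} + \bigl((\gamma_{\mathrm{n}}q)(\gamma v)\bigr) - \overline{\bigl((\gamma_{\mathrm{n}}r)(\gamma u)\bigr)},
\]
where one has to be careful tracking complex conjugates. Using the Robin condition $\gamma_{\mathrm{n}}q = -\i\gamma u$ (interpreted in $\L(\R^{d-1})$) and likewise for $r$, the boundary terms become $\scp{-\i\gamma u}{\gamma v}_{\L(\R^{d-1})} - \overline{\scp{-\i\gamma v}{\gamma u}_{\L(\R^{d-1})}}$, and since $\overline{\scp{-\i\gamma v}{\gamma u}} = \scp{\gamma u}{-\i \gamma v} = \scp{-\i\gamma u}{\gamma v}$ (pulling the $-\i$ out of the first slot introduces a $+\i$, but it sits in the conjugated term), these two contributions cancel. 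This yields $\scp{A(u,q)}{(v,r)} = -\scp{(u,q)}{A(v,r)}$, i.e. $A \subseteq -A^*$. Dense definedness of $A$ follows since $\dom(A) \supseteq \cci(\Omega)^{d+1}$ (test functions satisfy $\gamma\phi = 0$ and $\gamma_{\mathrm{n}}\phi = 0$ by \prettyref{cor:CharDiv0}), which is dense in $\L(\Omega)^{d+1}$; hence $A$ is skew-symmetric.

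For the reverse inclusion $-A^* \subseteq A$, take $(v,r) \in \dom(A^*)$ with $A^*(v,r) = (f,g)$. First, testing against $(u,q) \in \cci(\Omega)^{d+1}$ and then against pairs of the form $(u,0)$ and $(0,q)$ with $u \in \cci(\Omega)$, $q \in \cci(\Omega)^d$, one reads off that $v \in \dom(\grad)= H^1(\Omega)$ with $\grad v = -g$ and $r \in \dom(\dive) = H(\dive,\Omega)$ with $\dive r = -f$; that is, $-A^*(v,r) = \begin{pmatrix} 0 & \dive \\ \grad & 0\end{pmatrix}(v,r)$, so the only thing to check is the boundary condition $\gamma_{\mathrm{n}}r + \i\gamma v = 0$. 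For this, return to the defining identity $\scp{A(u,q)}{(v,r)} = \scp{(u,q)}{(f,g)}$ for \emph{all} $(u,q) \in \dom(A)$, and expand the left side via \prettyref{eq:ibptrace} using the regularity of $v,r$ just obtained. The volume terms match $\scp{(u,q)}{(f,g)}$, leaving
\[
(\gamma_{\mathrm{n}}q)(\gamma v) - \overline{(\gamma_{\mathrm{n}}r)(\gamma u)} = 0 \quad \text{for all } (u,q) \in \dom(A).
\]
Now substitute the Robin condition $\gamma_{\mathrm{n}}q = -\i\gamma u$ for the $q$-part: $(\gamma_{\mathrm{n}}q)(\gamma v) = \scp{-\i\gamma u}{\gamma v}_{\L(\R^{d-1})}$. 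This gives $\scp{-\i\gamma u}{\gamma v}_{\L} = \overline{(\gamma_{\mathrm{n}}r)(\gamma u)}$ for all admissible $u$; since $\gamma$ maps $H^1(\Omega) \cap \{\text{Robin-admissible}\,u\}$ onto a dense subset of $H^{1/2}(\R^{d-1})$ (indeed, by \prettyref{thm:NeuTrOnto} any prescribed Neumann datum can be realized, and we may choose $q$ to match $-\i\gamma u$), we conclude $\gamma_{\mathrm{n}}r = -\i\gamma v$ in $H^{-1/2}$, which is exactly $\gamma_{\mathrm{n}}r + \i\gamma v = 0$. Hence $(v,r) \in \dom(A)$ and $-A^*(v,r) = A(v,r)$, giving $-A^* \subseteq A$. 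Combined with $A \subseteq -A^*$, this yields $A = -A^*$.

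\textbf{Main obstacle.} The delicate point is the density/surjectivity argument in the last step: one must ensure that the set of boundary pairs $(\gamma u, \gamma_{\mathrm{n}}q)$ arising from $(u,q) \in \dom(A)$ is rich enough to conclude $\gamma_{\mathrm{n}}r = -\i\gamma v$ from the pairing identity. The cleanest route is: given any $w \in H^{1/2}(\R^{d-1})$, pick $u \in H^1(\Omega)$ with $\gamma u = w$, then invoke \prettyref{thm:NeuTrOnto} to find $q \in H(\dive,\Omega)$ with $\gamma_{\mathrm{n}}q = -\i w$; then $(u,q)$ satisfies the Robin condition and the identity reads $\scp{-\i w}{\gamma v}_{\L} = \overline{(\gamma_{\mathrm{n}}r)(w)}$ for \emph{all} $w \in H^{1/2}$, forcing $\gamma_{\mathrm{n}}r$ to be represented by the $\L(\R^{d-1})$-function $-\i\gamma v$. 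A secondary nuisance is bookkeeping of complex conjugates throughout (the scalar product is antilinear in the first slot and the functional $\gamma_{\mathrm{n}}q$ acts by an $\L$-pairing), but this is routine once one fixes conventions consistent with \prettyref{eq:ibptrace}.
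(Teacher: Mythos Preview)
Your approach is essentially the paper's: compute the integration-by-parts identity, use the Robin condition on $(u,q)$ to reduce the boundary terms, and then exploit density of the admissible traces $\gamma u$ in $H^{1/2}$ to identify $\dom(A^*)$. Two remarks.

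First, your displayed formula has a sign slip. From \prettyref{eq:ibptrace} one gets
\[
\scp{A(u,q)}{(v,r)} + \scp{(u,q)}{\begin{pmatrix}0&\dive\\\grad&0\end{pmatrix}(v,r)} = (\gamma_{\mathrm{n}}q)(\gamma v) + \overline{(\gamma_{\mathrm{n}}r)(\gamma u)},
\]
with a \emph{plus} on the last term, not a minus. With your sign the boundary contributions do not cancel (you would get $2\i\scp{\gamma u}{\gamma v}$ rather than $0$); with the correct sign they do, since $\scp{-\i\gamma u}{\gamma v}+\overline{\scp{-\i\gamma v}{\gamma u}}=\i\scp{\gamma u}{\gamma v}-\i\scp{\gamma u}{\gamma v}=0$. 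The same correction propagates to the $A^*$ step and gives $\gamma_{\mathrm{n}}r+\i\gamma v=0$ as desired. You flagged the conjugate bookkeeping as a nuisance; this is precisely where it bites.

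Second, your density argument is a mild variant of the paper's. The paper observes that for every $u\in\mathcal{D}$ one can pick $q\in\mathcal{D}^d$ with $-q_d(\cdot,0)=-\i u(\cdot,0)$, so $\gamma[\mathcal{D}]$ lies in the admissible trace set and density follows. You instead invoke surjectivity of $\gamma_{\mathrm{n}}$ (\prettyref{thm:NeuTrOnto}) to realise $\gamma_{\mathrm{n}}q=-\i w$ for \emph{any} $w\in H^{1/2}$, which shows the admissible trace set is in fact all of $H^{1/2}$. Both work; yours is slightly cleaner but uses a heavier tool.
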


\begin{proof}
Let $(u,q),(v,r)\in H^{1}(\Omega)\times H(\dive,\Omega)$. Then, by
\prettyref{eq:ibptrace} we obtain
\begin{align*}
 & \scp{\begin{pmatrix}
0 & \dive\\
\grad & 0
\end{pmatrix}\begin{pmatrix}
 u\\
 q 
\end{pmatrix}}{\begin{pmatrix}
v\\
r
\end{pmatrix}}+\scp{\begin{pmatrix}
 u\\
 q 
\end{pmatrix}}{\begin{pmatrix}
0 & \dive\\
\grad & 0
\end{pmatrix}\begin{pmatrix}
v\\
r
\end{pmatrix}}\\
 & =\scp{\dive q}{v}+\scp{\grad u}{r}+\scp{u}{\dive r}+\scp{q}{\grad v}
 = (\gamma_{\mathrm{n}}q)(\gamma v)+ ((\gamma_{\mathrm{n}}r)(\gamma u))^\ast
\end{align*}
If, in addition, $(u,q)\in\dom(A)$, we obtain
\begin{align*}
 & \scp{A\begin{pmatrix}
 u\\
 q 
\end{pmatrix}}{\begin{pmatrix}
v\\
r
\end{pmatrix}}+\scp{\begin{pmatrix}
 u\\
 q 
\end{pmatrix}}{\begin{pmatrix}
0 & \dive\\
\grad & 0
\end{pmatrix}\begin{pmatrix}
v\\
r
\end{pmatrix}}\\
 & = (\gamma_{\mathrm{n}}q)(\gamma v)+ ((\gamma_{\mathrm{n}}r)(\gamma u))^\ast
 =\scp{-\i\gamma u}{\gamma v}_{\L(\R^{d-1})}+((\gamma_{\mathrm{n}}r)(\gamma u))^\ast\\
 &= \scp{\gamma u}{\i\gamma v}_{\L(\R^{d-1})}+((\gamma_{\mathrm{n}}r)(\gamma u))^\ast
=((\i\gamma v+\gamma_{\mathrm{n}}r)(\gamma u))^\ast.
\end{align*}
Since for every $u\in\mathcal{D}$, we find $q\in\mathcal{D}^{d}$
such that $(u,q)\in\dom(A)$, 
\[
\gamma[\mathcal{D}]\subseteq\set{\gamma u}{\exists q\in H(\dive,\Omega)\colon(u,q)\in\dom(A)}.
\]
Thus, the set on the right-hand side is dense in $H^{1/2}(\R^{d-1})$. This in
turn implies that $(v,r)\in\dom(A^{*})$ if and only if $\i\gamma v+\gamma_{\mathrm{n}}r=0$,
and in this case we have $A^{*}(v,r)=-A(v,r)$. This implies that
$A$ is skew-selfadjoint.
\end{proof}

\begin{rem}
 The factor $\i$ in front of $\gamma u$ is chosen as a mere convenience in order to render the corresponding operator $A$ in \prettyref{thm:Robinssa} skew-selfadjoint. It is also possible to choose $\beta \in \bo(H^{1/2}(\partial\Omega))$ with $-\Re\beta\geq 0$ instead of $\i$. Then one obtains for all $U\in \dom(A)$ and $V\in \dom(A^*)$ the estimates $\Re\scp{U}{AU}\geq 0$ and $\Re\scp{V}{A^*V}\geq 0$. Appealing to \prettyref{rem:Aaccretive}, it can be shown that the corresponding evolutionary equation
 \[
     (\td{\nu}M(\td{\nu})+A)U=F
 \]
 for a suitable material law $M$ as in \prettyref{thm:Solution_theory_EE} is well-posed.
\end{rem}

Next, one could argue that in the case for arbitrary $\Omega$, the
condition 
\begin{equation}
\i\pi_{\BD(\grad)}u+\dive_{\BD}\pi_{\BD(\dive)}q=0\label{eq:abstractRobin1}
\end{equation}
amounts to a generalisation of the Robin boundary condition just
considered. However, this is not true as the following proposition shows.
\begin{prop}
 Let $u\in H^1(\Omega),q\in H(\dive,\Omega)$. Moreover, we set $\kappa\from\BD(\grad)\to \L(\R^{d-1})$ with $\kappa v=\gamma v$ for $v\in \BD(\grad)$. Then $\gamma_{\mathrm{n}}q + \i \gamma u=0$ if and only if
 \[
  \dive_\BD \pi_{\BD(\dive)} q + \i \kappa^\ast \kappa \pi_{\BD(\grad)} u=0.
 \]
\end{prop}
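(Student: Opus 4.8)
The plan is to reduce the concrete Robin condition to the abstract one by playing off the two integration-by-parts formulas \prettyref{eq:ibptrace} and \prettyref{prop:ibpabstract} against the unitarity of the Dirichlet trace on the boundary data space established in \prettyref{thm:abstractconcretesame}. Throughout, $\kappa\in\bo(\BD(\grad),\L(\R^{d-1}))$ is well defined as the restriction of $\gamma$ to $\BD(\grad)$ (boundedness of $\gamma$ is \prettyref{thm:trace}), so that $\kappa^{*}\colon\L(\R^{d-1})\to\BD(\grad)$ and $\kappa^{*}\kappa\in\bo(\BD(\grad))$ exist and the right-hand equation makes sense.

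First I would record two preliminary facts. Since $\ker\gamma=H^{1}_{0}(\Omega)=\BD(\grad)^{\bot_{H^{1}(\Omega)}}$ by \prettyref{cor:CharDiv0} and the definition of $\BD(\grad)$, writing $u=u_{0}+u_{1}$ with $u_{0}\in H^{1}_{0}(\Omega)$ and $u_{1}=\pi_{\BD(\grad)}u$ gives $\gamma u=\gamma u_{1}=\kappa\pi_{\BD(\grad)}u$. Moreover, by \prettyref{thm:abstractconcretesame} the map $\kappa$ is a bijection from $\BD(\grad)$ onto $H^{1/2}(\R^{d-1})$, so the statement ``$\gamma_{\mathrm{n}}q+\i\gamma u=0$'', which by the convention fixed after \prettyref{thm:divetrace} means $(\gamma_{\mathrm{n}}q)(v)=\scp{-\i\gamma u}{v}_{\L(\R^{d-1})}$ for every $v\in H^{1/2}(\R^{d-1})$, is equivalent to
\[
(\gamma_{\mathrm{n}}q)(\kappa w)=\scp{-\i\gamma u}{\kappa w}_{\L(\R^{d-1})}\qquad(w\in\BD(\grad)).
\]

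Next I would rewrite both sides of this identity as inner products in the Hilbert space $\BD(\grad)$ with the test element $w$. For the left-hand side, \prettyref{eq:ibptrace} applied to $q\in\dom(\dive)$ and $w\in\dom(\grad)=H^{1}(\Omega)$, followed by \prettyref{prop:ibpabstract} together with $\pi_{\BD(\grad)}w=w$, yields
\[
(\gamma_{\mathrm{n}}q)(\gamma w)=\scp{\dive q}{w}_{\L(\Omega)}+\scp{q}{\grad w}_{\L(\Omega)^{d}}=\scp{\dive_{\BD}\pi_{\BD(\dive)}q}{w}_{\BD(\grad)}.
\]
For the right-hand side, using $\gamma u=\kappa\pi_{\BD(\grad)}u$, the defining property of $\kappa^{*}$, and the conjugate-linearity of the scalar product in its first argument, one gets $\scp{-\i\gamma u}{\kappa w}_{\L(\R^{d-1})}=\scp{-\i\kappa^{*}\kappa\pi_{\BD(\grad)}u}{w}_{\BD(\grad)}$. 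Hence the displayed Robin condition is equivalent to $\scp{\dive_{\BD}\pi_{\BD(\dive)}q+\i\kappa^{*}\kappa\pi_{\BD(\grad)}u}{w}_{\BD(\grad)}=0$ for all $w\in\BD(\grad)$; since the vector inside the bracket lies in $\BD(\grad)$ and is orthogonal to all of it, it vanishes, which is exactly the asserted equation. Running the chain backwards gives the converse implication.

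The only genuinely delicate point is the bookkeeping of complex conjugates forced by the paper's convention (scalar products anti-linear in the first slot) together with the identifications made in \prettyref{thm:divetrace} of an $\L(\R^{d-1})$-function with the functional it induces on $H^{1/2}(\R^{d-1})$. In particular one must keep track of the signs when moving the factor $-\i$ across the inner products: this is precisely what turns $\scp{-\i\gamma u}{\kappa w}_{\L(\R^{d-1})}$ into $\scp{-\i\kappa^{*}\kappa\pi_{\BD(\grad)}u}{w}_{\BD(\grad)}$, and then the difference $\scp{\dive_{\BD}\pi_{\BD(\dive)}q}{w}_{\BD(\grad)}-\scp{-\i\kappa^{*}\kappa\pi_{\BD(\grad)}u}{w}_{\BD(\grad)}$ into $\scp{\dive_{\BD}\pi_{\BD(\dive)}q+\i\kappa^{*}\kappa\pi_{\BD(\grad)}u}{w}_{\BD(\grad)}$. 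Everything else is a direct application of the results already proved.
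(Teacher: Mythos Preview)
Your proof is correct and follows essentially the same route as the paper's: both arguments hinge on the identity $\gamma u=\kappa\pi_{\BD(\grad)}u$, the two integration-by-parts formulas \prettyref{eq:ibptrace} and \prettyref{prop:ibpabstract}, and the adjoint relation for $\kappa$. The only cosmetic difference is that you package both implications into a single chain of equivalences (invoking the surjectivity of $\kappa$ from \prettyref{thm:abstractconcretesame} to reduce to test vectors $\kappa w$), whereas the paper treats the two directions separately; the content is the same.
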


\begin{proof}
 We first observe that $\kappa \pi_{\BD(\grad)} w=\gamma w$ for each $w\in H^1(\Omega)$.\\
 Assume now that $\gamma_{\mathrm{n}}q + \i \gamma u=0$ and let $v\in \BD(\grad)$. Then we compute, using \prettyref{prop:ibpabstract} and \prettyref{eq:ibptrace}
 \begin{align*}
  \scp{\i \kappa^\ast \kappa \pi_{\BD(\grad)} u}{v}_{\BD(\grad)} &= \scp{\i \kappa \pi_{\BD(\grad)}u}{\kappa v}_{\L(\R^{d-1})}\\
  &= \scp{\i \gamma u}{\gamma v}_{\L(\R^{d-1})} \\
   &= -(\gamma_{\mathrm{n}}q)(\gamma v)\\
  &= \scp{-\dive q}{v}_{\L(\Omega)}+\scp{-q}{\grad v}_{\L(\Omega)^d} \\
  &= \scp{-\dive_\BD \pi_{\BD(\dive)}q}{v}_{\BD(\grad)},
 \end{align*}
which proves one of the asserted implications.

Assume now that $\dive_\BD \pi_{\BD(\dive)} q + \i \kappa^\ast \kappa \pi_{\BD(\grad)} u=0$ and let $v\in H^{1/2}(\R^{d-1})$. We take $w\in H^1(\Omega)$ with $\gamma w=v$ and compute
\begin{align*}
 (\gamma_{\mathrm{n}}q)(v) &= \scp{\dive q}{w}_{\L(\Omega)}+\scp{q}{\grad w}_{\L(\Omega)^d}\\
 &= \scp{\dive_{\BD} \pi_{\BD(\dive)} q}{\pi_{\BD(\grad)}w}_{\BD(\grad)}\\
 &= \scp{-\i \kappa^\ast \kappa \pi_{\BD(\grad)} u}{\pi_{\BD(\grad)}w}_{\BD(\grad)} \\
 &= \scp{-\i \kappa \pi_{\BD(\grad)} u}{\kappa \pi_{\BD(\grad) }w}_{\L(\R^{d-1})}\\
 &= \scp{-\i \gamma u}{v}_{\L(\R^{d-1})},
\end{align*}
which shows the remaining implication.
\end{proof}

\section{Comments}

The concept of abstract trace spaces has been introduced in \cite{PTW16_IM} in order to study a multi-dimensional analogue for port-Hamiltonian systems. Also concerning differential equations at the boundary (so-called impedance type boundary conditions), the concept of abstract boundary value spaces has been employed, see \cite{PSTW16_GD}.

A comparison between abstract and classical trace spaces has been provided in  \cite{EGW17_D2N,Trostorff2014} particularly concerning $H^{-1/2}(\R^{d-1})$.  A good introduction for trace mappings for
more complicated geometries can be found e.g. in \cite{Arendt2015}.  The trace operator can also be suitably established for $H(\curl,\Omega)$-regular vector fields given that $\Omega$ is a so-called Lipschitz domain, see \cite{Buffa2002}.

\section*{Exercises}
\addcontentsline{toc}{section}{Exercises}

\begin{xca}
\label{exer:convSmooth}Let $\phi\in\cci(\R^{d})$, $f\in\L(\R^{d})$.
Show that 
\[
\phi\ast f\colon x\mapsto\int_{\R^{d}}\phi(x-y)f(y)\d y
\]
belongs to $H^{1}(\R^{d})$ and that $\grad\left(\phi\ast f\right)=\left(\grad\phi\right)\ast f$.
If, in addition, $f\in H^{1}(\R^{d})=\dom(\grad)$, then $\grad(\phi\ast f)=\phi\ast\grad f$,
where the convolution is always taken component wise.
\end{xca}

\begin{xca}
\label{exer:H1translate}Let $\Omega\subseteq\R^{d}$ be open. Let $f\in \L(\Omega)$ and denote by $\tilde{f}\in \L(\R^{d})$
the extension of $f$ by zero. Let $v\in\R^{d}$, $\tau>0$ and
define $f_{\tau}\coloneqq\tilde{f}(\cdot+\tau v)|_{\Omega}$.
\begin{enumerate}
\item\label{exer:H1translate:item:1}
Show that $f_{\tau}\to f$ in $\L(\Omega)$ as $\tau\to0$.
\item\label{exer:H1translate:item:2}
Let now $f\in H^{1}(\Omega)$ and  $\Omega+\tau v\subseteq\Omega$ for all $\tau>0$. Show that
$f_{\tau}\to f$ in $H^{1}(\Omega)$ as $\tau\to0$. 
\end{enumerate}
\end{xca}

\begin{xca}
\label{exer:divdesnity} Prove \prettyref{thm:divdensity}.
\end{xca}

\begin{xca}
\label{exer:uniqunessBVP} Let $\Omega\subseteq\R^{d}$ be open, $M\from\dom(M)\subseteq \C\to \bo\bigl(\L(\Omega)\times \L(\Omega)^d\bigr)$ with $\sbb{M}<\nu_0$ for some $\nu_0\in\R$, $c>0$ such that for all $z\in \C_{\Re \geq \nu_0}$ we have $\Re zM(z) \geq c$, $\nu\geq \max\{\nu_0,0\}$ and $\nu\neq 0$.
Show that there exists a unique 
\[
\begin{pmatrix}
v\\
q
\end{pmatrix}\in H_{\nu}^{1}\Bigl(\R;\dom\Bigl(\begin{pmatrix}
0 & \dive\\
\grad & 0
\end{pmatrix}\Bigr)\Bigr)
\]
satisfying 
\[
\begin{cases}
\left(\td{\nu}M(\td{\nu})+\begin{pmatrix}
0 & \dive\\
\grad & 0
\end{pmatrix}\right)\begin{pmatrix}
v\\
q
\end{pmatrix}=\begin{pmatrix}
0\\
0
\end{pmatrix} & \text{ on }\Omega,\\
\pi_{\BD(\grad)}v(t)=\phi(t)f & \text{ for all }t\in\R,
\end{cases}
\]
for some bounded $\phi\in C^{\infty}(\R)$ with $\inf\spt\phi>-\infty$
and $f\in\BD(\grad)$.
\end{xca}

\begin{xca}
\label{exer:Extension} Let $\Omega=\R^{d-1}\times\Rg{0}$. Show that
there exists a continuous linear operator $E\colon H^{1}(\Omega)\to H^{1}(\R^{d})$
such that $E(\phi)|_{\Omega}=\phi$ for each $\phi \in H^1(\Omega)$. 
\end{xca}

\begin{xca}[Korn's second inequality]
\label{exer:Korn2} Let $\Omega=\R^{d-1}\times\Rg{0}$. Using \prettyref{exer:Extension}
show that there exists $c>0$ such that for all $\phi\in H^{1}(\Omega)^{d}$
we have
\[
\norm{\phi}_{H^{1}(\Omega)^{d}}\leq c\left(\norm{\phi}_{\L(\Omega)^{d}}+\norm{\Grad\phi}_{\L(\Omega)^{d\times d}}\right).
\]
Thus, describe the space of boundary values of $\dom(\Grad)$. 

\emph{Hint:} Prove a corresponding result for $\Omega=\R^{d}$ first
after having shown that $\cci(\R^{d})^{d}$ forms a dense subset of
both $H^{1}(\Omega)^{d}$ and $\dom(\Grad)$.
\end{xca}

\begin{xca}
\label{exer:MaxwellBdy} Let $\Omega\subseteq\R^{3}$ be open. Compute
$\BD(\curl)\coloneqq H_{0}(\curl,\Omega)^{\bot_{H(\curl,\Omega)}}$
and show that $\curl\colon\BD(\curl)\to\BD(\curl)$
is well-defined, unitary and skew-selfadjoint.
\end{xca}

% \begin{xca}
% \label{exer:RobinssaAbs}Prove \prettyref{thm:RobinssaAbstr}.
% \end{xca}

\printbibliography[heading=subbibliography]
\chapter{Continuous Dependence on the Coefficients I}

The power of the functional analytic framework for evolutionary equations
lies in its variety. In fact, as we have outlined in earlier lectures,
it is possible to formulate many differential equations in the form
\[
\left(\partial_{t}M(\partial_{t})+A\right)U=F.
\]
In this chapter we want to use this versatility and address continuity
of the above expression (or more precisely of the solution operator)
in $M(\partial_{t})$. To see this more clearly, fix $F$ and take a sequence
of material laws $\left(M_{n}\right)_{n}$. We will address the following
question: what are the conditions or notions of convergence of $\left(M_{n}\right)_{n}$
to some $M$ in order that $(U_{n})_{n}$ with $U_n$ given as the solution of
\[
\left(\partial_{t}M_{n}(\partial_{t})+A\right)U_{n}=F
\]
converges to $U$, which satisfies 
\[
\left(\partial_{t}M(\partial_{t})+A\right)U=F?
\]

In the first of two chapters on this subject, we shall specialise to $A=0$;
that is, we will discuss ordinary differential equations with infinite-dimensional
state space. To begin with, we address the convergence of material laws
pointwise in the Fourier--Laplace transformed domain and its relation
to the convergence of material laws evaluated at the time derivative.

\section{Convergence of Material Laws}

Throughout, let $H$ be a Hilbert space. We briefly
recall that a sequence $(T_{n})_n$ in $\bo(H)$ converges in the
\emph{strong operator topology} to some $T\in\bo(H)$ if for all
$x\in H$ we have
\[
T_{n}x\to Tx\quad(n\to\infty).
\]
$(T_{n})_n$ is said to converge in the \emph{weak operator topology} to $T\in\bo(H)$ if for all $x,y\in H$ we have
\[
\scp{y}{T_{n}x}\to\scp{y}{Tx}\quad(n\to\infty).
\]
We denote the set of material laws on $H$ with abscissa of boundedness less than or equal to
$\nu_{0}\in\R$ by
\[
\mathcal{M}(H,\nu_{0})\coloneqq\set{M\colon\dom(M)\to\bo(H)}{M\text{ material law},\sbb{M}\leq\nu_{0}}.
\]

\begin{rem}
 Let $\nu_0\in\R$, $\nu>\nu_0$. Then $\mathcal{M}(H,\nu_0)$ is an algebra and $\mathcal{M}(H,\nu_0)\ni M\mapsto M(\td{\nu})\in \bo\bigl(\Lnu(\R;H)\bigr)$ is an algebra homomorphism which is one-to-one by \prettyref{thm:representaion}. 
\end{rem}

\begin{defn*}
Let $\nu_{0}\in\R$.
A sequence $(M_{n})_{n\in\N}$ in $\mathcal{M}(H,\nu_{0})$ is called \emph{bounded}
if 
\[
\sup_{n\in\N}\norm{M_{n}}_{\infty,\C_{\Re>\nu_{0}}}<\infty.
\]
\end{defn*}

\begin{thm}
\label{thm:converge_wotsot}Let $\nu_{0}\in\R$, $(M_{n})_n$ in $\mathcal{M}(H,\nu_{0})$ be
bounded. Assume that for all $z\in\C_{\Re>{\nu_0}}$ the sequence $(M_{n}(z))_n$
converges in the weak operator topology of $\bo(H)$ with limit $M(z)$ and let $\nu>\nu_0$. Then $M\in\mathcal{M}(H,\nu_0)$ and $M_{n}(\td{\nu})\to M(\td{\nu})$ as
$n\to\infty$ in the weak operator topology of $\bo\bigl(\Lnu(\R,H)\bigr)$.

If, in addition, $(M_{n}(z))_n$ converges in the strong
operator topology of $\bo(H)$ for all $z\in\C_{\Re>{\nu_0}}$, then, as $n\to\infty$, $M_{n}(\td{\nu})\to M(\td{\nu})$ in the strong operator topology of $\bo\bigl(\Lnu(\R,H)\bigr)$.
\end{thm}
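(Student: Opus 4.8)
The plan is to proceed in three stages: first verify that the pointwise weak limit $M$ is itself a material law in $\mathcal{M}(H,\nu_{0})$, then establish the weak operator convergence of the associated material law operators, and finally upgrade this to strong convergence under the additional hypothesis.

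For the first stage, set $C\coloneqq\sup_{n\in\N}\norm{M_{n}}_{\infty,\C_{\Re>\nu_{0}}}<\infty$. For every $z\in\C_{\Re>\nu_{0}}$ and $x\in H$ we have $\norm{M(z)x}=\sup_{\norm{y}\leq 1}\lim_{n\to\infty}\abs{\scp{y}{M_{n}(z)x}}\leq C\norm{x}$, so $M(z)\in\bo(H)$ and $M$ is locally bounded on the open set $\C_{\Re>\nu_{0}}$. To show holomorphy I would invoke the characterisation in \prettyref{exer:holomorphic}: it suffices that $z\mapsto\scp{y}{M(z)x}$ be holomorphic for all $x,y\in H$ (with $H$ dense in itself and norming for itself, and $\bo(H)$ the target). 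Each $z\mapsto\scp{y}{M_{n}(z)x}$ is holomorphic on $\C_{\Re>\nu_{0}}$, these scalar functions are uniformly bounded there by $C\norm{x}\norm{y}$, and they converge pointwise to $z\mapsto\scp{y}{M(z)x}$; by Vitali's theorem (a locally bounded, pointwise convergent sequence of holomorphic functions converges locally uniformly, hence to a holomorphic function) the limit is holomorphic. Thus $M$ is holomorphic, and since $\C_{\Re>\nu_{0}}\subseteq\dom(M)$ with $\norm{M}_{\infty,\C_{\Re>\nu_{0}}}\leq C$, we conclude $M\in\mathcal{M}(H,\nu_{0})$, so $M(\td{\nu})$ is well-defined and bounded by \prettyref{prop:mat_law_function_of_td}.

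For the second stage, recall from \prettyref{prop:mat_law_function_of_td} that $M_{n}(\td{\nu})=\mathcal{L}_{\nu}^{\ast}M_{n}(\i\m+\nu)\mathcal{L}_{\nu}$ and likewise for $M$, and that $\mathcal{L}_{\nu}$ is unitary by \prettyref{thm:Plancherel}; hence it suffices to prove $M_{n}(\i\m+\nu)\to M(\i\m+\nu)$ in the weak operator topology of $\bo(\L(\R;H))$. Given $f,g\in\L(\R;H)$, we have $\scp{g}{M_{n}(\i\m+\nu)f}_{\L}=\int_{\R}\scp{g(t)}{M_{n}(\i t+\nu)f(t)}_{H}\d t$; the integrand converges pointwise to $\scp{g(t)}{M(\i t+\nu)f(t)}_{H}$ by the assumed weak convergence at $z=\i t+\nu\in\C_{\Re>\nu_{0}}$, and it is dominated by $t\mapsto C\norm{g(t)}_{H}\norm{f(t)}_{H}$, which lies in $L_{1}(\R)$ by the Cauchy--Schwarz inequality. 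Dominated convergence yields the claim.

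For the last stage, assume in addition that $M_{n}(z)\to M(z)$ strongly for every $z\in\C_{\Re>\nu_{0}}$. Then for $f\in\L(\R;H)$ we estimate $\norm{M_{n}(\i\m+\nu)f-M(\i\m+\nu)f}_{\L}^{2}=\int_{\R}\norm{\bigl(M_{n}(\i t+\nu)-M(\i t+\nu)\bigr)f(t)}_{H}^{2}\d t$; the integrand tends to $0$ pointwise by strong convergence at $z=\i t+\nu$, and is dominated by $4C^{2}\norm{f(t)}_{H}^{2}\in L_{1}(\R)$, using $\norm{M(z)}\leq C$ from the first stage. Dominated convergence together with the unitarity of $\mathcal{L}_{\nu}$ then gives $M_{n}(\td{\nu})f\to M(\td{\nu})f$ in $\Lnu(\R;H)$. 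The only non-routine ingredient is the passage from pointwise weak convergence to holomorphy of the limit, i.e.\ the Vitali argument; the remaining steps are dominated convergence combined with unitarity of the Fourier--Laplace transform.
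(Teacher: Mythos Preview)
Your proof is correct and follows essentially the same three-stage structure as the paper's: show $M\in\mathcal{M}(H,\nu_0)$, then reduce the operator convergence via the unitary Fourier--Laplace transform to convergence of the multiplication operators $M_n(\i\m+\nu)$, which is handled by dominated convergence. The only difference is in how you establish holomorphy of $M$: you invoke Vitali's theorem on the scalar functions $z\mapsto\scp{y}{M_n(z)x}$, whereas the paper passes to the limit in Cauchy's integral formula (using dominated convergence on the contour integral) to obtain a Cauchy representation for $\scp{y}{M(\cdot)x}$ directly. Both routes are standard and lead to the same invocation of \prettyref{exer:holomorphic}; your stages two and three in fact spell out what the paper leaves as a one-line remark.
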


\begin{proof}
Let $z_0\in \C_{\Re>{\nu_0}}$, $r\in \oi{0}{\Re z_0-\nu_0}$. For $x,y\in H$,
by Cauchy's integral formula, we deduce
\[
\scp{y}{M_{n}(z_{0})x}=\frac{1}{2\pi \i} \int_{\partial B(z_0,r)}\frac{\scp{y}{M_{n}(z)x}_H}{z-z_{0}}\d z \quad(n\in\N).
\]
Using boundedness of $(M_{n})_n$, Lebesgue's dominated
convergence theorem yields
\[
\scp{y}{M(z_{0})x}=\frac{1}{2\pi \i}\int_{\partial B(z_0,r)}\frac{\scp{y}{M(z)x}_H}{z-z_{0}}\d z.
\]
Since 
\begin{equation}
\norm{\scp{y}{M(z)x}}_H\leq\norm{x}_H\norm{y}_H\sup_{n\in\N}\norm{M_{n}}_{\infty,\C_{\Re>\nu_0}}\quad(z\in\C_{\Re>{\nu_0}}),\label{eq:elem_est_M}
\end{equation}
$\scp{y}{M(\cdot)x}_H$ is holomorphic in a neighbourhood of $z_{0}$.
By \prettyref{exer:holomorphic} we obtain that $M\colon\C_{\Re>{\nu_0}}\to\bo(H)$
is holomorphic. In fact, the estimate \prettyref{eq:elem_est_M}
even implies that $M\in\mathcal{M}(H,\nu_0)$. 

If $z\in\C_{\Re>{\nu_0}}$ and $(M_n(z))_n$ even converges in the strong operator topology, then the limit is clearly $M(z)$.

The convergence statements for $(M_{n}(\td{\nu}))_n$ (in the weak and strong operator topology)
are then implied by Fourier--Laplace transformation. 
\end{proof}

\begin{rem}
In \prettyref{thm:converge_wotsot}, it suffices
to assume that $(M_{n}(z))_n$ converges only for $z$ belonging
to a countable subset of $\C_{\Re>\nu_0}$ with an accumulation point in
$\C_{\Re>\nu_0}$.
\end{rem}

The next statement is essential for the convergence statement for
``ordinary'' evolutionary equations.

\begin{prop}
\label{prop:sotInv}Let $(T_{n})_n$ be a sequence in $\bo(H)$
converging in the strong operator topology to some $T\in\bo(H)$ with
$0\in\bigcap_{n\in\N}\rho(T_{n})$, $\sup_{n\in\N}\norm{T_{n}^{-1}}<\infty$
and $\ran(T)\subseteq H$ dense. Then $T$ is continuously invertible
and $(T_{n}^{-1})_n$ converges to $T^{-1}$ in the strong
operator topology.
\end{prop}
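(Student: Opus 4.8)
The plan is to first establish that $T$ is continuously invertible and then to derive the strong convergence $T_n^{-1} \to T^{-1}$ via a standard $\varepsilon/3$-type argument combined with the uniform bound on the inverses. Let me set $C \coloneqq \sup_{n\in\N}\norm{T_n^{-1}} < \infty$.

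For the invertibility of $T$: first I would show $T$ is injective and bounded below. For $x\in H$ we have $\norm{x} = \norm{T_n^{-1}T_n x} \leq C\norm{T_n x}$, and letting $n\to\infty$ (using $T_n x \to Tx$) gives $\norm{x} \leq C\norm{Tx}$. Hence $T$ is injective with closed range and $\norm{T^{-1}}\leq C$ on its range. Since $\ran(T)$ is assumed dense and is also closed, we get $\ran(T) = H$, so $T$ is continuously invertible with $\norm{T^{-1}}\leq C$.

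For the strong convergence: fix $y\in H$ and set $x\coloneqq T^{-1}y$. I would write, for $n\in\N$,
\[
  T_n^{-1}y - T^{-1}y = T_n^{-1}y - x = T_n^{-1}(y - T_n x) = T_n^{-1}(Tx - T_n x),
\]
using $T_n x \to Tx = y$. Therefore
\[
  \norm{T_n^{-1}y - T^{-1}y} = \norm{T_n^{-1}(T - T_n)x} \leq C\norm{(T-T_n)x} \to 0 \quad(n\to\infty),
\]
since $T_n x \to Tx$ by the strong convergence $T_n \to T$. This completes the argument.

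I do not expect any serious obstacle here; the only mild subtlety is to make sure one uses the density of $\ran(T)$ \emph{together with} its closedness (coming from the lower bound) to conclude surjectivity, rather than trying to pass the inverses to the limit directly. The uniform bound $C$ is what makes both halves of the argument work, and it is exactly the hypothesis $\sup_n \norm{T_n^{-1}} < \infty$ that supplies it.
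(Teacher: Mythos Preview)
The proposal is correct and follows essentially the same argument as the paper: the same lower bound $\norm{x}\leq C\norm{Tx}$ obtained from $\norm{x}=\norm{T_n^{-1}T_n x}\leq C\norm{T_n x}$, the same conclusion that $\ran(T)$ is closed and hence all of $H$, and the same identity $T_n^{-1}y-T^{-1}y=T_n^{-1}(T-T_n)T^{-1}y$ for the convergence.
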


\begin{proof}
We set $K\coloneqq\sup_{n\in\N}\norm{T_{n}^{-1}}$. We show that $T$ is
continuously invertible first. For this, let $x\in H$. Then 
\[
\norm{x}=\norm{T_{n}^{-1}T_{n}x}\leq K\norm{T_{n}x}\to K\norm{Tx}\quad(n\to\infty).
\]
Hence, $T$ is one-to-one and it follows that $\ran(T)\subseteq H$
is closed. Hence, $0\in\rho(T)$. For $x\in H$ we conclude
\begin{align*}
\norm{T_{n}^{-1}x-T^{-1}x} & =\norm{T_{n}^{-1}(T-T_{n})T^{-1}x}
 \leq K\norm{(T-T_{n})T^{-1}x}\to0\quad(n\to\infty).\tag*{{\qedhere}}
\end{align*}
\end{proof}

We are now in the position to obtain the first result on continuous dependence.

\begin{thm}
\label{thm:ODEcondep}
Let $\nu_{0}\in\R$, $(M_{n})_n$ a bounded sequence
in $\mathcal{M}(H,\nu_{0})$, $c>0$ such that
for all $n\in\N$ and $z\in\C_{\Re>\nu_{0}}$ we have
\[
\Re zM_{n}(z)\geq c.
\]
If $(M_{n}(z))_n$ converges in the strong operator
topology for all $z\in\C_{\Re>\nu_{0}}$ then for the limit $M(z)$ we have $M\in\mathcal{M}(H,\nu_{0})$
with $\Re zM(z)\geq c$ for all $z\in\C_{\Re>\nu_{0}}$ and for $\nu>\nu_0$ we have
\[
\bigl(\td{\nu}M_{n}(\td{\nu})\bigr)^{-1}\to\bigl(\td{\nu}M(\td{\nu})\bigr)^{-1}
\]
in the strong operator topology.
\end{thm}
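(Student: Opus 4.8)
The statement combines three assertions: that the limit $M$ is again a material law with the same positivity constant, that $\bigl(\td{\nu}M(\td{\nu})\bigr)^{-1}$ makes sense as a bounded operator, and that the inverses converge strongly. The first and second are essentially bookkeeping once the right tools are invoked, while the genuine work is in verifying the hypotheses of \prettyref{prop:sotInv} for the operator family $T_n \coloneqq \td{\nu}M_n(\td{\nu})$ on $\Lnu(\R;H)$.

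First I would establish that $M \in \mathcal{M}(H,\nu_0)$ and $\Re zM(z) \geq c$ for all $z \in \C_{\Re>\nu_0}$. The membership $M \in \mathcal{M}(H,\nu_0)$ is exactly the content of \prettyref{thm:converge_wotsot} (strong convergence implies weak convergence, so that theorem applies), and that same theorem gives $M_n(\td{\nu}) \to M(\td{\nu})$ in the strong operator topology of $\bo(\Lnu(\R;H))$. For the positivity, fix $z \in \C_{\Re>\nu_0}$ and $x \in H$; then $\Re\scp{x}{zM_n(z)x}_H \geq c\norm{x}_H^2$ for every $n$, and since $M_n(z)x \to M(z)x$ in $H$, the scalar products converge, so $\Re\scp{x}{zM(z)x}_H \geq c\norm{x}_H^2$. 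Thus $\Re zM(z) \geq c$. By \prettyref{thm:Solution_theory_EE} (Picard's theorem, applied with $A=0$), both $\td{\nu}M_n(\td{\nu})$ and $\td{\nu}M(\td{\nu})$ are then boundedly invertible on $\Lnu(\R;H)$ with inverse norms bounded by $1/c$, uniformly in $n$. This simultaneously yields the required uniform bound $\sup_n \norm{T_n^{-1}} \leq 1/c$.

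Next I would set up the application of \prettyref{prop:sotInv} with $T_n = \td{\nu}M_n(\td{\nu})$ and $T = \td{\nu}M(\td{\nu})$ on the Hilbert space $\Lnu(\R;H)$. The hypotheses to check are: (i) $0 \in \rho(T_n)$ for all $n$ and $\sup_n \norm{T_n^{-1}} < \infty$ — both just established; (ii) $\ran(T)$ is dense — this follows since $T$ is boundedly invertible, hence onto; and (iii) $T_n \to T$ in the strong operator topology. The delicate point is (iii): strong convergence of $M_n(\td{\nu})$ to $M(\td{\nu})$ does \emph{not} immediately give strong convergence of $\td{\nu}M_n(\td{\nu})$ because $\td{\nu}$ is unbounded. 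The fix is to work on the dense subspace $\dom(\td{\nu})$: for $\varphi \in \dom(\td{\nu})$, using that $M_n(\td{\nu})$ and $\td{\nu}$ commute on $\dom(\td{\nu})$ (this is the material-law-operator property, cf.\ \prettyref{rem:somlo} and the fact that $M_n(\td{\nu})$ leaves $\dom(\td{\nu})$ invariant), we have $\td{\nu}M_n(\td{\nu})\varphi = M_n(\td{\nu})\td{\nu}\varphi \to M(\td{\nu})\td{\nu}\varphi = \td{\nu}M(\td{\nu})\varphi$, where the convergence is the strong convergence of $M_n(\td{\nu})$ applied to the fixed vector $\td{\nu}\varphi \in \Lnu(\R;H)$. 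Hence $T_n \to T$ strongly on the dense set $\dom(\td{\nu})$; but strong convergence on a dense set plus a uniform norm bound is not available here since $T_n$ are unbounded, so instead I would apply \prettyref{prop:sotInv} in the equivalent reformulation via the resolvents, i.e.\ argue directly with $S_n \coloneqq T_n^{-1} = \bigl(\td{\nu}M_n(\td{\nu})\bigr)^{-1}$: write $S_n - S = S_n(T - T_n)S$ and note that for $f \in \Lnu(\R;H)$, $Sf \in \dom(\td{\nu})$ (by the regularity part of \prettyref{thm:Solution_theory_EE}, since actually $Sf = S_\nu f$ and one can first reduce to $f \in \dom(\td{\nu})$ by density and the uniform bound $\norm{S_n} \leq 1/c$), so $(T-T_n)Sf = M(\td{\nu})\td{\nu}Sf - M_n(\td{\nu})\td{\nu}Sf \to 0$ by strong convergence of $M_n(\td{\nu})$ at the fixed vector $\td{\nu}Sf$; then $\norm{S_nf - Sf} \leq (1/c)\norm{(T-T_n)Sf} \to 0$.

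The main obstacle, as indicated, is handling the unboundedness of $\td{\nu}$ when passing from strong convergence of $M_n(\td{\nu})$ to strong convergence of the inverses $\bigl(\td{\nu}M_n(\td{\nu})\bigr)^{-1}$; the clean way around it is the identity $S_n - S = S_n(T-T_n)S$ combined with the observation that $\ran(S) \subseteq \dom(\td{\nu})$ and the commutation $M_n(\td{\nu})\td{\nu} \supseteq \td{\nu}M_n(\td{\nu})$, so that $(T-T_n)S = (M(\td{\nu}) - M_n(\td{\nu}))\td{\nu}S$ converges strongly to zero. A small preliminary reduction to $f$ in the dense subspace $\dom(\td{\nu})$ (to guarantee $\td{\nu}Sf$ is a well-defined fixed vector) together with the uniform bound $\norm{S_n}\leq 1/c$ then gives the conclusion for all $f \in \Lnu(\R;H)$.
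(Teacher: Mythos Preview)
Your argument is correct, but it takes a different route from the paper's. The paper applies \prettyref{prop:sotInv} \emph{pointwise in $z$}: for each fixed $z\in\C_{\Re>\nu_0}$, the operators $T_n=zM_n(z)$ are bounded on $H$, satisfy $\Re T_n\geq c$, and converge strongly, so \prettyref{prop:sotInv} yields $(zM_n(z))^{-1}\to(zM(z))^{-1}$ in the strong operator topology of $\bo(H)$. Since $z\mapsto(zM_n(z))^{-1}$ is itself a bounded family of material laws (with norm $\leq 1/c$), a second application of \prettyref{thm:converge_wotsot} lifts this pointwise convergence to strong convergence of the material law operators $\bigl(\td{\nu}M_n(\td{\nu})\bigr)^{-1}$ on $\Lnu(\R;H)$. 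This avoids touching the unbounded operator $\td{\nu}$ altogether. Your approach instead works directly on $\Lnu(\R;H)$ with the resolvent identity $S_n-S=S_n(T-T_n)S$, exploiting that $S$ maps $\dom(\td{\nu})$ into $\dom(\td{\nu})$ and that material law operators commute with $\td{\nu}$ there, so that $(T-T_n)Sf=(M(\td{\nu})-M_n(\td{\nu}))\td{\nu}Sf$ for $f\in\dom(\td{\nu})$. The paper's route is shorter and stays within the bounded-operator world, reusing the abstract machinery twice as a black box; your route is more hands-on and makes the mechanism explicit, which could be useful in situations where the pointwise Fourier--Laplace decomposition is not available or where one wants to track the convergence more quantitatively.
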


\begin{proof}
By \prettyref{thm:converge_wotsot}, we observe $M\in\mathcal{M}(H,\nu_{0})$. Let $z\in\C_{\Re>\nu_{0}}$.
Then we have $\Re zM(z) = \lim_{n\to\infty} \Re zM_n(z)\geq c$ and hence
$zM(z)$ is continuously invertible. Since $0\in\bigcap_{n\in\N}\rho(zM_{n}(z))$
and $\norm{(zM_{n}(z))^{-1}}\leq1/c$ by \prettyref{prop:block_op_realinv}\ref{prop:block_op_realinv:item:2}, we deduce by \prettyref{prop:sotInv}
applied to $T_{n}=zM_{n}(z)$ that $(zM_{n}(z))^{-1}\to(zM(z))^{-1}$
in the strong operator topology. By \prettyref{thm:converge_wotsot}, for $\nu>\nu_0$
we infer $\bigl(\td{\nu}M_{n}(\td{\nu})\bigr)^{-1}\to\bigl(\td{\nu}M(\td{\nu})\bigr)^{-1}$
in the strong operator topology.
\end{proof}

\section{A Leading Example}

We want to illustrate the findings of the previous section with the
help of an ordinary differential equation. Also, we shall provide an argument on the limitations
of the theory presented above. Let $(\Omega,\Sigma,\mu)$
be a finite measure space. 

Note that for $V\in L_\infty(\mu)$ with associated multiplication operator $V(\m)$ as in \prettyref{thm:mult1} we have that
\[
M\colon z\mapsto1+z^{-1}V(\m)\in\bo(\L(\mu))
\]
is a material law with $\sbb{M}=0$ unless $V=0$ (in case $V=0$ we have $\sbb{M} = -\infty$).
The corresponding evolutionary equation is given by
\[
\td{\nu}u+V(\m)u=f.
\]
We want to study sequences of material laws of this form; that is, material laws induced by sequences $(V_n)_n$ in $L_\infty(\mu)$.
First, we provide the following characterisation of the convergence
of multiplication operators. 
We recall that for a Banach space $X$ the weak$^*$ topology $\sigma(X',X)$ on $X'$ is the coarsest topology such that all the mappings $X'\ni x'\mapsto x'(x)$ ($x\in X$) are continuous.

\begin{prop}
\label{prop:conve_mult} Let $(V_{n})_n$ in $L_{\infty}(\mu)$ and
$V\in L_{\infty}(\mu)$. Then the following statements hold.
\begin{enumerate}
\item\label{prop:conve_mult:item:1} $V_{n}(\m)\to V(\m)$
in $\bo(\L(\mu))$ if and only if $V_{n}\to V$ in $L_{\infty}(\mu)$.

\item\label{prop:conve_mult:item:2} $V_{n}(\m)\to V(\m)$ in the strong
operator topology of $\bo(\L(\mu))$ if and only if $(V_{n})$ is bounded in $L_{\infty}(\mu)$ and $V_{n}\to V$
in $L_{1}(\mu)$.

\item\label{prop:conve_mult:item:3} $V_{n}(\m)\to V(\m)$ in the weak operator topology of $\bo(\L(\mu))$ if and
only if $V_{n}\to V$ in the weak$^*$ topology $\sigma\bigl(L_{\infty}(\mu),L_{1}(\mu)\bigr)$.
\end{enumerate}
\end{prop}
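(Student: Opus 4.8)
The statement to prove, \prettyref{prop:conve_mult}, characterises the three modes of operator convergence of multiplication operators $V_n(\m)$ on $\L(\mu)$ in terms of convergence of the symbols $V_n$ in $L_\infty(\mu)$, in $L_1(\mu)$, and in the weak$^*$ topology $\sigma(L_\infty(\mu),L_1(\mu))$ respectively. The plan is to treat the three items essentially independently, using in each case the finiteness of the measure $\mu$ (which gives $L_\infty(\mu)\subseteq L_p(\mu)$ for every $p$ and, in particular, $\1_\Omega \in \L(\mu)$). Throughout I would repeatedly use \prettyref{prop:mult1.75}, which for a semi-finite (hence in particular finite) measure space gives $\norm{W(\m)}_{\bo(\L(\mu))} = \norm{W}_{L_\infty(\mu)}$ for bounded measurable $W$; this is the key bridge between the operator norm and the $L_\infty$-norm of the symbol.

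\emph{Item \ref{prop:conve_mult:item:1}.} This is immediate from \prettyref{prop:mult1.75}: applying it to $W\coloneqq V_n - V$ gives $\norm{V_n(\m)-V(\m)}_{\bo(\L(\mu))} = \norm{(V_n-V)(\m)}_{\bo(\L(\mu))} = \norm{V_n-V}_{L_\infty(\mu)}$, so the two convergences are literally the same.

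\emph{Item \ref{prop:conve_mult:item:2}.} For the ``if'' direction, suppose $\sup_n\norm{V_n}_{L_\infty(\mu)}\eqqcolon K<\infty$ and $V_n\to V$ in $L_1(\mu)$. First note $\norm{V}_{L_\infty(\mu)}\leq K$ (pass to an a.e.\ convergent subsequence, or argue via the $L_1$-duality). To show $V_n(\m)f\to V(\m)f$ in $\L(\mu)$ for every $f\in\L(\mu)$, I would use a density/$3\varepsilon$ argument: simple functions (indeed bounded measurable functions) are dense in $\L(\mu)$, and for $f\in L_\infty(\mu)$ one has $\norm{(V_n-V)f}_{\L(\mu)}^2 = \int_\Omega |V_n-V|^2|f|^2\d\mu \leq \norm{f}_{L_\infty(\mu)}^2 \int_\Omega |V_n-V|^2 \d\mu$, and the last integral is bounded by $(\norm{V_n}_{L_\infty}+\norm{V}_{L_\infty})\int_\Omega|V_n-V|\d\mu\leq 2K\norm{V_n-V}_{L_1(\mu)}\to 0$. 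For general $f\in\L(\mu)$ approximate by a bounded $g$ with $\norm{f-g}_{\L(\mu)}$ small and use the uniform bound $\norm{V_n(\m)}\leq K$. For the ``only if'' direction: strong convergence applied to the fixed vector $\1_\Omega\in\L(\mu)$ gives $V_n\to V$ in $\L(\mu)$, and since $\mu$ is finite, $\L(\mu)\hookrightarrow L_1(\mu)$ continuously, so $V_n\to V$ in $L_1(\mu)$; boundedness of $(V_n)$ in $L_\infty(\mu)$ follows from strong convergence together with the uniform boundedness principle and \prettyref{prop:mult1.75} (a strongly convergent sequence of bounded operators is norm-bounded, and the operator norm equals the $L_\infty$-norm of the symbol).

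\emph{Item \ref{prop:conve_mult:item:3}.} Unwind the definitions. For $f,g\in\L(\mu)$, $\scp{g}{V_n(\m)f}_{\L(\mu)} = \int_\Omega V_n\, g^* f\d\mu$, and $g^* f\in L_1(\mu)$ by Cauchy--Schwarz. Thus weak operator convergence $V_n(\m)\to V(\m)$ means $\int_\Omega V_n\, h\d\mu\to\int_\Omega V\, h\d\mu$ for all $h$ of the form $g^* f$ with $f,g\in\L(\mu)$. It remains to observe that such products $g^*f$ span a dense subspace of $L_1(\mu)$ — in fact every $h\in L_1(\mu)$ factors as $h = g^* f$ with $f\coloneqq \sgn(h)|h|^{1/2}$ (suitably interpreted) and $g\coloneqq |h|^{1/2}$, both in $\L(\mu)$ — so that, combined with the uniform bound $\sup_n\norm{V_n}_{L_\infty(\mu)}<\infty$ coming from the uniform boundedness principle and \prettyref{prop:mult1.75}, testing against the $g^*f$ is equivalent to testing against all of $L_1(\mu)$, i.e.\ to $V_n\to V$ in $\sigma(L_\infty(\mu),L_1(\mu))$.

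\textbf{Main obstacle.} No single step is deep; the place requiring the most care is item \ref{prop:conve_mult:item:2}, specifically deducing boundedness in $L_\infty(\mu)$ of $(V_n)$ from strong operator convergence — this needs the uniform boundedness principle plus \prettyref{prop:mult1.75} to translate the resulting operator-norm bound back to an $L_\infty$-bound — and keeping the $3\varepsilon$ interchange of limits (density of bounded functions, uniform operator bound) clean. Item \ref{prop:conve_mult:item:3} has the minor subtlety of exhibiting the factorisation $h = g^* f$ for $h\in L_1(\mu)$; this is routine but should be stated.
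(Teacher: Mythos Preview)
Your proposal is correct and follows essentially the same approach as the paper: item~(a) is immediate from \prettyref{prop:mult1.75}, item~(b) uses the density of $L_\infty(\mu)$ in $\L(\mu)$ together with the pointwise bound $\abs{V_n-V}^2\leq \norm{V_n-V}_{L_\infty}\abs{V_n-V}$ for one direction and the test vector $\1_\Omega$ plus the uniform boundedness principle for the other, and item~(c) rests on the factorisation $L_1(\mu)=\L(\mu)\cdot\L(\mu)$. The only superfluous remark is the appeal to uniform boundedness in item~(c): since every $h\in L_1(\mu)$ factors \emph{exactly} as $g^*f$, the two conditions are identical without any approximation, so no uniform bound is needed there.
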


\begin{proof}
\ref{prop:conve_mult:item:1} This is a direct consequence of \prettyref{prop:mult1.75}.

\ref{prop:conve_mult:item:2} Assume $V_{n}\to V$ in $L_{1}(\mu)$ and that $(V_{n})_n$
is bounded in $L_{\infty}(\mu)$. Then $(V_n-V)_n$ is also bounded in $L_\infty(\mu)$.
For $f\in L_\infty(\mu)\subseteq \L(\mu)$ we obtain
\begin{align*}
  \norm{V_n(\m)f - V(\m)f}_{\L(\mu)}^2 & = \int_\Omega \abs{V_n - V}^2 \abs{f}^2\d \mu \\
  & \leq \sup_{n\in\N} \norm{V_n-V}_{L_\infty(\mu)} \norm{f}_{L_\infty(\mu)}^2 \int_\Omega \abs{V_n-V} \,d\mu \to 0.
\end{align*}
Since $L_\infty(\mu)$ is dense in $\L(\mu)$ and $(V_n(\m)-V(\m))_n$ is bounded by \prettyref{prop:mult1.75}, we obtain $V_n(\m)\to V(\m)$ in the strong operator topology of $\bo(\L(\mu))$.

Now, let $V_{n}(\m)\to V(\m)$ in the strong operator
topology of $\bo(\L(\mu))$.
Then $(V_{n}(\m))_n$ is bounded
in $\bo(\L(\mu))$ by the uniform boundedness
principle. Now \prettyref{prop:mult1.75} yields boundedness of $(V_n)_n$ in $L_\infty(\mu)$.
Moreover, since $\1_{\Omega}\in\L(\mu)$, we deduce $V_{n} = V_n(\m)\1_\Omega \to V(\m)\1_\Omega = V$
in $\L(\mu)$. Since $\L(\mu)$ embeds continuously into $L_{1}(\mu)$ we obtain $V_{n}\to V$
in $L_{1}(\mu)$.

\ref{prop:conve_mult:item:3} The assertion follows easily upon realising that $\phi\in L_{1}(\mu)$
if and only if there exists $\psi_{1},\psi_{2}\in\L(\mu)$ such that
$\phi=\psi_{1}\psi_{2}$.
\end{proof}

With the latter result at hand together with the results in the previous
section, we easily deduce the next theorem on continuous dependence
on the coefficients.

\begin{thm}
\label{thm:cd_sot_ex}Let $(V_{n})_n$ in $L_{\infty}(\mu)$ be bounded, $V\in L_\infty(\mu)$, and $V_{n}\to V$ in $L_{1}(\mu)$. Then there
exists $\nu>0$ such that 
\[
\bigl(\td{\nu}+V_{n}(\m)\bigr)^{-1}\to\bigl(\td{\nu}+V(\m)\bigr)^{-1}
\]
in the strong operator topology of $\bo\bigl(\Lnu(\R;\L(\mu))\bigr)$.
\end{thm}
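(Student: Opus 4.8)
The plan is to apply \prettyref{thm:ODEcondep} to the sequence of material laws $M_n(z) \coloneqq 1 + z^{-1}V_n(\m)$ on the Hilbert space $\L(\mu)$, so the task reduces to verifying the hypotheses of that theorem. First I would fix $\nu_0 \coloneqq 0$ and note that each $M_n$ indeed belongs to $\mathcal{M}(\L(\mu),0)$: this follows from \prettyref{exa:material-laws} (polynomials in $z^{-1}$) together with \prettyref{thm:mult1}(c), which gives $\norm{V_n(\m)}_{\bo(\L(\mu))} \leq \norm{V_n}_{L_\infty(\mu)}$. For boundedness of the sequence in the sense required, set $K \coloneqq \sup_{n\in\N}\norm{V_n}_{L_\infty(\mu)} < \infty$ (finite by hypothesis); then for $z \in \C_{\Re>0}$ we have $\norm{M_n(z)} \leq 1 + \abs{z}^{-1}K$, but this is not uniformly bounded near the imaginary axis, so instead I would restrict attention to a half-plane $\C_{\Re>\nu_0'}$ for some $\nu_0' > 0$ and observe $\sup_{n\in\N}\norm{M_n}_{\infty,\C_{\Re>\nu_0'}} \leq 1 + (\nu_0')^{-1}K < \infty$. (Equivalently: replace $\nu_0$ by a small positive number throughout; the statement only claims existence of \emph{some} $\nu>0$.)

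Next I would verify the positive-definiteness condition. For $z \in \C_{\Re>\nu_0'}$ and $f \in \L(\mu)$,
\[
\Re\scp{f}{zM_n(z)f} = \Re\scp{f}{zf} + \Re\scp{f}{V_n(\m)f} = (\Re z)\norm{f}^2 + \Re\int_\Omega \overline{f}\,V_n f\,\d\mu.
\]
Since $\Re V_n$ can be negative, this lower bound is $(\Re z - K)\norm{f}^2$, which fails for $\Re z$ near $\nu_0'$ unless $\nu_0' > K$. So the correct choice is $\nu_0' \coloneqq K$ (or any number $> K$), and then with $c \coloneqq 1$ — or more precisely any $c \in (0, \nu_0' - K]$, say $c = \nu_0' - K$ after enlarging $\nu_0'$ slightly so this is positive — we get $\Re\scp{f}{zM_n(z)f} \geq (\Re z - K)\norm{f}^2 \geq c\norm{f}^2$ for all $z \in \C_{\Re>\nu_0'}$ and all $n$. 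This is the one genuinely delicate bookkeeping point: one must enlarge the half-plane far enough to the right to absorb the (possibly sign-indefinite) multiplication part into the real part of $z$; everything else is routine.

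Then I would check the strong-operator convergence of $(M_n(z))_n$. Fix $z \in \C_{\Re>\nu_0'}$. By \prettyref{prop:conve_mult}\ref{prop:conve_mult:item:2}, the hypotheses ``$(V_n)_n$ bounded in $L_\infty(\mu)$'' and ``$V_n \to V$ in $L_1(\mu)$'' give $V_n(\m) \to V(\m)$ in the strong operator topology of $\bo(\L(\mu))$; hence $M_n(z)f = f + z^{-1}V_n(\m)f \to f + z^{-1}V(\m)f$ for each $f \in \L(\mu)$, so $(M_n(z))_n$ converges strongly with limit $M(z) \coloneqq 1 + z^{-1}V(\m)$. Note $V \in L_\infty(\mu)$ with $\norm{V}_{L_\infty(\mu)} \leq K$ since the $L_1$-limit of a norm-bounded sequence in $L_\infty$ stays in the same ball (the closed ball of radius $K$ in $L_\infty(\mu)$ is closed in $L_1(\mu)$ on the finite measure space), so $M \in \mathcal{M}(\L(\mu),\nu_0')$ as well.

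Finally, \prettyref{thm:ODEcondep} applied with $\nu_0'$ in place of $\nu_0$ yields, for every $\nu > \nu_0'$,
\[
\bigl(\td{\nu}M_n(\td{\nu})\bigr)^{-1} \to \bigl(\td{\nu}M(\td{\nu})\bigr)^{-1}
\]
in the strong operator topology of $\bo\bigl(\Lnu(\R;\L(\mu))\bigr)$. It remains only to identify these operators: by \prettyref{exa:material_law_revisited}(a) and \prettyref{thm:spectral-repr-derivative} we have $M_n(\td{\nu}) = 1 + \td{\nu}^{-1}V_n(\m)$ (with $V_n(\m)$ now the extended multiplication operator on $\Lnu(\R;\L(\mu))$, which commutes with $\td{\nu}$), whence $\td{\nu}M_n(\td{\nu}) = \td{\nu} + V_n(\m)$ on the appropriate domain and $\overline{\td{\nu}M_n(\td{\nu})} = \td{\nu} + V_n(\m)$ since $V_n(\m)$ is bounded and $\td{\nu}$ is closed; likewise for $M$. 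Picking any single $\nu > K$ then gives the claim. The main obstacle, as indicated, is simply choosing the abscissa large enough: the naive choice $\nu_0 = 0$ does not deliver either the uniform bound on the $M_n$ or the coercivity estimate, and one has to track that the required $\nu$ depends on $\sup_n \norm{V_n}_{L_\infty(\mu)}$.
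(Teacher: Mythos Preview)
Your proof is correct and follows essentially the same approach as the paper: apply \prettyref{prop:conve_mult}\ref{prop:conve_mult:item:2} to get strong operator convergence of $V_n(\m)$, choose the abscissa larger than $\sup_n\norm{V_n}_{L_\infty(\mu)}$ to secure the coercivity estimate $\Re zM_n(z)\geq c$, and invoke \prettyref{thm:ODEcondep} with $M_n(z)=1+z^{-1}V_n(\m)$. The paper's proof is terser (it simply takes $\nu\geq 1+\sup_n\norm{V_n}_{L_\infty(\mu)}$ and notes $\Re(z+V_n(\m))\geq 1$), but the content is the same; your more explicit discussion of why $\nu_0'>K$ is forced is a helpful elaboration rather than a different argument.
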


Note that the convergence statement
can be improved, see \prettyref{exer:improved_cd_sot_ex}.

\begin{proof}
  By \prettyref{prop:conve_mult}\ref{prop:conve_mult:item:2} we obtain $V_n(\m)\to V(\m)$ in the strong
operator topology of $\bo(\L(\mu))$. Note that for $\nu\geq 1+\sup_{n\in\N} \norm{V_n}_{L_\infty(\mu)}$ we have
\[\Re (z+V_n(\m)) \geq 1 \quad (z\in\C_{\Re>\nu}, n\in\N).\]
Now \prettyref{thm:ODEcondep} applied to $M_n(z) = 1+z^{-1}V_n(\m)$ yields the assertion.
\end{proof}

\begin{rem}
  \prettyref{thm:cd_sot_ex} can be generalized in the following way. Let $(B_n)_n$ in $\bo(H)$, $B\in \bo(H)$, $B_n\to B$ in the strong operator topology.
  Then there txists $\nu>0$ such that 
\[
\bigl(\td{\nu}+B_n\bigr)^{-1}\to\bigl(\td{\nu}+B\bigr)^{-1}
\]
in the strong operator topology of $\bo\bigl(\Lnu(\R;\L(\mu))\bigr)$.
\end{rem}

In \prettyref{thm:cd_sot_ex} we did assume strong convergence of the sequence of multiplication operators $(V_n(\m))_n$.
A natural question to ask is whether the stated result can be improved to $(V_{n})_n$
converging in the weak$^*$ topology $\sigma\bigl(L_{\infty}(\mu),L_{1}(\mu)\bigr)$ only. The answer is neither
`yes' nor `no', but rather `not quite', as we will show in the following.
We start with a result on weak$^*$ limits of scaled periodic functions, which will serve as the prototypical example for a sequence converging in the weak$^*$ topology of $L_\infty$.

\begin{thm}
\label{thm:periodic_homo} Let $f\in L_{\infty}(\R^{d})$ be \emph{$\roi{0}{1}^d$-periodic}; that is,
\[
f(\cdot+k)=f\quad(k\in\Z^{d}).
\]
Then 
\[
f(n\cdot)\to\int_{\roi{0}{1}^{d}}f(x)\d x \1_{\R^d}
\]
in the weak$^*$ topology $\sigma\bigl(L_{\infty}(\R^{d}),L_{1}(\R^{d})\bigr)$ as $n\to\infty$.
\end{thm}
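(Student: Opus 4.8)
\textbf{Proof plan for \prettyref{thm:periodic_homo}.}
The plan is to exploit the density of a convenient class of test functions in $L_1(\R^d)$, combined with the observation that the claim is straightforward on indicator functions of axis-parallel cubes with integer-scaled corners. First I would fix notation: write $m \coloneqq \int_{\roi{0}{1}^d} f(x)\d x$, so that the target limit is $m\1_{\R^d}$ (as an element of $L_\infty(\R^d)$). Since $(f(n\cdot))_n$ is bounded in $L_\infty(\R^d)$ by $\norm{f}_{L_\infty(\R^d)}$ (periodicity does not affect the essential sup), it suffices to test against a total subset of $L_1(\R^d)$. As recalled in \prettyref{exer:C_cinfty dense}, the set $\set{\1_I}{I\text{ a bounded $d$-dimensional interval}}$ is total in $L_1(\R^d)$, so by boundedness of the sequence it is enough to show
\[
\int_{\R^d} f(nx)\1_I(x)\d x \to m\int_{\R^d}\1_I(x)\d x\quad(n\to\infty)
\]
for every bounded box $I=\prod_{j=1}^d\loi{a_j}{b_j}$.

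The key step is a direct computation for such a box. Substituting $y=nx$ gives $\int_I f(nx)\d x = n^{-d}\int_{nI} f(y)\d y$ where $nI = \prod_j \loi{na_j}{nb_j}$. I would then partition $nI$ into the integer-translated unit cells $k+\roi{0}{1}^d$ that it meets: the cells entirely contained in $nI$ number $\prod_j(\lfloor nb_j\rfloor - \lceil na_j\rceil)$, and on each such cell $\int_{k+\roi{0}{1}^d} f = m$ by periodicity (a change of variables $y\mapsto y-k$ and periodicity of $f$). The remaining `boundary' cells — those meeting $nI$ but not contained in it — form a set of Lebesgue measure $O(n^{d-1})$, since they lie within distance $\sqrt d$ of $\partial(nI)$, which has $(d-1)$-dimensional content $O(n^{d-1})$; on this remainder $\abs{f}\le\norm{f}_{L_\infty(\R^d)}$, so the contribution is $O(n^{d-1})$. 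Putting this together,
\[
n^{-d}\int_{nI} f(y)\d y = n^{-d}\Bigl( m\prod_{j=1}^d(\lfloor nb_j\rfloor - \lceil na_j\rceil) + O(n^{d-1})\Bigr) \to m\prod_{j=1}^d(b_j-a_j),
\]
because $n^{-1}(\lfloor nb_j\rfloor - \lceil na_j\rceil)\to b_j-a_j$, and the right-hand side equals $m\int_{\R^d}\1_I$. This proves the claimed convergence on boxes.

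Finally I would assemble the pieces: given arbitrary $g\in L_1(\R^d)$ and $\varepsilon>0$, pick a finite linear combination $h$ of indicators of bounded boxes with $\norm{g-h}_{L_1(\R^d)}<\varepsilon$; then
\[
\Bigl|\int_{\R^d}(f(n\cdot)-m)g\Bigr| \le \Bigl|\int_{\R^d}(f(n\cdot)-m)h\Bigr| + 2\norm{f}_{L_\infty(\R^d)}\varepsilon,
\]
and the first term tends to $0$ by linearity and the box computation, so $\limsup_n |\int (f(n\cdot)-m)g| \le 2\norm{f}_{L_\infty(\R^d)}\varepsilon$; letting $\varepsilon\to0$ gives the result. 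I do not expect a genuine obstacle here; the only point requiring a little care is the bookkeeping for the boundary cells, i.e.\ verifying that their total measure is indeed $O(n^{d-1})$ uniformly, which is where the argument could become notation-heavy if done too explicitly — it is cleanest to bound the boundary region by the difference of two concentric scaled boxes $\prod_j\loi{na_j-\sqrt d}{nb_j+\sqrt d}$ and $\prod_j\loi{na_j+\sqrt d}{nb_j-\sqrt d}$ and expand the product.
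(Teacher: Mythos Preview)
Your proposal is correct and follows essentially the same approach as the paper: reduce to indicator functions of boxes by density and uniform boundedness, substitute $y=nx$, then decompose $nI$ into full unit cells (each contributing the mean $m$) plus a boundary region of measure $O(n^{d-1})$. The only cosmetic difference is that the paper normalises to $m=0$ and $a=0$ and then strips off the boundary one coordinate at a time rather than counting interior versus boundary cells globally, but the computation is the same.
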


\begin{proof}
Without loss of generality, we may assume $\int_{\roi{0}{1}^{d}}f(x)\d x=0$.
By the density of simple functions in $L_{1}(\R^{d})$ and the boundedness
of $(f(n\cdot))_n$ in $L_{\infty}(\R^{d})$, it suffices to show
\[
\int_{Q}f(nx)\d x\to0\quad(n\to\infty)
\]
for $Q=\ci{a}{b}\coloneqq \ci{a_{1}}{b_{1}}\times\ldots\times\ci{a_{d}}{b_{d}}$ where
$a=(a_{1},\ldots,a_{d}),b=(b_{1},\ldots,b_{d})\in\R^d$. 
By translation and the periodicity of $f$ we may assume $a=0$. Thus, it suffices to show
\[
\int_{\ci{0}{b}}f(nx)\d x\to0\quad(n\to\infty)
\]
for all $b\in \oi{0}{\infty}^d$. So, let $b=(b_{1},\ldots,b_{d})\in\oi{0}{\infty}^d$.
Let $n\in\N$. Then we find $z\in\N^d$ and $\zeta\in\roi{0}{1}^d$
such that $nb=z+\zeta$.
We compute
\begin{align*}
 & \int_{\ci{0}{b}}f(nx)\d x\\
 & =\frac{1}{n^{d}}\int_{\ci{0}{nb}} f(x)\d x\\
 & =\frac{1}{n^{d}}\int_{\ci{0}{z_{1}}\times\ci{0}{nb_2}\times\ldots\times\ci{0}{nb_d}}f(x)\d x
 +\frac{1}{n^{d}}\int_{\loi{z_{1}}{z_{1}+\zeta_{1}}\times\ci{0}{nb_2}\times\ldots\times\ci{0}{nb_d}}f(x)\d x.
\end{align*}
We now estimate
\begin{align*}
\abs{\frac{1}{n^{d}}\int_{\loi{z_{1}}{z_{1}+\zeta_{1}}\times\ci{0}{nb_2}\times\ldots\times\ci{0}{nb_d}}f(x)\d x} 
 & \leq\frac{1}{n^{d}}\int_{\loi{z_{1}}{z_{1}+\zeta_{1}}\times\ci{0}{nb_2}\times\ldots\times\ci{0}{nb_d}}\abs{f(x)}\d x\\
 & \leq\frac{1}{n^{d}}\int_{\loi{0}{1}\times\ci{0}{nb_2}\times\ldots\times\ci{0}{nb_d}}\d x\norm{f}_{L_{\infty}(\mu)}\\
 & =\frac{1}{n}b_{2}\cdot\ldots\cdot b_{d}\norm{f}_{L_{\infty}(\mu)}.
\end{align*}
Continuing in this manner and using $z_{j}\leq nb_{j}$ for all $j\in\{1,\ldots,d\}$, we obtain
\begin{align*}
\abs{\int_{\ci{0}{b}}f(nx)\d x} & \leq\frac{1}{n^{d}}\abs{\int_{\ci{0}{z}}f(x)\d x}+\frac{1}{n}\sum_{j=1}^{d}\frac{b_{1}\cdot \ldots \cdot b_{d}}{b_{j}}\norm{f}_{L_{\infty}(\mu)}.
\end{align*}
Since $f$ is $\roi{0}{1}^{d}$-periodic and $z\in\N^d$ we observe
\[\int_{\ci{0}{z}}f(x)\d x = \prod_{j=1}^d z_j \int_{\roi{0}{1}^d} f(x)\d x = 0.\]
Thus,
\[
\abs{\int_{\ci{0}{b}}f(nx)\d x} \leq \frac{1}{n}\sum_{j=1}^{d}\frac{b_{1}\cdot\ldots\cdot b_{d}}{b_{j}}\norm{f}_{L_{\infty}(\mu)},
\]
which tends to $0$ as $n\to\infty$.
\end{proof}

\begin{rem}
Note that \prettyref{thm:periodic_homo} also yields 
\[
f(n\cdot)\to\int_{[0,1)^{d}}f(x)\d x \1_{\Omega}
\]
in the weak$^*$ topology $\sigma(L_{\infty}(\Omega),L_{1}(\Omega))$ for all
measurable subsets $\Omega\subseteq\R^{d}$ with non-zero Lebesgue measure.
\end{rem}

We now present an example which shows that weak$^*$ convergence of $(V_n)_n$ does not yield the result of \prettyref{thm:cd_sot_ex}.

\begin{example}
\label{exa:sin_Memory}
Let $(\Omega,\Sigma,\mu)=(\oi{0}{1},\mathcal{B}(\oi{0}{1}),\lambda|_{\oi{0}{1}})$.
For $n\in\N$ let $V_{n}$ be given by $V_{n}(x)\coloneqq\sin(2\pi nx)$
for $x\in\oi{0}{1}$. Then, by \prettyref{thm:periodic_homo}, we obtain $V_{n}\to0$ in $\sigma\bigl(L_{\infty}(\oi{0}{1}),L_{1}(\oi{0}{1})\bigr)$ as $n\to\infty$.
Let $\nu>1$. Then $\bigl(\td{\nu}+V_{n}(\m)\bigr)$ is continuously
invertible as an operator in $\Lnu\bigl(\R;\L(\oi{0}{1})\bigr)$. Let $\tilde{f}\in C(\ci{0}{1})$
and denote $f\colon(t,x)\mapsto\1_{\Rge{0}}(t)\tilde{f}(x)$. Then $f\in\Lnu\bigl(\R;\L(\oi{0}{1})\bigr)$.
The solution $u_{n}\in\Lnu\bigl(\R;\L(\oi{0}{1})\bigr)$ of 
\[
\bigl(\td{\nu}+V_{n}(\m)\bigr)u_{n}=f
\]
is given by the variations of constants formula; that is,
\[
u_{n}(t,x)=\1_{\roi{0}{\infty}}(t)\int_{0}^{t}\exp\bigl(-(t-s)\sin(2\pi nx)\bigr)\d s\tilde{f}(x)\quad(t\in\R,x\in\oi{0}{1}).
\]
Thus, if a variant of \prettyref{thm:cd_sot_ex} were true also in
this case, $(u_{n})_n$ needs to converge (in some sense) to the solution $u$
of
\[
\td{\nu}u=f,
\]
which is given by
\[
u(t,x)=\1_{\roi{0}{\infty}}(t)t\tilde{f}(x)\quad(t\in\R,x\in\oi{0}{1}).
\]
However, by \prettyref{thm:periodic_homo}, for $x\in\oi{0}{1}$ we deduce 
\[\int_{0}^{t}\exp\bigl(-(t-s)\sin(2\pi nx)\bigr)\d s \to \int_{0}^{t}J(-(t-s))\d s\quad(n\to\infty)
\]
in $\sigma\bigl(L_{\infty}(\oi{0}{1}),L_{1}(\oi{0}{1})\bigr)$ for each $t\geq0$, where
\[
J(s)\coloneqq\int_{0}^{1}\exp\bigl(s\sin(2\pi x)\bigr)\d x\quad(s\in\R)
\]
denotes the $0$-th order modified Bessel function of the first kind,
cf.~\cite[9.6.19]{Abramowitz1964}. Moreover, for $\varphi\in \cci(\R)$, $A\in\mathcal{B}(\oi{0}{1})$ and using domianted convergence we obtain
\begin{align*}
& \scp{u_{n}}{\varphi\1_{A}}_{\Lnu(\R;\L(\oi{0}{1}))} \\
& =\int_{0}^{\infty}\int_{0}^{1}\int_{0}^{t}\exp\bigl(-(t-s)\sin(2\pi nx)\bigr)\d s\tilde{f}(x)\1_{A}(x)\d x\varphi(t)\e^{-2\nu t}\d t\\
 & \to\int_{0}^{\infty}\int_{0}^{1}\int_{0}^{t}J(-(t-s))\d s\tilde{f}(x)\1_{A}(x)\d x\varphi(t)\e^{-2\nu t}\d t\\
 & =\scp{\tilde{u}}{\varphi\1_{A}}_{\Lnu(\R;\L(\oi{0}{1}))}
\end{align*}
with 
\[
\tilde{u}(t,x)\coloneqq\1_{\roi{0}{\infty}}(t)\int_{0}^{t}J(-(t-s))\d s\tilde{f}(x)\quad(t\in\R,x\in\oi{0}{1}).
\]
Since $(u_{n})_n$ is bounded in $\Lnu\bigl(\R;\L(\oi{0}{1})\bigr)$ and $\set{\varphi\1_{A}}{A\in\mathcal{B}(\oi{0}{1}),\,\varphi\in\cci(\R)}$
is total in $\Lnu\bigl(\R;\L(\oi{0}{1})\bigr)$ by \prettyref{lem:dense sets 2},
we infer $u_{n}\to\tilde{u}$
weakly in $\Lnu\bigl(\R;\L(\oi{0}{1})\bigr)$ as $n\to\infty$. In particular,
$\tilde{u}\ne u$.
Furthermore, $\tilde{u}$ is \emph{not} of the form
\[
\int_{0}^{t}\exp\bigl(-(t-s)\tilde{V}(x)\bigr)\d s\tilde{f}(x)
\]
for some $\tilde{V}\in L_{\infty}(\oi{0}{1})$ and hence, we \emph{cannot}
hope for $\tilde{u}$ to satisfy an equation of the type
\[
\bigl(\td{\nu}+\tilde{V}(\m)\bigr)\tilde{u}=f.
\]
\end{example}

As we shall see next, in the framework of evolutionary equations it is possible to
derive an equation involving suitable limits of $(V_{n})_n$
and $f$ as a right-hand side.

\section{Convergence in the Weak Operator Topology}

In this section, we consider a particular class of material laws and characterise convercenge of the solution operators of the corresponding evolutionary equations in the weak operator topology. The main theorem that will
serve to compute the limit equation satisfied by $\tilde{u}$ in \prettyref{exa:sin_Memory}
reads as follows.
\begin{thm}
\label{thm:char_thm_wot}Let $H$ be a Hilbert space, $(B_{n})_n$ a bounded sequence in $\bo(H)$ and $\nu>\sup_{n\in\N}\norm{B_{n}}$.
Then $\bigl((\td{\nu}+B_{n})^{-1}\bigr)_n$ converges
in the weak operator topology of $\bo(\Lnu(\R;H))$ if and only if
for all $k\in\N$ the sequence $(B_{n}^{k})_n$ converges
in the weak operator topology of $\bo(H)$. In either case, we have 
\[
(\td{\nu}+B_{n})^{-1}\to\sum_{k=0}^{\infty}\bigl(-\td{\nu}^{-1}\bigr)^{k}C_{k}\td{\nu}^{-1}
\]
in the weak operator topology of $\bo(\Lnu(\R;H))$, where $C_{k}\in\bo(H)$ denotes the weak limit of $(B_n^k)_n$ for $k\in\N$ and $C_0\coloneqq 1_H$.
\end{thm}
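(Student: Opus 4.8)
The key idea is to use the Neumann series for the resolvent of the time derivative. Since $\nu > \sup_{n\in\N}\norm{B_n}$ and $\norm{\td{\nu}^{-1}} \leq 1/\nu$, we have $\norm{\td{\nu}^{-1}B_n} \leq \norm{B_n}/\nu < 1$ uniformly in $n$. Hence
\[
(\td{\nu}+B_n)^{-1} = (1 + \td{\nu}^{-1}B_n)^{-1}\td{\nu}^{-1} = \sum_{k=0}^\infty \bigl(-\td{\nu}^{-1}B_n\bigr)^k \td{\nu}^{-1},
\]
with operator-norm convergence of the series, uniform in $n$. The first step of the plan is to justify this expansion rigorously using \prettyref{prop:resolvent} (or rather its elementary Neumann series argument), and to record that the tail of the series is small uniformly in $n$: for $q \coloneqq \sup_{n\in\N}\norm{B_n}/\nu < 1$ we have $\norm{\sum_{k\geq N}(-\td{\nu}^{-1}B_n)^k\td{\nu}^{-1}} \leq \frac{1}{\nu}\sum_{k\geq N} q^k \to 0$ as $N\to\infty$, independently of $n$.

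The second step is to observe that $\bigl(-\td{\nu}^{-1}B_n\bigr)^k\td{\nu}^{-1}$ can be rewritten by pushing the bounded operators $B_n$ to the left. Since $B_n$ is a bounded operator in $H$ acting pointwise on $\Lnu(\R;H)$, and $\td{\nu}^{-1}$ commutes with it (as $\td{\nu}^{-1}$ is a material law operator; cf.~\prettyref{exer:abstractFourier_exten} or \prettyref{rem:mlohom}), we get $\bigl(\td{\nu}^{-1}B_n\bigr)^k = \bigl(\td{\nu}^{-1}\bigr)^k B_n^k$, hence $\bigl(-\td{\nu}^{-1}B_n\bigr)^k\td{\nu}^{-1} = \bigl(-\td{\nu}^{-1}\bigr)^k B_n^k\, \td{\nu}^{-1}$. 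So the $n$-dependence of the $k$-th summand is carried entirely by $B_n^k$ as an operator in $H$, extended to $\Lnu(\R;H)$.

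The third step handles the equivalence. For the ``if'' direction, assume $B_n^k \to C_k$ in the weak operator topology of $\bo(H)$ for each $k$. Then the extension of $B_n^k$ to $\Lnu(\R;H)$ converges in the weak operator topology of $\bo(\Lnu(\R;H))$ to the extension of $C_k$: this follows by testing against elements of the total set $\set{\varphi(\cdot)x}{\varphi\in\Lnu(\R),x\in H}$ (using \prettyref{lem:dense sets}) together with the uniform boundedness $\sup_n\norm{B_n^k} \leq q^k\nu^k$. Consequently each summand $\bigl(-\td{\nu}^{-1}\bigr)^k B_n^k\td{\nu}^{-1}$ converges weakly to $\bigl(-\td{\nu}^{-1}\bigr)^k C_k\td{\nu}^{-1}$, and combining this with the uniform smallness of the tail from the first step, a standard $3\varepsilon$-argument gives weak convergence of the full series to $\sum_{k=0}^\infty\bigl(-\td{\nu}^{-1}\bigr)^k C_k\td{\nu}^{-1}$. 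For the ``only if'' direction, suppose $\bigl((\td{\nu}+B_n)^{-1}\bigr)_n$ converges weakly. The main obstacle here is to extract the convergence of each individual $B_n^k$ from the convergence of the whole series; I would do this by an inductive ``peeling'' argument: first multiply by $\td{\nu}$ on the right to recover $\sum_{k=0}^\infty(-\td{\nu}^{-1})^k B_n^k$, which then converges weakly; the $k=0$ term is the identity, so $\sum_{k=1}^\infty(-\td{\nu}^{-1})^k B_n^k$ converges weakly; multiplying by $-\td{\nu}$ on the left and peeling off $B_n$ (which we get by evaluating on functions of the form $\varphi x$ and using that the weak limit, being a material law operator, determines its ``coefficients'' — this is essentially the uniqueness in \prettyref{thm:representaion} applied coefficient-wise) shows $B_n \to C_1$ weakly; iterating yields $B_n^k \to C_k$ weakly for all $k$. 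The technically delicate point is making the peeling step precise — one natural route is to use the Fourier--Laplace transform to turn everything into multiplication-operator-valued power series in $z^{-1}$ and compare coefficients via \prettyref{exer:dense in L1}-type arguments on a set with an accumulation point, so that weak convergence of the $\bo(H)$-valued holomorphic functions $z\mapsto (zM_n(z)+\cdots)$ forces convergence of Taylor coefficients. I expect this ``only if'' direction, specifically the rigorous coefficient extraction, to be the main difficulty; the ``if'' direction and the limit formula are comparatively routine given the uniform Neumann estimate.
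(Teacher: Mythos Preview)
Your treatment of the ``if'' direction is correct and essentially matches the paper's: the Neumann expansion with uniform tail control, the commutation $(-\td{\nu}^{-1}B_n)^k=(-\td{\nu}^{-1})^kB_n^k$, and term-by-term weak convergence combined with a $3\varepsilon$-argument give exactly the stated limit. The paper packages this slightly differently (via \prettyref{thm:converge_wotsot}), but the content is the same.

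The ``only if'' direction, however, has a real gap. Your peeling argument gets you as far as weak convergence of $R_n\coloneqq\sum_{j\geq0}(-\td{\nu}^{-1})^jB_n^{j+1}$ (the multiplications by $\td{\nu}$ are fine, since the intermediate operators are uniformly bounded), but the step ``peel off $B_n$'' does not go through: weak convergence of $R_n$ gives you no handle on the $j=0$ term separately, because you have no information that the remainder $\sum_{j\geq1}(-\td{\nu}^{-1})^jB_n^{j+1}$ converges. Writing $B_n=(1-R_n\td{\nu}^{-1})^{-1}R_n$ does not help either, since inversion does not respect weak operator convergence. Your fallback suggestion (Fourier--Laplace and ``compare coefficients on a set with an accumulation point'') is in the right spirit but misses the essential mechanism.

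What the paper actually does for this direction is a normal-families argument. Fixing $\phi,\psi\in H$, the scalar functions $h_n(z)=\scp{\phi}{M_n(z)\psi}$ form a locally bounded family of holomorphic functions on $\C_{\Re>L}$. Weak convergence of $M_n(\td{\nu})$ translates, after Fourier--Laplace, into convergence of $h_n(\i\cdot+\nu)$ in $\sigma(L_\infty,L_1)$; Montel's theorem then gives subsequential locally uniform limits, and the $L_\infty$ identification pins down a unique holomorphic limit $h$, so the full sequence $h_n\to h$ locally uniformly. Transferring via $m_n(z)=h_n(1/z)$ to a neighbourhood of $0$ and invoking Vitali's theorem, one finally extracts $\scp{\phi}{B_n^k\psi}$ as the $k$-th Taylor coefficient of $m_n$ via Cauchy's integral formula, which converges because $m_n\to m$ uniformly on circles. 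The complex-analytic input (Montel, Vitali, Cauchy) is precisely what replaces your missing ``coefficient extraction'' step.
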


\begin{rem}
\label{rem:char_thm_wot}
In the situation of \prettyref{thm:char_thm_wot}, let $B_{n}^{k}\to C_k$ in the weak operator topology for all $k\in\N$.
Let $L\coloneqq\sup_{n\in\N}\norm{B_{n}}$, $\nu>2L$, and $f\in \Lnu(\R;H)$.
By \prettyref{thm:char_thm_wot}, if $(\td{\nu}+B_{n})u_{n}=f$ for all $n\in\N$, then
$(u_{n})_n$ converges weakly in $\Lnu(\R;H)$ to some element $\tilde{u}\in\Lnu(\R;H)$.
In order to determine the differential equation satisfied by $\tilde{u}$,
we make the following observations: by weak convergence,
\[
\norm{C_{k}}\leq\liminf_{n\to\infty}\norm{B_{n}^{k}}\leq L^{k}.
\]
Hence, since $\norm{\td{\nu}^{-1}}_{\Lnu}\leq \frac{1}{\nu}$ (see \prettyref{sec:time_derivative}) we infer that
\[
\sum_{k=1}^{\infty}\bigl(-\td{\nu}^{-1}\bigr)^{k}C_{k}
\]
converges in $\bo(\Lnu(\R;H))$ and 
\[
\norm{\sum_{k=1}^{\infty}\bigl(-\td{\nu}^{-1}\bigr)^{k}C_{k}} \leq \sum_{k=1}^\infty \norm{\td{\nu}^{-1}}^k \norm{C_k} < \sum_{k=1}^{\infty} \frac{1}{2^k} = 1.
\]
Hence, since $C_{0}=\idop_{H}$ we deduce that $\sum_{k=0}^{\infty}\bigl(-\partial_{t,\nu}^{-1}\bigr)^{k}C_{k}$
is boundedly invertible by the Neumann series. Thus, we obtain
\begin{align*}
f & = \td{\nu}\left(\sum_{k=0}^{\infty}\bigl(-\td{\nu}^{-1}\bigr)^{k}C_{k}\right)^{-1}\tilde{u}
 =\td{\nu}\left(\idop_{H}+\sum_{k=1}^{\infty}\bigl(-\td{\nu}^{-1}\bigr)^{k}C_{k}\right)^{-1}\tilde{u}\\
 & =\td{\nu}\sum_{\ell=0}^{\infty}\left(-\sum_{k=1}^{\infty}\bigl(-\td{\nu}^{-1}\bigr)^{k}C_{k}\right)^{\ell}\tilde{u}
 =\td{\nu}\tilde{u}+\td{\nu}\sum_{\ell=1}^{\infty}\left(-\sum_{k=1}^{\infty}\bigl(-\td{\nu}^{-1}\bigr)^{k}C_{k}\right)^{\ell}\tilde{u}.
\end{align*}
\end{rem}

Before we prove \prettyref{thm:char_thm_wot} we revisit \prettyref{exa:sin_Memory}.

\begin{example}[\prettyref{exa:sin_Memory} continued]
By \prettyref{thm:char_thm_wot}, we need to compute the limit
of $(\sin^{k}(2\pi n\cdot))_n$ in the weak$^*$ topology
of $L_{\infty}(\oi{0}{1})$ for all $k\in\N$. By \prettyref{thm:periodic_homo}, we
obtain for all $k\in\N$  
\begin{align*}
\lim_{n\to\infty}\sin^{k}(2\pi n\cdot) & =\int_{0}^{1}\sin^{k}(2\pi\xi)\d\xi \1_{\oi{0}{1}}
 =\begin{cases}
\frac{\left(2m\right)!}{\left(m!2^{m}\right)^{2}}\1_{\oi{0}{1}}, & k=2m\text{ for some }m\in\N,\\
0, & k\text{ odd},
\end{cases}
\end{align*}
in $\sigma\bigl(L_{\infty}(\oi{0}{1}),L_{1}(\oi{0}{1})\bigr)$. Hence, $u_{n}\to\tilde{u}$ weakly,
where $\tilde{u}$ satisfies
\[
\td{\nu}\tilde{u}+\td{\nu}\sum_{\ell=1}^{\infty}\left(-\sum_{m=1}^{\infty}\td{\nu}^{-2m}\frac{\left(2m\right)!}{\left(m!2^{m}\right)^{2}}\right)^{\ell}\tilde{u}=f
\]
for $\nu>2$ by \prettyref{rem:char_thm_wot}.
\end{example}

\begin{proof}[Proof of \prettyref{thm:char_thm_wot}]
 Before we prove the equivalence, we make some observations. Since
$\nu>\sup_{n\in\N}\norm{B_{n}}\eqqcolon L$, by a Neumann series argument we deduce that
\[
 \bigl(\td{\nu}+B_{n}\bigr)^{-1}=\sum_{k=0}^{\infty}\bigl(-\td{\nu}^{-1}B_{n}\bigr)^{k}\td{\nu}^{-1} = \sum_{k=0}^{\infty}\bigl(-\td{\nu}^{-1}\bigr)^k B_{n}^k \td{\nu}^{-1}.
\]
The series $\sum_{k=0}^{\infty}\bigl(-\td{\nu}^{-1}\bigr)^kB_{n}^{k}\td{\nu}^{-1}$
is absolutely convergent in $\bo(\Lnu(\R;H))$. Also note that for $M_{n}\from\C_{\Re>L}\ni z\mapsto\sum_{k=0}^{\infty}(-\frac{1}{z})^k B_{n}^{k}\frac{1}{z}$ we have $M_n\in\mathcal{M}(H,\nu)$. 

Assume now that $(B_{n}^{k})_{n}$ converges in the weak operator
topology to some $C_{k}$ for all $k\in\N$. A little computation reveals that 
as $n\to\infty$, 
\[
M_{n}(z)\to\sum_{k=0}^{\infty}\left(-\frac{1}{z}\right)^{k}C_{k}\frac{1}{z}\eqqcolon M(z)\quad(z\in\C_{\Re>L})
\]
in the weak operator topology, where the series on the right-hand
side converges in $\bo(H)$ since 
\[
\norm{C_{k}}\leq\liminf_{n\to\infty}\norm{B_{n}^{k}}\leq L^{k}\quad(k\in\N).
\]
Moreover, since $\nu>L$, the sequence $(M_{n})_n$ is bounded in
$\mathcal{M}(H,\nu)$ and thus, $M\in\mathcal{M}(H,\nu)$ and 
\[
M_{n}(\td{\nu})\to M(\td{\nu})
\]
in the weak operator topology by \prettyref{thm:converge_wotsot}.

Now, we assume that $\bigl((\td{\nu}+B_{n})^{-1}\bigr)_n$ converges in the weak operator topology.
Then $(M_{n}(\td{\nu}))_n$ converges in the weak operator topology. 
Let $k\in\N$. We need to show that for all $\phi,\psi\in H$ the sequence
$(\scp{\phi}{B_{n}^{k}\psi}_{H})_n$ is convergent to
some number $c_{k,\phi,\psi}$ as $n\to\infty$.
The Riesz representation theorem then yields the existence of $C_{k}\in\bo(H)$
with $\scp{\phi}{C_{k}\psi}=c_{k,\phi,\psi}$. So, let $\phi,\psi\in H$.
Moreover, we consider the functions $m_{n}$ and $h_{n}$ given by
\[
m_{n}(z)\coloneqq\sum_{k=0}^{\infty}(-z)^{k}z\scp{\phi}{B_{n}^{k}\psi}_{H}\quad(z\in B(0,1/L), n\in\N)
\]
and 
\[
h_{n}(z)\coloneqq\scp{\phi}{M_{n}(z)\psi}_{H}=\sum_{k=0}^{\infty}\frac{1}{z}\left(-\frac{1}{z}\right)^{k}\scp{\phi}{B_{n}^{k}\psi}_{H}\quad(z\in\C_{\Re>L}, n\in\N).
\]
Clearly, $m_{n}$ and $h_{n}$ are holomorphic on their respective domains
for each $n\in\N$ and the sequences $(m_{n})_n$ and $(h_{n})_n$ are
uniformly bounded on compact subsets (in other words they form normal
families). Moreover, 
\[
m_{n}(z)=h_{n}\Bigl(\frac{1}{z}\Bigr)\quad\bigl(z\in B\bigl(1/(2L),1/(2L)\bigr),n\in\N\bigr).
\]
We aim to show that the coefficients of the power series of $m_{n}$
converge as $n$ tends to infinity. The proof will be done in two
steps. In step 1, we will prove that the sequence $(h_{n})_{n}$
converges to a holomorphic function $h\from\C_{\Re>L}\to\C$ uniformly on
compact sets. Then, in the second step, we will use this to deduce
that $(m_{n})_n$ also converges uniformly on compact sets and prove
the assertion with the help of Cauchy's integral formula.

\emph{Step 1:} By \prettyref{prop:mat_law_function_of_td}, $(M_{n}(\i\m+\nu))_n$
converges in the weak operator topology of $\bo(\L(\R;H))$. For $f,g\in\L(\R)$
we thus obtain that
\begin{align*}
\bigl(\scp{f}{h_{n}(\i\m+\nu)g}_{\L(\R)}\bigr)_n & =\bigl(\scp{f\phi}{M_{n}(\i\m+\nu)g\psi}_{\L(\R;H)}\bigr)_n
\end{align*}
is convergent. Thus, using $\L(\R)\cdot\L(\R)=L_{1}(\R)$, we obtain
that
\[
\Psi\from L_{1}(\R)\ni u\mapsto\lim_{n\to\infty}\left(\int_{\R}h_{n}(\i t+\nu)u(t)\d t\right) \in \C
\]
defines a linear functional, which is continuous, since 
\[
\sup_{n\in\N}\sup_{t\in\R}\norm{M_{n}(\i t+\nu)}_{\bo(H)}=\sup_{n\in\N}\norm{M_{n}(\i\m+\nu)}_{\bo(\L(\R;H))}<\infty
\]
by boundedness of $(B_n)_n$. Hence, since $L_{1}(\R)'=L_{\infty}(\R)$,
we find a unique $\tilde{h}\in L_{\infty}(\R)$ with
\begin{align*}
\lim_{n\to\infty}\int_{\R}h_{n}(\i t+\nu)u(t)\d t & =\int_{\R}\tilde{h}(t)u(t)\d t\quad(u\in L_{1}(\R)).
\end{align*}
We now show that every subsequence $(h_{n_k})_k$ of $(h_n)_n$ has a subsequence $(h_{n_{k_l}})_l$ which converges locally uniformly to a holomorphic function $h\from \C_{\Re>L}\to \C$ such that $h(\i\cdot+\nu) = \tilde{h}$ a.e., and that this implies that the limit $h$ does not depend on the subsequences.
Then we conclude that $(h_n)_n$ itself converges locally uniformly to $h$.

So, let $(h_{n_k})_k$ be a subsequence of $(h_n)$. 
By Montel's theorem (see \cite[Theorem 6.2.2]{Simon2015}), we find a subsequence
$(h_{n_{k_l}})_{l}$ of $(h_{n_k})_k$ such that $h_{n_{k_l}}\to h$
as $l\to\infty$ uniformly on compact subsets of $\C_{\Re>L}$
for some holomorphic function $h\from\C_{\Re>L}\to\C$. In particular, we
obtain
\[
\lim_{k\to\infty}\int_{\R}h_{n_{k_l}}(\i t+\nu)\varphi(t)\d t=\int_{\R}h(\i t+\nu)\varphi(t)\d t\quad(\varphi\in \cc(\R))
\]
by dominated convergence and hence, $h(\i t+\nu)=\tilde{h}(t)$ for
almost every $t\in\R$. This shows that the limit $h$ is independent of choice of the subsequences $(h_{n_k})_k$ and $(h_{n_{k_l}})_l$.
Indeed, if $\hat{h}\from\C_{\Re>L}\to\C$ is the limit of another subsubsequence of $(h_n)_n$ as above, then $\hat{h}(\i\cdot+\nu) = \tilde{h}=h(\i\cdot+\nu)$ a.e.
Since $\hat{h}$ and $h$ are holomorphic, the identity theorem yields $\hat{h}=h$.

Now, assume for a contradiction that $(h_{n})_n$ does not converge locally uniformly to $h$. Then we
find a subsequence $(h_{n_{k}})_k$ of $(h_n)_n$, a compact set $K\subseteq\C_{\Re>L}$
and $\varepsilon>0$ such that
\begin{equation}
\norm{h_{n_{k}}-h}_{\infty,K}\geq\varepsilon\quad(k\in\N).\label{eq:subsubsequence}
\end{equation}
However, the subsequence $(h_{n_k})_k$ has a subsequence $(h_{n_{k_l}})_l$ which converges locally uniformly to $h$, contradicting \prettyref{eq:subsubsequence}.
Thus, $(h_n)_n$ itself converges locally uniformly to $h$, and, in particular, $h_n\to h$ pointwise on $\C_{\Re>L}$.
%Then $h(\i\cdot+\nu) = \tilde{h}$ a.e., so in particular $\tilde{h}$ has a continuous representative and we may assume that $h(\i t+\nu) = \tilde{h}(t)$ for all $t\in\R$.

\emph{Step 2:} By what we have shown in Step 1, the sequence $(m_{n})_{n\in\N}$
converges pointwise on $B\bigl(1/(2L),1/(2L)\bigr)$. Since $(m_{n})_n$
is also uniformly bounded on compact subsets of $B(0,1/L)$, we derive
that $(m_{n})_n$ converges uniformly on compact subsets of $B(0,1/L)$
by Vitali's theorem (see \cite[Theorem 6.2.8]{Simon2015}). Choosing
$0<r<1/L$, we thus obtain by Cauchy's integral formula 
\[
\scp{\phi}{B_{n}^{k}\psi}_H=(-1)^{k}\frac{1}{2\pi\i}\int_{\partial B(0,r)}\frac{m_{n}(z)}{z^{k+2}}\d z.
\]
Thus $(B_n^k)_n$ converges in the weak operator topology as $n\to\infty$.
\end{proof}

%The last theorem provided a characterisation of situations when we can expect
%$(\td{\nu}+B_{n})^{-1}$ to converge in the weak operator topology.
%Also we have seen that for a periodic function $f$, the sequence
%$(f(n\cdot))$ converges in the weak operator topology (and all
%its powers) to the respective integral mean. This leads us to the
%following problem, which at least to the virtual lecturers appears
%to be open:
%\begin{problem}
%Characterise all holomorphic functions 
%\[
%z\mapsto\sum_{k=0}^{\infty}a_{k}z^{k}
%\]
%with the property that $a_{k}=\int_{[0,1)^{d}}f(x)^{k}\d x$ for all
%$k\in\N$ and some bounded, measurable $[0,1)^{d}$-periodic $f$.
%\end{problem}

%A solution to this problem would yield all possible limits occuring
%in \prettyref{thm:char_thm_wot}.

\section{Comments}

The problems discussed here are contained in \cite{W16_H,W14_G} for
both the weak and the strong operator topology. The case of differential-algebraic
equations has been invoked as well. 

The appearance of memory effects; that is, the occurence of higher
order integral operators due to a weak convergence of the coefficients
has been first observed by Tartar and can, for instance, be found in
\cite{Tartar2009}. The limit equation, however, is described by a
convolution term rather than a power series of integral operators.
It is, however, possible to reformulate these resulting equations into
one another, see \cite{W11_P}. 

The last characterisation of weak convergence in \prettyref{thm:char_thm_wot} was formulated
for the first time in \cite{PTW15_WP_P}.

\section*{Exercises}
\addcontentsline{toc}{section}{Exercises}

\begin{xca}
Let $(V_{n})_n$ in $L_\infty(\R^{d})$ and $V\in L_\infty(\R^{d})$. Characterise
convergence of $V_{n}(\m)\to V(\m)$ in the strong operator topology
of $\bo(\L(\R^{d}))$ in terms of convergence of $(V_{n})_n$ similar
to as was done in \prettyref{prop:conve_mult}.
\end{xca}

\begin{xca}
Show that there exists an unbounded sequence $(V_{n})_n$ in $L_{\infty}(\oi{0}{1})$
and $V\in L_{\infty}(\oi{0}{1})$ with $V_{n}\to V$ in $L_{1}(\oi{0}{1})$. 
\end{xca}

\begin{xca}
\label{exer:improved_cd_sot_ex}
Let $(\Omega,\Sigma,\mu)$ be a finite measure space, $(V_{n})_n$ a
bounded sequence in $L_{\infty}(\mu)$ and assume that $V_{n}\to V$
in $L_{1}(\mu)$ for some $V\in L_{\infty}(\mu)$. Show that there
exists $\nu>0$ such that 
\[
\bigl(\td{\nu}+V_{n}(\m)\bigr)^{-1}\to\bigl(\td{\nu}+V(\m)\bigr)^{-1}
\]
in the strong operator topology of $\bo\bigl(\Lnu(\R;\L(\mu)),H_{\nu}^{1}(\R;\L(\mu))\bigr)$.
\end{xca}

\begin{xca}
Let $D=\bigcup_{n\in\Z} \ci{n+1/2}{n+1}$, $V_{n}\coloneqq\1_{D}(n\cdot)$.
For suitable $\nu>0$ compute the limit of 
\[
\bigl((\td{\nu}+V_{n}(\m))^{-1}\bigr)_n
\]
in the weak operator topology of $\Lnu\bigl(\R;\L(\oi{0}{1})\bigr)$.
\end{xca}

\begin{xca}
Let $H$ be a Hilbert space, $c>0$ and $c\leq B_{n}=B_{n}^{*}\in\bo(H)$
for all $n\in\N$. Characterise, in terms of convergence
of $(B_{n})_n$ in a suitable sense, that
\[
\bigl((\td{\nu}B_{n})^{-1}\bigr)_n
\]
converges in the weak operator topology. In the case of convergence, find
its limit and a sufficient condition for which there exists a $B\in\bo(H)$
such that 
\[
(\td{\nu}B_{n})^{-1}\to(\td{\nu}B)^{-1}
\]
in the weak operator topology.
\end{xca}

\begin{xca}
Let $H$ be a Hilbert space. Show that $B_{\bo(H)}\coloneqq\set{B\in\bo(H)}{\norm{B}\leq1}$
is a compact subset under the weak operator topology. If, in addition,
$H$ is separable, show that $B_{\bo(H)}$ is also metrisable under
the weak operator topology.
\end{xca}

\begin{xca}
Let $H$ be a separable Hilbert space, $(B_{n})_n$
in $\bo(H)$ bounded. Show that there exists a subsequence $(B_{n_{k}})_k$ of $(B_n)_n$, a material law $M\from\dom(M)\to\bo(H)$ and $\nu>0$ such that
given $f\in\Lnu(\R;H)$ and $(u_{k})_k$ in $\Lnu(\R;H)$ with 
\[
\td{\nu}u_{k}+B_{n_{k}}u_{k}=f\quad(k\in\N),
\]
we deduce that $(u_{k})_k$ converges weakly to some
$u\in\Lnu(\R;H)$ with the property that
\[
\td{\nu}M(\td{\nu})u=f.
\]
\end{xca}

\printbibliography[heading=subbibliography]

\chapter{Continuous Dependence on the Coefficients II}

This chapter is concerned with the study of problems of the form
\[
\left(\td{\nu}M_{n}(\td{\nu})+A\right)U_{n}=F
\]
for a suitable sequence of material laws $\left(M_{n}\right)_{n}$
when $A\neq0$. The aim of this chapter will be to provide the conditions required for
convergence of the material law sequence to imply the existence
of a limit material law $M$ such that the limit $U=\lim_{n\to\infty}U_{n}$
exists and satisfies 
\[
\left(\td{\nu}M(\td{\nu})+A\right)U=F.
\]
Additionally, for material laws of the form
$M_{n}(\td{\nu})=M_{0,n}+\td{\nu}^{-1}M_{1,n}$ it will be desirable to have the respective limit
material law satisfy $M(\td{\nu})=M_{0}+\td{\nu}^{-1}M_{1}$ for
some $M_{0},M_{1}\in\bo(H)$. This cannot be expected (as we have seen in the guiding example in the
previous chapter) if $A$ is
a bounded operator, the Hilbert space $H$ is infinite-dimensional, and
the material law sequence only converges pointwise in the weak operator
topology. It will turn out, however, that if $A$ is ``strictly unbounded''
then a suitable result can hold, even if we only assume weak convergence
of the material law operators. 

\section{A Convergence Theorem}

The main convergence theorem of this chapter will be presented next.
\begin{thm}
\label{thm:convPDE}Let $H$ be a Hilbert space, $\nu_{0}\in\R,$ $\left(M_{n}\right)_{n}$
in $\mathcal{M}(H,\nu_{0})$ and $M\in\mathcal{M}(H,\nu_{0})$. Assume there
exists $c>0$ such that for all $n\in\N$ we have
\[
\Re zM_{n}(z)\geq c\quad(z\in\C_{\Re>\nu_{0}}).
\]
Let $A\colon\dom(A)\subseteq H\to H$ be skew-selfadjoint and assume $\dom(A)\hookrightarrow H$ compactly. If $M_{n}(z)\to M(z)$
as $n\to\infty$ in the weak operator topology for all $z\in\C_{\Re>\nu_{0}}$,
then 
\[
\bigl(\overline{\td{\nu}M_{n}(\td{\nu})+A}\bigr)^{-1}\to \bigl(\overline{\td{\nu}M(\td{\nu})+A}\bigr)^{-1}
\]
in the strong operator topology of $\bo(\Lnu(\R;H))$ for each $\nu>\nu_0$. 
\end{thm}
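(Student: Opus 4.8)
The plan is to bypass any density or regularity reduction and work directly on the Fourier--Laplace side, where the compactness of the embedding $\dom(A)\hookrightarrow H$ upgrades the merely weak convergence of the material laws to \emph{strong} resolvent convergence, pointwise in the spectral parameter.

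First I would record the preliminaries. Since $M_n(z)\to M(z)$ in the weak operator topology for each $z\in\C_{\Re>\nu_0}$, the inequality $\Re zM_n(z)\geq c$ passes to the limit, giving $\Re zM(z)\geq c$ on $\C_{\Re>\nu_0}$. Hence for every $\nu>\nu_0$ Picard's theorem (\prettyref{thm:Solution_theory_EE}) applies to $A$ together with each of $M$ and $M_n$, and by \prettyref{rem:somlo} the solution operators are material law operators: with $S_n(z)\coloneqq(zM_n(z)+A)^{-1}$ and $S(z)\coloneqq(zM(z)+A)^{-1}$ we have $\norm{S_n(z)},\norm{S(z)}\leq 1/c$ on $\C_{\Re\geq\nu}$ and $\bigl(\overline{\td{\nu}M_n(\td{\nu})+A}\bigr)^{-1}=S_n(\td{\nu})$, $\bigl(\overline{\td{\nu}M(\td{\nu})+A}\bigr)^{-1}=S(\td{\nu})$. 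Via unitarity of $\mathcal{L}_\nu$ the assertion thus reduces to showing that $S_n(\i\m+\nu)\to S(\i\m+\nu)$ in the strong operator topology of $\bo\bigl(\L(\R;H)\bigr)$.

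The core step is a pointwise-in-$z$ statement: if $B_n\to B$ in the weak operator topology of $\bo(H)$ with $\Re B_n\geq c>0$ uniformly, and $A$ is skew-selfadjoint with $\dom(A)\hookrightarrow H$ compact, then $(B_n+A)^{-1}\to(B+A)^{-1}$ strongly. To prove it I would fix $y\in H$ and set $x_n\coloneqq(B_n+A)^{-1}y$; this is well-defined by \prettyref{prop:accr_inv} (note $\Re B\geq c$ as a weak limit) with $\norm{x_n}\leq\norm{y}/c$, and $Ax_n=y-B_nx_n$ is bounded in $H$ since $(B_n)_n$ is norm-bounded by the uniform boundedness principle. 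So $(x_n)_n$ is bounded in $\dom(A)$. Given any subsequence, compactness yields a further subsequence $(x_{n_k})_k$ with $x_{n_k}\to x$ in $H$; then $B_{n_k}x_{n_k}\rightharpoonup Bx$ (pair against $\phi$ via $\scp{\phi}{B_{n_k}x_{n_k}}=\scp{B_{n_k}^\ast\phi}{x_{n_k}}$, using $B_{n_k}^\ast\phi\rightharpoonup B^\ast\phi$ weakly against the strong convergence of $x_{n_k}$), hence $Ax_{n_k}=y-B_{n_k}x_{n_k}\rightharpoonup y-Bx$; the graph of the closed operator $A$ is weakly closed, so $x\in\dom(A)$ and $(B+A)x=y$, i.e.\ $x=(B+A)^{-1}y$. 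The limit being independent of the subsequence, the whole sequence converges, which proves the claim. Applying it with $B_n=zM_n(z)$, $B=zM(z)$ for each fixed $z=\i t+\nu\in\C_{\Re>\nu_0}$ gives $S_n(\i t+\nu)\to S(\i t+\nu)$ strongly for every $t\in\R$.

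Finally I would conclude by dominated convergence: for $F\in\Lnu(\R;H)$ put $G\coloneqq\mathcal{L}_\nu F\in\L(\R;H)$, so that
\[
\norm{S_n(\td{\nu})F-S(\td{\nu})F}_{\Lnu}^{2}=\int_{\R}\norm{S_n(\i t+\nu)G(t)-S(\i t+\nu)G(t)}_{H}^{2}\d t .
\]
The integrand converges to $0$ for a.e.\ $t$ by the previous step and is bounded by $(2/c)^{2}\norm{G(t)}_{H}^{2}\in L_1(\R)$, whence the integral tends to $0$. Thus $S_n(\td{\nu})\to S(\td{\nu})$ in the strong operator topology of $\bo(\Lnu(\R;H))$. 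I expect the main obstacle to be the pointwise strong resolvent convergence, i.e.\ orchestrating the three simultaneous limit behaviours --- weak convergence of $B_n$, strong convergence of $(x_{n_k})$ coming from the compact embedding, and weak closedness of the graph of $A$; the remaining arguments are routine bookkeeping with $\mathcal{L}_\nu$ and dominated convergence.
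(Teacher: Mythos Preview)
Your proof is correct and follows essentially the same route as the paper. The paper packages the pointwise step as a separate lemma (\prettyref{lem:convPDE}) and actually proves \emph{norm} convergence of $(zM_n(z)+A)^{-1}\to(zM(z)+A)^{-1}$ via a second contradiction/subsequence argument layered on top of the strong-convergence claim; it then invokes \prettyref{thm:converge_wotsot} rather than spelling out the dominated-convergence step. As the paper itself remarks, only strong pointwise convergence is needed for the theorem, which is exactly what you establish---so your version is slightly more economical while the paper's lemma is a bit stronger than required.
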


For the proof of this theorem, we need a lemma first.
\begin{lem}
\label{lem:convPDE}Let $H$ be a Hilbert space, $A\colon\dom(A)\subseteq H\to H$
skew-selfadjoint, $c>0$, $(T_{n})_{n}$ in $\bo(H)$ with $\Re T_{n}\geq c$ for all $n\in\N$, and $T\in\bo(H)$. 
Assume $\dom(A)\hookrightarrow H$ compactly and $T_{n}\to T$ in the
weak operator topology. Then $0\in\bigcap_{n\in\N}\rho(T_{n}+A)\cap\rho(T+A)$
and 
\[
\left(T_{n}+A\right)^{-1}\to\left(T+A\right)^{-1}
\]
in the norm topology of $\bo(H)$. 
\end{lem}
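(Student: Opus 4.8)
The plan is to split the proof into two parts: first the bounded invertibility of all the operators, then the norm convergence of the inverses via a compactness-and-contradiction argument.

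For the invertibility part I would use that $A$ is skew-selfadjoint, so that $\Re\scp{\phi}{A\phi}=0$ for every $\phi\in\dom(A)$, and hence $\Re\scp{\phi}{(T_n+A)\phi}=\Re\scp{\phi}{T_n\phi}\geq c\norm{\phi}^2$. Since $T_n\in\bo(H)$, \prettyref{thm:adj-sum} gives $(T_n+A)^*=T_n^*-A$ with $\dom(T_n+A)=\dom((T_n+A)^*)=\dom(A)$, so that $T_n+A$ is densely defined, closed and satisfies the hypotheses of \prettyref{prop:accr_inv}; thus $(T_n+A)^{-1}\in\bo(H)$ with $\norm{(T_n+A)^{-1}}\leq 1/c$. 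For $T$ itself, the weak convergence $T_n\to T$ yields $\Re\scp{\phi}{T\phi}=\lim_{n\to\infty}\Re\scp{\phi}{T_n\phi}\geq c\norm{\phi}^2$, so the same reasoning applies and gives $(T+A)^{-1}\in\bo(H)$ with $\norm{(T+A)^{-1}}\leq 1/c$. This establishes $0\in\bigcap_{n\in\N}\rho(T_n+A)\cap\rho(T+A)$.

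For the norm convergence I would argue by contradiction. If $\norm{(T_n+A)^{-1}-(T+A)^{-1}}\not\to 0$, then, passing to a subsequence, there are $f_n\in H$ with $\norm{f_n}=1$ and $\norm{x_n-y_n}\geq\varepsilon>0$, where $x_n\coloneqq(T_n+A)^{-1}f_n$ and $y_n\coloneqq(T+A)^{-1}f_n$. Using the uniform bound $L\coloneqq\sup_{n}\norm{T_n}<\infty$ (uniform boundedness principle, since $T_n\to T$ in the weak operator topology) together with $(T_n+A)x_n=f_n$, one gets $\norm{x_n}\leq 1/c$ and $\norm{Ax_n}=\norm{f_n-T_nx_n}\leq 1+L/c$, so $(x_n)_n$ is bounded in $\dom(A)$ endowed with the graph norm, and likewise $(y_n)_n$. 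Since $\dom(A)\hookrightarrow H$ is compact, I would pass to a further subsequence along which $x_n\to x$ and $y_n\to y$ in $H$; being bounded in the Hilbert space $\dom(A)$, these sequences also have weakly convergent subsequences there, whose weak limits must coincide with $x$ and $y$ by continuity of $\dom(A)\hookrightarrow H$, so $x,y\in\dom(A)$ and $Ax_n\rightharpoonup Ax$, $Ay_n\rightharpoonup Ay$ weakly in $H$.

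Finally I would pass to the limit in the identities $(T+A)y_n=f_n$ and $(T_n+A)x_n=f_n$. From the first, $Ty_n\to Ty$ in norm and $Ay_n\rightharpoonup Ay$, hence $f_n\rightharpoonup(T+A)y$ weakly. From the second, writing $(T_n+A)x_n=(T_n-T)x_n+(T+A)x_n$, the term $(T_n-T)x_n=(T_n-T)(x_n-x)+(T_n-T)x$ converges weakly to $0$ (the first summand tends to $0$ in norm because $\sup_n\norm{T_n-T}<\infty$ and $x_n\to x$, the second because $T_n\to T$ in the weak operator topology), while $(T+A)x_n\rightharpoonup(T+A)x$; hence $f_n\rightharpoonup(T+A)x$. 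Therefore $(T+A)x=(T+A)y$, and injectivity of $T+A$ forces $x=y$, contradicting $\norm{x_n-y_n}\geq\varepsilon$. The crux of the argument is exactly this middle step: extracting strongly convergent subsequences of $(x_n)_n$ and $(y_n)_n$, which is precisely where the compactness of $\dom(A)\hookrightarrow H$ is indispensable — without this ``strict unboundedness'' of $A$, mere weak convergence of $(T_n)_n$ would not suffice, as the guiding example of the previous chapter shows.
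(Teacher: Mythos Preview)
Your proof is correct and follows essentially the same compactness-and-contradiction strategy as the paper; the only organizational difference is that the paper first isolates an auxiliary claim (that $(T_n+A)^{-1}f_n\to(T+A)^{-1}f$ in norm whenever $f_n\rightharpoonup f$) and then derives the norm convergence from it, whereas you run the whole argument in one pass. The key ingredients---uniform boundedness of $((T_n+A)^{-1})_n$ in $\bo(H,\dom(A))$, extraction of a strongly $H$-convergent subsequence via the compact embedding, and passage to the limit in the defining equations using the decomposition $(T_n-T)x_n=(T_n-T)(x_n-x)+(T_n-T)x$---are identical.
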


\begin{proof}
From $\Re T_{n}\geq c$ it follows that $0\in\rho(T_{n}+A)$ ($n\in\N$) and $\bigl((T_{n}+A)^{-1}\bigr)_{n}$
is bounded in $\bo(H)$. Indeed, since $B\coloneqq T_n+A$ satisfies $\Re B=\Re T_n\geq c$ and $\dom(B)=\dom(A)=\dom(B^\ast)$ due to the skew-selfadjointness of $A$, \prettyref{prop:accr_inv} yields the assertion. Moreover, since
\[
 A(T_n+A)^{-1}=1-T_n(T_n+A)^{-1}
\]
for all $n\in \N$, it follows that $\left((T_n+A)^{-1}\right)_n$ is also bounded in $\bo(H,\dom(A))$ by the boundedness of $(T_n)_n$ in $\bo(H)$. Due to
the convergence of $(T_{n})_{n}$ to $T$, it follows that $\Re T\geq c$, and thus, $(T+A)^{-1}\in\bo(H,\dom(A))$.
Before we come to a proof of the desired result, we will prove an
auxiliary observation.

Claim: for all $(f_{n})_{n}$ in $H$ weakly converging to $f$, we
have $\left(T_{n}+A\right)^{-1}f_{n}\to\left(T+A\right)^{-1}f$
in the norm topology of $H$. 

For proving the claim, let $(f_{n})_{n}$ in $H$ be weakly convergent
to some $f$. Consider $u_{n}\coloneqq (T_{n}+A)^{-1}f_{n}$. Then $(u_n)_n$ is bounded in $\dom(A)$, since $\bigl((T_n+A)^{-1}\bigr)_n$ is bounded in $\bo(H,\dom(A))$ and $(f_n)_n$ is bounded in $H$. Hence, there exists a subsequence $(u_{n_{k}})_{k}$ which weakly converges to some $u$ in $\dom(A)$.
Since $\dom(A)\hookrightarrow H$ compactly, we infer $u_{n_k}\to u$ in
the norm topology of $H$. Hence, in the
equality
\[
T_{n_{k}}u_{n_{k}}+Au_{n_{k}}=f_{n_{k}},
\]
as $T_{n_k}\to T$ in the weak operator topology and $u_{n_k}\to u$ in $H$, we may let $k\to\infty$ and obtain for the weak limits
\[
Tu+Au=f;
\]
that is, $u=\left(T+A\right)^{-1}f$. Having identified the limit,
a contradiction argument (here a so-called `subsequence argument', see \prettyref{exer:convPDEstrong}) concludes that $(u_{n})_{n}$ itself converges
weakly in $\dom(A)$ and strongly in $H$ to $u$. Thus, the claim
is proved.

Next, assume by contradiction that $\bigl((T_{n}+A)^{-1}\bigr)_{n}$
does not converge in operator norm to $\left(T+A\right)^{-1}$. Then
we find an $\varepsilon>0$ and a strictly increasing sequence of
integers, $(n_{k})_{k}$, and a sequence of unit vectors $(f_{n_{k}})_{k}$
in $H$ such that 
\begin{equation}
\norm{(T_{n_{k}}+A)^{-1}f_{n_{k}}-\left(T+A\right)^{-1}f_{n_{k}}}\geq\varepsilon.\label{eq:contr}
\end{equation}
By possibly taking another subsequence, we may assume without loss
of generality that $\left(f_{n_{k}}\right)_{k}$ converges weakly
to some $f\in H$. By the claim proved above, we deduce  $\left(T_{n_{k}}+A\right)^{-1}f_{n_{k}}\to\left(T+A\right)^{-1}f$
and $\left(T+A\right)^{-1}f_{n_{k}}\to\left(T+A\right)^{-1}f$, both
in the norm topology of $H$ as $k\to\infty.$ Thus, we may let $k\to\infty$
in \prettyref{eq:contr}, and obtain the desired contradiction.
\end{proof}
\begin{proof}[Proof of \prettyref{thm:convPDE}]
 By \prettyref{thm:converge_wotsot} it suffices to show that for
all $z\in\C_{\Re>\nu_{0}}$ 
\[
\left(zM_{n}(z)+A\right)^{-1}\to\left(zM(z)+A\right)^{-1}\quad(n\to\infty)
\]
 in the strong operator topology. This, however, follows from \prettyref{lem:convPDE}
applied to $T_{n}=zM_{n}(z)$.
\end{proof}
\begin{rem}
Note that we only used convergence in the strong operator topology
in the proof of \prettyref{thm:convPDE}. However, the assertion in
\prettyref{lem:convPDE} is about convergence in the norm topology.
The reason that we cannot assert the convergence claimed in \prettyref{thm:convPDE}
in the norm topology is that the compact embedding of $\dom(A)\hookrightarrow H$
only works locally for fixed $z$, and not uniformly in $z$. This
situation can, however, be rectified. We refer to \prettyref{exer:normPDEconv}
for this.
\end{rem}

\section{The Theorem of Rellich and Kondrachov}

In order to apply \prettyref{thm:convPDE}, we need to provide a setting
where the condition on the compactness of the embedding is satisfied.
In fact, it is true that $H^{1}(\Omega)$ embeds compactly into $\L(\Omega)$
given $\Omega\subseteq\R^{d}$ is bounded and has `continuous boundary', see e.g.\ \cite[Theorem 7.11]{Arendt2015}. In this
chapter, we restrict ourselves to a proof of a less general statement. 

A preparatory result needed to prove the compact embedding theorem is given
next.
\begin{prop}
\label{prop:restriction} Let $I\subseteq\R$ be an open, bounded, non-empty
interval. Then the mapping $H^{1}(\R)\ni f\mapsto f|_{I}\in H^{1}(I)$
is well-defined, continuous and onto. Moreover, there exists a continuous right inverse $H^1(I)\to H^1(\R)$.
\end{prop}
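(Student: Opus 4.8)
First I would dispose of the restriction map itself. Writing $I=\oi{a}{b}$ with $a<b$, recall that $H^{1}(\R)=\dom(\grad)$ and $H^{1}(I)=\dom(\grad)$ are the usual first-order Sobolev spaces in dimension $d=1$, so that $\grad$ is simply the (distributional) derivative $\partial$ characterised as in \prettyref{thm:Sobolev_space}(a). If $f\in H^{1}(\R)$ with $g\coloneqq\grad f$, then testing the defining identity against $\phi\in\cci(I)\subseteq\cci(\R)$ shows $-\int_{I}f\phi'=\int_{I}g\phi$, hence $f|_{I}\in H^{1}(I)$ with $\grad(f|_{I})=g|_{I}$; in particular
\[
\norm{f|_{I}}_{H^{1}(I)}^{2}=\norm{f}_{\L(I)}^{2}+\norm{g}_{\L(I)}^{2}\leq\norm{f}_{H^{1}(\R)}^{2},
\]
so the restriction map is well-defined and continuous (indeed a contraction).

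For surjectivity together with a continuous right inverse it suffices to construct a bounded linear \emph{extension} operator $E\colon H^{1}(I)\to H^{1}(\R)$ with $(Ef)|_{I}=f$; then $R\circ E=\id_{H^{1}(I)}$ gives both claims at once. I would use reflection at the two endpoints followed by a smooth cut-off. Fix $\chi\in\cci(\R)$ with $\chi\equiv 1$ on $\ci{a}{b}$ and $\spt\chi\subseteq\oi{2a-b}{2b-a}$. Given $f\in H^{1}(I)$, let $\bar f\colon\ci{a}{b}\to\K$ be its absolutely continuous representative (one-dimensional Sobolev embedding: every $f\in H^{1}(I)$ equals $x\mapsto\text{const}+\int_{a}^{x}\grad f$ almost everywhere, which follows exactly as in \prettyref{thm:Sobolev_emb} for the unweighted interval, using additionally that a function on an interval with vanishing weak derivative is constant), and set, on $J\coloneqq\oi{2a-b}{2b-a}$,
\[
g(x)\coloneqq\begin{cases}
\bar f(2a-x), & x\in\loi{2a-b}{a},\\
\bar f(x), & x\in\ci{a}{b},\\
\bar f(2b-x), & x\in\roi{b}{2b-a}.
\end{cases}
\]
Since $\bar f$ is absolutely continuous, $g$ is absolutely continuous on $J$, hence $g\in H^{1}(J)$ with $\grad g$ given piecewise by $\mp(\grad f)(2a-\cdot)$, $\grad f$, $\mp(\grad f)(2b-\cdot)$; a change of variables yields $\norm{g}_{H^{1}(J)}^{2}=3\norm{f}_{H^{1}(I)}^{2}$. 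Finally $\chi g\in H^{1}(J)$ by the product rule for weak derivatives ($\grad(\chi g)=\chi'g+\chi\grad g$, verified by integration by parts), it has compact support inside $J$, and therefore its extension by $0$, which I call $Ef$, lies in $H^{1}(\R)$ with $\norm{Ef}_{H^{1}(\R)}\leq C_{\chi}\norm{g}_{H^{1}(J)}\leq C\norm{f}_{H^{1}(I)}$. Linearity of $E$ is clear, and $(Ef)|_{I}=(\chi g)|_{I}=f$ because $\chi\equiv1$ on $I$. This proves the proposition.

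\textbf{Main obstacle.} The only non-formal point is the regularity input: that functions in $H^{1}(I)$ admit absolutely continuous representatives, so that the reflected pieces glue together continuously at $a$ and $b$ and the pieced-together weak derivative is the expected one. I would therefore carry out this one-dimensional argument carefully (FTC representative plus the fact that vanishing weak derivative on an interval forces constancy, via mollification or by testing against antiderivatives of test functions), and treat the product rule for weak derivatives and the extension-by-zero statement as the routine integration-by-parts computations that they are.
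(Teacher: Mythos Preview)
Your proof is correct; the approach differs from the paper's. Both arguments share the same regularity input---that every $f\in H^{1}(I)$ has an absolutely continuous representative, obtained by writing $f=c+\int_{a}^{\cdot}\partial f$ via \prettyref{lem:kernel_grad}---but they diverge in how the extension is built. You use even reflection across each endpoint followed by a smooth cut-off, which is the classical Sobolev extension technique and has the advantage of generalising (with higher-order reflections) to $H^{k}$. The paper instead reads off the boundary values $f(a)=c$ and $f(b)=c+\int_{a}^{b}\partial f$ and extends $f$ by explicit linear interpolation down to zero over the adjacent unit intervals $\ci{a-1}{a}$ and $\ci{b}{b+1}$, then by zero outside; continuity of the right inverse is obtained by estimating $\abs{c}$ directly in terms of $\norm{f}_{H^{1}(I)}$. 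The paper's construction is more elementary (no cut-off function, no product rule) but tailor-made for $H^{1}$; your construction is slightly heavier in bookkeeping but conceptually the standard one.
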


For the proof of this proposition, we need an auxiliary result first.

\begin{lem}\label{lem:kernel_grad}
 Let $\Omega \subseteq \R^d$ be open and connected. Moreover, let $u\in H^1(\Omega)$ with $\grad u=0$. Then $u$ is constant.
\end{lem}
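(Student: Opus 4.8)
The plan is to prove this by local mollification combined with a connectedness argument. Fix a $\delta$-sequence $(\varphi_n)_n$ in $\cci(\R^d)$ as in \prettyref{exer:delta-seq}, so $\spt\varphi_n\subseteq\ci{-1/n}{1/n}^d$. For $x$ in the open set $\Omega_n\coloneqq\set{x\in\Omega}{\ball{x}{1/n}\subseteq\Omega}$ put $u_n(x)\coloneqq\int_{\R^d}u(x-y)\varphi_n(y)\d y$; this involves only values of $u$ on $\ball{x}{1/n}\subseteq\Omega$, so it is well-defined, and by the differentiation results underlying \prettyref{exer:C_cinfty dense}, $u_n$ is smooth on $\Omega_n$ with $\partial_j u_n=u\ast\partial_j\varphi_n$ there. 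The decisive observation is that for fixed $x\in\Omega_n$ the function $\phi\colon z\mapsto\varphi_n(x-z)$ lies in $\cci(\Omega)$ (its support is contained in $\ball{x}{1/n}\subseteq\Omega$) and satisfies $\partial_{z_j}\phi(z)=-(\partial_j\varphi_n)(x-z)$; hence, using the first characterisation in \prettyref{thm:Sobolev_space} together with $\grad u=0$,
\[
\partial_j u_n(x)=\int_\Omega u(z)(\partial_j\varphi_n)(x-z)\d z=-\int_\Omega u(z)\,\partial_{z_j}\phi(z)\d z=\int_\Omega (\grad u)_j(z)\,\phi(z)\d z=0 .
\]
Thus $u_n$ is locally constant on $\Omega_n$.

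Next I would localise. Fix an open ball $B$ with $\cball{x_0}{r}\subseteq\Omega$ and let $B'\coloneqq\ball{x_0}{r/2}$. For $n>2/r$ we have $B'\subseteq\Omega_n$ and $B'$ connected, so $u_n$ is constant, say $\equiv c_n$, on $B'$. On the other hand $(\1_B u)\ast\varphi_n\to\1_B u$ in $\L(\R^d)$ by \prettyref{exer:C_cinfty dense}, and $(\1_B u)\ast\varphi_n=u_n$ on $B'$ for all large $n$, so $(c_n)_n$ converges in $\L(B')$ to $u|_{B'}$. Consequently $u$ is $\lambda$-almost everywhere equal to a constant on $B'$. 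Since $\Omega$ is covered by such balls $B'$, we obtain a map $c\colon\Omega\to\K$ assigning to each $x$ the (unique) essential value of $u$ on a small ball around $x$; this $c$ is locally constant — if $u=c(x)$ a.e.\ on a ball around $x$, then $c(y)=c(x)$ for every $y$ in that ball — hence continuous, hence constant on the connected set $\Omega$, say $c\equiv c_0$. Then $u-c_0$ vanishes a.e.\ on a neighbourhood of every point of $\Omega$, and as $\Omega$ is second countable, $u=c_0$ $\lambda$-a.e.\ on $\Omega$; that is, $u$ is constant (with $c_0=0$ necessarily if $\lambda(\Omega)=\infty$).

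The main obstacle is the careful support bookkeeping in the mollification step: one must ensure that $z\mapsto\varphi_n(x-z)$ is a genuine element of $\cci(\Omega)$, so that the hypothesis $\grad u=0$, which is only available when tested against functions compactly supported inside $\Omega$, can be invoked pointwise in $x$ to kill $\grad u_n$ on $\Omega_n$. Everything else — smoothness and differentiation of the mollification, $L_2$-convergence $(\1_B u)\ast\varphi_n\to\1_B u$, and the passage from "locally constant almost everywhere" to "globally constant" via the continuity-of-the-local-value argument — is routine and relies only on results already established in the preceding chapters.
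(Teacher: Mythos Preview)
Your proof is correct and is precisely the argument the paper intends --- it leaves the proof to \prettyref{exer:kernel_grad}, which asks you to mollify on the shrunken sets $\Omega_\varepsilon=\set{x\in\Omega}{\dist(x,\partial\Omega)>\varepsilon}$, show $\grad(\varphi_k\ast u)=\varphi_k\ast\grad u=0$ there, and then conclude by connectedness. One minor bookkeeping slip: since $\spt\varphi_n\subseteq\ci{-1/n}{1/n}^d\subseteq\cball{0}{\sqrt{d}/n}$, you need $\ball{x}{\sqrt{d}/n}\subseteq\Omega$ (or a $\delta$-sequence supported in $\ball{0}{1/n}$) for $z\mapsto\varphi_n(x-z)$ to genuinely lie in $\cci(\Omega)$; adjust the definition of $\Omega_n$ accordingly.
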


We leave the proof of this lemma as \prettyref{exer:kernel_grad}.

\begin{proof}[Proof of \prettyref{prop:restriction}]
The mapping $H^{1}(\R)\to H^{1}(I),f\mapsto f|_{I}$
is readily confirmed to be continuous. It remains to prove that it is onto. Let $I=\oi{a}{b}$, $u\in H^1(I)$ and define the function $v$ by
\[
 v(t)\coloneqq \int_a^t \partial u(s)\d s\quad (t\in \oi{a}{b}).
\]
Clearly, $v\in \L(\oi{a}{b})$ and we compute for each $\varphi\in \cci(\oi{a}{b})$
\begin{align*}
 \scp{v}{\varphi'}_{\L(\oi{a}{b})}
 & = \int_a^b \left(\int_a^t \partial u(s)\d s\right)^\ast \varphi'(t) \d t
 = \int_a^b \int_s^b \varphi'(t)\d t\, \partial u(s)^\ast \d s \\
 & = -\scp{\partial u}{\varphi}_{\L(\oi{a}{b})}.
\end{align*}
This shows $v\in H^1(\oi{a}{b})$ with $\partial v=\partial u$. Hence, by \prettyref{lem:kernel_grad} there exists a constant $c\in \C$ with $u=c+v$. We now define $f$ by
\[f(t) \coloneqq \begin{cases}
                 0 &\mbox{if } t<a-1 \mbox{ or } t>b+1,\\
                 ct+c(1-a)  &\mbox{if } a-1\leq t \leq a,\\
                 u(t) &\mbox{if } a<t<b,\\
                 -(c+v(b))t+(c+v(b))(1+b) &\mbox{if } b\leq t\leq b+1.
                \end{cases}
\]
We then easily see that $f\in H^1(\R)$ and clearly $f|_{\oi{a}{b}}=u$. In order to see that $u\mapsto f$ is continuous, we need to establish that the value $c$ depends continuously on $u$. This, however, follows from the estimate
\begin{align*}
 |c|&=\frac{1}{\sqrt{b-a}}\left(\int_a^b |c|^2\right)^{1/2} \leq \frac{1}{\sqrt{b-a}} (\norm{u}_{\L(a,b)}+\norm{v}_{\L(a,b)}) \\
   &\leq \frac{1}{\sqrt{b-a}}(\norm{u}_{\L(a,b)} + (b-a)\norm{\partial u}_{\L(a,b)}) 
   \leq \frac{\sqrt{2} \max\{1,(b-a)\}}{\sqrt{b-a}} \norm{u}_{H^1(a,b)}.\qedhere
\end{align*}
\end{proof}

\begin{thm}
\label{thm:compH1}Let $I\subseteq\R$ be an open bounded interval.
Then $H^{1}(I)\hookrightarrow\L(I)$ compactly.
\end{thm}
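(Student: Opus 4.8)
The plan is to prove the Rellich--Kondrachov theorem in one dimension by combining the extension result \prettyref{prop:restriction} with a classical Arzel\`a--Ascoli argument on a larger bounded interval. First I would note that it suffices to take a bounded sequence $(f_n)_n$ in $H^1(I)$ with $\sup_n \norm{f_n}_{H^1(I)} \leq 1$ and extract an $\L(I)$-convergent subsequence. Using \prettyref{prop:restriction} I would fix a bounded open interval $J$ with $\overline{I}\subseteq J$ and a continuous right inverse, so the extensions $g_n$ satisfy $g_n|_I = f_n$ and $\sup_n \norm{g_n}_{H^1(J)} \leq C$ for some constant $C$; here I may as well assume (by enlarging $J$ and cutting off, exactly as in the proof of \prettyref{prop:restriction}) that each $g_n$ has compact support inside $J$, so no boundary terms appear.

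Next I would exploit the Sobolev embedding in the spirit of \prettyref{thm:Sobolev_emb} (or more elementarily, \prettyref{exer:weak_diff+cont} and the fundamental theorem of calculus): each $g_n$ has a continuous representative and, for $s,t\in \overline{J}$,
\[
  |g_n(t) - g_n(s)| = \left| \int_s^t \partial g_n(r)\d r \right| \leq |t-s|^{1/2} \norm{\partial g_n}_{\L(J)} \leq C |t-s|^{1/2},
\]
so $(g_n)_n$ is equicontinuous on $\overline{J}$; likewise $\sup_n \sup_{t\in\overline{J}} |g_n(t)| < \infty$ by the one-dimensional Sobolev inequality, so the sequence is uniformly bounded. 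By the Arzel\`a--Ascoli theorem there is a subsequence $(g_{n_k})_k$ converging uniformly on $\overline{J}$ to some continuous $g$. Uniform convergence on the bounded set $\overline{J}$ implies convergence in $\L(J)$, hence $f_{n_k} = g_{n_k}|_I \to g|_I$ in $\L(I)$. This gives the desired compactness.

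The main obstacle — really the only nontrivial point — is making sure the extension operator of \prettyref{prop:restriction} can be arranged to produce functions that are uniformly bounded in $H^1(J)$ and (conveniently) compactly supported, so that the Arzel\`a--Ascoli / uniform-to-$\L$ step has no boundary issues; but this is exactly what the continuous right inverse constructed in the proof of \prettyref{prop:restriction} delivers, since its image consists of functions supported in a fixed slightly larger interval with norm controlled by $\norm{\cdot}_{H^1(I)}$. Everything else — equicontinuity via the Cauchy--Schwarz estimate on $\int \partial g_n$, uniform boundedness, and the passage from uniform convergence on a bounded interval to $\L$-convergence — is routine. I would close by remarking that the argument shows slightly more, namely relative compactness in $C(\overline{I})$, but $\L(I)$ is all that is needed for the application to \prettyref{thm:convPDE}.
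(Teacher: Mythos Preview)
Your proof is correct and follows essentially the same route as the paper: extend via \prettyref{prop:restriction}, obtain uniform H\"older-$\tfrac{1}{2}$ bounds, apply Arzel\`a--Ascoli, and pass from uniform to $\L$-convergence. The paper packages this as the composition $H^1(I)\stackrel{E}{\to}H^1(\R)\hookrightarrow C^{1/2}(\R)\to C^{1/2}(I)$ (citing \prettyref{exer:12Hoelder} for the Sobolev--H\"older embedding) rather than working on a fixed larger interval $J$, but the substance is identical.
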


\begin{proof} By \prettyref{prop:restriction}, we find a continuous mapping $E\colon H^1(I)\to H^1(\R)$ such that for all $u\in H^1(I)$ we have $E(u)|_I=u$. Moreover, by \prettyref{exer:12Hoelder} the mapping $H^1(\R)\hookrightarrow C^{1/2}(\R)$ is continuous. Thus,
\[
   H^1(I) \stackrel{E}{\to} H^1(\R) \hookrightarrow C^{1/2}(\R) \to C^{1/2}(I),
\] 
is a composition of continuous mappings, where the last mapping is the restriction to $I$. Since $C^{1/2}(I)\hookrightarrow C(I)$ compactly by the Arzel\`{a}--Ascoli theorem, and $C(I)\hookrightarrow \L(I)$ continuously, we infer $H^1(I)\hookrightarrow \L(I)$ compactly.
\end{proof}
We now have the opportunity to study the limit behaviour of a periodic
mixed type problem.

\begin{example}[Highly oscillatory problems]
Let $s_{1},s_{2}\colon\R\to\ci{0}{1}$ be $1$-periodic, measurable
functions. Then for $\nu>0$, we set
\[
S^{(n)}\coloneqq\Biggl(\overline{\td{\nu}\begin{pmatrix}
s_{1}(n\m) & 0\\
0 & s_{2}(n\m)
\end{pmatrix}+\begin{pmatrix}
1-s_{1}(n\m) & 0\\
0 & 1-s_{2}(n\m)
\end{pmatrix}+\begin{pmatrix}
0 & \partial\\
\partial_{0} & 0
\end{pmatrix}}\Biggr)^{-1},
\]
where $\partial=\dive$ and $\partial_{0}=\grad_{0}$ are regarded as operators
in $\L(\oi{0}{1})$ with respective domains $H^{1}(\oi{0}{1})$ and $H_{0}^{1}(\oi{0}{1})$.
Then, by \prettyref{thm:compH1}, the operator $A\coloneqq\begin{pmatrix}
0 & \partial\\
\partial_{0} & 0
\end{pmatrix}$ satisfies the assumptions of \prettyref{thm:convPDE}. Moreover,
\prettyref{thm:periodic_homo} implies that the remaining assumptions
of \prettyref{thm:convPDE} are satisfied. Hence, we deduce that $\left(S^{(n)}\right)_{n}$
converges in the strong operator topology of $\Lnu\bigl(\R;\L(\oi{0}{1})\bigr)$
to the limit
\[
\Biggl(\overline{\td{\nu}\begin{pmatrix}
\int_{0}^{1}s_{1} & 0\\
0 & \int_{0}^{1}s_{2}
\end{pmatrix}+\begin{pmatrix}
1-\int_{0}^{1}s_{1} & 0\\
0 & 1-\int_{0}^{1}s_{2}
\end{pmatrix}+\begin{pmatrix}
0 & \partial\\
\partial_{0} & 0
\end{pmatrix}}\Biggr)^{-1}.
\]
\end{example}

Next, we aim to provide an application to more than one spatial dimension.
For this, we will also need a corresponding compactness statement. This is the subject of the rest of this section.
\begin{thm}[Rellich--Kondrachov]
\label{thm:Rellich} Let $\Omega\subseteq\R^{d}$ be open and bounded.
Then $H_{0}^{1}(\Omega)\hookrightarrow\L(\Omega)$ compactly.
\end{thm}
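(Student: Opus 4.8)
The claim is equivalent to: every bounded sequence $(u_{n})_{n}$ in $H_{0}^{1}(\Omega)$ has a subsequence converging in $\L(\Omega)$. The first step is to pass to the whole space by zero-extension. For $u\in H_{0}^{1}(\Omega)=\dom(\grad_{0})$ choose, by \prettyref{thm:Sobolev_space}, a sequence $(f_{k})_{k}$ in $\cci(\Omega)$ with $f_{k}\to u$ in $\L(\Omega)$ and $\grad f_{k}\to\grad_{0}u$ in $\L(\Omega)^{d}$. The zero-extensions $\tilde f_{k}$ lie in $\cci(\R^{d})$, converge in $\L(\R^{d})$, and satisfy $\grad\tilde f_{k}=\widetilde{\grad f_{k}}$, which converges in $\L(\R^{d})^{d}$; hence, by closedness of $\grad$ on $\R^{d}$ (\prettyref{lem:grscp}), the zero-extension $\tilde u$ of $u$ lies in $H^{1}(\R^{d})$ with $\grad\tilde u=\widetilde{\grad_{0}u}$, and $u\mapsto\tilde u$ is an isometry of $H_{0}^{1}(\Omega)$ onto a closed subspace of $H^{1}(\R^{d})$. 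Since $\Omega$ is bounded, all the $\tilde u_{n}$ are supported in the fixed compact set $\overline{\Omega}$, so it suffices to show that a bounded sequence in $H^{1}(\R^{d})$ with supports in $\overline{\Omega}$ has an $\L(\R^{d})$-convergent subsequence.

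The second ingredient is a uniform translation estimate: for $\varphi\in\cci(\R^{d})$ and $h\in\R^{d}$ the fundamental theorem of calculus together with the Cauchy--Schwarz inequality (in the integration variable of $\varphi(x+h)-\varphi(x)=\int_{0}^{1}\grad\varphi(x+th)\cdot h\d t$) gives $\norm{\varphi(\cdot+h)-\varphi}_{\L(\R^{d})}\leq\abs{h}\norm{\grad\varphi}_{\L(\R^{d})^{d}}$. By the density argument of the previous paragraph this extends to $\norm{\tilde u(\cdot+h)-\tilde u}_{\L(\R^{d})}\leq\abs{h}\norm{\grad_{0}u}_{\L(\Omega)^{d}}$ for every $u\in H_{0}^{1}(\Omega)$ and $h\in\R^{d}$.

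Next, fix a $\delta$-sequence type family $(\varphi_{\varepsilon})_{\varepsilon>0}$ (in the sense of \prettyref{exer:delta-seq}, scaled by $\varepsilon$). Writing $(\varphi_{\varepsilon}\ast\tilde u)(\cdot)-\tilde u(\cdot)=\int_{\R^{d}}\varphi_{\varepsilon}(h)\bigl(\tilde u(\cdot-h)-\tilde u(\cdot)\bigr)\d h$ as an $\L(\R^{d})$-valued Bochner integral and using the norm estimate \prettyref{prop:Bochner-integral} together with the translation estimate, we get $\norm{\varphi_{\varepsilon}\ast\tilde u-\tilde u}_{\L(\R^{d})}\leq\varepsilon\norm{\grad_{0}u}_{\L(\Omega)^{d}}$, uniformly over bounded subsets of $H_{0}^{1}(\Omega)$. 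On the other hand, for fixed $\varepsilon$ the functions $\varphi_{\varepsilon}\ast\tilde u_{n}$ are supported in a fixed bounded set, are uniformly bounded since $\abs{(\varphi_{\varepsilon}\ast\tilde u_{n})(x)}\leq\norm{\varphi_{\varepsilon}}_{\L(\R^{d})}\norm{\tilde u_{n}}_{\L(\R^{d})}$, and are equi-Lipschitz since $\grad(\varphi_{\varepsilon}\ast\tilde u_{n})=(\grad\varphi_{\varepsilon})\ast\tilde u_{n}$ is bounded in $\L_\infty$ uniformly in $n$ (\prettyref{exer:convSmooth}); hence by Arzel\`a--Ascoli $(\varphi_{\varepsilon}\ast\tilde u_{n})_{n}$ has a subsequence converging uniformly, hence in $\L(\R^{d})$. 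Applying this with $\varepsilon=1/k$ and extracting a diagonal subsequence $(\tilde u_{n_{j}})_{j}$ along which $(\varphi_{1/k}\ast\tilde u_{n_{j}})_{j}$ converges in $\L(\R^{d})$ for every $k$, the triangle inequality
\[
\norm{\tilde u_{n_{j}}-\tilde u_{n_{l}}}_{\L(\R^{d})}\leq\norm{\tilde u_{n_{j}}-\varphi_{1/k}\ast\tilde u_{n_{j}}}_{\L(\R^{d})}+\norm{\varphi_{1/k}\ast(\tilde u_{n_{j}}-\tilde u_{n_{l}})}_{\L(\R^{d})}+\norm{\tilde u_{n_{l}}-\varphi_{1/k}\ast\tilde u_{n_{l}}}_{\L(\R^{d})}
\]
combined with the two estimates (choose $k$ large, then $j,l$ large) shows that $(\tilde u_{n_{j}})_{j}$ is Cauchy in $\L(\R^{d})$; restricting to $\Omega$ finishes the proof.

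The only genuinely substantive point is the translation estimate and its passage from $\cci$ to $H_{0}^{1}$; everything else is the standard mollification-plus-Arzel\`a--Ascoli bookkeeping and the diagonal extraction. An alternative route, which I would mention but not carry out, avoids mollification: after the zero-extension one applies the $d$-dimensional Plancherel theorem (established exactly as \prettyref{thm:Plancherel}), notes that the $\hat{\tilde u}_{n}$ are uniformly bounded and equicontinuous on compact sets because $\partial_{\xi_{j}}\hat{\tilde u}_{n}=\widehat{(-\i x_{j}\tilde u_{n})}$ is bounded ($x_{j}$ being bounded on $\Omega$), extracts a locally uniformly convergent subsequence of Fourier transforms by Arzel\`a--Ascoli and a diagonal argument over balls, and splits $\norm{\hat{\tilde u}_{n_{j}}-\hat{\tilde u}_{n_{l}}}_{\L(\R^{d})}^{2}$ into the part over $\{\abs{\xi}\leq R\}$, controlled by local uniform convergence and finiteness of the measure of the ball, and the part over $\{\abs{\xi}>R\}$, bounded by $R^{-2}\sup_{n}\norm{\,\abs{\cdot}\,\hat{\tilde u}_{n}}_{\L(\R^{d})}^{2}$, which is uniformly small for large $R$. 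I would present the mollification argument since it uses only tools already at hand.
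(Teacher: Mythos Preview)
Your proof is correct and follows the classical mollification-plus-Arzel\`a--Ascoli route (essentially the Kolmogorov--Riesz argument), but it differs genuinely from the paper's proof. The paper proceeds by induction on the dimension $d$: the base case $d=1$ is \prettyref{thm:compH1}, and for the inductive step one reduces to $\Omega=(0,1)^d$, views $H_0^1((0,1)^d)$ as sitting inside $H^1\bigl(\R;\L((0,1)^{d-1})\bigr)\cap\L\bigl(\R;H_0^1((0,1)^{d-1})\bigr)$ via slicing in the first coordinate, and then combines the one-dimensional compact embedding with the induction hypothesis on the slices, finishing with dominated convergence (using the Sobolev embedding into $C_b$ for the dominating bound). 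Your argument is more self-contained---it does not need a separately established one-dimensional case and works directly in $\R^d$---and is the proof found in most PDE textbooks. The paper's inductive slicing argument, by contrast, reuses the one-dimensional Sobolev embedding from \prettyref{thm:Sobolev_emb} and \prettyref{exer:12Hoelder} (which arose there from the time-derivative theory), so it stays closer to the toolkit already built in the text; it also illustrates how Hilbert-space-valued function spaces can be exploited, which fits the overall theme. Both are perfectly valid; yours is arguably the more robust standalone proof, the paper's is the one that better exploits what has already been developed.
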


\begin{proof}
Without loss of generality (by shifting and
shrinking of $\Omega$ and extending by $0$), we may assume that $\Omega=\oi{0}{1}^{d}$.
We carry out the proof by induction on the spatial dimension $d$.
The case $d=1$ has been dealt with in \prettyref{thm:compH1}. Assume
the statement is true for some $d-1$. Using that $\cci(\oi{0}{1}^d)$ is dense in $H_{0}^{1}(\oi{0}{1}^d)$, we infer the continuity of the
injection 
\begin{align*}
R\colon H_{0}^{1}(\oi{0}{1}^d) & \to H^{1}\bigl(\R;\L(\oi{0}{1}^{d-1})\bigr)\cap\L\bigl(\R;H_{0}^{1}(\oi{0}{1}^{d-1})\bigr)\\
\phi & \mapsto\bigl(t\mapsto\bigl(\omega\mapsto\phi(t,\omega)\bigr)\bigr),
\end{align*}
where we identify $\phi$ with its extension to $\R^d$ by $0$. The range space is endowed with the usual sum scalar
product.

Let $(\phi_{n})_{n}$ be a weakly convergent null-sequence
in $H_{0}^{1}(\oi{0}{1}^d)$. In particular, $\left(R\phi_{n}\right)_{n}$ is bounded in $H^{1}\bigl(\R;\L(\oi{0}{1}^{d-1})\bigr)$ and hence, it 
is also bounded in $\cb\bigl(\R;\L\oi{0}{1}^{d-1}\bigr)$ by \prettyref{thm:Sobolev_emb} (and \prettyref{cor:vanish_at_inf});
that is,
\begin{equation}
\sup_{t\in\ci{0}{1},n\in\N}\norm{\phi_{n}(t,\cdot)}_{\L(\oi{0}{1}^{d-1})}<\infty.\label{eq:boundRellich}
\end{equation}
Let $f\in\L(\oi{0}{1}^{d-1})$. Then $(\phi_{n,f})_n$ given by
\[
\phi_{n,f}\colon t\mapsto\scp{\phi_{n}(t,\cdot)}{f}_{\L(\oi{0}{1}^{d-1})}
\]
is a weakly convergent null-sequence in $H^1(\oi{0}{1})$. We obtain
by \prettyref{thm:compH1} that $\phi_{n,f}\to 0$ in $\L(\oi{0}{1})$
as $n\to\infty$. By separability of $\L(\oi{0}{1}^{d-1})$ we find $D\subseteq\L(\oi{0}{1}^{d-1})$ countable
and dense, a subsequence (again labeled by $n$)  and 
a null-set $N\subseteq\R$ such that $\phi_{n,f}(t)\to 0$
for all $t\in\R\setminus N$ and $f\in D$ as $n\to \infty$. By \prettyref{eq:boundRellich}, we deduce $\phi_{n,f}(t)\to0$
for all $t\in\R\setminus N$ and $f\in\L(\oi{0}{1}^{d-1})$ as $n\to\infty$, or, in other words, $\phi_{n}(t,\cdot)\to 0$ weakly in $\L(\oi{0}{1}^{d-1})$ for each $t\in \R\setminus N$ as $n\to \infty$.

Next, we show that there exists a null set $N\subseteq N_{1}\subseteq\R$ such that $\phi_n(t,\cdot)\to 0$ in $\L(\oi{0}{1}^{d-1})$ for all $t\in\R\setminus N_{1}$.
For this, since $(R\phi_n)_n$ in $\L\bigl(\R;H_0^1(\oi{0}{1}^{d-1})\bigr)$ is bounded, we find a null set $N\subseteq N_{1}\subseteq\R$ such
that $\left(\phi_{n}(t,\cdot)\right)_{n}$ is bounded in $H_{0}^{1}(\oi{0}{1}^{d-1})$
for all $t\in\R\setminus N_{1}$. Let $t\in \R\setminus N_1$. Then there exists a further subsequence $(\phi_{n_k}(t,\cdot))_{k}$ which converges weakly in $H_{0}^{1}(\oi{0}{1}^{d-1})$. By the induction hypothesis, $(\phi_{n_k}(t,\cdot))_{n_k}$ converges strongly in $\L(\oi{0}{1}^{d-1})$, and since we have already seen that it is a weak null-sequence in $\L(\oi{0}{1}^{d-1})$, we derive
$\phi_{n_k}(t,\cdot)\to 0$ in $\L(\oi{0}{1}^{d-1})$. By a subsequence argument we derive that
\[\phi_n(t,\cdot)\to 0\]
in $\L(\oi{0}{1}^{d-1})$ for all $t\in \R\setminus N_1$.

Now, for
$n\in\N$ we deduce
\[
\norm{\phi_{n}}_{\L(\oi{0}{1}^{d})}^{2}=\int_{0}^{1}\norm{\phi_{n}(t,\cdot)}_{\L(\oi{0}{1}^{d-1})}^{2}\d t\to0,
\]
where we have used dominated convergence, which is possible due to \prettyref{eq:boundRellich}.
\end{proof}

\section{The Periodic Gradient}

In this section we investigate the gradient on periodic functions on $\R^d$. Throughout, we set
$Y\coloneqq\roi{0}{1}^{d}$.
\begin{defn*}[Periodic Gradient]
We define 
\[
C_{\sharp}^{\infty}(Y)\coloneqq\set{\phi|_{Y}}{\phi\in C^{\infty}(\R^{d}),\ \phi(\cdot+k)=\phi\ (k\in\Z^{d})}
\]
and
\begin{align*}
\grad_{\sharp,\infty}\colon C_{\sharp}^{\infty}(Y)\subseteq\L(Y) & \to\L(Y)^{d}\\
\phi & \mapsto\grad\phi.
\end{align*}
Moreover, we set $\dive_{\sharp}\coloneqq-\grad_{\sharp,\infty}^{*}$ and
$\grad_{\sharp}\coloneqq-\dive_{\sharp}^{*}=\overline{\grad_{\sharp,\infty}}$.
\end{defn*}
\begin{rem}
The operators just introduced can easily be shown to lie between the
operator realisations we have introduced in earlier chapters. Indeed, it is easy
to see that 
\[
\dive_{0}\subseteq\dive_{\sharp}\text{ and }\grad_{0}\subseteq\grad_{\sharp}
\]
and, consequently, we also have
\[
\grad_{\sharp}\subseteq\grad\text{ and }\dive_{\sharp}\subseteq\dive.
\]
The corresponding domains for the operators $\grad_{\sharp}$ and
$\dive_{\sharp}$ will be denoted by $H_{\sharp}^{1}(Y)$
and $H_{\sharp}(\dive,Y)$, respectively.

For the next results, we define the periodic extension operator. For
$\phi\in\L(Y)^{m}$ we put
\[
\phi_{\rmpe}(x+k)\coloneqq\phi(x)
\]
for almost every $x\in Y$ and all $k\in\Z^{d}$.
\end{rem}

We start with the following two observations.

\begin{lem}\label{lem:delta-seq-periodic}
 Let $f\in \L(Y)$ and $(\rho_k)_k$ be a $\delta$-sequence in $\cci(\R^d)$ (cf.~\prettyref{exer:delta-seq}). Define
 \[
  f_k \coloneqq (\rho_k\ast f_\rmpe)|_Y \quad (k\in \N).
 \]
 Then $f_k\in C_\sharp^\infty(Y)$ for each $k\in \N$ and $f_k\to f$ in $\L(Y)$ as $k\to \infty$.
\end{lem}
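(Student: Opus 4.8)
The plan is to establish the two assertions separately: first that each $f_k$ is a smooth periodic function on $Y$, and then that $f_k \to f$ in $\L(Y)$. The main point is to leverage the facts about convolutions and $\delta$-sequences already developed in \prettyref{exer:delta-seq} and \prettyref{exer:C_cinfty dense}, adapted to the periodic setting via the periodic extension operator.

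First I would observe that $f_\rmpe\in L_{1,\loc}(\R^d)$, since $f\in \L(Y)\subseteq L_1(Y)$ and $f_\rmpe$ is just the $\Z^d$-periodic tiling of $f$; in particular $f_\rmpe$ is integrable on every compact set, so the convolution $\rho_k\ast f_\rmpe$ is well-defined and, by the same argument as in \prettyref{exer:C_cinfty dense}~(a), lies in $C^\infty(\R^d)$ with $\partial^\alpha(\rho_k\ast f_\rmpe)=(\partial^\alpha\rho_k)\ast f_\rmpe$. To see that $\rho_k\ast f_\rmpe$ is $\Z^d$-periodic, I would use translation invariance of the Lebesgue integral together with periodicity of $f_\rmpe$: for $m\in\Z^d$,
\[
(\rho_k\ast f_\rmpe)(x+m)=\int_{\R^d}\rho_k(x+m-y)f_\rmpe(y)\d y=\int_{\R^d}\rho_k(x-y)f_\rmpe(y+m)\d y=(\rho_k\ast f_\rmpe)(x),
\]
where the middle step is the substitution $y\mapsto y+m$. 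Hence $\rho_k\ast f_\rmpe$ is a smooth $\Z^d$-periodic function, so its restriction $f_k$ to $Y$ belongs to $C_\sharp^\infty(Y)$ by definition.

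For the convergence statement, the natural approach is to pass through $L_1$-estimates on a slightly enlarged cell, say $Y_1\coloneqq\ci{-1}{2}^d$, which contains $Y$ together with the relevant translates. Since $\spt\rho_k\subseteq\ci{-1/k}{1/k}^d$, for $k\geq 1$ and $x\in Y$ the value $f_k(x)=(\rho_k\ast f_\rmpe)(x)$ depends only on the values of $f_\rmpe$ on $\ci{-1}{2}^d$; thus, writing $g\coloneqq f_\rmpe\1_{Y_1}\in L_1(\R^d)$ (indeed $g\in \L(\R^d)$ since $f\in\L(Y)$ and $Y_1$ has finite measure), we get $f_k=(\rho_k\ast g)|_Y$ for all $k\geq 1$. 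Now $\rho_k\ast g\to g$ in $\L(\R^d)$ as $k\to\infty$ by the argument in \prettyref{exer:C_cinfty dense}~(b) (or by density of $\cc(\R^d)$ in $\L(\R^d)$ and the bound $\norm{\rho_k\ast g}_2\leq\norm{g}_2$ from \prettyref{lem:convolution}), hence
\[
\norm{f_k-f}_{\L(Y)}^2=\int_Y\abs{(\rho_k\ast g)(x)-f_\rmpe(x)}^2\d x\leq\norm{\rho_k\ast g-g}_{\L(\R^d)}^2\to 0,
\]
using $f_\rmpe=g$ on $Y$. This completes the proof.

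I do not anticipate a genuine obstacle here; the only mildly delicate point is the bookkeeping needed to justify replacing $f_\rmpe$ by the compactly supported truncation $g$ — one has to check that the support of $\rho_k$ is small enough that no values of $f_\rmpe$ outside $Y_1$ enter into $f_k$ on $Y$, which is exactly the support estimate $\spt(\rho_k\ast g)\subseteq\spt\rho_k+\spt g$ from \prettyref{exer:C_cinfty dense}~(a). Everything else is a routine transfer of the standard mollification argument to the periodic torus.
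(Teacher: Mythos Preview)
Your proof is correct and follows essentially the same approach as the paper: establish smoothness and periodicity of $\rho_k\ast f_\rmpe$ directly, then obtain the $\L(Y)$-convergence by replacing $f_\rmpe$ with a compactly supported truncation (the paper uses $\1_{Y+B(0,1)}f_\rmpe$ where you use $\1_{\ci{-1}{2}^d}f_\rmpe$) and invoking the standard mollifier convergence from \prettyref{exer:C_cinfty dense}.
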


\begin{proof}
 It follows as in \prettyref{exer:C_cinfty dense} that $\rho_k\ast f_\rmpe$ is in $C^\infty$. Moreover, one easily sees that $\rho_k\ast f_\rmpe$ is $\roi{0}{1}^d$-periodic, and hence, $f_k\in C^\infty_\sharp(Y)$ for each $k\in \N$. For the convergence we observe 
 \[
  \bigl(\rho_k\ast (\1_{Y+B(0,1)} f_\rmpe)\bigr)(x)=f_k(x)\quad (x\in Y, k\in \N).
 \]
 Moreover, by \prettyref{exer:C_cinfty dense} we have $\rho_k\ast (\1_{Y+B(0,1)} f_\rmpe) \to \1_{Y+B(0,1)} f_\rmpe$ in $\L(\R^d)$ as $k\to \infty$, and thus,
 \[
  f_k =\bigl(\rho_k\ast (\1_{Y+B(0,1)} f_\rmpe)\bigr)|_Y\to (\1_{Y+B(0,1)} f_\rmpe)|_Y=f\quad (k\to \infty) \quad\text{in $\L(Y)$.}\qedhere
 \]

\end{proof}

\begin{lem}\label{lem:C_sharp_core}
 $C_\sharp^\infty(Y)^d$ is a core for $\dive_\sharp$ and $\dive_\sharp\Psi=\dive \Psi$ for each $\Psi\in C_\sharp^\infty(Y)^d$.
\end{lem}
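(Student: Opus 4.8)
The plan is to first settle the pointwise formula $\dive_\sharp\Psi=\dive\Psi$ on $C_\sharp^\infty(Y)^d$ by integration by parts, and then to obtain the core property by mollifying an arbitrary $\Psi\in\dom(\dive_\sharp)$ with a periodic $\delta$-sequence as in \prettyref{lem:delta-seq-periodic}. For the first part, let $\Psi\in C_\sharp^\infty(Y)^d$ and $\phi\in C_\sharp^\infty(Y)$. Integrating by parts over the cube $Y=\roi{0}{1}^{d}$ and using that the boundary contributions on opposite faces of $Y$ cancel because $\Psi$ and $\phi$ are periodic, we get
\[
\scp{\Psi}{\grad_{\sharp,\infty}\phi}_{\L(Y)^d}=\sum_{j=1}^d\int_Y\overline{\Psi_j}\,\partial_j\phi=-\sum_{j=1}^d\int_Y\overline{\partial_j\Psi_j}\,\phi=\scp{-\dive\Psi}{\phi}_{\L(Y)}.
\]
Since $\grad_{\sharp,\infty}$ is densely defined by \prettyref{lem:delta-seq-periodic}, $\grad_{\sharp,\infty}^{*}$ is a linear operator, and the displayed identity shows $\Psi\in\dom(\grad_{\sharp,\infty}^{*})$ with $\grad_{\sharp,\infty}^{*}\Psi=-\dive\Psi$. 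Hence $\Psi\in\dom(\dive_\sharp)$ and $\dive_\sharp\Psi=-\grad_{\sharp,\infty}^{*}\Psi=\dive\Psi$, which gives $C_\sharp^\infty(Y)^d\subseteq\dom(\dive_\sharp)$ together with the asserted formula.

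For the core property, let $\Psi\in\dom(\dive_\sharp)$ be arbitrary. The crucial point is that the periodic extension $\Psi_\rmpe\in L_{1,\loc}(\R^{d})^d$ has distributional divergence $(\dive_\sharp\Psi)_\rmpe$ on $\R^{d}$. To see this, fix $\varphi\in\cci(\R^{d})$ and set $\tilde\varphi\coloneqq\sum_{k\in\Z^{d}}\varphi(\cdot+k)$; since $\spt\varphi$ is compact the sum is locally finite, so $\tilde\varphi\in C^\infty(\R^{d})$ is $\roi{0}{1}^{d}$-periodic and restricts to an element of $C_\sharp^\infty(Y)$. Writing the integrals over $\R^{d}$ as sums of integrals over the cubes $Y+k$ and using $\Psi_\rmpe(x+k)=\Psi(x)$ for $x\in Y$, the assertion tested against $\varphi$ reduces precisely to the defining relation
\[
\scp{\Psi}{\grad_{\sharp,\infty}\tilde\varphi}_{\L(Y)^d}=\scp{-\dive_\sharp\Psi}{\tilde\varphi}_{\L(Y)},
\]
which holds since $\Psi\in\dom(\grad_{\sharp,\infty}^{*})$ and $\dive_\sharp=-\grad_{\sharp,\infty}^{*}$.

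Now let $(\rho_k)_k$ be a $\delta$-sequence in $\cci(\R^{d})$ (cf.~\prettyref{exer:delta-seq}) and put $\Psi_k\coloneqq(\rho_k\ast\Psi_\rmpe)|_Y$. Applying \prettyref{lem:delta-seq-periodic} componentwise yields $\Psi_k\in C_\sharp^\infty(Y)^d$ with $\Psi_k\to\Psi$ in $\L(Y)^d$. Moreover, convolution with the smooth, compactly supported $\rho_k$ commutes with distributional differentiation (arguing as in \prettyref{exer:C_cinfty dense}), so for $k$ large enough, namely $\spt\rho_k\subseteq\ball{0}{1}$ so that all occurring convolutions localise to $Y+\ball{0}{1}$, the previous step gives
\[
\dive\Psi_k=(\rho_k\ast\dive\Psi_\rmpe)|_Y=\bigl(\rho_k\ast(\dive_\sharp\Psi)_\rmpe\bigr)|_Y\to\dive_\sharp\Psi\quad(k\to\infty)\quad\text{in }\L(Y),
\]
where the convergence is \prettyref{lem:delta-seq-periodic} applied to $f=\dive_\sharp\Psi\in\L(Y)$. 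By the first part $\dive\Psi_k=\dive_\sharp\Psi_k$, so $(\Psi_k)_k$ converges to $\Psi$ in the graph norm of $\dive_\sharp$; as $\dive_\sharp$ is closed, this shows that $C_\sharp^\infty(Y)^d$ is a core for $\dive_\sharp$.

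The main obstacle is the middle step: showing that $\Psi_\rmpe$ has a locally integrable distributional divergence equal to $(\dive_\sharp\Psi)_\rmpe$, rather than merely an interior divergence with possible singular contributions on the faces of $\partial Y$. This is exactly the place where the periodic boundary condition built into $\dive_\sharp=-\grad_{\sharp,\infty}^{*}$ enters, and replacing a general test function $\varphi$ by its periodization $\tilde\varphi$ is the device that extracts it; the remaining work (unfolding integrals, tracking complex conjugates) is routine.
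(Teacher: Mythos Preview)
Your proof is correct and follows essentially the same strategy as the paper: establish the formula on $C_\sharp^\infty(Y)^d$ by integration by parts with the boundary terms cancelling by periodicity, then approximate a general $\Psi\in\dom(\dive_\sharp)$ by the periodic mollifications $(\rho_k\ast\Psi_\rmpe)|_Y$ and show $\dive$ of the mollification equals the mollification of $\dive_\sharp\Psi$.

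The only organisational difference is in how this last identity is obtained. The paper verifies $\dive q_k=(\rho_k\ast(\dive_\sharp q)_\rmpe)|_Y$ directly by computing $\scp{q_k}{\grad_\sharp\phi}_{\L(Y)^d}$ for $\phi\in C_\sharp^\infty(Y)$ via Fubini and a change of variables, using that $\phi_\rmpe(\cdot+y)\in C_\sharp^\infty(Y)$ to invoke the definition of $\dive_\sharp$ inside the integral. You instead first isolate the global statement that $\Psi_\rmpe$ has distributional divergence $(\dive_\sharp\Psi)_\rmpe$ on all of $\R^d$ (via the periodisation $\tilde\varphi=\sum_{k\in\Z^d}\varphi(\cdot+k)$ of a test function), and then appeal to the standard commutation of convolution with distributional differentiation. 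Your factorisation is arguably cleaner, since it separates the ``periodic boundary'' content from the routine mollification step; the paper's single computation is more self-contained but mixes the two ingredients.
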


\begin{proof}
 First we note that $C_\sharp^\infty(Y)^d\subseteq \dom(\dive_\sharp)$ and $\dive_\sharp \Psi=\dive \Psi$ for $\Psi\in C_\sharp^\infty(Y)^d$. To see this, for $\phi \in C_\sharp^\infty(Y),\Psi\in C_\sharp^\infty(Y)^d$ we compute
 \begin{align*}
  \scp{\grad \phi}{\Psi}_{\L(Y)^d} & =\int_Y \scp{\grad \phi(x)}{\Psi(x)}_{\K^d} \d x=-\int_Y \phi(x)^\ast \dive \Psi (x) \d x\\
  & =\scp{\phi}{-\dive \Psi}_{\L(Y)}
 \end{align*}
by integration by parts (note that the boundary values cancel out due to the periodicity of $\phi$ and $\Psi$). Now, let $q\in \dom(\dive_\sharp)$ and $(\rho_k)_k$ be a $\delta$-sequence in $\cci(\R^d)$. For $k\in\N$ we define 
\[
 q_k\coloneqq (\rho_k \ast q_\rmpe)|_Y,
\]
and obtain $q_k \in C^\infty_\sharp(Y)$ and $q_k\to q$ in $\L(Y)^d$ as $k\to \infty$ by \prettyref{lem:delta-seq-periodic}. It is left to show that $\dive q_k\to \dive_\sharp q$ in $\L(Y)$ as $k\to \infty$. For doing so, we show that $\dive q_k =\bigl(\rho_k \ast (\dive_\sharp q)_\rmpe\bigr)|_Y$, which would then yield the assertion again by \prettyref{lem:delta-seq-periodic}. So, let $k\in \N$ and $\phi\in C^\infty_\sharp(Y)$. We compute
\begin{align*}
 \scp{q_k}{\grad \phi}_{\L(Y)^d}&= \int_Y \scp{\int_{\R^d} \rho_k(y) q_\rmpe (x-y)\d y}{\grad \phi(x)}_{\K^d} \d x\\
 &= \int_{\R^d} \rho_k(y)  \int_Y \scp{q_\rmpe(x-y)}{\grad \phi(x)}_{\K^d} \d x\d y\\
 &= \int_{\R^d} \rho_k(y)  \int_{Y-y} \scp{q_\rmpe(x)}{(\grad \phi)_\rmpe (x+y)}_{\K^d} \d x\d y\\
 &= \int_{\R^d} \rho_k(y)  \int_{Y} \scp{q(x)}{(\grad \phi)_\rmpe (x+y)}_{\K^d} \d x\d y\\
 &=\int_{\R^d} \rho_k(y)  \int_{Y} \scp{q(x)}{(\grad \phi_\rmpe(\cdot +y)) (x)}_{\K^d} \d x\d y\\
 &= -\int_{\R^d} \rho_k(y)  \int_{Y} \scp{\dive_\sharp q(x)}{\phi_\rmpe(x+y)}_{\K^d} \d x\d y\\
 &= -\int_{\R^d} \rho_k(y) \int_{Y+y} \scp{(\dive_\sharp q)_\rmpe(x-y)}{\phi_\rmpe(x)}_{\K^d}\d x\d y \\
 &= -\scp{\bigl(\rho_k \ast (\dive_\sharp q)_\rmpe\bigr)|_Y}{\phi}_{\L(Y)},
\end{align*}
where we have used perodicity %in the 4th and last equality, 
as well as $\phi_\rmpe(\cdot +y)\in C_\sharp^\infty(Y)$. %in the 6th equality. 
\end{proof}

\begin{rem}\label{rem:approx_in_Kern_div_sharp}
  The proof of \prettyref{lem:C_sharp_core} reveals that every $q\in \ker(\dive_\sharp)$ can be approximated by elements in $C^\infty_\sharp(Y)^d\cap \ker(\dive_\sharp)$.
\end{rem}

\begin{prop}
\label{prop:periodicextension}
Let $\Omega\subseteq\R^{d}$ be open,
bounded, $u\in H_{\sharp}^{1}(Y)$ and $q\in H_\sharp(\dive,Y)$.
Then $u_{\rmpe}|_{\Omega}\in H^{1}(\Omega)$, $q_\rmpe|_\Omega \in H(\dive,\Omega)$ and 
\[
\grad\left(u_{\rmpe}|_{\Omega}\right)=\left(\grad_{\sharp}u\right)_{\rmpe}|_{\Omega}  \mbox{ and } \dive\left(q_{\rmpe}|_{\Omega}\right)=\left(\dive_{\sharp}q\right)_{\rmpe}|_{\Omega}.
\]
\end{prop}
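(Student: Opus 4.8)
The plan is to reduce the statement to an approximation by smooth periodic functions, for which the claim is immediate, and then to pass to the limit using the closedness of $\grad$ and $\dive$ as operators in $\L(\Omega)$.

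First I would record the elementary fact that, since $\Omega$ is bounded, there is a finite set $F\subseteq\Z^{d}$ with $\Omega\subseteq\bigcup_{k\in F}(Y+k)$, and hence for every $m\in\N$ and $f\in\L(Y)^{m}$ one has
\[
\norm{f_{\rmpe}|_{\Omega}}_{\L(\Omega)^{m}}^{2}\leq\sum_{k\in F}\int_{Y+k}\norm{f_{\rmpe}(x)}^{2}\d x=\card{F}\,\norm{f}_{\L(Y)^{m}}^{2}.
\]
In other words, $f\mapsto f_{\rmpe}|_{\Omega}$ is a bounded operator from $\L(Y)^{m}$ to $\L(\Omega)^{m}$, so it turns $\L(Y)$-convergence into $\L(\Omega)$-convergence.

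For the gradient, let $u\in H_{\sharp}^{1}(Y)=\dom(\grad_{\sharp})$. Since $\grad_{\sharp}=\overline{\grad_{\sharp,\infty}}$, the space $C_{\sharp}^{\infty}(Y)$ is a core, so there is a sequence $(u_{k})_{k}$ in $C_{\sharp}^{\infty}(Y)$ with $u_{k}\to u$ in $\L(Y)$ and $\grad u_{k}\to\grad_{\sharp}u$ in $\L(Y)^{d}$. Each $u_{k}$ is the restriction to $Y$ of a periodic function $\phi_{k}\in C^{\infty}(\R^{d})$; as $\phi_{k}$ is periodic, its periodic extension is $(u_{k})_{\rmpe}=\phi_{k}$, and likewise $(\grad u_{k})_{\rmpe}=\grad\phi_{k}$ (using periodicity of $\grad\phi_{k}$). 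Hence $(u_{k})_{\rmpe}|_{\Omega}=\phi_{k}|_{\Omega}\in H^{1}(\Omega)$ with $\grad\bigl((u_{k})_{\rmpe}|_{\Omega}\bigr)=(\grad\phi_{k})|_{\Omega}=(\grad u_{k})_{\rmpe}|_{\Omega}$. By the boundedness recorded above, $(u_{k})_{\rmpe}|_{\Omega}\to u_{\rmpe}|_{\Omega}$ in $\L(\Omega)$ and $(\grad u_{k})_{\rmpe}|_{\Omega}\to(\grad_{\sharp}u)_{\rmpe}|_{\Omega}$ in $\L(\Omega)^{d}$. Since $\grad$ is a closed operator in $\L(\Omega)$ (cf.\ \prettyref{lem:grscp}), this forces $u_{\rmpe}|_{\Omega}\in\dom(\grad)=H^{1}(\Omega)$ and $\grad(u_{\rmpe}|_{\Omega})=(\grad_{\sharp}u)_{\rmpe}|_{\Omega}$.

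The divergence case is handled in exactly the same way: for $q\in H_{\sharp}(\dive,Y)$, \prettyref{lem:C_sharp_core} provides $(q_{k})_{k}$ in $C_{\sharp}^{\infty}(Y)^{d}$ with $q_{k}\to q$ in $\L(Y)^{d}$ and $\dive q_{k}\to\dive_{\sharp}q$ in $\L(Y)$; the periodic extensions $(q_{k})_{\rmpe}$ are smooth on $\R^{d}$ and satisfy $\dive\bigl((q_{k})_{\rmpe}\bigr)=(\dive q_{k})_{\rmpe}$ by periodicity, and passing to the limit via the boundedness of the restricted periodic extension together with the closedness of $\dive$ in $\L(\Omega)$ yields $q_{\rmpe}|_{\Omega}\in\dom(\dive)=H(\dive,\Omega)$ and $\dive(q_{\rmpe}|_{\Omega})=(\dive_{\sharp}q)_{\rmpe}|_{\Omega}$. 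I do not expect a genuine obstacle here; the only points needing a moment's care are the boundedness of $f\mapsto f_{\rmpe}|_{\Omega}$ (handled by covering $\Omega$ with finitely many unit cells) and the commutation of differentiation with periodic extension for smooth periodic functions, which is immediate because such a function coincides with the periodic extension of its restriction to $Y$.
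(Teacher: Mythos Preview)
Your proof is correct and follows essentially the same route as the paper: approximate by smooth periodic functions (using that $C_{\sharp}^{\infty}(Y)$ is a core for $\grad_{\sharp}$, and \prettyref{lem:C_sharp_core} for $\dive_{\sharp}$), use boundedness of the periodic-extension-and-restrict map $\L(Y)^{m}\to\L(\Omega)^{m}$, and pass to the limit via closedness of $\grad$ and $\dive$. Your explicit finite-cover argument for the boundedness of $f\mapsto f_{\rmpe}|_{\Omega}$ is a slight elaboration of what the paper leaves implicit, but otherwise the arguments coincide.
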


\begin{proof}
Let first $\phi \in C^\infty_\sharp(Y)$. Then by definition $\phi_\rmpe \in C^\infty(\R^d)$ and we easily see
\[
 \grad \phi_{\rmpe} = (\grad \phi)_\rmpe = (\grad_\sharp \phi)_\rmpe. 
\]
Moreover, since $\Omega$ is bounded, we infer $\phi_\rmpe \in H^1(\Omega)$. By definition of $H^1_\sharp(Y)$ we find a sequence $(\phi_k)_{k\in \N}$ in $C^\infty_\sharp(Y)$ such that $\phi_k\to u$ in $\L(Y)$ and $\grad_\sharp \phi_k\to \grad_\sharp u$ in $\L(Y)^d$ as $k\to \infty$. Since 
\[
 \L(Y)\to \L(\Omega),\quad f\mapsto f_\rmpe
\]
is bounded due to the boundedness of $\Omega$, we also derive $\phi_{k,\rmpe}\to u_\rmpe$ in $\L(\Omega)$ and $(\grad_\sharp \phi_k)_\rmpe\to (\grad_\sharp u)_\rmpe$ in $\L(\Omega)^d$ as $k\to \infty$.  By what we have shown above, we infer
\[
 \grad \phi_{k,\rmpe}=(\grad_\sharp \phi_k)_\rmpe\to (\grad_\sharp u)_\rmpe\quad (k\to \infty)
\]
in $\L(\Omega)^d$, and thus, $u_\rmpe\in H^1(\Omega)$ with $\grad u_\rmpe=(\grad_\sharp u)_\rmpe$ by the closedness of $\grad$. The proof for $q$ follows by the same argument with \prettyref{lem:C_sharp_core} as an additional resource.
\end{proof}

The extension result just established yields the following compactness
statement.
\begin{thm}[Rellich--Kondrachov II]
\label{thm:RellichII} The embedding $H_{\sharp}^{1}(Y)\hookrightarrow\L(Y)$
is compact.
\end{thm}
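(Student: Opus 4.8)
The goal is to deduce compactness of $H_{\sharp}^1(Y)\hookrightarrow \L(Y)$ from the already established Rellich--Kondrachov theorem for $H_0^1(\Omega)$ (\prettyref{thm:Rellich}) together with the periodic extension result \prettyref{prop:periodicextension}. The basic idea is that a periodic $H^1$-function, when extended periodically and restricted to a slightly larger box, remains $H^1$, and $H^1$ (without boundary condition) on a nice bounded domain embeds compactly into $L_2$; the difficulty is that \prettyref{thm:Rellich} is stated for $H_0^1$, not $H^1$. I would bridge this by the standard localisation trick: multiply by a fixed cut-off function supported in the larger box and equal to $1$ on $Y$, which lands one in $H_0^1$ of the larger box.

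More concretely, set $\Omega\coloneqq \oi{-1}{2}^d$, so that $\overline{Y}\subseteq \Omega$. Fix $\chi\in\cci(\Omega)$ with $0\leq \chi\leq 1$ and $\chi=1$ on a neighbourhood of $\overline{Y}$; such a $\chi$ exists (e.g.\ by mollifying an indicator, cf.~the constructions around \prettyref{exer:delta-seq}). Let $(\phi_n)_n$ be a bounded sequence in $H_{\sharp}^1(Y)$; I want to extract an $\L(Y)$-convergent subsequence. By \prettyref{prop:periodicextension}, the periodic extensions satisfy $\phi_{n,\rmpe}|_\Omega\in H^1(\Omega)$ with $\grad(\phi_{n,\rmpe}|_\Omega) = (\grad_\sharp\phi_n)_\rmpe|_\Omega$, and since the map $\L(Y)\to \L(\Omega)$, $f\mapsto f_\rmpe|_\Omega$ is bounded (as $\Omega$ is bounded, being a finite union of translates of $Y$ up to a null set — more precisely $\norm{f_\rmpe}_{\L(\Omega)}^2 \leq 3^d\norm{f}_{\L(Y)}^2$), the sequence $(\phi_{n,\rmpe}|_\Omega)_n$ is bounded in $H^1(\Omega)$. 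Now set $\psi_n\coloneqq \chi\cdot(\phi_{n,\rmpe}|_\Omega)$. Using the product rule for weak derivatives (which one checks exactly as in the proof of \prettyref{lem:CinfwholespaceDense}: $\grad(\chi f) = (\grad\chi)f + \chi\grad f$ for $f\in H^1(\Omega)$, $\chi\in\cci(\Omega)$), together with the fact that $\chi$ has compact support in $\Omega$, one gets $\psi_n\in H_0^1(\Omega)$ — indeed $\psi_n$ can be approximated in $H^1(\Omega)$ by $\chi f_k$ for $f_k\in\cci$-approximants (or one invokes that a compactly-supported $H^1(\Omega)$-function lies in $H_0^1(\Omega)$, cf.\ the argument of \prettyref{thm:CinfbdryDense} / \prettyref{exer:Extension}). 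Moreover $(\psi_n)_n$ is bounded in $H_0^1(\Omega)$ because $\norm{\psi_n}_{H^1(\Omega)}\leq (\norm{\chi}_\infty + \norm{\grad\chi}_\infty)\norm{\phi_{n,\rmpe}|_\Omega}_{H^1(\Omega)}$.

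By \prettyref{thm:Rellich}, $H_0^1(\Omega)\hookrightarrow\L(\Omega)$ is compact, so a subsequence $(\psi_{n_k})_k$ converges in $\L(\Omega)$. Since $\chi = 1$ on $Y$, we have $\psi_{n_k}|_Y = \phi_{n_k}$, and therefore
\[
\norm{\phi_{n_k}-\phi_{n_l}}_{\L(Y)} = \norm{\psi_{n_k}|_Y - \psi_{n_l}|_Y}_{\L(Y)} \leq \norm{\psi_{n_k}-\psi_{n_l}}_{\L(\Omega)}\to 0
\]
as $k,l\to\infty$; hence $(\phi_{n_k})_k$ is Cauchy in $\L(Y)$ and thus convergent. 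This shows that the (continuous) embedding $H_{\sharp}^1(Y)\hookrightarrow\L(Y)$ maps bounded sequences to sequences with convergent subsequences, i.e.\ it is compact. The main obstacle in writing this out carefully is the verification that $\psi_n = \chi\cdot(\phi_{n,\rmpe}|_\Omega)\in H_0^1(\Omega)$, i.e.\ that multiplication by a $\cci(\Omega)$ cut-off maps $H^1(\Omega)$ into $H_0^1(\Omega)$; everything else is bookkeeping with the boundedness of the periodic-extension map and an application of the already-proved $H_0^1$ compactness theorem.
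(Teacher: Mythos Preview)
Your proof is correct and is essentially identical to the paper's own argument: the paper also takes an open bounded $\Omega\supseteq\overline{Y}$, uses \prettyref{prop:periodicextension} to get a bounded sequence in $H^1(\Omega)$, multiplies by a cut-off $\psi\in\cci(\Omega)$ with $\psi=1$ on $\overline{Y}$ to land in $H_0^1(\Omega)$, applies \prettyref{thm:Rellich}, and restricts back to $Y$. The only difference is that you spell out more carefully why $\chi\cdot(\phi_{n,\rmpe}|_\Omega)\in H_0^1(\Omega)$, which the paper simply asserts.
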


\begin{proof}
Let $(\phi_{n})_{n}$ be a bounded sequence in $H_{\sharp}^{1}(Y)$.
Let $\Omega\subseteq\R^{d}$ be open and bounded such that $\overline{Y}\subseteq\Omega.$
By \prettyref{prop:periodicextension}, we deduce that $\left(\phi_{n,\rmpe}|_{\Omega}\right)_{n}$
is bounded in $H^{1}(\Omega)$. Let $\psi\in\cci(\Omega)$ with $\psi=1$
on $\overline{Y}$. Then $\left(\psi\phi_{n,\rmpe}\right)_{n}$
is bounded in $H_{0}^{1}(\Omega)$. By \prettyref{thm:Rellich}, we
find an $\L(\Omega)$-convergent subsequence. This sequence also converges
in $\L(Y)$. Since $\psi=1$ on $Y$, we obtain the assertion.
\end{proof}

Next, we provide a Poincar\'{e}-type inequality for the periodic gradient. 

\begin{prop}
\label{prop:Poincare_periodic} There exists $c\geq 0$ such that for
all $u\in H_{\sharp}^{1}(Y)$
\[
\norm{u-\int_{Y}u}_{\L(Y)}\leq c \norm{\grad_{\sharp}u}_{\L(Y)^{d}}.
\]
 In particular, $\ran(\grad_{\sharp})\subseteq\L(Y)^{d}$ is closed, $\ker(\grad_\sharp)=\lin\{\1_Y\}$ and the operator
 \[
  \grad_\sharp\from H^1_\sharp(Y)\cap \{\1_Y\}^\bot \to \ran(\grad_\sharp)
 \]
is an isomorphism.
\end{prop}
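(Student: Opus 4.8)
The plan is to establish the Poincaré-type inequality first by a compactness-and-contradiction argument in the style of many such proofs, and then deduce the structural statements about the range and kernel of $\grad_\sharp$ as consequences.

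First I would prove the inequality. Suppose it fails. Then for every $n\in\N$ there is $u_n\in H^1_\sharp(Y)$ with
\[
  \norm{u_n-\textstyle\int_Y u_n}_{\L(Y)} > n\,\norm{\grad_\sharp u_n}_{\L(Y)^d}.
\]
Replacing $u_n$ by $\bigl(u_n-\int_Y u_n\bigr)\big/\norm{u_n-\int_Y u_n}_{\L(Y)}$ we may assume $\int_Y u_n = 0$ and $\norm{u_n}_{\L(Y)}=1$, so that $\norm{\grad_\sharp u_n}_{\L(Y)^d} < 1/n \to 0$. In particular $(u_n)_n$ is bounded in $H^1_\sharp(Y)$. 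By \prettyref{thm:RellichII} (Rellich--Kondrachov for the periodic gradient), a subsequence converges in $\L(Y)$ to some $u$; since $\grad_\sharp u_n\to 0$ and $\grad_\sharp$ is closed (being $\overline{\grad_{\sharp,\infty}}$), we get $u\in H^1_\sharp(Y)$ with $\grad_\sharp u = 0$. The constraints $\int_Y u_n=0$ and $\norm{u_n}_{\L(Y)}=1$ pass to the limit, giving $\int_Y u = 0$ and $\norm{u}_{\L(Y)}=1$. But $\grad_\sharp u = 0$ together with \prettyref{lem:kernel_grad} (applied after periodic extension via \prettyref{prop:periodicextension}, since $\R^d$ is connected) forces $u$ to be constant; a constant with $\int_Y u = 0$ is zero, contradicting $\norm{u}_{\L(Y)}=1$. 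This proves the inequality with some $c\geq 0$.

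Next I would read off the kernel. If $u\in\ker(\grad_\sharp)$ then the inequality gives $u = \int_Y u$, a constant, so $\ker(\grad_\sharp)\subseteq\lin\{\1_Y\}$; the reverse inclusion is clear since $\1_Y\in C^\infty_\sharp(Y)$ and $\grad \1_Y = 0$. Hence $\ker(\grad_\sharp)=\lin\{\1_Y\}$. For closedness of the range: let $(u_n)_n$ in $H^1_\sharp(Y)$ with $\grad_\sharp u_n \to v$ in $\L(Y)^d$. Set $\tilde u_n \coloneqq u_n - \int_Y u_n$; then $\grad_\sharp \tilde u_n = \grad_\sharp u_n$ is Cauchy, and by the Poincaré inequality $(\tilde u_n)_n$ is Cauchy in $\L(Y)$, hence convergent to some $w\in\L(Y)$. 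Closedness of $\grad_\sharp$ yields $w\in H^1_\sharp(Y)$ and $\grad_\sharp w = v\in\ran(\grad_\sharp)$, so the range is closed.

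Finally, for the isomorphism statement: by \prettyref{cor:abstrHelmH} applied to the closed operator $\grad_\sharp$ we have $\L(Y) = \ker(\grad_\sharp)\oplus\cran(\dive_\sharp)$, but more directly the Poincaré inequality shows that $\grad_\sharp$ restricted to $H^1_\sharp(Y)\cap\{\1_Y\}^\bot$ is injective (since $\int_Y u = \frac{1}{\,|Y|\,}\scp{\1_Y}{u}_{\L(Y)} = 0$ there, so the right-hand side of the inequality controls $\norm{u}_{\L(Y)}$, making the map bounded below), and its range equals $\ran(\grad_\sharp)$ because every $u$ differs from $u - \int_Y u \in \{\1_Y\}^\bot$ by an element of $\ker(\grad_\sharp)$. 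A bounded bijective linear operator with closed range and bounded inverse is an isomorphism; boundedness of $\grad_\sharp$ on its domain (equipped with the graph norm) is automatic, and boundedness of the inverse is exactly the Poincaré estimate. I expect no serious obstacle here: the only point requiring a little care is the application of \prettyref{lem:kernel_grad}, which concerns $H^1(\Omega)$ on connected open $\Omega$, to the periodic setting — this is bridged by \prettyref{prop:periodicextension}, extending $u$ periodically to all of $\R^d$ (or to a large ball) where the kernel-of-gradient lemma applies and then restricting back.
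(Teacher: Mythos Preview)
Your proof is correct and follows the same compactness-and-contradiction mechanism the paper intends: the paper defers to \prettyref{exer:poincare}, which asks for the abstract statement that any closed operator $C$ with $\dom(C)\hookrightarrow H_0$ compactly satisfies $\norm{P_{\ker(C)^\bot}u}\leq c\norm{Cu}$, and then specialises to $C=\grad_\sharp$ via \prettyref{thm:RellichII}. The abstract version is marginally cleaner in that the contradiction closes through $\ker(C)\cap\ker(C)^\bot=\{0\}$ without identifying the kernel explicitly mid-argument, whereas your direct route needs the fact that $\ker(\grad_\sharp)$ consists of constants already inside the contradiction; since you supply this via \prettyref{lem:kernel_grad} there is no circularity, only a slight redundancy with your second paragraph. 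One small simplification: the periodic-extension detour is unnecessary, because the paper already records $\grad_\sharp\subseteq\grad$, so $\grad_\sharp u=0$ immediately gives $\grad u=0$ on the connected open set $(0,1)^d$ and \prettyref{lem:kernel_grad} applies there directly.
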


\begin{proof}
The proof is left as \prettyref{exer:poincare}.
\end{proof}
We are now in a position to formulate the particular example we have in mind.
Problems of this type with highly oscillatory coefficients are also
referred to as \emph{homogenisation problems}. We refer to the comments section
for more details on this.
\begin{example}[Homogenisation problem for the wave equation]
\label{exa:wave_homo} Let $c>0$, $a\colon\R^{d}\to\K^{d\times d}$ be bounded,
measurable, $a(x)=a(x)^{*}\geq c$ for all $x\in\R^{d}$. Furthermore,
assume that $a$ is $\roi{0}{1}^{d}$-periodic. Let $\nu>0$, $f\in\Lnu(\R;\L(Y))$
and for $n\in\N$ consider the problem of finding $u_{n}\in\Lnu(\R;\L(Y))$
such that 
\begin{equation}
\td{\nu}^{2}u_{n}-\dive_{\sharp}a(n\m)\grad_{\sharp}u_{n}=f. \label{eq:waven}
\end{equation}
We have already established that there exists a uniquely determined solution, $u_{n}$. 
Employing the same trick as in \prettyref{sec:Three_Exponentially_Stable_Models_for_Heat_Conduction},
we rewrite \prettyref{eq:waven} using $v_{n}\coloneqq\td{\nu}u_{n}$, the canonical embedding $\iota_{\sharp}\colon\ran(\grad_{\sharp})\hookrightarrow\L(Y)^{d}$
as well as $q_{n}\coloneqq-\iota_{\sharp}^{*}a(n\m)\iota_{\sharp}\iota_{\sharp}^{*}\grad_{\sharp}u_{n}$
to obtain
\[
\left(\td{\nu}\begin{pmatrix}
1 & 0\\
0 & \left(\iota_{\sharp}^{*}a(n\m)\iota_{\sharp}\right)^{-1}
\end{pmatrix}+\begin{pmatrix}
0 & \dive_{\sharp}\iota_{\sharp}\\
\iota_{\sharp}^{*}\grad_{\sharp} & 0
\end{pmatrix}\right)\begin{pmatrix}
v_{n}\\
q_{n}
\end{pmatrix}=\begin{pmatrix}
f\\
0
\end{pmatrix}.
\]
Note that we have used that $\bigl(\iota_{\sharp}^{*}a(n\m)\iota_{\sharp}\bigr)\colon\ran(\grad_{\sharp})\to\ran(\grad_{\sharp})$
is continuously invertible and strictly positive definite (uniformly
in $n$); see \prettyref{prop:exp_stab_heat}. Also note that $\iota_{\sharp}^{*}a(n\m)\iota_{\sharp}$
is selfadjoint. As in  \prettyref{exer:closed_range} we see that $\left(\iota_{\sharp}^{*}\grad_{\sharp}\right)^{*}=-\dive_{\sharp}\iota_{\sharp}$.
Thus, the operator
\[
S^{(n)}\coloneqq\Biggl(\overline{\td{\nu}\begin{pmatrix}
1 & 0\\
0 & \left(\iota_{\sharp}^{*}a(n\m)\iota_{\sharp}\right)^{-1}
\end{pmatrix}+\begin{pmatrix}
0 & \dive_{\sharp}\iota_{\sharp}\\
\iota_{\sharp}^{*}\grad_{\sharp} & 0
\end{pmatrix}}\Biggr)^{-1}
\]
 is well-defined and bounded in $\Lnu\bigl(\R;\L(Y)\times\ran(\grad_{\sharp})\bigr)$.
We aim to find the limit of $(S^{(n)})_{n}$ as $n\to\infty$. For this,
we want to apply \prettyref{thm:convPDE}. We readily see using \prettyref{thm:RellichII}
and \prettyref{exer:compactness of adjoint} that 
\[
A\coloneqq\begin{pmatrix}
0 & \dive_{\sharp}\iota_{\sharp}\\
\iota_{\sharp}^{*}\grad_{\sharp} & 0
\end{pmatrix}
\]
satisfies the assumptions in \prettyref{thm:convPDE}. Thus, it is
left to analyse $\bigl(\bigl(\iota_{\sharp}^{*}a(n\m)\iota_{\sharp}\bigr)^{-1}\bigr)_{n}$.
This is the subject of the next section. For this reason, we define
\[\mathfrak{a}_n:=\bigl(\iota_{\sharp}^{*}a(n\m)\iota_{\sharp}\bigr)^{-1} \quad(n\in\N).\]
\end{example}

\section{The Limit of the Scaled Coefficient Sequence}

In this section, we shall apply our earlier findings to higher-dimensional
problems. Again, we fix $Y\coloneqq\roi{0}{1}^{d}$ as well as $\iota_{\sharp}\colon\ran(\grad_{\sharp})\hookrightarrow\L(Y)^{d}$, the canonical embedding. Before we are able to present the central result of this section, we need a preliminary result.

Throughout, let $a\colon\R^{d}\to\K^{d\times d}$
be measurable, bounded and $\roi{0}{1}^{d}$-periodic such that $\Re a(x)\geq c$ for each $x\in \R^d$ for some $c>0$.

\begin{lem}
\label{lem:ExistenceLemma} 
Let $\xi\in\K^{d}$. Then there exists a unique $v_\xi\in\L(Y)^{d}$ with $v_\xi-\xi \in \ran(\grad_\sharp)$ and $a(\m)v\in \ker(\dive_\sharp)$.
\end{lem}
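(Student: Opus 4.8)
The plan is to convert the two conditions on $v_\xi$ into a single coercive equation on the closed subspace $\ran(\grad_\sharp)$ of $\L(Y)^d$ and to solve it with the accretivity machinery already at hand. Throughout I identify $\xi\in\K^d$ with the constant function $\xi\1_Y\in\L(Y)^d$ and write $a(\m)$ for multiplication by $a|_Y$ in $\L(Y)^d$. First I would record the structural facts: by \prettyref{prop:Poincare_periodic} the range $\ran(\grad_\sharp)$ is closed, so the canonical embedding $\iota_\sharp\colon\ran(\grad_\sharp)\hookrightarrow\L(Y)^d$ is isometric, with $\iota_\sharp^*\iota_\sharp$ the identity on $\ran(\grad_\sharp)$ and $\iota_\sharp\iota_\sharp^*$ the orthogonal projection onto $\ran(\grad_\sharp)$ (cf.\ \prettyref{lem:embeddings_projections}). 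Moreover $\grad_\sharp=\overline{\grad_{\sharp,\infty}}$ is closed with $\grad_\sharp^*=-\dive_\sharp$, so \prettyref{cor:abstrHelmH} yields the orthogonal decomposition $\L(Y)^d=\ran(\grad_\sharp)\oplus\ker(\dive_\sharp)$, and by \prettyref{thm:ran-kernerl} we have $\ran(\grad_\sharp)^\bot=\ker(\dive_\sharp)$.

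For existence I would consider $\iota_\sharp^*a(\m)\iota_\sharp\in\bo(\ran(\grad_\sharp))$. Exactly as in the proof of \prettyref{prop:exp_stab_heat}, the hypothesis $\Re a(x)\geq c$ gives $\Re\bigl(\iota_\sharp^*a(\m)\iota_\sharp\bigr)\geq c$, so \prettyref{prop:block_op_realinv}\ref{prop:block_op_realinv:item:2} shows that $\iota_\sharp^*a(\m)\iota_\sharp$ is continuously invertible on $\ran(\grad_\sharp)$. I then set
\[
g\coloneqq-\bigl(\iota_\sharp^*a(\m)\iota_\sharp\bigr)^{-1}\iota_\sharp^*a(\m)\xi\in\ran(\grad_\sharp),\qquad v_\xi\coloneqq\xi+\iota_\sharp g.
\]
By construction $v_\xi-\xi=\iota_\sharp g\in\ran(\grad_\sharp)$, and for every $h\in\ran(\grad_\sharp)$ one computes $\scp{\iota_\sharp h}{a(\m)v_\xi}_{\L(Y)^d}=\scp{h}{\iota_\sharp^*a(\m)\xi+\iota_\sharp^*a(\m)\iota_\sharp g}_{\ran(\grad_\sharp)}=0$, whence $a(\m)v_\xi\in\ran(\grad_\sharp)^\bot=\ker(\dive_\sharp)$. (Equivalently, one may run a direct Lax--Milgram-type argument for the bounded, coercive sesquilinear form $(g,h)\mapsto\scp{h}{a(\m)g}_{\L(Y)^d}$ on $\ran(\grad_\sharp)$.)

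For uniqueness, suppose $v_1,v_2$ both satisfy the conclusion and put $w\coloneqq v_1-v_2=(v_1-\xi)-(v_2-\xi)\in\ran(\grad_\sharp)$. Then $a(\m)w=a(\m)v_1-a(\m)v_2\in\ker(\dive_\sharp)=\ran(\grad_\sharp)^\bot$, so $\scp{a(\m)w}{w}_{\L(Y)^d}=0$. Taking real parts and using $\Re a(x)\geq c$ pointwise gives $0=\int_Y\Re\scp{a(x)w(x)}{w(x)}_{\K^d}\d x\geq c\norm{w}_{\L(Y)^d}^2$, hence $w=0$.

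I do not anticipate a genuine obstacle: all the estimates are of a type used repeatedly in the text. The only point requiring a little care is the bookkeeping with the abstract adjoint relations — using $\dive_\sharp=-\grad_\sharp^*$ together with the closedness of $\ran(\grad_\sharp)$ from \prettyref{prop:Poincare_periodic} — so that \prettyref{cor:abstrHelmH} and \prettyref{thm:ran-kernerl} may legitimately be applied, and ensuring $\iota_\sharp^*a(\m)\iota_\sharp$ is genuinely coercive on the correct subspace.
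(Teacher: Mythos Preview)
Your proof is correct and follows essentially the same route as the paper: both invert the coercive operator $\iota_\sharp^* a(\m)\iota_\sharp$ on $\ran(\grad_\sharp)$, set $v_\xi=\xi+\iota_\sharp g$ with $g=-\bigl(\iota_\sharp^* a(\m)\iota_\sharp\bigr)^{-1}\iota_\sharp^* a(\m)\xi$, and deduce $a(\m)v_\xi\in\ran(\grad_\sharp)^\bot=\ker(\dive_\sharp)$. The only cosmetic difference is in the uniqueness step, where you use the coercivity estimate $\Re\scp{a(\m)w}{w}\geq c\norm{w}^2$ directly, while the paper invokes invertibility of $\iota_\sharp^* a(\m)\iota_\sharp$; these are equivalent.
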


\begin{proof}
Take $w\in H^1_\sharp(Y)$ such that
\[
\grad_{\sharp}w=-\iota_{\sharp}\left(\iota_{\sharp}^{*}a(\m)\iota_{\sharp}\right)^{-1}\iota_{\sharp}^{*}a(\m)\xi = -\iota_{\sharp}\mathfrak{a}_n \iota_{\sharp}^{*}a(\m)\xi.
\]
This is possible, since the right-hand side belongs to $\ran(\grad_\sharp)$ by definition. We put $v_\xi\coloneqq\grad_{\sharp}w+\xi$. Then
$v_\xi-\xi \in \ran(\grad_\sharp)$ and we have
\begin{align*}
\iota_{\sharp}^{*}a(\m)v_\xi & =\iota_{\sharp}^{*}a(\m)\left(\grad_{\sharp}w+\xi\right)
 =\iota_{\sharp}^{*}a(\m)\left(-\iota_{\sharp}\mathfrak{a}_n\iota_{\sharp}^{*}a(\m)\xi+\xi\right)\\
 & =-\iota_{\sharp}^{*}a(\m)\iota_\sharp\mathfrak{a}_n\iota_{\sharp}^{*}a(\m)\xi+\iota_{\sharp}^{*}a(\m)\xi=0.
\end{align*}
The latter gives $a(\m)v_\xi\in \ran(\grad_\sharp)^\bot=\ker(\dive_\sharp)$. For the uniqueness, we assume $v\in\ran(\grad_{\sharp})$
with $a(\m)v\in \ker(\dive_\sharp)$. 
Then
\[
(\iota_\sharp^* a(\m)\iota_\sharp)\iota_\sharp^\ast v=\iota_\sharp^*a(\m)v=0,
\]
which implies $\iota_{\sharp}^{*}v=0$ since $\iota_{\sharp}^{*}a(\m)\iota_{\sharp}$
is invertible. Thus $v=0$.
\end{proof}
The previous result induces the linear mapping 
\[
a_{\hom}\colon\K^{d}\ni\xi\mapsto\int_{Y}av_\xi\in\K^{d},
\]
where $v_\xi \in\L(Y)^{d}$ is the unique vector field from \prettyref{lem:ExistenceLemma}.

\begin{rem}\label{rem:a_hom}
We gather some elementary facts on $a_{\hom}$. 
\begin{enumerate}
 \item\label{rem:a_hom:item:1}
  We have $(a^\ast)_{\rmhom}=a_{\rmhom}^\ast$. In particular, if $a$ is pointwise selfadjoint then so is $a_{\rmhom}$. Indeed, let $\xi,\zeta\in\K^{d}$
and $v_{\xi}$ and $v_{\zeta}\in\L(Y)^{d}$ be the corresponding functions for $a^\ast$ and $a$, respectively, according to \prettyref{lem:ExistenceLemma}. Then there exist $w_{\xi},w_{\zeta}\in\dom(\grad_{\sharp})$ with $v_\xi-\xi=\grad_\sharp w_\xi$ and $v_\zeta-\zeta=\grad_\sharp w_\zeta$. 
We compute
\begin{align*}
\scp{(a^\ast)_{\hom}\xi}{\zeta}_{\K^d} & =\int_{Y}\scp{\left(a^\ast v_{\xi}\right)(y)}{v_{\zeta}(y)-\grad_\sharp w_{\zeta}(y)}_{\K^d}\d y\\
 & =\int_{Y}\scp{\left(a^\ast v_{\xi}\right)(y)}{v_{\zeta}(y)}_{\K^d}\d y-\int_{Y}\scp{\left(a^\ast v_{\xi}\right)(y)}{\grad_\sharp w_{\zeta}(y)}_{\K^d}\d y\\
 & =\int_{Y}\scp{v_{\xi}(y)}{\left(av_{\zeta}\right)(y)}_{\K^d}\d y-\scp{a^\ast v_{\xi}}{\grad_\sharp w_{\zeta}}_{\L(Y)^{d}}\\
 & =\int_{Y}\scp{v_{\xi}(y)}{\left(av_{\zeta}\right)(y)}_{\K^d}\d y\\
 & =\int_{Y}\scp{\grad_\sharp w_{\xi}(y)+\xi}{\left(av_{\zeta}\right)(y)}_{\K^d}\d y\\
 & =\int_{Y}\scp{\xi}{\left(av_{\zeta}\right)(y)}_{\K^d}\d y=\scp{\xi}{a_{\hom}\zeta}_{\K^d}.
\end{align*}
\item $\Re a_{\hom}$ is strictly positive definite. As above, one shows
\[
\Re\scp{\xi}{a_{\hom} \xi}_{\K^d}=\Re\int_{Y}\scp{v_{\xi}(y)}{(av_{\xi})(y)}_{\K^d}\d y\geq c\norm{v_\xi}^2_{\L(Y)^d}\quad (\xi\in \K^d)
\]
and since the right-hand side is strictly positive if $\xi\ne 0$ by \prettyref{lem:ExistenceLemma}, we derive the assertion.
\end{enumerate}
\end{rem}

The construction of $a_{\hom}$ now allows us to formulate
the main result of this section.

\begin{thm}
\label{thm:homo} We have
\[
\mathfrak{a}_n = \left(\iota_{\sharp}^{*}a(n\m)\iota_{\sharp}\right)^{-1}\to\left(\iota_{\sharp}^{*}a_{\hom}\iota_{\sharp}\right)^{-1}\eqqcolon \mathfrak{a}_{\hom} \quad(n\to\infty)
\]
in the weak operator topology of $\bo(\ran(\grad_{\sharp}))$.
\end{thm}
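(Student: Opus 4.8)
The plan is to run the classical energy method of Tartar (compensated compactness via oscillating test functions), in its periodic incarnation on the torus $Y=\roi{0}{1}^{d}$.

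\emph{Reductions.} First, $(\mathfrak{a}_{n})_{n}$ is bounded in $\bo(\ran(\grad_{\sharp}))$: for $x\in\ran(\grad_{\sharp})$ one has $\Re\scp{x}{\iota_{\sharp}^{*}a(n\m)\iota_{\sharp}x}=\int_{Y}\Re\scp{(\iota_{\sharp}x)(y)}{a(ny)(\iota_{\sharp}x)(y)}_{\K^{d}}\d y\geq c\norm{\iota_{\sharp}x}_{\L(Y)^{d}}^{2}=c\norm{x}^{2}$, so $\iota_{\sharp}^{*}a(n\m)\iota_{\sharp}$ is invertible with $\norm{\mathfrak{a}_{n}}\leq 1/c$ by \prettyref{prop:block_op_realinv}\ref{prop:block_op_realinv:item:2}. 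Since $\ran(\grad_{\sharp})$ is a separable Hilbert space, the weak operator topology is metrisable on the ball of radius $1/c$, and that ball is sequentially compact; hence it suffices to show that every subsequence of $(\mathfrak{a}_{n})_{n}$ converging in the weak operator topology has limit $\mathfrak{a}_{\hom}$, a subsequence argument then giving the claim. So fix such a subsequence (no longer relabelled) with limit $T$; I must show $Tg=\mathfrak{a}_{\hom}g$ for every $g\in\ran(\grad_{\sharp})$.

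\emph{The field sequences.} Fix $g\in\ran(\grad_{\sharp})$, set $G\coloneqq\iota_{\sharp}g$, $h_{n}\coloneqq\mathfrak{a}_{n}g\rightharpoonup Tg\eqqcolon h$ in $\ran(\grad_{\sharp})$, and $E_{n}\coloneqq\iota_{\sharp}h_{n}\rightharpoonup E\coloneqq\iota_{\sharp}h$ in $\L(Y)^{d}$. From $(\iota_{\sharp}^{*}a(n\m)\iota_{\sharp})\mathfrak{a}_{n}=\idop$ we get $\iota_{\sharp}^{*}(a(n\m)E_{n})=g$, so $q_{n}\coloneqq a(n\m)E_{n}-G$ lies in $\ker(\iota_{\sharp}^{*})=\ran(\grad_{\sharp})^{\bot}=\ker(\dive_{\sharp})$ (using \prettyref{lem:embeddings_projections} for the first equality, and $\grad_{\sharp}=-\dive_{\sharp}^{*}$ together with closedness of $\ran(\grad_{\sharp})$ from \prettyref{prop:Poincare_periodic} for the second). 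Boundedness of $a$ and of $(E_{n})_{n}$ makes $(q_{n})_{n}$ bounded, so after passing to a further subsequence $q_{n}\rightharpoonup q\in\ker(\dive_{\sharp})$. The proof reduces to the identity $G+q=a_{\hom}E$ in $\L(Y)^{d}$: granting it, $\iota_{\sharp}^{*}a_{\hom}E=\iota_{\sharp}^{*}G+\iota_{\sharp}^{*}q=g+0=g$, and since $\Re a_{\hom}$ is strictly positive definite (\prettyref{rem:a_hom}) the operator $\iota_{\sharp}^{*}a_{\hom}\iota_{\sharp}$ is boundedly invertible on $\ran(\grad_{\sharp})$ by \prettyref{prop:block_op_realinv}, whence $Tg=h=(\iota_{\sharp}^{*}a_{\hom}\iota_{\sharp})^{-1}g=\mathfrak{a}_{\hom}g$.

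\emph{Oscillating test functions and the compensated-compactness step (the hard part).} For $\eta\in\K^{d}$ let $w_{\eta}\in\L(Y)^{d}$ be the corrector of $a^{*}$ from \prettyref{lem:ExistenceLemma}, i.e.\ $w_{\eta}-\eta=\grad_{\sharp}\chi_{\eta}$ for some $\chi_{\eta}\in H_{\sharp}^{1}(Y)$ and $a^{*}w_{\eta}\in\ker(\dive_{\sharp})$; note $\int_{Y}w_{\eta}=\eta$ and $\int_{Y}a^{*}w_{\eta}=(a^{*})_{\hom}\eta=a_{\hom}^{*}\eta$ by \prettyref{rem:a_hom}. Put $w_{n}^{\eta}\coloneqq(w_{\eta})_{\rmpe}(n\m)|_{Y}$ and $\psi_{n}\coloneqq\tfrac{1}{n}(\chi_{\eta})_{\rmpe}(n\m)|_{Y}$. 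By periodic scaling (as in \prettyref{prop:periodicextension}), $w_{n}^{\eta}-\eta\1_{Y}=\grad_{\sharp}\psi_{n}\in\ran(\grad_{\sharp})$, $\sigma_{n}\coloneqq a^{*}(n\m)w_{n}^{\eta}=(a^{*}w_{\eta})_{\rmpe}(n\m)|_{Y}\in\ker(\dive_{\sharp})$, and $\norm{\psi_{n}}_{\L(Y)}=\tfrac{1}{n}\norm{\chi_{\eta}}_{\L(Y)}\to 0$; moreover \prettyref{thm:periodic_homo} gives $w_{n}^{\eta}\rightharpoonup\eta\1_{Y}$ and $\sigma_{n}\rightharpoonup a_{\hom}^{*}\eta\1_{Y}$ in $\L(Y)^{d}$. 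Now fix $\phi\in C_{\sharp}^{\infty}(Y)$ and evaluate $\scp{\phi w_{n}^{\eta}}{a(n\m)E_{n}}$ in two ways. Moving the pointwise matrix adjoint $a(n\m)^{*}=a^{*}(n\m)$ to the left factor (with $\phi$ scalar) gives $\scp{\phi w_{n}^{\eta}}{a(n\m)E_{n}}=\scp{\phi\sigma_{n}}{E_{n}}$. Writing $E_{n}=\grad_{\sharp}\rho_{n}$ with $\int_{Y}\rho_{n}=0$, \prettyref{prop:Poincare_periodic} bounds $(\rho_{n})_{n}$ in $H_{\sharp}^{1}(Y)$ and \prettyref{thm:RellichII} yields, along a further subsequence, $\rho_{n}\to\rho$ in $\L(Y)$ with $\grad_{\sharp}\rho=E$; using the Leibniz rule $\overline{\phi}\grad_{\sharp}\rho_{n}=\grad_{\sharp}(\overline{\phi}\rho_{n})-\rho_{n}\grad_{\sharp}\overline{\phi}$, the orthogonality $\sigma_{n}\in\ker(\dive_{\sharp})\bot\ran(\grad_{\sharp})\ni\grad_{\sharp}(\overline{\phi}\rho_{n})$, and weak$\,\times\,$strong convergence, one obtains $\scp{\phi\sigma_{n}}{E_{n}}\to-\scp{a_{\hom}^{*}\eta\1_{Y}}{\rho\grad_{\sharp}\overline{\phi}}=\scp{\phi a_{\hom}^{*}\eta\1_{Y}}{E}=\int_{Y}\overline{\phi}\,\scp{\eta}{a_{\hom}E}_{\K^{d}}$, the last two equalities by running the same Leibniz/orthogonality computation backwards, now using that the constant $a_{\hom}^{*}\eta$ lies in $\ker(\dive_{\sharp})$. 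On the other hand, $\scp{\phi w_{n}^{\eta}}{a(n\m)E_{n}}=\scp{\phi w_{n}^{\eta}}{G}+\scp{\phi w_{n}^{\eta}}{q_{n}}$; here $\scp{\phi w_{n}^{\eta}}{G}\to\scp{\phi\eta\1_{Y}}{G}$ (weak factor against fixed factor), while from $\phi w_{n}^{\eta}=\phi\eta\1_{Y}+\grad_{\sharp}(\phi\psi_{n})-\psi_{n}\grad_{\sharp}\phi$ together with $q_{n}\in\ker(\dive_{\sharp})\bot\ran(\grad_{\sharp})$ and $\psi_{n}\to 0$ in $\L(Y)$ one gets $\scp{\phi w_{n}^{\eta}}{q_{n}}\to\scp{\phi\eta\1_{Y}}{q}$, so this side tends to $\int_{Y}\overline{\phi}\,\scp{\eta}{G+q}_{\K^{d}}$. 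Equating the two limits and letting $\phi$ range over $C_{\sharp}^{\infty}(Y)$ — dense in $\L(Y)$ by \prettyref{lem:delta-seq-periodic} — and $\eta$ over $\K^{d}$ yields $G+q=a_{\hom}E$, which closes the argument. The main obstacle is exactly this last step: both factors in $\scp{\phi\sigma_{n}}{E_{n}}$ and in $\scp{\phi w_{n}^{\eta}}{q_{n}}$ converge only weakly, and identifying the limit with the product of the weak limits is the compensated-compactness phenomenon; here it is extracted not from an abstract div–curl lemma but from the rigid orthogonality $\ran(\grad_{\sharp})\bot\ker(\dive_{\sharp})$ combined with a single Rellich-type upgrade of weak to strong convergence (for $\rho_{n}$; the analogous fact for $\psi_{n}$ comes for free from the explicit scaling).
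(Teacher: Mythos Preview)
Your argument is correct and follows essentially the same route as the paper: Tartar's oscillating test function method, using the corrector of $a^{*}$ scaled by $n$, the orthogonality $\ran(\grad_{\sharp})\perp\ker(\dive_{\sharp})$, and a Rellich upgrade of the potentials. The only difference is organisational: the paper first isolates an abstract div--curl lemma (\prettyref{thm:dcl_abst}) and then applies it twice inside \prettyref{lem:JikovLemma}, whereas you inline that computation directly; also, the paper tests with $\phi\in\cci(Y)$ while you use $\phi\in C_{\sharp}^{\infty}(Y)$, which is immaterial. One small point to tighten: your appeal to \prettyref{prop:periodicextension} for $\sigma_{n}\in\ker(\dive_{\sharp})$ and $w_{n}^{\eta}-\eta\1_{Y}\in\ran(\grad_{\sharp})$ is not literally what that proposition says (it concerns $q_{\rmpe}|_{\Omega}$, not $q_{\rmpe}(n\cdot)|_{Y}$); the claim is true and follows by approximating $\chi_{\eta}$ and $a^{*}w_{\eta}$ by smooth periodic functions and scaling, exactly as the paper does explicitly in the proof of \prettyref{lem:JikovLemma}.
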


The proof of \prettyref{thm:homo} requires some more preparation.
One of the results needed is a variant of \prettyref{thm:periodic_homo}
for $\L(Y)$. However, it will be beneficial to finish \prettyref{exa:wave_homo} first.
\begin{example}[\prettyref{exa:wave_homo} continued]
 The operator sequence $(S^{(n)})_{n}$ converges in the strong operator
topology of $\bo\bigl(\Lnu\bigl(\R;\L(Y)\times\ran(\grad_{\sharp})\bigr)\bigr)$ to the
following limit 
\[
\left(\overline{\td{\nu}\begin{pmatrix}
1 & 0\\
0 & \mathfrak{a}_{\hom}
\end{pmatrix}+\begin{pmatrix}
0 & \dive_{\sharp}\iota_{\sharp}\\
\iota_{\sharp}^{*}\grad_{\sharp} & 0
\end{pmatrix}}\right)^{-1}.
\]
\end{example}

\begin{lem}
\label{lem:homo} Let $f\colon\R^{d}\to\K$ be measurable and $\roi{0}{1}^{d}$-periodic.
Let $\Omega\subseteq\R^{d}$ be open, bounded and assume $f|_{Y}\in\L(Y)$.
Then 
\[
f(n\cdot)\to\Bigl(\int_{Y} f\Bigr) \1_\Omega
\]
weakly in $\L(\Omega)$ as $n\to\infty$.
\end{lem}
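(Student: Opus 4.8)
The plan is the following. Since $Y$ has finite measure and $f|_Y\in\L(Y)$, we have $f|_Y\in L_1(Y)$, so $\int_Y f$ is well defined, and periodicity yields $f\in L_{1,\loc}(\R^d)$ and $f(n\cdot)|_\Omega\in\L(\Omega)$ for every $n\in\N$. The first step is a uniform bound on $\left(f(n\cdot)\right)_n$ in $\L(\Omega)$. Choosing $R>0$ with $\Omega\subseteq\ball{0}{R}$ and substituting $y=nx$, the tiling $\R^d=\roi{0}{1}^d+\Z^d=\bigcup_{k\in\Z^d}(Y+k)$ gives
\[
\norm{f(n\cdot)}_{\L(\Omega)}^{2}=\frac{1}{n^{d}}\int_{n\Omega}\abs{f(y)}^{2}\d y\leq\frac{1}{n^{d}}\sum_{k\in K_{n}}\int_{Y+k}\abs{f(y)}^{2}\d y=\frac{\card{K_{n}}}{n^{d}}\norm{f}_{\L(Y)}^{2},
\]
where $K_{n}\coloneqq\set{k\in\Z^{d}}{(Y+k)\cap\ball{0}{nR}\neq\emptyset}$ and we used $n\Omega\subseteq\ball{0}{nR}$. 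A crude count gives $\card{K_{n}}\leq(2nR+2)^{d}$, hence $\sup_{n\geq1}\norm{f(n\cdot)}_{\L(\Omega)}<\infty$. Since $\left(f(n\cdot)\right)_n$ is bounded in the Hilbert space $\L(\Omega)$, the claimed weak convergence will follow, by a standard density argument, once we show $\int_{Q\cap\Omega}f(nx)\d x\to\left(\int_{Y}f\right)\abs{Q\cap\Omega}$ as $n\to\infty$ for every bounded $d$-dimensional interval $Q$, because the family $\set{\1_{Q\cap\Omega}}{Q\text{ bounded interval}}$ is total in $\L(\Omega)$ (it is the image of the total family $\set{\1_{Q}}{Q\text{ bounded interval}}$ in $\L(\R^{d})$ under the norm-decreasing restriction to $\Omega$).

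Fix such a $Q$. For $M>0$ decompose $f=f_{1}+f_{2}$, where $f_{1}\coloneqq f\cdot\1_{[\abs{f}\leq M]}$ is bounded, measurable and $\roi{0}{1}^{d}$-periodic, and $f_{2}\coloneqq f\cdot\1_{[\abs{f}>M]}$ is $\roi{0}{1}^{d}$-periodic with $\norm{f_{2}}_{\L(Y)}\to0$ as $M\to\infty$ by dominated convergence. Since $f_{1}\in L_{\infty}(\R^{d})$, \prettyref{thm:periodic_homo} gives $f_{1}(n\cdot)\to\left(\int_{Y}f_{1}\right)\1_{\R^{d}}$ in $\sigma\bigl(L_{\infty}(\R^{d}),L_{1}(\R^{d})\bigr)$, and testing against $\1_{Q\cap\Omega}\in L_{1}(\R^{d})$ yields $\int_{Q\cap\Omega}f_{1}(nx)\d x\to\left(\int_{Y}f_{1}\right)\abs{Q\cap\Omega}$. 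For $f_{2}$, the uniform bound of the first step applied to $f_{2}$ together with the Cauchy--Schwarz inequality gives
\[
\abs{\int_{Q\cap\Omega}f_{2}(nx)\d x}\leq\norm{f_{2}(n\cdot)}_{\L(\Omega)}\abs{Q\cap\Omega}^{1/2}\leq C_{\Omega}\norm{f_{2}}_{\L(Y)}\abs{Q\cap\Omega}^{1/2}
\]
with $C_{\Omega}$ independent of $n$ and $M$, while $\abs{\int_{Y}f_{2}}\leq\abs{Y}^{1/2}\norm{f_{2}}_{\L(Y)}$. Combining these, for each $M$ we obtain
\[
\limsup_{n\to\infty}\abs{\int_{Q\cap\Omega}f(nx)\d x-\Bigl(\int_{Y}f\Bigr)\abs{Q\cap\Omega}}\leq\bigl(C_{\Omega}\abs{Q\cap\Omega}^{1/2}+\abs{Y}^{1/2}\abs{Q\cap\Omega}\bigr)\norm{f_{2}}_{\L(Y)},
\]
and letting $M\to\infty$ (the left-hand side being independent of $M$) shows the limit is $0$. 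Since $\int_{\Omega}\bigl(\int_{Y}f\bigr)\1_{Q}(x)\d x=\bigl(\int_{Y}f\bigr)\abs{Q\cap\Omega}$, this is precisely the required convergence.

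The only genuine work is the uniform $\L(\Omega)$-bound via the cell count and the $M$-dependent splitting; everything else reduces to \prettyref{thm:periodic_homo} and routine density and boundedness arguments. I expect the main obstacle to be keeping the covering estimate $\card{K_n}\leq(2nR+2)^d$ and the $\varepsilon$--$M$ bookkeeping clean, but no deeper difficulty should arise.
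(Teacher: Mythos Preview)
Your proof is correct and follows essentially the same strategy as the paper: establish a uniform $\L$-bound on $(f(n\cdot))_n$, reduce to bounded $f$, and invoke \prettyref{thm:periodic_homo}. The paper streamlines the execution by first reducing to $\Omega=Y$ via periodicity (which gives the exact identity $\norm{f(n\cdot)}_{\L(Y)}=\norm{f}_{\L(Y)}$, avoiding your cell count) and then approximating $f$ by simple functions rather than truncating, so that the weak convergence follows for all test functions $g\in\L(Y)\subseteq L_1(Y)$ at once without the separate reduction to indicators of intervals.
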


\begin{proof} Due to the boundedness of $\Omega$ we find a finite
set $F\subseteq\Z^{d}$ such that $\Omega\subseteq\bigcup_{k\in F}k+Y$.
Thus, by periodicity, it suffices to restrict our attention to the case when $\Omega=Y$. 
First of all we show that $(f(n\cdot))_{n}$ is a bounded sequence
in $\L(Y)$. For this, we compute  
\begin{align*}
\int_{Y}\abs{f(nx)}^{2}\d x & =\frac{1}{n^{d}}\int_{nY}\abs{f(y)}^{2}\d y
 =\frac{1}{n^{d}}n^{d}\int_{Y}\abs{f(y)}^{2}\d y=\norm{f}_{\L(Y)}^{2},
\end{align*}
where we used periodicity again. This chain of equalities also shows with the help of a density argument, that it suffices to assume $f|_Y$ to be a simple function. Note that in this case $f \in L_\infty(\R^d)$. 

Finally, we note that by \prettyref{thm:periodic_homo} 
\[
 \scp{f(n\cdot)}{g}_{\L(Y)}\to \scp{\Bigl(\int_Y f\Bigr)\1_{Y}}{g}_{\L(Y)}\quad (n\to \infty)
\]
for each $g\in  \L(Y)\subseteq L_1(Y)$, which implies the desired assertion.
\end{proof}

\begin{lem}
\label{lem:dcl_proto}Let
$(q_{n})_{n}$ and $\left(r_{n}\right)_{n}$ be weakly convergent
sequences in a Hilbert space $H$ with weak limits $q,r\in H$, respectively. Moreover,  let $X\subseteq H$ be a closed subspace and $\iota\from X\to H$ the canonical embedding. Assume that
\[
q_{n}\in X\text{ for each }n\in \N \text{ and }\left(\iota^{*}r_{n}\right)_{n}\text{ is strongly convergent in }X.
\]
Then
\[
\lim_{n\to\infty}\scp{r_{n}}{q_{n}}_{H}=\scp{r}{q}_{H}.
\]
\end{lem}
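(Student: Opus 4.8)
The plan is to exploit that $q_{n}$ lives in the closed subspace $X$, so that the pairing $\scp{r_{n}}{q_{n}}_{H}$ can be rewritten as a pairing inside $X$ involving $\iota^{*}r_{n}$, which by hypothesis converges \emph{strongly}; the inner product of a strongly convergent sequence with a weakly convergent one passes to the limit, and that is essentially all that is needed.

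First I would record two preliminary facts. Since $X\subseteq H$ is closed it is weakly closed, and $q$ is the weak limit of the sequence $(q_{n})_{n}$, which lies in $X$; hence $q\in X$. By \prettyref{lem:embeddings_projections}, $\iota\iota^{*}$ is the orthogonal projection of $H$ onto $X$ and $\iota^{*}\iota=\idop_{X}$; in particular $\iota\iota^{*}x=x$ for every $x\in X$, so $\iota\iota^{*}q_{n}=q_{n}$ for all $n\in\N$ and $\iota\iota^{*}q=q$. Moreover, $\iota^{*}\in\bo(H,X)$, hence $\iota^{*}$ is weakly sequentially continuous; thus $(\iota^{*}q_{n})_{n}$ converges weakly to $\iota^{*}q$ in $X$ and $(\iota^{*}r_{n})_{n}$ converges weakly to $\iota^{*}r$ in $X$. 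Since $(\iota^{*}r_{n})_{n}$ is assumed to be strongly convergent, its strong limit must coincide with its weak limit $\iota^{*}r$.

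With these facts in hand the computation is short. For each $n\in\N$,
\[
\scp{r_{n}}{q_{n}}_{H}=\scp{r_{n}}{\iota\iota^{*}q_{n}}_{H}=\scp{\iota^{*}r_{n}}{\iota^{*}q_{n}}_{X}.
\]
Now $\iota^{*}r_{n}\to\iota^{*}r$ strongly and $\iota^{*}q_{n}\to\iota^{*}q$ weakly in $X$, and the pairing of a norm-convergent sequence with a weakly convergent one converges to the pairing of the limits: writing $\scp{\iota^{*}r_{n}}{\iota^{*}q_{n}}_{X}-\scp{\iota^{*}r}{\iota^{*}q}_{X}=\scp{\iota^{*}r_{n}-\iota^{*}r}{\iota^{*}q_{n}}_{X}+\scp{\iota^{*}r}{\iota^{*}q_{n}-\iota^{*}q}_{X}$, the first term is bounded by $\norm{\iota^{*}r_{n}-\iota^{*}r}_{X}\sup_{n}\norm{\iota^{*}q_{n}}_{X}$ (the supremum being finite since weakly convergent sequences are bounded) and the second tends to $0$ by weak convergence. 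Hence
\[
\lim_{n\to\infty}\scp{r_{n}}{q_{n}}_{H}=\scp{\iota^{*}r}{\iota^{*}q}_{X}=\scp{r}{\iota\iota^{*}q}_{H}=\scp{r}{q}_{H},
\]
where the last equality uses $q\in X$.

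As for the main obstacle: there is none of substance. The only points requiring a moment of care are the identification of the strong limit of $(\iota^{*}r_{n})_{n}$ as $\iota^{*}r$ (for which weak continuity of $\iota^{*}$ and uniqueness of weak limits suffice) and the standard strong–weak lemma for inner products; everything else is bookkeeping with the projection $\iota\iota^{*}$ furnished by \prettyref{lem:embeddings_projections}.
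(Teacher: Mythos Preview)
Your proof is correct and follows essentially the same approach as the paper: rewrite $\scp{r_{n}}{q_{n}}_{H}=\scp{\iota^{*}r_{n}}{\iota^{*}q_{n}}_{X}$ via the projection $\iota\iota^{*}$, identify the strong limit of $(\iota^{*}r_{n})_{n}$ as $\iota^{*}r$ by weak continuity of $\iota^{*}$, and pass to the limit using the strong--weak pairing argument. Your version is slightly more explicit (spelling out the strong--weak lemma and the weak convergence of $(\iota^{*}q_{n})_{n}$), but the structure is identical.
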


\begin{proof}
Since $\iota^\ast\from H\to X$ is continuous it is also weakly continuous, and thus, 
\[
 \iota^\ast r_n \to \iota^\ast r\quad (n\to \infty)
\]
strongly in $X$. For $n\in\N$ we compute
\begin{align*}
\scp{r_{n}}{q_{n}}_{H} & =\scp{r_{n}}{\iota\iota^{*}q_{n}}_{H}=\scp{\iota^{*}r_{n}}{\iota^{*}q_{n}}_X
 \to\scp{\iota^\ast r}{\iota^{*}q}_{X}.
\end{align*}
Since $X$ is a closed subspace, it is also weakly closed and thus $q\in X$ which yields
\[
 \scp{\iota^\ast r}{\iota^{*}q}_{X}=\scp{r}{q}_{H}.\qedhere
\]
\end{proof}
The next theorem is a version of the so-called `div-curl lemma'.
\begin{thm}
\label{thm:dcl_abst}Let $(q_{n})_{n}$ and $(r_{n})_{n}$ be weakly
convergent sequences in $\L(Y)^{d}$ to some $q,r\in \L(Y)^d$, respectively.
Assume that
\[
q_{n}\in\ran(\grad_{\sharp}) \text{ for each } n\in \N \text{ and }\left(\iota_\sharp^{*}r_{n}\right)_{n}\text{ is strongly convergent in } \ran(\grad_\sharp).
\]
Then
\[
\int_{Y}\scp{r_{n}(x)}{q_{n}(x)}_{\K^d}\phi(x)\d x\to\int_{Y}\scp{r(x)}{q(x)}_{\K^d}\phi(x) \d x
\]
for all $\phi\in\cci(Y)$ as $n\to\infty$. 
\end{thm}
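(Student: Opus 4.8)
The plan is to reduce everything to the scalar weight $\phi=\1_{Y}$, where the claim is precisely \prettyref{lem:dcl_proto} applied with the Hilbert space $\L(Y)^{d}$, the closed subspace $X=\ran(\grad_{\sharp})$ and the embedding $\iota=\iota_{\sharp}$ (using $q_{n}\in\ran(\grad_{\sharp})$ and strong convergence of $\iota_{\sharp}^{*}r_{n}$). To transport a general $\phi$ past the gradient structure of $(q_{n})_{n}$, note that since $q_{n}\in\ran(\grad_{\sharp})$, \prettyref{prop:Poincare_periodic} produces a unique $u_{n}\in H_{\sharp}^{1}(Y)\cap\{\1_{Y}\}^{\bot}$ with $\grad_{\sharp}u_{n}=q_{n}$, and the isomorphism statement there gives $\sup_{n}\norm{u_{n}}_{H_{\sharp}^{1}(Y)}<\infty$, since the weakly convergent sequence $(q_{n})_{n}$ is bounded.

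Next I would identify the limit of $(u_{n})_{n}$. Along any subsequence there is a further subsequence converging weakly in $H_{\sharp}^{1}(Y)$ to some $\tilde{u}$; since weak convergence in the graph Hilbert space $H_{\sharp}^{1}(Y)$ forces weak convergence of $\grad_{\sharp}u_{n_{k}}=q_{n_{k}}$ to $\grad_{\sharp}\tilde{u}$ in $\L(Y)^{d}$, we get $\grad_{\sharp}\tilde{u}=q$, and $\int_{Y}\tilde{u}=0$. By \prettyref{prop:Poincare_periodic} such a $\tilde{u}$ is unique, so a subsequence argument yields $u_{n}\rightharpoonup u$ in $H_{\sharp}^{1}(Y)$ for the full sequence, with $\grad_{\sharp}u=q$. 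The compact embedding $H_{\sharp}^{1}(Y)\hookrightarrow\L(Y)$ from \prettyref{thm:RellichII} then upgrades this to $u_{n}\to u$ strongly in $\L(Y)$.

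For $\phi\in\cci(Y)$ and $n\in\N$ one has $\phi u_{n}\in\dom(\grad_{\sharp})$ with the product rule $\grad_{\sharp}(\phi u_{n})=\phi\,q_{n}+u_{n}\grad\phi$ (obtained by approximating $u_{n}$ in $H_{\sharp}^{1}(Y)$ by functions in $C_{\sharp}^{\infty}(Y)$ and passing to the limit, using that $\phi$ has compact support in the interior of $Y$; equivalently, via \prettyref{prop:periodicextension}, $\phi u_{n}\in\dom(\grad_{0})\subseteq\dom(\grad_{\sharp})$ on the open cube). Hence
\[
\int_{Y}\scp{r_{n}(x)}{q_{n}(x)}_{\K^{d}}\phi(x)\d x=\scp{r_{n}}{\grad_{\sharp}(\phi u_{n})}_{\L(Y)^{d}}-\scp{r_{n}}{u_{n}\grad\phi}_{\L(Y)^{d}}.
\]
In the last term, $u_{n}\grad\phi\to u\grad\phi$ strongly in $\L(Y)^{d}$ while $r_{n}\rightharpoonup r$, so it tends to $\scp{r}{u\grad\phi}_{\L(Y)^{d}}$. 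For the first term apply \prettyref{lem:dcl_proto} again, to $(r_{n})_{n}$ and $(\grad_{\sharp}(\phi u_{n}))_{n}$: the latter lies in $\ran(\grad_{\sharp})$, $\iota_{\sharp}^{*}r_{n}$ is strongly convergent, and $\grad_{\sharp}(\phi u_{n})=\phi q_{n}+u_{n}\grad\phi\rightharpoonup\phi q+u\grad\phi=\grad_{\sharp}(\phi u)$ weakly in $\L(Y)^{d}$, because multiplication by the bounded function $\phi$ is weakly continuous and $u_{n}\grad\phi$ converges strongly. Thus the first term tends to $\scp{r}{\grad_{\sharp}(\phi u)}_{\L(Y)^{d}}=\scp{r}{\phi q}_{\L(Y)^{d}}+\scp{r}{u\grad\phi}_{\L(Y)^{d}}$; subtracting, the $u\grad\phi$-terms cancel and we are left with $\scp{r}{\phi q}_{\L(Y)^{d}}=\int_{Y}\scp{r(x)}{q(x)}_{\K^{d}}\phi(x)\d x$, as claimed.

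The heavy analytic ingredient, compactness of the periodic Sobolev embedding, is already available (\prettyref{thm:RellichII}), and the weak/strong pairing manipulations are routine. The delicate point is the product rule $\grad_{\sharp}(\phi u_{n})=\phi q_{n}+u_{n}\grad\phi$ together with $\phi u_{n}\in\dom(\grad_{\sharp})$ and the correct reading of the weak limit $\grad_{\sharp}(\phi u)$ of $(\grad_{\sharp}(\phi u_{n}))_{n}$; this is where the density of $C_{\sharp}^{\infty}(Y)$ in $H_{\sharp}^{1}(Y)$ and the strong $\L(Y)$-convergence $u_{n}\to u$ coming from compactness are used, and it explains why the hypothesis on $\iota_{\sharp}^{*}r_{n}$ must be strong convergence rather than mere weak convergence of $(r_{n})_{n}$.
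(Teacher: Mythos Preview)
Your proof is correct and follows essentially the same approach as the paper: lift $q_{n}$ to a potential $u_{n}\in H_{\sharp}^{1}(Y)\cap\{\1_{Y}\}^{\bot}$ via \prettyref{prop:Poincare_periodic}, use the compact embedding of \prettyref{thm:RellichII} to get strong $\L(Y)$-convergence of $(u_{n})_{n}$, then split $\phi q_{n}=\grad_{\sharp}(\phi u_{n})-u_{n}\grad\phi$ and handle the two terms by \prettyref{lem:dcl_proto} and the weak--strong pairing, respectively. The paper is slightly terser (it invokes the isomorphism directly rather than a subsequence argument to get $u_{n}\rightharpoonup u$, and asserts $\phi u_{n}\rightharpoonup\phi u$ in $H_{\sharp}^{1}(Y)$ without spelling out the product rule), but the structure is identical.
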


\begin{proof}
Let $\phi\in\cci(Y)$, $n\in\N$. Since $q_n \in \ran(\grad_\sharp)$, we find a unique $w_{n}\in H^1_\sharp(Y)$ with $w_n \in \{\1_Y\}^\bot=\ker(\grad_\sharp)^\bot$ such that 
\[
\grad_{\sharp}w_{n}=q_{n}.
\]
Moreover, since $\grad_\sharp\from H^1_\sharp(Y)\cap \{\1_Y\}^\bot \to \ran(\grad_\sharp)$ is an isomorphism by \prettyref{prop:Poincare_periodic}, we infer that $(w_n)_{n}$ is a weakly convergent sequence in $H^1_\sharp(Y)$ and denote its weak limit by $w\in H^1_\sharp(Y)$.  By \prettyref{thm:RellichII}, we deduce $w_{n}\to w$
strongly in $\L(Y)^{d}$. Moreover, note that $(\phi w_{n})_{n}$
weakly converges to $\phi w$ in $H_{\sharp}^{1}(Y).$ In
particular, $\grad_{\sharp}\left(\phi w_{n}\right)\to\grad_{\sharp}\left(\phi w\right)$
weakly in $\L(Y)^d$.  For $n\in \N$,  we compute
\begin{align*}
\int_{Y}\scp{r_{n}(x)}{q_{n}(x)}_{\K^d}\phi(x)\d x &= \scp{r_n}{q_n \phi}_{L(Y)^d} 
=\scp{r_n}{(\grad_\sharp w_n) \phi}_{L(Y)^d}\\
&=\scp{r_n}{\grad_\sharp (\phi w_n)}_{L(Y)^d}- \scp{r_n}{w_n \grad_\sharp\phi}_{\L(Y)^d}.
\end{align*}
Now, the first term on the right-hand side of this equality tends to $\scp{r}{\grad_\sharp (\phi w)}_{\L(Y)^d}$ by 
\prettyref{lem:dcl_proto} applied to $X=\ran(\grad_{\sharp})$, which is
closed by \prettyref{prop:Poincare_periodic}. The second term tends to $\scp{r}{w \grad_\sharp \phi}_{\L(Y)^d}$ by strong convergence of $(w_{n})_n$ and weak
convergence of $(r_{n})_{n}$ in $\L(Y)^d$. Thus, we obtain
\begin{align*}
 \int_{Y}\scp{r_{n}(x)}{q_{n}(x)}_{\K^d}\phi(x)\d x&\to \scp{r}{\grad_\sharp(\phi w)}_{\L(Y)^d}-\scp{r}{w \grad_\sharp \phi}_{\L(Y)^d}\\
 &=\int_{Y}\scp{r(x)}{q(x)}_{\K^d}\phi(x) \d x \quad(n\to\infty). \qedhere
\end{align*}
\end{proof}

We will apply the latter theorem to the concrete case when $r_n=a(n\m) q_n$ in order to determine the weak limit of $(a(n\m)q_n)_n$.  
\begin{lem}
\label{lem:JikovLemma}
Let $\left(q_{n}\right)_{n}$ and $\left(a(n\m)q_{n}\right)_{n}$
be weakly convergent in $\L(Y)^{d}$ to some $q$ and $r$, respectively.
Assume that
\[
q_{n}\in\ran(\grad_{\sharp}) \text{ for each } n\in \N \text{ and } \left(\iota_\sharp^\ast a(n\m)q_{n}\right)_{n} \text{ is strongly convergent in } \ran(\grad_\sharp).
\]
Then $r=a_{\hom}q$. 
\end{lem}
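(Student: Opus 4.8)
The plan is to run the classical Tartar--Murat compensated-compactness (``div--curl'') argument in the periodic setting: one tests the weakly convergent flux $\bigl(a(n\m)q_{n}\bigr)_{n}$ against the oscillating corrector field built from the \emph{transposed} coefficient $a^{*}$, and reads off the homogenised relation from \prettyref{thm:dcl_abst}. Fix $\xi\in\K^{d}$ and a real-valued $\phi\in\cci(Y)$; since $r,a_{\hom}q\in\L(Y)^{d}$, it suffices to show
\[
\int_{Y}\scp{r(x)}{\xi}_{\K^{d}}\phi(x)\d x=\int_{Y}\scp{a_{\hom}q(x)}{\xi}_{\K^{d}}\phi(x)\d x ,
\]
and then let $\phi$ and $\xi$ vary.

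First I would construct the oscillating correctors. Applying \prettyref{lem:ExistenceLemma} with $a^{*}$ in place of $a$ produces $v\in\L(Y)^{d}$ with $v-\xi\1_{Y}=\grad_{\sharp}w$ for some $w\in H_{\sharp}^{1}(Y)$ and $a^{*}(\m)v\in\ker(\dive_{\sharp})$. Denoting by $v(n\cdot)$ the scaled periodic field as in \prettyref{lem:homo}, a density-plus-closedness argument --- approximate $w$ by elements of $C_{\sharp}^{\infty}(Y)$, use the chain rule $\grad(\psi(n\cdot))=n(\grad\psi)(n\cdot)$, and invoke the closedness of $\grad_{\sharp}$ --- shows $v(n\cdot)-\xi\1_{Y}=\tfrac1n\grad_{\sharp}\bigl(w(n\cdot)\bigr)\in\ran(\grad_{\sharp})$. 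Likewise, approximating $a^{*}(\m)v$ by elements of $C_{\sharp}^{\infty}(Y)^{d}\cap\ker(\dive_{\sharp})$ via \prettyref{rem:approx_in_Kern_div_sharp} and using that $\ker(\dive_{\sharp})$ is closed gives $a^{*}(n\m)v(n\cdot)=(a^{*}v)(n\cdot)\in\ker(\dive_{\sharp})$, whence $\iota_{\sharp}^{*}\bigl((a^{*}v)(n\cdot)\bigr)=0$ by \prettyref{lem:embeddings_projections} and \prettyref{thm:ran-kernerl}. Since constant vector fields lie in $C_{\sharp}^{\infty}(Y)^{d}\cap\ker(\dive_{\sharp})=\ran(\grad_{\sharp})^{\bot}$, one gets $\int_{Y}\grad_{\sharp}w=0$, so $\int_{Y}v=\xi$, and \prettyref{lem:homo} yields that $v(n\cdot)$ converges weakly in $\L(Y)^{d}$ to $\xi\1_{Y}$ while $(a^{*}v)(n\cdot)$ converges weakly to $\bigl((a^{*})_{\hom}\xi\bigr)\1_{Y}$.

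Next I would apply \prettyref{thm:dcl_abst} twice. \emph{First application:} with $q_{n}\in\ran(\grad_{\sharp})$ (given) converging weakly to $q$, and $\tilde r_{n}\coloneqq(a^{*}v)(n\cdot)$ converging weakly to $((a^{*})_{\hom}\xi)\1_{Y}$ with $\iota_{\sharp}^{*}\tilde r_{n}=0$ (hence strongly convergent), one obtains, after rewriting $\scp{a^{*}(nx)v(nx)}{q_{n}(x)}=\scp{v(nx)}{a(nx)q_{n}(x)}$,
\[
\int_{Y}\scp{v(nx)}{\bigl(a(n\m)q_{n}\bigr)(x)}_{\K^{d}}\phi(x)\d x\to\int_{Y}\scp{(a^{*})_{\hom}\xi}{q(x)}_{\K^{d}}\phi(x)\d x .
\]
\emph{Second application:} with $\hat q_{n}\coloneqq v(n\cdot)-\xi\1_{Y}\in\ran(\grad_{\sharp})$ converging weakly to $0$ and $\hat r_{n}\coloneqq a(n\m)q_{n}$ converging weakly to $r$ --- here the standing hypothesis that $\bigl(\iota_{\sharp}^{*}a(n\m)q_{n}\bigr)_{n}$ converges strongly is precisely what makes \prettyref{thm:dcl_abst} applicable --- one obtains
\[
\int_{Y}\scp{a(nx)q_{n}(x)}{v(nx)}_{\K^{d}}\phi(x)\d x-\int_{Y}\scp{a(nx)q_{n}(x)}{\xi}_{\K^{d}}\phi(x)\d x\to 0 .
\]

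Finally I would combine the two limits. Conjugating the first limit (legitimate because $\phi$ is real) shows $\int_{Y}\scp{a(nx)q_{n}(x)}{v(nx)}_{\K^{d}}\phi(x)\d x\to\int_{Y}\scp{q(x)}{(a^{*})_{\hom}\xi}_{\K^{d}}\phi(x)\d x$; substituting into the second limit gives that $\int_{Y}\scp{a(nx)q_{n}(x)}{\xi}_{\K^{d}}\phi(x)\d x$ converges to $\int_{Y}\scp{q(x)}{(a^{*})_{\hom}\xi}_{\K^{d}}\phi(x)\d x$. On the other hand, weak convergence $a(n\m)q_{n}\to r$ in $\L(Y)^{d}$, tested against $x\mapsto\phi(x)\xi$, shows the same integral converges to $\int_{Y}\scp{r(x)}{\xi}_{\K^{d}}\phi(x)\d x$. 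Equating the limits and using $(a^{*})_{\hom}=(a_{\hom})^{*}$ from \prettyref{rem:a_hom}\ref{rem:a_hom:item:1} to rewrite $\scp{q(x)}{(a^{*})_{\hom}\xi}=\scp{a_{\hom}q(x)}{\xi}$ gives the desired identity, and hence $r=a_{\hom}q$ after varying $\phi$ and $\xi$. The main obstacle is the corrector bookkeeping in the second paragraph --- verifying rigorously that the scaled fields $v(n\cdot)-\xi\1_{Y}$ and $(a^{*}v)(n\cdot)$ actually belong to $\ran(\grad_{\sharp})$ and $\ker(\dive_{\sharp})$, which requires the periodic density results --- together with tracking the complex conjugates so that the two div--curl identities interlock correctly.
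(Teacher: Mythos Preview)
Your proposal is correct and follows the same compensated-compactness strategy as the paper: build the corrector $v$ for $a^{*}$, scale it, and apply \prettyref{thm:dcl_abst} twice to the pairing $\scp{a^{*}(n\m)v_{n}}{q_{n}}=\scp{v_{n}}{a(n\m)q_{n}}$. The paper verifies $\iota_{\sharp}^{*}a^{*}(n\m)v_{n}=0$ by a direct integration-by-parts computation against test functions in $C_{\sharp}^{\infty}(Y)$, whereas you do it via the density statement in \prettyref{rem:approx_in_Kern_div_sharp}; both are fine. Your handling of the second application is in fact tidier than the paper's: the paper applies \prettyref{thm:dcl_abst} with the theorem's ``$q_{n}$'' taken to be $v_{n}$ itself, but only $v_{n}-\xi\1_{Y}$ lies in $\ran(\grad_{\sharp})$; your explicit split $v(n\cdot)=(v(n\cdot)-\xi)+\xi$, together with the trivial weak limit for the constant part, closes that gap cleanly.
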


\begin{proof}
Let $\xi\in\K^{d}$ and choose $v\coloneqq v_\xi\in\L(Y)^{d}$ according to \prettyref{lem:ExistenceLemma} for $a^\ast$ instead of $a$; that is, $v-\xi \in \ran(\grad_\sharp)$ and $a^\ast(\m) v \in \ker (\dive_\sharp)$.  For $n\in \N$, we define
$v_{n}\coloneqq v_\rmpe(n\cdot)\in \L(Y)^d$.  Next, let $g\in C_{\sharp}^{\infty}(Y)$. Then
we compute
\begin{align*}
\scp{a^\ast(n\m)v_{n}}{\grad_{\sharp}g}_{\L(Y)^d} & =\int_{Y}\scp{a^\ast(nx)v_\rmpe(nx)}{\grad_{\sharp}g(x)}_{\K^d}\d x\\
 & =\frac{1}{n^{d}}\int_{nY}\scp{a^\ast(y)v_\rmpe(y)}{\left(\grad_{\sharp}g\right)(y/n)}_{\K^d}\d y\\
 &= \frac{1}{n^{d-1}} \int_{nY} \scp{a^\ast(y)v_\rmpe (y)}{(\grad g(\cdot/n))(y)}_{\K^d} \d y.
\end{align*}
In order to compute the last integral, we employ \prettyref{lem:C_sharp_core} and \prettyref{rem:approx_in_Kern_div_sharp} to find a sequence $(\phi_k)_{k\in \N}$ in $C_\sharp^\infty(Y)^d\cap \ker(\dive_\sharp)$ such that $\phi_k \to a^\ast(\m)v$ as $k\to \infty$ in $\L(Y)^d$. The latter implies
$(\phi_k)_\rmpe \to a^\ast(\m) v_\rmpe$ as $k\to \infty$ in $\L(nY)^d$ for each $n\in \N$ and $\dive (\phi_k)_\rmpe = 0$ for all $k\in\N$ by \prettyref{prop:periodicextension}.  
Thus, we obtain with integration by parts (note that the boundary terms vanish due to the periodicity of $\phi_k$ and $g$)
\begin{align*}
 \scp{a^\ast(n\m)v_{n}}{\grad_{\sharp}g}_{\L(Y)^{d}}&=\frac{1}{n^{d-1}}  \scp{a^\ast(\m)v_\rmpe}{(\grad g(\cdot/n))}_{\L(nY)^d} \\
 &=\frac{1}{n^{d-1}}  \lim_{k\to \infty}\scp{(\phi_k)_\rmpe}{(\grad g(\cdot/n))}_{\L(nY)^d}=0.
\end{align*}
Since $C_\sharp^\infty(Y)$ is a core for $\grad_\sharp$, we infer that $a^\ast(n\m)v_n \in \ran(\grad_\sharp)^\bot$ and hence,
\[
 \iota_\sharp^\ast a^\ast(n\m)v_n=0\quad (n\in \N).
\]
Moreover, we have $a^\ast(n\m)v_n \to \int_Y a^\ast v=(a^\ast)_\rmhom \xi$ weakly in $\L(Y)$ as $n\to \infty$ by \prettyref{lem:homo}. Thus, by \prettyref{thm:dcl_abst} applied to $q_n$ and $r_n\coloneqq a^\ast(n\m)v_n$, we deduce that for all $\phi\in\cci(Y)$
\[
 \lim_{n\to \infty}\int_Y \scp{a^\ast(nx)v_n(x)}{q_n(x)}_{\K^d} \phi(x)\d x= \int_Y \scp{(a^\ast)_\rmhom \xi}{q(x)}_{\K^d} \phi(x) \d x.
\]
On the other hand, $v_n \to \bigl(\int_Y v\bigr)\1_Y =\xi \1_Y$ weakly in $\L(Y)$ as $n\to \infty$ by \prettyref{lem:homo}, where $\int_Yv=\xi$ follows from $v-\xi \in \ran(\grad_\sharp)$. Thus, we can apply \prettyref{thm:dcl_abst} to $q_n\coloneqq v_n$ and $r_n\coloneqq a(n\m)q_n$ and obtain for all $\phi \in \cci(Y)$
\begin{align*}
 \lim_{n\to \infty}\int_Y \scp{a^\ast(nx)v_n(x)}{q_n(x)}_{\K^d} \phi(x)\d x&=\lim_{n\to \infty}\int_Y \scp{v_n(x)}{a(nx) q_n(x)}_{\K^d} \phi(x)\d x \\
 &= \int_Y \scp{\xi}{r(x)}_{\K^d} \phi(x) \d x.
\end{align*}
Thus, we have
\[
\int_Y \scp{(a^\ast)_{\hom} \xi}{q(x)}_{\K^d} \phi(x) \d x=\int_Y \scp{\xi}{r(x)}_{\K^d} \phi(x) \d x
\]
for each $\phi \in \cci(Y)$. Hence, we infer
\[
\scp{\xi}{r(x)}_{\K^d}=\scp{(a^\ast)_{\hom} \xi}{q(x)}_{\K^d}=\scp{\xi}{a_{\hom}  q(x)}_{\K^d}
\]
for almost every $x\in Y$, where we have used \prettyref{rem:a_hom}\ref{rem:a_hom:item:1}. Since the latter holds for each $\xi \in \K^d$, we deduce $r=a_{\hom} q$.
\end{proof}

\begin{proof}[Proof of \prettyref{thm:homo}]
Let $n\in\N$ and for $u\in\ran(\grad_{\sharp})$ we put $q_{n}\coloneqq \mathfrak{a}_{n} u$.
We need to show that $(q_{n})_{n}$ weakly converges to $\mathfrak{a}_{\hom} u$.
For this, we choose subsequences (without relabeling) such that both
$\left(q_{n}\right)_{n}$ and $\left(a(n\m)q_{n}\right)_{n}$ weakly converge
 to some $q$ and $r$, respectively. By definition, we have
$q_{n}\in\ran(\grad_{\sharp})$ and $\iota_{\sharp}^{*}a(n\m)q_{n}=u$ for each $n\in \N$.
Hence, by \prettyref{lem:JikovLemma}, we deduce $a_{\hom}q=r$. As $\ran(\grad_\sharp)$ is closed, it is also weakly closed, and hence, $q\in \ran(\grad_\sharp)$. Thus, we have 
\[
 \iota_\sharp^\ast a_{\hom} \iota_\sharp q= \iota_\sharp^\ast r,
\]
or equivalently
\[
 q= \mathfrak{a}_{\hom} \iota_\sharp^\ast r.
\]
Now, since $u=\iota_{\sharp}^{*}a(n\m)q_{n}\to \iota_\sharp^\ast r$ weakly, we infer
\[
 q=\mathfrak{a}_{\hom} u.
\]
A subsequence argument now yields the claim.
\end{proof}

\section{Comments}

The theory of finding partial differential equations as appropriate
limit problems of partial differential equations with highly oscillatory
coefficients is commonly referred to as `homogenisation'. The mathematical
theory of homogenisation goes back to the late 1960s and early 70s.
We refer to \cite{Bensoussan1978} as an early monograph wrapping
up the available theory to that date.

The usual way of addressing homogenisation problems is to look at
static (i.e., time-independent) problems first. The corresponding
elliptic equation is then intensively studied. Even though it might
be hidden in the derivations above, the `study of the elliptic problem'
essentially boils down to addressing the limit behaviour of $\mathfrak{a}_{n}$
as $n\to\infty$; see \cite{EGW17_D2N,W16_Gcon}. Consequently, generalisations
of the periodic case have been introduced. The periodic case (and
beyond) is covered in \cite{Bensoussan1978,Cioranescu1999}; non-periodic
cases and corresponding notions have been introduced in \cite{Spagnolo1967,Spagnolo1968}
and, independently, in \cite{Murat1997,Murat1978}.

An important technical tool to obtain results in this direction is
the $\dive$-$\curl$ lemma or the notion of `compensated
compactness'. In the above presented material, this is \prettyref{thm:dcl_abst};
the main difficulty to overcome is that of finding a limit of a product $\left(\scp{q_{n}}{r_{n}}\right)_{n}$
of weakly convergent sequences $\left(q_{n}\right)_{n},\left(r_{n}\right)_{n}$
in $\L(\Omega)^{3}$ for some open $\Omega\subseteq\R^{3}$. It turns
out that if $\left(\curl q_{n}\right)_{n}$ and $\left(\dive r_{n}\right)_{n}$
converge strongly in an appropriate sense, then $\int_{\Omega}\scp{q_{n}}{r_{n}}\phi$
converges to the desired limit for all $\phi\in\cci(\Omega)$. In
\prettyref{thm:dcl_abst} the $\curl$-condition is strengthened in
as much as we ask $q_{n}$ to be a gradient, which results in $\curl q_{n}=0$.
The $\dive$-condition is replaced by the condition involving $\iota_{\sharp}^{*}$,
which can in fact be shown to be equivalent, see \cite{W17_DCL}. The restriction to periodic boundary
value problems is a mere convenience. It can be shown that the arguments
work similarly for non-periodic boundary conditions, and even with the same
limit, see \cite[Lemma 10.3]{Tartar2009}.

There are many generalisations of the $\dive$-$\curl$ lemma. For
this, we refer to \cite{Briane2009} (and the references given
there) and to the rather recently found operator-theoretic perspective,
with plenty of applications not solely restricted to the operators $\dive$ and $\curl$, see \cite{W17_DCL,Pauly2019}.

The way of deriving the homogenised equation (i.e., the limit of $\mathfrak{a}_{n}$)
is akin to some derivations in \cite{JKO94,Cioranescu1999}. Further
reading on homogenisation problems can also be found in these references.
The first step of combining homogenisation processes and evolutionary
equations has been made in \cite{W11_P} and has had some profound
developments for both quantitative and qualitative results; see \cite{W16_H,FW17_1D,CW17_FH,W18_NHC}.

\section*{Exercises}
\addcontentsline{toc}{section}{Exercises}

\begin{xca}
\label{exer:normPDEconv}Under the same assumptions of \prettyref{thm:convPDE}
show 
\[
\norm{\left(\bigl(\overline{\td{\nu}M_{n}(\td{\nu})+A}\bigr)^{-1} - \bigl(\overline{\td{\nu}M(\td{\nu})+A}\bigr)^{-1}\right)\td{\nu}^{-1}}_{\bo(\Lnu(\R;H))}\to0.
\]
\end{xca}

\begin{xca}
\label{exer:kernel_grad} Let $\Omega \subseteq \R^d$ be open and $\varepsilon >0$. We define the set 
\[
 \Omega_\varepsilon\coloneqq \set{x\in \Omega}{\dist(x,\partial\Omega)>\varepsilon}.
\]
\begin{enumerate}
 \item\label{exer:kernel_grad:item:1} Let $(\phi_k)_{k\in \N}$ in $\cci(\R^d)$ be a $\delta$-sequence (cf.~\prettyref{exer:delta-seq}) and $u\in H^1(\Omega)$. We identify each function on $\Omega$ by its extension to $\R^d$ by $0$. Prove that for $k\in \N$ large enough, $\phi_k\ast u\in H^1(\Omega_\varepsilon)$ with 
 \[
  \grad(\phi_k\ast u)=\phi_k\ast \grad u \mbox{ on } \Omega_\varepsilon.
 \]
\item\label{exer:kernel_grad:item:2} Use \ref{exer:kernel_grad:item:1} to prove \prettyref{lem:kernel_grad}.
\end{enumerate}
\end{xca}

\begin{xca}
\label{exer:convPDEstrong} Prove the `subsequence argument': Let $X$ be a topological space and $(x_n)_n$ a sequence in $X$. Assume that there exists $x\in X$ such that each subsequence of $(x_n)_n$ has a subsequence converging to $x$. Show that $x_n\to x$ as $n\to \infty$.
\end{xca}

\begin{xca}
\label{exer:poincare}
Let $H_0, H_1$ be Hilbert spaces and $C\colon\dom(C)\subseteq H_0\to H_1$ be a closed linear operator such that $\dom(C)\hookrightarrow H_0$ compactly. Let $P_{\ker(C)^\bot}\from H_0\to H_0$ denote the orthogonal projection onto the closed subspace $\ker(C)^\bot$. Prove that there exists $c>0$ such that 
\[
 \forall u\in \dom(C):\, \norm{P_{\ker(C)^\bot} u}_{H_0} \leq c \norm{Cu}_{H_1}. 
\]
Apply this result to prove \prettyref{prop:Poincare_periodic}.
\end{xca}

\begin{xca}
\label{exer:compactness of adjoint}Let $H_{0},H_{1}$ be Hilbert
spaces. Let $C\colon\dom(C)\subseteq H_{0}\to H_{1}$ be closed and
densely defined. Assume that $\dom(C)\cap\ker(C)^{\bot}\hookrightarrow H_{0}$
compactly. Show that, then, $\dom(C^{*})\cap\ker(C^{*})^{\bot}\hookrightarrow H_{1}$
compactly.
\end{xca}

\begin{xca}
Let $\nu>0$, $\Omega=\roi{0}{1}^{d}$, $s\in L_{\infty}(\R)$ be $1$-periodic, $0\leq s\leq 1$,
and $a$ as in \prettyref{exa:wave_homo}. Show that
$(u_{n})_{n}$ in $\Lnu(\R;\L(Y))$ satisfying 
\[
\td{\nu}^{2}s(n\m)u_{n}+\td{\nu}(1-s(n\m))u_{n}-\dive_{\sharp}a(n\m)\grad_{\sharp}u_{n}=f
\]
for some $f\in\Lnu(\R;\L(Y))$ is convergent to some $u\in\Lnu(\R;\L(Y))$.
Which limit equation is satisfied by $u$?
\end{xca}

\begin{xca}
Let $(\alpha_{n})_{n}$ be a null-sequence in $\ci{0}{1}$ and let
$a$ be as in \prettyref{exa:wave_homo}. Show 
\begin{align*}
 & \Biggl(\overline{\begin{pmatrix}
\td{\nu} & 0\\
0 & \td{\nu}^{\alpha_{n}}\mathfrak{a}_{n}
\end{pmatrix}+\begin{pmatrix}
0 & \dive_{\sharp}\iota_{\sharp}\\
\iota_{\sharp}^{*}\grad_{\sharp} & 0
\end{pmatrix}}\Biggr)^{-1}\\
 & \to\Biggl(\overline{\begin{pmatrix}
\td{\nu} & 0\\
0 & \mathfrak{a}_{\hom}
\end{pmatrix}+\begin{pmatrix}
0 & \dive_{\sharp}\iota_{\sharp}\\
\iota_{\sharp}^{*}\grad_{\sharp} & 0
\end{pmatrix}}\Biggr)^{-1}
\end{align*}
in the strong operator topology. Show that if $f\in\Lm{-\mu}(\R;\L(Y)_{\bot})$,
where $\L(Y)_{\bot}\coloneqq\set{\phi\in\L(Y)}{\int_{Y}\phi=0}$ for
some small enough $\mu>0$, we have
\[
\Biggl(\overline{\begin{pmatrix}
\td{\nu} & 0\\
0 & \mathfrak{a}_{\hom}
\end{pmatrix}+\begin{pmatrix}
0 & \dive_{\sharp}\iota_{\sharp}\\
\iota_{\sharp}^{*}\grad_{\sharp} & 0
\end{pmatrix}}\Biggr)^{-1}\begin{pmatrix}
f\\
0
\end{pmatrix}\in\Lm{-\mu}\bigl(\R;\L(Y)\times\ran(\grad_{\sharp})\bigr).
\]
\end{xca}

\printbibliography[heading=subbibliography]
% \bibliographystyle{abbrv}
% \bibliography{ISEM23_lit}
%\printbibliography[heading=subbibliography]
%\end{refsection}

% 
% \bibliographystyle{abbrv}
% \bibliography{ISEM23_lit}
%\printbibliography[heading=subbibliography]
%\end{refsection}

% \bibliographystyle{abbrv}
% \bibliography{ISEM23_lit}
% \printbibliography[heading=subbibliography]
% \end{refsection}

%\foreach \nn in {01pp,02pp,03pp,04pp,05pp,06pp,07pp,08pp,09pp,10pp,11pp,12pp,13pp,14pp}{
 % \input{Lecture_\nn}
 % \printbibliography[heading=subbibliography]
%}

\backmatter

\cleardoublepage
%\phantomsection 
\addcontentsline{toc}{chapter}{Index}
\printindex

% $ biblatex auxiliary file $
% $ biblatex bbl format version 2.8 $
% Do not modify the above lines!
%
% This is an auxiliary file used by the 'biblatex' package.
% This file may safely be deleted. It will be recreated by
% biber as required.
%

\end{document}